\font\cyreight=wncyr8
\def\cprime{$'$}
\let\C=\undefined
\newcommand\C{{\mathbb C}}
\newcommand\N{{\mathbb N}}
\newcommand\Q{{\mathbb Q}}
\newcommand\R{{\mathbb R}}
\newcommand\Z{{\mathbb Z}}
\newcommand\Sym{{\operatorname{Sym}}}
\newcommand\Alt{{\operatorname{Alt}}}
\newcommand\Fol{{\operatorname{F\o l}}}
\newcommand\SO{{\operatorname{SO}}}
\newcommand\PSL{{\operatorname{PSL}}}
\newcommand\GL{{\operatorname{GL}}}
\newcommand\BG{{\mathsf B}}
\newcommand\GG{{\mathsf G}}
\newcommand\IET{{\mathsf{IET}}}
\renewcommand\index[2]{}
\newbox\pairbox
\def\pair<#1>{{\mathsurround=0pt
    \setbox\pairbox\hbox{$\left\langle#1\right\rangle$}
    \left\langle\kern-0.35\ht\pairbox
      \copy\pairbox\kern-0.35\ht\pairbox\right\rangle}}
\newcommand\Alphabet{{\mathcal A}}
\newcommand\restrict{{\downharpoonright}}
\newcommand\supp{{\operatorname{support}}}
\DeclareFontFamily{U}{mathb}{\hyphenchar\font45}
\DeclareFontShape{U}{mathb}{m}{n}{
      <5> <6> <7> <8> <9> <10> gen * mathb
      <10.95> mathb10 <12> <14.4> <17.28> <20.74> <24.88> mathb12
      }{}
\DeclareSymbolFont{mathb}{U}{mathb}{m}{n}
\DeclareMathSymbol{\righttoleftarrow}{3}{mathb}{"FD}
\tikzset{
   irregular border/.style={decoration={irregular fractal line, amplitude=0.2},
           decorate,
     },
   ragged border/.style={ decoration={random steps, segment length=4mm, amplitude=1.8mm},
           decorate,
   }
}
\tikzset{
  use path for main/.code={%
    \tikz@addmode{%
      \expandafter\pgfsyssoftpath@setcurrentpath\csname tikz@intersect@path@name@#1\endcsname
    }%
  },
  use path for actions/.code={%
    \expandafter\def\expandafter\tikz@preactions\expandafter{\tikz@preactions\expandafter\let\expandafter\tikz@actions@path\csname tikz@intersect@path@name@#1\endcsname}%
  },
  use path/.style={%
    use path for main=#1,
    use path for actions=#1,
  }
}
\begin{document}
\iffalse
\setcounter{chapter}{10}
\frontmatter
\tableofcontents
\mainmatter

%%%%%%%%%%%%%%%%%%%%%%%%%%%%%%%%%%%%%%%%%%%%%%%%%%%%%%%%%%%%%%%%

\title{Amenability of groups and $G$-sets}
\author{Laurent Bartholdi}
\authorrunning{L. Bartholdi} 
\institute{Laurent Bartholdi \at  \'Ecole Normale Sup\'erieure, Paris, \email{laurent.bartholdi@gmail.com}}
\thanks{Supported by the ``@raction'' grant ANR-14-ACHN-0018-01}

\maketitle
\else
\markboth{\small L. Bartholdi}{Amenability of groups and $G$-sets}

\noindent\textbf{\LARGE\boldmath Amenability of groups and $G$-sets}\bigskip

\noindent Laurent Bartholdi\footnote{\'Ecole Normale Sup\'erieure, Paris, \email{laurent.bartholdi@gmail.com}\\
Supported by the ``@raction'' grant ANR-14-ACHN-0018-01}\bigskip
\def\abstract*#1{\noindent\textbf{Abstract. }#1}
\fi

\abstract*{This text surveys classical and recent results in the field of
  amenability of groups, from a combinatorial standpoint. It has
  served as the support of courses at the University of G\"ottingen
  and the \'Ecole Normale Sup\'erieure.
  The goals of the text are (1) to be as self-contained as possible, so as
  to serve as a good introduction for newcomers to the field; (2) to stress
  the use of combinatorial tools, in collaboration with functional
  analysis, probability etc., with discrete groups in focus; (3) to
  consider from the beginning the more general notion of amenable
  \emph{actions}; (4) to describe recent classes of examples, and in
  particular groups acting on Cantor sets and topological full groups.}

%%%%%%%%%%%%%%%%%%%%%%%%%%%%%%%%%%%%%%%%%%%%%%%%%%%%%%%%%%%%%%%%
\section{Introduction}
In 1929, \JvNeumann\ introduced in~\cite{vneumann:masses} the notion
of amenability of $G$-spaces.  Fundamentally, it considers the
following property of a group $G$ acting on a set $X$: \emph{The right
  $G$-set $X$ is amenable if there exists a $G$-invariant mean on the
  power set of $X$}, namely a function
$m\colon\{\text{subsets of }X\}\to[0,1]$ satisfying
$m(A\sqcup B)=m(A)+m(B)$ and $m(X)=1$ and $m(A g)=m(A)$ for all
$A,B\subseteq X$ and all $g\in G$.

Amenability may be thought of as a finiteness condition, since
non-empty finite $G$-sets are amenable with $m(A)=\#A/\#X$; it may
also be thought of as a fixed-point property: on a general $G$-set
there exists a $G$-invariant mean; on a compact $G$-set there exists a
$G$-invariant measure, and on a convex compact $G$-set there exists a
$G$-fixed point, see~\S\ref{sec:convex}; on a $G$-measure space there
exists a $G$-invariant measurable family of means on the orbits,
see~\S\ref{ss:equivrel}.

Amenability may be defined for other objects such as graphs and random
walks on sets. If $X$ is a $G$-set and $G$ is finitely generated, then
$X$ naturally has the structure of a graph, with one edge from $x$ to
$x s$ for every $x\in X,s\in S$. Amenability means, in the context of
graphs, that there are finite subsets of $X$ with arbitrarily small
boundary with respect to their size. In terms of random walks, it
means that there are finite subsets with arbitrarily small
connectivity between the set and its complement, and equivalently that
the return probability of the random walk decreases subexponentially
in time; see~\S\ref{sec:kesten}.

The definition may also be modified in another direction: rather than
considering group actions, we may consider equivalence relations, or
more generally groupoids. The case we concentrate on is an equivalence
relation with countable leaves on a standard measure space. The orbits
of a countable group acting measurably naturally give rise to such an
equivalence relation. This point of view is actually very valuable:
quite different groups (e.g.\ one with free subgroups, one without)
may generate the same equivalence relation; see~\S\ref{ss:equivrel}.

One of the virtues of the notion of amenability of $G$-sets is that
there is a wealth of equivalent definitions; depending on context, one
definition may be easier than another to check, and another may be
more useful. In summary, the following will be shown, in the text, to
be equivalent for a $G$-set $X$:
\begin{itemize}
\item $X$ is amenable; i.e.\ there is a $G$-invariant mean on subsets
  of $X$;
\item There is a $G$-invariant normalized positive functional in
  $\ell^\infty(X)^*$, see Corollary~\ref{cor:vneumann};
\item For every bounded functions $h_i$ on $X$ and $g_i\in G$ the
  function $\sum _i(1-g_i)$ is non-negative somewhere on $X$, see
  Theorem~\ref{thm:dixmier};
\item For every finite subset $S\subseteq G$ and every $\epsilon>0$
  there exists a finite subset $F\subseteq X$ with
  $\#(F S\setminus F)<\epsilon\#F$, see Theorem~\ref{thm:folneramen}(5);
\item For every finite subset $S\subseteq G$, every $\epsilon>0$ and
  every $p\in[1,\infty)$ there exists a positive function
  $\phi\in\ell^p(X)$ with $\|\phi s-\phi\|<\epsilon\|\phi\|$ for all
  $s\in S$, see Theorem~\ref{thm:folneramen}(4);

\item There does not exist a ``paradoxical decomposition'' of $X$,
  namely
  $X=Z_1\sqcup\cdots\sqcup Z_m=Z_{m+1}\sqcup\cdots\sqcup Z_{m+n}=Z_1
  g_1\sqcup\cdots\sqcup Z_{m+n}g_{m+n}$ for some $Z_i\subset X$ and
  $g_i\in G$, see Theorem~\ref{thm:pd}(2);
\item There does not exist a map $\phi\colon X\righttoleftarrow$ and a
  finite subset $S\subseteq G$ with $\#\phi^{-1}(x)=2$ and
  $\phi(x)\in x S$ for all $x\in X$, see Theorem~\ref{thm:pd}(4);
\item There does not exist a free action of a non-amenable group $H$
  on $X$ with the property that for every $h\in H$ there is a finite
  subset $S\subseteq G$ with $x h\in x S$ for all $x\in X$, see
  Theorem~\ref{thm:whyte}(3);
\item Every convex compact set equipped with a $G$-equivariant map
  from $X$ admits a fixed point, see Theorem~\ref{thm:fixedpoint};
\item Every compact set equipped with a $G$-equivariant map from $X$
  admits an invariant measure, see Theorem~\ref{thm:fixedmeasure};
\item The isoperimetric constant (Definition~\ref{defn:isoperimetric})
  of every non-degenerate $G$-driven random walk on $X$ vanishes, see
  Theorem~\ref{thm:kesten}(2);
\item The spectral radius (Definition~\ref{defn:isoperimetric}) of
  every non-degenerate $G$-driven random walk on $X$ is equal to $1$,
  see Theorem~\ref{thm:kesten}(3).
\end{itemize}

Amenability has been given particular attention for groups themselves,
seen as $G$-sets under right multiplication; see the next section. We
stress that many results that exclusively concern groups (e.g., the
recent proofs that topological full groups are amenable) are actually
proven using amenable $G$-sets in a fundamental manner. The reason is
that a group is amenable if and only if it acts on an amenable $G$-set
with amenable point stabilizers, see
Proposition~\ref{prop:stabilizers}.

Quotients of amenable $G$-sets are again amenable; but sub-$G$-sets of
amenable $G$-sets need not be amenable. A stronger notion will be
developed in~\S\ref{ss:extensiveamen}, that of \emph{extensively
  amenable} $G$-sets. It has the fundamental property that, if
$\pi\colon X\twoheadrightarrow Y$ is a $G$-equivariant map between
$G$-sets, then $X$ is extensively amenable if and only if both $Y$ and
all $\pi^{-1}(y)$ are extensively amenable, the latter for the action
of the stabilizer $G_y$.

We detail slightly Reiter's characterization of amenability given
above: the space $\ell^1(G)$ of summable functions on $G$ is a Banach
algebra under convolution, and $\ell^1(X)$ is a Banach
$\ell^1(G)$-module. We denote by $\varpi(\ell^1G)$ and
$\varpi(\ell^1X)$ respectively the ideal and submodule of functions
with $0$ sum, and by $\ell^1_+(G)$ and $\ell^1_+(X)$ the cones of
positive elements.

Then $X_G$ is \emph{amenable} if and only if for every $\epsilon>0$
and every $g\in\varpi(\ell^1G)$ there exists $f\in\ell^1_+(X)$ with
$\|f g\|<\epsilon\|f\|$, see Proposition~\ref{prop:Ramenable}.

The quantifiers may be exchanged; we call $X_G$ \emph{laminable} if
for every $\epsilon>0$ and every $f\in\varpi(\ell^1X)$ there exists
$g\in\ell^1_+(G)$ with $\|f g\|<\epsilon\|g\|$, see
Theorem~\ref{thm:Lamenable}. It has the consequence that there
exists a measure $\mu$ on $G$ such that every $\mu$-harmonic function
on $X$ is constant.

In case $X=G_G$, these definitions are equivalent, but for $G$-sets the
properties of being amenable or Liouville are in general position.

\begin{figure}
  \centering\begin{tikzpicture}
    \node[red] (1) at (0,0) {$1$};
    \node[red] (2) at (0.4,0.3) {$\Z/2$};
    \node[anchor=west,red] (Z) at (1.1,0.9) {$\Z$};
    \draw (-0.3,0.9) node [below right] {Finite} rectangle (1.05,-0.2);
    
    \node[anchor=west,red] (Z2) at (1.1,0.3) {$\Z^2$};
    \node[anchor=west,rotate=-20] (abelian) at (2,1) {(virt.) abelian};
    \node[anchor=west,rotate=-20] (nilpotent) at (3,1) {(virt.) nilpotent};
    \node[anchor=west,rotate=-20] (pc) at (4,1) {(virt.) polycyclic};
    \node[anchor=west,rotate=-20] (sol) at (5,1) {(virt.) soluble};
    \draw (-0.36,-0.26) rectangle (10,1.1) node [below left] {Elementary amenable};
    \node[anchor=west] (poly) at (1.7,1.4) {polynomial growth};
    \node[anchor=west] (subexp) at (2.6,1.8) {subexponential growth};
    \node[anchor=west,red] (grig) at (4.5,1.4) {Grigorchuk's group};
    \draw (-0.42,-0.32) -- (-0.42,1.16) -- ++(2.1,0) -- ++(0,1.04) -- (10.06,2.2) node [below left] {Subexponentially amenable} -- (10.06,-0.32) -- cycle;
    \draw[very thick] (-0.48,-0.38) -- (-0.48,1.22) -- ++(2.1,0) -- ++(0,1.4) -- (10.12,7) -- (10.12,-0.38) -- cycle;
    \draw[very thick] (10.12,7) -- (-0.48,7) -- (-0.48,1.22);
    \node[anchor=west,red] (F2) at (1.1,2.0) {$F_2$};
    \node[anchor=east,rotate=-40] at (1.5,2.5) {(virt.) free};
    \node[anchor=east,rotate=-40] at (1.5,3.2) {small cancellation};
    \node[anchor=west,red] (Sg) at (1.1,3.8) {$\pi_1(\Sigma_g)$};
    \node[anchor=east,rotate=-40] at (1.5,4.2) {word-hyperbolic};
    \node[red] at (1.6,6.1) {non-virt.-soluble matrix groups};
    \draw (-0.42,1.28) -- ++(1.98,0) -- ++(0,1.4) -- (4.3,6.94) -- (-0.42,6.94) node [below right] {Groups with free subgroups} -- cycle;
    \node[rotate=20,red] at (3.5,4.0) {Frankenstein's group};
    \node[rotate=20,red] at (4.0,4.7) {$B(n,m)$ for $m\gg1$};
    \node[red] at (2.9,2.6) {Basilica group};
    \node[red] at (7.1,2.6) {Topological full groups of minimal $\Z$-actions};
    \node[anchor=east] at (10,4.0) {Amenable groups};
    \node[rotate=20,anchor=east] at (6.6,6.9) {Non-amenable groups};
    \node[anchor=north east] at (10.12,7.0) [label=below:{``HC SVNT DRACONES''}] {\includegraphics[width=34mm]{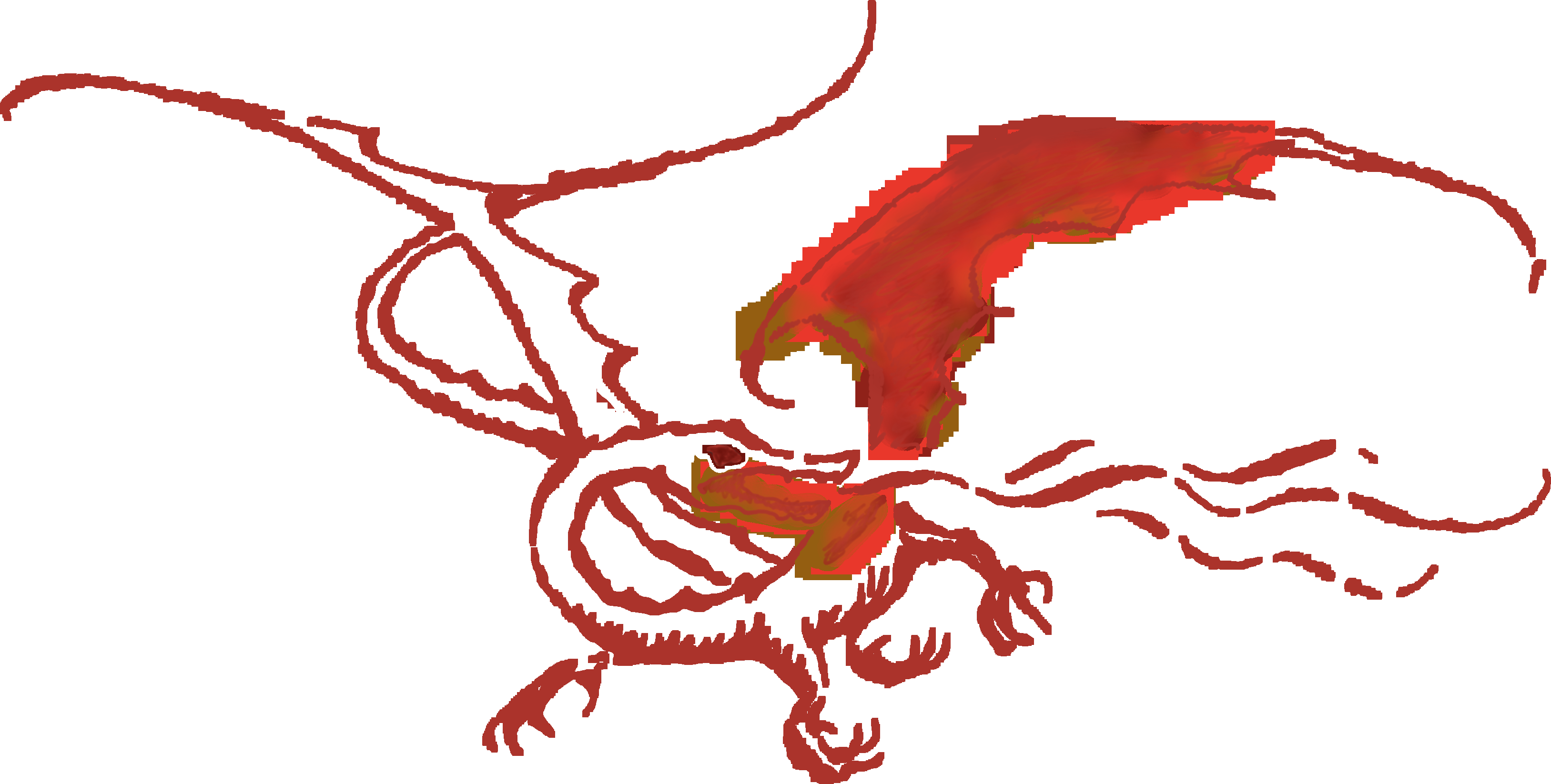}};
  \end{tikzpicture}
  \caption{The universe of groups}\label{fig:universe}
\end{figure}

\subsection{Amenability of groups}
\JvNeumann's purpose, in introducing amenability of $G$-spaces, was to
understand better the group-theoretical nature of the
Hausdorff-Banach-Tarski paradox. This paradox, due to Banach and
Tarski~\cite{banach-tarski:pd} and based on Hausdorff's
work~\cite{hausdorff:pd}, states that a solid ball can be decomposed
into five pieces, which when appropriately rotated and translated can
be reassembled in two balls of same size as the original one. It could
have been felt as a death blow to measure theory; it is now resolved
by saying that the pieces are not measurable.

A group is called \emph{amenable} if all non-empty $G$-sets are
amenable; and it suffices to check that the regular $G$-set $G_G$ is
amenable, see Corollary~\ref{cor:regamen}.

Using the ``paradoxical decompositions'' criterion, it is easy to see
that the free group $F_2=\langle a,b\mid\rangle$ is not amenable: we
exhibit a partition
$F_2=G_1\sqcup\cdots\sqcup G_m\sqcup H_1\sqcup\cdots\sqcup H_n$ and
elements $g_1,\dots,g_m,h_1,\dots,h_n$ with
$F_2=G_1g_1\sqcup\cdots\sqcup G_m g_m=H_1h_1\sqcup\cdots\sqcup H_n
h_n$ as follows. Set
\begin{align*}
  G_1 & =\{\text{words whose reduced form ends in }a\}\cup\{1,a^{-1},a^{-2},\dots\},\\
  G_2 &= \{\text{words whose reduced form ends in }a^{-1}\}\setminus\{a^{-1},a^{-2},\dots\},\\
  H_1 &= \{\text{words whose reduced form ends in }b\},\\
  H_2 &= \{\text{words whose reduced form ends in }b^{-1}\};
\end{align*}
then $F_2=G_1\sqcup G_2\sqcup H_1\sqcup H_2=G_1\sqcup G_2 a=H_1\sqcup H_2 b$.

The group of rotations $\SO_3(\R)$ contains a free subgroup $F_2$, and
even one that acts freely on the sphere $S^2$; so its orbits are all
isomorphic to $F_2$. Choose a \emph{transversal}: a subset $T\subset
S^2$ intersecting every $F_2$-orbit in exactly one point. Consider
then the sphere partition $S^2=T G_1\sqcup T G_2\sqcup T H_1\sqcup
T H_2=T G_1\sqcup T G_1 a=T H_1\sqcup T H_2 b$; this is the basis for the
paradoxical Hausdorff-Banach-Tarski decomposition.

\JvNeumann\ also noted that the class of amenable groups is closed
under the following operations $(*)$: subgroups, quotients,
extensions, and directed unions. It contains all finite and abelian
groups. More generally, a criterion due to F\o lner,
Theorem~\ref{thm:folneramen}(5), shows that all groups in which every
finite subset generates a group of subexponential word
growth\footnote{Namely, in which the number of elements expressible as
  a product of at most $n$ generators grows subexponentially in $n$.}
is amenable. One may therefore define the following classes:
\begin{align*}
  EG &= \text{the smallest class containing finite and abelian groups and closed under }(*),\\
  SG &= \text{the smallest class containing groups of subexponential growth and closed under }(*),\\
  AG &= \text{the class of amenable groups},\\
  NF &= \text{the class of groups with no free subgroups};
\end{align*}
and concrete examples show that all inclusions
\[EG\subsetneqq SG\subsetneqq AG\subsetneqq NF
\]
are strict: the ``Grigorchuk group'' $\GG$ for the first inclusion,
see~\S\ref{ss:interm}; the group of ``bounded tree automorphisms'' for
the second inclusion, see~\S\ref{ss:subexpamen}; and the
``Frankenstein group'' for the last one, see~\S\ref{ss:fgfg}.

This text puts a strong emphasis on examples; they are essential to
obtain a (however coarse) picture of the universe of discrete groups,
see Figure~\ref{fig:universe}.  A fairly general framework contains a
large number of important constructions: groups acting on Cantor
sets. On the one hand, if we choose $X=\Alphabet^\N$ as model for the
Cantor set, we have examples of groups defined by automatic
transformations of $X$, namely by actions of invertible
transducers. On the other hand, we may fix a ``manageable'' group $H$
acting on $X$, and consider the group of self-homeomorphisms of $X$
that are piecewise $H$.

Examples of the first kind may be constructed via their
recursively-defined actions on $X$. The Grigorchuk group $\GG$ is the
group acting on $\{0,1\}^\N$ and generated by four elements $a,b,c,d$
defined by
\begin{xalignat*}{2}
  a(x_0x_1\dots)&= (1-x_0)x_1\dots,& b(x_0x_1\dots)&=\begin{cases}x_0\,a(x_1\dots) & \text{ if }x_0=0,\\ x_0\,c(x_1\dots) & \text{ if }x_0=1,\end{cases}\\
  c(x_0x_1\dots)&=\begin{cases}x_0\,a(x_1\dots) & \text{ if }x_0=0,\\ x_0\,d(x_1\dots) & \text{ if }x_0=1,\end{cases}&
  d(x_0x_1\dots)&=\begin{cases}x_0x_1\dots & \text{ if }x_0=0,\\ x_0\,b(x_1\dots) & \text{ if }x_0=1.\end{cases}
\end{xalignat*}
The Grigorchuk group gained prominence in group theory for being a
finitely generated infinite torsion group, and for having intermediate
word-growth between polynomial and exponential,
see~\S\ref{ss:interm}. An amenable group that does not belong to the
class $SG$ is the ``Basilica group'' $\BG$, generated by two elements
$a,b$ acting recursively on $\{0,1\}^\N$ by
\begin{xalignat*}{2}
  a(x_1x_2\dots)&=\begin{cases}1x_2\dots & \text{ if }x_1=0,\\ 0\,b(x_2\dots) & \text{ if }x_1=1,\end{cases}&
  b(x_1x_2\dots)&=\begin{cases}0x_2\dots & \text{ if }x_1=0,\\ 1\,a(x_2\dots) & \text{ if }x_1=1.\end{cases}
\end{xalignat*}
The Basilica group is a subgroup of the group of bounded tree
automorphisms, whose amenability will be proven
in~\S\ref{ss:subexpamen}.

These groups are \emph{residually finite}: the action on $\{0,1\}^\N$
is the limit of actions on the finite sets $\{0,1\}^n$ as
$n\to\infty$, so that the groups may be arbitrarily well approximated
by their finite quotients. More conceptually, the actions of $\GG$ and
$\BG$ on $\{0,1\}^\N$ induce actions on the clopens of
$\{0,1\}^\infty$, and every clopen has a finite orbit, giving rise to
a finite quotient acting by permutation on the orbit.

Examples of the second kind include the ``Frankenstein'' group
mentioned above, which is a non-amenable group acting on the circle by
piecewise projective transformations, and ``topological full groups''
of a minimal action of $H=\Z$ on a Cantor set; for example, let
$\sigma\colon 0\mapsto 01,1\mapsto 0$ be the Fibonacci substitution,
and consider $H=\langle S\rangle$ the two-sided shift on the subset
$X=\overline{S^n(\sigma^\infty(0))}\subset\{0,1\}^\Z$. Let $G$ be the
group of piecewise-$H$ homeomorphisms of $X$. Then $G'$ is an example
of a simple, infinite, finitely generated, amenable group.

These groups' actions on the Cantor set exhibit behaviours at the
exact opposite of $\GG$ and $\BG$: the actions are \emph{expansive}:
the orbit of a clopen may be used to separate points in
$X$. Topological full groups shall be used to produce examples of
finitely generated, infinite, amenable \emph{simple} groups.

Finally, we consider in~\S\ref{ss:linear} the adaptation of
amenability to a linear setting: on the one hand, a natural notion of
amenability of $\mathscr A$-modules for an associative algebra
$\mathscr A$; and, on the other hand, a characterization of
amenability by cellular automata.

\subsection{Why this text?}
After \JvNeumann's initial work in the late 1920's, amenability of
groups has developed at great speed in the 1960's, and then remained
mostly dormant till the late 2000's, when a variety of new techniques
and examples appeared. It seems now to be a good time to reread and
rewrite the fundamentals of the field with these developments in mind.

I have done my best to include all the material I found digestible,
and to express it in the ``best'' generality, namely the maximum
generality that does not come at the price of arcane definitions or
notation. Whenever possible, I included complete proofs of the
results, so that the text may be used for a course as well as for a
reference.

I have also striven to follow von Neumann's use of $G$-sets rather
than groups; it seems to me that clarity is gained by separating the
set $X$ (with a right $G$-action) from the group $G$.

I have also, consciously, avoided any mention of amenability for
topological groups. This notion is well developed for second-countable
locally compact groups, see
e.g.~\cite{bekka-h-v:t,reiter:harmonicanalysis}, so I should justify
its exclusion. I have felt that either the results stated for discrete
groups extend more-or-less obviously to topological groups (and then
there is no point in loading the notation with topology), or they
don't extend, and then the additional effort would be a distraction
from the main topic.

I have also devoted a fairly large part of the text to examples; and,
in particular, to groups defined by their action on a Cantor set, see
the previous section. I have included exercises, with ranking *=just
check the definitions, **=requires some thought, ***=probably very
difficult. Problems are like ***-exercises, but are questions rather
than statements.

I have consulted a large number of sources, and did my best to
attribute to their original authors all results and fragments of proof
that I have used. Apart from articles, these sources include notes
from a course given by Nicolas Monod at EPFL in 2007 and from a course
given by Anna Erschler and myself at ENS in 2016, and books in
preparation by Kate Juschenko and G\'abor Pete. I have also made
abundant use of~\cite{greenleaf:im}, \cite{ceccherini-g-h:amen},
and~\cite{bekka-h-v:t}*{Chapter~5 and Appendix~G}.

I benefited from useful conversations with and remarks from Yves de
Cornulier, Anna Erschler, Vadim Kaimanovich, Peter Kropholler, Yash
Lodha, Nicolas Matte Bon, Nicolas Monod, Volodya Nekrashevych and
Romain Tessera. I thank all of them heartily.

\subsection{Why not this text?}
For lack of space, I have left out much material that I wanted to
include. First and foremost, I have not touched at all at the boundary
initiated by Furstenberg; the ``size'' of its boundary is an
indication of the non-amenability of a $G$-set.

I have also left out much material related to quantitative invariants
--- drift, entropy, on- and off-diagonal probabilities of return of
random walks, and their relation to other invariants such as growth
and best-case distortion of embeddings in convex metric spaces such as
Hilbert space. This topic is evolving rapidly, and I fear that my
rendition would be immediately obsolete.

I would have preferred to write \S\ref{ss:equivrel} in terms of
groupoids, especially since groupoids appear anyways
in~\S\ref{ss:topfull}. In the end, I have opted for directness at the
cost of generality.

Finally, I put as much effort as I could into including applications
and examples in the text; but I omitted the most important ones, e.g.\
Margulis's work on lattices in semisimple Lie groups and percolation
on graphs, feeling they would take us too far adrift.

\subsection{Notation}
We mainly use standard mathematical notation. We try to keep Latin
capitals for sets, Latin lowercase for elements, and Greek for maps. A
subset inclusion $A\subset B$ is strict, while $A\subseteq B$ means
that $A$ could equal $B$. The difference and symmetric difference of
two sets $A,B$ are respectively written $A\setminus B$ and
$A\triangle B$. We denote by $\mathfrak P(X)$ the power set of $X$,
and by $\mathfrak P_f(X)$ the collection of finite subsets of $X$.
Since it appears quite often in the context of amenability, we use
$A\Subset B$ (``compactly contained'') to mean that $A$ is a finite
subset of $B$.

We denote by $A^X$ the set of maps $X\to A$, and by $A^{(X)}$ or by
$\prod'_X A$ the restricted product of $A$, namely the set of
finitely-supported maps $X\to A$. Under the operation of symmetric
difference, $\mathfrak P(X)$ and $\mathfrak P_f(X)$ are respectively
isomorphic to $(\Z/2)^X$ and $(\Z/2)^{(X)}$.

We denote by $\Sym(X)$ the group of finitely-supported permutations of
a set $X$, and abbreviate $\Sym(n)=\Sym(\{1,\dots,n\})$. Groups and
permutations always act on the right, and we denote by
$X\looparrowleft G$ a set $X$ equipped with a right $G$-action.

We denote by $\mathbb 1_A$ the characteristic function of a set $A$,
and also by $\mathbb 1_{\mathcal P}$ the function that takes value $1$
when property $\mathcal P$ holds and $0$ otherwise.

Finally, we write $x\restrict S$ for various kinds of restriction of
the object $x$ to a set $S$.

%%%%%%%%%%%%%%%%%%%%%%%%%%%%%%%%%%%%%%%%%%%%%%%%%%%%%%%%%%%%%%%%
\newpage\section{Means and amenability}
\begin{definition}
  Let $X$ be a set. A \emph{mean} on $X$ is a function\footnote{By
    $\mathfrak P(X)$ we denote the power set of $X$, namely the set of its subsets.}
  $m\colon\mathfrak P(X)\to[0,1]$ satisfying
  \begin{align*}
    m(X)&=1,\\
    m(A\sqcup B)&=m(A)+m(B)\text{ for all disjoint }A,B\subseteq X.
  \end{align*}
  (This last property is often called \emph{finite additivity}, as
  opposed to the $\sigma$-additivity property enjoyed by measures, in
  which countable unions are allowed).
\end{definition}

It easily follows from the definition that $m(\emptyset)=0$; that
$m(A)\le m(B)$ if $A\subseteq B$; and that
$m(A_1\sqcup\dots\sqcup A_k)=m(A_1)+\dots+m(A_k)$ for pairwise
disjoint $A_1,\dots,A_k$.

We denote by $\mathscr M(X)$ the set of means on $X$, with the usual
topology on a set of functions; namely, a sequence
$m_n\in\mathscr M(X)$ converges to $m$ precisely if for every
$\epsilon>0$ and every finite collection $A_1,\dots,A_k\subseteq X$ we
have $|m_n(A_i)-m(A_i)|<\epsilon$ for all $i\in\{1,\dots,k\}$ and
all $n$ large enough.

Observe that $\mathscr M$ is a covariant functor: if $f\colon X\to Y$, then
we have a natural map $f_*\colon\mathscr M(X)\to\mathscr M(Y)$ given by
\[f_*(m)\colon B\mapsto m(f^{-1}(B))\qquad\text{for all }B\subseteq
Y.
\]
In particular, if a group $G$ acts on $X$, then it also acts on $\mathscr
M(X)$. For a right action $\cdot\colon X\times G\to X$, we have a
right action on $\mathscr M(X)$ given by $(m\cdot g)(A)=m(A\cdot
g^{-1})$ for all $A\subseteq X$.

\begin{definition}[von Neumann~\cite{vneumann:masses}]
  Let $G$ be a group and let $X\looparrowleft G$ be a set on which $G$
  acts. The $G$-set $X$ is \emph{amenable} if there is a $G$-fixed
  element in $\mathscr M(X)$.

  A group $G$ is \emph{amenable} if all non-empty right $G$-sets are
  amenable.
\end{definition}
In other words, the $G$-set $X$ is amenable if $\mathscr
M(X)^G\neq\emptyset$, namely if there exists a mean $m$ on $X$ such that
$m(A g)=m(A)$ for all $g\in G$ and all $A\subseteq X$.

\subsection{First examples}
\begin{proposition}
  Every finite, non-empty $G$-set is amenable. More generally, every
  $G$-set with a finite orbit is amenable.
\end{proposition}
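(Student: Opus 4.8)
The plan is to handle the two cases in the statement, with the finite case being an immediate special case of the second. For the general claim, suppose $X \looparrowleft G$ has a finite orbit $O = x_0 G$, with $\#O = n$. Then $O$ is itself a non-empty $G$-set, and the obvious candidate mean on $O$ is the normalized counting measure $m_O(A) = \#(A \cap O)/n$. First I would check that $m_O$ is a mean on $O$: it takes values in $[0,1]$, satisfies $m_O(O) = 1$, and is finitely additive because cardinality is. Next I would check $G$-invariance: since $G$ permutes $O$, for any $A \subseteq O$ and $g \in G$ we have $\#(A g \cap O) = \#(A g) = \#A = \#(A \cap O)$, using that right multiplication by $g$ is a bijection of $O$; hence $m_O(A g) = m_O(A)$. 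So $m_O \in \mathscr M(O)^G$.

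The remaining step is to transport this mean from the sub-$G$-set $O$ up to $X$. The inclusion $\iota \colon O \hookrightarrow X$ is not $G$-equivariant-friendly in the pushforward direction (we would need a map $X \to O$), so instead I would define the mean $m$ on $X$ directly by $m(A) = m_O(A \cap O) = \#(A \cap O)/n$ for all $A \subseteq X$. One checks $m(X) = \#O/n = 1$; finite additivity follows from $(A \sqcup B) \cap O = (A \cap O) \sqcup (B \cap O)$; and $G$-invariance follows from $(A g) \cap O = (A g) \cap (O g) = (A \cap O) g$ — here I use that $O$ is $G$-invariant, so $O g = O$ — combined with the invariance of $m_O$ established above. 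Thus $m \in \mathscr M(X)^G$, proving $X$ is amenable.

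For the first assertion, a finite non-empty $G$-set $X$ certainly has a finite orbit (every orbit is finite), so it is covered; alternatively one notes directly that $m(A) = \#A/\#X$ works, which is exactly the special case $O = X$ of the above.

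I do not expect any serious obstacle here — the proof is entirely a matter of verifying the mean axioms and the single algebraic identity $O g = O$ for an orbit. The only point requiring a moment's care is the direction of the group action in the invariance computation: because the action is on the right and $\mathscr M$ carries it via $(m \cdot g)(A) = m(A g^{-1})$, one must be consistent about whether one checks $m(A g) = m(A)$ or $m(A g^{-1}) = m(A)$, but since this must hold for all $g \in G$ the two are equivalent and no difficulty arises.
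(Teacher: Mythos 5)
Your proof is correct and takes essentially the same approach as the paper: you define $m(A) = \#(A\cap O)/\#O$ for a finite orbit $O$ and verify the mean axioms and $G$-invariance, which is exactly the paper's one-line construction, just spelled out in full detail.
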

Note that, trivially, the empty set is never amenable since a mean
requires $m(\emptyset)=0\neq1=m(X)$.
\begin{proof}
  Let $x G$ be a finite $G$-orbit in the $G$-set $X$. Then $m(A)\coloneqq
  \#(A\cap x G)/\#(x G)$ defines a $G$-invariant mean on $X$.
\end{proof}
In particular, finite groups are amenable.  We shall now see that,
although amenable groups abound, extra logical tools are necessary to
provide more examples.
\begin{proposition}\label{prop:Zamen}
  The infinite cyclic group $\Z$ is amenable.
\end{proposition}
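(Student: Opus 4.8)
The plan is to construct a $\Z$-invariant mean on the power set of $\Z$. Since $\Z$ is countable, I cannot simply average over orbits as in the finite case; the obstacle is that there is no uniform probability measure on $\Z$, so I must use a limiting procedure together with a compactness argument. The key idea is to exhibit \emph{almost-invariant} finite sets — the intervals $F_n = \{-n, -n+1, \dots, n\}$ — and observe that translating $F_n$ by $1$ changes it only at the two endpoints, so $\#(F_n \triangle (F_n + 1)) / \#F_n = 2/(2n+1) \to 0$.

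First I would define, for each $n$, the finitely additive probability measure $m_n(A) = \#(A \cap F_n)/\#F_n$ on $\mathfrak P(\Z)$. Each $m_n$ lies in $\mathscr M(\Z)$. Next I would invoke compactness of $\mathscr M(\Z)$: with the topology of pointwise convergence described in the text, $\mathscr M(\Z)$ is a closed subspace of the compact space $[0,1]^{\mathfrak P(\Z)}$ (by Tychonoff), hence compact. Therefore the sequence $(m_n)$ has a convergent subnet, with limit $m \in \mathscr M(\Z)$. Finally I would check $\Z$-invariance of $m$: for any fixed $A \subseteq \Z$ and the generator $1 \in \Z$, one has $|m_n(A+1) - m_n(A)| \le \#(F_n \triangle (F_n+1))/\#F_n \to 0$, so passing to the limit along the subnet gives $m(A+1) = m(A)$; since $1$ generates $\Z$, this yields $m(A+k) = m(A)$ for all $k \in \Z$, i.e.\ $m \in \mathscr M(\Z)^\Z$.

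The main obstacle — and the only genuinely non-elementary ingredient — is the compactness of $\mathscr M(\Z)$, which rests on Tychonoff's theorem (equivalently a weak-$*$ compactness / ultrafilter argument); this is where the ``extra logical tools'' foreshadowed in the text enter. One could alternatively phrase the limit directly via a non-principal ultrafilter $\mathcal U$ on $\N$, setting $m(A) = \lim_{\mathcal U} m_n(A)$, which makes the finite additivity and the vanishing of $|m_n(A+1)-m_n(A)|$ transparent and avoids explicitly naming subnets. Either way, the combinatorial heart is just the telescoping estimate on the symmetric difference of nested intervals under translation, which is the prototype of a F\o lner sequence and anticipates Theorem~\ref{thm:folneramen}.
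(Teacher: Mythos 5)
Your proof is correct and matches the paper's approach: the paper likewise averages over the intervals $\{1,\dots,n\}$ (a cosmetic difference from your symmetric intervals) and repairs the naive limit by replacing it with an ultrafilter limit $\lim_{\mathfrak F}$, which is exactly the alternative you describe in your final paragraph. The compactness-of-$\mathscr M(\Z)$/subnet phrasing and the ultrafilter phrasing are the same argument, and you correctly identify the F\o lner-type symmetric-difference estimate as the combinatorial core.
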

\begin{proof}[False proof]
  Define $m\in\mathscr M(\Z)$ by
  \[m(A)=\lim_{n\to\infty}\frac{\#(A\cap\{1,2,\dots,n\})}{n}.\]
  It is clear that $m(A)$ is contained in $[0,1]$, and the axioms of
  a mean are likewise easy to check. Finally, if $g$ denote the
  positive generator of $\Z$,
  \begin{align*}
    |m(A g)-m(A)|&=\lim_{n\to\infty}\frac{|\#(A g\cap\{1,2,\dots,n\})-\#(A\cap\{1,2,\dots,n\})|}{n}\\
    &=\lim_{n\to\infty}\frac{|\#(A\cap\{0,1,\dots,n-1\})-\#(A\cap\{1,2,\dots,n\})|}{n}\\
    &=\lim_{n\to\infty}\frac{\#(A\cap\{0,n\})}{n}=0.\tag*{\qedhere}
  \end{align*}
\end{proof}
The problem in this proof, of course, is that the limit need not
exist. Consider typically
\[A=\bigcup_{k\ge0}\{2^k+1,2^k+2,\dots,2^k+2^{k-1}\}=\{2,3,5,6,9,10,11,12,17,\dots\}.\]
The arguments of the ``limit'' above oscillate between $2/3$ and $1/2$.  To
correct this proof, we make use of a logical axiom:
\begin{definition}\label{defn:ultrafilter}
  Let $X$ be a set. A \emph{filter} is a family $\mathfrak F$ of
  subsets of $X$, such that
  \begin{enumerate}
  \item $X\in\mathfrak F$ and $\emptyset\not\in\mathfrak F$;
  \item if $A\in\mathfrak F$ and $B\supseteq A$ then $B\in\mathfrak
    F$;
  \item if $A,B\in\mathfrak F$ then $A\cap B\in\mathfrak F$.
  \end{enumerate}

  An \emph{ultrafilter} is a maximal filter (under inclusion). It
  therefore satisfies the extra condition
  \begin{enumerate}\setcounter{enumi}{3}
  \item if $A\subseteq X$, then either $A\in\mathfrak F$ or
    $X\setminus A\in\mathfrak F$.
  \end{enumerate}

  For every $x\in X$, there is a \emph{principal} ultrafilter
  $\mathfrak F_x=\{A\subseteq X\mid x\in A\}$.

  The set of ultrafilters on $X$ is called its \emph{Stone-\v Cech
    compactification} and is written $\beta X$. Its topology is
  defined by declaring open, for every $Y\subseteq X$, the collection
  $\{\mathfrak F\in \beta X\mid Y\in\mathfrak F\}\cong\beta Y$.
\end{definition}

Elements of a filter are thought of as ``large''. As a standard example,
consider the ``cofinite filter'' on $\N$,
\[\mathfrak F_c=\{A\subseteq\N\mid\N\setminus A\text{ is finite}\}.\]
Using this notion, the standard definition of convergence in analysis can
be phrased as follows: ``a sequence $(x_n)$ converges to $x$ if for every
$\epsilon>0$ we have $\{n\in\N\mid \epsilon>|x_n-x|\}\in\mathfrak
F_c$.'' More generally, for a filter $\mathfrak F$ on $\N$ we define
\emph{convergence with respect to $\mathfrak F$} by
\[\lim_{\mathfrak F}x_n=x\quad\text{if and only if}\quad\forall\epsilon>0:\,\{n\in\N\mid \epsilon>|x_n-x|\}\in\mathfrak F.\]

A standard axiom asserts the existence of \emph{non-principal}
ultrafilters on every infinite set. In fact, Zorn's lemma implies that
the cofinite filter $\mathfrak F_c$ is contained in an ultrafilter
$\mathfrak F$. Using this axiom, $\beta X$ is compact, and in fact is
universal in the sense that every map $X\to K$ with $K$ compact
Hausdorff factors uniquely through $\beta X$. We state this universal
property in the following useful form sometimes called ``stone
duality'':
\begin{lemma}\label{lem:stone}
  Let $X$ be a set. The map
  $f\mapsto(\mathfrak F\mapsto\lim_{\mathfrak F}f)$ is an isometry
  between the spaces $\ell^\infty(X)$ of bounded functions on $X$ and
  $\mathcal C(\beta X)$ of continuous functions on $\beta X$ with
  supremum norm.

  In particular, if $\mathfrak F$ is an ultrafilter on $\N$ then every
  bounded sequence converges with respect to $\mathfrak F$.
\end{lemma}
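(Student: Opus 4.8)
The plan is to establish the two parts in turn: first the universal property / Stone duality isometry, then deduce the consequence about convergence along ultrafilters. Throughout I will identify $\beta X$ with the set of ultrafilters on $X$ as in Definition~\ref{defn:ultrafilter}, and use that $\beta X$ is compact Hausdorff in the stated topology.

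First I would check that the map is well-defined: given a bounded function $f\in\ell^\infty(X)$ and an ultrafilter $\mathfrak F$ on $X$, the value $\lim_{\mathfrak F}f$ exists and is unique. Existence follows by a bisection argument: if $f$ takes values in a closed interval $I$, then for each of the two closed halves $I_0,I_1$ of $I$, exactly one of $f^{-1}(I_0)$, $f^{-1}(I_1)$ lies in $\mathfrak F$ (using properties (3) and (4) of an ultrafilter, since their union is in $\mathfrak F$ and any superset of a member is a member); iterating produces a nested sequence of closed intervals of lengths tending to $0$ whose $f$-preimages all lie in $\mathfrak F$, and their unique common point is $\lim_{\mathfrak F}f$. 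Uniqueness is immediate since two distinct candidate limits would force two disjoint preimages to both lie in $\mathfrak F$, contradicting property (1). Write $\hat f\colon\beta X\to\mathbb C$ (or $\mathbb R$) for the resulting function $\mathfrak F\mapsto\lim_{\mathfrak F}f$.

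Next I would verify that $\hat f$ is continuous and that $f\mapsto\hat f$ is an isometry onto $\mathcal C(\beta X)$. Linearity and multiplicativity of $f\mapsto\hat f$ follow from the usual filter-limit arithmetic (if both $\lim_{\mathfrak F}f$ and $\lim_{\mathfrak F}g$ exist then so does $\lim_{\mathfrak F}(f+g)$ and it is the sum, etc., using property (3) to intersect the relevant members). Continuity of $\hat f$: given $\mathfrak F_0$ and $\epsilon>0$, put $Y=f^{-1}(B_\epsilon(\lim_{\mathfrak F_0}f))$; then $Y\in\mathfrak F_0$, the basic open set $\{\mathfrak F:Y\in\mathfrak F\}$ contains $\mathfrak F_0$, and on it $\hat f$ stays within $\epsilon$ of its value. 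For the isometry: $\|\hat f\|_\infty\le\|f\|_\infty$ since every filter-limit of $f$ lies in the closure of the range of $f$; conversely, for any $x\in X$ the principal ultrafilter $\mathfrak F_x$ gives $\hat f(\mathfrak F_x)=f(x)$, so $\|\hat f\|_\infty\ge\|f\|_\infty$. Surjectivity onto $\mathcal C(\beta X)$ is the one point requiring a little work: the principal ultrafilters $\{\mathfrak F_x:x\in X\}$ are dense in $\beta X$ (any nonempty basic open set $\{\mathfrak F:Y\in\mathfrak F\}$ with $Y\neq\emptyset$ contains $\mathfrak F_y$ for $y\in Y$), so a continuous function $\varphi$ on $\beta X$ is determined by its restriction to them; setting $f(x)=\varphi(\mathfrak F_x)$ gives a bounded function, and one checks $\hat f=\varphi$ by density together with continuity of both sides, or alternatively invokes the Stone–Weierstrass theorem, since the image of $f\mapsto\hat f$ is a closed (by the isometry) subalgebra of $\mathcal C(\beta X)$ containing the constants and separating points of $\beta X$ (two distinct ultrafilters contain complementary sets $Y$, $X\setminus Y$, and $\widehat{\mathbb 1_Y}$ separates them). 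The main obstacle is exactly this surjectivity step — making the density argument or the Stone–Weierstrass application airtight — since everything else is routine filter bookkeeping.

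Finally, the ``in particular'' clause: if $\mathfrak F$ is an ultrafilter on $\N$ and $(x_n)$ is a bounded sequence, then $n\mapsto x_n$ is an element of $\ell^\infty(\N)$, so by the first part $\lim_{\mathfrak F}x_n$ exists (this is precisely the existence argument above specialized to $X=\N$). This completes the proof. \qed
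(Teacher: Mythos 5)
Your proposal is correct and follows essentially the same route as the paper: a bisection argument for existence of the filter limit, a direct check of continuity of the extension via basic open sets, and the inverse given by restriction to the (dense) copy of $X$ inside $\beta X$. You are in fact somewhat more thorough than the paper, which leaves the isometry and the verification that restriction is a two-sided inverse (your density / Stone--Weierstrass step) implicit.
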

\begin{proof}
  We first prove that if $f\colon X\to\C$ is bounded and $\mathfrak F$
  is an ultrafilter then it has a well-defined limit with respect to
  $\mathfrak F$.  Assume $f(x)\in [L_0,U_0]$ for all $x\in X$. For
  $i=0,1,\dots$ repeat the following.
\begin{enumerate}
  \item Set $M_i=(L_i+U_i)/2$.
  \item Define $A_i=\{x\in X\mid f(x)\in[L_i,M_i]\}$ and
    $B_i=\{x\in X\mid f(x)\in[M_i,U_i]\}$.
  \item By induction, $A_i\cup B_i\in\mathfrak F$; so either
    $A_i\in\mathfrak F$ or $B_i\in\mathfrak F$. In the former case,
    set $(L_{i+1},U_{i+1})=(L_i,M_i)$ while in the latter case set
    $(L_{i+1},U_{i+1})=(M_i,U_i)$.
  \end{enumerate}
  Then $(L_i)$ is an increasing sequence, $(U_i)$ is a decreasing
  sequence, and they both have the same limit; call that limit
  $f(\mathfrak F)$.

  We have extended $f$ to $\beta X$. Let us show that this extension
  is continuous at every $\mathfrak F$: keeping the notation from the
  previous paragraph, for every $\epsilon>0$ there is some $i$ with
  $U_i-L_i<\epsilon$; so
  $\{x\in X\mid\epsilon>|f(x)-f(\mathfrak F)|\}\supseteq A_i\cup
  B_i\in\mathfrak F$ and therefore $f(x)\to f(\mathfrak F)$ when
  $x\to\mathfrak F$.

  Finally the inverse map $\mathcal C(\beta X)\to\ell^\infty(X)$ is simply
  given by restriction to the discrete subspace $X\subseteq\beta X$.
\end{proof}

\begin{exercise}[*]
  Prove that the Stone-\v Cech compactification $\beta X$ is
  homeomorphic to the set of continuous \emph{algebra} homomorphisms
  $\ell^\infty(X)\to\C$, with the induced topology of
  $\ell^\infty(X)^*$.
\end{exercise}

\begin{exercise}[*]
  Let $\mathfrak F$ be an ultrafilter on $\N$. Prove
  $\lim_{\mathfrak F}(x_n+y_n)=\lim_{\mathfrak F}x_n+\lim_{\mathfrak
    F}y_n$ when these last two limits exist.
\end{exercise}

Using a non-principal ultrafilter $\mathfrak F$ on $\N$, we may
correct the ``proof'' that $\Z$ is amenable, by replacing `$\lim$' by
`$\lim_{\mathfrak F}$'; but in some sense we have done nothing except
shuffling axioms around. Indeed, an ultrafilter $\mathfrak F$ on $X$
is precisely the same thing as a $\{0,1\}$-valued mean on $X$: given
an ultrafilter $\mathfrak F$, we define a mean $m$ on $X$ by
\[m(A)=\begin{cases}
  0 & \text{ if }A\not\in \mathfrak F,\\
  1 & \text{ if }A\in \mathfrak F,\end{cases}
\]
and given a mean $m$ taking $\{0,1\}$ values we define a filter
$\mathfrak F=\{A\subseteq X\mid m(A)=1\}$; so the construction of
complicated means is as hard as the construction of complicated
filters.

\begin{proposition}\label{prop:fnnotamen}
  The free group $F_k$ is not amenable if $k\ge2$.
\end{proposition}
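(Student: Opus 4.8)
The plan is to argue directly from the definition of an invariant mean, using nothing more than finite additivity together with the combinatorics of reduced words; in particular I will not invoke paradoxical decompositions or closure properties, which have not yet been developed here. Since a group is amenable only if \emph{every} non-empty right $G$-set is amenable, it suffices to exhibit one non-amenable $F_k$-set, and I take the regular one: $F_k$ acting on itself by right multiplication. So I would assume, for contradiction, that $m\in\mathscr M(F_k)^{F_k}$, where $F_k=\langle a_1,\dots,a_k\mid\rangle$ with $k\ge2$.

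For each of the $2k$ letters $x\in\{a_1^{\pm1},\dots,a_k^{\pm1}\}$ let $W(x)$ be the set of non-trivial reduced words whose last letter is $x$, so that $F_k=\{1\}\sqcup\bigsqcup_x W(x)$. The key point — which I would verify carefully, since it is the one place a slip is possible — is that right multiplication by $a_i^{-1}$ sends $W(a_i)$ bijectively onto $F_k\setminus W(a_i^{-1})$: a reduced word ending in $a_i$ is precisely one of the form $u\,a_i$ with $u$ reduced and not ending in $a_i^{-1}$ (the empty word being allowed), and then $u\,a_i\cdot a_i^{-1}=u$, with $u$ ranging exactly over $\{1\}\cup\bigcup_{x\ne a_i^{-1}}W(x)=F_k\setminus W(a_i^{-1})$.

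Invariance of $m$ then gives, for each $i$,
\[
m\bigl(W(a_i)\bigr)=m\bigl(W(a_i)a_i^{-1}\bigr)=m\bigl(F_k\setminus W(a_i^{-1})\bigr)=1-m\bigl(W(a_i^{-1})\bigr),
\]
the last step using $m(F_k)=1$ and finite additivity; hence $m(W(a_i))+m(W(a_i^{-1}))=1$ for every $i$. Applying this for $i=1$ and $i=2$ — this is where $k\ge2$ is used — the four sets $W(a_1),W(a_1^{-1}),W(a_2),W(a_2^{-1})$ are pairwise disjoint subsets of $F_k$, so finite additivity forces the sum of their $m$-values to be at most $m(F_k)=1$; but that sum is $2$. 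This contradiction shows that the regular $F_k$-set admits no invariant mean, so $F_k$ is not amenable. The argument is essentially obstacle-free; the only thing requiring care is the reduced-word bookkeeping above, and in particular not forgetting the empty word, so that the image of $W(a_i)$ under $\cdot\,a_i^{-1}$ is all of $F_k\setminus W(a_i^{-1})$ and not merely a proper subset. (One could instead prove the case $k=2$ and deduce the rest from the surjection $F_k\twoheadrightarrow F_2$ via functoriality of $\mathscr M$, but the direct argument is no longer.)
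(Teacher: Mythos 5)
Your proof is correct, and it is essentially the same argument the paper gives: both work directly from right-invariance and finite additivity of a putative mean on the regular $F_k$-set, using sets of reduced words classified by their last letter. The paper packages the contradiction slightly differently --- with $A=W(x_1)\cup W(x_1^{-1})$ it derives $m(A)\ge\tfrac12$ from $F_k=A\cup Ax_1$ and $m(A)\le\tfrac13$ from $Ax_2^{-1}\sqcup A\sqcup Ax_2\subseteq F_k$ --- whereas your version gets the cleaner exact identity $m(W(a_i))+m(W(a_i^{-1}))=1$ and sums over $i=1,2$; this is a cosmetic variant, not a different route.
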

\begin{proof}
  We reason by contradiction, assuming that the regular right
  $F_k$-set $F_k\looparrowright F_k$ is amenable. Assume that there
  were an invariant mean $m\colon\mathfrak P(F_k)\to[0,1]$. In
  $F_k=\langle x_1,\dots,x_k\mid\rangle$, let $A$ denote those
  elements whose reduced form ends by a non-trivial (positive or
  negative) power of $x_1$. Then clearly $F_k=A\cup A x_1$, so
  \[1=m(F_k)\le m(A)+m(A x_1)=2m(A).\]
  On the other hand, $F_k\supseteq A x_2^{-1}\sqcup A\sqcup A x_2$, so
  \[1=m(F_k)\ge m(A x_2^{-1})+m(A)+m(A x_2)=3m(A).\]
  These statements imply $1/2\le m(A)\le1/3$, a contradiction.
\end{proof}

\subsection{Elementary properties}
\begin{proposition}\label{prop:quotientX}
  Let $G,H$ be groups, let $X\looparrowleft G$ and $Y\looparrowleft H$
  be respectively a $G$-set and an $H$-set, let
  $\phi\colon G\twoheadrightarrow H$ be a surjective homomorphism, and
  let $f\colon X\to Y$ be an equivariant map, namely satisfying
  $f(x g)=f(x)\phi(g)$ for all $x\in X,g\in G$. If $X$ is amenable,
  then $Y$ is amenable.
\end{proposition}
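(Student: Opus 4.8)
The plan is to push the invariant mean forward along $f$ and check that surjectivity of $\phi$ forces the pushforward to be $H$-invariant. Concretely: since $X$ is amenable, pick $m\in\mathscr M(X)^G$, and let $n=f_*(m)\in\mathscr M(Y)$ be its image under the functor $\mathscr M$ described above, so $n(B)=m(f^{-1}(B))$ for all $B\subseteq Y$. That $n$ is a mean is already guaranteed by functoriality of $\mathscr M$, so the only thing to prove is that $n$ is fixed by $H$.

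The key computation is a set-theoretic identity: for every $g\in G$ and every $B\subseteq Y$ one has $f^{-1}(B\phi(g))=f^{-1}(B)\,g$. This is immediate from equivariance: $x\in f^{-1}(B\phi(g))$ iff $f(x)\phi(g)^{-1}\in B$ iff $f(xg^{-1})\in B$ iff $xg^{-1}\in f^{-1}(B)$ iff $x\in f^{-1}(B)g$. Now take any $h\in H$; by surjectivity of $\phi$ choose $g\in G$ with $\phi(g)=h$. Then
\[
  n(Bh)=m\bigl(f^{-1}(B\phi(g))\bigr)=m\bigl(f^{-1}(B)\,g\bigr)=m\bigl(f^{-1}(B)\bigr)=n(B),
\]
where the third equality uses the $G$-invariance of $m$. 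Hence $n\in\mathscr M(Y)^H$, and $Y$ is amenable.

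I expect no genuine obstacle here: the statement is essentially the assertion that amenability transports along equivariant surjections of actions, and all the machinery — the covariant functor $\mathscr M$, the formula $f_*(m)(B)=m(f^{-1}(B))$, and the right action $(m\cdot g)(A)=m(A\cdot g^{-1})$ — has already been set up in the text. The only point requiring care is checking that the lift $g$ of $h$ drops out of the final equality; it does, because $m$ is invariant under \emph{all} of $G$, so the particular choice of preimage is irrelevant. (If one wanted, one could phrase this more slickly by noting that the $G$-action on $\mathscr M(X)$ and the $H$-action on $\mathscr M(Y)$ are intertwined by $f_*$ through $\phi$, i.e. $f_*(m\cdot g)=f_*(m)\cdot\phi(g)$, and a $G$-fixed point maps to a $\phi(G)=H$-fixed point.)
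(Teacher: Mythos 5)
Your proof is correct and takes the same route as the paper: push the invariant mean forward along $f_*$ and use surjectivity of $\phi$ to see that a $G$-fixed point of $\mathscr M(X)$ maps to an $H$-fixed point of $\mathscr M(Y)$. The paper states this in one line (essentially your parenthetical slick version, $f_*(\mathscr M(X)^G)=f_*(\mathscr M(X))^{\phi(G)}\subseteq\mathscr M(Y)^H$); your unwinding of the set-theoretic identity $f^{-1}(B\phi(g))=f^{-1}(B)g$ is the detail the paper leaves implicit.
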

\begin{proof}
  If $\mathscr M(X)^G\neq\emptyset$, then
  $f_*(\mathscr M(X)^G)=f_*(\mathscr M(X))^{\phi(G)}\subseteq\mathscr
  M(Y)^H$ so $\mathscr M(Y)^H\neq\emptyset$.
\end{proof}

\begin{corollary}[\cite{greenleaf:amenableactions}*{Corollary~3.2}]\label{cor:regamen}
  Let $G$ be a group. Then $G$ is amenable if and only if the right
  $G$-set $G_G$ is amenable.
\end{corollary}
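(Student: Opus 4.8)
The plan is to prove both directions. One direction is immediate from the definition: if $G$ is amenable, then \emph{all} non-empty right $G$-sets are amenable, and $G_G$ is one such, so $G_G$ is amenable. The content is in the converse: assuming $G_G$ carries a $G$-invariant mean, I must produce a $G$-invariant mean on an arbitrary non-empty $G$-set $X$.

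First I would fix a non-empty $G$-set $X$ and choose a point $x_0\in X$; more uniformly, choose (using the axiom of choice) one point $x_i$ in each $G$-orbit $O_i$ of $X$. For each orbit $O_i$, the orbit map $g\mapsto x_i g$ gives a surjective $G$-equivariant map $G_G\twoheadrightarrow O_i$ (equivariant in the sense of Proposition~\ref{prop:quotientX} with $\phi=\mathrm{id}_G$). By Proposition~\ref{prop:quotientX}, amenability of $G_G$ transfers to amenability of each orbit $O_i$, so each $O_i$ admits a $G$-invariant mean $m_i$.

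Next I would assemble a mean on all of $X$ from the orbit-wise means. The subtlety is that there may be uncountably many orbits, so I cannot simply average. Instead, for a subset $A\subseteq X$ consider the function $i\mapsto m_i(A\cap O_i)$ on the index set $I$ of orbits; this is a bounded function in $\ell^\infty(I)$. Fixing any mean $\mu\in\mathscr M(I)$ on the index set (which exists by Lemma~\ref{lem:stone}, e.g.\ $\lim_{\mathfrak F}$ along an ultrafilter, or even a principal one since we only need \emph{some} mean here — the index set plays no dynamical role), define $m(A)=\int_I m_i(A\cap O_i)\,d\mu(i)$, interpreting the integral via Stone duality (Lemma~\ref{lem:stone}: the bounded function on $I$ extends to $\mathcal C(\beta I)$ and $\mu$ evaluates it). Then $m(X)=\int_I 1\,d\mu=1$, finite additivity of $m$ follows from finite additivity of each $m_i$ together with additivity of integration against $\mu$ (Exercise on $\lim_{\mathfrak F}$), and $G$-invariance follows because each $m_i$ is $G$-invariant and the orbit decomposition is preserved by $G$ (the action permutes nothing here — each orbit is setwise fixed). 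Hence $m\in\mathscr M(X)^G$, so $X$ is amenable, and since $X$ was arbitrary, $G$ is amenable.

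The main obstacle is the passage from orbit-wise means to a global mean when there are many orbits: one must resist the temptation to "average" and instead realize that any finitely additive way of combining the $m_i(A\cap O_i)$ works, which is exactly what an arbitrary mean on the index set $I$ provides via Stone duality. (Alternatively, and perhaps more cleanly, one avoids the index set entirely: pull the mean on $G_G$ back along the map $X\to G_G$ dual to a choice of section — but there is no $G$-equivariant map $X\to G_G$ in general, so the orbit-by-orbit argument above is the honest route.) Everything else — the functoriality of $\mathscr M$, the verification of the mean axioms — is routine given Proposition~\ref{prop:quotientX} and Lemma~\ref{lem:stone}.
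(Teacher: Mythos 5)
Your proof is correct, but it is considerably more elaborate than necessary, and the paper's argument is a one-liner that you actually had within reach. You write that ``there is no $G$-equivariant map $X\to G_G$ in general, so the orbit-by-orbit argument above is the honest route,'' but this has the direction of the map and the direction of the transfer backwards: Proposition~\ref{prop:quotientX} pushes a mean \emph{forward} along an equivariant map from the amenable set, so what you need is a map $G_G\to X$, not $X\to G_G$. And that map is trivial to produce: pick any $x\in X$ and take the orbit map $g\mapsto x g$. Crucially, Proposition~\ref{prop:quotientX} does not require $f$ to be surjective (only $\phi$ must be onto, and here $\phi=\mathrm{id}_G$); the pushforward $f_*(m)$ is automatically a mean on the whole target since $f_*(m)(X)=m(f^{-1}(X))=m(G)=1$. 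You seem to have assumed surjectivity of $f$ was needed, which is why you restricted to a single orbit $O_i$ and then had to reassemble the orbit-wise means with an auxiliary mean on the index set $I$ via Stone duality. That machinery all works — your argument is sound — but it solves a problem that does not exist. The paper simply applies Proposition~\ref{prop:quotientX} to the single map $G_G\to X$, $g\mapsto x g$, and is done.
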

\begin{proof}
  Assume the right $G$-set $G\looparrowleft G$ is amenable. For every
  non-empty $G$-set $X$, choose $x\in X$; then $g\mapsto x g$ is a
  $G$-equivariant map $G\to X$, so $X$ is amenable by
  Proposition~\ref{prop:quotientX}. The converse is obvious.
\end{proof}

Thus amenability of a group is equivalent to amenability of the
right-regular action, and also to amenability of all actions. We give
another characterization:
\begin{proposition}\label{prop:freeaction}
  Let $G$ be a group. Then the following are equivalent:
  \begin{enumerate}
  \item $G$ is amenable;
  \item every non-empty $G$-set is amenable;
  \item $G$ admits an amenable free action.
  \end{enumerate}
\end{proposition}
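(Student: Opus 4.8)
The plan is to prove the cycle $(1)\Rightarrow(2)\Rightarrow(3)\Rightarrow(1)$. The implication $(1)\Rightarrow(2)$ is immediate from the definition of an amenable group. The implication $(2)\Rightarrow(3)$ requires producing \emph{some} non-empty free $G$-set; the standard choice is the regular action $G\looparrowleft G$, which is free, and which is amenable by $(2)$ (or directly by Corollary~\ref{cor:regamen}). So the only implication with content is $(3)\Rightarrow(1)$: given an amenable free action $X\looparrowleft G$, deduce that $G_G$ is amenable, hence $G$ is amenable.

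For $(3)\Rightarrow(1)$, the key point is that freeness lets us transport a mean on $X$ back to a mean on $G$. Since the action is free, $X$ decomposes as a disjoint union of orbits, each orbit being $G$-equivariantly isomorphic to $G_G$; fix a transversal $T\subseteq X$ meeting each orbit in exactly one point, so that the map $T\times G\to X$, $(t,g)\mapsto t\cdot g$, is a bijection. Let $m\in\mathscr M(X)^G$. For a subset $A\subseteq G$ I would like to define $\bar m(A) := m(T\cdot A)$ where $T\cdot A=\{t\cdot a: t\in T, a\in A\}$; because the decomposition $X=\bigsqcup_{t\in T} t\cdot G$ is into orbits and the action is free, $T\cdot A$ and $T\cdot B$ are disjoint whenever $A,B\subseteq G$ are disjoint, and $T\cdot G=X$, so $\bar m$ satisfies the mean axioms. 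Finite additivity and normalization are then routine. For $G$-invariance: $(T\cdot A)\cdot g = T\cdot(Ag)$, so $\bar m(Ag)=m(T\cdot(Ag))=m((T\cdot A)g)=m(T\cdot A)=\bar m(A)$ using invariance of $m$. Hence $\bar m\in\mathscr M(G_G)^G$, so $G_G$ is amenable, and $G$ is amenable by Corollary~\ref{cor:regamen}.

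The main obstacle — though a mild one — is making the transversal argument clean: one must use the axiom of choice to pick $T$, and one must be careful that the assignment $A\mapsto T\cdot A$ genuinely respects disjointness and the $G$-action, which is exactly where freeness of the action is used (without freeness, $T\cdot A$ and $T\cdot B$ could overlap even for disjoint $A,B$, and the formula would fail). An alternative, essentially equivalent, route avoids choosing $T$ explicitly: pick any $x$ in any orbit, use the equivariant injection $G\to X$, $g\mapsto x\cdot g$, and restrict $m$ to the image — but since $m$ is only finitely additive this restriction must be set up as a pushforward along a retraction $X\to x\cdot G$, which again quietly needs a transversal on the remaining orbits. I would present the transversal version as the cleanest, and remark that $(2)\Rightarrow(3)$ uses the regular action while $(3)\Rightarrow(1)$ is where all the work lies.
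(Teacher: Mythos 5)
Your proof is correct and follows the same route as the paper: reduce to $(3)\Rightarrow(1)$ via Corollary~\ref{cor:regamen}, choose a transversal $T$ so that $X\cong T\times G$ as $G$-sets, and push the invariant mean $m$ on $X$ forward to $m'(A)=m(T\times A)$ on $G$. Your remarks on where freeness and choice enter, and on why the retraction variant is no simpler, are accurate but not needed beyond the clean transversal argument you already give.
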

\begin{proof}
  In view of the previous corollary, it suffices to prove
  $(3)\Rightarrow(1)$. Let $X$ be a free $G$-set, and choose a
  $G$-isomorphism $X\cong T\times G$. Let
  $m\colon\mathfrak P(X)\to[0,1]$ be a $G$-invariant mean. Define a
  mean $m'$ on $G$ by $m'(A)=m(T\times A)$, and check that $m'$ is
  $G$-invariant.
\end{proof}

\begin{exercise}[*]
  Let $X,Y$ be $G$-sets. Then
  \begin{enumerate}
  \item $X\sqcup Y$ is amenable if and only if $X$ or $Y$ is amenable;
  \item $X\times Y$ is amenable if and only if $X$ and $Y$ are amenable.
  \end{enumerate}
\end{exercise}

Proposition~\ref{prop:quotientX} says that quotients of amenable
$G$-sets are amenable. Note however that subsets of amenable $G$-sets
need not be amenable; the empty set being the extreme example.
See~\S\ref{ss:extensiveamen} for a notion of amenability better suited
to subsets and extensions of $G$-sets.

\begin{definition}[Wreath product]
  We introduce a construction of groups that serve as important
  examples. Let $A,G$ be groups and let $X$ be a $G$-set. Their
  \emph{(restricted) wreath product} is
  \begin{equation}\label{eq:wreath}
    A\wr_X G\coloneqq A^{(X)}\rtimes G,
  \end{equation}
  the semidirect product of the group of finitely-supported maps
  $X\to A$ with $G$, under the action of $G$ at the source. Elements
  of $A\wr_X G$ may be written as $(f,g)$ with $f\colon X\to A$ and
  $g\in G$; they multiply by
  $(f,g)\cdot(f',g')=(f'\cdot(f' g^{-1}),g g')$ with
  $(f' g^{-1})(x)=f'(x g)$.
\end{definition}
In case $G$ acts faithfully on $X$, elements of $A\wr_X G$ may be
thought of as ``decorated permutations'': permutations, say $\sigma$
represented by a diagram with vertex set $X$ and an arrow from $x$ to
$\sigma(x)$, and with a label in $A$ on each arrow in such a manner
that only finitely many labels are non-trivial. Decorated permutations
are composed by concatenating their arrows and multiplying their
labels.

The wreath product is associative, in the sense that if $A,G,H$ are
groups, $X$ is a $G$-set and $Y$ is an $H$-set, then $G\wr_Y H$
naturally acts on $X\times Y$ and
$A\wr_{X\times Y}(G\wr_Y H)=(A\wr_X G)\wr_Y H$.

On the other hand, for groups $A,G$ we write `$A\wr G$' for the wreath
product $A\wr_G G$ with regular right action of $G$ on itself, and
that operation is \emph{not} associative.

\begin{definition}[Tree automorphisms]\label{defn:treeaut}
  For a finite set $\Alphabet$, consider the set
  $X\coloneqq\Alphabet^*$ of words over $\Alphabet$. This set is
  naturally the vertex set of a rooted tree $\mathcal T$; the root is
  the empty word, and there is an edge between $x_1\cdots x_n$ and
  $x_1\cdots x_n x_{n+1}$ for all $x_i\in\Alphabet$. The space
  $\Alphabet^\N$ corresponds to infinite paths in $\mathcal T$, and
  thus naturally describes the boundary of $\mathcal T$.

  Let $G$ be the group of graph automorphisms of $\mathcal T$: maps
  $\Alphabet^*\righttoleftarrow$ that preserve the edge set. Then
  there is a natural map $\pi\colon G\to\Sym(\Alphabet)$ defined by
  restricting the action of $G$ to the neighbours of the root; and
  $\ker(\pi)$ acts on the $\#\Alphabet$ disjoint trees hanging from
  the root, so is isomorphic to $G^\Alphabet$. We therefore have a
  natural isomorphism
  \begin{equation}\label{eq:treerec}
    \Phi\colon G\longrightarrow G\wr_\Alphabet\Sym(\Alphabet).
  \end{equation}

  A subgroup $H\le G$ is called \emph{self-similar} if the
  isomorphism~\eqref{eq:treerec} restricts to a homomorphism
  $\Phi\colon H\to H\wr_\Alphabet\Sym(\Alphabet)$. In that case,
  elements of $H$ may be defined recursively in terms of their image
  under $\Phi$, and conversely such a recursive description defines
  uniquely an action on $\mathcal T$.
\end{definition}

The Grigorchuk group $\GG$ (see~\S\ref{ss:interm} or the Introduction)
acts faithfully on the binary rooted tree $\mathcal T_2$, and as such
is a subgroup of the automorphism group of $\mathcal T_2$. It is
self-similar, and the generators $\{a,b,c,d\}$ of $\GG$ may be written
using decorated permutations as follows:
\[a\mapsto\tikz[baseline=-2ex]{\draw[->] (0,0) -- (0.5,-0.5); \draw[->](0.5,0) -- (0,-0.5);},\qquad
b\mapsto\tikz[baseline=-2ex]{\draw[->] (0,0) -- node[right=-2pt] {\small$a$} (0,-0.5); \draw[->](0.5,0) -- node[right=-2pt] {\small$c$} (0.5,-0.5);},\qquad
c\mapsto\tikz[baseline=-2ex]{\draw[->] (0,0) -- node[right=-2pt] {\small$a$} (0,-0.5); \draw[->](0.5,0) -- node[right=-2pt] {\small$d$} (0.5,-0.5);},\qquad
d\mapsto\tikz[baseline=-2ex]{\draw[->] (0,0) -- (0,-0.5); \draw[->](0.5,0) -- node[right=-2pt] {\small$b$} (0.5,-0.5);}.
\]

\begin{example}[The ``lamplighter group'']\label{ex:lamplighter}
  Consider $G=\Z$ acting on itself by translation, and $A=\Z/2$. The
  wreath product $W=A\wr G$ is called the ``lamplighter group''. The
  terminology is justified as follows: consider an bi-infinite street
  with a lamp at each integer location. The group $G$ consists of
  invertible instructions for a person, the ``lamplighter'': either
  move up or down the street, or toggle the state of a lamp before
  him/her.

  If we denote by $a$ the operation of toggling the lamp at position
  $0$ and by $t$ the movement of the lamplighter one step up the
  street, then $G$ is generated by $\{a,t\}$; and it admits as
  presentation
  \begin{equation}
    G=\langle a,t\mid [a,a^{t^k}]\text{ for all }k\in\N\rangle.
  \end{equation}
\end{example}

\begin{exercise}[**]\label{ex:perfect}
  Let $A$ be a simple group and let $H$ be perfect. Let
  $G\coloneqq H\wr_X A$ be their wreath product. Then $G$ is perfect,
  and all normal subgroups of $G$ are $G$ or of the form $N^X$ for a
  normal subgroup $N\triangleleft H$.
\end{exercise}

\begin{example}[Monod-Popa~\cite{monod-popa:coamenability}]\label{ex:monodpopa}
  There are groups $K\triangleleft H\triangleleft G$ such that the
  $G$-sets $K\backslash G$ and $H\backslash G$ are amenable but the
  $H$-set $K\backslash H$ is not.

  Choose indeed any non-amenable group $Q$, and set
  $G\coloneqq Q\wr\Z$ and $H=\prod'_\Z Q$ and $K=\prod'_\N Q$.

  The $G$-set $H\backslash G$ is clearly amenable, since the action of
  $G$ factors through an action of $\Z$. To prove that $K\backslash G$
  is amenable, it therefore suffices to find an $H$-invariant mean on
  $\ell^\infty(K\backslash G)$, and then apply
  Proposition~\ref{prop:stabilizers}. Let $t$ denote the positive
  generator of $\Z$. For every $k\in\N$, define a mean $m_k$ by
  $m_k(f)=f(K t^k)$ for $f\in\ell^\infty(K\backslash G)$. This mean is
  invariant by the group $K^{t^k}$. Since $H=\bigcup_{k\in\N}K^{t^k}$,
  any weak limit of the $m_k$ is an $H$-invariant mean.

  On the other hand, $K\backslash H$ is just a restricted direct
  product of $Q$'s, so is not amenable by Proposition~\ref{prop:freeaction}.
\end{example}

\begin{exercise}[**]
  Give an amenable $G$-set such that none of its orbits are amenable.

  \emph{Hint:} Consider the ``lamplighter group''
  $G=\langle a,t\rangle$, see Example~\ref{ex:lamplighter}, and the
  groups
  $G_n=\langle a,t\mid [a,a^{t^k}]\text{ for all
  }k=1,\dots,n\rangle$. Consider the natural action of
  $F_2=\langle a,t\mid\rangle$ on $X=\bigsqcup_{n\ge0}G_n$, and show
  that (i) each $G_n$ is non-amenable (ii) the group $G$ is amenable
  (iii) the action on $X$ approximates arbitrarily well the action on
  $G$.
\end{exercise}

We return to the definition of means we started with; we shall see
more criteria for amenability. Recall that $\mathscr M(X)$ denotes the
set of means on $X$.

\begin{lemma}\label{lem:mcompact}
  $\mathscr M(X)$ is compact.
\end{lemma}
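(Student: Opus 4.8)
The plan is to realize $\mathscr M(X)$ as a closed subset of the compact product space $[0,1]^{\mathfrak P(X)}$ and invoke Tychonoff's theorem. First I would observe that the topology described on $\mathscr M(X)$ — pointwise convergence on finite collections of subsets — is exactly the subspace topology inherited from $[0,1]^{\mathfrak P(X)}$ with the product topology, since a basic neighbourhood of $m$ is determined by finitely many coordinates $A_1,\dots,A_k$ and an $\epsilon$. By Tychonoff, $[0,1]^{\mathfrak P(X)}$ is compact (and Hausdorff), so it suffices to show that $\mathscr M(X)$ is a closed subset.

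Next I would check closedness by exhibiting $\mathscr M(X)$ as an intersection of closed sets, one for each defining condition of a mean. The condition $m(X)=1$ cuts out the set $\{f : f(X)=1\}$, which is closed since evaluation at the coordinate $X$ is continuous. For each disjoint pair $A,B\subseteq X$, the finite-additivity condition $m(A\sqcup B)=m(A)+m(B)$ cuts out $\{f : f(A\sqcup B)-f(A)-f(B)=0\}$; this is the preimage of $\{0\}$ under the continuous map $f\mapsto f(A\cup B)-f(A)-f(B)$ (continuous because it depends on only three coordinates), hence closed. Therefore $\mathscr M(X)$, being the intersection of all these closed sets, is closed in $[0,1]^{\mathfrak P(X)}$, and a closed subset of a compact space is compact.

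There is essentially no obstacle here; the only mild subtlety is to confirm that the codomain can be taken to be $[0,1]$ rather than $\R$ — but this is immediate from the definition, which requires $m\colon\mathfrak P(X)\to[0,1]$, so every mean already lands in the compact interval and no separate boundedness argument is needed. One could alternatively phrase the whole argument via the characterization of $\mathscr M(X)$ as a weak-$*$ closed subset of the unit ball of $\ell^\infty(X)^*$ and apply Banach–Alaoglu, but the Tychonoff argument is more elementary and self-contained, so that is the route I would take.
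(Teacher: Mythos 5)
Your proposal is correct and matches the paper's proof: both embed $\mathscr M(X)$ in the Tychonoff-compact cube $[0,1]^{\mathfrak P(X)}$ and verify closedness by writing the defining conditions of a mean as zero sets of continuous coordinate-wise maps. No substantive difference.
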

\begin{proof}
  Since $\mathscr M(X)$ is a subset of $[0,1]^{\mathfrak P(X)}$ which
  is compact by Tychonoff's theorem\footnote{We are using here, and
    throughout this chapter, the Axiom of Choice;
    see~\cite{kelley:tychonoff}.}, it suffices to show that $\mathscr
  M(X)$ is closed.

  Now each of the conditions defining a mean, namely $m(X)-1=0$ and
  $m(A\cup B)-m(A)-m(B)=0$, defines a closed subspace of
  $[0,1]^{\mathfrak P(X)}$ because it is the zero set of a continuous map. The
  intersection of these closed subspaces is $\mathscr M(X)$ which is
  therefore closed.
\end{proof}

\noindent Here are simple examples of means. For $x\in X$, define
$\delta_x\in\mathscr M(X)$ by
\[\delta_x(A)=\begin{cases} 0 & \text{ if }x\not\in A,\\
  1 & \text{ if }x\in A.\end{cases}\] It is easy to see that the
axioms of a mean are satisfied. We have thus obtained a map
$\delta\colon X\to\mathscr M(X)$, which is clearly injective.

\begin{lemma}
  $\delta(X)$ is discrete\footnote{Recall that $D$ is discrete in a
    topological space $X$ if for every $x\in D$ there is an open set
    $\mathcal U\ni x$ with $D\cap\mathcal U=\{x\}$.} in $\mathscr
  M(X)$.
\end{lemma}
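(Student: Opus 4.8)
The plan is to exhibit, for each point $\delta_x$ of $\delta(X)$, a single open set of $\mathscr M(X)$ that isolates it. Recall that the topology on $\mathscr M(X)$ is the subspace topology inherited from $[0,1]^{\mathfrak P(X)}$, so a basic open neighbourhood of a mean $m$ is specified by choosing finitely many subsets $A_1,\dots,A_k\subseteq X$ and an $\epsilon>0$ and taking all means $m'$ with $|m'(A_i)-m(A_i)|<\epsilon$ for each $i$. For the point $\delta_x$ the natural choice is to use the \emph{single} subset $A=\{x\}$, with $\epsilon=1/2$: set $\mathcal U=\{m'\in\mathscr M(X)\mid m'(\{x\})>1/2\}$. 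This is open, and $\delta_x\in\mathcal U$ since $\delta_x(\{x\})=1$.

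The key step is then to check that $\mathcal U\cap\delta(X)=\{\delta_x\}$. If $y\neq x$, then $x\notin\{y\}$ is false — rather, one observes $\delta_y(\{x\})=0$ because $x\notin\{y\}$ when $y\neq x$; hence $\delta_y\notin\mathcal U$. So the only element of $\delta(X)$ in $\mathcal U$ is $\delta_x$ itself, which is exactly the definition of discreteness recalled in the footnote. One may equivalently phrase this using the two subsets $A=\{x\}$ and $B=X\setminus\{x\}$ and note $m'(\{x\})+m'(X\setminus\{x\})=1$, but a single subset already suffices.

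I expect no real obstacle here: the only thing to be slightly careful about is to use a neighbourhood defined by a \emph{finite} family of subsets (indeed a single one), since arbitrary conditions on all of $\mathfrak P(X)$ are not open. The argument is genuinely a one-line observation once the right subset $\{x\}$ is chosen, so the "hard part" is merely making sure the chosen $\mathcal U$ is a legitimate basic open set in the product topology — which it is, being the preimage of the open interval $(1/2,1]$ under the single coordinate projection $m'\mapsto m'(\{x\})$.
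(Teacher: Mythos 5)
Your proof is correct and takes essentially the same approach as the paper: the paper's one-line proof sets $\mathcal U=\{m\in\mathscr M(X)\mid m(\{x\})>0\}$, which isolates $\delta_x$ for exactly the reason you give, namely that $\delta_y(\{x\})=0$ whenever $y\neq x$. Your choice of threshold $1/2$ instead of $0$ is an immaterial variant.
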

\begin{proof}
  Given $x\in X$, set
  \[\mathcal U=\{m\in\mathscr M(X)\mid m(\{x\})>0\}.\qedhere\]
\end{proof}

\begin{corollary}
  If $X$ is infinite, then $\delta(X)$ is not closed.
\end{corollary}
\begin{proof}
  Indeed, if $\delta(X)$ is closed in $\mathscr M(X)$, then it is
  compact; being furthermore discrete, it is finite; $\delta$ being
  injective, $X$ itself is finite.
\end{proof}

Recall that a subset $K$ of a topological vector space is
\emph{convex} if for all $x,y\in K$ the segment
$\{(1-t)x+t y\mid t\in[0,1]\}$ is contained in $K$;
see~\S\ref{sec:convex} for more on convex sets. The \emph{convex hull}
of a subset $S$ of a topological vector space is the intersection
$\widehat S$ of all the closed convex subspaces containing $S$.

\begin{lemma}\label{lem:mconvex}
  $\mathscr M(X)$ is convex.
\end{lemma}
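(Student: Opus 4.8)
The plan is to verify directly that any convex combination of two means is again a mean, which amounts to checking the two defining axioms.

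First I would take $m_0, m_1 \in \mathscr M(X)$ and $t \in [0,1]$, and define $m \coloneqq (1-t)m_0 + t m_1$ as a function $\mathfrak P(X) \to \R$. The first thing to confirm is that $m$ actually takes values in $[0,1]$: for each $A \subseteq X$ we have $m_0(A), m_1(A) \in [0,1]$, so $m(A)$ is a convex combination of two numbers in $[0,1]$ and hence lies in $[0,1]$. Thus $m$ is a genuine function $\mathfrak P(X) \to [0,1]$.

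Next I would check the normalization: $m(X) = (1-t)m_0(X) + t m_1(X) = (1-t)\cdot 1 + t\cdot 1 = 1$. Then I would check finite additivity: for disjoint $A, B \subseteq X$,
\[
m(A \sqcup B) = (1-t)m_0(A\sqcup B) + t m_1(A \sqcup B) = (1-t)\bigl(m_0(A)+m_0(B)\bigr) + t\bigl(m_1(A)+m_1(B)\bigr),
\]
and regrouping gives $\bigl((1-t)m_0(A)+tm_1(A)\bigr) + \bigl((1-t)m_0(B)+tm_1(B)\bigr) = m(A)+m(B)$. Hence $m \in \mathscr M(X)$, so the segment between $m_0$ and $m_1$ lies in $\mathscr M(X)$, which is the definition of convexity.

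There is no real obstacle here — the statement is essentially immediate from the fact that $\mathscr M(X)$ is cut out of $[0,1]^{\mathfrak P(X)}$ by affine (indeed linear) conditions, exactly as in the proof of Lemma~\ref{lem:mcompact}; the only point requiring a word is that a convex combination of values in $[0,1]$ stays in $[0,1]$, so that $m$ is well-defined as a mean and not merely as a finitely additive functional. One could alternatively phrase the whole argument as: the set of affine functionals $\mu$ on $\mathfrak P(X)$ with $\mu(X)=1$ and $\mu(A\sqcup B)=\mu(A)+\mu(B)$ forms an affine subspace, and its intersection with the convex set $[0,1]^{\mathfrak P(X)}$ is convex.
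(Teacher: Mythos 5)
Your argument is correct and is the same one the paper gives (the paper just says ``Consider means $m_i$ and positive numbers $t_i$ with $\sum t_i=1$; then $\sum t_i m_i$ clearly satisfies the axioms of a mean''). You have simply spelled out the verification of the two axioms and the range constraint that the paper leaves implicit.
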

\begin{proof}
  Consider means $m_i$ and positive numbers $t_i$ such that $\sum
  t_i=1$. Then $\sum t_i m_i$ clearly satisfies the axioms of a mean.
\end{proof}

% Now given $m_1\neq m_2\in\mathscr M(X)$, there must exist $A\subseteq X$
% with $m_1(A)\neq m_2(A)$. Consider then the affine map
% $\phi\colon\mathscr M(X)\to\R$ defined by $\phi(m)=m(A)$; clearly $\phi$
% distinguishes $m_1$ from $m_2$.

For a set $X$ and $p\in[1,\infty)$ we denote by $\ell^p(X)$ the Banach
space of functions $\phi\colon X\to\R$ satisfying
$\|\phi\|^p\coloneqq\sum|\phi(x)|^p<\infty$, and by $\ell^\infty(X)$
the space of bounded functions with supremum norm. For
$p\in[1,\infty]$ the space $\ell^p(X)$ carries a natural isometric
$G$-action by $(\phi g)(x)=\phi(x g^{-1})$.  Of particular interest is
the space $\ell^1(X)$, and its subset
\begin{equation}\label{eq:proba}
  \mathscr P(X)=\big\{\mu\in\ell^1(X)\mid\mu\ge0,\sum_{x\in X}\mu(x)=1\big\},
\end{equation}
the space of \emph{probability measures} on $X$. It is a convex
subspace of $\ell^1(X)$, compact for the weak*-topology, and (for
infinite $X$ strictly) contained in $\mathscr M(X)$:
\begin{proposition}\label{prop:linfty}
  For a set $X$, consider the following subset of $\ell^\infty(X)^*$:
  \[\mathscr B(X)\coloneqq\{m\in\ell^\infty(X)^*\mid m(f)\ge 0\text{
    whenever }f\ge0,\,m(\mathbb 1)=1\}.\]
  Then the map $\int\colon \mathscr B(X)\to\mathscr M(X)$ defined by
  \[({\textstyle\int}m)(A)\coloneqq m(\mathbb 1_A)\text{ with }\mathbb 1_A\text{ the characteristic function of }A\]
  is a homeomorphism, functorial in $X$.

  The subspace $\ell^1(X)\cap\mathscr B(X)\subset\ell^\infty(X)^*$
  corresponds via $\int$ to the convex hull $\widehat{\delta(X)}$ of
  $\delta(X)$.
\end{proposition}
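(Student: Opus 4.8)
The plan is to construct the map $\int$ explicitly, verify it lands in $\mathscr M(X)$, build an inverse, and then check both directions are continuous; functoriality is then a routine diagram-chase. First I would take $m\in\mathscr B(X)$ and set $(\int m)(A)=m(\mathbb 1_A)$. Since $0\le\mathbb 1_A\le\mathbb 1$ pointwise, positivity of $m$ gives $0\le m(\mathbb 1_A)\le m(\mathbb 1)=1$, so the values lie in $[0,1]$; normalization $m(\mathbb 1)=1$ gives $(\int m)(X)=1$; and for disjoint $A,B$ we have $\mathbb 1_{A\sqcup B}=\mathbb 1_A+\mathbb 1_B$, so linearity of $m$ yields finite additivity. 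Hence $\int m\in\mathscr M(X)$.

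For the inverse, given a mean $\mu\in\mathscr M(X)$ I would define a functional on $\ell^\infty(X)$ by \emph{integration against $\mu$}: for a simple function $f=\sum_{i=1}^k c_i\mathbb 1_{A_i}$ with the $A_i$ pairwise disjoint, set $\widetilde m(f)=\sum_i c_i\mu(A_i)$; this is well-defined and linear on simple functions, and $|\widetilde m(f)|\le\|f\|_\infty$ because $\sum_i\mu(A_i)\le 1$. Simple functions are dense in $\ell^\infty(X)$ in supremum norm (any bounded $f$ is a uniform limit of the simple functions obtained by partitioning its range into finitely many small intervals), so $\widetilde m$ extends uniquely to a bounded functional of norm $\le 1$ on $\ell^\infty(X)$; positivity ($f\ge 0\Rightarrow\widetilde m(f)\ge 0$) and $\widetilde m(\mathbb 1)=1$ pass to the limit, so $\widetilde m\in\mathscr B(X)$. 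One then checks $\int\widetilde m=\mu$ (immediate on characteristic functions) and $\widetilde{\int m}=m$ (both agree on simple functions, hence everywhere by density), so $\int$ is a bijection.

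Continuity is where a little care is needed, and I expect it to be the main (though modest) obstacle, since the two sides carry different-looking topologies: $\mathscr M(X)$ has the topology of pointwise convergence on $\mathfrak P(X)$, while $\mathscr B(X)\subset\ell^\infty(X)^*$ carries the weak*-topology, i.e.\ pointwise convergence on $\ell^\infty(X)$. That $\int$ is continuous is clear: convergence of $m_n\to m$ weak* forces $m_n(\mathbb 1_A)\to m(\mathbb 1_A)$ for every $A$. For the converse, suppose $\mu_n\to\mu$ in $\mathscr M(X)$; I must show $\widetilde{\mu_n}(f)\to\widetilde\mu(f)$ for every $f\in\ell^\infty(X)$. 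Approximate $f$ within $\epsilon$ in supremum norm by a \emph{single} simple function $s=\sum_{i=1}^k c_i\mathbb 1_{A_i}$; then $|\widetilde{\mu_n}(f)-\widetilde{\mu_n}(s)|\le\epsilon$ uniformly in $n$ (as $\|\widetilde{\mu_n}\|\le 1$), and $\widetilde{\mu_n}(s)=\sum_i c_i\mu_n(A_i)\to\sum_i c_i\mu(A_i)=\widetilde\mu(s)$ since only finitely many sets $A_i$ are involved; a standard $3\epsilon$-argument closes the gap. Thus $\int$ is a homeomorphism. Functoriality in $X$ — that for $f\colon X\to Y$ the square relating $\int$ with $f_*$ on means and with the adjoint of $f^*\colon\ell^\infty(Y)\to\ell^\infty(X)$ commutes — follows by evaluating both composites on characteristic functions $\mathbb 1_B$, $B\subseteq Y$, using $f^*\mathbb 1_B=\mathbb 1_{f^{-1}(B)}$.

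For the last sentence: an element of $\ell^1(X)$ lying in $\mathscr B(X)$ is exactly a positive $\ell^1$-function $\mu$ with $\sum_x\mu(x)=1$, i.e.\ a point of $\mathscr P(X)$; under $\int$ it goes to the mean $A\mapsto\sum_{x\in A}\mu(x)=\sum_x\mu(x)\,\delta_x(A)$, which is the (norm-convergent, hence weak-limit) convex combination $\sum_x\mu(x)\delta_x$ of the point masses. So the image of $\ell^1(X)\cap\mathscr B(X)$ is the set of countably-supported convex combinations of elements of $\delta(X)$. It remains to identify this with $\widehat{\delta(X)}$: every finite convex combination of $\delta_x$'s is of this form, so $\widehat{\delta(X)}$ contains at least the finitely-supported ones; conversely any $\mu\in\mathscr P(X)$ is a weak-limit of its finite truncations renormalized, so lies in the closure, giving $\widehat{\delta(X)}\supseteq\int(\ell^1(X)\cap\mathscr B(X))$; and since $\mathscr P(X)$ is weak*-compact (hence weak*-closed) and convex and contains $\delta(X)$, it contains $\widehat{\delta(X)}$, forcing equality. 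This last containment is the only spot where one invokes compactness of $\mathscr P(X)$, already recorded in the text just above the statement.
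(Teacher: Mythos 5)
Your treatment of the bijection is the same as the paper's (injectivity from density of simple functions, surjectivity by extending a mean to $\ell^\infty$ via integration against simple functions and a Lipschitz estimate), and you supply details on continuity and functoriality that the paper leaves implicit; that part is fine, arguably more complete than the source.

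The last paragraph, however, contains a real gap. You argue that the image of $\ell^1(X)\cap\mathscr B(X)$ equals $\widehat{\delta(X)}$ by showing $\mathscr P(X)\subseteq\widehat{\delta(X)}$ (finite truncations) and then $\widehat{\delta(X)}\subseteq\mathscr P(X)$ on the grounds that ``$\mathscr P(X)$ is weak*-compact (hence weak*-closed), convex and contains $\delta(X)$.'' But in the topology in which $\mathscr M(X)$ and $\mathscr B(X)$ have just been identified---pointwise convergence on $\mathfrak P(X)$, equivalently the weak* topology of $\ell^\infty(X)^*$---$\mathscr P(X)$ is \emph{not} closed when $X$ is infinite. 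Take $X=\N$: the net $(\delta_n)_{n\in\N}$ lies in $\mathscr P(\N)$, yet every cluster point in $\mathscr M(\N)$ is a non-principal ultrafilter mean, which does not lie in $\ell^1(\N)$. (Indeed, the text itself records just before this proposition that $\delta(X)$ is not closed for infinite $X$.) Consequently the weak*-closed convex hull of $\delta(X)$ is strictly larger than $\mathscr P(X)$---in fact, since the extreme points of the compact convex set $\mathscr M(X)$ are precisely the ultrafilter means and these all lie in $\overline{\delta(X)}$, Krein--Milman gives $\widehat{\delta(X)}=\mathscr M(X)$ in this topology. So the compactness step cannot be used to close the argument, and the remark in the text about $\mathscr P(X)$ being ``compact for the weak*-topology'' should not be taken at face value here. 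The correct content of the proposition's final sentence is that $\ell^1(X)\cap\mathscr B(X)$ corresponds exactly to the set of countably supported, $\ell^1$-summable convex combinations $\sum_x\mu(x)\delta_x$---which you establish directly in your second-to-last sentence, and which is what the paper's proof (``the element $f\in\ell^1(X)$ therefore corresponds to the affine combination $\sum f(x)\delta_x$'') also asserts---and this is the convex hull of $\delta(X)$ only if ``convex hull'' is read as the \emph{norm}-closed convex hull in $\ell^\infty(X)^*$, not the closed convex hull in the weak* topology that the rest of the proposition is about. Either flag this mismatch of topologies explicitly, or drop the appeal to compactness of $\mathscr P(X)$ and simply present the identification with $\ell^1$-convex combinations as the content of the statement.
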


We recall that there is a natural non-degenerate pairing
$\ell^1(X)\times\ell^\infty(X)\to\C$, given by
$(f,g)\mapsto\sum f(x)g(x)$. For that pairing,
$(\ell^1X)^*=\ell^\infty(X)$; but $(\ell^\infty X)^*$ is much
bigger than $\ell^1(X)$, as is clear from the proposition. In fact,
$\ell^\infty(X)$ is in isometric bijection with the space of
continuous functions on the Stone-\v Cech compactification $\beta X$
of $X$, see Lemma~\ref{lem:stone}, so
\begin{equation}\label{eq:stonerep}
  (\ell^\infty(X))^*=L^1(\beta X)\text{ the set of Borel measures on }\beta X.
\end{equation}

\begin{proof}[Proof of Proposition~\ref{prop:linfty}]
  Let $\mathscr S$ be the set of \emph{simple} functions on $X$, namely the
  functions that take only finitely many values.  Consider first
  $m\in\ell^\infty(X)^*$ with $m(\mathbb 1_A)=0$ for all $A\subseteq
  X$. Then $m$ vanishes on $\mathscr S$ by linearity; and $\mathscr S$ is
  dense in $\ell^\infty(X)$, so $m=0$. This proves that $\int$ is
  injective.

  On the other hand, let $m\colon\mathfrak P(X)\to[0,1]$ be a mean. For
  $f\in\mathscr S$, we have
  \[({\textstyle\int}m)(f)=\sum_{v\in f(X)}v m(f^{-1}(v)).
  \]
  We check that $\int m$ is a continuous function $\mathscr S\to\C$ for the
  $\ell^\infty$ norm on $\mathscr S$; indeed, for $f,g$ simple
  functions on $X$,
  \begin{align*}
    |({\textstyle\int m})(f)-({\textstyle\int m})(g)| &=
    \left|\sum_{v\in f(X),w\in g(X)}(v-w)\mu(f^{-1}(v)\cap g^{-1}(w))\right|\\
    &\le \sum_{v\in f(X),w\in g(X)}|v-w|\mu(f^{-1}(v)\cap g^{-1}(w))\\
    &\le\|f-g\|_\infty\sum_{v\in f(X),w\in g(X)}\mu(f^{-1}(v)\cap g^{-1}(w))\\
    &\le \|f-g\|_\infty.
  \end{align*}
  Therefore, $\int m$ extends to a continuous function
  $\ell^\infty(X)\to\C$, which clearly belongs to $\mathscr B(X)$. Since
  $\mathscr S$ is dense, this extension is unique.

  Finally recall that $\ell^1(X)$ embeds in $\ell^\infty(X)^*$ by
  $f\mapsto (f'\mapsto \sum_x f(x)f'(x))$. The element $f\in\ell^1(X)$
  therefore corresponds to the affine combination $\sum f(x)\delta_x$
  of Dirac means.
\end{proof}
From now on, we will use interchangeably the notations
$m\in\mathscr M(X)$ and $m\in(\ell^\infty(X))^*$; they correspond to
each other via the proposition.

\begin{corollary}\label{cor:vneumann}
  Let $X$ be a $G$-set. Then $X$ is amenable if and only if there
  exists a $G$-invariant positive functional in $\ell^\infty(X)^*$.\qed
\end{corollary}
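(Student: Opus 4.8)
The plan is to deduce this immediately from Proposition~\ref{prop:linfty}. That proposition furnishes a homeomorphism $\int\colon\mathscr B(X)\to\mathscr M(X)$ which is functorial in $X$, so the first task is simply to read off from functoriality that $\int$ is $G$-equivariant. For each $g\in G$ the right translation $x\mapsto x g$ is a bijection of $X$, and naturality of $\int$ with respect to this bijection says precisely that $\int$ intertwines the $G$-action on $\mathscr B(X)\subseteq\ell^\infty(X)^*$ with the $G$-action on $\mathscr M(X)$. Concretely, for $A\subseteq X$ one has $\mathbb 1_A g^{-1}=\mathbb 1_{A g^{-1}}$, and hence
\[
  ({\textstyle\int}(m g))(A)=(m g)(\mathbb 1_A)=m(\mathbb 1_A g^{-1})=m(\mathbb 1_{A g^{-1}})=({\textstyle\int}m)(A g^{-1})=(({\textstyle\int}m) g)(A),
\]
so $\int(m g)=(\int m) g$.

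Being a $G$-equivariant bijection, $\int$ restricts to a bijection between the fixed-point sets $\mathscr B(X)^G$ and $\mathscr M(X)^G$. By definition $X$ is amenable exactly when $\mathscr M(X)^G\neq\emptyset$, and $\mathscr B(X)$ is by construction nothing but the set of normalized positive functionals in $\ell^\infty(X)^*$. Therefore $X$ is amenable if and only if $\mathscr B(X)^G\neq\emptyset$, which is the assertion.

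There is essentially no obstacle here: the corollary is a repackaging of Proposition~\ref{prop:linfty} together with the observation that the identification of $\mathscr M(X)$ with $\mathscr B(X)$ is compatible with the $G$-actions, the one computation above being the only thing that needs checking.
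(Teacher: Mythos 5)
Your proof is correct and is exactly the intended argument: the paper marks the corollary \qed with no separate proof because it is an immediate consequence of Proposition~\ref{prop:linfty}, and what you have written out --- the $G$-equivariance of $\int$ (which is the content of ``functorial in $X$'' applied to the bijection $x\mapsto x g$) followed by the identification of fixed-point sets --- is precisely the unpacking the paper leaves to the reader.
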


The fact that the ``Dirac'' means $\widehat{\delta(X)}$ constitute a small
subset of $\mathscr M(X)$ may be confirmed as follows.  Every mean $m\in
\widehat{\delta(X)}$ enjoys an additional property, namely
$\sigma$-additivity: for disjoint $A_1,A_2,\dots$ we have
\[m(\bigcup A_i)=\sum m(A_i).\]
Consider now an invariant mean $m$ on $\Z$, as given by
Proposition~\ref{prop:Zamen}. Assume for contradiction that $m$ were
$\sigma$-additive. Then either $m(\{0\})=0$, so $m(\{n\})=0$ for all
$n\in\Z$ by $\Z$-invariance and $m(\Z)=0$ by $\sigma$-additivity; or
$m(\{0\})=\epsilon>0$ and $m(\{0,1,\dots,n\})>1$ as soon as
$n>1/\epsilon$. In all cases we have reached a contradiction.

\begin{proposition}\label{prop:stabilizers}
  Let $X$ be an amenable $G$-set such that all point stabilizers $G_x$
  are amenable. Then $G$ itself is amenable.
\end{proposition}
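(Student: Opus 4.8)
The plan is to build a $G$-invariant positive unital functional on $\ell^\infty(G)$ and then invoke Corollary~\ref{cor:vneumann} together with Corollary~\ref{cor:regamen}. The functional will be obtained by ``unfolding'' a given invariant mean on $X$ along the fibres of the orbit maps, using invariant means on the stabilizers. Concretely, fix a $G$-invariant $m\in\ell^\infty(X)^*$ (it exists because $X$ is amenable, by Corollary~\ref{cor:vneumann}); fix a transversal $R\subseteq X$ for the $G$-orbits on $X$; and, for each $x\in R$, fix a $G_x$-invariant mean $M_x$ on $\ell^\infty(G_x)$ for the \emph{right} regular action of $G_x$ on itself (it exists because $G_x$ is amenable). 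Finally, for each $y\in X$ let $x(y)\in R$ be its orbit representative and choose $s(y)\in G$ with $x(y)\cdot s(y)=y$; recall that $\{g\in G\mid x(y)\cdot g=y\}$ is exactly the left coset $G_{x(y)}\,s(y)$. All of these choices use the Axiom of Choice, which is in force throughout.

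Define $\Phi\colon\ell^\infty(G)\to\ell^\infty(X)$ by $\Phi(\phi)(y)=M_{x(y)}\big(h\mapsto\phi(h\,s(y))\big)$, where $h$ ranges over $G_{x(y)}$. Then $\Phi$ is linear, positive, sends $\mathbb 1$ to $\mathbb 1$, and satisfies $\|\Phi(\phi)\|_\infty\le\|\phi\|_\infty$, all essentially by inspection. The crux is that $\Phi$ is $G$-equivariant, i.e.\ $\Phi(\phi g)=\Phi(\phi)\,g$ for all $g\in G$. Evaluating both sides at $y$, and writing $x=x(y)$, $s=s(y)$, $s'=s(y g^{-1})$, the left side is $M_x\big(h\mapsto\phi(h\,s\,g^{-1})\big)$ and the right side is $M_x\big(h\mapsto\phi(h\,s')\big)$; since $x\cdot s'=y g^{-1}=x\cdot(s g^{-1})$, the element $h_0:=s'\,g\,s^{-1}$ lies in $G_x$ and $s'=h_0\,s\,g^{-1}$, so the right side equals $M_x\big(h\mapsto\phi(h\,h_0\,s\,g^{-1})\big)$. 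These two agree precisely because $M_x$ is invariant under right translation of its argument by the element $h_0\in G_x$ — this is exactly where the right-invariance of the stabilizer means enters.

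Now set $M:=m\circ\Phi\in\ell^\infty(G)^*$. It is positive and unital because $m$ and $\Phi$ are, and it is $G$-invariant since $M(\phi g)=m(\Phi(\phi g))=m(\Phi(\phi)\,g)=m(\Phi(\phi))=M(\phi)$ by $G$-invariance of $m$. By Corollary~\ref{cor:vneumann}, $M$ corresponds to a $G$-invariant mean on the regular $G$-set $G_G$, and by Corollary~\ref{cor:regamen} the group $G$ is amenable.

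I expect the only real obstacle to be bookkeeping: keeping the left/right conventions consistent so that the equivariance of $\Phi$ falls out, and in particular realizing that the stabilizer means $M_x$ must be right-invariant (rather than left-invariant) for the computation in the second paragraph to close — both types exist for an amenable group, so one simply has to pick the correct one. It is worth noting explicitly that the argument never requires any single orbit of $X$ to be amenable: all orbits are handled simultaneously through the one mean $m$, which is what makes the proof go through even when $X$ has infinitely many orbits and $m$ assigns mass $0$ to each of them.
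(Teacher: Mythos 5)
Your proof is correct and is essentially the paper's own argument, written in the dual picture: your operator $\Phi\colon\ell^\infty(G)\to\ell^\infty(X)$, $\Phi(\phi)(y)=M_{x(y)}(h\mapsto\phi(h\,s(y)))$, is precisely the transpose of the paper's equivariant map $X\to\mathscr M(G)$, $xg\mapsto m_x g$, and precomposing $m$ with it is the same as the paper's composition with the barycentre map $\Upsilon\circ\Phi_*$. The only presentational difference is that you carry out the averaging explicitly on $\ell^\infty$-functions rather than invoking the abstract barycentre $\mathscr M(\mathscr M(G))\to\mathscr M(G)$, with the well-definedness under changing $s(y)$ absorbed into the right-invariance check.
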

\begin{proof}
  Thanks to Proposition~\ref{prop:linfty}, for all $Y$ we view
  $\mathscr M(Y)$ as the set of normalized positive functionals
  $m\colon\ell^\infty(Y)\to\R$.  Let us first define a map
  $\Phi\colon X\to\mathscr M(G)$.

  Since every $G_x$ is amenable, there exists for all $x\in X$ an
  invariant mean $m_x\in\mathscr M(G_x)^{G_x}$, which we extend via
  the inclusion $G_x\hookrightarrow G$ to mean still written
  $m_x\in\mathscr M(G)^{G_x}$. Choose for every $G$-orbit in $X$ a
  point $x$, and set $\Phi(x g)=m_x g$ on that orbit. This is
  well-defined: if $x g=x h$, then $h g^{-1}\in G_x$ so
  $m_x h=m_x h g^{-1}g=m_x g$. It follows automatically that $\Phi$ is
  $G$-equivariant.

  By functoriality, $\Phi$ induces a $G$-equivariant map
  $\Phi_*\colon\mathscr M(X)\to\mathscr M(\mathscr M(G))$.

  Now there is, for all $Y$, a functorial map
  $\beta\colon\mathscr M(\mathscr M(Y))\to\mathscr M(Y)$ called the
  \emph{barycentre}: it is given by
  \begin{equation}\label{eq:barycentre}
    \Upsilon(m)(f)=m(n\mapsto n(f))\text{ for }m\in\mathscr M(\mathscr M(Y)),f\in\ell^\infty(Y),n\in\mathscr M(Y).
  \end{equation}
  Composing, we get a map
  $\Upsilon\circ\Phi_*\colon\mathscr M(X)\to\mathscr M(G)$, which is
  still $G$-equivariant. Now since $X$ is amenable $\mathscr M(X)^G$
  is non-empty, so $\mathscr M(G)^G$ is also non-empty.
\end{proof}

\begin{corollary}\label{cor:extensions}
  Let
  $1\longrightarrow N\longrightarrow G\longrightarrow
  Q\longrightarrow1$ be an exact sequence of groups. Then $G$ is
  amenable if and only if both $N$ and $Q$ are amenable.
\end{corollary}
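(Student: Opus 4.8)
The plan is to deduce both directions from the machinery already assembled, principally Proposition~\ref{prop:quotientX}, Proposition~\ref{prop:freeaction}, and Proposition~\ref{prop:stabilizers}, together with Corollary~\ref{cor:regamen}.

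For the ``only if'' direction, suppose $G$ is amenable. To see that $Q$ is amenable, note that the quotient homomorphism $\phi\colon G\twoheadrightarrow Q$ together with itself, viewed as a $\phi$-equivariant map $G_G\to Q_Q$, lets us apply Proposition~\ref{prop:quotientX}: since $G_G$ is amenable, so is $Q_Q$, hence $Q$ is amenable by Corollary~\ref{cor:regamen}. To see that $N$ is amenable, restrict the right $G$-action on $G$ to $N$; choosing a transversal $T$ of $N$ in $G$ identifies this $N$-set with $T\times N$, so it is a free $N$-set, and any $G$-invariant mean on $G$ is in particular $N$-invariant, so this free $N$-set is amenable. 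By the implication $(3)\Rightarrow(1)$ of Proposition~\ref{prop:freeaction}, $N$ is amenable.

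For the ``if'' direction, suppose $N$ and $Q$ are amenable. Consider $Q$ as a right $G$-set via $\phi\colon G\twoheadrightarrow Q$ followed by right translation; call it $X$. A $G$-invariant mean on $X$ is the same thing as a $Q$-invariant mean on $Q_Q$, which exists because $Q$ is amenable, so $X$ is an amenable $G$-set. The stabilizer in $G$ of a point $q\in Q$ is $\{g\in G\mid q\,\phi(g)=q\}=\{g\mid\phi(g)=1\}=N$, since right translation of $Q$ on itself is free; thus every point stabilizer equals $N$, which is amenable. Proposition~\ref{prop:stabilizers} now gives that $G$ is amenable.

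Nothing here should present a genuine obstacle; the only point requiring a moment's care is the bookkeeping in the ``only if'' direction, namely observing that amenability of $N$ is not available as a formal ``closed under subgroups'' statement at this stage and must instead be extracted from the free-action criterion of Proposition~\ref{prop:freeaction} applied to $G$ regarded as an $N$-set.
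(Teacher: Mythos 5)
Your proof is correct and takes essentially the same route as the paper: Proposition~\ref{prop:quotientX} for $Q$, the free-action criterion of Proposition~\ref{prop:freeaction} applied to $G$ qua $N$-set for $N$, and Proposition~\ref{prop:stabilizers} applied to the $G$-action on $Q$ for the converse. The only difference is that you spell out the bookkeeping (the transversal, the identification of stabilizers with $N$) which the paper leaves implicit.
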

\begin{proof}
  If $G$ is amenable, then its quotient $Q$ is amenable by
  Proposition~\ref{prop:quotientX}, and its subgroup $H$ is amenable
  by Proposition~\ref{prop:freeaction}, since it acts freely on the
  amenable $G$-set $G$.

  Conversely, if $N$ and $Q$ are amenable, then the natural action of
  $G$ on $Q$ satisfies the hypotheses of
  Proposition~\ref{prop:stabilizers}.
\end{proof}

\begin{exercise}[*]\label{ex:leftamen}
  Let $G$ be a group. We might have called $G$ \emph{left-amenable} if
  there exists a \emph{left-invariant} mean on $G$, namely a mean
  $m\in\mathscr M(G)$ with $m(g A)=m(A)$ for all $g\in G,A\subseteq G$; and
  have called $G$ \emph{bi-amenable} if there exists a mean $m\in\mathscr
  M(G)$ with $m(g A h)=m(A)$ for all $g,h\in G,A\subseteq G$.

  Prove that in fact $G$ is amenable if and only if it is left-amenable, if
  and only if it is bi-amenable.
\end{exercise}

\noindent We conclude with yet another criterion,
attributed\footnote{Erroneously!} to Dixmier:
\begin{theorem}[F\o lner~\cite{folner:bogoliouboff}*{Theorem~4}, Dixmier~\cite{dixmier:moyennes}*{Th\'eor\`eme~1}; see~\cite{gournay:percolation}*{Theorem~4.2}]\label{thm:dixmier}
  Let $X$ be a $G$-set. Then $X$ is amenable if and only if for any
  $h_1,\dots,h_n\in\ell^\infty(X)$ and any
  $g_1,\dots,g_n\in G$ the function
  \[H\coloneqq\sum_{i=1}^n(h_i-h_i g_i)\qquad\text{satisfies }\sup_{x\in X}H(x)\ge0.\]
\end{theorem}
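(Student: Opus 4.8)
The plan is to prove both implications directly, the forward one by a short averaging argument and the converse by a Hahn–Banach / separation argument using the compactness and convexity of $\mathscr M(X)$ established above.

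\textbf{Forward direction ($\Rightarrow$).} Suppose $X$ is amenable, and view the invariant mean as a $G$-invariant positive normalized functional $m\in\ell^\infty(X)^*$ via Proposition~\ref{prop:linfty}. For any $h_i$ and $g_i$, apply $m$ to $H=\sum_i(h_i-h_i g_i)$: since $m(h_i g_i)=m(h_i)$ by $G$-invariance, we get $m(H)=0$. But $m$ is positive and normalized, so $m(H)\le\sup_{x}H(x)$; hence $\sup_x H(x)\ge m(H)=0$. This is the easy half.

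\textbf{Converse direction ($\Leftarrow$).} Assume the stated positivity condition. Let $V\subseteq\ell^\infty(X)$ be the linear subspace spanned by all functions of the form $h-h g$ with $h\in\ell^\infty(X)$, $g\in G$ — equivalently, $V=\sum_{g\in G}(1-g)\ell^\infty(X)$. The hypothesis says precisely that $\sup_x H(x)\ge 0$ for every $H\in V$; applying this to $-H$ as well shows $\inf_x H(x)\le 0$, so in fact $V$ is disjoint from the open cone $\{f\in\ell^\infty(X): \inf_x f(x)>0\}$, and more to the point $\operatorname{dist}(\mathbb 1, V)\ge 1$ in the sup norm, since $\|\mathbb 1 - H\|_\infty\ge 1$ whenever $\sup_x H\ge 0$. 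By Hahn–Banach, there is a norm-one functional $m\in\ell^\infty(X)^*$ vanishing on $V$ with $m(\mathbb 1)=1$. One then checks $m$ is positive: for $f\ge 0$ with $\|f\|_\infty\le 1$ we have $\|\mathbb 1-f\|_\infty\le 1$, so $|1-m(f)|=|m(\mathbb 1-f)|\le 1$, giving $m(f)\ge 0$, and positivity follows by scaling. Since $m$ kills $h-h g$ for all $h,g$, we have $m(h g)=m(h)$, i.e. $m$ is $G$-invariant; by Corollary~\ref{cor:vneumann}, $X$ is amenable.

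\textbf{Main obstacle.} The only delicate point is verifying $\operatorname{dist}(\mathbb 1,V)\ge 1$ cleanly from the hypothesis, i.e. translating "$\sup_x H(x)\ge 0$ for all $H\in V$" into a statement to which Hahn–Banach applies; one must be slightly careful that $V$ need not be closed, but this is harmless since the distance estimate, and hence the separation, only uses $V$ itself, not its closure. (Alternatively, one could run the separation inside $\mathcal C(\beta X)$ using Lemma~\ref{lem:stone}, or invoke a sublinear-functional version of Hahn–Banach with $p(f)=\sup_x f(x)$ dominating the zero functional on $V$ — these are cosmetic variants of the same argument.) Everything else is routine.
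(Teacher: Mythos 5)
Your proof is correct and uses essentially the same underlying tool (Hahn--Banach) as the paper, but in a different form. The paper constructs the explicit sublinear functional $\tilde m(f)=\inf_{H\in V}\sup_X(f+H)$, verifies positive homogeneity, subadditivity, invariance, normalization and positivity, and then applies the ``dominated extension'' version of Hahn--Banach; you instead compute $\operatorname{dist}(\mathbb 1,V)=1$ and apply the ``annihilator achieving the distance'' corollary, then recover positivity and invariance of the resulting functional. These are genuinely two standard packagings of the same separation principle; your version is perhaps slightly slicker since it avoids checking the list of properties of $\tilde m$, while the paper's version makes the sublinear majorant explicit (which is the form one usually sees for Banach means). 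You even mention the paper's variant in your closing remarks, so you're aware they are close.

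One small slip to fix: you justify $\operatorname{dist}(\mathbb 1,V)\ge 1$ by writing ``since $\|\mathbb 1-H\|_\infty\ge 1$ whenever $\sup_x H\ge 0$,'' but that implication is false as stated (take $H=\mathbb 1$, which has $\sup H=1$ yet $\|\mathbb 1-H\|_\infty=0$). The condition you actually need, and have already derived, is $\inf_x H\le 0$: then for any $\epsilon>0$ there is $x$ with $H(x)\le\epsilon$, whence $1-H(x)\ge 1-\epsilon$ and $\|\mathbb 1-H\|_\infty\ge 1$. You produced the correct ingredient (from the hypothesis applied to $-H$), and the Hahn--Banach step, the positivity check, and the invariance conclusion are all fine; just swap the cited condition in that clause.
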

\begin{proof}
  If $X$ is amenable then there is an invariant positive mean
  $m\in\ell^\infty(X)^*$; then for every function $H$ as above
  $m(H)=0$ by invariance while $m(H)\le\sup H$ by positivity.

  On the other hand, if $\sup H\ge0$ for all $H$ as above, then an
  invariant mean may be constructed as follows: set
  \[\tilde m(f)=\inf_{H\text{ as above}}\sup_X(f+H).
  \]
  Clearly $\tilde m$ satisfies
  $\tilde m(\lambda f)=\lambda\tilde m(f)$ for $\lambda\ge0$ and
  $\tilde m(f g)=\tilde m(f)$ for $g\in G$ and $\tilde m(\mathbb1)=1$
  and $\tilde m(f)\ge0$ if $f\ge0$; and
  $\tilde m(f+g)\le\tilde m(f)+\tilde m(g)$ because if
  $\tilde m(f)\ge\sup_X(f+H)-\epsilon$ and
  $\tilde m(g)\ge\sup_X(g+K)-\epsilon$ then
  $\tilde m(f+g)\le\sup_X(f+g+H+K)\le\sup_X(f+H)+\sup_X(g+K)\le\tilde
  m(f)+\tilde m(g)-2\epsilon$. The Hahn-Banach theorem~(see
  e.g.~\cite{rudin:fa}*{Theorem~3.12}) implies the existence of a
  linear functional $m$ with the same properties.
\end{proof}

%%%%%%%%%%%%%%%%%%%%%%%%%%%%%%%%%%%%%%%%%%%%%%%%%%%%%%%%%%%%%%%% 
\newpage\section{F\o lner and Reiter's criteria}\label{ss:folner}
The following combinatorial criterion will be shown equivalent to
amenability; it is sometimes the easiest path to prove a group's
amenability. It was introduced by \EFolner~\cite{folner:banach},
though the idea of averaging over larger and larger finite sets to
construct invariant means can be traced back at least
to Ahlfors~\cite{ahlfors:coverings}*{Chapter~III.25}.

\begin{definition}
  Let $X$ be a $G$-set. We say that $X$ satisfies \emph{F\o
    lner's condition} if for all finite $S\Subset G$ and all
  $\epsilon>0$, there is a finite subset $F\Subset X$ with
  \[\#(F S\setminus F)<\epsilon\#F.\]

  When we say that a group $G$ satisfies F\o lner's condition, we mean
  it for the right $G$-set $X=G\looparrowleft G$.
\end{definition}

For example, $\Z$ satisfies F\o lner's condition: given $\epsilon>0$
and $S\subset\Z$ finite, find $k$ such that
$S\subseteq\{-k,\dots,k\}$. Let $\ell\in\N$ be such that
$\ell>2k/\epsilon$, and set $F=\{1,2,\dots,\ell\}$. Then
$F S\setminus F\subseteq\{1-k,\dots,0,\ell+1,\dots,\ell+k\}$ has size
at most $2k$, so $\#(F S\setminus F)<\epsilon\#F$.

Actually, the definition makes sense in a much more general context,
that of \emph{graphs}:
\begin{definition}\label{defn:graphs}
  A directed graph (digraph) is a pair of sets $\mathscr G=(V,E)$
  called \emph{vertices} and \emph{edges}, with maps
  $\pm\colon E\to V$ giving for each edge $e\in E$ its \emph{head}
  $e^+\in V$ and \emph{tail} $e^-\in V$.

  A graph $\mathscr G=(V,E)$ has \emph{bounded valency} if there is a
  bound $K\in\N$ such that at every vertex $v\in V$ there are at most
  $K$ incoming and outgoing edges, namely if
  $\#\{e\in E\mid v=e^+\}\le K$ and $\#\{e\in E\mid v=e^-\}\le K$.
\end{definition}

Consider a $G$-set $X$ and a finite set $S\subset G$. The
\emph{Schreier graph} of $X$ with respect to $S$ is the graph with
vertex set $V=X$ and edge set $E=X\times S$, with $(x,s)^-=x$ and
$(x,s)^+=x s$.  In other words, there is an edge from $x$ to $x s$ for
all $x\in X,s\in S$. If $X=G\looparrowleft G$, then the Schreier graph
is usually called the \emph{Cayley graph} of $G$.

Let $(V,E)$ be a graph. For a subset $F\subseteq V$, its
\emph{boundary} is the set of edges connecting $F$ to its complement,
in formul\ae
\[\partial F=\{e\in E\mid e^-\in F,e^+\not\in F\}.\]

\begin{definition}\label{defn:folnergraph}
  A graph $\mathscr G=(V,E)$ satisfies \emph{F\o lner's condition} if
  for all $\epsilon>0$ there is a finite subset $F\Subset V$ with
  $\#\partial F<\epsilon\# F$.
\end{definition}

Thus F\o lner's criterion asks for the existence of subgraphs of $X$
with an arbitrarily small relative outer boundary.  It is clear that a
$G$-set $X$ satisfies F\o lner's condition if and only if its Schreier
graphs satisfy it for all choices of $S\Subset G$.

\begin{lemma}\label{lem:folner1}
  Let $X$ be a $G$-set. F\o lner's condition is equivalent to: for all
  finite subsets $S\Subset G$ and all $\epsilon>0$, there is a finite
  subset $F\Subset X$ with
  \[\#(F s\setminus F)<\epsilon\#F\text{ for all }s\in S.\]
\end{lemma}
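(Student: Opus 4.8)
The plan is to verify the two implications directly from set-theoretic inclusions, with no analytic input; the whole content is bookkeeping about the quantifier on $s\in S$.

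First I would prove that F\o lner's condition implies the condition in the lemma. Given $S\Subset G$ and $\epsilon>0$, apply F\o lner's condition to obtain $F\Subset X$ with $\#(FS\setminus F)<\epsilon\#F$. For each $s\in S$ we have $Fs\subseteq FS$, hence $Fs\setminus F\subseteq FS\setminus F$, and therefore $\#(Fs\setminus F)\le\#(FS\setminus F)<\epsilon\#F$ for every $s\in S$, which is exactly the asserted estimate (and note the \emph{same} $F$ works for all $s$).

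Conversely, I would derive F\o lner's condition from the lemma's condition. We may assume $S\neq\emptyset$, since enlarging $S$ only strengthens both conditions (or one can dispose of $S=\emptyset$ separately, taking $F$ to be any singleton). Set $n=\#S$ and apply the lemma's condition with the same $S$ but with $\epsilon/n$ in place of $\epsilon$, obtaining $F\Subset X$ with $\#(Fs\setminus F)<(\epsilon/n)\#F$ for all $s\in S$. Since $FS=\bigcup_{s\in S}Fs$, distributivity gives $FS\setminus F=\bigcup_{s\in S}(Fs\setminus F)$, so subadditivity of cardinality yields $\#(FS\setminus F)\le\sum_{s\in S}\#(Fs\setminus F)<n\cdot(\epsilon/n)\#F=\epsilon\#F$, as needed.

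Neither direction presents a genuine obstacle; the only point deserving mild care is the quantifier structure — in the lemma's formulation a single $F$ must serve simultaneously for all $s\in S$, and it is precisely this that makes the union estimate above valid, the rescaling of $\epsilon$ by $\#S$ absorbing the resulting loss.
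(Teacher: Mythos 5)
Your proof is correct and follows essentially the same argument as the paper's: one direction is the trivial inclusion $Fs\setminus F\subseteq FS\setminus F$, and the other rescales $\epsilon$ by $\#S$ and sums over $s\in S$. The paper states it more tersely but the reasoning is identical.
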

\begin{proof}
  If $\#(F S\setminus F)<\epsilon\#F$, then in particular
  $\#(F s\setminus F)<\epsilon\#F$ for all $s\in S$. Conversely, if
  $\#(F s\setminus F)<\epsilon\#F/\#S$ for all $s\in S$ then
  $\#(F S\setminus F)<\epsilon\#F$.
\end{proof}

Recall that a \emph{directed set} is a partially ordered set
$(\mathscr N,\le)$ with finite upper bounds, i.e. for every
$m,n\in\mathscr N$ there exists an element $\max\{m,n\}\in\mathscr N$
with $m,n\le\max\{m,n\}$. A \emph{net} is a sequence indexed by a
directed set. For $(x_n)_{n\in\mathscr N}$ a real-valued net, we write
\begin{equation}\label{eq:convnet}
  \lim_{n\to\infty}x_n=x\quad\text{to mean}\quad\forall\epsilon>0:\exists n_0\in\mathscr N:\forall n\ge n_0:|x_n-x|<\epsilon,
\end{equation}
as in usual calculus.
\begin{exercise}[*]
  Let $\mathscr N$ be a non-empty net. Then
  $\{F\subseteq\mathscr N\mid \exists n_0\in\mathscr N:n\ge
  n_0\Rightarrow n\in F\}$ is a filter on $\mathscr N$, and the
  notions of convergence in~\eqref{eq:convnet} and in the filter
  coincide.
\end{exercise}

\noindent We have the following alternative definition of F\o lner's condition:
\begin{lemma}\label{lem:folnerlim}
  Let $G$ be a group and let $X$ be a $G$-set. Then $X$ satisfies F\o
  lner's condition if and only if there exists a net
  $(F_n)_{n\in\mathscr N}$ of finite subsets of $X$ with
  \begin{equation}\label{eq:net:folner}
    \lim_{n\to\infty}\frac{\#(F_n g\setminus F_n)}{\#F_n}=0\text{ for all
  }g\in G.
\end{equation}
\end{lemma}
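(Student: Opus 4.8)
The plan is to prove the two implications separately, the ``only if'' direction being the substantive one.

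For the ``if'' direction, suppose a net $(F_n)_{n\in\mathscr N}$ as in~\eqref{eq:net:folner} is given. Fix $S\Subset G$ and $\epsilon>0$; by Lemma~\ref{lem:folner1} it is enough to produce a single finite subset $F$ with $\#(F s\setminus F)<\epsilon\#F$ for all $s\in S$. For each $s\in S$, the convergence~\eqref{eq:net:folner} gives an index $n_s\in\mathscr N$ beyond which $\#(F_n s\setminus F_n)<\epsilon\#F_n$; since $S$ is finite and $\mathscr N$ is directed, there is a single index $n^*\in\mathscr N$ above all the finitely many $n_s$, and then $F\coloneqq F_{n^*}$ works. (Note that each $F_n$ is non-empty, since the defining inequality is never satisfied by $F=\emptyset$, so all the quotients make sense.)

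For the ``only if'' direction the key is to choose the right directed set. I would take $\mathscr N=\mathfrak P_f(G)\times\N$, ordered by $(S,k)\le(S',k')$ iff $S\subseteq S'$ and $k\le k'$; this is directed, with $\max\{(S,k),(S',k')\}=(S\cup S',\max\{k,k'\})$. For $n=(S,k)\in\mathscr N$, apply F\o lner's condition to $S$ and $\epsilon=1/k$ to obtain a finite $F_n\Subset X$ with $\#(F_n S\setminus F_n)<\frac1k\#F_n$, hence in particular $\#(F_n s\setminus F_n)<\frac1k\#F_n$ for every $s\in S$. To verify~\eqref{eq:net:folner}, fix $g\in G$ and $\epsilon>0$, pick $k_0\in\N$ with $k_0>1/\epsilon$, and set $n_0=(\{g\},k_0)$: for every $n=(S,k)\ge n_0$ one has $g\in S$ and $k\ge k_0$, whence $\#(F_n g\setminus F_n)<\frac1k\#F_n\le\frac1{k_0}\#F_n<\epsilon\#F_n$, as desired.

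I do not expect a genuine obstacle here; the only point requiring a moment's care is the interplay between the single universally quantified $S$ and $\epsilon$ in F\o lner's condition and the family of conditions ``for all $g\in G$'' in~\eqref{eq:net:folner} — this is exactly what the index set $\mathfrak P_f(G)\times\N$ is designed to bridge, together with Lemma~\ref{lem:folner1} to move between the ``all of $S$ at once'' and ``one $s$ at a time'' formulations. A secondary point worth a sentence is nondegeneracy: the $F_n$ are automatically non-empty, so the quotients in~\eqref{eq:net:folner} are well defined.
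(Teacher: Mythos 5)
Your proof is correct and follows essentially the same route as the paper: for the ``if'' direction you pass through the per-element form of F\o lner's condition (the paper avoids invoking Lemma~\ref{lem:folner1} by choosing thresholds with $\epsilon/\#S$, but this is cosmetic), and for the ``only if'' direction you build the same directed set, using $\mathfrak P_f(G)\times\N$ with $1/k$ in place of the paper's $\mathfrak P_f(G)\times(0,\infty)$ with $\epsilon$ directly.
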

\begin{proof}
  Assume~\eqref{eq:net:folner}, and let $S\Subset G,\epsilon>0$ be
  given. For each $s\in S$, let $n(s)\in\mathscr N$ be such that
  $\#(F_n s\setminus F_n)<\epsilon\#F_n/\#S$ for all $n\ge n(s)$, and
  set $F=F_{\max\{n(s)\}}$; then
  $\#(F S\setminus F)\le\sum_{s\in S}\#(F s\setminus F)<\epsilon\#F$,
  so F\o lner's condition is satisfied.

  Conversely, define
  $\mathscr N=\{(S,\epsilon)\mid S\Subset G\text{ finite
  },\epsilon>0\}$, ordered as follows: $(S,\epsilon)\le(T,\delta)$ if
  $S\subseteq T$ and $\epsilon>\delta$; so
  $\max\{(S,\epsilon),(T,\delta)\}=(S\cup
  T,\min\{\epsilon,\delta\})$. For each $n=(S,\epsilon)\in\mathscr N$,
  choose a finite set $F_n\Subset X$ with
  $\#(F S\setminus F)<\epsilon\#F$. These
  satisfy~\eqref{eq:net:folner}.
\end{proof}

\noindent In case $G$ is finitely generated, we also have the following
alternative definition:
\begin{lemma}\label{lem:folnerfg}
  Let $G$ be finitely generated, say by a finite set $S$ containing $1$,
  and let $X$ be a $G$-set. Then $X$ satisfies F\o lner's condition if and
  only if for all $\epsilon>0$ there is a finite subset $F\Subset X$ with
  \[\#(F S\setminus F)<\epsilon\#F.\]
\end{lemma}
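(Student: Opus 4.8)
The plan is to prove both implications, one of which is immediate and the other of which rests on a single sub-additivity estimate together with the fact that $1\in S$.

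For the easy direction, assume $X$ satisfies F\o lner's condition. Applying it to the particular finite set $S\Subset G$ produces, for every $\epsilon>0$, a finite $F\Subset X$ with $\#(FS\setminus F)<\epsilon\#F$, which is exactly the displayed condition. So the real content is the converse.

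For the converse, I would first record the elementary inequality
\[\#(Fgh\setminus F)\le\#(Fg\setminus F)+\#(Fh\setminus F)\qquad(g,h\in G,\ F\Subset X),\]
which follows by writing $Fgh\setminus F\subseteq(Fgh\setminus Fh)\cup(Fh\setminus F)$ and observing that right multiplication by $h$ is a bijection of $X$, so that $Fgh\setminus Fh=(Fg\setminus F)h$ and hence $\#(Fgh\setminus Fh)=\#(Fg\setminus F)$. Iterating, if $g=s_1\cdots s_n$ is a word in $S$ then $\#(Fg\setminus F)\le\sum_{i=1}^n\#(Fs_i\setminus F)\le n\max_{s\in S}\#(Fs\setminus F)\le n\,\#(FS\setminus F)$, the last step because $Fs\setminus F\subseteq FS\setminus F$ for every $s\in S$. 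Now fix an arbitrary finite $T\Subset G$ and $\epsilon>0$. Since $1\in S$ and $S$ generates $G$, every element of $G$ is a product of finitely many elements of $S$, and as $T$ is finite we may choose one integer $n$ with each $t\in T$ a product of at most $n$ elements of $S$. Then, using $FT\setminus F=\bigcup_{t\in T}(Ft\setminus F)$,
\[\#(FT\setminus F)\le\sum_{t\in T}\#(Ft\setminus F)\le n\,\#T\cdot\#(FS\setminus F)\]
for every finite $F\Subset X$. Applying the hypothesis with $\epsilon/(n\,\#T)$ in place of $\epsilon$ yields a finite $F$ with $\#(FT\setminus F)<\epsilon\#F$; since $T$ and $\epsilon$ were arbitrary, this is F\o lner's condition.

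There is no genuine obstacle here: the only step worth isolating is the sub-additivity of $g\mapsto\#(Fg\setminus F)$, which is a one-line bijection argument, and the role of the hypothesis $1\in S$ is simply to guarantee $G=\bigcup_{n\ge0}S^n$, so that each element of the arbitrary test set $T$ admits a uniformly bounded-length expression over $S$ and the constants $n,\#T$ depend only on $T$ and $S$, not on $F$.
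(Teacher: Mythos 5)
Your proof is correct and follows essentially the same route as the paper: both rest on the sub-additivity of $g\mapsto\#(Fg\setminus F)$ over a word $g=s_1\cdots s_n$ in $S$, which the paper expresses as the telescoping covering $Fg\setminus F\subseteq\bigcup_j\bigl(Fs_j\cdots s_k\setminus Fs_{j+1}\cdots s_k\bigr)$ and which you package as the two-term inequality $\#(Fgh\setminus F)\le\#(Fg\setminus F)+\#(Fh\setminus F)$ and iterate. The only cosmetic difference is that the paper pads each $g$ to a word of fixed length $k$ (which is where $1\in S$ enters) and then appeals to Lemma~\ref{lem:folner1}, while you sum the bound over $t\in T$ and control $\#(FT\setminus F)$ directly; the mechanism and the estimates are the same.
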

\begin{proof}
  One direction is obvious. In the other direction, let $S'\Subset G$
  and $\epsilon'>0$ be given. Since $S$ generates $G$, there exists
  $k\in\N$ with $S'\subseteq S^k$. Set $\epsilon=\epsilon'/k$, and let
  $F\Subset X$ satisfy $\#(F s\setminus F)<\epsilon\#F$ for all
  $s\in S$.

  Consider $g\in S'$, and write it as $g=s_1\dots s_k$ with
  $s_1,\dots,s_k\in S$. Then
  \begin{align*}
    F g\setminus F &=\bigsqcup_{j=1}^k F s_j\cdots s_k\setminus F
    s_{j+1}\cdots s_k,\\
    \intertext{so}
    \#(F g\setminus F)&=\sum\#(F s_j\cdots s_k\setminus F s_{j+1}\cdots s_k)\\
    &=\sum\#(F s_j\setminus F)s_{j+1}\cdots s_k< k\epsilon\#F=\epsilon'\#F.
  \end{align*}
  We are done by Lemma~\ref{lem:folner1}.
\end{proof}

We shall see in Theorem~\ref{thm:folneramen} that a $G$-space $X$
satisfies F\o lner's criterion if and only if it is amenable. This can
be used to prove (non-)amenability in numerous cases; for example,
\begin{proposition}\label{prop:restrict}
  A $G$-set $X\looparrowleft G$ is amenable if and only if for every finitely
  generated subgroup $H\le G$ the $H$-set $X\looparrowleft H$ is amenable.
\end{proposition}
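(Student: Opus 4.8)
The plan is to use the F\o{}lner characterization of amenability (Theorem~\ref{thm:folneramen}, whose equivalence with amenability we may invoke) together with Lemma~\ref{lem:folner1}, which reduces F\o{}lner's condition to a finite subset $F$ that is almost-invariant under a prescribed finite set $S$ of group elements. The key observation is that F\o{}lner's condition for a $G$-set $X$ quantifies only over \emph{finite} $S\Subset G$, and any such finite $S$ is contained in a finitely generated subgroup of $G$ — indeed, in $\langle S\rangle$ itself.

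First I would prove the easy direction: if $X$ is amenable as a $G$-set, and $H\le G$ is any subgroup, then restricting a $G$-invariant mean $m\in\mathscr M(X)^G$ to demand only $H$-invariance gives $m\in\mathscr M(X)^H$, so $X\looparrowleft H$ is amenable; this needs no finite generation at all. For the converse, suppose that $X\looparrowleft H$ is amenable for every finitely generated $H\le G$. To verify F\o{}lner's condition for the $G$-set $X$, fix a finite $S\Subset G$ and $\epsilon>0$. Put $H=\langle S\rangle$; this is a finitely generated subgroup, so by hypothesis $X\looparrowleft H$ is amenable, hence (by Theorem~\ref{thm:folneramen} applied to the $H$-set $X$, combined with Lemma~\ref{lem:folner1}) there is a finite $F\Subset X$ with $\#(F s\setminus F)<\epsilon\#F$ for all $s\in S$. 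Since $S\Subset H\le G$, this same $F$ witnesses F\o{}lner's condition for the data $(S,\epsilon)$ over $G$. As $S$ and $\epsilon$ were arbitrary, the $G$-set $X$ satisfies F\o{}lner's condition, hence is amenable by Theorem~\ref{thm:folneramen} again.

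The only subtlety — and the step to state carefully rather than a genuine obstacle — is that we are invoking the equivalence ``amenable $\iff$ F\o{}lner'' for actions of a possibly-infinitely-generated group $G$; but F\o{}lner's condition itself only ever refers to finite subsets $S$ of the acting group, so there is no circularity and no need for $G$ to be finitely generated in Theorem~\ref{thm:folneramen}. One could alternatively bypass F\o{}lner entirely and argue directly with means via a compactness/ultrafilter argument: for each finitely generated $H\le G$ pick $m_H\in\mathscr M(X)^H$, and take a limit of the $m_H$ along an ultrafilter on the directed set of finitely generated subgroups refining the order filter; since every $g\in G$ lies in some finitely generated $H$, the limit mean is $G$-invariant. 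I would include the F\o{}lner argument as the main proof and perhaps remark on this second route.
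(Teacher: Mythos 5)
Your proof is correct and takes essentially the same route as the paper: the easy direction by restricting a $G$-invariant mean, and the converse by fixing a finite $S\Subset G$, passing to $H=\langle S\rangle$, and applying F\o{}lner's criterion for the $H$-set $X$, which supplies the required $F$ for the $G$-set as well.
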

\begin{proof}
  ($\Leftarrow$) Given $S\Subset G$ and $\epsilon>0$, consider
  $H=\langle S\rangle$ and apply F\o lner's criterion.

  ($\Rightarrow$) Every $G$-invariant mean is also $H$-invariant.
\end{proof}
Thus for instance the action of $\Q$ on $\Q/\Z$ is amenable, because every
finitely generated subgroup of $\Q$ has a finite orbit on $\Q/\Z$. (We
shall later see that all actions of $\Q$ are amenable.)

\begin{example}
  The group of permutations $\Sym(\N)$ of $\N$ with finite support is
  amenable; indeed every finite subset generates a finite group.
\end{example}

\begin{example}\label{ex:W(Z)}
  The group of ``bounded-displacement permutations of $\Z$''
  \[G=W(\Z)=\{\tau\colon\Z\righttoleftarrow\mid\sup_{n\in\Z}|\tau(n)-n|<\infty\}\]
  acts amenably on $\Z$. Indeed given $S\subset G$ finite and
  $\epsilon>0$, the maximum displacement of elements of $S$ is
  bounded, say $\le k$; and then $\Z\looparrowleft G$ satisfies F\o lner's
  condition with $F=\{0,\dots,\lceil k/\epsilon\rceil\}$.
\end{example}

\begin{example}\label{ex:folnerll}
  The ``lamplighter group'' $G$ from Example~\ref{ex:lamplighter} is
  amenable. Indeed elements of $G$ may be written as pairs $(f,m)$
  with $f\colon\Z\to\Z/2$ and $m\in\Z$, and one may consider as F\o
  lner sets
  \[F_n=\{(f,m):\supp(f)\subseteq[-n,n]\text{ and
  }m\in[-n,n]\}.\]
\end{example}

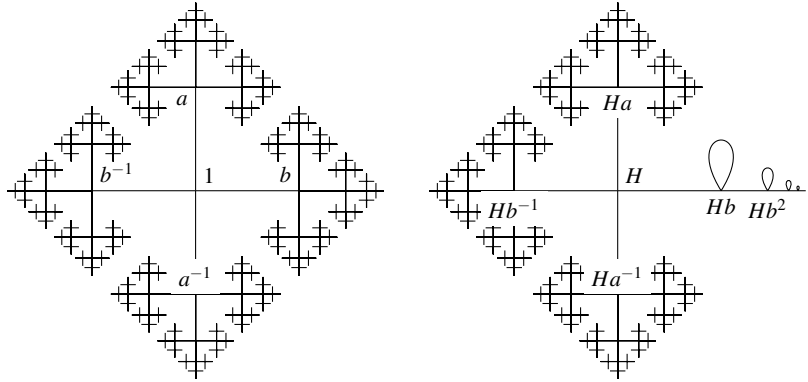
\begin{figure}
  \centerline{\begin{tikzpicture}[decoration=Fractal Tree,thin,xscale=2.5,yscale=2.5]
      \draw decorate{ decorate{ decorate { decorate { (0,0) -- (1,0) } } } };
      \draw decorate{ decorate{ decorate{ decorate { (0,0) -- (-1,0) } } } };
      \draw decorate{ decorate{ decorate{ decorate { (0,0) -- (0,1) } } } };
      \draw decorate{ decorate{ decorate{ decorate { (0,0) -- (0,-1) } } } };
      \node at (0,0) [above right] {$1$};
      \node at (1-0.45,0) [above left] {$b$};
      \node at (0,1-0.45) [below left] {$a$};
      \node at (-1+0.45,0) [above right] {$b^{-1}$};
      \node at (0,-1+0.45) [above,fill=white] {$a^{-1}$};
    \end{tikzpicture}\qquad
    \begin{tikzpicture}[decoration=Fractal Tree,thin,xscale=2.5,yscale=2.5]
      \draw (0,0) -- (1,0);
      \foreach\x in {0.45,0.45*0.45,0.45*0.45*0.45,0.45*0.45*0.45*0.45} {\draw (1-\x,0) .. controls +(0.5*\x,0.8*\x) and +(-0.5*\x,0.8*\x) .. (1-\x,0);}
      \draw decorate{ decorate{ decorate { decorate { (0,0) -- (-1,0) } } } };
      \draw decorate{ decorate{ decorate{ decorate { (0,0) -- (0,1) } } } };
      \draw decorate{ decorate{ decorate{ decorate { (0,0) -- (0,-1) } } } };
      \node at (0,0) [above right] {$H$};
      \node at (1-0.45,0) [below] {$H b$};
      \node at (1-0.45*0.45,0) [below] {$H b^2$};
      \node at (0,1-0.45) [below,fill=white] {$H a$};
      \node at (-1+0.45,0) [below,fill=white] {$H b^{-1}$};
      \node at (0,-1+0.45) [above,fill=white] {$H a^{-1}$};
    \end{tikzpicture}}
  \caption{The Cayley graph of the free group $F_2$, and the coset space of $H$ (see Example~\ref{ex:freeamen})}\label{fig:cayleyF2}
\end{figure}

\begin{example}\label{ex:freefolner}
  The free group $F_k=\langle x_1,\dots,x_k\mid\rangle$ is amenable if
  and only if $k\le1$, see Proposition~\ref{prop:fnnotamen}. Indeed if
  $k\le1$ then $F_k$ is $\{1\}$ or $\Z$; while in general, choose
  $S=\{x_1^{\pm1},\dots,x_k^{\pm1}\}$ and consider $F\Subset X$. In
  the Cayley graph of $F_k$, which is a $2k$-regular tree (see
  Figure~\ref{fig:cayleyF2} left), consider the subgraph spanned by
  $F$. It suffices to consider connected components of the graph once
  at a time; each connected component is a tree, with say $v$ vertices
  and therefore $v-1$ edges. The sum of the vertex degrees within that
  tree is therefore $2v-2$, so the total number of edges pointing out
  of the component is at least $2k v-(2v-2)\ge(2k-2)v$; these edges
  point to distinct elements in $SF\setminus F$. Therefore, F\o lner's
  criterion is not satisfied as soon as $\epsilon<2n-2$.
\end{example}

There are plenty of non-amenable groups with amenable actions, and
even faithful amenable actions; here is one.
\begin{example}\label{ex:freeamen}
  Consider $F_2=\langle a,b\mid\rangle$ and its subgroup
  $H=\langle a^{b^n}:n\le0\rangle$. Then $F_2$ acts naturally on the
  coset space $X\coloneqq H\backslash F_2$, see
  Figure~\ref{fig:cayleyF2} right, and this action is amenable. Indeed
  with $S=\{a^{\pm1},b^{\pm1}\}$ and $\epsilon>0$ given, consider the
  set $F=\{H,H b,\dots,H b^n\}$ for $n>\epsilon^{-1}$. It
  satisfies F\o lner's criterion. Note that the action of $F_2$ on $X$
  is not free, but it is nevertheless faithful.
\end{example}

\subsection{Growth of sets}\label{ss:growthsets}
Let $X\looparrowleft G$ be a $G$-set, and consider $S\Subset G$ and
$x_0\in X$. The \emph{orbit growth} of $X$ is the function
$v_{X,x_0,S}\colon\N\to\N$ given by
\[v_{X,x_0,S}(n)=\#\{x\in X\mid x=x_0 s_1\cdots s_m\text{ for some }s_i\in S,m\le n\}.
\]
If $G$ is finitely generated, then the orbit growth depends only
mildly on the choice of $S$ as soon as it generates $G$: if $S'$ be
another generating set of $G$, then there exists a constant $C>0$ with
$v_{X,x_0,S}(n)\le v_{X,x_0,S'}(C n)$ and
$v_{X,x_0,S'}(n)\le v_{X,x_0,S}(C n)$. Similarly, if $x_0,x'_0\in X$
belong to the the same $G$-orbit, then there exists a constant
$C\in\N$ with $v_{X,x_0,S}(n)\le v_{X,x_0',S}(n+C)$ and
$v_{X,x_0',S}(n)\le v_{X,x_0,S}(n+C)$. Therefore, the equivalence
class of $v_{X,x_0,S}$ under linear transformations of its argument is
independent of the choice of $S$ if $S$ generates $G$, and of $x_0$ if
$X$ is transitive; it is denoted simply $v_{X,x_0}$, $v_X$ and
$v_{x_0}$ respectively.

As usual, we consider $G$ as a $G$-set under right translation, and
denote by $v_{G,S}$ and $v_G$ its growth function. We also write
$B_{G,S}(n)$ for the ball of radius $n$ in $G$, and more generally
$B_{X,x_0,S}(n)$ for the ball of radius $n$ in $X$ around $x_0$.

\begin{proposition}\label{prop:subexp=>amenable}
  Let $X$ be a $G$-set and let $x_0\in X$ be such that $v_{X,x_0,S}$
  grows subexponentially for all $S\Subset G$. Then $X$ satisfies
  F\o lner's condition.
\end{proposition}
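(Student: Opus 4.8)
The plan is to show that along the sequence of balls $B_n := B_{X,x_0,S}(n)$ around $x_0$, some $B_n$ must serve as a F\o lner set; the mechanism is the standard one: if \emph{no} ball were a good F\o lner set, the sizes $\#B_n$ would have to grow exponentially, contradicting the subexponential hypothesis. By Lemma~\ref{lem:folner1} it suffices to produce, for a given $\epsilon>0$ and a given finite $S\Subset G$ (which we may enlarge so that $S=S^{-1}\ni 1$), a finite $F\Subset X$ with $\#(Fs\setminus F)<\epsilon\#F$ for all $s\in S$.

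First I would observe that $B_n s\setminus B_n\subseteq B_{n+1}\setminus B_n$ for every $s\in S$, because $1\in S$ guarantees $B_n\subseteq B_{n+1}$ and any element of $B_n s$ lies in $B_{n+1}$. Hence $\#(B_n s\setminus B_n)\le \#B_{n+1}-\#B_n$ for all $s\in S$, and it is enough to find an $n$ with
\[
\#B_{n+1}-\#B_n < \epsilon\,\#B_n,\qquad\text{i.e.}\qquad \#B_{n+1}<(1+\epsilon)\#B_n.
\]
Suppose, for contradiction, that this fails for every $n\ge 0$; then $\#B_{n+1}\ge(1+\epsilon)\#B_n$ for all $n$, so by induction $\#B_n\ge(1+\epsilon)^n\#B_0\ge(1+\epsilon)^n$. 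But $\#B_n=v_{X,x_0,S}(n)$, which by hypothesis grows subexponentially, meaning $\limsup_n v_{X,x_0,S}(n)^{1/n}=1<1+\epsilon$. This is a contradiction, so some $n$ with $\#B_{n+1}<(1+\epsilon)\#B_n$ exists; taking $F=B_n$ for that $n$ finishes the argument via Lemma~\ref{lem:folner1}.

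The only point requiring a little care — and the closest thing to an obstacle — is making sure the subexponential hypothesis is used in the right form. ``Grows subexponentially'' for an equivalence class of functions under linear reparametrization of the argument means precisely $v(n)^{1/n}\to 1$, equivalently $\log v(n)=o(n)$; this is invariant under the linear changes of $S$ and $x_0$ discussed just before the proposition, so the statement is well-posed, and the geometric lower bound $(1+\epsilon)^n$ is incompatible with it. One should also note the mild edge cases: $\#B_n$ is finite for each $n$ (each ball is a finite set by definition of $v_{X,x_0,S}$), and $\#B_0=1\ne 0$, so no division-by-zero or infinite-cardinality issue arises. With those remarks the proof is essentially the three displayed lines above.
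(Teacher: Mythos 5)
Your proof is correct and follows essentially the same route as the paper's: both take $F$ to be a ball $B_{X,x_0,S}(n)$ and argue that subexponential growth forces some $n$ with $\#B_{n+1}<(1+\epsilon)\#B_n$, which makes that ball a F\o lner set. The paper phrases the existence of such an $n$ via $\liminf v(n+1)/v(n)=1$ rather than by contradiction, and cites Lemma~\ref{lem:folnerfg} instead of Lemma~\ref{lem:folner1}, but these are cosmetic differences; in fact your appeal to Lemma~\ref{lem:folner1} is the more natural one, since Lemma~\ref{lem:folnerfg} is stated for a generating set of a finitely generated group, which $S$ need not be.
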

\begin{proof}
  Let a finite subset $S\Subset G$ and $\epsilon>0$ be given. Since
  $X$ has subexponential growth, we have
  $\lim\sqrt[n]{v_{X,x_0,S}(n)}=1$; therefore
  $\liminf\frac{v_{X,x_0,S}(n+1)}{v_{X,x_0,S}(n)}=1$, so for some $n$
  we have $v_{X,x_0,S}(n+1)<(1+\epsilon)v_{X,x_0,S}(n)$. Set
  $F=B_{X,x_0,S}(n)$. We have $\#(F S)<(1+\epsilon)\#F$, so $X$
  satisfies F\o lner's condition by Lemma~\ref{lem:folnerfg}.
\end{proof}

Note that it is unknown whether in every finitely generated group $G$
of subexponential growth we have
$\lim\frac{v_{G,S}(n+1)}{v_{G,S}(n)}=1$; only the `$\liminf$' is known
to equal $1$.

One may study more quantitatively the F\o lner condition as follows:
let $X$ be a $G$-set and let $S$ be a generating set for $G$. Define
$\Fol\colon\N\to\N\cup\{\infty\}$ by
\begin{equation}\label{eq:fol}
  \Fol(n)=\inf\{\#F\mid F\Subset X, \#(F\triangle F s)<\#F/n\text{ for all }s\in S\}.
\end{equation}
Then $\Fol(n)<\infty$ for all $n$ precisely if $X$ is amenable. A
similar definition may be given for graphs, which we leave to the
reader. Groups admit the following lower bound on $\Fol$:
\begin{proposition}[Coulhon-(Saloff-Coste)~\cite{coulhon-sc:isoperimetry}]\label{prop:csc}
  Let $G=\langle S\rangle$ be a finitely generated group, with growth
  function $v_{G,S}(n)$. Then
  \[\Fol(n)\ge\frac12 v_{G,S}(n)\text{ for all }n\in\N.\]
\end{proposition}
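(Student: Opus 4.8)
The plan is to bound from below the size of any Følner set in $G$ by comparing it with a ball of radius $n$. Suppose $F \Subset G$ satisfies $\#(F \triangle F s) < \#F/n$ for all $s \in S$; we want $\#F \ge \tfrac12 v_{G,S}(n)$. The idea is that a set with such a small symmetric-difference defect cannot be "small" relative to how far the generators move it: roughly, each application of a generator $s$ changes $F$ by fewer than $\#F/n$ elements, so after $n$ steps $F$ has not been translated "far" in a quantitative sense, yet $F \cdot B_{G,S}(n)$ must contain a translate of a ball.

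First I would fix any element $g \in F$ (if $F = \emptyset$ the defect condition fails since $\#F/n$ would have to exceed $0 = \#(\emptyset \triangle \emptyset)$, so $F \neq \emptyset$). The key estimate is that for any word $w = s_1 \cdots s_k$ of length $k \le n$ in the generators, $\#(F w \setminus F) \le \sum_{j=1}^{k} \#(F s_j \cdots s_k \setminus F s_{j+1} \cdots s_k) = \sum_j \#(F s_j \setminus F) < k \cdot \#F / n \le \#F$, telescoping exactly as in the proof of Lemma~\ref{lem:folnerfg}. Actually one wants a sharper bookkeeping: I would sum over the $v_{G,S}(n)$ elements $w \in B_{G,S}(n)$ and count, with multiplicity, pairs $(g, w)$ with $g \in F$ and $g w \notin F$.

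The cleanest route: for $g \in F$, let $N(g) = \#\{w \in B_{G,S}(n) : gw \notin F\}$. Double counting, $\sum_{g \in F} N(g) = \sum_{w \in B_{G,S}(n)} \#(F w \setminus F)$. I need each $\#(F w \setminus F)$ bounded by something like $(\text{length of }w)\cdot \#F/n \le \#F$; summing over the $v_{G,S}(n)$ elements $w$ gives $\sum_{g} N(g) \le v_{G,S}(n) \cdot \#F$ — that's too weak. So instead I would argue: pick $g_0 \in F$; the set $\{w \in B_{G,S}(n) : g_0 w \in F\}$ has the property that $g_0 w \in F$ for "most" $w$. Precisely, the number of $w \in B_{G,S}(n)$ with $g_0 w \notin F$ is at most $\sum_{w} \mathbb{1}_{g_0 w \notin F}$, and averaging over $g_0 \in F$: $\tfrac{1}{\#F}\sum_{g_0 \in F}\#\{w \in B : g_0 w \notin F\} = \tfrac{1}{\#F}\sum_{w \in B}\#(Fw \setminus F) < \tfrac{1}{\#F}\sum_{w\in B}\#F \cdot \tfrac{|w|}{n}$. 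Since $|w| \le n$, this is $< v_{G,S}(n)$... still too weak by a factor.

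The fix is to only use that the average defect is small: $\tfrac1{\#F}\sum_{w\in B}\#(Fw\setminus F) \le \tfrac1{\#F}\cdot\#F\cdot\sum_{w\in B}\tfrac{|w|}n$, and one should bound $\sum_{w\in B_{G,S}(n)}|w| \le n\cdot v_{G,S}(n)$ — no improvement. The real argument must be: there exists $g_0 \in F$ with $\#\{w\in B_{G,S}(n): g_0 w\notin F\} < \tfrac{v_{G,S}(n)}2$, which forces $\#\{w : g_0w\in F\} > \tfrac{v_{G,S}(n)}2$, hence $\#F \ge \#(g_0 B_{G,S}(n)\cap F) > \tfrac{v_{G,S}(n)}2$. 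To get the existence of such $g_0$, I need $\tfrac1{\#F}\sum_{g_0\in F}\#\{w: g_0w\notin F\} < \tfrac{v_{G,S}(n)}2$, i.e. $\sum_{w\in B_{G,S}(n)}\#(Fw\setminus F) < \tfrac{\#F\cdot v_{G,S}(n)}2$. Here is where the $\tfrac1n$ must be exploited per-generator: writing each $w$ as a geodesic and telescoping, $\#(Fw\setminus F) \le \sum_{j=1}^{|w|}\#(Fs_j\setminus F) < |w|\cdot\#F/n$. Summing, $\sum_{w\in B}\#(Fw\setminus F) < \tfrac{\#F}n\sum_{w\in B}|w|$. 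Now I bound $\sum_{w\in B_{G,S}(n)}|w| \le n \cdot v_{G,S}(n)$ crudely, giving only $< \#F\cdot v_{G,S}(n)$ — I am off by a factor of $2$.

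The honest resolution, which I expect is the actual proof, uses $B_{G,S}(n-1)$ or an inductive/averaging trick over radii: replace the ball $B(n)$ by choosing, for each $g_0$, the "good" radius, or sum $\sum_{w}\#(Fw\setminus F)$ more carefully using that $\#\{w : |w|=k\} = v(k)-v(k-1)$ and $\sum_k (v(k)-v(k-1))\cdot k/n$ can be summation-by-parts'd to $\le \tfrac1n\sum_{k=0}^{n-1}(v(n)-v(k)) \le v(n)$, still giving the factor-$2$ loss unless one is slightly more clever. The main obstacle is precisely squeezing out the constant $\tfrac12$: I would handle it by noting that not all $w\in B(n)$ can have full length, and $\sum_{k=1}^n (v(k)-v(k-1)) k \le n\,v(n) - \sum_{k=0}^{n-1}v(k) \le n\,v(n)$, but combined with discarding the identity and a careful choice one arrives at the clean bound $\#F\ge\tfrac12 v_{G,S}(n)$. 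So the skeleton is: (1) $F\neq\emptyset$; (2) telescoping defect bound $\#(Fw\setminus F)<|w|\,\#F/n$ for geodesic $w$; (3) average over $g_0\in F$ and over $w\in B_{G,S}(n)$ to find a $g_0$ whose ball-image meets $F$ in more than half its points; (4) conclude $\#F\ge\tfrac12 v_{G,S}(n)$. The delicate counting in step (3) — getting the constant exactly right — is the part I expect to require the most care.
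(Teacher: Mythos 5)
Your plan is sound and is essentially the same argument as the paper's, just run in the direct rather than contrapositive direction. But you are chasing a phantom factor of $2$: it is already sitting in the definition of $\Fol$, which uses \emph{symmetric} difference. If $\#(F \triangle Fs) < \#F/n$ for all $s \in S$, then since $\#(Fs) = \#F$ we have $\#(Fs\setminus F) = \tfrac12\#(F\triangle Fs) < \#F/(2n)$, not merely $< \#F/n$. Telescoping along a geodesic $w = s_1\cdots s_k$ with $k \le n$ then gives $\#(Fw\setminus F) \le \sum_j\#(Fs_j\setminus F) < k\cdot\#F/(2n) \le \#F/2$, with the crucial $2$ already in the denominator. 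Your double-count is then exactly what you want:
\[\sum_{g_0\in F}\#\{w\in B_{G,S}(n) : g_0w\notin F\} = \sum_{w\in B_{G,S}(n)}\#(Fw\setminus F) < v(n)\cdot\frac{\#F}{2},\]
so by pigeonhole there is $g_0\in F$ with $\#\{w\in B_{G,S}(n) : g_0w\notin F\} < v(n)/2$, hence $\#F \ge \#(g_0B_{G,S}(n)\cap F) > v(n)/2$. None of the summation-by-parts maneuvers, radius-averaging, or identity-discarding you contemplate is needed; the cleanup is one line.

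The paper's proof does the same thing in contrapositive form: given $F$ and $n$ with $v(n)\ge 2\#F$, for each $x\in F$ note $\#(xB_{G,S}(n)\setminus F) \ge v(n)-\#F \ge v(n)/2 \ge \#F \ge \#(xB_{G,S}(n)\cap F)$; summing over $x\in F$ and pigeonholing on $g\in B_{G,S}(n)$ produces a single $g$ with $\#(F\triangle Fg) \ge \#F$, and the telescoping then forces $\#(F\triangle Fs_k) \ge \#F/n$ for some letter $s_k$ of $g$. Same mechanism, opposite direction; in that form the factor of $2$ appears at the start as $v(n) \ge 2\#F$ rather than from the $\triangle$-versus-$\setminus$ conversion, which may be why you didn't spot it.
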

\begin{proof}
  We shall prove the following equivalent form: given $F\Subset G$,
  choose $n\in\N$ such that $v_{G,S}(n)\ge2\#F$. We are required to
  find $s\in S$ with $\#(F\triangle F s)\ge\#F/n$.

  First, for all $x\in F$ we have
  $\#(x B_{G,S}(n)\setminus F)\ge\#F\ge\#(x B_{G,S}(n)\cap F)$, so
  \begin{gather*}
    \sum_{g\in B_{G,S}(n)}\mathbb 1_{x g\not\in F}\ge v(n)/2\ge\sum_{g\in B_{G,S}(n)}\mathbb 1_{x g\in F},\\
    \sum_{g\in B_{G,S}(n)}\sum_{x\in F}\mathbb 1_{x g\not\in F}\ge v(n)\#F/2,
  \end{gather*}
  so for some $g\in B_{G,S}(n)$ we have
  $\sum_{x\in F}\mathbb 1_{x g\not\in F}\ge\#F/2$, namely
  $\#(F\triangle F g)\ge\#F$. Write now $g=s_1\cdots s_n$; then
  $F\triangle F g=(F\triangle F s_n)\triangle\cdots\triangle(F
  s_2\cdots s_n\triangle F g)\subseteq(F\triangle F s_n)\cup\cdots\cup
  (F\triangle F s_1)s_2\cdots s_n$. It follows that there exists some
  $k\in\{1,\dots,n\}$ with $\#(F\triangle F s_k)\ge\#F/n$.
\end{proof}

On the other hand, we have an upper bound on $\Fol$
coming from balls: with $F=B_{G,S}(n)$ we have
$F\triangle F s\subseteq B_{G,S}(n+1)$ so
$\#(F\triangle F s)\le v(n+1)-v(n)$, and therefore
$\Fol(v(n)/(v(n+1)-v(n)))\le v(n)$. Assuming that $v$
is the restriction to $\N$ of a differentiable function, we may seek a
function $f$ satisfying $f(1/\log(v)')=v$ to obtain an upper bound
$\Fol(n)\le f(n)$. For example, if $v(n)\propto n^d$
then $f(n)\propto n^d$ and therefore the estimate given by
Proposition~\ref{prop:csc} is at worst a constant off. The ``$1/2$''
in Proposition~\ref{prop:csc} cannot easily be eliminated: in a finite
group, we shouldn't expect any good estimates for sets larger than
half of the group.

Note also that we have $\Fol(n)>n$ as soon as $X$ is infinite, since
then $\#(F\triangle F s)\ge1$ for all $F\Subset X$. No analogue of
Proposition~\ref{prop:csc} may hold for $G$-sets in general:
\begin{exercise}[**]
  Let $X$ be a $G$-set for a finitely generated group $G$. Prove that
  $\Fol(n)$ is linear (i.e.\ $\Fol(n)\le C n$ for some constant $C$)
  if and only if the Schreier graph of $X$ has bounded cutsets, namely
  there is a bound $C'$ such that every finite set of vertices can be
  separated by removing at most $C'$ vertices.
\end{exercise}

\begin{exercise}[**]
  We saw in Exercise~\ref{ex:leftamen} that a group is
  ``left-amenable'' if and only if it is amenable. First prove
  directly that if a group admits sets that are almost invariant under
  right translation, then it admits sets that are almost invariant
  under left translation.

  Next, prove that the infinite dihedral group
  $D_\infty=\langle a,b\mid a^2,b^2\rangle$ admits finite subsets that
  are almost right-invariant but far from left-invariant, namely
  subsets $F_n\Subset D_\infty$ with
  $\#(F_n\triangle F_n g)/\#F_n\to 0$ for all $g\in D_\infty$ but
  $\#(F_n\triangle g F_n)/\#F_n\not\to 0$.

  Give on the other hand a family of sets $F_n\Subset D_\infty$ with
  $\#(F_n\triangle g F_n h)/\#F_n\to 0$ for all $g,h\in D_\infty$.
\end{exercise}

We return to Definition~\ref{defn:folnergraph}. A connected graph
$\mathscr G=(V,E)$ endows its set of vertices $V$ with the structure
of a metric space still written $\mathscr G$: the distance between two
vertices is the minimal length of a path connecting them. Given two
metric spaces (e.g.\ connected graphs) $X,Y$, a map $f\colon X\to Y$
is \emph{quasi-Lipschitz} if there is a constant $C$ with
\[d(f(x),f(y))\le C d(x,y)+C,
\]
and $f$ is a \emph{quasi-isometry} if there is a quasi-Lipschitz map
$g\colon Y\to X$ with $\sup_{x\in X}d(x,g(f(x)))<\infty$ and
$\sup_{y\in Y}d(y,f(g(y)))<\infty$.
\begin{exercise}[*]
  Let $\mathscr G=(V,E)$ be a graph, and let $\mathscr G'=(V',E')$ be
  its barycentric subdivision: $V'=V\sqcup E$ and $E'=E\times\{+,-\}$
  with $(e,\pm)^\pm=e^\pm$ and $(e,\pm)^{\mp}=e$. Prove that
  $\mathscr G$ and $\mathscr G'$ are quasi-isometric.
\end{exercise}

\begin{exercise}[*]
  Let $G$ be a finitely generated group. Prove that all Cayley graphs
  of $G$ with respect to finite generating sets are quasi-isometric;
  that all finite-index subgroups of $G$ are have quasi-isometric
  Cayley graphs; and that all quotients of $G$ by finite subgroups
  have quasi-isometric Schreier graphs.
\end{exercise}

\begin{proposition}
  Let $\mathscr G=(V,E)$ and $\mathscr G'=(V',E')$ be bounded-degree
  graphs, and let $f\colon\mathscr G\to\mathscr G'$ be quasi-Lipschitz
  with $\sup_{y\in V'}d(y,f(V))<\infty$. If $\mathscr G$ is amenable
  then $\mathscr G'$ is amenable.

  In particular, if $\mathscr G,\mathscr G'$ are quasi-isometric then
  $\mathscr G$ is amenable if and only if $\mathscr G'$ is amenable.
\end{proposition}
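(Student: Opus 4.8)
The plan is to prove the contrapositive: from a uniform isoperimetric inequality on $\mathscr G'$ I will derive one on $\mathscr G$. For a finite non-empty set of vertices $A$ in a graph write $\delta(A)$ for the number of edges with exactly one endpoint in $A$; since all degrees are bounded by some $D$, this is comparable (up to the factor $D$) to the boundary $\#\partial A$ used above, and F\o lner's condition for a graph is exactly $\inf_A\delta(A)/\#A=0$. Assume then that $\mathscr G'$ is not amenable: there is $\beta>0$ with $\delta'(A)\ge\beta\#A$ for every finite $A\subseteq V'$. By the coarea identity
\[\sum_{\{y,z\}\in E'}|\psi(y)-\psi(z)|=\sum_{t\ge0}\delta'(\{\psi>t\}),\qquad \psi\colon V'\to\Z_{\ge0}\text{ finitely supported,}\]
this is equivalent to the $\ell^1$ form $\sum_{\{y,z\}\in E'}|\psi(y)-\psi(z)|\ge\beta\|\psi\|_1$ for all such $\psi$. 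Fix also a quasi-Lipschitz constant $C\ge1$ for $f$, and let $b_r$ denote the maximal size of a radius-$r$ ball in either graph.

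Now let $F\Subset V$ be an arbitrary finite set; the goal is a lower bound $\delta(F)\ge c\,\#F$ with $c$ independent of $F$. Push the indicator of $F$ forward along $f$: set $w:=f_*\mathbb 1_F$, i.e.\ $w(y)=\#\bigl(f^{-1}(y)\cap F\bigr)$, a finitely supported function on $V'$ with $\|w\|_1=\#F$. Feeding $w$ into the $\ell^1$ isoperimetric inequality for $\mathscr G'$ gives at once
\[\sum_{\{y,z\}\in E'}\bigl|w(y)-w(z)\bigr|\;\ge\;\beta\,\#F.\]
Everything then reduces to the complementary estimate $\sum_{\{y,z\}\in E'}|w(y)-w(z)|\le K\,\delta(F)$ for a constant $K=K(C,D)$: combining the two inequalities yields $\delta(F)\ge(\beta/K)\#F$, so $\mathscr G$ is not amenable. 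The coboundedness hypothesis on $f$ will be used precisely in establishing this upper estimate.

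For that upper estimate the plan is again coarea: $\sum_{\{y,z\}\in E'}|w(y)-w(z)|=\sum_{t\ge0}\delta'(\{w>t\})$ with $\{w>t\}=\{y:\#(f^{-1}(y)\cap F)>t\}$, and one charges the edges of each $\partial_{\mathscr G'}\{w>t\}$, with multiplicity bounded in terms of $C$ and $D$, to edges of $\partial_{\mathscr G}F$, using that $f$ sends every edge of $\mathscr G$ into a ball of radius $C$ and thickening appropriately before summing over $t$. The step I expect to be the real obstacle is controlling the \emph{collapsing} of $f$: the quasi-Lipschitz hypothesis bounds how much $f$ spreads distances but says nothing about how much it contracts them, and if a large low-boundary piece of $\mathscr G$ (in the extreme, a whole fibre of $f$) is mapped onto a bounded-diameter subset of $\mathscr G'$, then $w$ is concentrated, $\sum_{\{y,z\}}|w(y)-w(z)|$ is of order $\#F$ while $\delta(F)$ stays small, and the bound fails for that $F$. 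The resolution must combine the hypotheses rather than use them separately — bounded degree of $\mathscr G'$ forces the thickened image $\{y:d'(y,f(F))\le C\}$ to have size comparable to $\#f(F)$, and one must show that no arbitrarily good F\o lner set $F$ of $\mathscr G$ can be absorbed into the fibres of $f$, so that some $F$ genuinely spreads over $\mathscr G'$. When $f$ is in addition a quasi-isometry this last point is immediate, since the coarse inverse forces every fibre of $f$ to have diameter at most some $C'$, hence at most $b_{C'}$ vertices, which makes the charging step routine; this already settles the "in particular" clause.
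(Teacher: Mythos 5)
You have put your finger on the real obstruction, and it is not a step you have yet to supply: the first clause of the proposition is actually false with the paper's definition of quasi-Lipschitz, which bounds $d(f(x),f(y))$ only from above. Take $\mathscr G$ to be the bi-infinite line (the Cayley graph of $\Z$) and $\mathscr G'$ the $3$-regular infinite tree. Enumerate $V'=\{v_0,v_1,\dots\}$ and let $f$ traverse, one edge per unit of time, the concatenated geodesics $v_0\to v_1\to v_2\to\cdots$, with $f(-n)\coloneqq v_0$ for $n\ge0$. Then $f$ is a $1$-Lipschitz surjection, $\mathscr G$ is amenable, and $\mathscr G'$ is not. This realises exactly the collapsing failure mode you describe: long F\o lner intervals of $\Z$ are crushed onto bounded-diameter pieces of the tree, $w=f_*\mathbb 1_F$ is sharply concentrated, and the estimate $\sum_{\{y,z\}\in E'}|w(y)-w(z)|\le K\,\delta(F)$ that your plan requires fails for every uniform $K$. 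There is no estimate to be had, so your hesitation about the charging step was well placed.

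The ``in particular'' clause, which you do settle, is the only correct part of the statement, and your sketch for it is sound. The coarse inverse $g$ supplies both ingredients you name: a uniform bound on fibre cardinalities (so $\|w\|_\infty$ is bounded and $\#f(F)\gtrsim\#F$), and a reverse coarse-Lipschitz inequality, which is what lets you pull each edge of $\mathscr G'$ witnessing $w(y)\neq w(z)$ back to a uniformly short path from $F$ to $V\setminus F$ and charge it, with bounded multiplicity, to an edge of $\partial F$. Your argument is contrapositive where the paper's is direct --- the paper pushes a F\o lner sequence through $f$ rather than pulling an isoperimetric inequality back --- but both routes need the lower distortion bound; in fact the paper's asserted inclusion $\{f(e^+)\mid e\in\partial F\}\subseteq\{e_D^+\mid(e_1,\dots,e_D)\in\partial^D(f(F))\}$ already presupposes $f(e^+)\notin f(F)$, a condition the stated hypotheses do not provide. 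The hypothesis that makes the proposition true is that $f$ be a quasi-isometric embedding with cobounded image, that is, a quasi-isometry.
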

\begin{proof}
  Let $\mathscr G''=(V'',E'')$ be a graph. For $F\Subset V''$ and
  $k\in\N$, define
  \[\partial^k(F)=\{(e_1,\dots,e_k)\mid e_i\in E'',e_i^+=e_{i+1}^-,e_1^-\in F,e_k^+\not\in F\}.\]
  Recall that $\mathscr G$ is amenable if and only if
  $\inf_{F\Subset V}\#\partial F/\#F=0$. Equivalently,
  $\inf_{F\Subset V}\#\{e^+\mid e\in\partial F\}/\#F=0$.  There exists
  a constant $D$ such that, for every $F\Subset V$, we have
  $\{f(e^+)\mid e\in\partial F\}\subseteq\{e_D^+\mid
  (e_1,\dots,e_D)\in\partial^D(f(F))\}$. Therefore,
  $\inf_{F\Subset V}\#\partial^D(f(F))/\#f(F)=0$, and therefore
  $\inf_{F'\Subset V'}\#\partial(F')/\#F'=0$.
\end{proof}

\begin{exercise}[*]
  Prove that if $\mathscr G,\mathscr G'$ are quasi-isometric graphs
  then their F\o lner functions~\eqref{eq:fol} are equivalent in the
  sense that $\Fol_{\mathscr G}(n)\le C\Fol_{\mathscr G'}(C n+C)+C$
  and conversely, for some constant $C$.
\end{exercise}

There are quasi-invariant groups with quite distinct algebraic
properties; e.g., $A\wr\Z$ and $B\wr\Z$ are quasi-isometric for all
finite groups $A,B$ of same cardinality. If $A$ is Abelian but $B$ is
simple, then $A\wr\Z$ is metabelian and residually finite but $B\wr\Z$
is neither. However, these groups are quasi-isometric (and both
amenable).

\subsection{Reiter's criterion}
F\o lner sets --- finite subsets $F\Subset X$ that are almost
invariant under translation --- may be thought of as almost-invariant
characteristic functions.

\begin{definition}[see~\cite{reiter:harmonicanalysis}*{page~168}]
  Let $X$ be a $G$-set. It satisfies \emph{Reiter's condition} for
  $p\ge1$ if for every finite subset $S\Subset G$ and every
  $\epsilon>0$ there exists a positive function $\phi\in\ell^p(X)$
  with $\|\phi s-\phi\|<\epsilon\|\phi\|$ for all $s\in S$.
\end{definition}

\begin{theorem}\label{thm:folneramen}
  Let $X$ be a $G$-set. The following are equivalent:
  \begin{enumerate}
  \item $X$ is amenable;
  \item $X$ satisfies Reiter's condition for $p=1$;
  \item $X$ satisfies Reiter's condition for some $p\in[1,\infty)$;
  \item $X$ satisfies Reiter's condition for all $p\in[1,\infty)$;
  \item $X$ satisfies F\o lner's condition.
  \end{enumerate}
\end{theorem}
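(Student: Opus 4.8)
The plan is to run the cycle $(1)\Rightarrow(2)\Rightarrow(5)\Rightarrow(4)\Rightarrow(3)\Rightarrow(1)$; every implication except the first is soft, and the first carries the analytic content.

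For $(1)\Rightarrow(2)$ I would argue by contradiction. If Reiter's $\ell^1$-condition fails there are $S\Subset G$ and $\epsilon>0$ with $\sum_{s\in S}\|\phi-\phi s\|_1\ge\epsilon\|\phi\|_1$ for every $\phi\in\ell^1_+(X)$, hence for every $\mu\in\mathscr P(X)$. Then $0$ lies at positive distance from the convex set $C=\{(\mu-\mu s)_{s\in S}:\mu\in\mathscr P(X)\}$ in the Banach space $\bigoplus_{s\in S}\ell^1(X)$, so Hahn--Banach separation produces $(h_s)_{s\in S}\in\bigoplus_{s\in S}\ell^\infty(X)$ and $\delta>0$ with $\sum_{s\in S}\langle h_s,\mu-\mu s\rangle\ge\delta$ for all $\mu\in\mathscr P(X)$. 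Testing on Dirac measures gives $\sum_{s\in S}\bigl(h_s(x)-h_s(xs)\bigr)\ge\delta$ for every $x\in X$; since $h_s(xs)=(h_s s^{-1})(x)$, the function $\sum_{s\in S}\bigl((h_s s^{-1})-(h_s s^{-1})s\bigr)$ is of the shape to which the Dixmier criterion applies, yet it takes only values $\le-\delta<0$, contradicting amenability of $X$ (Theorem~\ref{thm:dixmier}). (One may instead bypass Theorem~\ref{thm:dixmier}: write an invariant mean as a weak-$*$ limit of probability measures, note that $0$ lies in the weak closure of $C$, and apply Mazur's theorem.)

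For $(2)\Rightarrow(5)$: given $S\Subset G$ and $\epsilon>0$, choose $\phi\in\ell^1_+(X)$ with $\|\phi-\phi s\|_1<(\epsilon/\#S)\|\phi\|_1$ for all $s\in S$, and for $t>0$ set $F_t=\{x:\phi(x)>t\}$, a finite set since $\phi\in\ell^1(X)$. The layer-cake identities
\[\|\phi\|_1=\int_0^\infty\#F_t\,dt,\qquad\|\phi-\phi s\|_1=\int_0^\infty\#(F_t\triangle F_t s)\,dt\]
(the second because, for fixed $x$, the map $t\mapsto\mathbb 1_{F_t}(x)-\mathbb 1_{F_t s}(x)$ is the signed indicator of the interval between $\phi(x)$ and $\phi(xs^{-1})$) show that some level $t$ has $\#F_t>0$ and $\sum_{s\in S}\#(F_t\triangle F_t s)<\epsilon\#F_t$; then $F=F_t$ witnesses F\o lner's condition. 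For the rest: $(4)\Rightarrow(3)$ is trivial; $(5)\Rightarrow(4)$ holds because $\mathbb 1_F\in\ell^p(X)$ with $\|\mathbb 1_F\|_p^p=\#F$ and $\|\mathbb 1_F-\mathbb 1_F s\|_p^p=\#(F\triangle F s)$, so applying F\o lner's condition to $S\cup S^{-1}$ with parameter $\epsilon^p/2$ (and Lemma~\ref{lem:folner1}) gives the required $F$; and for $(3)\Rightarrow(1)$ one fixes the relevant $p$, chooses along the net indexed by pairs $(S,\epsilon)$ (ordered by $S\subseteq S'$, $\epsilon\ge\epsilon'$) functions $\phi_{S,\epsilon}\in\ell^p_+(X)$ of norm $1$ with $\|\phi_{S,\epsilon}-\phi_{S,\epsilon}s\|_p<\epsilon$ for $s\in S$, sets $\mu_{S,\epsilon}=\phi_{S,\epsilon}^{p}\in\mathscr P(X)$, and uses the pointwise bound $|a^p-b^p|\le p|a-b|(a^{p-1}+b^{p-1})$ together with H\"older's inequality to get $\|\mu_{S,\epsilon}-\mu_{S,\epsilon}s\|_1\le 2p\,\|\phi_{S,\epsilon}-\phi_{S,\epsilon}s\|_p$; any weak-$*$ cluster point of $(\mu_{S,\epsilon})$ in the compact space $\mathscr M(X)$ (Lemma~\ref{lem:mcompact}) is $G$-invariant, so $X$ is amenable by Corollary~\ref{cor:vneumann}.

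The main obstacle is $(1)\Rightarrow(2)$: converting the abstract, weak-$*$-level invariant mean into honest norm-almost-invariant vectors of $\ell^1(X)$. This is exactly the ``weak closure equals norm closure for convex sets'' phenomenon, and the only place a Hahn--Banach-type input is genuinely needed; everything else reduces to elementary measure theory and continuity of the maps $\phi\mapsto\phi^{p}$ and $\phi\mapsto\phi^{1/p}$ between $\ell^1(X)$ and $\ell^p(X)$.
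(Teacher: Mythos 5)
Your proof is correct, and its overall shape is close to the paper's, but you organize the implications as a single cycle $(1)\Rightarrow(2)\Rightarrow(5)\Rightarrow(4)\Rightarrow(3)\Rightarrow(1)$ whereas the paper proves $(1)\Rightarrow(2)\Rightarrow(4)\Rightarrow(3)\Rightarrow(2)\Rightarrow(5)\Rightarrow(1)$. The genuine difference is in $(1)\Rightarrow(2)$. The paper argues directly: an invariant mean $m\in\ell^\infty(X)^*$ is a weak-$*$ limit of elements of $\ell^1(X)$, so $0$ lies in the weak-$*$ closure of the convex set $K=\{\bigoplus_s(\mu s-\mu):\mu\in\mathscr P(X)\}$, hence (Hahn--Banach/Mazur) in its norm closure. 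You instead argue by contradiction, separate $0$ from that convex set by a Hahn--Banach functional $(h_s)\in\bigoplus_s\ell^\infty(X)$, test on Dirac masses, and observe that the resulting function $\sum_s\bigl((h_ss^{-1})-(h_ss^{-1})s\bigr)$ is a Dixmier form with everywhere-negative values, contradicting Theorem~\ref{thm:dixmier}; your parenthetical remark is essentially the paper's direct argument. Both routes rely on Hahn--Banach; yours packages the convex-separation step once and for all inside Dixmier's criterion, which is a clean modularization given that Theorem~\ref{thm:dixmier} is already available. Your $(5)\Rightarrow(4)$ via indicator functions (noting $\|\mathbb 1_F-\mathbb 1_F s\|_p^p=\#(F\triangle Fs)$) is a small simplification over the paper's $(2)\Rightarrow(4)$ via the map $\psi\mapsto\psi^{1/p}$; conversely your $(3)\Rightarrow(1)$, taking a weak-$*$ cluster point of $\phi^p$, merges the paper's two steps $(3)\Rightarrow(2)$ and $(5)\Rightarrow(1)$ (the H\"older estimate $\|\phi^p-(\phi s)^p\|_1\le 2p\|\phi-\phi s\|_p\|\phi\|_p^{p-1}$ is the same one the paper uses, and the net-cluster-point argument mirrors the paper's final step with $\mu_n=\#(A\cap F_n)/\#F_n$ replaced by $\phi^p$). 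In short: correct, with the same analytic content, but a more linear proof architecture and a Dixmier-based shortcut for the one hard direction.
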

\begin{proof}
  $(1)\Rightarrow(2)$ Given $S\Subset G$ and $\epsilon>0$, consider
  the subset
  \[K=\big\{\bigoplus_{s\in S}(\mu s-\mu)\mid \mu\in\mathscr
    P(X)\}\subset\ell^1(X)^S.
  \]
  Since $X$ is amenable, there exists a $G$-invariant functional
  $m\in\ell^\infty(X)^*$ by Corollary~\ref{cor:vneumann}. Since
  $\ell^1(X)$ is weak*-dense in $\ell^\infty(X)^*$, there exists a net
  $(\mu_n)_{n\in\mathscr N}$ in $\ell^1(X)$ with $\mu_n\to\mu$ in the
  weak*-topology, so $\bigoplus_{s\in S}(\mu_n s-\mu_n)\in K$
  converges to $0$ in the weak*-topology on $\ell^1(X)^S$, so
  $\overline K^{\text{weak*}}\ni0$. Since $K$ is convex, its norm
  closure $\overline K$ also contains $0$, by the Hahn-Banach theorem
  (see e.g.~\cite{rudin:fa}*{Theorem~3.12}); so there exists
  $\mu\in\mathscr P(X)$ with $\|\mu s-\mu\|<\epsilon$ for all
  $s\in S$.

  $(2)\Rightarrow(4)$ Let $\psi\in\ell^1(X)$ satisfy
  $\|\psi s-\psi\|<\epsilon\|\psi\|$ for all $s\in S$. Define
  $\phi(x)\coloneqq\psi(x)^{1/p}$; then $\phi\in\ell^p(X)$ with
  $\|\phi\|_p=\|\psi\|^{1/p}$, and
  \begin{align*}
    \|\phi s-\phi\|_p^p&=\sum_{x\in X}|\phi(x s^{-1})-\phi(x)|^p=\sum_{x\in X}|\psi(x s^{-1})^{1/p}-\psi(x)^{1/p}|^p\\
                       &\le\sum_{x\in X}|\psi(x s^{-1})-\psi(x)|\text{ because }|A^{1/p}-B^{1/p}|\le|A-B|^{1/p}\text{ for all }A,B\\
                       &=\|\psi s-\psi\|<\epsilon\|\psi\|=\epsilon\|\phi\|_p^p.
  \end{align*}

  $(4)\Rightarrow(3)$ is obvious, and so is $(2)\Rightarrow(3)$.

  $(3)\Rightarrow(2)$ Let $\psi\in\ell^p(X)$ satisfy
  $\|\psi s-\psi\|<\epsilon\|\phi\|$ for all $s\in S$. Define
  $\phi(x)\coloneqq\psi(x)^p$; then $\phi\in\ell^1(X)$ with
  $\|\phi\|_1=\|\psi\|^p$, and
  \begin{align*}
    \|\phi s-\phi\|&=\sum_{x\in X}|\phi(x s^{-1})-\phi(x)|=\sum_{x\in X}|\psi(x s^{-1})^p-\psi(x)^p|\\
                   &\le\sum_{x\in X}p|\psi(x s^{-1})-\psi(x)|\max\{\psi(x
                     s^{-1}),\psi(x)\}^{p-1}\text{ because }|X^p-Y^p|\le p|X-Y|\max\{X,Y\}^{p-1}\\
                   &\le p\Big(\sum_{x\in X}|\psi(x s^{-1})-\psi(x)|^p\Big)^{1/p}\Big(\sum_{x\in X}|\psi(x s^{-1})+\psi(x)|^p\Big)^{1-1/p}\text{ by H\"older's inequality}\\
                   &= p \|\psi s-\psi\|_p \|\psi s+\psi\|_p^{p-1}<p\epsilon\|\psi\|_p2^{p-1}\|\psi\|_p^{p-1}=p2^{p-1}\epsilon\|\phi\|.
  \end{align*}

  $(2)\Rightarrow(5)$ Given $S\Subset G$ and $\epsilon>0$, let
  $\phi\in\ell^1(X)$ be positive and satisfy
  $\|\phi s-\phi\|<\epsilon\|\phi\|$ for all $s\in S$. For all
  $r\in\R_+$, consider the set $F_r=\{x\in X\mid \phi(x)\ge r\}$. Then
  $\phi=\int\mathbb1_{F_r}d r$ and $\phi s=\int\mathbb1_{F_r s}d r$, so
  \[\int(\#F_r s\triangle F_r)d r=\|\phi s-\phi\|<\epsilon\|\phi\|=\epsilon\int\#F_r d r;
  \]
  therefore, there exists $r\in\R_+$ with
  $\#(F_r s\triangle F_r)<\epsilon\#F_r$, and $X$ satisfies F\o lner's
  criterion by Lemma~\ref{lem:folner1}.

  $(5)\Rightarrow(1)$ By Lemma~\ref{lem:folnerlim}, there exists a net
  $(F_n)_{n\in\mathscr N}$ with
  $\lim_{n\to\infty}\#(F_n g\setminus F_n)/\#F_n\to0$ for all
  $g\in G$.

  For each $n\in\mathscr N$, consider the ``discrete'' mean
  $\mu_n\in\mathscr M(X)$ defined by
  \[\mu_n(A)=\frac{\#(A\cap F_n)}{\#F_n}.\]

  Since $\mathscr M(X)$ is compact, the net $(\mu_n)_{n\in\mathscr N}$
  has an accumulation point, say $\mu$. We will show that $\mu$ is a
  $G$-invariant mean by a standard ``$\delta/3$'' argument.

  Given $g\in G$ and $A\subseteq X$, we show
  $|\mu(A)-\mu(A g)|<\delta$ for any $\delta>0$.  There is
  $n\in\mathscr N$ with
  \[n>(\{g,g^{-1}\},\delta/3),\quad |\mu_n(A)-\mu(A)|<\delta/3,\quad
    |\mu_n(A g)-\mu(A g)|<\delta/3,
  \]
  because the $\mu_n$ converge pointwise to $\mu$. Then
  \begin{align*}
    |\#(A\cap F_n) - \#(A g\cap F_n)| &= |\#(A\cap F_n)-\#(A\cap F_n g^{-1})|\\
      &\le\max\{\#(F_n\setminus F_n g^{-1}),\#(F_n g^{-1}\setminus F_n)\}\\
      &\le\#(F_n\{g,g^{-1}\}\setminus F_n)<\epsilon\#F_n<\delta/3\#F_n,
  \end{align*}
  so $|\mu_n(A)-\mu_n(A g)|<\delta/3$ and
  \[|\mu(A)-\mu(A g)|\le|\mu_n(A)-\mu(A)|+|\mu_n(A)-\mu_n(A g)|+|\mu_n(A g)-\mu(A g)|<\delta.
  \]
  Since this holds for all $\delta>0$, we get
  $\mu(A)=\mu(A g)$.
\end{proof}

In fact, the `$\#(F s\setminus F)/\#F\to0$' in F\o lner's condition
can be substantially weakened:
\begin{proposition}[Gournay]
  Let $X$ be a $G$-set. Then $X$ is amenable if and only if there is a
  constant $c<1$ with the following property: for every finite subset
  $S\Subset X$ there is a finite subset $F\Subset X$ with
  $\#(F s\setminus F)\le c\#F$ for all $s\in S$.
\end{proposition}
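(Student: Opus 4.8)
The plan is to prove the non-trivial ($\Leftarrow$) direction by contraposition; the forward direction is immediate, since if $X$ is amenable then by Theorem~\ref{thm:folneramen} it satisfies F\o lner's condition, so for every $S\Subset G$ there is a nonempty finite $F\Subset X$ with $\#(Fs\setminus F)\le\#(FS\setminus F)<\tfrac12\#F$ for all $s\in S$, and $c=\tfrac12$ works. So assume $X$ is \emph{not} amenable; the goal is to show that for \emph{every} $c<1$ there is a finite $S\Subset G$ with $\max_{s\in S}\#(Fs\setminus F)>c\,\#F$ for every nonempty $F\Subset X$, contradicting the stated condition for that $c$.

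The heart of the argument, and the step I expect to require the most care, is converting non-amenability into a spectral gap for a random walk on $X$. Since $X$ is not amenable it fails Reiter's condition for $p=2$ (Theorem~\ref{thm:folneramen}), so there are a finite symmetric $S_0\Subset G$ and $\epsilon_0>0$ with $\max_{s\in S_0}\|\phi s-\phi\|_2\ge\epsilon_0\|\phi\|_2$ for all $\phi\in\ell^2(X)$ (passing from positive to arbitrary $\phi$ is free, using $\bigl\||\phi|s-|\phi|\bigr\|\le\|\phi s-\phi\|$ pointwise). Put $m=\#S_0$ and let $P\colon\phi\mapsto\frac1m\sum_{s\in S_0}\phi s$ on $\ell^2(X)$, a self-adjoint contraction. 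Expanding $\sum_{s\in S_0}\|\phi s-\phi\|^2=2m\bigl(\|\phi\|^2-\langle P\phi,\phi\rangle\bigr)$ and comparing with $\epsilon_0^2\|\phi\|^2$ gives $\langle P\phi,\phi\rangle\le(1-\tfrac{\epsilon_0^2}{2m})\|\phi\|^2$; hence the lazy operator $Q=\tfrac12(\mathrm{Id}+P)$, which is \emph{positive} and self-adjoint, satisfies $\|Q\|=\sup\operatorname{spec}(Q)\le 1-c'$ with $c'=\tfrac{\epsilon_0^2}{4m}>0$. The passage to $Q$ is essential: the upper spectral bound on $P$ does not control $\|P\|$, since $-1$ could lie in $\operatorname{spec}(P)$, whereas for the positive operator $Q$ the operator norm equals the spectral supremum. (This is, in effect, the elementary direction of Kesten's spectral criterion, obtained here directly from Reiter's condition.)

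The last step is a routine ``high convolution power'' amplification. Write $Q=\sum_{g\in G}\nu(g)(\phi\mapsto\phi g)$ for a symmetric probability measure $\nu$ on $G$ supported on $\{1\}\cup S_0$; then $Q^k$ is convolution by $\nu^{*k}$, supported on the finite set $S:=(\{1\}\cup S_0)^k\Subset G$. For any nonempty $F\Subset X$, since $\mathbb 1_F g=\mathbb 1_{Fg}$,
\[
\sum_{g\in G}\nu^{*k}(g)\,\#(Fg\cap F)=\langle Q^k\mathbb 1_F,\mathbb 1_F\rangle\le\|Q\|^k\|\mathbb 1_F\|_2^2\le(1-c')^k\,\#F ,
\]
and since $\sum_g\nu^{*k}(g)=1$ and $\#(Fg\cap F)=\#F-\#(Fg\setminus F)$, this rearranges to $\sum_g\nu^{*k}(g)\,\#(Fg\setminus F)\ge\bigl(1-(1-c')^k\bigr)\#F$, so some $g\in S$ satisfies $\#(Fg\setminus F)\ge\bigl(1-(1-c')^k\bigr)\#F$. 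Finally I would choose $k$ with $(1-c')^k<1-c$; then this bound exceeds $c\,\#F$ for every nonempty $F\Subset X$, contradicting the weak F\o lner condition for the finite set $S$. Hence $X$ is amenable. (As with Definition~\ref{defn:folnergraph} and Theorem~\ref{thm:folneramen}, the weak F\o lner condition is to be read over nonempty $F$, lest $F=\emptyset$ make it vacuous.)
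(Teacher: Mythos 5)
Your proof is correct, and it takes a genuinely different route from the paper's. The paper argues the non-trivial direction $(\Leftarrow)$ directly: it takes the normalized characteristic functions $\xi_n=\mathbb1_{F_n}/\sqrt{\#F_n}$ of the weakly F\o lner sets, observes $\langle\xi_n,\xi_n g\rangle\ge1-c>0$, passes to an ultraproduct of $\ell^2(X)$, and extracts a genuinely almost-invariant vector as the (normalized) minimal-norm element of the closed convex hull of $\{\xi g\}$ — this is the classical ``circumcenter'' trick for producing invariant vectors from uniformly-correlated translates. You instead argue by contraposition, converting the failure of Reiter's condition for $p=2$ into a spectral gap for the lazy averaging operator $Q=\tfrac12(\mathrm{Id}+P)$, and then amplifying by taking powers $Q^k$ so that the weak F\o lner constant $c$ would be forced above any $c<1$. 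Both proofs live in $\ell^2(X)$, but the underlying mechanisms differ: the paper's is geometric (convexity in an ultraproduct Hilbert space, requiring a non-principal ultrafilter), whereas yours is operator-theoretic and elementary, self-containing the easy half of Kesten's criterion rather than invoking it, and it is quantitative — the gap $c'=\epsilon_0^2/4m$ together with the choice of $k$ makes the dependence on $c$ explicit. Two small points worth flagging that you already handle correctly but that are easy to get wrong: the passage from ``$\sup\operatorname{spec}(P)\le1-c'$'' to an operator-norm bound genuinely requires the laziness (since $-1$ could be in $\operatorname{spec}(P)$), and the identity $\#(Fg\cap F)=\#F-\#(Fg\setminus F)$ uses $\#(Fg)=\#F$, i.e.\ that $G$ acts by bijections.
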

\begin{proof}
  $(\Rightarrow)$ is obvious, by Lemma~\ref{lem:folner1} and
  Theorem~\ref{thm:folneramen}.

  $(\Leftarrow)$ by the condition of the proposition, there exists a
  net $(F_n)_{n\in\mathscr N}$ of finite subsets of $X$ (say indexed
  by $\mathfrak P_f(G)$) with
  $\limsup_{n\to\infty}\#(F_n g\setminus F_n)/\#F_n\le c$ for all
  $g\in G$. Set
  $\xi_n\coloneqq\mathbb 1_{F_n}/\sqrt{\#F_n}\in\ell^2(X)$ be the
  normalized characteristic function of $F_n$. We have
  \[2-2\langle\xi_n,\xi_n g\rangle=\|\xi_n g-\xi_n\|_2^2=\|\mathbb 1_{F_n g}-\mathbb 1_{F_n}\|_1/\#F_n=2\#(F_n g\setminus F_n)/\#F_n,
  \]
  so $\langle\xi_n,\xi_n g\rangle\ge1-c$ for all $n\gg1$.

  Choose now a non-principal ultrafilter $\mathfrak F$ on
  $\mathscr N$, and consider the ultraproduct space
  $\mathscr H\coloneqq\ell^2(X)^{\mathfrak F}$: it is a Hilbert space,
  whose elements are equivalence classes of sequences
  $(\eta_n)_{n\in\mathscr N}$ with $\eta_n\in\ell^2(X)$ for all $n$
  and $\sum_{n\in\mathscr N}\|\eta_n\|^2<\infty$, under the relation
  $(\eta_n)\sim(\eta'_n)$ if $\lim_{\mathfrak
    F}\|\eta_n-\eta'_n\|=0$.

  Write $\xi=(\xi_n)\in\mathscr H$, and let $K$ denote the convex hull
  in $\mathscr H$ of $\{\xi g\mid g\in G\}$. We have
  $\langle\xi g,\xi\rangle\ge1-c$ for all $g\in G$, so
  $\langle\xi,\eta\rangle\ge1-c>0$ for all $\eta\in K$, and in
  particular $0\not\in K$. Let $\zeta'$ be the element of $K$ of
  minimal norm, and set $\zeta=\zeta'/\|\zeta'\|$, represented by a
  sequence $(\zeta_n)_{n\in\mathscr N}$ with $\zeta_n\in\ell^2(X)$ of
  norm $1$. Since $\zeta g=\zeta$ by unicity of the element of minimal
  norm in $K$, we have $\|\zeta_n-\zeta_n g\|\to0$ for all $g\in G$,
  so $X$ is amenable by Theorem~\ref{thm:folneramen}(3).
\end{proof}

We finally present a result that puts as much symmetry between $X$ and
$G$ as possible, with an eye towards the corresponding notion with the
roles of $G$ and $X$ interchanged, see
Theorem~\ref{thm:Lamenable}:
\begin{proposition}\label{prop:Ramenable}
  Let $X$ be a $G$-set. Then $X$ is amenable if and only if for every
  $\epsilon>0$ and every $g\in\varpi(\ell^1G)$ there exists a positive
  function $f\in\ell^1(X)$ with $\|f g\|<\epsilon\|f\|$.
\end{proposition}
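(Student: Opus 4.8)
The plan is to recognize the stated condition as Reiter's condition for $p=1$ (Theorem~\ref{thm:folneramen}(2)) up to a density manipulation, and thereby reduce to the equivalences already proved.

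For the ``only if'' direction I would argue as follows. Assume $X$ is amenable, and fix $\epsilon>0$ and $g\in\varpi(\ell^1G)$. Since finitely supported functions of zero sum are dense in $\varpi(\ell^1G)$, choose such a $g'=\sum_{i=1}^n c_i\delta_{s_i}$ (so $\sum_i c_i=0$, i.e.\ $g'=\sum_i c_i(\delta_{s_i}-\delta_e)$) with $\|g-g'\|_1<\epsilon/2$. Apply Theorem~\ref{thm:folneramen}(2) to $S=\{s_1,\dots,s_n\}$ with tolerance $\epsilon'=\epsilon/\bigl(2\sum_i|c_i|+2\bigr)$ to get a positive $f\in\ell^1(X)$ with $\|fs_i-f\|<\epsilon'\|f\|$ for all $i$; then, using the Banach-module bound $\|f(g-g')\|\le\|f\|_1\|g-g'\|_1$,
\[
  \|fg\|\le\|fg'\|+\|f(g-g')\|\le\sum_i|c_i|\,\|fs_i-f\|+\|f\|\,\|g-g'\|_1<\tfrac\epsilon2\|f\|+\tfrac\epsilon2\|f\|=\epsilon\|f\|,
\]
as desired.

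For the ``if'' direction I would aim to verify Reiter's condition for $p=1$, i.e.\ to produce, for each $S=\{s_1,\dots,s_n\}\Subset G$ and $\epsilon>0$, a $\mu\in\mathscr P(X)$ with $\|\mu s_i-\mu\|<\epsilon$ for all $i$; amenability then follows from Theorem~\ref{thm:folneramen}. Arguing by contradiction, if no such $\mu$ exists then the convex set $K=\{(\mu s_1-\mu,\dots,\mu s_n-\mu):\mu\in\mathscr P(X)\}\subseteq\ell^1(X)^n$ is bounded away from $0$, so Hahn--Banach (as in the proof of $(1)\Rightarrow(2)$ of Theorem~\ref{thm:folneramen}) provides $h_1,\dots,h_n\in\ell^\infty(X)$ and $\alpha>0$ with $\sum_i\langle\mu s_i-\mu,h_i\rangle\ge\alpha$ for all $\mu\in\mathscr P(X)$; evaluating at Dirac measures, the bounded function $H:=\sum_i(h_i\cdot s_i^{-1}-h_i)$ has $\inf_X H\ge\alpha$. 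The task is then to promote this ``multi-function'' obstruction to a single pair $h\in\ell^\infty(X)$, $g\in\varpi(\ell^1G)$ with $\inf_X\sum_t g(t)(h\cdot t-h)\ge\alpha'>0$: such a pair forces $\|\mu g\|_1\ge\alpha'/\|h\|_\infty$ for every $\mu\in\mathscr P(X)$, contradicting the hypothesis. (Put differently, one must show that the single-function version of Dixmier's criterion---using only expressions $\sum_t g(t)(h\cdot t-h)$ with a fixed $h$---already implies the general version of Theorem~\ref{thm:dixmier}.)

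The necessity part is routine bookkeeping. The main obstacle is the amalgamation at the end of sufficiency: the hypothesis controls a single element of the augmentation ideal at a time, and taken naively it says nothing about several $\delta_{s_i}-\delta_e$ at once---which is precisely F\o lner's condition---so to package $h_1,\dots,h_n$ into one $h$ and one $g$ one has to bring in extra structure, for instance that a $G$-set with a finite orbit is already amenable (so that one may assume all orbits of $X$ are infinite and spread the $h_i$ apart within the orbits).
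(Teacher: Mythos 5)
Your proof of necessity is correct and is essentially the paper's: approximate $g$ by a finitely supported $g'\in\varpi(\ell^1G)$, invoke Reiter's criterion (Theorem~\ref{thm:folneramen}(2)) on the support of $g'$, and control the tail with $\|f(g-g')\|\le\|f\|_1\|g-g'\|_1$.

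The gap you identify in the sufficiency direction is genuine, and it is worth knowing that the paper's own argument does not escape it. The paper applies the hypothesis to the single element $g=\sum_{s\in S}(s-1)$, obtains positive $f$ with $\|fg\|<\tfrac\epsilon2\|f\|$, and then asserts the inequality $2\|fg\|\ge2\bigl\|\sum_{s\in S}\max(fs-f,0)\bigr\|$. This inequality is false: take $X=\{1,2,3,4\}$, $G=\Sym(4)$, $f=3\delta_1+2\delta_2+\delta_3$ and $S=\{(1\,2),(2\,3),(3\,4)\}$; then $fs-f$ equals $(-1,1,0,0)$, $(0,-1,1,0)$, $(0,0,-1,1)$, which telescope to $fg=(-1,0,0,1)$ of norm $2$, whereas $\sum_s\max(fs-f,0)=(0,1,1,1)$ has norm $3$. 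The defect is not merely in one step: for this $g$ the dual obstruction to $\inf_{0\ne f\ge0}\|fg\|/\|f\|>0$ would be a bounded $h\in\ell^\infty(X)$ with $\inf_x\sum_{s\in S}(h(xs)-h(x))>0$, i.e.\ a bounded strictly subharmonic function, which cannot exist (the expectation $\mathbb E[h(W_n)]$ along the $S$-random walk would be unbounded). So $\inf_f\|fg\|/\|f\|=0$ for this $g$ on every $G$-set, amenable or not, and this choice of test element cannot detect non-amenability. Your ``amalgamation'' of the $h_i$ into a single pair $(h,g)$ is thus the true crux of the sufficiency direction, and the paper does not supply it. For $X=G_G$ (the only case later invoked, in Proposition~\ref{prop:amenlamin}) it can be resolved by the approximate-identity argument of Lemma~\ref{lem:diagonalidentities} and Theorem~\ref{thm:amen=approxid}: from a positive $f\in\mathscr P(G)$ with $\|fg\|$ small one forms $e=1-f\in\varpi(\ell^1G)$ with $\|g-eg\|$ small, and the algebra product $e=e'+e''-e''e'$, equivalently the convolution $f=f''*f'$, again positive of mass one, handles finitely many $g$ at once; but that trick uses the ring structure of $\ell^1(G)$ and has no analogue for a general $G$-module $\ell^1(X)$, which is precisely the extra structure you sensed was needed.
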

\begin{proof}
  ($\Rightarrow$) Given $\epsilon>0$ and
  $g=\sum_{x\in G}g_x x\in\ell^1G$ with $\sum g_x=0$, let $S\Subset G$
  be such that $g'\coloneqq g-\sum_{s\in S}g_s(s-1)$ satisfies
  $\|g'\|<\epsilon/2$. Since $X$ is amenable, there exists
  $F\Subset X$ with $\#(F s\setminus F)<\epsilon/4\|g\|\#F$ for all
  $s\in S$. Set $f\coloneqq\mathbb 1_F$. Then
  \[\|f g\|\le\|f g'\|+\sum_{s\in S}\|g_s(f s-f)\|<\#F\|g'\|+2\#(F S\setminus F)\|g\|<\epsilon\#F=\epsilon\|f\|.\]

  ($\Leftarrow$) Given $\epsilon>0$ and $S\Subset G$, set
  $g=\sum_{s\in S}s-1$, and let $f\in\ell^1(G)$ be a positive function
  satisfying $\|f g\|<\epsilon\|f\|/2$. Then
  \[\epsilon\|f\|>2\|f g\|\ge2\Big\|\sum_{s\in S}\max(f s-f,0)\Big\|=\sum_{s\in S}2\|\max(f s-f,0)\|\ge\sum_{s\in S}\|f s-f\|.\qedhere\]
\end{proof}

\subsection{Non-amenability}
It may be interesting to consider weaker versions of amenability for
groups; for instance, to consider groups admitting faithful amenable
actions.

Osin considers in~\cite{osin:weaklyamen} a class of ``weakly amenable
groups'', which in our context are groups $G$ admitting an amenable
action $X$ such that, for every finite $F\subset G$, there exists
$x\in X$ with $\#(x F)=\#F$; namely, the orbit map $f\mapsto x\cdot f$
is injective on $F$. An example of a weakly amenable, non-amenable
group is the Baumslag-Solitar group
$\langle a,t\mid a^mt=ta^n\rangle$, for $m>n\ge2$.

If a group $G$ is not amenable, but all its proper subgroups are
amenable, then $G$ does not have any ``interesting'' amenable actions:
by Proposition~\ref{prop:stabilizers}, every amenable action of $G$
has a fixed point. This applies in particular to Tarski
monsters~\cite{olshansky:monsters}, which are non-amenable torsion
groups in which every proper subgroup is cyclic.

\begin{definition}[Kazhdan, see~\cite{kazhdan:T} or~\cite{bekka-h-v:t}]
  A group $G$ has \emph{property (T)} if every unitary representation
  $G\to U(\mathscr H)$ in a Hilbert space $\mathscr H$ with almost
  invariant vectors (in the sense that for every $\epsilon>0$ and
  every finite $S\Subset G$ there exists non-trivial $x\in\mathscr H$
  with $\|x-x s\|<\epsilon$ for all $s\in S$) has a non-trivial fixed
  vector.
\end{definition}

If $G$ is infinite, then Kazhdan's property (T) restricted to the
unitary representation on $\ell^2(G)$ is thus precisely the negation
of amenability: there are invariant vectors in $\ell^2(G)$ if and only
if $G$ is finite, and the existence of almost-invariant vectors is
Reiter's condition for $p=2$.

Thus an amenable group with property (T) is finite,\footnote{This was
  exploited in a fundamental manner by Margulis
  in~\cite{margulis:subgroups} to prove his ``normal subgroup
  theorem''.} and more generally a group with property (T) does not
have any ``interesting'' amenable actions: every amenable action has a
finite orbit.

Glasner and Monod consider in~\cite{glasner-monod:amenableactions}
another group property, which they call \emph{property (F)}: ``every
amenable action has a fixed point''. They show that a free product of
groups always has a faithful, transitive, amenable action unless one
factor is (F) and the other is virtually (F). Thus for example $G*\Z$
is not amenable if $G\neq1$, yet admits a faithful, transitive,
amenable action.

% \cite{juschenko-dlsalle:wobbling}: if $X$ grows subexponentially,
% then $W(X)$ does not contain an infinite countable group with (T)

%%%%%%%%%%%%%%%%%%%%%%%%%%%%%%%%%%%%%%%%%%%%%%%%%%%%%%%%%%%%%%%%
\newpage\section{Growth of groups}\label{ss:growth}
We cover here some classical material on asymptotic growth of
groups. Recall from~\S\ref{ss:growthsets} that $v_{G,S}(n)$ denotes
the number of elements in a group $G$ that are expressible as products
of at most $n$ elements of $S$. The group $G$ has \emph{exponential
  growth} if $v_{G,S}(n)\ge B^n$ for some $B>1$, and
\emph{subexponential growth} otherwise; it has \emph{polynomial
  growth} if $v_{G,S}(n)\le p(n)$ for some polynomial $p$; and it has
\emph{intermediate growth} if its growth is neither polynomial nor
exponential. These properties are easily seen to be independent of the
choice of generating set $S$.

\subsection{Groups of polynomial growth}
Groups of polynomial growth admit an elegant algebraic
characterization. The ``if'' part is due to \HBass~\cite{bass:nilpotent}
and independently \YGuivarch~\cite{guivarch:poly1}, with an explicit
computation of the growth degree of $G$, which is always an integer; the
harder, ``only if'' part is due to \MGromov.

We recall some basic group theoretical terminology. For $\mathscr P$ a
property of groups (abelian, \dots), a group group $G$ is called
\emph{virtually $\mathscr P$} if $G$ admits a finite-index subgroup
satisfying $\mathscr P$.\footnote{Much to the annoyance of finite group
  theorists, some people call finite groups ``virtually trivial''.}

A group $G$ is \emph{nilpotent} if there exists a constant $c$ such
that every $(c+1)$-fold iterated commutator
$[g_0,[g_1,\dots,[g_{c-1},g_c]\cdots]]$ vanishes in $G$; the minimal
$c$ is called the \emph{nilpotency class} of $G$. A group $G$ is
\emph{polycyclic} if it admits a sequence of subgroups
$G=G_0\triangleright G_1\triangleright\cdots\triangleright G_n=1$ with
$G_k/G_{k+1}$ cyclic for all $k$. Finitely generated nilpotent groups
are polycyclic.

\begin{theorem}[Gromov~\cite{gromov:nilpotent}]\label{thm:polygrowth}
  Let $G$ be a finitely generated group. Then $G$ has polynomial
  growth if and only if $G$ is virtually nilpotent, namely $G$ has a
  finite-index nilpotent subgroup.
\end{theorem}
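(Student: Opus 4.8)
The plan is to prove the two directions of Gromov's theorem separately, with the easy direction first.

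\medskip

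\noindent\textbf{The ``if'' direction.} Suppose $G$ is virtually nilpotent; I would argue that $G$ has polynomial growth. First I would reduce to the case $G$ nilpotent: if $H\le G$ has finite index $d$ and is nilpotent, then balls in $G$ of radius $n$ are contained in a bounded number (depending on $d$ and the generating set) of cosets of $H$, and each coset meets the ball of radius $n$ in $G$ inside a translate of a ball of radius $n+C$ in $H$ for a fixed constant $C$; so $v_{G,S}(n)\le d\cdot v_{H,S'}(n+C)$ for a suitable finite generating set $S'$ of $H$. Then, for $G$ nilpotent of class $c$, I would induct on $c$ using the lower central series $G=\gamma_1\triangleright\gamma_2\triangleright\cdots\triangleright\gamma_{c+1}=1$. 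Picking generators adapted to this series, every element of the ball of radius $n$ can be written in a normal form as a bounded-length word in which each generator from $\gamma_i$ appears with exponent bounded polynomially in $n$ (of degree $i$, because commutators of weight $i$ are ``produced'' only by collecting $i$ letters). This is the Guivarc'h--Bass polynomial-degree computation; carrying out the collection process carefully gives $v_{G,S}(n)\le P(n)$ with $\deg P=\sum_{i\ge1} i\cdot\operatorname{rank}(\gamma_i/\gamma_{i+1})$. The main technical nuisance here is the bookkeeping in the collection process, but it is routine.

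\medskip

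\noindent\textbf{The ``only if'' direction.} This is Gromov's theorem proper and is the hard part. Suppose $G$ is finitely generated of polynomial growth; I would show $G$ is virtually nilpotent. The strategy — which I would not reprove from scratch, since it uses substantial outside machinery not available in this excerpt — is: (1) form the sequence of rescaled metric spaces $(G, \tfrac1n d_S)$ and extract a limit in the Gromov--Hausdorff topology; polynomial growth gives the uniform doubling/properness needed for a subsequence to converge to a locally compact, connected, homogeneous, finite-dimensional metric space $Y$. (2) Invoke the Montgomery--Zippin--Gleason solution of Hilbert's fifth problem: the isometry group of such a $Y$ is a Lie group with finitely many components, so $G$ acts (via the limit) on a connected Lie group, yielding a homomorphism $\rho\colon G\to L$ to a Lie group with $\rho(G)$ having non-discrete closure or finite image, and one extracts a nontrivial finite-dimensional representation or an infinite virtually solvable quotient. (3) Either way one gets a finite-index subgroup $G_0$ with an infinite abelianization, hence a surjection $G_0\twoheadrightarrow\Z$; the kernel $N$ is finitely generated (this uses polynomial growth: the growth exponent of $N$ is strictly smaller), and one inducts on the degree of polynomial growth. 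The induction terminates because subgroups and the quotients appearing have strictly smaller growth degree, and polynomial growth is inherited by finitely generated subgroups and finite-index overgroups (by the ``if''-direction reductions above, run in reverse).

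\medskip

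\noindent The main obstacle is clearly step (2) of the ``only if'' direction: without a self-contained proof of Hilbert's fifth problem (and the subsequent work of Montgomery--Zippin, Wilkie, Tits, and van den Dries--Wilkie on the algebraic structure of the limit) one cannot make the asymptotic-cone argument fully rigorous, so in a survey one would cite \cite{gromov:nilpotent} together with the later streamlinings (van den Dries--Wilkie, Kleiner, Tao) rather than grinding through it. For the purposes of this text I would present the ``if'' direction in full and sketch the ``only if'' direction along the lines above, pointing to the literature for the Lie-theoretic input.
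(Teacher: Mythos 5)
Your ``if'' direction is essentially the argument in the text: both are Bass--Guivarc'h collection arguments. The paper inducts on the length of a polycyclic series with cyclic quotients, peeling off one generator $x_0$ at a time and tracking the commutator subexpressions $[x_{i_j},x_0,\dots,x_0]$, settling for the crude degree bound $cd+1$. You instead induct on the nilpotency class via the lower central series and quote the sharp Bass--Guivarc'h degree $\sum_i i\cdot\operatorname{rank}(\gamma_i/\gamma_{i+1})$; either bookkeeping works, and yours gives the precise degree at the cost of setting up a Mal'cev-type basis.

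For the ``only if'' direction your sketch follows Gromov's original route (asymptotic cones, Montgomery--Zippin and Hilbert's fifth problem, a homomorphism to a Lie group, then induction on growth degree). The paper takes a genuinely different path: it explicitly defers this direction to the end of \S\ref{ss:harmonic} and there sketches Kleiner's approach via harmonic functions. In that sketch, $G$ is amenable by Proposition~\ref{prop:subexp=>amenable}, so one builds a fixed-point-free affine isometric action on a Hilbert space (Lemma~\ref{lem:kleiner:1}); minimizing the energy functional yields a non-constant Lipschitz harmonic map; Kleiner's finite-dimensionality result (Lemma~\ref{lem:kleiner:3}) makes the span of its $G$-translates a finite-dimensional representation with infinite image; Shalom's theorem (amenable finitely generated linear groups are virtually soluble) then supplies a finite-index subgroup mapping onto $\Z$, and the induction on growth degree proceeds much as you describe. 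The common skeleton is the reduction to producing an infinite linear image and inducting on degree; the engine producing that image differs --- you invoke the solution of Hilbert's fifth problem, the paper substitutes the (still nontrivial, but more self-contained) Poincar\'e-type estimate behind Kleiner's finite-dimensionality theorem. Since the text does contain a sketch of the hard direction, just in a later section and by a different method than you chose, it is worth flagging that alternative route rather than presenting Gromov's original argument as the only option.
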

\begin{proof}[Proof of Theorem~\ref{thm:polygrowth}, ``if'' direction]
  Let $G_0$ be a finite-index nilpotent subgroup of $G$. It suffices
  to prove that $G_0$ has polynomial growth, since then $G$ will have
  polynomial growth of same degree as $G_0$. Denote by $c$ the
  nilpotency class of $G_0$, so all $(c+1)$-fold iterated commutators
  vanish.

  Let $(G_k)_{0\le k\le\ell}$ be a composition series for $G$, namely
  a series of subgroups such that $G_k/G_{k+1}$ is cyclic for all $k$;
  and for each $k$ let $x_k\in G_k$ be a lift of a generator of
  $G_k/G_{k+1}$ so that $G_0=\langle x_0,x_1,\dots x_{\ell-1}\rangle$.

  We reason by induction on $\ell$. If $\ell=0$, or if $G/G_1$ is
  finite, we are done. Assume then $G_0/G_1\cong\Z$, and by induction
  that the growth of $G_1$ is bounded by a polynomial, say of
  degree $d$.

  Consider $x\in G_0$, of the form
  $x=x_{i_1}^{\pm1}\cdots x_{i_n}^{\pm1}$. Write it in the form
  $x_0^e z$, with $e\in\Z$ and $z\in G_1$. This requires us to
  exchange past each other some letters $x_0$ and $x_{i_j}$, producing
  subexpressions $[x_{i_j},x_0,\dots,x_0]$ along the process: indeed
  one has $W x_0=x_0W[W,x_0]$ for any expression $W$.

  There are at most $n$ letters $x_0$ in $x$; each of them must be
  brought past at most $n$ other letters, producing at most $n^2$
  expressions $[x_i,x_0]$; each of these produces in turn at most
  $n^3$ expressions $[x_i,x_0,x_0]$; etc. We take as generating set
  $S$ for $G_1$ all expressions of the form $[x_i,x_0,\dots,x_0]$
  with $i\ge1$ and length $\in\{1,\dots,c\}$. We have then expressed
  $x$ by an integer $e\in\{-n,\dots,n\}$ and a word $z$ of length at
  most $n+\dots+n^c$ in these generators; so
  \[v_{G_0,S\cup\{x_0\}}(n)\le(2n+1)v_{G_1,S}(n+\dots+n^c)\]
  is bounded by a polynomial of degree $\le c d+1$.
\end{proof}

We shall give at the end of~\S\ref{ss:harmonic} a sketch of the ``only
if'' direction, via slowly growing harmonic functions.

\begin{corollary}\label{cor:abelian}
  Let $G$ be a virtually nilpotent group. Then $G$ is amenable.
\end{corollary}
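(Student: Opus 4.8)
The plan is to deduce this as a direct corollary of the ``if'' direction of Theorem~\ref{thm:polygrowth} together with Proposition~\ref{prop:subexp=>amenable} and Theorem~\ref{thm:folneramen}. First I would reduce to the finitely generated case: by Corollary~\ref{cor:regamen} and Proposition~\ref{prop:restrict}, $G$ is amenable if and only if every finitely generated subgroup $H\le G$ is amenable. If $N\le G$ is a nilpotent subgroup of finite index, then $N\cap H$ has finite index in $H$ and, being a subgroup of the nilpotent group $N$, is itself nilpotent; so $H$ is a finitely generated virtually nilpotent group. Hence it suffices to prove amenability for finitely generated virtually nilpotent $G$.

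For such $G$, the proof of the ``if'' direction of Theorem~\ref{thm:polygrowth} shows that $v_{G,S_0}$ is bounded by a polynomial for some finite generating set $S_0$. By the comparison of growth functions recorded in~\S\ref{ss:growthsets}, for every finite $S\Subset G$ there is a constant $C$ with $v_{G,S}(n)\le v_{G,S_0}(Cn)$, so $v_{G,S}$ is subexponential for \emph{every} finite $S\Subset G$. Applying Proposition~\ref{prop:subexp=>amenable} to the $G$-set $X=G$ with basepoint $x_0=1$, we conclude that $G$ satisfies F\o lner's condition, and therefore $G$ is amenable by Theorem~\ref{thm:folneramen}, $(5)\Rightarrow(1)$.

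There is no serious obstacle here — the statement is genuinely a corollary — and the only points requiring a little care are (i) the reduction to finitely generated subgroups via Proposition~\ref{prop:restrict}, needed because the word ``virtually nilpotent group'' carries no standing finiteness assumption, and (ii) upgrading ``polynomial growth with respect to one generating set'' to ``subexponential $v_{G,S}$ for all finite $S$'', which is the routine growth-function comparison. One could alternatively avoid growth entirely: nilpotent groups are amenable by induction on the nilpotency class, using the lower central series and Corollary~\ref{cor:extensions} (each factor $\gamma_i/\gamma_{i+1}$ is abelian, hence amenable), and then a finite-index nilpotent subgroup of $G$ contains a finite-index normal nilpotent subgroup $N_0$ (its normal core), so $1\to N_0\to G\to G/N_0\to1$ is an extension of a finite group by an amenable group and Corollary~\ref{cor:extensions} finishes. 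I would present the growth-based argument as the main proof since it fits the surrounding section, and mention this algebraic route as a remark.
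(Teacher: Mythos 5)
Your proof is correct and matches the paper's proof essentially step for step: the paper also reduces to finitely generated subgroups (each of which is virtually nilpotent), invokes the ``if'' direction of Theorem~\ref{thm:polygrowth} to get polynomial growth, and concludes amenability via Proposition~\ref{prop:subexp=>amenable}. You make explicit two things the paper leaves implicit — the reduction to finitely generated subgroups via Proposition~\ref{prop:restrict}, and the upgrade from ``polynomial growth with respect to one generating set'' to ``subexponential $v_{G,S}$ for every finite $S$'' required by the hypotheses of Proposition~\ref{prop:subexp=>amenable} — both of which are worth spelling out. Your remark about the alternative algebraic route (lower central series plus Corollary~\ref{cor:extensions}) is also correct and is in fact the more classical argument; the paper opts for the growth-based proof because it is the natural corollary in a section devoted to growth.
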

\begin{proof}
  If $G$ is virtually nilpotent, then every finitely generated
  subgroup of $G$ is also virtually nilpotent, so by
  Theorem~\ref{thm:polygrowth} has polynomial growth, so is amenable
  by Proposition~\ref{prop:subexp=>amenable}.
\end{proof}

\subsection{Groups of exponential growth}
At the other end of the growth spectrum, we find groups of
\emph{exponential growth}. In fact, as soon as a group has a
non-abelian free subgroup, it has exponential growth; so a large class
of groups, including all non-elementary hyperbolic
groups~\cite{ghys-h:gromov}, have exponential growth.

In the class of soluble groups, the growth of a group is either
polynomial or exponential, as we shall see below. Recall that the
\emph{derived series} of a group $G$ is the series of normal subgroups
defined by $G^{(0)}=G$ and $G^{(i+1)}=[G^{(i)},G^{(i)}]$, and that $G$
is \emph{soluble} if $G^{(n)}=1$ for some $n$. The minimal such $n$ is
called the \emph{derived length} of $G$.

\begin{proposition}\label{prop:G'fg}
  Let $G$ be a finitely generated group of subexponential growth, and let
  $N\triangleleft G$ be a normal subgroup with $G/N\cong\Z$. Then $N$
  is also finitely generated.
\end{proposition}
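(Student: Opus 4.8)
\medskip
\noindent\textbf{A proof strategy.} The plan is to argue by contradiction: if $N$ is not finitely generated, I will exhibit in $G$ a family of short words representing exponentially many distinct elements, contradicting subexponential growth. First I would fix $t\in G$ mapping to a generator of $G/N\cong\Z$; since $G$ is finitely generated, replacing each original generator $s$ by $s t^{-m}\in N$ (where $m$ is its image in $\Z$) lets me assume $G=\langle t,a_1,\dots,a_r\rangle$ with all $a_i\in N$. A routine normal-closure argument — the quotient of $G$ by the normal closure of $\{a_i\}$ is cyclic and surjects onto $\Z$, hence equals $\Z$ — then gives $N=\langle\, t^m a_i t^{-m}:m\in\Z,\ 1\le i\le r\,\rangle$. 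I will measure everything with the generating set $S'=\{t,a_1,\dots,a_r\}$, which is harmless since subexponential growth is independent of the generating set.

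Next I would build an exhaustion. Set $M_k=\langle\, t^m a_i t^{-m}:|m|\le k,\ i\,\rangle$, so $M_0\subseteq M_1\subseteq\cdots$ with $\bigcup_k M_k=N$. The pivotal remark is: if $M_{k-1}=M_k$ for some $k$, then $tM_{k-1}t^{-1}\subseteq M_k=M_{k-1}$ and $t^{-1}M_{k-1}t\subseteq M_{k-1}$ force $tM_{k-1}t^{-1}=M_{k-1}$, so $M_{k-1}$ is $t$-invariant, contains every $t^m a_i t^{-m}$, and thus equals $N$ — making $N$ finitely generated. So, under our assumption, $M_{k-1}\subsetneq M_k$ for all $k\ge1$. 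Since $M_k$ is generated over $M_{k-1}$ by the $2r$ elements $t^{\pm k}a_i t^{\mp k}$, one of these, call it $g_k$, lies outside $M_{k-1}$; its $S'$-length is at most $2k+1$, and the power of $t$ appearing in it is exactly $\pm k$ (a conjugate with a smaller exponent already lies in $M_{k-1}$).

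Then comes the growth estimate. Fix $K\ge1$. By pigeonhole, at least $\lceil K/2\rceil$ of $g_1,\dots,g_K$ conjugate by powers of $t$ of one common sign; listing the corresponding levels as $k_1<\dots<k_m$ ($m\ge K/2$), write $g_{k_j}=t^{\sigma k_j}a_{i_j}t^{-\sigma k_j}$ for a fixed $\sigma\in\{\pm1\}$. I would then consider the $2^m$ elements $w_{\mathbf e}=g_{k_m}^{e_m}g_{k_{m-1}}^{e_{m-1}}\cdots g_{k_1}^{e_1}$ for $\mathbf e\in\{0,1\}^m$. They are pairwise distinct: if $w_{\mathbf e}=w_{\mathbf e'}$ and $j$ is the largest index with $e_j\ne e_j'$, cancelling the common prefix forces $g_{k_j}\in\langle g_{k_1},\dots,g_{k_{j-1}}\rangle\subseteq M_{k_{j-1}}\subseteq M_{k_j-1}$, contradicting the choice of $g_{k_j}$. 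And telescoping the interior powers of $t$ rewrites $w_{\mathbf e}$ as $t^{\sigma k_m}a_{i_m}^{e_m}t^{-\sigma(k_m-k_{m-1})}a_{i_{m-1}}^{e_{m-1}}\cdots a_{i_1}^{e_1}t^{-\sigma k_1}$, whose $S'$-length is at most $2k_m+m\le 3K$. Hence $v_{G,S'}(3K)\ge 2^{\lceil K/2\rceil}$ for every $K$, so $G$ has exponential growth — the desired contradiction.

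The hard part will be the bookkeeping in the last paragraph: the elements one multiplies must simultaneously be ``new'' at their level (so the $2^m$ products are distinct) and be conjugates of the $a_i$ by a \emph{monotone} sequence of powers of $t$ (so those powers telescope to a total of only $2k_m$ letters rather than quadratically many). Choosing the unique-exponent generators $g_k$ makes them monotone apart from their sign, and the pigeonhole on that sign is precisely what reconciles the two demands; carefully verifying distinctness via the top differing index, and the length bound $2k_m+m\le 3K$, is where the real care lies.
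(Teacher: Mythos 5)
Your proposal is correct, and it rests on the same engine as the paper's proof: pigeonhole $2^n$ carefully chosen words of length $O(n)$ against the subexponential-growth hypothesis. But the bookkeeping is organized differently enough to be worth comparing.

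The paper first decomposes $N=\langle N_1,\dots,N_d\rangle$ with $N_i=\langle y_i^{x^n}:n\in\Z\rangle$, reducing to a single $y=y_i$. It then considers only positively-indexed conjugates and observes that among the $2^n$ telescoped words $y^{e_1 x}\cdots y^{e_n x^n}$ two must coincide; the collision directly gives $y^{x^m}\in\langle y^x,\dots,y^{x^{m-1}}\rangle$, and an easy conjugation-induction makes all higher positive conjugates redundant. A symmetric pass with $x^{-1}$ bounds the negative side, and combining over $i$ finishes. Because each pass conjugates only one way, the telescoping is automatic and no sign analysis is needed.

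You instead treat $N$ globally, building the exhaustion $M_k$ by conjugation depth and isolating the clean ``Noetherian'' observation that $M_{k-1}=M_k$ already forces $M_{k-1}=N$ — a structural step the paper leaves implicit. Assuming the chain is strict, you extract a new generator $g_k$ at level exactly $\pm k$, pigeonhole on the sign so at least half go the same way, and build $2^m$ products whose distinctness comes not from a collision but from the strictness of the chain (the top-differing-index argument, which is correct: for $i<j$ one has $g_{k_i}\in M_{k_i}\subseteq M_{k_j-1}$). The price of handling both signs and all $a_i$ at once is the extra sign-pigeonhole; the benefit is a single pass and a cleanly contrapositive logic. I checked the two delicate points you flagged: the distinctness argument holds, and the telescoped length is $2k_m+\#\{j:e_j=1\}\le 3K$ as claimed, so $v_{G,S'}(3K)\ge 2^{\lceil K/2\rceil}$. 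This is a valid proof, essentially equivalent to the paper's but with a genuinely different decomposition of the combinatorics.
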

\begin{proof}
  Let $S=\{x_1,\dots,x_d\}$ generate $G$, and let $x\in G$
  generate $G/N$. Write each $x_i=x^{e_i}y_i$, with $y_i\in N$; so
  $G=\langle x,y_1,\dots,y_d\rangle$, and $N=\langle
  y_1,\dots,y_d\rangle^G$.

  Consider further $N_i=\langle y_i^{x^n}\mid n\in\Z\rangle$, so that
  $N=\langle N_1,\dots,N_d\rangle$. It is sufficient to show that each
  $N_i$ is finitely generated.

  Write then $y=y_i$, and consider all expressions
  $x^{-1}y^{e_1}x^{-1}y^{e_2}\dots x^{-1}y^{e_n}x^n$, with all
  $e_j\in\{0,1\}$. There are $2^n$ such expressions, and their length
  is linear in $n$, so two must be equal in $G$ because $G$ has
  subexponential growth. Let
  \begin{equation}\label{eq:G'fg:1}
    y^{e_1x}\cdots y^{e_m x^m}=y^{f_1x}\cdots y^{f_m x^m}
  \end{equation}
  be such an equality in $G$, without loss of generality with
  $1=e_m\neq f_m=0$.  It follows that $y^{x^m}$ is in the group
  generated by $\{y^x,\dots,y^{x^{m-1}}\}$, so that $N_i=\langle
  y_i^{x^n}\mid n<m\rangle$. Now a similar argument, replacing $x$ by
  $x^{-1}$ in~\eqref{eq:G'fg:1}, shows that $N_i$ is finitely
  generated.
\end{proof}

\begin{corollary}\label{cor:Nfg}
  Let $G$ be a finitely generated group of subexponential growth, and
  let $N\triangleleft G$ be a normal subgroup such that $G/N$ is
  virtually polycyclic. Then $N$ is finitely generated.\qed
\end{corollary}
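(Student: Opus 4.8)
The plan is to reduce Corollary~\ref{cor:Nfg} to Proposition~\ref{prop:G'fg} by peeling off the polycyclic quotient one cyclic layer at a time. First I would set up a subnormal series $G/N=Q_0\triangleright Q_1\triangleright\cdots\triangleright Q_r$ with each $Q_j/Q_{j+1}$ cyclic (this is the definition of polycyclic, applied to a finite-index polycyclic subgroup of $G/N$; the finite-index issue is handled separately, see below). Pulling this series back along $G\twoheadrightarrow G/N$ gives a chain $G=M_0\triangleright M_1\triangleright\cdots\triangleright M_r$ of subgroups with $M_r/N$ finite and $M_j/M_{j+1}$ cyclic, hence $M_{j+1}\triangleleft M_j$ with quotient $\Z$ or finite.

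The key observation is that every $M_j$ has subexponential growth: they are subgroups of the finitely generated subexponential-growth group $G$, and a subgroup of a subexponential-growth group that happens to be finitely generated again has subexponential growth — so I would run an induction that simultaneously records finite generation. Concretely, I would prove by downward induction on $j$ that $M_j$ is finitely generated. The base case $M_0=G$ is the hypothesis. For the inductive step, suppose $M_{j+1}$ is finitely generated; then $M_{j+1}$ has subexponential growth (as a finitely generated subgroup of $G$), and $M_{j+1}\triangleleft M_j$ has either finite quotient or quotient $\Z$. Wait — the induction needs to go the other way: I want to go from $M_j$ finitely generated to $M_{j+1}$ finitely generated, which is exactly the content of Proposition~\ref{prop:G'fg} when $M_j/M_{j+1}\cong\Z$ (using that $M_j$ is finitely generated and of subexponential growth), and is trivial when $M_j/M_{j+1}$ is finite (a finitely generated group has finitely generated finite-index subgroups). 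Iterating down to $j=r$ yields that $M_r$ is finitely generated; since $N\triangleleft M_r$ has finite index, $N$ is finitely generated, completing the proof.

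The one genuine subtlety is the phrase ``virtually polycyclic'' in the hypothesis: $G/N$ need not itself be polycyclic, only contain a finite-index polycyclic subgroup $P$. I would handle this at the outset by passing to $M=$ the preimage of $P$ in $G$; then $M$ has finite index in $G$, hence is finitely generated and of subexponential growth, and $M/N\cong P$ is genuinely polycyclic, so the argument above applies to $M$ in place of $G$ and shows $N$ finitely generated. (A minor point: one may need to replace $P$ by a finite-index subgroup that is normal in $G/N$, or simply not bother with normality since finite generation of $M$ only requires finite index, not normality, in a finitely generated group.)

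I expect the main obstacle to be purely bookkeeping: keeping straight which groups are known to be finitely generated (hence, as subgroups of $G$, of subexponential growth and thus eligible inputs to Proposition~\ref{prop:G'fg}) at each stage of the induction, and making sure the ``virtually'' in the hypothesis is absorbed cleanly before the induction begins. No new ideas beyond Proposition~\ref{prop:G'fg} are needed; the corollary is essentially its iterated form.
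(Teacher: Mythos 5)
Your proposal is correct and matches the intended argument: the paper leaves this corollary without written proof precisely because it is the iterated form of Proposition~\ref{prop:G'fg}, with the $\Z$-quotient steps handled by that proposition, the finite-quotient steps handled by the standard fact that finite-index subgroups of finitely generated groups are finitely generated, and the ``virtually'' absorbed by passing to the preimage $M$ of a finite-index polycyclic subgroup of $G/N$. (Your opening paragraph momentarily places the finite quotient at the wrong end of the series, but your third paragraph sets this right by working inside $M$, so the argument as a whole is sound.)
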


\begin{corollary}[Milnor]
  Let $G$ be a finitely generated soluble group of subexponential
  growth. Then $G$ is polycyclic.
\end{corollary}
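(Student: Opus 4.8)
The plan is to proceed by induction on the derived length of $G$, using Corollary~\ref{cor:Nfg} as the engine for descent along the derived series. Let $G$ be finitely generated, soluble of subexponential growth, with derived length $n$, so $G^{(n)}=1$. If $n\le1$ then $G$ is abelian and finitely generated, hence polycyclic, which is the base case. For the inductive step, consider the quotient $G/G^{(n-1)}$: it is finitely generated, soluble of derived length $n-1$, and of subexponential growth (since subexponential growth passes to quotients). By induction, $G/G^{(n-1)}$ is polycyclic, hence virtually polycyclic; in fact polycyclic groups are finitely presented, but more to the point they are virtually polycyclic as needed.

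The key move is then to apply Corollary~\ref{cor:Nfg} to the normal subgroup $N=G^{(n-1)}$: since $G$ has subexponential growth and $G/N$ is (virtually) polycyclic, $N$ is finitely generated. Now $N=G^{(n-1)}$ is an \emph{abelian} group (being the last nontrivial term of the derived series, $[N,N]=G^{(n)}=1$), and it is finitely generated, hence polycyclic. Finally I would assemble a subnormal series for $G$ with cyclic quotients by concatenating a polycyclic series for $N$ (which exists since $N$ is a finitely generated abelian group) with the preimage in $G$ of a polycyclic series for $G/N$; an extension of a polycyclic group by a polycyclic group is polycyclic, so $G$ is polycyclic. This completes the induction.

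The main obstacle, and the only genuinely nontrivial input, is the step where Corollary~\ref{cor:Nfg} is invoked to deduce that $N=G^{(n-1)}$ is finitely generated — but that corollary is already established, so in the present context the proof is essentially bookkeeping. One subtlety worth flagging explicitly: one must be a little careful that "polycyclic" (rather than merely "virtually polycyclic") is what comes out of the induction, so that Corollary~\ref{cor:Nfg} applies at the next stage; this is fine because the corollary only requires $G/N$ to be virtually polycyclic, and a polycyclic group is in particular virtually polycyclic. An alternative phrasing avoids explicit induction by descending the derived series from the top: $G^{(n-1)}$ is abelian, and repeatedly applying Corollary~\ref{cor:Nfg} to the quotients $G^{(i)}/G^{(i+1)}$ (noting each $G/G^{(i+1)}$ is built from finitely generated abelian pieces, hence polycyclic) shows each $G^{(i)}$ is finitely generated, whence each $G^{(i)}/G^{(i+1)}$ is a finitely generated abelian group and $G$ has a polycyclic series.
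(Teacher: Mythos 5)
Your proof is correct and takes essentially the same route as the paper: induction on derived length, peeling off the last nontrivial derived subgroup $A=G^{(s)}$, applying Corollary~\ref{cor:Nfg} (with the inductive hypothesis that $G/A$ is polycyclic) to conclude $A$ is a finitely generated abelian group, and then closing with stability of polycyclicity under extensions. The paper states this more tersely but the argument is identical, including the base case and the use of the fact that subexponential growth passes to quotients.
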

\begin{proof}  
  Consider the derived series $G^{(i)}$ of $G$; by assumption,
  $G^{(s+1)}=1$ for some minimal $s\in\N$. Set $A=G^{(s)}$. We may
  assume, by induction, that $G/A$ is polycyclic. By
  Corollary~\ref{cor:Nfg}, the subgroup $A$ is finitely generated and
  abelian, so is polycyclic too. It follows that $G$ is polycyclic.
\end{proof}

\begin{lemma}\label{lem:vnil}
  Let $G$ be a finitely generated group that is an extension $N.Q$ of
  finitely generated virtually nilpotent groups. Then $G$ is virtually
  soluble.
\end{lemma}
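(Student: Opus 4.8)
The plan is to prove the stronger statement that $G$ is virtually polycyclic. The first observation is that a finitely generated virtually nilpotent group is virtually polycyclic: a finite-index nilpotent subgroup of a finitely generated group is itself finitely generated (a finite-index subgroup of a finitely generated group is finitely generated), hence polycyclic. Thus $N$ and $Q$ are both polycyclic-by-finite, and the lemma reduces to the assertion that the class of virtually polycyclic groups is closed under extensions.

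First I would replace $N$ by a suitable normal subgroup of $G$. Let $M\le N$ be a finite-index nilpotent (hence polycyclic) subgroup. Since $N$ is finitely generated, it has only finitely many subgroups of index $[N:M]$, so their intersection $N_0$ is a characteristic subgroup of $N$ of finite index; being contained in $M$ it is polycyclic, and being characteristic in the normal subgroup $N$ it is normal in $G$. Then $G/N_0$ is an extension of the finite group $N/N_0$ by $Q$, which is polycyclic-by-finite. If I can show $G/N_0$ is again virtually polycyclic, I am done: choosing a finite-index polycyclic subgroup $\bar P\le G/N_0$ and letting $P\le G$ be its preimage, I obtain a finite-index subgroup of $G$ fitting in an extension $1\to N_0\to P\to\bar P\to1$ with $N_0$ and $\bar P$ polycyclic, and an extension of a polycyclic group by a polycyclic group is polycyclic, by concatenating the two subnormal series with cyclic quotients.

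So the heart of the matter is: an extension $1\to F\to E\to D\to1$ with $F$ finite and $D$ virtually polycyclic is virtually polycyclic. Replacing $E$ by the preimage of a finite-index polycyclic subgroup of $D$, I may assume $D$ is polycyclic. Then I would pass to the centralizer $C\coloneqq C_E(F)$, which has finite index in $E$ since $E/C$ embeds into $\operatorname{Aut}(F)$. Here $C\cap F=Z(F)$ is finite and central in $C$, while $C/(C\cap F)\cong CF/F\le E/F\cong D$ is polycyclic; as a finite abelian group is polycyclic, $C$ is an extension of a polycyclic group by a polycyclic group, hence polycyclic. This centralizer step is the one genuinely computational point, and it is routine. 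Putting everything together, $G$ is virtually polycyclic, and in particular virtually soluble.
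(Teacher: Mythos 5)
Your proof is correct and follows essentially the same route as the paper: replace $N$ by a characteristic finite-index subgroup $N_0$, reduce to the case of a finite kernel, and handle that case by passing to the centralizer of the finite normal subgroup, which has finite index and is a central extension. The paper gets there by noting each $Z_G(n)$ has finite index because the conjugacy class of $n\in N$ lies in the finite set $N$, while you invoke the embedding $E/C_E(F)\hookrightarrow\operatorname{Aut}(F)$; these are the same observation. The only substantive difference is that you carry through the stronger conclusion ``virtually polycyclic'' end to end, whereas the paper proves the finite-kernel case gives a virtually nilpotent quotient and then only concludes virtually soluble for $G$ -- your version is a marginally sharper statement, obtained at no extra cost, but both are proofs of the lemma as stated.
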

\begin{proof}
  Assume first that $N$ is finite; we then claim that $G$ is virtually
  nilpotent. Indeed the centralizer $Z_G(n)$ has finite index in $G$,
  so $Z=\bigcap_{n\ge0}Z_G(n)$ has finite index in $G$. Then $Z$ is a
  central extension of $Z\cap N$ by $Z/(Z\cap N)$, so is virtually
  nilpotent; and then so is $G$.

  We turn to the general case. Let $N_0$ be a nilpotent subgroup of
  finite index in $N$. Up to replacing $N_0$ by
  $\bigcap_{[N:M]=[N:N_0]}M$, we may assume $N_0$ is characteristic in
  $N$, and therefore normal in $G$. By the first paragraph, $G/N_0$ is
  virtually nilpotent, so $G$ is virtually soluble.
\end{proof}

We recall that a group is \emph{noetherian} if all its subgroups are
finitely generated; in other words, if every chain $H_1<H_2<\cdots$ of
subgroups of $G$ is finite.

\begin{lemma}\label{lem:polycyclic=soluble+noetherian}
  A group $G$ is polycyclic if and only if it is both soluble and
  noetherian.
\end{lemma}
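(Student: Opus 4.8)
The plan is to exploit two natural subnormal series: the defining cyclic series when $G$ is polycyclic, and the derived series $G=G^{(0)}\triangleright G^{(1)}\triangleright\cdots$ when $G$ is soluble. The common technical ingredient is the elementary observation that every finitely generated abelian group $A$ admits a finite normal series $A=A_0\ge A_1\ge\cdots\ge A_m=1$ with each $A_j/A_{j+1}$ cyclic: choosing generators $a_1,\dots,a_m$ of $A$ and setting $A_j=\langle a_{j+1},\dots,a_m\rangle$ works, since $A_j/A_{j+1}$ is generated by the image of $a_{j+1}$.

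For the forward implication I would argue by induction on the length $n$ of a cyclic series $G=G_0\triangleright G_1\triangleright\cdots\triangleright G_n=1$. Solubility: the case $n=0$ is trivial; for the step, $G_1$ is polycyclic with a shorter series, hence soluble by induction, and $G$ is an extension of the soluble group $G_1$ by the abelian (hence soluble) group $G/G_1$, so $G$ is soluble. Noetherianity: the case $n=0$ is again trivial; for the step, $G_1$ is noetherian by induction and $G/G_1$ is cyclic, hence noetherian, so it suffices to check that an extension $1\to N\to H\to Q\to1$ of noetherian groups is noetherian. For that, let $K\le H$; then $K\cap N$ is finitely generated as a subgroup of the noetherian group $N$, while $K/(K\cap N)\cong KN/N\le Q$ is finitely generated; adjoining lifts of a finite generating set of $K/(K\cap N)$ to a finite generating set of $K\cap N$ yields a finite generating set of $K$.

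For the converse, suppose $G$ is soluble and noetherian, with derived series $G=G^{(0)}\triangleright G^{(1)}\triangleright\cdots\triangleright G^{(s)}=1$. Each $G^{(i)}$ is a subgroup of the noetherian group $G$, hence finitely generated, and $G^{(i)}/G^{(i+1)}$ is abelian by construction; so each successive quotient is a finitely generated abelian group. By the ingredient above, each $G^{(i)}/G^{(i+1)}$ carries a finite normal series with cyclic quotients; pulling it back along $G^{(i)}\twoheadrightarrow G^{(i)}/G^{(i+1)}$ refines the step $G^{(i)}\triangleright G^{(i+1)}$ into a chain of subgroups, each normal in its predecessor, with cyclic quotients. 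Splicing these refinements together for $i=0,\dots,s-1$ produces a single subnormal series of $G$ with all quotients cyclic, so $G$ is polycyclic.

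The proof is essentially bookkeeping; the only point needing a little care — rather than a genuine obstacle — is that the pulled-back refinements glue into a bona fide subnormal series: each inserted subgroup is normal in the next one (though typically not in $G$), which is exactly what the definition of polycyclic permits, so no stronger normality is required.
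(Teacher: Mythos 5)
Your proof is correct and takes essentially the same route as the paper: establish that finitely generated abelian groups (equivalently, noetherian abelian groups) admit cyclic series, note that noetherianity passes through extensions, and for the converse refine the derived series using that its quotients are finitely generated abelian. You simply spell out the details (the proof that extensions of noetherian groups are noetherian, and the gluing of refinements into a subnormal series) that the paper leaves implicit.
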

\begin{proof}
  Note first that an abelian group is noetherian if and only if it is
  finitely generated: if finitely generated, it is of the form
  $\Z^d\times F$ for a finite abelian group $F$, and is clearly
  noetherian.

  If $G$ is soluble and noetherian, then all quotients
  $G^{(i)}/G^{(i+1)}$ along its derived series are also noetherian, so
  finitely generated; the derived series may then be refined into a
  polycyclic series.

  Conversely, an extension of noetherian groups is noetherian, so if
  $G$ is polycyclic, then it is noetherian by induction.
\end{proof}

This reduction to polycyclic groups brings us closer to groups of
polynomial growth; the next step is the
\begin{theorem}[Wolf]
  Let $G$ be a polycyclic group of subexponential growth. Then $G$ is
  virtually nilpotent.
\end{theorem}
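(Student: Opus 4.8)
The plan is to reduce, by induction on the derived length, to the case of a finitely generated abelian-by-(virtually nilpotent) group, and then to extract from subexponential growth a strong finiteness constraint on the way the nilpotent part acts on the abelian part. First I would observe that a polycyclic group of subexponential growth is, by Milnor's corollary above (or directly), finitely generated and soluble; so it has a finite derived series, and it suffices to treat the top nontrivial layer. Thus write $A = G^{(s)}$ for the last nontrivial term of the derived series; $A$ is finitely generated abelian by Corollary~\ref{cor:Nfg}, hence $A\cong\Z^d\times F$ with $F$ finite, and (replacing $A$ by a characteristic finite-index subgroup) we may assume $A\cong\Z^d$ is normal in $G$ with $Q = G/A$ polycyclic of subexponential growth. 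By induction $Q$ is virtually nilpotent, and by Lemma~\ref{lem:vnil} the group $G$ is at least virtually soluble; the real content is to upgrade ``virtually soluble'' to ``virtually nilpotent''.

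The key step is the following: the conjugation action of $Q$ on $A\otimes\Q\cong\Q^d$ gives a homomorphism $Q\to\GL_d(\Q)$, and I claim subexponential growth forces the image to have all eigenvalues equal to roots of unity on every element. Indeed, suppose some $q\in Q$ acts on $A$ with an eigenvalue $\lambda$ of absolute value $>1$ (passing to $\C$). Pick a lift $t\in G$ of $q$ and an element $a\in A$ whose image in $A\otimes\C$ has nonzero component along the $\lambda$-eigenspace. Then the elements $a^{\epsilon_0}(a^t)^{\epsilon_1}\cdots (a^{t^{n-1}})^{\epsilon_{n-1}}$, for $\epsilon_i\in\{0,1\}$, all have word length linear in $n$, yet I claim they are pairwise distinct in $G$: their images in $A$ are $\sum_i \epsilon_i q^i\cdot a$, and distinctness follows because the vectors $q^i\cdot a$ grow like $|\lambda|^i$ in the relevant coordinate, so no two distinct $0/1$-combinations coincide (a standard argument: the ``largest'' index where the $\epsilon$'s differ dominates the sum). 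This exhibits exponentially many distinct elements in a ball of linear radius, contradicting subexponential growth. The same argument run with $t^{-1}$ rules out eigenvalues of absolute value $<1$. (This is exactly the mechanism already used in the proof of Proposition~\ref{prop:G'fg}, now applied to an abelian normal subgroup instead of $\Z$.)

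Once every element of the image of $Q$ in $\GL_d(\Q)$ has all eigenvalues of absolute value $1$, a theorem on such subgroups (Kronecker: an algebraic integer all of whose conjugates have absolute value $1$ is a root of unity, applied to the characteristic polynomials, combined with the fact that a finitely generated subgroup of $\GL_d(\Q)$ is residually finite and one can pass to a finite-index subgroup on which the eigenvalues are genuinely $1$) shows that a finite-index subgroup $Q_0\le Q$ acts \emph{unipotently} on $A$. Replacing $G$ by the preimage $G_0$ of $Q_0$ (still finite index, still subexponential growth), we have $A\triangleleft G_0$ with $G_0/A = Q_0$ virtually nilpotent acting unipotently on $A$. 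Passing once more to a finite-index nilpotent subgroup of $Q_0$, we get that $G_0$ has a nilpotent-by-(f.g. abelian acting unipotently) structure, and an extension of a nilpotent group by a finitely generated abelian group on which it acts unipotently is itself nilpotent (the upper central series of the whole group terminates because unipotence means the action has a filtration of $A$ with trivial successive quotients). Hence $G_0$ is virtually nilpotent, and therefore so is $G$.

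The main obstacle is the clean passage from ``all eigenvalues on the circle'' to ``a finite-index subgroup acts unipotently.'' The eigenvalue estimate itself is just the growth-versus-$0/1$-sums trick and is routine; but to conclude one must know that the matrix group generated by $Q$ is not merely a collection of individually ``quasi-unipotent'' elements but actually has a finite-index unipotent subgroup, which requires a little algebraic number theory (Kronecker's theorem on the characteristic polynomials, whose coefficients lie in a fixed finitely generated ring $\Z[1/m]$) together with a residual-finiteness or Mal'cev-type argument to separate the roots-of-unity eigenvalues from $1$. I would carry out that step carefully, as it is where the ``virtually'' in the statement is genuinely needed and where a naive argument would only give a quasi-unipotent, not unipotent, action.
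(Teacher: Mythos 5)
Your plan genuinely diverges from the paper's. The paper inducts on a polycyclic series $G=G_0>G_1>\cdots$ with a single infinite cyclic factor $G/G_1=\langle x\rangle$ at the top, constructs a $G$-invariant central series inside the finite-index nilpotent subgroup $N\le G_1$, and then analyzes the single matrix $x$ acting on an irreducible $\Q[x]$-module $V$: Schur's lemma makes $x$ an algebraic integer sitting inside a division ring, and Kronecker's lemma plus the $0/1$-sums argument of Proposition~\ref{prop:G'fg} forces $x$ to be a root of unity, so some power $x^n$ centralizes the factor. All the ``virtually'' bookkeeping is absorbed into that single power. You instead peel off the abelian normal subgroup $A=G^{(s)}$ at the bottom and work with the conjugation representation $\rho\colon Q=G/A\to\GL_d(\Z)$ of the whole quotient. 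Your form of the growth step is actually a bit more elementary than the paper's --- applying the $0/1$-sums trick to a left $\lambda$-eigenfunctional avoids Schur's lemma and the minimal central series entirely --- but you should first replace $q$ by a power with $|\lambda|>2$ before claiming the sums are pairwise distinct, and you should write the products telescopically as $a^{\epsilon_0}t^{-1}a^{\epsilon_1}t^{-1}\cdots a^{\epsilon_{n-1}}t^{n-1}$ to keep the word length linear in $n$ rather than quadratic.

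The genuine gap is the one you flag yourself: getting from ``every element of $\rho(Q)$ is quasi-unipotent'' to ``a finite-index subgroup of $\rho(Q)$ is unipotent.'' Your sketch (``residual finiteness plus separating roots of unity from $1$'') points in the right direction but is not a proof as written. Here is the clean fill. Since eigenvalues of elements of $\GL_d(\Z)$ are algebraic integers of degree at most $d$ over $\Q$, the orders of the roots of unity that can occur are bounded, say by $N$. Pick a prime $p>N$ and let $K=\ker\bigl(\rho(Q)\to\GL_d(\mathbb F_p)\bigr)$. For $g\in K$ the characteristic polynomial satisfies $\chi_g\equiv(X-1)^d\pmod p$, so each eigenvalue of $g$ --- a root of unity of order coprime to $p$ --- reduces to $1$ at a prime above $p$ and therefore equals $1$; hence $K$ acts unipotently on $A$, and $K$ has finite index. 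This step is genuinely required in your route and is genuinely absent from the paper's, which never has to unipotentize a whole matrix group because it only ever sees one generator at a time. With this lemma supplied, your final deduction (abelian-by-nilpotent with unipotent action is nilpotent, via the induced central filtration of $A$) is fine.
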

\begin{proof}
  Let $G=G_0>G_1>\cdots$ be a polycyclic series of minimal length. If
  $[G:G_1]<\infty$, proceed inductively with $G_1$. Assume therefore
  that $G/G_1\cong\Z=\langle x\rangle$. By induction, there is a
  nilpotent subgroup $N\le G_1$ of finite index. Furthermore, since
  $G_1$ is finitely generated by Proposition~\ref{prop:G'fg}, we may
  suppose that $N$ is characteristic in $G_1$, at the cost of
  intersecting it with its finitely many images under automorphisms of
  $G_1$; so we may assume $N\triangleleft G$. We have $N\langle
  x\rangle\le G$ of finite index, and we replace $G$ by $N\langle
  x\rangle$, to simplify notation.

  We now seek a central series $(N_k)$ in $N$, i.e.\ a series with
  $N_0=N$, all $N_k$ normal in $G$, and $N_k/N_{k+1}\le Z(N/N_{k+1})$;
  and we require that some non-zero power $x^n$ centralizes
  $N_k/N_{k+1}$ for all $k$. Then $\langle N,x^n\rangle$ will be the
  finite-index nilpotent subgroup of $G$ we are after.

  Among central series, choose one maximizing the number of $k$ such
  that $N_k/N_{k+1}$ is infinite; it exists because the number of
  factors is bounded by the Hirsch length of $G$. The torsion subgroup
  of $N_k/N_{k+1}$ is characteristic, so insert it in the series
  between $N_k$ and $N_{k+1}$. The resulting series is such that each
  quotient $N_k/N_{k+1}$ is either finite or free abelian; and, in the
  latter case, if $M\triangleleft G$ and $N_{k+1}\le M\le N_k$, then
  either $N_{k+1}=M$ or $N_k/M$ is finite.

  If $N_k/N_{k+1}$ is finite, then certainly some non-zero power of
  $x$ will act trivially on it. We therefore consider
  $N_k/N_{k+1}\cong\Z^m$, and we study the $\Q[x]$-module
  $V:=N_k/N_{k+1}\otimes\Q\cong\Q^m$.

  The module $V$ is irreducible; indeed, otherwise there would exist a
  proper, non-trivial invariant subspace $W<V$; then $M:=\{x\in
  N_k\mid x N_{k+1}\in W\}$ is a normal subgroup of $G$, of infinite
  index in $N_k$, contradicting the maximality of the number of
  infinite factors in $(N_k)$. We then use the
  \begin{lemma}[Schur]\label{lem:schur}
    Let $V$ be an irreducible module. Then $\operatorname{End}(V)$ is a division ring.
  \end{lemma}
  \begin{proof}
    Let $\alpha\neq0\in\operatorname{End}(V)$ be an endomorphism; then $\ker(\alpha)$ and
    $\alpha(V)$ are invariant subspaces, so $\ker(\alpha)=0$ and
    $\alpha(V)=V$; so $\alpha$ is invertible.
  \end{proof}

  We see $x\in G$ as an endomorphism of $V$; by Lemma~\ref{lem:schur},
  the ring $\operatorname{End}(V)$ does not contain nilpotent elements, so $x$
  generates a field $\Q(x)$ within $\operatorname{End}(V)$. Since $\operatorname{End}(V)$ is
  finite-dimensional, $x$ is algebraic. Since $x$ preserves the
  lattice $N_k/N_{k+1}\subset V$, it is an algebraic integer. We now
  recall the classical
  \begin{lemma}[Kronecker]\label{lem:kronecker}
    Let $\tau$ be an algebraic number, all of whose conjugates have
    norm $1$. Then $\tau$ is a root of unity.
  \end{lemma}
  \begin{proof}
    Let $\tau$ be algebraic of degree $n$, and consider some power
    $\sigma=\tau^N$. Then $\sigma\in\Q(\tau)$, and all conjugates of
    $\sigma$ have norm $1$, so the coefficients of the minimal
    polynomial of $\sigma$, which are symmetric functions of the
    conjugates of $\sigma$, have norm at most $2^n$. It follows that
    there are finitely many such minimal polynomials, so
    $\sigma^N=\sigma^M$ for some $M>N$.
  \end{proof}

  We are now ready to finish the proof. Either all conjugates of $x$
  (seen now as an algebraic number) have norm $\le1$; and then $x$ is
  a root of unity by Lemma~\ref{lem:kronecker}, so $x^n$ acts
  trivially for some $n>0$; or there exists an embedding of $\Q(x)$ in
  $\C$ such that $|x|>1$.

  In that last case, we may replace $x$ by a power of itself so that
  $|x|>2$. Choose $y\in N_k\setminus N_{k+1}$, seen as a vector
  $v\neq0\in V$. Consider as in the proof of Proposition~\ref{prop:G'fg}
  all expressions $x^{-1}y^{e_1}x^{-1}y^{e_2}\dots x^{-1}y^{e_n}x^n$,
  with all $e_j\in\{0,1\}$. There are $2^n$ such expressions, and
  their length is linear in $n$, so two must be equal in $G$ because
  $G$ has subexponential growth. This leads in $V$ to the relation
  \[(e_1-f_1)x(v)+\cdots+(e_{n-1}-f_{n-1})x^{n-1}(v)+x^n(v)=0,\]
  so $(e_1-f_1)x+\cdots+(e_{n-1}-f_{n-1})x^{n-1}+x^n=0$, because only
  $0$ is non-invertible in $\operatorname{End}(V)$. Now taking norms we get
  \[|x|^n\le(e_1-f_1)|x|+\cdots+(e_{n-1}+f_{n-1})|x|^{n-1}\le|x|\frac{|x|^{n-1}-1}{|x|-1}\le|x|^n\]
  using $|x|>2$, a contradiction.
\end{proof}

\begin{corollary}\label{cor:milnorwolf}
  Let $G$ be a virtually soluble finitely generated group. Then $G$
  has either polynomial of exponential growth, and has polynomial
  growth precisely when it is virtually nilpotent.\qed
\end{corollary}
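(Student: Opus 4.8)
The plan is to read off the statement by combining the two preceding theorems of Milnor and Wolf with the ``if'' direction of Theorem~\ref{thm:polygrowth}, shuttling between $G$ and a finite-index subgroup. Fix a finite-index soluble subgroup $G_0\le G$; since a finite-index subgroup of a finitely generated group is again finitely generated, $G_0$ is finitely generated. I would first record two elementary commensurability facts, to be used without further comment: (i) for $H\le G$ of finite index, the growth functions $v_{G,S}$ and $v_{H,S'}$ are equivalent under a linear change of the argument, so $G$ has subexponential (resp.\ polynomial) growth if and only if $H$ does; and (ii) if $H\le G$ has finite index and $H$ has a nilpotent subgroup of finite index, then that subgroup also has finite index in $G$, so $G$ is virtually nilpotent. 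Both are standard counting arguments with coset representatives.

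Now suppose $G$ does \emph{not} have exponential growth. Then neither does $G_0$, so $G_0$ is a finitely generated soluble group of subexponential growth; Milnor's corollary above shows $G_0$ is polycyclic, and Wolf's theorem above then shows $G_0$ is virtually nilpotent. By fact (ii), $G$ itself is virtually nilpotent, and hence has polynomial growth by the ``if'' direction of Theorem~\ref{thm:polygrowth}. This already establishes the dichotomy --- every finitely generated virtually soluble group has polynomial or exponential growth --- as well as one half of the last clause: subexponential growth forces virtual nilpotency, which forces polynomial growth.

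For the converse, if $G$ is virtually nilpotent then Theorem~\ref{thm:polygrowth} gives it polynomial --- in particular subexponential --- growth, so it does not have exponential growth. Together with the previous paragraph this yields the equivalence ``$G$ has polynomial growth if and only if $G$ is virtually nilpotent'' within the class of finitely generated virtually soluble groups, completing the argument. There is no genuine obstacle here beyond the bookkeeping around finite-index subgroups in facts (i) and (ii); all the real content has already been carried out in the Milnor and Wolf theorems and in Theorem~\ref{thm:polygrowth}.
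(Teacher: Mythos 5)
Your proof is correct and is exactly the argument the paper intends — the corollary is stated with a bare \qed precisely because it follows by assembling Milnor's corollary, Wolf's theorem, and the ``if'' direction of Theorem~\ref{thm:polygrowth} (Bass--Guivarc'h), mediated by the standard commensurability facts you spell out. Your explicit handling of the finite-index soluble subgroup $G_0$ and the transfer of virtual nilpotency back to $G$ is the right bookkeeping and closes the loop cleanly.
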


\subsection{Groups of intermediate growth}\label{ss:interm}
The previous sections were aimed at showing that ``most'' groups have
polynomial or exponential growth; \JMilnor\ asked in 1968 whether
there existed any groups of intermediate
growth~\cite{milnor:5603}. There can be no such examples among
virtually soluble groups, as we saw above; nor among linear groups
(subgroups of matrix groups over fields), by Tits'
alternative~\cite{tits:linear}.

Milnor's question has, however, a positive answer, which was given in
the early 1980's by \RGrigorchuk. We give here his example.

Set $\Alphabet=\{0,1\}$, and consider the following group $\GG$ acting
recursively on the set $X\coloneqq\Alphabet^\N$ of infinite sequences
over $\Alphabet$. It is generated by four elements $a,b,c,d$ defined
by
\begin{align*}
  (x_0x_1\cdots)a &=(1-x_0)x_1\cdots,\\
  (x_0x_1\cdots)b &= \begin{cases}x_0\cdots(1-x_n)x_{n+1}\cdots&\text{ if }x_0=\cdots=x_{n-2}=0\neq x_{n-1},n\not\equiv0\pmod3\\
    x_0x_1\cdots & \text{ else},
  \end{cases}\\
  (x_0x_1\cdots)c &= \begin{cases}x_0\cdots(1-x_n)x_{n+1}\cdots&\text{ if }x_0=\cdots=x_{n-2}=0\neq x_{n-1},n\not\equiv2\pmod3\\
    x_0x_1\cdots & \text{ else},
  \end{cases}\\
  (x_0x_1\cdots)d &= \begin{cases}x_0\cdots(1-x_n)x_{n+1}\cdots&\text{ if }x_0=\cdots=x_{n-2}=0\neq x_{n-1},n\not\equiv1\pmod3\\
    x_0x_1\cdots & \text{ else}.
  \end{cases}
\end{align*}
This action is the limit of an action on finite sequences
$\Alphabet^*$, which is the vertex set of the binary rooted tree, and
$\GG$ is self-similar, see Definition~\ref{defn:treeaut}.

\begin{theorem}[Grigorchuk]\label{thm:grig}
  The group $\GG$ has intermediate growth. More precisely, let
  $\eta\approx0.811$ be the positive root of $X^3+X^2+X-2=0$; then
  \[\exp(n^{1/2})\precsim v_{G,S}(n)\precsim\exp(n^{\log(2)/(\log(2)-\log(\eta))}).\]
\end{theorem}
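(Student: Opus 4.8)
The plan is to establish the two bounds separately; the upper bound is the substantial half.

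\textbf{Set-up for the upper bound.} Because $a^2=b^2=c^2=d^2=1$ and $\{1,b,c,d\}$ is a Klein four-group ($bc=cb=d$, etc.), every element of $\GG$ has a reduced expression alternating between $a$ and letters of $\{b,c,d\}$, of length essentially twice the number of $\{b,c,d\}$-syllables. Self-similarity enters via~\eqref{eq:treerec}: the level-$1$ stabilizer $\operatorname{Stab}_\GG(1)$ (pointwise stabilizer of the first level) has index $2$, and restricting $\Phi$ to it yields an injective homomorphism $\psi\colon\operatorname{Stab}_\GG(1)\to\GG\times\GG$ with $\psi(b)=(a,c)$, $\psi(c)=(a,d)$, $\psi(d)=(1,b)$, and $\psi(a\,g\,a)=(g_1,g_0)$ whenever $\psi(g)=(g_0,g_1)$. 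The point is that $d$ and $ada$ map to pairs with a trivial coordinate; over a single level this does not force a uniform gain (e.g.\ $baba\mapsto(ac,ca)$), and a uniform gain appears only over three consecutive levels, during which the ``cheap'' generator cycles through $d$, $b$, $c$ by the defining formulas of $\GG$. I would therefore prove, by induction on word length, the \emph{contraction lemma}: there is a constant $C$ such that for every $g\in\operatorname{Stab}_\GG(3)$, with $(g_w)_{w\in\{0,1\}^3}$ its tuple of sections at the third level,
\[\sum_{w\in\{0,1\}^3}|g_w|\ \le\ \eta^3\,|g|+C,\]
where $\eta$ is the positive root of $X^3+X^2+X-2=0$. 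The cubic is exactly the compatibility condition making the three-step induction close: expanding $g$ as an alternating word, descending three levels, and tracking how the interspersed $a$'s (which may cancel in pairs) split between the two coordinates, the worst-case length transformation over three levels is governed by a substitution whose relevant eigenvalue is $\eta$.

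\textbf{From the lemma to the growth bound.} Since $\operatorname{Stab}_\GG(3)$ has a fixed finite index in $\GG$ (hence bounded coset representatives) and an element of $\operatorname{Stab}_\GG(3)$ is determined by its tuple of level-$3$ sections, the lemma gives a self-referential inequality
\[v_{\GG,S}(n)\ \le\ P(n)\cdot\max\Big\{\,\prod_{w\in\{0,1\}^3}v_{\GG,S}(m_w)\ :\ m_w\ge0,\ \textstyle\sum_w m_w\le\eta^3 n+C'\Big\}\]
for some polynomial $P$. Writing $v_{\GG,S}(n)\le\exp(f(n))$ and using concavity of $t\mapsto t^{\alpha}$ — which makes the balanced choice $m_w=(\eta^3 n+C')/8$ the worst one, and is where the factor $8=2^3$, hence the ``$2/\eta$'' rather than ``$1/\eta$'' in the exponent, enters — one closes the induction, with the usual lower-order correction term in the hypothesis, with $f(n)=O(n^{\alpha})$, where $\alpha=\log 8/\log(8/\eta^3)=\log 2/(\log 2-\log\eta)$. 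Hence $v_{\GG,S}(n)\le\exp\!\big(O(n^{\log 2/(\log 2-\log\eta)})\big)$, as claimed.

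\textbf{Lower bound.} For $\exp(n^{1/2})\precsim v_{\GG,S}(n)$ I would use the branch structure: a finite-index subgroup $K\le\GG$ (a suitable normal closure) satisfies $\psi(K)\supseteq K\times K$, and the induced inclusion $K\times K\hookrightarrow K$ distorts word length by at most a bounded multiplicative factor (plus an additive constant). Iterating $k$ times embeds $K^{2^k}$ in $\GG$ with distortion $\le\lambda^k$; feeding in, at each of the $2^k$ coordinates, either the identity or one fixed nontrivial element of $K$ of bounded length produces $2^{2^k}$ pairwise distinct elements of $\GG$ of word length $O((2\lambda)^k)$, so $v_{\GG,S}(n)\succsim 2^{2^k}$ with $n\asymp(2\lambda)^k$, i.e.\ $v_{\GG,S}(n)\succsim\exp(n^{\log 2/\log(2\lambda)})$; a careful estimate of the distortion gives $2\lambda\le 4$, whence the claim. (The exponent $1/2$ is far from optimal; the essential point is only that $\GG$, being an infinite finitely generated torsion group, is not virtually nilpotent — cf.\ Theorem~\ref{thm:polygrowth} — and the resulting superpolynomial growth can be made quantitative as above.)

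\textbf{Main obstacle.} The crux is the contraction lemma: proving that three levels of the recursion shrink word length by the uniform factor $\eta^3$. The bookkeeping is delicate because one must simultaneously track the $\{b,c,d\}$-syllables and the $a$'s (including their pairwise cancellations), handle the dependence of the recursion on the level modulo $3$, and keep all error terms additive; extracting the sharp constant $\eta$, rather than a cruder ratio, is precisely what forces the three-level analysis and the cubic equation.
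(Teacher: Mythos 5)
Your proposal for the upper bound takes a genuinely different route from the paper's, and I believe it contains a gap at its central step. The paper (following \cite{bartholdi:upperbd}) does not perform a three-level contraction of the unweighted word length; instead it introduces a \emph{weighted} length, with $\|a\|=1-\eta^3$, $\|b\|=\eta^3$, $\|c\|=1-\eta^2$, $\|d\|=1-\eta$, and proves a \emph{one-level} contraction $\|g_0\|+\|g_1\|\le\eta(\|g\|+\|a\|)$ (Lemma~\ref{lem:grigineq}). The cubic $\eta^3+\eta^2+\eta=2$ is precisely the consistency condition making these four weights compatible with the recursion $b\mapsto(a,c)$, $c\mapsto(a,d)$, $d\mapsto(1,b)$: one checks $\|b\|+\|a\|=1$, $\|c\|+\|a\|=\eta$, $\|d\|+\|a\|=\eta^2$, so each syllable--letter block contracts by exactly $\eta$. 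This weight then feeds into a tree-counting argument. Your proposed contraction lemma, by contrast, asserts $\sum_{w\in\{0,1\}^3}|g_w|\le\eta^3|g|+C$ for the \emph{unweighted} word length $|g|$, and you attribute $\eta$ to ``a substitution whose relevant eigenvalue is $\eta$.'' This is the crux, and it is not established: the weights above differ from one another by a factor of nearly $3$ (from $\|d\|\approx0.19$ to $\|b\|\approx0.53$), so the weighted contraction by $\eta$ does \emph{not} transfer to an unweighted contraction by $\eta$. Moreover the substitution on the syllable alphabet $b\to c\to d\to b$ is a cyclic permutation, whose eigenvalues are roots of unity, not $\eta$; the number $\eta$ enters only through the weight normalisation. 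The unweighted word length does contract over a few levels (and that suffices to prove \emph{some} subexponential upper bound, as in Grigorchuk's original argument), but getting the sharp constant $\eta^3$ is exactly the difficulty that the weighted metric was invented to resolve. As it stands, the burden is on your claimed lemma, and without an argument producing the sharp $\eta^3$ you would only obtain $v_{\GG,S}(n)\precsim\exp(n^{\gamma})$ for some $\gamma>\log2/(\log2-\log\eta)$, which is weaker than the statement.

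Your lower-bound sketch is in the same spirit as the paper's, which uses the explicit doubling map $F(g_0,g_1)=\sigma(g_0)^a\sigma(g_1)$ satisfying $\#\Phi^{-1}(g)=8$ and $|F(g_0,g_1)|\le2|g_0|+2|g_1|+4$, hence $8v(n-2)^2\le v(4n-2)$ and so $v(n)\ge8^{\sqrt{n/8}-1}$; invoking a branch subgroup $K$ as you propose works equally well, but the verification ``$2\lambda\le4$,'' i.e. that the diagonal embedding $K\times K\hookrightarrow\GG$ distorts additive word length by a factor $\le2$, needs to be carried out, which is exactly what the displayed inequality for $F$ accomplishes. In short: upper bound --- different and incomplete, the weighted norm (or an equivalent device supplying the sharp $\eta$) is the missing ingredient; lower bound --- same idea, details deferred.
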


We begin by a series of exercises deriving useful properties of
$\GG$. Details may be found e.g.\ in~\cite{harpe:ggt}*{Chapter~8}. The
self-similar structure of $\GG$ is at the heart of all arguments; let
us describe it again, starting from the action above.

There is an injective group homomorphism
$\Phi\colon\GG\to(\GG\times\GG)\rtimes C_2$, written
$g\mapsto\pair<g_0,g_1>\pi_g$, and defined as follows. If $g$ permutes
$0X$ and $1X$ then $\pi_g=\varepsilon\neq1$ while if $g$ preserves
them setwise then $\pi_g=1$. Then $g\pi_g^{-1}$ preserves $0X$ and
$1X$, and for $i=0,1$ define a permutation $g_i$ of $X$ by
$(x_0x_1\dots)g=(x_0\pi_g)\;(x_1\dots)g_{x_0}$.  To see that the $g_i$
belong to $\GG$, note that $\Phi$ is given on the generators by
\[\Phi\colon\begin{cases}a&\mapsto\pair<1,1>\varepsilon,\\
    b&\mapsto\pair<a,c>,\\
    c&\mapsto\pair<a,d>,\\
    d&\mapsto\pair<1,b>.\\
  \end{cases}
\]

\begin{exercise}[*]\label{ex:grigrel}
  Check in $\GG$ the relations $a^2=b^2=c^2=d^2=b c d=(ad)^4=1$.
\end{exercise}

We fix once and for all the generating set $S=\{a,b,c,d\}$ of
$\GG$. It follows from the exercise that every element of $\GG$ may be
written as a word of minimal length in the form
$s_0a s_1\cdots s_{n-1}a s_n$ for some $s_0,s_n\in\{1,b,c,d\}$ and
other $s_i\in\{b,c,d\}$.

We let $\eta\approx0.811$ be the real root of $X^3+X^2+X-2=0$, and
define a metric on $\GG$ by setting
\[\|a\|=1-\eta^3,\quad
  \|b\|=\eta^3,\quad\|c\|=1-\eta^2,\quad\|d\|=1-\eta\] and extending
the metric to $\GG$ by the triangle inequality:
$\|g\|=\min\{\|s_1\|+\cdots+\|s_n\|\mid g=s_1\cdots s_n\}$.

\begin{lemma}\label{lem:grigineq}
  If $\Phi(g)=\pair<g_0,g_1>\pi$, then $\|g_0\|+\|g_1\|\le\eta(\|g\|+\|a\|)$.
\end{lemma}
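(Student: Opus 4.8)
The plan is to push a $\|\cdot\|$-geodesic word for $g$ through the homomorphism $\Phi$ and count, exploiting that $\Phi(a)$ has trivial coordinates. First I would fix a word $w=s_1s_2\cdots s_m$ with each $s_i\in\{a,b,c,d\}$, representing $g$ and realizing the norm, i.e. $\sum_{i=1}^m\|s_i\|=\|g\|$ (such a word exists since all four weights are positive). Using the relations $a^2=b^2=c^2=d^2=bcd=1$ of Exercise~\ref{ex:grigrel} — which force $\langle b,c,d\rangle$ to be abelian with the product of any two distinct members being the third — together with the elementary weight inequalities $\|b\|+\|c\|>\|d\|$, $\|b\|+\|d\|>\|c\|$ and $\|c\|+\|d\|=\|b\|$, I would argue that $w$ may be taken \emph{alternating}: no two consecutive letters equal $a$, and no two consecutive letters both lie in $\{b,c,d\}$ (any forbidden pattern is either removed by a strictly length-decreasing move — impossible for a geodesic — or, in the single case of a subword $cd$ or $dc$, replaced by $b$ at no cost, and these cost-free replacements can be iterated to termination). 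Let $n_a,n_b,n_c,n_d$ be the multiplicities of $a,b,c,d$ in $w$ and $t=n_b+n_c+n_d$; alternation forces $n_a\ge t-1$, hence
\[\|g\|+\|a\|\;\ge\;t\,\|a\|+n_b\|b\|+n_c\|c\|+n_d\|d\|\;=\;n_b(\|a\|+\|b\|)+n_c(\|a\|+\|c\|)+n_d(\|a\|+\|d\|).\]

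Next I would apply $\Phi$ to $w$ and expand in $(\GG\times\GG)\rtimes C_2$. Since $\Phi(a)=\pair<1,1>\varepsilon$, each letter $a$ contributes only the identity to both coordinates while toggling which coordinate the subsequent letters feed; and by $\Phi(b)=\pair<a,c>$, $\Phi(c)=\pair<a,d>$, $\Phi(d)=\pair<1,b>$, each letter from $\{b,c,d\}$ contributes one of its two entries to $g_0$ and the other to $g_1$ — which one depends on the parity of the number of preceding $a$'s, but this is irrelevant since I only bound $\|g_0\|+\|g_1\|$ symmetrically. The triangle inequality for $\|\cdot\|$ then gives
\[\|g_0\|+\|g_1\|\;\le\;n_b\,(\|a\|+\|c\|)+n_c\,(\|a\|+\|d\|)+n_d\,\|b\|.\]

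Comparing the two displays, it suffices to prove the three scalar inequalities $\|a\|+\|c\|\le\eta(\|a\|+\|b\|)$, $\|a\|+\|d\|\le\eta(\|a\|+\|c\|)$ and $\|b\|\le\eta(\|a\|+\|d\|)$, for then multiplying by $n_b$, $n_c$, $n_d$ respectively and summing yields $\|g_0\|+\|g_1\|\le\eta(\|g\|+\|a\|)$. Substituting $\|a\|=1-\eta^3$, $\|b\|=\eta^3$, $\|c\|=1-\eta^2$, $\|d\|=1-\eta$ and using the defining identity $\eta^3+\eta^2+\eta=2$, one finds $\|a\|+\|b\|=1$, $\|a\|+\|c\|=\eta$, $\|a\|+\|d\|=\eta^2$, so the three inequalities become $\eta=\eta\cdot1$, $\eta^2=\eta\cdot\eta$ and $\eta^3=\eta\cdot\eta^2$ — each an equality. (This reflects that the metric was designed precisely to make the lemma sharp.)

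The one genuinely delicate point is the reduction to an alternating geodesic word: because $\|c\|+\|d\|=\|b\|$ exactly and $cd=dc=b$, the move $cd\mapsto b$ does not shorten $w$, so one must check that after performing all strictly shortening reductions the surviving $cd$- and $dc$-adjacencies are isolated (bounded by $a$'s or by the ends of $w$, since $cdc$, $dcd$, $cdcd$, $\dots$ are not geodesic) and hence can all be eliminated; alternatively one may simply invoke the normal form $s_0as_1\cdots as_n$ recorded just before the lemma. Everything else — the bookkeeping in $(\GG\times\GG)\rtimes C_2$ and the arithmetic of $\eta$ — is routine.
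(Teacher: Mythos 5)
Your proof is correct and takes essentially the same route as the paper: reduce to an alternating geodesic word of the form $s_0a s_1\cdots a s_n$, push it through $\Phi$ letter by letter, and observe that each $\{b,c,d\}$-letter bundled with a neighbouring $a$ contributes to $\|g_0\|+\|g_1\|$ exactly $\eta$ times what it contributes to $\|g\|+\|a\|$, the weights having been tuned to make all three scalar inequalities equalities. The only cosmetic difference is that you account for the $a$'s by the global count $n_a\ge t-1$, whereas the paper pairs each $s_i$ with the $a$ immediately following it and absorbs the possibly unpaired final letter into the $+\|a\|$ slack.
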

\begin{proof}
  Consider $g\in\GG$. Since $\|c\|+\|d\|\ge\|b\|$ etc., $g$ may be
  written as a word of minimal norm in the form
  $s_0a s_1\cdots s_{n-1}a s_n$ for some $s_0,s_n\in\{1,b,c,d\}$ and
  other $s_i\in\{b,c,d\}$, using Exercise~\ref{ex:grigrel}. Now among
  the $s_i$, each `$b$', taken with the `$a$' after it, contributes
  $\|b\|+\|a\|=1$ to $\|g\|$, and contributes at most
  $\|a\|+\|c\|=\eta$ to $\|g_0\|+\|g_1\|$ because
  $\Phi(b)=\pair<a,c>$. Similarly, each `$c$'+`$a$' contributes $\eta$
  to $\|g\|$ and at most $\eta^2$ to $\|g_0\|+\|g_1\|$, and each
  `$d$'+`$a$' contributes $\eta^2$ to $\|g\|$ and at most $\eta^3$ to
  $\|g_0\|+\|g_1\|$. Only the last $s_n$ may not have an `$a$' after
  it. Summing all these inequalities proves the lemma.
\end{proof}

\begin{exercise}[**]
  Define $\sigma\colon\GG\to\GG$ by
  \[\sigma\colon a\mapsto c^a,\quad b\mapsto d,\quad d\mapsto c,\quad c\mapsto b,
  \]
  extended multiplicatively. Prove
  $\Phi(\sigma(g))=\pair<\theta(g),g>$ for all $g\in\GG$, where
  $\theta(a)=d,\theta(b)=1,\theta(c)=\theta(d)=a$ is a homomorphism to
  the finite group $\langle a,d\rangle\cong D_4$. Deduce that $\sigma$
  is well-defined, and is an injective endomorphism of $\GG$. For the
  usual word metric, prove that $|\sigma(g)|\le2|g|+1$ for all
  $g\in\GG$.
\end{exercise}

\begin{proof}[Proof of Theorem~\ref{thm:grig}, see~\cite{bartholdi:upperbd}]
  For the lower bound, consider the map (not homomorphism!)
  \[F\colon\GG\times\GG\to\GG,\qquad(g_0,g_1)\mapsto
    \sigma(g_0)^a\sigma(g_1).
  \]
  By the exercise, we have
  $\Phi(F(g_0,g_1))=\pair<g_0\theta(g_1),\theta(g_0)g_1>$. Since
  $\#\theta(\GG)=8$ and $\Phi$ is injective, we have $\#\Phi^{-1}(g)=8$ for
  all $g\in\GG$. Also, $|\sigma(g)|\le2|g|+1$ for the usual word metric, so
  $|F(g_0,g_1)|\le2|g_0|+2|g_1|+4$. Denoting by $B(n)$ the ball of radius
  $n$ in $\GG$ for the word metric, we have $F(B(n)\times B(n))\subseteq
  B(4n+4)$, so the growth function $v(n)$ of $\GG$ satisfies $8v(n-2)^2\le
  v(4(n-2)+4)\le v(4n-2)$. Iterating, we have $v(4^t n-2)\ge 8^{2^t-1}
  v(n-2)^{2^t}$, so $v(n)\ge 8^{\sqrt{n/8}-1}$.

  For the upper bound, we make use of the norm $\|\cdot\|$, and
  represent every $g\in\GG$ by a finite rooted tree $R(g)$. Fix any
  constant $K>\|a\|/(\eta-1)$. Given $g\in\GG$, construct $R(g)$ as
  follows. If $\|g\|\le K$, let $R(g)$ be the one-vertex tree with
  label $g$ written at the root, which is also a leaf of the tree.

  If $\|g\|>K$, compute $\Phi(g)=\pair<g_0,g_1>\pi$. Note
  $\|g_0\|,\|g_1\|<\|g\|$, and construct $R(g_0),R(g_1)$
  recursively. Let then $R(g)$ be the tree with a root labeled $\pi$
  connected by two edges leading to the roots of $R(g_0)$ and $R(g_1)$
  respectively. Since $\Phi$ is injective, the map $R$ is injective,
  and it remains to count the number of trees of given size.

  Up to replacing $\|g\|$ by $\max\{1,\|g\|-K\}$, we may assume that,
  in Lemma~\ref{lem:grigineq}, we have $\|g_0\|+\|g_1\|\le\eta\|g\|$
  as soon as $\|g\|$ is large enough.

  Let us denote by $\#R(g)$ the number of leaves of $R(g)$, and set
  $\alpha=\log2/(\log2-\log\eta)$. We claim that there is a constant
  $D$ such that $\#R(g)\le D\|g\|^\alpha$ for all $g\in\GG$. This is
  certainly true if $\|g\|$ is small enough. For $\|g\|>K$, we proceed
  by induction:
  \begin{align*}
    \#R(g) &= \#R(g_0)+\#R(g_1)\le D(\|g_0\|^\alpha+\|g_1\|^\alpha)\\
    &\le 2D\bigg(\frac{\|g_0\|+\|g_1\|}{2}\bigg)^\alpha\text{ by convexity of $X^\alpha$}\\
    &\le2D\|g\|^\alpha\big(\frac\eta2\big)^\alpha=D\|g\|^\alpha.
  \end{align*}
  We finally count the number of trees with $n$ leaves. There are
  $\operatorname{Catalan}(n)$ such tree shapes; each of the $n-1$
  non-leaf vertices has a label in $\{1,\varepsilon\}$, and each of
  the $n$ leaf vertices has a label in $B(K)$. It follows that there
  are $\operatorname{Catalan}(n)2^{n-1}B(K)^n\le E^n$ trees with at
  most $n$ leaves, for some constant $E$; and then
  $v(n)\le E^{n^\alpha}$.
\end{proof}

\begin{exercise}[**]\label{ex:torsion}
  Prove that $\GG$ is a torsion group.

  \emph{Hint:} Use Exercise~\ref{ex:grigrel}, Lemma~\ref{lem:grigineq}
  and induction.
\end{exercise}
  
%%%%%%%%%%%%%%%%%%%%%%%%%%%%%%%%%%%%%%%%%%%%%%%%%%%%%%%%%%%%%%%% 
\newpage\section{Paradoxical decompositions}\label{sec:pd}
We consider again the general case of a group $G$ acting on a set $X$, and
shall derive other characterizations of amenability, based on finite
partitions of $X$.
\begin{definition}\label{def:paradoxical}
  A $G$-set $X$ is \emph{paradoxical} if there are partitions
  \[X=Y_1\sqcup\dots\sqcup Y_m=Z_1\sqcup\dots\sqcup Z_n,\]
  and $g_1,\dots,g_m,h_1,\dots,h_n\in G$, such that
  \[X=Y_1g_1\sqcup\dots\sqcup Y_m g_m\sqcup Z_1h_1\sqcup\dots\sqcup
  Z_n h_n.\qedhere\]
\end{definition}

As a naive example, relax the condition that $G$ be a group, and consider
the monoid of affine transformations of $\N$. Then $\N=\N g_1\sqcup\N h_1$
for $g_1(n)=2n$ and $h_1(n)=2n+1$ defines a paradoxical
decomposition\footnote{This should not come as a surprise, since
  $\{g_1,h_1\}$ generate a free monoid.}.

\begin{example}\label{ex:paraf2}
  We return to Proposition~\ref{prop:fnnotamen}. More precisely, now,
  consider $X=G=\langle x_1,x_2\mid\rangle$ a free group of rank $2$; and
  \begin{xalignat*}{2}
    Y_1 &= \{\text{reduced words ending in }x_1\}, & Y_2 &= G\setminus Y_1,\\
    Z_1 &= \{\text{reduced words ending in }x_2\}\cup\{1,x_2^{-1},x_2^{-2},\dots\}, & Z_2 &= G\setminus Z_1;
  \end{xalignat*}
  then $G=Y_1\sqcup Y_2=Z_1\sqcup Z_2=Y_1\sqcup Y_2 x_1^{-1}\sqcup
  Z_1\sqcup Z_2x_2^{-1}$.
\end{example}

\subsection{Hausdorff's Paradox}\label{ss:hbt}
\JvNeumann\ had noted already in~\cite{vneumann:masses} that
non-amenability of $F_2$ was at the heart of the
Hausdorff-Banach-Tarski paradox. We first show:
\begin{proposition}\label{prop:f2so3}
  The group $\SO_3(\R)$ of rotations of the sphere contains a
  non-abelian free subgroup.
\end{proposition}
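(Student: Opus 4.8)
The plan is to exhibit two explicit rotations of $\R^3$ generating a free group, following the classical argument of Hausdorff. First I would fix an angle $\theta$ with $\cos\theta=1/3$ (equivalently take the rotation by $\arccos(1/3)$), and consider the two rotations $\rho$ and $\tau$ by angle $\theta$ about two perpendicular axes, say the $x$-axis and the $z$-axis. Concretely,
\[\rho=\begin{pmatrix}1&0&0\\0&1/3&-2\sqrt2/3\\0&2\sqrt2/3&1/3\end{pmatrix},\qquad
\tau=\begin{pmatrix}1/3&-2\sqrt2/3&0\\2\sqrt2/3&1/3&0\\0&0&1\end{pmatrix}.\]
The claim is that $\rho$ and $\tau$ generate a free group of rank $2$; by Proposition~\ref{prop:fnnotamen} this is a non-abelian free subgroup, and since it is infinite (indeed free) it is non-trivial, proving the statement. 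Note that although $\SO_3(\R)$ is uncountable, one only needs two well-chosen elements, so the existence of a free subgroup is a matter of a direct verification rather than a measure-theoretic genericity argument.

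The key combinatorial step is to show that no nonempty reduced word $w=w(\rho^{\pm1},\tau^{\pm1})$ equals the identity. The standard device is to work over the ring $\Z[1/3]$: every entry of $\rho^{\pm1}$ and $\tau^{\pm1}$ lies in $\frac13\Z[\sqrt2]$, so after clearing denominators one tracks the matrices modulo powers of $3$. Precisely, I would prove by induction on the length $n$ of a reduced word $w$ (of a suitable normalized shape, say ending in $\rho^{\pm1}$) that
\[w\begin{pmatrix}1\\0\\0\end{pmatrix}=\frac{1}{3^n}\begin{pmatrix}a\\b\sqrt2\\c\end{pmatrix}\]
with $a,b,c\in\Z$ and $b\not\equiv 0\pmod 3$. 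The inductive step is a short case analysis: applying $\rho^{\pm1}$ or $\tau^{\pm1}$ to such a vector and extracting the new middle coordinate, one checks that the non-divisibility of $b$ by $3$ is preserved, except possibly when the last two letters would cancel, which is excluded because $w$ is reduced; the four cases (last letter $\rho^{\pm1}$ or $\tau^{\pm1}$, previous letter equal or not) are exactly what makes the reducedness hypothesis do its work. Since $b\not\equiv0\pmod3$ forces the middle coordinate to be nonzero (in particular $\neq 0$), the vector is not $(1,0,0)^{\mathsf T}$, so $w\neq1$.

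I would then handle the reduction to the normalized shape: a general reduced word is a conjugate of one beginning and ending with a power of $\rho$ (or is a power of $\tau$, handled by symmetry after swapping the roles of the axes and the first and third coordinates), and conjugation does not affect whether the word is trivial. The main obstacle is purely bookkeeping — getting the case analysis in the inductive step exactly right so that the congruence $b\not\equiv 0\pmod 3$ propagates, and making sure the reducedness hypothesis is invoked precisely where a cancellation would otherwise break the argument. There is nothing deep beyond this $3$-adic valuation trick; the only subtlety is choosing $\cos\theta=1/3$ (any rational of the form $p/q$ with $q$ not a perfect power and $\gcd(p,q)=1$ works after adjusting the modulus, but $1/3$ is cleanest) so that the denominators grow by exactly one factor of the prime at each step.
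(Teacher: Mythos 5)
Your argument is correct in spirit but takes a genuinely different route from the paper, so let me compare and also flag one slip. The paper's first proof exhibits a matrix $U$ of order $2$ and a matrix $V$ of order $3$, shows via a mod-$2$ congruence on the entries of $2^nW$ that no alternating word $W$ in $U,V^{\pm1}$ is trivial, concludes $\langle U,V\rangle\cong C_2*C_3$, and extracts the free group $\langle[U,V],[U,V^{-1}]\rangle$ (free of rank $2$ because the commutator subgroup of $C_2*C_3\cong\PSL_2(\Z)$ is); it also gives a second proof via norm-one quaternions and Lagrange's four-square theorem. You instead take two rotations of infinite order by $\arccos(1/3)$ about perpendicular axes and prove freeness directly by a $3$-adic valuation, with no passage through a free product. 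Both are ``reduction modulo a prime'' arguments; yours is slightly more self-contained in that it needs no knowledge of the structure of $[C_2*C_3,C_2*C_3]$, while the paper's first proof makes the connection with $\PSL_2(\Z)$ visible.

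The slip: you track $(1,0,0)$ but normalize the reduced word $w$ to \emph{end} (rightmost letter, the one applied first) with a power of $\rho$; yet $\rho$ \emph{fixes} $(1,0,0)$. For instance $w=\tau\rho$ has length $n=2$ but $w(1,0,0)=\tau(1,0,0)=\tfrac13(1,2\sqrt2,0)=\tfrac19(3,6\sqrt2,0)$, so $b=6\equiv0\pmod{3}$ and the inductive claim fails as stated. You should run the induction over reduced words whose rightmost letter is $\tau^{\pm1}$ (the generator that actually moves $(1,0,0)$), or else track $(0,0,1)$. With that fixed, your case analysis does propagate $b\not\equiv0\pmod{3}$: applying $\rho^{\pm1}$ yields $a'\equiv0$ and $c'\equiv\pm b'\pmod{3}$, applying $\tau^{\pm1}$ yields $c'\equiv0$ and $a'\equiv\mp b'\pmod{3}$, and the only transitions on which the new $b$ could vanish mod $3$ are exactly the four immediate cancellations excluded by reducedness.
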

\begin{proof}
  There are many classical proofs of this fact. Consider for example
  the matrices
  \[U=\begin{pmatrix}0&1&0\\1&0&0\\0&0&-1\end{pmatrix},\qquad
    V=\begin{pmatrix}1&0&0\\0&-\frac12&\frac{\sqrt3}2\\0&-\frac{\sqrt3}2&-\frac12\end{pmatrix}
  \]
  in $\SO_3(\R)$. They satisfy the relations $U^2=V^3=1$, but no
  other, since in a product
  $W=U^{\varepsilon_1}V^{\pm1}U\cdots V^{\pm1}U^{\varepsilon_2}$ with
  $\varepsilon_1,\varepsilon_2\in\{0,1\}$ and $n$ letters $V^{\pm1}$
  we have
  \[W=\frac1{2^n}\begin{pmatrix}a_{1,1}&a_{1,2}&\sqrt3 a_{1,3}\\
      a_{2,1}&a_{2,2}&\sqrt3 a_{2,3}\\\sqrt3 a_{3,1}&\sqrt3 a_{3,2}&a_{3,3}
    \end{pmatrix}
  \]
  with $a_{i,j}\in\Z$ and $a_{3,3}$ odd, as can be seen from computing
  $2^n W\bmod 2$; so $W\neq1$. Then $\langle [U,V],[U,V^{-1}]\rangle$
  is a free group of rank $2$.

  Here is another proof: $\SO_3(\R)$ is the group of quaternions of norm
  $1$. Let $p$ be a prime $\equiv1\pmod4$, and set
  \[S=\{(a+bi+c j+d k)/\sqrt p\mid
  a\in2\N+1,b,c,d\in2\Z,a^2+b^2+c^2+d^2=p\}.\] It follows from Lagrange's
  Theorem on sums of four squares that $\#S=p+1$, and from the unique
  factorization of quaternions that $S$ generates a free group of rank
  $(p+1)/2$. See~\cite{hurwitz:quaternions} for proofs of these facts.
\end{proof}

\noindent The following paradox follows:
\begin{theorem}[Hausdorff~\cite{hausdorff:pd}]
  There exists a partition of the sphere $S^2$, or of the ball $B^3$,
  in two pieces; and a further partition of each of these into
  respectively two and three pieces, in such a manner that these be
  reassembled, using only isometries of $\R^3$, into two spheres or
  balls respectively.
\end{theorem}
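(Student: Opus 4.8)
The plan is to find a subgroup of $\SO_3(\R)$ admitting a paradoxical decomposition with the right piece counts, to transport that decomposition to $S^2$ with the help of the axiom of choice, and then to absorb a countable exceptional set. The first proof of Proposition~\ref{prop:f2so3} proves slightly more than it states: the matrices $U,V$ there satisfy $U^2=V^3=1$ and \emph{no other} relation, so $\Gamma:=\langle U,V\rangle\le\SO_3(\R)$ is isomorphic to $\Z/2*\Z/3=\langle\rho,\tau\mid\rho^2,\tau^3\rangle$. Writing elements of $\Gamma$ in reduced normal form (alternating $\rho$ and $\tau^{\pm1}$), let $E_\rho,E_\tau,E_{\tau^{-1}}$ be the sets of non-trivial elements whose reduced word \emph{ends} in $\rho$, $\tau$, $\tau^{-1}$ respectively, so that $\Gamma=\{1\}\sqcup E_\rho\sqcup E_\tau\sqcup E_{\tau^{-1}}$. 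Exactly as in Example~\ref{ex:paraf2} one checks, by inspecting reduced words, that $E_\rho\rho=\{1\}\sqcup E_\tau\sqcup E_{\tau^{-1}}$, that $E_\tau\tau=E_{\tau^{-1}}$, and that $E_\tau\tau^{-1}=\{1\}\sqcup E_\rho$. From these relations — together with a progression $\{1,c^{-1},c^{-2},\dots\}$ along the infinite-order element $c=\rho\tau$ inserted to shepherd the identity, playing the role of the tail $\{1,x_2^{-1},x_2^{-2},\dots\}$ in Example~\ref{ex:paraf2} — one obtains a partition $\Gamma=P_1\sqcup P_2\sqcup Q_1\sqcup Q_2\sqcup Q_3$ and elements $g_1,g_2,h_1,h_2,h_3\in\Gamma$ with $\Gamma=P_1g_1\sqcup P_2g_2$ and $\Gamma=Q_1h_1\sqcup Q_2h_2\sqcup Q_3h_3$; this is the group-theoretic heart of the paradox, a partition of $\Gamma$ into a bundle of two pieces and a bundle of three, each reassembling by translations to all of $\Gamma$. (Carrying out this verification is a finite bookkeeping exercise on normal forms, which I would do explicitly.)

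Next I would transport the decomposition to $S^2$. Every non-trivial rotation of $S^2$ fixes exactly its two poles, so the set $D$ of poles of all elements of $\Gamma\setminus\{1\}$ is a countable, $\Gamma$-invariant subset of $S^2$, and $\Gamma$ acts freely on $S^2\setminus D$. By the axiom of choice pick a transversal $T\subseteq S^2\setminus D$ meeting each $\Gamma$-orbit exactly once, so that $S^2\setminus D=\bigsqcup_{\gamma\in\Gamma}T\gamma$ is a disjoint union of copies of the regular $\Gamma$-set. Setting $P_i^*:=\bigsqcup_{\gamma\in P_i}T\gamma$ and $Q_j^*:=\bigsqcup_{\gamma\in Q_j}T\gamma$ transports everything verbatim: $S^2\setminus D=P_1^*\sqcup P_2^*\sqcup Q_1^*\sqcup Q_2^*\sqcup Q_3^*$, with $P_1^*g_1\sqcup P_2^*g_2=S^2\setminus D=Q_1^*h_1\sqcup Q_2^*h_2\sqcup Q_3^*h_3$, all the $g_i,h_j$ being honest rotations in $\Gamma\le\SO_3(\R)$. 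This is already Hausdorff's decomposition of $S^2\setminus D$ into two-and-three pieces rebuilding two copies of $S^2\setminus D$.

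To pass from $S^2\setminus D$ to $S^2$ itself I would absorb $D$ by the classical ``Hilbert hotel'' move. Since $D$ is countable, only countably many conditions on a rotation are to be avoided, so one may choose $\lambda\in\SO_3(\R)$ of infinite order with $D,D\lambda,D\lambda^2,\dots$ pairwise disjoint; put $D^*:=\bigsqcup_{k\ge0}D\lambda^k$, so that carrying $D^*$ by $\lambda$ and fixing $S^2\setminus D^*$ exhibits $S^2$ as equidecomposable with $S^2\setminus D$. Folding this absorbing move, as Hausdorff does, entirely into the three-piece ($\tau$-)bundle — rather than composing it blindly with the five-piece decomposition and passing to a common refinement — yields the partition of $S^2$ claimed, into two pieces, each cut further into two and three pieces, rebuilding two spheres. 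For $B^3$: radial extension makes $B^3\setminus\{0\}$ a $\Gamma$-set (indeed $B^3\setminus\{0\}\cong S^2\times(0,1]$ $\Gamma$-equivariantly, with $\Gamma$ acting only on the first factor), so the same construction gives the two-and-three partition of $B^3\setminus\{0\}$; the centre is then absorbed in the same way, using a rotation of $\R^3$ about an axis through an interior point near $0$, whose forward orbit of $0$ is bounded and remains inside the ball.

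The main obstacle is precisely the bookkeeping of pieces. Each individual step — establishing the paradoxical decomposition of $\Gamma$ while correctly parking the identity, transporting along the transversal, and absorbing the countable exceptional set (and, for the ball, the centre) — is routine; but a careless composition of the absorption with the five-piece decomposition passes to a common refinement and inflates the count well beyond $2+3$. Keeping the total at exactly two and three pieces forces one to arrange all of the ``parking'' and ``absorbing'' to take place inside the single three-piece bundle, along the one infinite-order element $c=\rho\tau$, so that no further cutting is introduced; this is the delicate point, and the one I would spend the most care on.
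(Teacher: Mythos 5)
Your overall architecture — a countable exceptional set $D$ of poles, a free action on $S^2\setminus D$, a choice of transversal, a paradoxical decomposition pushed forward from the acting group, and a Hilbert-hotel absorption of $D$ and of the centre of the ball — matches the paper's. Where you genuinely diverge is in the acting group: you work directly with $\Gamma=\langle U,V\rangle\cong\Z/2*\Z/3$, whereas the paper passes to the free subgroup $F_2=\langle[U,V],[U,V^{-1}]\rangle$ inside it and applies the $2{+}2$ paradox of Example~\ref{ex:paraf2}. Your observation that the first proof of Proposition~\ref{prop:f2so3} already establishes that $U,V$ satisfy no relation besides $U^2=V^3=1$ is correct, and using $\Gamma$ itself is closer to Hausdorff's original argument; the order-$3$ generator makes the three-fold split of one bundle intrinsic rather than ad hoc. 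More importantly, you are more forthright than the paper about the piece-count arithmetic: the paper transports a $2{+}2$ paradox of $F_2$ to $S^2\setminus D$ and then disposes of $D$ with a separate two-piece equidecomposition, saying only that these ``can be combined'' via Corollary~\ref{cor:equidectrans} — but that combination proceeds by common refinement and does not by itself land on the advertised $2{+}3$ split. You correctly single out as the crux the need to park the identity and to absorb $D$ (and the centre) entirely within the three-piece bundle. That is precisely where the work is (the exact $2{+}3$ bound is really Robinson's refinement, not present in Hausdorff's paper), and it is fair to call it a ``finite bookkeeping exercise,'' but be aware you have asserted rather than verified it: the paper's proof leaves the same step to the reader, so you are not worse off, but until you actually exhibit $P_1,P_2,Q_1,Q_2,Q_3$ with the absorption folded in, your proof, like the paper's, is a sketch at exactly the point the theorem's numerical content lives.
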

\begin{proof}
  We first show the following: there is a countable subset $D\subset
  S^2$ such that one can decompose $S^2\setminus D=P\sqcup Q$, and
  further decompose $P=P_1\sqcup\dots\sqcup P_m$ and
  $Q=Q_1\sqcup\dots\sqcup Q_n$, so that $S^2\setminus
  D=P_1g_1\sqcup\dots\sqcup P_m g_m= Q_1h_1\sqcup\dots\sqcup Q_n h_n$.

  Indeed, by Proposition~\ref{prop:f2so3}, there is a free subgroup
  $G$ of $\SO_3(\R)$, acting on the sphere. Every non-trivial element
  of $G$ acts as a rotation, and therefore has two fixed points. Let
  $D$ denote the collection of all fixed points of all non-trivial
  elements of $G$; clearly $D$ is countable. The group $G$ acts freely
  on $S^2\setminus D$; let $T$ be a choice of one point per
  orbit\footnote{The Axiom of Choice is required here.}. Let
  $(Y_i,Z_j,g_i,h_j)$ be a paradoxical decomposition of $G$ as in
  Definition~\ref{def:paradoxical}. Set then $P_i=T Y_i g_i^{-1}$ and
  $Q_j=T Z_j h_j^{-1}$ for $i=1,\dots,m$ and $j=1,\dots,n$.

  Keeping the same notation, we now show that $S^2$ can be cut as
  $S^2=U\sqcup V$, such that for an appropriate rotation $\rho$ we
  have $\rho(U)\sqcup V=S^2\setminus D$. Since $D$ is countable, there
  is a direction $\R v\subset\R^3$ that does not intersect $D$. There
  are continuously many rotations $\rho$ with axis $\R v$, and only
  countably many that satisfy $D\cap\rho^n(D)\neq\emptyset$ for some
  $n\neq0$; let $\rho$ be any other rotation. Set
  $U=\bigcup_{n\ge0}\rho^n(D)$ and $V=S^2\setminus U$; then
  $\rho(U)=U\setminus D$ and we are done.

  These paradoxical decompositions can be combined (see
  Corollary~\ref{cor:equidectrans} below for details), proving the
  statement for $S^2$.

  The same argument works for all concentric spheres simultaneously,
  and therefore for $B^3\setminus\{0\}$. It remains to show that $B^3$
  and $B^3\setminus\{0\}$ can respectively be cut into isometric
  pieces. Let $\rho$ be a rotation about $(\frac12,0,\R)$ with angle
  $1$ (in radians), and set $W=\{\rho^n(0)\mid n\in\N\}$. Then
  $\rho(W)=W\setminus\{0\}$, so $B^3=W\sqcup(B^3\setminus W)$ and
  $B^3\setminus\{0\}=\rho(W)\sqcup(B^3\setminus W)$.
\end{proof}

\subsection{Doubling conditions}\label{ss:doubling}
Let us restate paradoxical decompositions in a more sophisticated way.
\begin{definition}
  Let a group $G$ act on a set $X$. A \emph{$G$-wobble} is a map
  $\phi\colon Y\to Z$ for two subsets $Y,Z\subseteq X$, such that there
  exists a finite decomposition $Y=Y_1\sqcup\dots\sqcup Y_n$ and elements
  $g_1,\dots,g_n\in G$ with $\phi(y)=y g_i$ whenever $y\in Y_i$.

  We define a preorder\footnote{I.e.\ a transitive, reflexive
    relation.} on subsets of $X$ by $Y\precsim Z$ if there exists an
  injective $G$-wobble $Y\to Z$; and an equivalence relation $Y\sim Z$
  if there exists a bijective $G$-wobble $Y\to Z$; in that case, we
  say that $Y$ and $Z$ are \emph{equidecomposable}.
\end{definition}
Using that terminology, the $G$-set $X$ is paradoxical if one may
decompose $X=Y\sqcup Z$ with $Y\sim X\sim Z$.

\begin{lemma}\label{lem:equidecset}
  The map $\phi\colon Y\to Z$ is a $G$-wobble if and only if there exists a
  finite subset $S\Subset G$ such that $\phi(y)\in y S$ for all $y\in
  Y$.
\end{lemma}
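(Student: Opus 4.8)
The plan is to verify both implications directly from the definition, since both are elementary. For the forward direction, suppose $\phi\colon Y\to Z$ is a $G$-wobble, witnessed by a finite decomposition $Y=Y_1\sqcup\dots\sqcup Y_n$ and elements $g_1,\dots,g_n\in G$ with $\phi(y)=yg_i$ whenever $y\in Y_i$. I would simply set $S=\{g_1,\dots,g_n\}\Subset G$; then for each $y\in Y$ we have $y\in Y_i$ for some $i$, so $\phi(y)=yg_i\in yS$, as required.

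For the converse, suppose $S=\{g_1,\dots,g_n\}\Subset G$ satisfies $\phi(y)\in yS$ for all $y\in Y$. The idea is to convert the (possibly overlapping) conditions ``$\phi(y)=yg_i$'' into a genuine partition by always selecting the least admissible index. Concretely, I would define
\[Y_i=\{y\in Y\mid \phi(y)=yg_i\text{ and }\phi(y)\neq yg_j\text{ for all }j<i\}.\]
Every $y\in Y$ satisfies $\phi(y)=yg_i$ for at least one $i$, since $\phi(y)\in yS$; taking $i$ to be the smallest such index shows $y\in Y_i$, so $Y=Y_1\cup\dots\cup Y_n$. The $Y_i$ are pairwise disjoint by construction, hence $Y=Y_1\sqcup\dots\sqcup Y_n$, and $\phi(y)=yg_i$ for every $y\in Y_i$ by definition of $Y_i$. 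This exhibits $\phi$ as a $G$-wobble and completes the proof.

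There is no genuine obstacle here; the only point requiring a moment's care is that a single $y$ may satisfy $\phi(y)=yg_i$ for several indices $i$ (for instance when the action is not free, or when two elements of $S$ differ by an element of a point stabilizer), so one cannot naively take $Y_i=\{y\in Y\mid\phi(y)=yg_i\}$ but must instead disjointify by the ``least index'' convention above.
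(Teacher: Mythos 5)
Your proof is correct and matches the paper's argument essentially verbatim: the forward direction collects the $g_i$ into $S$, and the converse uses exactly the same ``least index'' disjointification $Y_i=\{y\mid\phi(y)=yg_i,\ \phi(y)\neq yg_j\text{ for }j<i\}$. Your extra remark about why one must disjointify (non-free actions) is a helpful clarification the paper leaves implicit.
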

\begin{proof}
  If $\phi$ is a $G$-wobble, set $S=\{g_1,\dots,g_n\}$, and note
  $\phi(y)\in y S$ for all $y\in Y$.

  Conversely, if $\phi(y)\in y S$ for all $y\in Y$, write
  $S=\{g_1,\dots,g_n\}$, and set
  \[Y_n=\{y\in Y\mid \phi(y)=y g_n\text{ and }\phi(y)\neq y g_m\text{
    for all }m<n\}.\qedhere\]
\end{proof}

\begin{corollary}
  The composition of $G$-wobbles is again a $G$-wobble, and the
  inverse of a bijective $G$-wobble is also a $G$-wobble.\qed
\end{corollary}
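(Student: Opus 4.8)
The plan is to prove each assertion by reducing to the convenient reformulation supplied by Lemma~\ref{lem:equidecset}: a map between subsets of $X$ is a $G$-wobble exactly when it moves every point within a single finite set of right translates.

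First I would treat composition. Suppose $\phi\colon Y\to Z$ and $\psi\colon Z\to W$ are $G$-wobbles. By Lemma~\ref{lem:equidecset}, pick finite $S,T\Subset G$ with $\phi(y)\in y S$ for all $y\in Y$ and $\psi(z)\in z T$ for all $z\in Z$. Since $\phi(Y)\subseteq Z$, the composite $\psi\circ\phi\colon Y\to W$ is well-defined, and for $y\in Y$ we have $(\psi\circ\phi)(y)=\psi(\phi(y))\in\phi(y)T\subseteq y S T$. The set $S T=\{s t\mid s\in S,\,t\in T\}$ is finite, so Lemma~\ref{lem:equidecset}, applied now in the reverse direction, shows $\psi\circ\phi$ is a $G$-wobble.

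Next the inverse. Let $\phi\colon Y\to Z$ be a bijective $G$-wobble, witnessed by a finite decomposition $Y=Y_1\sqcup\dots\sqcup Y_n$ and $g_1,\dots,g_n\in G$ with $\phi(y)=y g_i$ for $y\in Y_i$. Put $Z_i\coloneqq\phi(Y_i)$. Because $\phi$ is a bijection, $Z=Z_1\sqcup\dots\sqcup Z_n$ is again a finite partition. For $z\in Z_i$, write $z=\phi(y)=y g_i$ with $y\in Y_i$; then $y=z g_i^{-1}$, so $\phi^{-1}(z)=z g_i^{-1}$. Hence $\phi^{-1}$ is a $G$-wobble with witnessing data $(Z_i,g_i^{-1})_{1\le i\le n}$.

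The only point that needs any care is the disjointness of the pieces $Z_i$ in the inverse argument, and this is precisely where injectivity of $\phi$ (part of being a \emph{bijective} $G$-wobble) enters: without it the sets $\phi(Y_i)$ could overlap and the formula $\phi^{-1}(z)=z g_i^{-1}$ would be ambiguous. Beyond this there is no real obstacle — the corollary is essentially a bookkeeping consequence of Lemma~\ref{lem:equidecset} and the fact that finite subsets of $G$ are closed under products.
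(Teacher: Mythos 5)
Your proof is correct and matches the approach the paper clearly intends (the corollary is stated with \qed, and the paper uses the identical $ST$ argument from Lemma~\ref{lem:equidecset} explicitly in the proof of the subsequent Corollary~\ref{cor:equidectrans}). One small remark: for the inverse you went back to the raw decomposition definition, which is fine, but Lemma~\ref{lem:equidecset} also dispatches it in one line — if $\phi(y)\in y S$ for all $y\in Y$, then for $z=\phi(y)\in Z$ one has $\phi^{-1}(z)=y\in z S^{-1}$, so $\phi^{-1}$ is a $G$-wobble with witnessing finite set $S^{-1}$, and no bookkeeping of pieces is needed.
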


It follows that the set of invertible $G$-wobbles is actually a
group. If the space $X$ is assumed compact and the pieces in the
decomposition are open, then this group is known as the ``topological
full group'' of $G$, see~\S\ref{ss:topfull}.

\begin{corollary}\label{cor:equidectrans}
  The relation $\precsim$ is a preorder, and $\sim$ is an
  equivalence relation.
\end{corollary}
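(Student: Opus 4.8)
The plan is to deduce everything from the preceding corollary, which states that $G$-wobbles compose and that the inverse of a bijective $G$-wobble is again a $G$-wobble. The only genuinely new ingredient needed is that the identity map is a $G$-wobble, and once that is in place the verification is pure bookkeeping.

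First I would check reflexivity. For any $Y\subseteq X$, the identity map $\mathrm{id}_Y\colon Y\to Y$ is a $G$-wobble: take the trivial one-piece decomposition $Y=Y$ with associated element $g_1=1$. Since $\mathrm{id}_Y$ is both injective and bijective, this shows $Y\precsim Y$ and $Y\sim Y$, so both relations are reflexive.

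Next, transitivity. Suppose $\phi\colon Y\to Z$ and $\psi\colon Z\to W$ are injective $G$-wobbles. By the preceding corollary the composite $\psi\circ\phi\colon Y\to W$ is a $G$-wobble, and it is injective as a composition of injective maps; hence $Y\precsim W$, and $\precsim$ is a preorder. Running the same argument with bijective $G$-wobbles, and using that a composition of bijections is a bijection, gives transitivity of $\sim$. For symmetry of $\sim$, if $\phi\colon Y\to Z$ is a bijective $G$-wobble then by the preceding corollary $\phi^{-1}\colon Z\to Y$ is a $G$-wobble, and it is visibly a bijection, so $Z\sim Y$; together with reflexivity and transitivity this makes $\sim$ an equivalence relation.

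I do not anticipate any real obstacle: the substantive content — that the class of $G$-wobbles is closed under composition and under inversion of bijective ones — was already established in Lemma~\ref{lem:equidecset} and the corollary following it, so the present statement is just the observation that these closure properties, plus the triviality that identities are $G$-wobbles, are exactly the axioms of a preorder and of an equivalence relation.
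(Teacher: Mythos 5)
Your proof is correct and takes essentially the same approach as the paper, which also reduces everything to the closure of $G$-wobbles under composition (the paper re-derives this directly from Lemma~\ref{lem:equidecset}, while you cite the preceding corollary, but the underlying argument is identical). You are slightly more thorough than the paper, which explicitly proves only transitivity of $\precsim$ and leaves reflexivity and symmetry of $\sim$ tacit; your observation that the identity (with $g_1=1$) is a $G$-wobble cleanly supplies reflexivity.
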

\begin{proof}
  Consider injective $G$-wobbles $\phi\colon Y\to Z$ and
  $\psi\colon W\to Y$. By Lemma~\ref{lem:equidecset}, there are
  $S,T\Subset G$ such that $\phi(y)\in y S$ and $\phi(w)\in w T$ for
  all $y\in Y,w\in W$. Then $\phi\psi(w)\in w T S$ for all $w\in W$,
  so $\phi\psi\colon W\to Z$ is an injective $G$-wobble, again by
  Lemma~\ref{lem:equidecset}.
\end{proof}

\begin{theorem}[Cantor-Schr\"oder-Bernstein~\cite{cantor:mitteilungen}]\label{thm:cantorbernstein}
  Let $Y,Z$ be sets. If there exists an injection $\alpha\colon Y\to Z$ and
  an injection $\beta\colon Z\to Y$, then there exists a bijection
  $\gamma\colon Y\to Z$.

  Furthermore, $\gamma$ may be chosen so that
  $\gamma(y)\in\{\alpha(y),\beta^{-1}(y)\}$ for all $y\in Y$.
\end{theorem}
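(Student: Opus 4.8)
The plan is to give the classical back-and-forth construction of the bijection $\gamma$, phrased so that the extra ``furthermore'' clause falls out for free. First I would partition $Y$ according to the ``ancestry'' of its points. Given $y\in Y$, one may try to trace back its pre-images under the alternating maps: $\beta^{-1}(y)$, then $\alpha^{-1}(\beta^{-1}(y))$, then $\beta^{-1}$ of that, and so on, as long as these pre-images exist (recall $\alpha,\beta$ are injective, so pre-images, when they exist, are unique). Three things can happen: the chain terminates at a point of $Y$ with no $\beta^{-1}$-pre-image; it terminates at a point of $Z$ with no $\alpha^{-1}$-pre-image; or it continues forever. This partitions $Y$ into three sets $Y_Y$, $Y_Z$, $Y_\infty$, and symmetrically partitions $Z$ into $Z_Y$, $Z_Z$, $Z_\infty$.

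The key step is then to define $\gamma$ piecewise: set $\gamma(y)=\alpha(y)$ for $y\in Y_Y\cup Y_\infty$ and $\gamma(y)=\beta^{-1}(y)$ for $y\in Y_Z$ (the latter is well-defined, since every point of $Y_Z$ has, by construction, a $\beta$-pre-image). I would then check that $\alpha$ restricts to a bijection $Y_Y\to Z_Y$ and $Y_\infty\to Z_\infty$, and that $\beta$ restricts to a bijection $Z_Z\to Y_Z$; the verification is just chasing the definition of the ancestry classes, observing that applying $\alpha$ (resp.\ $\beta^{-1}$) shifts a chain by one step without changing where, or whether, it terminates. Since $Z=Z_Y\sqcup Z_Z\sqcup Z_\infty$, gluing these three bijections shows $\gamma$ is a bijection $Y\to Z$. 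The ``furthermore'' clause is immediate from the definition, since each $\gamma(y)$ is by fiat either $\alpha(y)$ or $\beta^{-1}(y)$.

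The main obstacle — really the only point requiring care — is the well-definedness and the bijectivity bookkeeping: one must be sure that a point of $Y_Z$ genuinely lies in the image of $\beta$ (so that $\beta^{-1}(y)$ makes sense), and that the three pieces of $\gamma$ have disjoint images covering all of $Z$ with no overlap. This is handled cleanly by noting that the ancestry class of a point is preserved under the natural correspondence: if $y\in Y_Y$ then the chain of $y$ ends in $Y$, and $\alpha(y)\in Z$ has the same terminating chain (one step longer), so $\alpha(y)\in Z_Y$; and conversely every $z\in Z_Y$ has an $\alpha$-pre-image lying in $Y_Y$. The cases $Y_\infty\leftrightarrow Z_\infty$ and $Y_Z\leftrightarrow Z_Z$ are identical in spirit. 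No choice principle is needed, since all pre-images in sight are unique by injectivity.
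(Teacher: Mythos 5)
Your proof is correct and is essentially the same argument the paper gives: the paper's partition $Y=U\sqcup V\sqcup W$ (with $Y_0=Y$, $Y_{n+1}=\beta(\alpha(\cdots))$, and $U,V,W$ read off by parity and intersection) is exactly your ancestry decomposition $Y_Y\sqcup Y_Z\sqcup Y_\infty$. The one cosmetic difference is that you route the doubly-infinite class $Y_\infty$ through $\alpha$ while the paper routes $W$ through $\beta^{-1}$; either choice works, and the ``furthermore'' clause holds in both cases.
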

\begin{proof}
  Let $\alpha\colon Y\to Z$ and $\beta\colon Z\to Y$ be injective maps. Set
  $Y_0=Y$ and $Z_0=Z$; and, for $n\ge 1$, set $Y_n=\beta(Z_{n-1})$ and
  $Z_n=\alpha(Y_{n-1})$. Partition $Y$ as follows:
  \[U=\bigsqcup_{n\in\N}Y_{2n}\setminus Y_{2n+1},\qquad
  V=\bigsqcup_{n\in\N}Y_{2n+1}\setminus Y_{2n+2},\qquad
  W=\bigcap_{n\in\N}Y_n.\] Define then $\gamma\colon Y\to Z$ as follows:
  \[\gamma(y)=\begin{cases}
    \alpha(y) & \text{ if }y\in U;\\
    \beta^{-1}(y) & \text{ if }y\in V\cup W.
  \end{cases}\] Therefore $\gamma$ sends $Y_{2n}\setminus Y_{2n-1}$ to
  $Z_{2n+1}\setminus Z_{2n}$ and $Y_{2n+1}\setminus Y_{2n+2}$ to
  $Z_{2n}\setminus Z_{2n-1}$; while sending $\bigcap Y_n$ to $\bigcap
  Z_n$. It follows that $\gamma$ is a bijection.
\end{proof}

\begin{corollary}
  If $Y\precsim Z$ and $Z\precsim Y$, then $Y\sim Z$.
\end{corollary}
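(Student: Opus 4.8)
The plan is to combine the ``wobble version'' of Cantor--Schr\"oder--Bernstein (Theorem~\ref{thm:cantorbernstein}, in particular its ``Furthermore'' clause) with the bounded-displacement characterization of $G$-wobbles (Lemma~\ref{lem:equidecset}). First I would unwind the hypotheses: $Y\precsim Z$ supplies an injective $G$-wobble $\alpha\colon Y\to Z$, and $Z\precsim Y$ supplies an injective $G$-wobble $\beta\colon Z\to Y$. By Lemma~\ref{lem:equidecset} there are finite subsets $S,T\Subset G$ with $\alpha(y)\in yS$ for all $y\in Y$ and $\beta(z)\in zT$ for all $z\in Z$.

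Next I would forget the group action momentarily and apply Theorem~\ref{thm:cantorbernstein} to the bare injections $\alpha,\beta$, obtaining a bijection $\gamma\colon Y\to Z$ with $\gamma(y)\in\{\alpha(y),\beta^{-1}(y)\}$ for every $y\in Y$. Here $\beta^{-1}$ is only defined on the image $\beta(Z)$, but inspection of the proof of Theorem~\ref{thm:cantorbernstein} shows that the branch $\gamma(y)=\beta^{-1}(y)$ is used only for $y\in V\cup W\subseteq Y_1=\beta(Z)$, so no illegitimate expression appears. The remaining task is to verify that $\gamma$ is itself a $G$-wobble.

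For this, note that $\beta$ restricts to a bijection $Z\to\beta(Z)$, so $\beta^{-1}\colon\beta(Z)\to Z$ is a $G$-wobble by the corollary following Lemma~\ref{lem:equidecset}; explicitly, if $y=\beta(z)$ then $y=zt$ for some $t\in T$, whence $z=y t^{-1}$, so $\beta^{-1}(y)\in y\{t^{-1}\mid t\in T\}$. Combining the two cases, $\gamma(y)\in y\bigl(S\cup\{t^{-1}\mid t\in T\}\bigr)$ for all $y\in Y$, and this set of group elements is finite. By Lemma~\ref{lem:equidecset}, $\gamma$ is a $G$-wobble; being a bijection $Y\to Z$, it witnesses $Y\sim Z$. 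The argument is essentially bookkeeping; the only point that needs a moment's care is confirming that the Cantor--Schr\"oder--Bernstein bijection never evaluates $\beta^{-1}$ outside $\beta(Z)$, which is exactly what the ``Furthermore'' clause (read together with its proof) guarantees.
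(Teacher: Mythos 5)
Your proof is correct and follows essentially the same route as the paper's: extract finite sets $S,T$ via Lemma~\ref{lem:equidecset}, invoke the ``Furthermore'' clause of Theorem~\ref{thm:cantorbernstein} to get a bijection $\gamma$ with $\gamma(y)\in y(S\cup T^{-1})$, and conclude by Lemma~\ref{lem:equidecset} that $\gamma$ is a bijective $G$-wobble. Your extra remark that $\beta^{-1}$ is only ever evaluated on $\beta(Z)$ is a worthwhile clarification the paper leaves implicit.
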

\begin{proof}
  Consider injective $G$-wobbles $\alpha\colon Y\to Z$ and
  $\beta\colon Z\to Y$. By Lemma~\ref{lem:equidecset}, there are
  finite sets $S,T\Subset G$ such that $\alpha(y)\in y S$ and
  $\beta(z)\in z T$ for all $y\in Y,z\in Z$. Let $\gamma\colon Y\to Z$
  be the bijection given by Theorem~\ref{thm:cantorbernstein}, with
  $\gamma(y)\in y(S\cup T^{-1})$. Then $\gamma$ is a bijective
  $G$-wobble, again by Lemma~\ref{lem:equidecset}.
\end{proof}

\noindent We also need a little more terminology, coming from graph
theory and following Definition~\ref{defn:graphs}:
\begin{definition}
  A digraph $(V,E)$ is \emph{bipartite} if there is a decomposition
  $V=V^+\sqcup V^-$ such that $e^+\in V^+$ and $e^-\in V^-$ for every
  edge.

  If $V^+$ and $V^-$ are $G$-sets and are identified, the graph
  $(V,E)$ is \emph{bounded} if there exists a finite subset
  $S\Subset G$ with $e^+\in e^- S$ for all $e\in E$.

  An \emph{$m:n$ matching} in $(V,E)$ is a subgraph $(V,\mathcal M)$
  with $\mathcal M\subset E$, such that for each $v\in V^+$ there are
  precisely $n$ edges $e\in\mathcal M$ with $e^+=v$, and for each
  $v\in V^-$ there are precisely $m$ edges $e\in\mathcal M$ with
  $e^-=v$. We define similarly $m:(\le n)$ and $m:(\ge n)$ matchings.

  If $X$ is a $G$-set, a \emph{bounded matching on $X$} is a matching
  in a bounded graph with vertex set $X\sqcup X$.
\end{definition}
In particular, a $1:1$ matching is nothing but a bijection $V^-\to
V^+$; and a bounded $1:1$ matching is a bijective $G$-wobble. A
$1:(\le1)$ matching is an injective map, and a $1:(\ge0)$ matching is
just a map.

\begin{theorem}[Hall~\cite{hall:subsets}-Hall-Rado~\cite{rado:transfinite}]\label{thm:hallrado}
  Let $V,W$ be sets, and for each $v\in V$, let $E_v\subset W$ be a
  finite set. Assume that, for every finite subset $F\Subset V$,
  \begin{equation}\label{eq:hallrado}
    \text{the set }E_F\coloneqq \bigcup_{v\in F}E_v\text{ contains at least $\#F$ elements}.
  \end{equation}
  Then there exists an injection $e\colon V\to W$ with $e(v)\in E_v$ for all
  $v\in V$.
\end{theorem}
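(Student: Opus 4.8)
The plan is to prove the statement in two stages: first the classical finite marriage theorem, and then bootstrap to the general case by a compactness argument, which is precisely where the hypothesis that each $E_v$ is finite gets used.

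First I would treat the case $V\Subset W$ (i.e.\ $V$ finite), arguing by induction on $\#V$. The cases $\#V\le1$ are immediate from~\eqref{eq:hallrado}. For the inductive step, distinguish whether or not there is a \emph{critical} set. (a) If every non-empty proper $F\subsetneq V$ satisfies the strict bound $\#E_F\ge\#F+1$, pick any $v_0\in V$ and any $w_0\in E_{v_0}$ (non-empty by~\eqref{eq:hallrado}), and apply induction to $V'=V\setminus\{v_0\}$ with the shrunken sets $E'_v=E_v\setminus\{w_0\}$: for non-empty $F\subseteq V'$ one has $\#E'_F\ge\#E_F-1\ge\#F$, so~\eqref{eq:hallrado} persists, induction gives an injection $V'\to W$ avoiding $w_0$, and we extend it by $v_0\mapsto w_0$. (b) Otherwise there is a critical set $\emptyset\ne F_0\subsetneq V$ with $\#E_{F_0}=\#F_0$; by induction there is an injection $e_0\colon F_0\to W$ with $e_0(v)\in E_v$, which by cardinality has image exactly $E_{F_0}$. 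On $V\setminus F_0$ consider the sets $E_v\setminus E_{F_0}$: for $F\subseteq V\setminus F_0$ one computes $\#\big(\bigcup_{v\in F}(E_v\setminus E_{F_0})\big)=\#E_{F\cup F_0}-\#E_{F_0}\ge\#(F\cup F_0)-\#F_0=\#F$, so~\eqref{eq:hallrado} holds for this family, and induction yields an injection $e_1\colon V\setminus F_0\to W\setminus E_{F_0}$. Then $e_0\sqcup e_1$ is the required injection on $V$.

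For the general case I would form the product space $P=\prod_{v\in V}E_v$, which is compact by Tychonoff's theorem since each $E_v$ is a finite (hence compact discrete) set. For each $F\Subset V$ set $C_F=\{e\in P\mid e\restrict F\text{ is injective}\}$; this is closed in $P$, being cut out by conditions on finitely many coordinates, and non-empty: the hypothesis~\eqref{eq:hallrado} restricted to subsets of $F$ is exactly Hall's condition for the finite family $(E_v)_{v\in F}$, so the finite case supplies an injection $F\to W$ with values in the $E_v$, which extends to a point of $P$ by choosing arbitrary values on $V\setminus F$. Since $C_{F_1}\cap C_{F_2}\supseteq C_{F_1\cup F_2}$, the family $\{C_F\}_{F\Subset V}$ has the finite intersection property, so by compactness $\bigcap_{F\Subset V}C_F\ne\emptyset$. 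Any $e$ in this intersection is injective on every finite subset of $V$, hence injective, and satisfies $e(v)\in E_v$ for all $v$.

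The hard part will be the case analysis in the finite induction, and within it the verification that Hall's condition survives the passage to $V\setminus F_0$ with the sets $E_v\setminus E_{F_0}$; this hinges on criticality of $F_0$ together with the identity $\#E_{F\cup F_0}=\#(E_{F\cup F_0}\setminus E_{F_0})+\#E_{F_0}$. Everything downstream, i.e.\ the compactness step, is routine (modulo the Axiom of Choice, already in force throughout), the one genuine hypothesis being finiteness of each $E_v$ — without it the statement fails, as witnessed by $V=\N$, $E_0=\N$ and $E_n=\{n-1\}$ for $n\ge1$.
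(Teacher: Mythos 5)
Your proof is correct, but it follows a genuinely different route from the paper's. For the finite case you use the classical Halmos--Vaughan argument, splitting on whether a \emph{critical} set $F_0$ with $\#E_{F_0}=\#F_0$ exists and running separate inductions on $F_0$ and on $V\setminus F_0$ with the shrunken neighbourhoods $E_v\setminus E_{F_0}$; the paper instead proves a single ``point-removal'' lemma (if some $\#E_v\ge2$, one of the two candidate points can be deleted from $E_v$ while preserving Hall's condition, shown by an inclusion--exclusion contradiction on the two would-be obstructing sets $N_0,N_1$), and then reaches the finite case by repeated shrinking. For the infinite case you invoke Tychonoff compactness on $\prod_{v\in V}E_v$ together with the finite intersection property of the closed sets $C_F$; the paper handles the countable case by an explicit enumeration and the general case by applying Zorn's lemma to the poset of subsystems $(E'_v)\le(E_v)$ still satisfying~\eqref{eq:hallrado}, using the point-removal lemma to show the minimal element has all singletons. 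Both transfinite steps rest essentially on the finiteness of each $E_v$ (compactness of the product for you; existence of lower bounds of descending chains for the paper). The trade-offs: your compactness argument cleanly separates the infinite bootstrap from the combinatorics and needs no discussion of countable versus uncountable $V$, while the paper's shrinking lemma avoids a two-case finite induction at the cost of a less elementary counting contradiction and a slightly more delicate Zorn step. Your concluding counterexample with $E_0=\N$, $E_{n+1}=\{n\}$ is exactly the one the paper cites to show finiteness cannot be dropped.
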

\begin{proof}
  Assume first that $(E_v)$ satisfies~\eqref{eq:hallrado}, and that
  $\#E_v\ge2$ for some $v\in V$. We show that we may replace $E_v$ by
  $E_v\setminus\{w\}$ for some $w\in E_v$ and still
  satisfy~\eqref{eq:hallrado}.

  Indeed, consider $w_0\neq w_1\in E_v$, and assume that neither $w_0$
  nor $w_1$ may be removed from $E_v$. Then there are
  $F_0,F_1\Subset V$ and
  $N_i=E_{F_i}\cup(E_v\setminus\{w_i\})\subseteq W$ for $i=0,1$ such
  that $\#N_i<\#(F_i\cup\{v\})$; i.e.\ $\#N_i\le\#F_i$. Then
  \begin{align*}
    \#F_0+\#F_1 &\ge\#N_0+\#N_1=\#(N_0\cup N_1)+\#(N_0\cap N_1)\\
    &\ge\#(E_{F_0\cup F_1}\cup E_v)+\#(E_{F_0\cap F_1})\\
    &\ge\#(F_0\cup F_1)+1+\#(F_0\cap F_1)=\#F_0+\#F_1+1,
  \end{align*}
  a contradiction. Then, inductively, we may suppose $\#E_v=1$ for any
  given $v\in V$.

  If $V$ is finite, we are done by repeatedly replacing each $E_v$ by
  a singleton; the injection is $v\mapsto w$ for the unique $w\in E_v$.

  If $V$ is countable, we may write $V=\{v_1,v_2,\dots\}$ and define
  recursively $E_v^0=E_v$ for all $v\in V$, and, for $i,j>0$,
  \[E_{v_i}^j=\begin{cases}
    E_{v_i}^{j-1} & \text{ if }j\neq i,\\
    \text{the singleton coming from the above operation} & \text{ if }j=i;
  \end{cases}\] then the required injection is $v_i\mapsto w$ for the
  unique $w\in E_{v_i}^i$.

  For general $V$, we need the help of an axiom. Order all systems
  $(E'_v)$ satisfying~\eqref{eq:hallrado} by $(E'_v)\le(E''_v)$ if
  $E'_v\subseteq E''_v$ for all $v\in V$. By Zorn's lemma,
  $\{(E'_v)\le(E_v)\}$ admits a minimal element $(E'_v)$. If
  $\#E'_v\ge2$ for some $v\in V$, then by the above it could be made
  strictly smaller; therefore $\#E'_v=1$ for all $v\in V$ and we again
  have an injection $V\to W$.
\end{proof}
Note that, if one drops the assumption that $E_v$ is finite for all
$v$, then there are counterexamples to the theorem, e.g.\ $V=W=\N$,
$E_0=\N$ and $E_{n+1}=\{n\}$ for all $n\in\N$. For more details
see~\cite{mirsky:transversal}.

\begin{corollary}
  Let $(V,E)$ be a bipartite graph, and assume that for all
  $\varepsilon\in\{\pm1\}$ and all finite subsets $F\subset
  V^\varepsilon$ the set
  \[\{v\in V^{-\varepsilon}\mid e^{-\varepsilon}=v,e^\varepsilon\in
    F\text{ for some }e\in E\}
  \]
  is finite and contains at least $\#F$ elements. Then there exists a
  $1:1$ matching in $(V,E)$.
\end{corollary}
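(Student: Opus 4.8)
The plan is to deduce this from the Hall--Rado theorem (Theorem~\ref{thm:hallrado}) applied in both directions, followed by the refined Cantor--Schr\"oder--Bernstein theorem (Theorem~\ref{thm:cantorbernstein}).

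First I would invoke Hall--Rado with $V:=V^+$, $W:=V^-$, and, for each $v\in V^+$, with $E_v:=\{w\in V^-\mid e^+=v,\ e^-=w\text{ for some }e\in E\}$ the set of neighbours of $v$ in $V^-$. Taking $F=\{v\}$ in the hypothesis (with $\varepsilon=+1$) shows each $E_v$ is finite, and for a general finite $F\Subset V^+$ the union $E_F=\bigcup_{v\in F}E_v$ is precisely the set described in the statement, which by hypothesis has at least $\#F$ elements --- that is, condition~\eqref{eq:hallrado} holds. Theorem~\ref{thm:hallrado} then yields an injection $\alpha\colon V^+\to V^-$ with, for every $v$, an edge $e\in E$ satisfying $e^+=v$ and $e^-=\alpha(v)$. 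Running the same argument with $\varepsilon=-1$ gives an injection $\beta\colon V^-\to V^+$ such that for each $w\in V^-$ there is an edge $e\in E$ with $e^-=w$ and $e^+=\beta(w)$.

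Next I would apply Theorem~\ref{thm:cantorbernstein} to $\alpha\colon V^+\to V^-$ and $\beta\colon V^-\to V^+$, using its ``furthermore'' clause, to obtain a bijection $\gamma\colon V^+\to V^-$ with $\gamma(v)\in\{\alpha(v),\beta^{-1}(v)\}$ for all $v\in V^+$. The key observation is that either candidate value is realised by an edge joining $v$ to $\gamma(v)$: if $\gamma(v)=\alpha(v)$ this is the edge produced by $\alpha$, and if $\gamma(v)=\beta^{-1}(v)=w$ then $v=\beta(w)$, so the edge produced by $\beta$ at $w$ has $e^-=w=\gamma(v)$ and $e^+=\beta(w)=v$. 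Hence for each $v\in V^+$ I may pick an edge $e_v\in E$ with $e_v^+=v$ and $e_v^-=\gamma(v)$, and set $\mathcal M=\{e_v\mid v\in V^+\}$. Since $\gamma$ is a bijection, every $v\in V^+$ is the head of exactly one edge of $\mathcal M$ and every $w\in V^-$ is the tail of exactly one edge of $\mathcal M$ (namely $e_{\gamma^{-1}(w)}$), so $(V,\mathcal M)$ is a $1:1$ matching.

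The only delicate point is the bookkeeping in the last step: one must use the strengthened form of Cantor--Schr\"oder--Bernstein so that the bijection $\gamma$ remains ``supported on edges''. Merely knowing that a set-theoretic bijection $V^+\to V^-$ exists would not produce a matching. The local-finiteness half of the hypothesis is exactly what makes Hall--Rado applicable in the first step.
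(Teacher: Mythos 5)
Your proof is correct and follows the same route as the paper: Hall--Rado in both directions to get the two injections, then the refined Cantor--Schr\"oder--Bernstein (with its ``furthermore'' clause, which you correctly identify as the key to keeping the bijection supported on edges) to assemble the matching. The paper's proof is merely a terser version of exactly this argument.
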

\begin{proof}
  By Theorem~\ref{thm:hallrado}, there exists a subgraph of $(V,E)$
  defining an injection $V^-\to V^+$; and symmetrically there exists a
  subgraph of $(V,E)$ defining an injection $V^+\to V^-$. Applying
  Theorem~\ref{thm:cantorbernstein}, there exists a subgraph of
  $(V,E)$ defining a bijection $V^-\to V^+$.
\end{proof}

\noindent We are ready to prove the equivalence of our new notions:
\begin{theorem}\label{thm:pd}
  Let $X$ be a $G$-set. The following are equivalent:
  \begin{enumerate}
  \item $X$ is paradoxical;
  \item $X$ is not amenable;
  \item For any $m>n>0$ there exists a bounded $m:n$ matching on $X$;
  \item There exists a $G$-wobble $\phi\colon X\to X$ with
    $\#\phi^{-1}\{x\}=2$ for all $x\in X$;
  \item There exists a $G$-wobble $\phi\colon X\to X$ with
    $\#\phi^{-1}\{x\}\ge2$ for all $x\in X$.
  \end{enumerate}
\end{theorem}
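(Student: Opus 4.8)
The plan is to prove the cycle of implications $(1)\Rightarrow(5)\Rightarrow(4)\Rightarrow(3)\Rightarrow(1)$, together with the equivalence $(1)\Leftrightarrow(2)$, which completes the loop. The implication $(2)\Rightarrow(1)$, i.e.\ that non-amenability forces a paradoxical decomposition, is the deep part and will be extracted from a Hall--Rado marriage argument; the converse $(1)\Rightarrow(2)$ is the easy measure-theoretic obstruction.

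First I would do the soft implications. For $(1)\Rightarrow(5)$: if $X=Y\sqcup Z$ with bijective $G$-wobbles $\alpha\colon X\to Y$ and $\beta\colon X\to Z$, then the map $\phi$ that sends $\alpha(x)\mapsto x$ on $Y$ and $\beta(x)\mapsto x$ on $Z$ is a $G$-wobble (use Lemma~\ref{lem:equidecset} with the union of the two finite translation sets) with $\#\phi^{-1}\{x\}=2$ for all $x$; in fact $(1)\Rightarrow(4)$ directly this way. $(4)\Rightarrow(5)$ is trivial. For $(5)\Rightarrow(4)$: given $\phi$ with all fibers of size $\ge2$, discard surplus preimages to get a sub-wobble with all fibers exactly $2$ (a sub-wobble of a wobble is still a wobble by Lemma~\ref{lem:equidecset}). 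For $(4)\Rightarrow(1)$: from $\phi\colon X\to X$ with $\#\phi^{-1}\{x\}=2$, pick a section, i.e.\ split $X=Y\sqcup Z$ so that $\phi$ restricted to $Y$ and to $Z$ are each bijections $\to X$; one can do this using the axiom of choice on the $2$-element fibers, or more cleverly by $2$-coloring the functional graph of $\phi$. Then $\phi\restrict Y$ and $\phi\restrict Z$ witness $Y\sim X\sim Z$, so $X$ is paradoxical. For $(1)\Rightarrow(3)$: iterate/combine wobbles---from $X\sim X\sqcup X$ one builds $X\sim X^{\sqcup k}$ for all $k$, and then an $m:n$ matching on $X$ is assembled by partitioning an $m$-fold copy into $n$ blocks each equidecomposable to $X$; bounded-ness is preserved because finitely many wobbles have a common finite translation set.

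The two genuinely substantive steps are $(3)\Rightarrow(1)$ and $(2)\Rightarrow(1)$. For $(3)\Rightarrow(1)$: take a bounded $m:n$ matching with, say, $m=2,n=1$ directly gives a $2:1$ matching, which unwinds to a $G$-wobble $X\to X$ with fibers of size $2$, hence $(4)$, hence $(1)$; for general $m>n$ one first passes to a $2:1$ matching by a combinatorial reduction on the matching graph (or just cite that $(3)$ for the single pair $(m,n)=(2,1)$ already suffices, proving the theorem with that weaker hypothesis). For $(2)\Rightarrow(1)$, the hard one: I expect to use the Hall--Rado theorem (Theorem~\ref{thm:hallrado}) applied to the bipartite "doubling graph'' whose vertex set is $X\sqcup X$ with edges recording $x\mapsto xs$ for $s$ in a suitable finite $S\subset G$; the point is that if $X$ is \emph{not} amenable then F\o lner's condition fails (Theorem~\ref{thm:folneramen}), so for some finite $S$ and some $c<1$ every finite $F\Subset X$ has $\#(FS)\ge$ (something like) $2\#F$, which is exactly the expansion hypothesis \eqref{eq:hallrado} needed to run Hall--Rado and produce an injective $G$-wobble realizing $X\precsim$ "half of $X$'', and dually, giving the paradoxical splitting via Cantor--Schr\"oder--Bernstein (Theorem~\ref{thm:cantorbernstein}). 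Quantitatively pinning down the constant (getting genuine $2$-to-$1$ expansion rather than merely $>1$, perhaps by passing to $S^k$ and using submultiplicativity, or by the doubling trick $F\mapsto$ two disjoint translates) is where I expect the main friction. The remaining implication $(1)\Rightarrow(2)$ is short: a paradoxical decomposition $X=Y\sqcup Z$, $Y\sim X\sim Z$, applied to a hypothetical $G$-invariant mean $m$ gives $1=m(X)=m(Y)+m(Z)=m(X)+m(X)=2$, a contradiction, so $X$ has no invariant mean.
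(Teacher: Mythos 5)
Your overall architecture matches the paper's: Hall--Rado for the substantive implication out of non-amenability, measure-theoretic contradiction for the easy direction back, and Cantor--Schr\"oder--Bernstein to pass between injections. The paper's chain is $(1)\Rightarrow(2)\Rightarrow(3)\Rightarrow(4)\Rightarrow(1)$, together with $(4)\Rightarrow(5)$ and $(5)\Rightarrow(2)$, so every condition is in the loop. Your proposed chain has a genuine gap at $(5)\Rightarrow(4)$.

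The step ``discard surplus preimages to get a sub-wobble with all fibers exactly $2$'' does not work. The fibers $\phi^{-1}(x)$ partition $X$; if you select a $2$-element subset $S_x\subseteq\phi^{-1}(x)$ for each $x$, the union $\bigcup_x S_x$ is a \emph{proper} subset of $X$ whenever some fiber has size $>2$. So restricting $\phi$ to $\bigcup_x S_x$ gives a $2$-to-$1$ map whose domain is not all of $X$, and you cannot extend it back to $X$ without adding a third preimage to some point. There is no obvious local fix; the points you discarded have nowhere to go. Consequently $(5)$ is not connected into your implication loop, and the equivalence you are trying to prove is not established. The clean repair is the one the paper uses: prove $(5)\Rightarrow(2)$ directly. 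From Lemma~\ref{lem:equidecset} take $S\Subset G$ with $\phi(x)\in x S$ for all $x$; then for any finite $F\Subset X$ one has $\phi^{-1}(F)\subseteq F S$ and hence $\#(F S)\ge\#\phi^{-1}(F)\ge 2\#F$, which violates F\o lner's criterion (Theorem~\ref{thm:folneramen}). This closes the loop without ever needing to manufacture a $2$-to-$1$ map out of a many-to-one one; the $2$-to-$1$ map in $(4)$ is then recovered by running $(2)\Rightarrow(3)$ with $m=2$, $n=1$ and reading off the matching, exactly as in your $(3)\Rightarrow(4)$.

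One minor remark on $(2)\Rightarrow(1)$: you correctly flag the quantitative issue that non-amenability only gives $\#(F S)\ge(1+\epsilon)\#F$, not $\ge 2\#F$. The fix you anticipate (replacing $S$ by $S^k$ with $(1+\epsilon)^k\ge 2$) is exactly what the paper does, and it also shows why the theorem states $(3)$ for arbitrary $m>n>0$ rather than just $m=2,n=1$: the same Hall--Rado construction, run on the bipartite graph $X\times\{1,\dots,m\}\sqcup X\times\{\bar 1,\dots,\bar n\}$, produces all the $m:n$ matchings at once.
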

\begin{proof}
  $(1)\Rightarrow(2)$ Assume that there exists a $G$-invariant mean
  $\mu\in\mathscr M(X)$. Then
  \[1=\mu(X)=\sum_{i=1}^m\mu(Y_i g_i)+\sum_{j=1}^n\mu(Z_j h_j)=
    \sum_{i=1}^m\mu(Y_i)+\sum_{j=1}^n\mu(Z_j)= \mu(X)+\mu(X)=2,\] a
  contradiction.

  $(2)\Rightarrow(3)$ Assume that $X$ does not satisfy F\o lner's
  condition, so there are $S\Subset G$ and $\epsilon>0$ with
  $\#(F S)\ge(1+\epsilon)\#F$. Given $m>n>0$, let $k\in\N$ be such
  that $(1+\epsilon)^k\ge m/n$.

  Construct now the following bipartite graph: its vertex set is
  $V=X\times\{1,\dots,m\}\sqcup X\times\{\overline1,\dots,\overline
  n\}$. There is an edge from $(x,i)$ to $(x g,\overline j)$ for all
  $g\in S^k$ and all $i\in\{1,\dots,m\},j\in\{1,\dots,n\}$. Consider
  first a finite subset $F\Subset V^-$, and project it to
  $F'\subseteq X$. Then
  \[\#(F' S^k\times\{\overline1,\dots,\overline n\})=n\#(F' S^k)\ge
    m\#F'\ge\#F,
  \]
  and all these vertices are reached from $F$ by edges in
  $(V,E)$. Conversely, fix $g\in S^k$, and consider a finite subset
  $F\Subset V^+$. Because $m>n$, every $(x,\overline i)\in F$ is
  connected by an edge to $(x g^{-1},i)\in V^-$. Therefore, every
  finite $F\subset V^{\pm}$ has at least $\#F$ neighbours in
  $V^{\mp}$.

  We now invoke the Hall-Rado theorem~\ref{thm:hallrado} to obtain a
  $1:1$ matching $(V,\mathcal M)$; which we project to a bounded $m:n$
  matching $(X\sqcup X,\mathcal M)$ by setting $e^\pm=x$ whenever we
  had $e^\pm=(x,*)$ in $(V,\mathcal M)$.

  $(3)\Rightarrow(4)$ Let $\mathcal M$ be a bounded $2:1$ matching on
  $X$. Given $x\in X$, there is a unique $e\in\mathcal M$ with
  $e^-=x$; set $\phi(x)=e^+$. This defines a $G$-wobble
  $\phi\colon X\to X$ with $\#\phi^{-1}(y)=2$.

  $(4)\Rightarrow(5)$ is obvious.

  $(4)\Rightarrow(1)$ For each $x\in X$ choose $y_x\in X$ with
  $\phi(y_x)=y$; this is possible using the Axiom of Choice. Set
  $Y=\{y_x\mid x\in X\}$, and $Z=X\setminus Y$. We have $X=Y\sqcup Z$,
  and $\phi$ restricts to bijective $G$-wobbles $Y\to X$ and $Z\to X$,
  so $Y\sim X\sim Z$.

  $(5)\Rightarrow(2)$ Let $S\Subset G$ satisfy $\phi(x)S\ni x$ for all
  $x\in X$. Then, for any finite $F\Subset X$, we have
  $\phi^{-1}(F)\subseteq F S$ so $\#(F S)\ge 2\#F$.
\end{proof}

If a group $G$ contains a non-abelian free subgroup, then $G$ is not
amenable. The converse is not true, as we shall see
in~\S\ref{ss:fgfg}. However, the following weaker form of the converse
holds:
\begin{theorem}[see~\cite{whyte:amenability}]\label{thm:whyte}
  Let $X$ be a $G$-set. The following are equivalent:
  \begin{enumerate}
  \item $X$ is not amenable;
  \item There is a free action of the free group $F_2$ on $X$ by
    bijective $G$-wobbles;
  \item There is a free action of a non-amenable group on $X$ by
    bijective $G$-wobbles.
  \end{enumerate}
\end{theorem}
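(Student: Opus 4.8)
The implication $(2)\Rightarrow(3)$ is immediate, since $F_2$ is non-amenable by Proposition~\ref{prop:fnnotamen}. For $(3)\Rightarrow(1)$ I would argue contrapositively: assume $X$ is amenable, fix $m\in\mathscr M(X)^G$, and suppose a group $H$ acts freely on $X$ by bijective $G$-wobbles. The observation driving this is that $m$ is automatically invariant under \emph{every} bijective $G$-wobble $\psi\colon X\to X$: by Lemma~\ref{lem:equidecset} write $X=X_1\sqcup\dots\sqcup X_n$ with $\psi(x)=x g_i$ on $X_i$; since $\psi$ is injective the sets $(A\cap X_i)g_i$ are pairwise disjoint, so $m(\psi(A))=\sum_i m((A\cap X_i)g_i)=\sum_i m(A\cap X_i)=m(A)$ by $G$-invariance. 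Applying this to the wobbles $x\mapsto x h$, $h\in H$, makes $X$ an amenable $H$-set; as the $H$-action is free, $H$ is amenable by Proposition~\ref{prop:freeaction}, contradicting the choice of $H$.

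The substance of the theorem is $(1)\Rightarrow(2)$. My plan is to realise $X$ as $T\times F_2$ with $F_2$ acting by $G$-wobbles on the right factor, going through $C_2\ast C_2\ast C_2$ (which contains $F_2$ with finite index). Concretely, the goal is to construct three fixed-point-free involutions $\tau_1,\tau_2,\tau_3$ of $X$, each a bijective $G$-wobble, such that for every $x$ the four points $x,x\tau_1,x\tau_2,x\tau_3$ are pairwise distinct and the resulting simple $3$-regular graph $\mathscr F$ on vertex set $X$ --- the superposition of the three perfect matchings $\{x,x\tau_i\}$ --- is acyclic. Granting such $\tau_i$, a reduced nonempty word $s_{i_1}\cdots s_{i_k}$ in $C_2\ast C_2\ast C_2$ with a fixed point $x$ would produce a closed walk $x,x\tau_{i_1},x\tau_{i_1}\tau_{i_2},\dots$ in $\mathscr F$ whose consecutive edges carry distinct colours, hence (simplicity of $\mathscr F$) has no immediate backtrack; but a forest has no non-trivial non-backtracking closed walk, so $C_2\ast C_2\ast C_2$ acts freely on $X$. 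Restricting this action to a free finite-index subgroup $\cong F_2$, and using that a composition of $G$-wobbles is again a $G$-wobble, yields the free action of $F_2$ by bijective $G$-wobbles required in $(2)$.

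Building $\mathscr F$ is where non-amenability is used, and where the difficulty lies. By Theorem~\ref{thm:pd}, non-amenability of $X$ supplies bounded $m:n$ matchings for all $m>n$ (equivalently, a $G$-wobble $\phi\colon X\to X$ with every fibre of size $2$); this is the combinatorial ``room''. I would then grow $\mathscr F$ by a transfinite exhaustion of $X$ in the spirit of the proof of Hall--Rado (Theorem~\ref{thm:hallrado}), maintaining a partial sub-forest of bounded edge-length and maximal degree $\le3$ and, at each stage, adding injectively one new bounded edge at every vertex of degree $<3$ while forbidding edges that would close a cycle or double an existing one. The existence of a valid simultaneous extension is a Hall-type condition on an auxiliary bipartite graph, to be proved from the \emph{quantitative} non-amenability (expansion of finite sets) furnished by Theorem~\ref{thm:pd}. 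The genuine obstacle is to run the process so that one fixed finite $S\Subset G$ bounds \emph{all} edges of $\mathscr F$ while Hall's condition survives at every stage: this forces control over the growth and spatial spread of the components of the partial forest, so that the ``cycle-closing'' forbidden targets stay negligible against the expansion guaranteed by taking $m\gg n$. That accounting, rather than any of the structural reductions above, is the part that needs real care.
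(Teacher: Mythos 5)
Your $(3)\Rightarrow(1)$ argument is correct and in fact cleaner than the one in the paper: showing directly that a $G$-invariant mean on $X$ is automatically invariant under \emph{every} bijective $G$-wobble (since $\psi(A)=\bigsqcup_i(A\cap X_i)g_i$ is a disjoint union when $\psi$ is injective) is a nicer route to ``$X$ amenable $\Rightarrow$ $H$ amenable'' than the paper's F\o lner bookkeeping. The $(2)\Rightarrow(3)$ step and the reduction of $(1)\Rightarrow(2)$ to building a $3$-regular $G$-bounded forest colored so that all three colors meet every vertex are also correct, as is the observation that this gives a free action of $C_2\ast C_2\ast C_2$, hence of its index-$2$ free subgroup $F_2$.

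The gap is exactly where you flag it: $(1)\Rightarrow(2)$ is not actually proved. You propose to build the forest $\mathscr F$ by a fresh transfinite Hall--Rado exhaustion, and you acknowledge that making a single finite $S\Subset G$ bound all edges while Hall's condition survives at each stage ``needs real care.'' This is not a minor finishing step: controlling how the components of the partial forest spread in $X$ while forbidding cycle-closing edges is genuinely where the argument could collapse, and you have not supplied the accounting. Moreover, you are doing unnecessary work: you already invoke Theorem~\ref{thm:pd} to produce a bounded $G$-wobble $\phi\colon X\to X$ with $\#\phi^{-1}(x)=2$ for all $x$, and that $\phi$ \emph{already} gives the graph you want, up to a simple surgery. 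View $X$ as a directed graph $T$ with one edge $x\to\phi(x)$; the underlying undirected graph $U$ is $3$-regular. Since each vertex has exactly one out-edge, any cycle in $U$ must be coherently oriented, and two distinct cycles in one component would force some vertex on a path joining them to have two out-edges; so each component of $U$ is either a tree or a single cycle with trees hanging off it. Delete one edge from each cycle; at each vertex of deficient degree pick a ray of degree-$3$ vertices to infinity and shift edges back along it to restore $3$-regularity. The result is a $3$-regular forest whose edges are bounded by $S\cup S^{-1}S^2$ if $S$ bounds $\phi$, and a greedy proper $3$-edge-coloring of each tree component finishes the job. That is the content you are trying to reprove with Hall--Rado; you should use it directly rather than rebuilding it.
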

\begin{proof}
  $(1)\Rightarrow(2)$ Assume that $X$ is non-amenable, so by
  Theorem~\ref{thm:pd} there exists a $G$-wobble $\phi\colon X\to X$
  with $\#\phi^{-1}\{x\}=2$ for all $x\in X$. Let $S\Subset G$
  satisfy $\phi(x)\in x S$ for all $x\in X$.

  View $X$ as a directed graph $T$, with an edge from $x$ to $\phi(x)$
  for all $x$; and let $U$ be the corresponding undirected
  graph. These graphs are $3$-regular: in $T$ every vertex has one
  outgoing and two incoming edges. Assume that there is a cycle in
  $U$. This cycle is necessarily oriented, for otherwise there would
  be two outgoing edges at a vertex. Furthermore, there cannot be two
  cycles in the same connected component of $U$: if there were two
  such cycles, consider a minimal path $p$ joining them. At least one
  of $p$'s extremities would be oriented away from its end, and again
  there would be two outgoing edges at a vertex.

  It follows that all connected components of $U$ are either
  $3$-regular trees, or cycles with $3$-regular trees attached to
  them. Remove an edge from each cycle, creating in this manner either
  two vertices of degree $2$ or one of degree $1$. In all cases, at
  each vertex $v$ of degree $<3$ choose a ray $\rho_v$ going to
  infinity consisting entirely of degree-$3$ vertices, and shift the
  edges attached to $\rho_v$ towards $v$ along $\rho_v$ so as to
  increase the degree of $v$. In this manner, we obtain a $3$-regular
  forest $U$ with vertex set $X$, with the following property: there
  exists a finite subset $S'\Subset G$ such that every edge of $U$,
  joining say $x$ to $y$, satisfies $y\in x S'$. In fact,
  $S'=S\cup S^{-1}S^2$ will do.

  Now label all edges of $U$ with $\{a,b,c\}$ in such a manner that
  at every vertex all three colours appear exactly once on the
  incident edges. This is easy to do: on each connected component
  label arbitrarily an edge; then at each extremity label the two
  other incident edges by the two remaining symbols, and continue.

  In this manner, every connected component of $U$ becomes the Cayley graph
  of $H\coloneqq\langle a,b,c\mid a^2,b^2,c^2\rangle$.  In effect, we have
  defined an action of $H$ on $X$ by $G$-wobbles: the image of $x$ under
  $a,b,c$ respectively is the other extremity of the edge starting at $x$
  and labeled $a,b,c$ respectively. The group $H$ contains a free subgroup
  of rank $2$, namely $\langle ab,b c\rangle$.

  $(2)\Rightarrow(3)$ is obvious.

  $(3)\Rightarrow(1)$ Assume that $X$ admits a free action of a
  non-amenable group $H$ by bijective $G$-wobbles; without loss of
  generality, $H$ is finitely generated, say by a set $T$. Since $H$
  is not amenable and acts freely, it does not satisfy F\o lner's
  condition by Proposition~\ref{prop:freeaction}, so there exists
  $\delta>0$ such that $\#(F T)\ge\delta\#F$ for all
  $F\Subset X$.

  Let $S\Subset G$ satisfy $x T\subset x S$ for all $x\in X$.
  In particular, $\#(F S)\ge\delta\#F$, so $X$ does not satisfy F\o
  lner's condition.
\end{proof}
Note that the proof becomes trivial in case $X=G\looparrowleft G$ and
$G$ contains a non-abelian free subgroup; indeed the action of $G$
itself is by $G$-wobbles.

It is possible to modify slightly this construction to make $F_2$ act
transitively by $G$-wobbles, see~\cite{seward:burnside}.

%%%%%%%%%%%%%%%%%%%%%%%%%%%%%%%%%%%%%%%%%%%%%%%%%%%%%%%%%%%%%%%%
\newpage\section{Convex sets and fixed points}\label{sec:convex}
We consider an abstract version of convex sets, introduced by Stone
in~\cite{stone:barycentric} as sets with barycentric coordinates:
\begin{definition}
  A \emph{convex space} is a set $K$ with an operation $[0,1]\times K\times
  K\to K$ of taking convex combinations, written $(t,x,y)\mapsto t(x,y)$,
  satisfying the axioms
  \begin{align*}
    0(x,y)&=x=t(x,x),\\
    t(x,y)&=(1-t)(y,x),\qquad\text{ for all $x,y,z\in K$ and $0\le u\le t\le 1$}.\\
    \textstyle t(x,\frac u t(y,z))&=\frac{t-u}{1-u}(x,u(y,z)),
  \end{align*}
  It is called \emph{cancellative} if it furthermore satisfies the axiom
  \[t(x,y)=t(x,z), t>0\Rightarrow y=z.
  \]
  An \emph{affine map} is a map $f\colon K\to L$ between convex spaces
  satisfying $t(f(x),f(y))=t(x,y)$ for all $t\in[0,1]$ and all
  $x,y\in K$.
\end{definition}

Usual convex subsets of vector spaces are typical examples; if $K\subseteq
V$ is convex, then $t(x,y)\coloneqq (1-t)x+t y$ gives $K$ the structure of
a convex space. There are other examples: for any set $X$ one may take
$K=\mathfrak P(X)$ with $t(x,y)=x\cup y$ whenever $t\in(0,1)$.

As another example, trees (and more generally $\R$-trees: geodesic metric
spaces in which every triangle is isometric to a tripod) are convex spaces:
for $x,y$ in a tree, there is a unique geodesic from $x$ to $y$, and
$t(x,y)$ is defined as the point at distance $t\,d(x,y)$ from $x$ along
this geodesic. Unless the tree is a line segment, this convex space is not
cancellative.

The set of closed balls in an ultrametric space\footnote{Namely, a
  metric space in which the ultratriangle inequality
  $d(x,z)\le\max\{d(x,y),d(y,z)\}$ holds.}, with Hausdorff distance, is
also an example of a convex space; it is actually isomorphic to the
convex space associated with an $\R$-tree, see~\cite{hughes:trees}.

It turns out~\cite{stone:barycentric}*{Theorem~2} that those convex spaces
that are embeddable in real vector spaces as convex subsets are precisely
the cancellative ones.

A \emph{topological convex space} is a convex space $K$ with the
structure of a topological space, such that the structure map
$[0,1]\times K\times K\to K$ is continuous. A \emph{convex $G$-space}
is a convex space on which a group $G$ acts by affine maps. The
\emph{convex hull} of a subset $X\subseteq K$ of a convex space is the
intersection $\widehat X$ of all convex subspaces of $K$ containing
$X$.

\begin{exercise}[*]
  Convex spaces form a variety. Prove that the free convex space on
  $n+1$ generators is isomorphic to the standard $n$-simplex
  $\{(x_0,\dots,x_n)\in\R^{n+1}\mid x_i\ge0,\sum x_i=1\}$, and also to
  the convex hull of the basis vectors in $\R^{n+1}$. In particular,
  it is cancellative.
\end{exercise}

\begin{definition}
  Let $X,Y$ be $G$-sets. We say that $Y$ is \emph{$X$-markable} if
  there exists an equivariant $G$-map $X\to Y$.
\end{definition}

\begin{theorem}\label{thm:fixedpoint}
  Let $X$ be a $G$-set. The following are equivalent:
  \begin{enumerate}
  \item $X$ is amenable;
  \item Every compact $X$-markable convex space admits a fixed point;
  \item Every compact $X$-markable convex subset of a locally compact
    topological vector space admits a fixed point.
  \end{enumerate}
\end{theorem}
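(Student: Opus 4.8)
The plan is to prove the cycle $(1)\Rightarrow(2)\Rightarrow(3)\Rightarrow(1)$, with the implication $(2)\Rightarrow(3)$ being immediate since a compact convex subset of a locally compact topological vector space is in particular a compact topological convex space. The main work lies in $(1)\Rightarrow(2)$ and $(3)\Rightarrow(1)$.

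For $(1)\Rightarrow(2)$, suppose $K$ is a compact $X$-markable convex space, and let $f\colon X\to K$ be the equivariant map. By amenability there is a $G$-invariant mean $m\in\mathscr M(X)$, which (via Proposition~\ref{prop:linfty}) we regard as a normalized positive functional on $\ell^\infty(X)$. The idea is to push $m$ forward through $f$ to a mean on $K$, then take its barycentre. More precisely, $f_*(m)\in\mathscr M(K)$ is $G$-invariant; so it suffices to construct a $G$-equivariant \emph{barycentre map} $\mathscr M(K)\to K$ for $K$ a compact convex space, and apply it to $f_*(m)$. For $K$ a compact convex subset of a topological vector space this barycentre is the classical one: $\beta(n)$ is the unique point of $K$ such that $\ell(\beta(n))=n(\ell\restrict K)$ for every continuous linear functional $\ell$, and it is affine and equivariant. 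For an abstract compact convex space one uses Stone's embedding theorem (quoted in the text, \cite{stone:barycentric}*{Theorem~2}) to reduce to the cancellative case embedded in a locally convex space; the non-cancellative case needs a small separate argument, but since statement $(2)$ only needs \emph{existence} of a fixed point one may also argue directly that the net of finitely-supported means (images under $f$ of F\o lner averages on $X$, using Theorem~\ref{thm:folneramen}) has a cluster point in $K$ which is $G$-fixed, by compactness and continuity of the convex structure. The fixed point is $\beta(f_*(m))$: for $g\in G$ we have $g\cdot\beta(f_*(m))=\beta(g\cdot f_*(m))=\beta(f_*(m))$.

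For $(3)\Rightarrow(1)$, the natural choice of compact convex $X$-markable set is $\mathscr P(X)$, the space of probability measures on $X$, sitting inside $\ell^1(X)\subseteq\ell^\infty(X)^*$ with the weak*-topology. The obstacle is that $\mathscr P(X)$ is not weak*-compact when $X$ is infinite; its weak*-closure is $\mathscr B(X)\cong\mathscr M(X)$, which lives in $\ell^\infty(X)^*$ — a space that is locally convex in the weak*-topology but not locally compact. So a direct application of $(3)$ is not available, and this is the main subtlety. The fix: $\mathscr M(X)$ \emph{is} compact (Lemma~\ref{lem:mcompact}) and convex (Lemma~\ref{lem:mconvex}) and carries a continuous convex structure, and the Dirac map $\delta\colon X\to\mathscr M(X)$ is $G$-equivariant, so $\mathscr M(X)$ is a compact $X$-markable convex space. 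To land inside hypothesis $(3)$ one embeds $\mathscr M(X)$ equivariantly as a compact convex subset of a locally compact topological vector space: take the product $V=\prod_{A\subseteq X}\R$ with the product topology, which is locally convex, and in which $[0,1]^{\mathfrak P(X)}\supseteq\mathscr M(X)$ is compact; the $G$-action permuting coordinates is by continuous linear maps. Strictly, $V$ is not locally compact either, but $(3)$ as applied through the abstract statement $(2)$ already suffices; alternatively, one invokes the version of the Markov–Kakutani / Day fixed point theorem that holds for compact convex subsets of arbitrary locally convex spaces, which is what ``locally compact topological vector space'' is really standing in for here. Either way, applying the fixed-point hypothesis yields a $G$-fixed $m\in\mathscr M(X)$, i.e.\ $X$ is amenable. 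I expect the packaging of the ambient vector space — reconciling ``locally compact'' with the genuinely needed ``locally convex, compact convex subset'' — to be the one delicate point; everything else is bookkeeping with the barycentre and the functoriality already set up in Proposition~\ref{prop:linfty} and the proof of Proposition~\ref{prop:stabilizers}.
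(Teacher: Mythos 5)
Your proposal is essentially correct and, after the back-and-forth about barycentres, converges to the paper's proof. For $(1)\Rightarrow(2)$ the paper does exactly what you describe as the "alternative" argument: take a F\o lner net $(F_n)$ (from Lemma~\ref{lem:folnerlim}), form the convex combinations $k_n = \sum_{x\in F_n}\tfrac1{\#F_n}\pi(x)\in K$, and extract a cluster point by compactness of $K$; invariance follows because $k_n g$ differs from $k_n$ only by the asymptotically negligible symmetric difference $F_n\triangle F_n g$. This avoids having to construct a barycentre map on an abstract (possibly non-cancellative) compact convex space, which is the gap in your first route. For $(3)\Rightarrow(1)$ the paper takes $K=\mathscr M(X)$ exactly as you do, marked by $\delta$. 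Your hesitation about the ambient space is well-placed: the paper asserts that $\ell^1(X)^*$ is locally compact "by Banach--Alaoglu," which strictly speaking is not so in the weak*-topology when $X$ is infinite (weak*-neighbourhoods of $0$ are unbounded). What is actually used is that $\mathscr M(X)$ is a weak*-compact convex subset of a locally convex space, which is precisely the fix you propose; so your identification of the "delicate point" matches a genuine imprecision in the paper's phrasing, not a gap in your argument.
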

\begin{proof}
  $(1)\Rightarrow(2)$ By Lemma~\ref{lem:folnerlim}, there exists a
  net $(F_n)_{n\in\mathscr N}$ of F\o lner sets in $X$. Let $K$ be a
  compact $X$-markable convex space, and let $\pi\colon X\to K$ be a
  $G$-equivariant map. For each $n\in\mathscr N$, set
  \[k_n\coloneqq\sum_{x\in F_n}\frac1{\#F_n}\pi(x)\in K.
  \]
  Then $(k_n)$ is a net in $K$, so by compactness admits a cluster
  point, say $k$. The $k_n g$ have the same limit, so $k$ is a fixed
  point.

  $(2)\Rightarrow(3)$ is obvious.

  $(3)\Rightarrow(1)$ Take $K=\mathscr M(X)$; it is compact by
  Lemma~\ref{lem:mcompact}, $X$-marked by $\delta$, convex by
  Lemma~\ref{lem:mconvex}, and contained in the topological vector
  space $\ell^1(X)^*$ which is locally compact by the Banach-Alaoglu
  theorem~\cite{rudin:fa}*{Theorem~3.15}. A fixed point is an
  invariant mean on $X$.
\end{proof}

In particular, a group $G$ is amenable if and only if every compact
non-empty convex $G$-space admits a fixed point.  We may thus show
that amenability of $G$-sets is stable under amenable extensions:
\begin{proposition}
  Let $X$ be a $G$-set, and let $N\triangleleft G$ be a normal
  subgroup with $G/N$ amenable. Then $X\looparrowleft G$ is amenable
  if and only if $X\looparrowleft N$ is amenable.
\end{proposition}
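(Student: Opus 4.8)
The plan is to reduce to the fixed-point characterization of amenability, applied not to $G$ itself but to the quotient $G/N$. One implication is immediate and needs no normality: any $G$-invariant mean on $X$ is a fortiori $N$-invariant, so if $X\looparrowleft G$ is amenable then so is $X\looparrowleft N$. For the converse, assume $X\looparrowleft N$ is amenable and $G/N$ is amenable, and aim to produce a $G$-invariant mean.

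First I would look at the set $\mathscr M(X)^N$ of $N$-invariant means on $X$, and record three features. It is non-empty, by the amenability of $X\looparrowleft N$. It is convex, since a convex combination of $N$-invariant means is again a mean (Lemma~\ref{lem:mconvex}) and is plainly still $N$-invariant. And it is compact: it is the intersection, over all $n\in N$ and all $A\subseteq X$, of the sets $\{m\in\mathscr M(X)\mid m(An)=m(A)\}$, each the zero set of a continuous function hence closed, taken inside the compact space $\mathscr M(X)$ (Lemma~\ref{lem:mcompact}).

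The crucial point — and the one place normality of $N$ enters — is that $G$ acts on $\mathscr M(X)^N$ and this action factors through $G/N$. Indeed, for $m\in\mathscr M(X)^N$, $g\in G$, $n\in N$ and $A\subseteq X$, writing the right action as $(m\cdot g)(A)=m(Ag^{-1})$, one computes
\[
  (m\cdot g)(An)=m(Ang^{-1})=m\bigl(Ag^{-1}(gng^{-1})\bigr)=m(Ag^{-1})=(m\cdot g)(A),
\]
using $gng^{-1}\in N$ and the $N$-invariance of $m$; hence $m\cdot g\in\mathscr M(X)^N$. Moreover $N$ acts trivially on $\mathscr M(X)^N$, since $(m\cdot n)(A)=m(An^{-1})=m(A)$ for $n\in N$. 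Thus $\mathscr M(X)^N$ is a non-empty compact convex $G/N$-space, the $G/N$-action being affine and continuous as a restriction of the corresponding action on $\mathscr M(X)$.

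Finally, since $G/N$ is amenable, every non-empty compact convex $G/N$-space has a fixed point (the remark following Theorem~\ref{thm:fixedpoint}); let $m\in\mathscr M(X)^N$ be such a fixed point. Then $m$ is simultaneously $N$-invariant and $G/N$-invariant, hence $G$-invariant, so $\mathscr M(X)^G\neq\emptyset$ and $X\looparrowleft G$ is amenable. I expect the only genuinely delicate step to be the verification just above that the $G$-action on $\mathscr M(X)^N$ is well defined and descends to $G/N$; the rest is routine compactness-and-convexity bookkeeping. (An alternative route, avoiding the explicit passage to $G/N$, would mimic the barycentre construction of Proposition~\ref{prop:stabilizers} with $G/N$ in place of the point stabilizers, but the argument through Theorem~\ref{thm:fixedpoint} seems cleanest.)
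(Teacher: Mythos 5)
Your proof is correct and follows essentially the same route as the paper: the paper works with a general $X$-markable convex compact $G$-space $K$, shows $K^N$ is a non-empty compact convex $G/N$-space, and appeals to the fixed-point property of the amenable quotient $G/N$; you simply specialize $K$ to $\mathscr M(X)$ and run the identical argument. The verification that $G$ acts on $\mathscr M(X)^N$ through $G/N$ is exactly where normality is used in both versions, and your computation of it is right.
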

\begin{proof}
  Let $K$ be an $X$-markable convex compact space.  The ``if''
  direction is obvious, since every $G$-fixed point in $K$ is
  $N$-fixed. Conversely, if $K^N\neq\emptyset$, then $K^N$ is a
  non-empty convex compact space on which $G/N$ acts, and has a fixed
  point because $G/N$ is amenable. Clearly $(K^N)^{G/N}=K^G$, so
  $X\looparrowleft G$ is amenable.
\end{proof}

\subsection{Measures}\label{ss:measures}
Consider a topological space $X$. We recall that $\mathcal C(X)$ denotes the
space of continuous functions $X\to\R$, and that probability measures
on $X$ are identified with functionals $\lambda\in \mathcal C(X)^*$ such that
$\lambda(\mathbb 1)=1$ and $\lambda(\phi)\ge0$ if $\phi\ge0$. One
sometimes writes $\lambda(\phi)=\int\phi d\lambda$.

An important property of measures on subsets of vector spaces is that
they have \emph{barycentres}:
\begin{lemma}\label{lem:barycentre}
  Let $K$ be a non-empty convex compact subset of a locally compact
  topological vector space, and let $\mu\in \mathcal C(K)^*$ be a probability
  measure. Then there exists a unique $b\in K$ such that
  $\mu(\phi)=\phi(b)$ for all affine maps $\phi\in \mathcal C(K)$. We write
  $b=\int t d\mu(t)$ and call it the \emph{barycentre} of $\mu$.
\end{lemma}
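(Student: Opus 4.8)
The plan is to establish existence and uniqueness separately, both reducing to standard facts about compact convex sets. First I would set up the right ambient space. Let $V$ be the locally compact topological vector space containing $K$; by local compactness $V$ is finite-dimensional, so I may as well take $V=\R^d$, and $K\subset\R^d$ compact convex nonempty. The coordinate functions $t\mapsto t_i$ are affine and continuous on $K$, so I can define a candidate barycentre by $b\coloneqq(\mu(t_1),\dots,\mu(t_d))\in\R^d$, i.e. integrating each coordinate against $\mu$. The point of the proof is then to show (i) $b\in K$, and (ii) $\mu(\phi)=\phi(b)$ for every affine continuous $\phi$; uniqueness is then immediate since the coordinate functions are themselves affine, so any $b'$ with the defining property has $b'_i=\mu(t_i)=b_i$.

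For step (ii), I would first treat affine functions of the special form $\phi(t)=c_0+\sum_i c_i t_i$; for these $\mu(\phi)=c_0+\sum_i c_i\mu(t_i)=\phi(b)$ by linearity of $\mu$ and $\mu(\mathbb 1)=1$. Then I would argue that every continuous affine $\phi\colon K\to\R$ extends to such a restricted-linear function on $\R^d$: an affine function on a convex set extends to an affine function on its affine hull, which is a finite-dimensional affine subspace, and affine functions on affine subspaces of $\R^d$ are restrictions of genuine affine functions $c_0+\sum c_i t_i$ on $\R^d$. (If $K$ is a single point this is trivial.) Hence $\mu(\phi)=\phi(b)$ for all affine continuous $\phi$, once we know $b\in K$.

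For step (i), $b\in K$, I would argue by contradiction using the finite-dimensional Hahn–Banach (separation) theorem: if $b\notin K$, since $K$ is compact convex there is a linear functional $\psi$ on $\R^d$ and a constant $c$ with $\psi(t)<c<\psi(b)$ for all $t\in K$. But $\psi$ restricted to $K$ is a continuous affine function bounded above by $c$, so by positivity and normalization of the probability measure $\mu$ we get $\mu(\psi)\le c$; on the other hand the linear-functional computation from step (ii) gives $\mu(\psi)=\psi(b)>c$, a contradiction. (Here it matters that $\mu$ is a \emph{positive} normalized functional, i.e. a genuine probability measure.) I expect this separation argument to be the only real step; everything else is bookkeeping about affine functions in finite dimensions. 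An alternative, slightly slicker route for (i): cover $K$ by finitely many small balls, note $b$ is a limit of convex combinations $\sum \mu(U_j\cap K)\, x_j$ with $x_j\in K$ (approximating $\mu$ weakly by finitely supported measures supported on $K$, using that $K$ is metrizable), and invoke that $K$ is closed under convex combinations and closed; but the Hahn–Banach version is cleanest and self-contained given what the text already cites.
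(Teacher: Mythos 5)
Your proposal is correct, but it takes a genuinely different route from the paper. You begin by invoking Riesz's theorem (a Hausdorff locally compact topological vector space is finite-dimensional) to reduce at once to $K\subseteq\R^d$, then define $b$ coordinate-by-coordinate as $b_i=\mu(t_i)$, verify $b\in K$ by a separating-hyperplane argument, and get the defining identity for all affine $\phi$ by extending $\phi$ from $K$ to an affine function $c_0+\sum c_i t_i$ on $\R^d$. The paper instead never names the finite-dimensionality of the ambient space: for each affine $\phi$ it defines $K_\phi=\{x\in K\mid\mu(\phi)=\phi(x)\}$, proves that every finite intersection $K_{\phi_1}\cap\dots\cap K_{\phi_n}$ is non-empty (by separating the point $(\mu(\phi_1),\dots,\mu(\phi_n))$ from the compact convex image of $K$ in $\R^n$), invokes the finite intersection property of the compact $K$ to conclude $\bigcap_\phi K_\phi\neq\emptyset$, and then uses Hahn--Banach separation of points to show that this intersection is a single point. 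In substance the separation step is the same in both proofs; the difference is that the paper replaces your appeal to Riesz by a finite-intersection-property argument, which has the virtue of generalizing verbatim to compact convex subsets of locally convex (not necessarily locally compact) spaces. Your route is a little more concrete and shorter, but leans on the extra theorem and quietly on the Hausdorff hypothesis which Riesz requires. One step you gloss over and should expand if writing this out: that a function affine in the convex-combination sense on a convex $K\subseteq\R^d$ really does extend to a globally affine function on the affine hull of $K$ --- this is true, but it takes a short argument (e.g.\ agree with the barycentric extension on an interior simplex, then propagate along segments through an interior point), and is not an immediate consequence of the definitions.
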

\begin{proof}
  For any affine function $\phi\colon K\to\R$, set
  \[K_\phi\coloneqq\{x\in K\mid \mu(\phi)=\phi(x)\}.\]
  It is clear that $K_\phi$ is convex and compact. Furthermore, it is
  non-empty; more generally, we will show that $K_{\phi_1}\cap\dots\cap
  K_{\phi_n}\neq\emptyset$ for all affine
  $\phi_1,\dots,\phi_n\colon K\to\R$.

  Write $\phi=(\phi_1,\dots,\phi_n)\colon K\to\R^n$. Define
  $L=\{\phi(x)\colon x\in K\}$; this is a convex compact in
  $\R^n$. Define $p\in\R^n$ by $p_i=\mu(phi_i)=\int_K\phi_i d\mu$. We
  claim that $p$ belongs to $L$; once this is shown, every $x\in K$
  with $\phi(x)=p$ belongs to $K_{\phi_1}\cap\dots\cap K_{\phi_n}$, so
  the intersection is not empty.

  We now show that, for any $q\not\in L$, we have $p\neq q$. There
  exists then a hyperplane that separates $q$ from $L$, namely the
  nullspace of any affine map $\tau\colon\R^n\to\R$ with $\tau(q)<0$
  and $\tau(L)>0$. In particular $\tau(\phi(x))>0$ for all $x\in K$,
  so by integrating $\tau(p)>0$, and therefore $p\neq q$.

  Set now $B=\bigcap_{\phi\text{ affine}}K_\phi$. It is non-empty
  by compactness of $K$, because any finite sub-intersection is
  non-empty.

  Affine functions separate points\footnote{Note that we use here the
    Hahn-Banach theorem, which requires certain logical axioms.} in
  $K$, so $B$ contains a single point $b$.
\end{proof}

\begin{theorem}\label{thm:fixedmeasure}
  Let $X$ be a $G$-set. The following are equivalent:
  \begin{enumerate}
  \item $X$ is amenable;
  \item Every compact $X$-markable set admits an invariant probability
    measure.
  \end{enumerate}
\end{theorem}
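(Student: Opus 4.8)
The plan is to prove the two implications separately, using the fixed-point theorem (Theorem~\ref{thm:fixedpoint}) already established, together with the barycentre construction of Lemma~\ref{lem:barycentre}.

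For $(1)\Rightarrow(2)$: suppose $X$ is amenable, and let $K$ be a compact $X$-markable set, with equivariant map $\pi\colon X\to K$. Here $K$ need not be convex, so I cannot apply Theorem~\ref{thm:fixedpoint} directly. Instead I would pass to the space $\mathscr P(K)$ of probability measures on $K$, i.e.\ the space of functionals $\lambda\in\mathcal C(K)^*$ with $\lambda(\mathbb 1)=1$ and $\lambda(\phi)\ge0$ for $\phi\ge0$. This is a convex compact subset of $\mathcal C(K)^*$ (compact for the weak*-topology by Banach--Alaoglu, hence sitting in a locally convex, indeed locally compact in the weak* sense, topological vector space), and $G$ acts on it affinely by pushforward. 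Moreover it is $X$-markable: compose $\pi$ with the map $K\to\mathscr P(K)$, $k\mapsto\delta_k$ (the Dirac mass), which is $G$-equivariant. Then Theorem~\ref{thm:fixedpoint}$(1)\Rightarrow(2)$ applied to $\mathscr P(K)$ yields a $G$-fixed point, namely a $G$-invariant probability measure on $K$, which is exactly what $(2)$ asks for.

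For $(2)\Rightarrow(1)$: I would exhibit a single compact $X$-markable set whose invariant probability measures give invariant means. Take $K=\beta X$, the Stone--\v Cech compactification of $X$ (Definition~\ref{defn:ultrafilter}); it is compact, it carries a continuous $G$-action extending the action on $X$ (by the universal property / Lemma~\ref{lem:stone}), and it is $X$-markable via the canonical inclusion $X\hookrightarrow\beta X$. By hypothesis $(2)$ there is a $G$-invariant probability measure $\lambda$ on $\beta X$. Since $\ell^\infty(X)\cong\mathcal C(\beta X)$ isometrically and $G$-equivariantly by Lemma~\ref{lem:stone}, the functional $\lambda$ pulls back to a $G$-invariant, normalized, positive functional on $\ell^\infty(X)$, which is a $G$-invariant mean by Corollary~\ref{cor:vneumann}. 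Alternatively, and perhaps more cleanly, one can take $K=\mathscr M(X)$ itself (compact by Lemma~\ref{lem:mcompact}, convex by Lemma~\ref{lem:mconvex}, $X$-markable by $\delta$): an invariant probability measure on $\mathscr M(X)$ then has, by Lemma~\ref{lem:barycentre}, a barycentre $b\in\mathscr M(X)$, and because the $G$-action on $\mathscr M(X)$ is affine and the barycentre is unique, $b$ is $G$-fixed, hence an invariant mean.

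The only genuinely delicate point is the choice of ambient topological vector space in $(1)\Rightarrow(2)$: Lemma~\ref{lem:barycentre} and Theorem~\ref{thm:fixedpoint} are phrased for convex compact subsets of \emph{locally compact} topological vector spaces, whereas $\mathcal C(K)^*$ with the weak*-topology is only locally convex. I expect this to be a non-issue in practice — the averaging argument in the proof of Theorem~\ref{thm:fixedpoint}$(1)\Rightarrow(2)$ (forming $k_n=\frac1{\#F_n}\sum_{x\in F_n}\delta_{\pi(x)}$ and extracting a cluster point in the compact set $\mathscr P(K)$) works verbatim in any compact convex $G$-space and needs no local compactness of the ambient space. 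So the cleanest route is to reprove that averaging argument directly inside $\mathscr P(K)$ rather than invoking Theorem~\ref{thm:fixedpoint} as a black box; this is the step I would write out carefully.
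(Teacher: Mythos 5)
Your proof is correct, and for $(2)\Rightarrow(1)$ it takes a genuinely different route from the paper's. The paper applies hypothesis~(2) to an \emph{arbitrary} compact $X$-markable convex subset $K$ of a locally compact topological vector space, takes the barycentre of the resulting invariant measure by Lemma~\ref{lem:barycentre}, obtains a $G$-fixed point in $K$, and concludes via Theorem~\ref{thm:fixedpoint}$(3)\Rightarrow(1)$. Your primary route instead specializes to the single space $K=\beta X$ and transports the invariant measure directly across the isometry $\ell^\infty(X)\cong\mathcal C(\beta X)$ of Lemma~\ref{lem:stone}, producing an invariant mean without any barycentre argument; this is cleaner and more self-contained (your $\mathscr M(X)$ alternative is essentially the paper's proof, specialized). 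For $(1)\Rightarrow(2)$ the paper is more direct than you: it simply composes an invariant positive functional $m\in\ell^\infty(X)^*$ with $\pi^*\colon\ell^\infty(K)\to\ell^\infty(X)$ and restricts to $\mathcal C(K)$, whereas you route through the fixed-point theorem applied to $\mathscr P(K)$.

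One remark on the caveat you raise at the end: the concern about local compactness is unfounded. You invoke Theorem~\ref{thm:fixedpoint}$(1)\Rightarrow(2)$, and statement~(2) there is phrased for \emph{abstract} compact topological convex spaces (in the sense of Stone's axioms, with continuous structure map), not for subsets of locally compact vector spaces --- that restriction only enters in statement~(3). The averaging argument in the paper's proof of $(1)\Rightarrow(2)$ uses only compactness of $K$ and continuity of the convex-combination map, both of which $\mathscr P(K)$ enjoys for the weak* topology; no reproving is needed.
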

\begin{proof}
  $(1)\Rightarrow(2)$ Let $K$ be a compact $G$-set and let
  $\pi\colon X\to K$ be a $G$-equivariant map. Let
  $m\in\ell^\infty(X)^*$ be a $G$-invariant positive functional; then
  $m\circ\pi^*\colon\ell^\infty(K)\to\ell^\infty(X)\to\R$ is a
  $G$-invariant, positive functional on $K$, and its restriction to
  $\mathcal C(K)$ is an invariant probability measure on $K$.

  $(2)\Rightarrow(1)$ Let $K$ be a compact $X$-markable convex subset
  of a locally compact topological vector space, and let $\lambda$ be
  an invariant probability measure on $K$. Then $\lambda$'s
  barycentre, which exists by Lemma~\ref{lem:barycentre}, is a fixed
  point in $K$, so $X$ is amenable by
  Theorem~\ref{thm:fixedpoint}$(3)\Rightarrow(1)$.
\end{proof}

\begin{exercise}[*]
  Reprove that the free group $F_2$ is not amenable as follows: write
  $F_2=\langle a,b\mid\rangle$, and make it act on the circle
  $X=[0,1]/(0\sim1)$ by $x a=x^2$ and $x b=(x+1/2)\mod 1$ for all
  $x\in[0,1]$. Show that the only $a$-invariant measure on $X$ is
  $\delta_0$, and that it is not $b$-invariant.
\end{exercise}

We proved in Corollary~\ref{cor:abelian} that abelian groups are
amenable. We may reprove it as follows:
\begin{proposition}[Kakutani~\cite{kakutani:fp}-Markov~\cite{markov:abeliens}]\label{prop:abelian}
  Let $G$ be an abelian group. Then $G$ is amenable.
\end{proposition}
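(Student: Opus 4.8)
The plan is to invoke the fixed-point characterisation of amenability. By Theorem~\ref{thm:fixedpoint}(3) applied to the regular $G$-set $G\looparrowleft G$ (for which being ``$X$-markable'' simply means being non-empty, since any point $y_0$ of the target yields the equivariant map $g\mapsto y_0 g$, as in Corollary~\ref{cor:regamen}), it suffices to show: every non-empty compact convex $G$-subset $K$ of a locally compact (Hausdorff) topological vector space $V$ has a $G$-fixed point. Write $T_g\colon K\to K$ for the continuous affine bijection through which $g\in G$ acts. The argument, due to Kakutani and Markov, replaces each $T_g$ by its Ces\`aro averages and exploits commutativity.

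For $g\in G$ and $n\ge1$ put $A_{g,n}\coloneqq\frac1n\sum_{k=0}^{n-1}T_g^{\,k}$; since $K$ is convex this is a well-defined map $K\to K$, and it is continuous and affine, so $A_{g,n}(K)$ is a non-empty compact convex subset of $K$. As $G$ is abelian the maps $T_g$ pairwise commute, hence so do all the $A_{g,n}$. I claim the family $\{A_{g,n}(K)\mid g\in G,\ n\ge1\}$ has the finite intersection property: given finitely many $(g_1,n_1),\dots,(g_k,n_k)$, the composite $B\coloneqq A_{g_1,n_1}\circ\cdots\circ A_{g_k,n_k}$ satisfies $B(K)\neq\emptyset$, and bringing $A_{g_i,n_i}$ to the front using commutativity gives $B(K)\subseteq A_{g_i,n_i}(K)$ for each $i$. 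Since $K$ is compact and each $A_{g,n}(K)$ is closed in $K$, the intersection $\bigcap_{g,n}A_{g,n}(K)$ is non-empty; fix a point $x$ in it.

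It remains to check that $x$ is $G$-fixed. Fix $g\in G$; for each $n$ choose $y_n\in K$ with $x=A_{g,n}(y_n)$. Computing in $V$, and using that $T_g$ is affine so that $T_g\circ A_{g,n}=\frac1n\sum_{k=1}^{n}T_g^{\,k}$ on $K$, we get
\[
x-T_g x \;=\; A_{g,n}(y_n)-T_g\big(A_{g,n}(y_n)\big)\;=\;\tfrac1n\big(y_n-T_g^{\,n}y_n\big)\;\in\;\tfrac1n\,(K-K).
\]
A compact subset of a topological vector space is bounded, hence so is $K-K$; as the single vector $x-T_gx$ lies in $\tfrac1n(K-K)$ for every $n$, it must be $0$ (in a Hausdorff topological vector space the set $\{nv\}_{n\ge1}$ is unbounded whenever $v\neq0$). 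Thus $T_gx=x$ for all $g\in G$, and $x$ is the desired fixed point.

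The only step needing care is the finite-intersection bookkeeping, which is exactly where abelianness is used; everything else is routine once one recalls that compact sets in a topological vector space are bounded. (One could also phrase the argument via fixed-point sets: each $\mathrm{Fix}(T_g)$ is non-empty by the one-generator averaging, compact, convex, and stable under every $T_h$, so finite intersections of the $\mathrm{Fix}(T_g)$ are non-empty and compactness concludes; but the version above is more self-contained.)
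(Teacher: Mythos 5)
Your proof is correct and follows essentially the same Kakutani--Markov argument as the paper: Ces\`aro averages $A_{g,n}$, commutativity to get the finite intersection property of the sets $A_{g,n}(K)$, and then the observation that $\tfrac1n(y_n - T_g^n y_n)\to 0$ forces $x$ to be fixed. The one cosmetic divergence is in the last step: the paper applies a continuous affine functional $\phi$ and bounds $|\phi(x)-\phi(xg)|\le\tfrac2n\|\phi\|_\infty$, whereas you argue directly in the ambient topological vector space that $x-T_gx$ lies in $\tfrac1n(K-K)$ for every $n$, hence vanishes by boundedness of the compact set $K-K$; the two are interchangeable and equally valid here.
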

\begin{proof}
  Let $G$ act affinely on a convex compact $K$. For every $g\in G$ and
  every $n\ge1$ define a continuous transformation $A_{n,g}\colon K\to K$ by
  \[A_{n,g}(x) = \frac1n\sum_{i=0}^{n-1}x g^i.\] Let $\mathcal S$
  denote the monoid generated by $\{A_{n,g}\mid g\in G,n\ge
  1\}$. We show that $\bigcap_{s\in\mathcal S}s(K)$ is not
  empty. Since $K$ is compact, it suffices to show that every finite
  intersection $s_1(K)\cap\dots\cap s_k(K)$ is non empty. To that end, set
  $t=s_1\dots s_k$. We have
  \[s_i(K)\subseteq s_is_1\dots\widehat{s_i}\dots s_k(K)=t(K),\]
  because $\mathcal S$ is commutative. Therefore $s_1(K)\cap\dots
  s_k(K)$ contains $t(K)$ so is not empty.

  Pick now $x\in\bigcap_{s\in\mathcal S}s(K)$. To show that $x$ is
  $G$-fixed, choose any affine function $\phi\colon K\to\R$, and any $g\in
  G$. For all $n$, write $x=A_{n,g}(y)$, and compute
  \[\phi(x)-\phi(x g)=\frac1n\big(\phi(y)-\phi(y g^n)\big)\le\frac2n\|\phi\|_\infty;\]
  Since $\phi,g$ are fixed and $n$ is arbitrary, we have
  $\phi(x)=\phi(x g)$ for all affine $\phi\colon K\to\R$, from which
  $x=x g$.
\end{proof}

Furstenberg studied in~\cite{furstenberg:boundary} a condition at the
exact opposite of amenability: a \emph{boundary} for a group $G$ is a
compact $G$-space $K$ which is minimal and such that every probability
measure on $K$ admits point measures in the closure of its
$G$-orbit. By Theorem~\ref{thm:fixedmeasure}, if $G$ is amenable then
its only boundary is the point. See~\S\ref{ss:boundary} for more details.

\subsection{Amenability of equivalence relations}\label{ss:equivrel}
In the previous section, we gave conditions on a compact $G$-set to
admit an invariant measure. Here, we assume that we are given a
measure space on which a group acts measurably.

In the abstract setting, we are given a set $X$, a $\sigma$-algebra
$\mathfrak M$ of subsets of $X$, and a map
$\lambda\colon\mathfrak M\to\R$.

To simplify the presentation, and focus on the interesting cases, we
assume that $(X,\lambda)$ is \emph{$\sigma$-finite}, namely $X$ is the
countable union of subsets of finite measure. In this case, it costs
nothing to assume that $\lambda$ is a \emph{probability} measure,
namely $\lambda(X)=1$. (Indeed, if $X=\bigsqcup_{n\in\N}X_n$ with
$\lambda(X_n)<\infty$, define a new measure
$\lambda'(A)=\sum_{n\in\N}2^{-n}\lambda(A\cap X_n)/\lambda(X_n)$.) We
will even assume that $(X,\lambda)$ is a \emph{standard probability
  space}~\cite{vneumann:measures}, such as $([0,1],\text{Lebesgue})$
or $(\{0,1\}^\N,\text{Bernoulli})$; these spaces are isomorphic as
measure spaces.

Let $G$ be a group, and assume that $G$ acts measurably on
$(X,\lambda)$. Recall that this means that $G$ acts on $\lambda$-null
sets: if $A\subset X$ satisfies $\lambda(A)=0$, then
$(\lambda g)(A)=\lambda(A g^{-1})=0$ for all $g\in G$. In other words,
the measures $\lambda$ and $\lambda g$ are absolutely continuous with
respect to each other, and the Radon-Nikodym
theorem~\cite{nikodym:radon} implies that there is an essentially
unique measurable function
$\partial(\lambda g)/\partial\lambda\colon X\to\R$ satisfying
\[\int_X f(x g)d\lambda(x)=\int_X f(x)\frac{\partial(\lambda g)}{\partial\lambda}d\lambda(x)\text{ for all }f\in L^1(X,\lambda).\]
If $(X,\lambda)=([0,1],\text{Lebesgue})$ and $g\colon X\to X$ is
differentiable, then $\partial(\lambda g)/\partial\lambda=d g/d x$, the
usual derivative. The chain rule gives a ``cocycle'' identity
\[\frac{\partial(\lambda g h)}{\partial\lambda}=\frac{\partial(\lambda g)}{\partial\lambda}\cdot\bigg(\frac{\partial(\lambda h)}{\partial\lambda}g\bigg).\]

In the extreme case (which is not the typical case we are interested
in), the measure $\lambda$ might be $G$-invariant:
$\lambda(A)=\lambda(A g)$ for all $A\subseteq X,g\in G$, and then the
Radon-Nikodym derivative is constant $\equiv1$.

To simplify the presentation and concentrate on the useful cases, we
also restrict ourselves to a countable group $G$.  Recall that an
action is \emph{essentially free} if $\lambda$-almost every point has
a trivial stabilizer, namely $\lambda(\{x\in X\mid
G_x\neq1\})=0$. More generally, everything is considered ``up to
measure $0$'': a group action, isomorphisms between measured actions
etc.\ only need to be defined on sets of full measure.
\newpage
It will be useful to forget much about the group action, and only
remember its orbits. This is captured in the following definitions:
\begin{definition}
  A \emph{countable (respectively finite) measurable equivalence
    relation} on $(X,\lambda)$ is an equivalence relation
  $R\subseteq X\times X$ that is measurable qua subset of $X\times X$,
  such that for every $x\in X$ the equivalence class
  $x R\coloneqq\{y\in X\mid (x,y)\in R\}$ is countable (respectively
  finite) and such that for every measurable $A\subseteq X$ with
  $\lambda(A)=0$ one has $\lambda(A R)=0$.

  The set $R$ itself is treated as a measure space, with the counting
  measure on each equivalence class: $d\mu(x,y)=d\lambda(x)$.
\end{definition}

A fundamental example is given by a measurable action of a countable
group $G$, as above: one sets
$R_G=\{(x,y)\in X^2\mid \exists g\in G\text{ with }x g =y\}$.

\begin{definition}
  A countable measurable equivalence relation $R$ on $(X,\lambda)$ is
  \emph{amenable} if there is a measurable invariant mean
  $m\colon X\to\mathscr M(R)$, written $x\mapsto m_x$, with
  $m_x\in\mathscr M(x R)$ for all $x\in X$. Here ``measurable'' means
  that for every $F\in L^\infty(X,\lambda)$ the map $x\mapsto m_x(F)$
  is measurable, and ``invariant'' means that $m_x=m_y$ almost
  whenever $(x,y)\in R$.
\end{definition}

By~\cite{connes-feldman-weiss:amenable}, a countable measurable
equivalence $R$ relation is amenable if and only if it is
\emph{hyperfinite}: $R$ is the increasing union of countably many
finite measurable equivalence relations, if and only if it is given by
an action of $\Z$.

The following lemma rephrases amenability of equivalence relations as
an analogue of Reiter's criterion; we omit the proof which essentially
follows that of Theorem~\ref{thm:folneramen};
see~\cite{kaimanovich:isop}:
\begin{lemma}
  The equivalence relation $R$ on $(X,\lambda)$ is amenable if and
  only if there exists a system $(\phi_{x,n})_{x\in X,n\in\N}$ of
  measures, with $\phi_{x,n}\in\ell^1(x R)$, which is
  \begin{description}
  \item[---] \emph{measurable}: for all $n\in\N$ the function
    $(x,y)\mapsto\phi_{x,n}(y)$ is measurable on $R$,
  \item[---] \emph{asymptotically invariant}:
    $\|\phi_{x,n}-\phi_{y,n}\|\to0$ for almost all $(x,y)\in R$.\qed
  \end{description}
\end{lemma}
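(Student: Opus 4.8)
The plan is to run, almost verbatim, the proof of Theorem~\ref{thm:folneramen}, with $\ell^1(X)$ replaced by $L^1(R,\mu)$ and $\mathscr M(X)$ replaced by the space of measurable fields of means on the classes of $R$. First I would fix the dictionary: by Feldman-Moore (or the standing assumption that $R=R_G$ for a countable $G$) there is a countable family of partial transformations $g_1,g_2,\dots$ generating $R$, so that ``asymptotically invariant'' for a system $\Phi_{x,n}:=\bigl((x,y)\mapsto\phi_{x,n}(y)\bigr)\in L^1(R,\mu)_{\ge0}$ (fibrewise a probability measure on $\mu$-a.e.\ $xR$) amounts, after triangulating over words in the $g_j$, to $\|\phi_{x,n}-\phi_{xg_j,n}\|\to0$ for a.e.\ $x$ and every $j$; throughout one carries the Radon-Nikodym cocycle $\partial(\lambda g)/\partial\lambda$ when $\lambda$ is not $G$-invariant.

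For the ``if'' direction, I would form from $\Phi_{\cdot,n}$ the state $\omega_n$ of $L^\infty(R,\mu)$ given by $\omega_n(F)=\int_X\sum_{y\in xR}\phi_{x,n}(y)F(x,y)\,d\lambda(x)$. Each $\omega_n$ is $L^\infty(X)$-modular, and the set of $L^\infty(X)$-modular states is a weak-$*$ compact convex subset of the unit ball of $L^\infty(R,\mu)^*$ (a fibred Banach-Alaoglu argument, exactly parallel to Lemma~\ref{lem:mcompact}), which under disintegration is precisely the set of measurable fields of means $x\mapsto m_x$, $m_x\in\mathscr M(xR)$. Take a weak-$*$ cluster point $m=(m_x)$. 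Since $\|\phi_{x,n}-\phi_{xg_j,n}\|\le2$ and tends to $0$ for a.e.\ $x$, dominated convergence gives $\|\Phi_{\cdot,n}\cdot g_j-\Phi_{\cdot,n}\|_{L^1(R)}\to0$, hence the $g_j$-translate of $\omega_n$ minus $\omega_n$ tends to $0$ in norm; so the cluster point satisfies $m_{xg_j}=m_x$ a.e.\ for each $j$, i.e.\ $m$ is a measurable invariant mean and $R$ is amenable.

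For the ``only if'' direction, start from a measurable invariant mean $x\mapsto m_x$ and copy the $(1)\Rightarrow(2)$ step of Theorem~\ref{thm:folneramen} inside $L^1(R,\mu)$. Fix a finite $S=\{g_1,\dots,g_k\}$ and $\epsilon>0$; let $\mathcal N\subset L^1(R,\mu)$ be the convex set of non-negative, fibrewise-normalised densities, and let $\Theta\colon\mathcal N\to L^1(R,\mu)^S$ be the affine, weak-$*$-continuous map $\Phi\mapsto\bigoplus_{g\in S}(\Phi\cdot g-\Phi)$ (with the cocycle twist). Fibrewise weak-$*$ density of $\ell^1$-means in all means, a measurable selection (Jankov-von Neumann) of the fibrewise approximants, and dominated convergence show that the field $m$ lies in the weak-$*$ closure of $\mathcal N$; since $m$ is invariant, $\Theta(m)=0$, so $0\in\overline{\Theta(\mathcal N)}^{\text{weak}*}=\overline{\Theta(\mathcal N)}^{\,\|\cdot\|_{L^1}}$ by convexity and Mazur's theorem, exactly as in the original. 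This produces $\Phi^{(S,\epsilon)}\in\mathcal N$ with $\sum_{g\in S}\|\Phi^{(S,\epsilon)}\cdot g-\Phi^{(S,\epsilon)}\|_{L^1(R)}<\epsilon$. Diagonalising with $S_n=\{g_1,\dots,g_n\}$ and $\epsilon=4^{-n}$, set $\phi_{x,n}:=\Phi^{(S_n,4^{-n})}\!\restrict xR$; then for each $j$, $\sum_n\int_X\|\phi_{xg_j,n}-\phi_{x,n}\|\,d\lambda(x)<\infty$, so Borel-Cantelli yields $\|\phi_{xg_j,n}-\phi_{x,n}\|\to0$ for a.e.\ $x$, and intersecting over $j$ (and over words) gives the required asymptotic invariance along $R$, while measurability is built in.

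The main obstacle is precisely the measurability bookkeeping: the fibrewise Hahn-Banach/Mazur and cluster-point arguments must be carried out uniformly in $x$. The point of running everything in $L^1(R,\mu)$ rather than fibre by fibre is that norm-smallness of $\Phi\cdot g-\Phi$ in $L^1(R,\mu)$ already packages the integrated form of asymptotic invariance, so that the only genuinely new devices beyond the proof of Theorem~\ref{thm:folneramen} are a single measurable selection, Borel-Cantelli to pass from $L^1$-convergence to a.e.\ convergence, and the Radon-Nikodym cocycle when $\lambda$ fails to be invariant; full details are in~\cite{kaimanovich:isop}.
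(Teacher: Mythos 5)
The paper does not actually supply a proof of this lemma; it explicitly omits it, pointing to~\cite{kaimanovich:isop} and remarking that the argument ``essentially follows that of Theorem~\ref{thm:folneramen}.'' Your proposal carries out precisely the programme the paper indicates: replace $\ell^1(X)$ by $L^1(R,\mu)$, run the Hahn--Banach/Mazur step for one implication and the weak-$*$ cluster-point step for the other, and supply the measure-theoretic bookkeeping (Feldman--Moore generators, Jankov--von Neumann selection, the Radon--Nikodym cocycle, Tonelli/Borel--Cantelli to pass from $L^1$-smallness to a.e.\ convergence along $R$). The structure is correct and you have correctly isolated the genuinely nontrivial points, namely that an $L^\infty(X)$-modular state on $L^\infty(R,\mu)$ disintegrates as a measurable field of means and that fibrewise weak-$*$ approximants can be chosen measurably; these are standard but are exactly the content deferred to Kaimanovich. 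One small slip: ``the field $m$ lies in the weak-$*$ closure of $\mathcal N$'' is an abuse, since $m$ lives in $L^\infty(R,\mu)^*$ rather than in $L^1$; what you mean and use is that a measurable net $\Phi_n\in\mathcal N$ of fibrewise weak-$*$ approximants of $(m_x)$ has $\Theta(\Phi_n)\to0$ weakly in $L^1(R,\mu)^S$, whence $0\in\overline{\Theta(\mathcal N)}^{\|\cdot\|_1}$ by Mazur --- and that is indeed the correct statement, parallel to the one in the proof of Theorem~\ref{thm:folneramen}.
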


\begin{proposition}
  If $G$ is amenable and acts measurably on $(X,\lambda)$, then $G$
  generates an amenable equivalence relation.
\end{proposition}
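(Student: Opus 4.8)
The plan is to verify, for the equivalence relation $R_G=\{(x,y)\in X^2\mid\exists g\in G\colon x g=y\}$ generated by the action, the Reiter-type criterion stated in the Lemma preceding the statement, by pushing forward normalized characteristic functions of Følner sets in $G$ along the orbit maps $\alpha_g\colon x\mapsto x g$. (Note that $R_G$ is indeed a countable measurable equivalence relation: it is the countable union $\bigcup_{g\in G}\{(x,x g)\}$ of graphs of the measurable transformations $\alpha_g$, its classes are countable since $G$ is, and it preserves null sets since $G$ acts measurably.)

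First I would fix a \emph{left}-Følner sequence in $G$: since $G$ is countable and amenable, Theorem~\ref{thm:folneramen} together with a diagonal argument along an enumeration of $G$ yields finite sets $E_n\subseteq G$ with $\#(E_n g\setminus E_n)/\#E_n\to0$ for every $g\in G$, and then $F_n\coloneqq E_n^{-1}$ satisfies $\#(g F_n\triangle F_n)/\#F_n\to0$ for every $g$, because $g F_n\triangle F_n=(E_n g^{-1}\triangle E_n)^{-1}$ and the right-Følner condition bounds both $\#(E_n g^{-1}\setminus E_n)$ and $\#(E_n\setminus E_n g^{-1})=\#(E_n g\setminus E_n)$. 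For $x\in X$ and $n\in\N$ I then set
\[
  \phi_{x,n}(y)\coloneqq\frac{\#\{g\in F_n\mid x g=y\}}{\#F_n}\qquad(y\in x R_G=x G),
\]
a finitely supported probability measure on the orbit, so $\phi_{x,n}\in\ell^1(x R_G)$.

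Two things then need checking. Measurability: for fixed $n$ one has $\phi_{x,n}(y)=\frac1{\#F_n}\sum_{g\in F_n}\mathbb 1[\alpha_g(x)=y]$, and each summand is the indicator of the graph of the measurable transformation $\alpha_g$, hence measurable on $X\times X$; a finite sum of measurable functions is measurable, and we restrict to $R_G$. Asymptotic invariance: if $(x,y)\in R_G$, write $y=x g_0$; reindexing $h=g_0 g$ gives $\phi_{y,n}(z)=\#\{h\in g_0 F_n\mid x h=z\}/\#F_n$, so cancelling the common contribution of $F_n\cap g_0 F_n$ yields
\[
  \|\phi_{x,n}-\phi_{y,n}\|\le\frac{\#(F_n\triangle g_0 F_n)}{\#F_n}\longrightarrow0,
\]
which holds for every $(x,y)\in R_G$, a fortiori almost everywhere. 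The Lemma then gives that $R_G$ is amenable.

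The computation is essentially bookkeeping once the Lemma is granted; the one genuine subtlety is that, since $G$ acts on $X$ on the right, moving along an orbit from $x$ to $x g_0$ translates the index set $F_n$ on the \emph{left}, so a left-Følner sequence — rather than the right-Følner sequence that Theorem~\ref{thm:folneramen} literally produces — is what makes the estimate close; this is harmless because amenability is left-right symmetric. Were the Reiter-type Lemma not available, the main obstacle would instead be measurability of an honest invariant mean $x\mapsto m_x$ built as a weak-$*$ limit of the $\phi_{x,n}$, since pointwise ultrafilter limits of measurable functions need not be measurable — and it is exactly this difficulty that the Lemma sidesteps.
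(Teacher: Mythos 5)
Your proof is correct and follows the same route the paper takes: push forward an almost-invariant family of probability measures on $G$ along the orbit maps $g\mapsto x g$, and read off measurability from the fact that each $\phi_{x,n}$ is a finite sum of indicators of graphs of measurable transformations. The one point where you are more careful than the paper is the left/right distinction---since the orbit map $g\mapsto x g$ converts passage from $x$ to $x g_0$ into \emph{left} multiplication by $g_0$ on the index group, one genuinely needs $\|\phi_n-g_0\phi_n\|\to0$, and your remark that inverting a right-F\o lner sequence produces a left-F\o lner one supplies exactly what the paper's informal ``$\|\phi_n-\phi_n g\|\to0$'' elides.
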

\begin{proof}
  Since $G$ is amenable, there exists a sequence of almost invariant
  measures $\phi_n\in\ell^1(G)$, in the sense that
  $\|\phi_n-\phi_n g\|\to0$ for all $g\in G$.  Let $R_G$ be the
  equivalence relation generated by $G$ on $X$. For $x\in X$, set
  $\phi_{x,n}\coloneqq x\cdot\phi_n$, the push-forward of $\phi_n$
  along the orbit of $x$. Clearly $(\phi_{x,n})_{x\in X,n\in\N}$ is an
  asymptotically invariant system, and it is measurable since for all
  $n\in\N$ the level sets $\{(x,y)\in R\mid \phi_{x,n}(y)>a\}$ are the
  unions of the graphs of finitely many elements of $G$.
\end{proof}
Note that the proposition does not admit a converse: for instance, if
$G$ is a discrete subgroup of a Lie group $L$ and $P\le L$ is
soluble, then the action of $G$ on $P\backslash L$ is
amenable. Indeed the action of $G$ on $L$ is amenable: letting $T$ be
a measurable transversal of $G$ in $L$, choose arbitrarily a
measurable assignment $m\colon T\to\mathscr M(R_G)$ on the
transversal, and extend it to $L$ by translation. The map $m$ may
easily be required to be $P$-invariant, so passes to the quotient
$P\backslash L$.

\begin{proposition}
  If $G$ acts essentially freely by measure-preserving transformations
  on the probability space $(X,\lambda)$, and the generated
  equivalence relation $R$ is amenable, then $G$ is amenable.
\end{proposition}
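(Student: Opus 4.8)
The plan is to transport the approximately invariant Reiter-type system living on the orbit relation $R$ down to an approximately invariant sequence of probability measures on $G$ itself, and then invoke Reiter's characterization of amenability, Theorem~\ref{thm:folneramen}, for the regular $G$-set.

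\emph{Setup.} Using the preceding Lemma (the Reiter-type criterion for amenable equivalence relations), amenability of $R$ furnishes a system $(\phi_{x,n})_{x\in X,\,n\in\N}$ in which each $\phi_{x,n}$ is a probability measure on the class $xR$, the map $(x,y)\mapsto\phi_{x,n}(y)$ is measurable on $R$, and $\|\phi_{x,n}-\phi_{y,n}\|\to0$ for $\mu$-almost all $(x,y)\in R$. Since the action is essentially free, for $\lambda$-almost every $x$ the orbit map $g\mapsto xg$ is a bijection $G\to xR$ that intertwines right translation on $G$ with the right $G$-action on $xR$; pulling $\phi_{x,n}$ back along it yields $\psi_{x,n}\in\mathscr P(G)$, $\psi_{x,n}(g)\coloneqq\phi_{x,n}(xg)$ (set $\psi_{x,n}=\delta_1$ on the negligible remaining set). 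As $x\mapsto(x,xg)$ is a measurable map $X\to R$, each $x\mapsto\psi_{x,n}(g)$ is measurable, so $\psi_n\coloneqq\int_X\psi_{x,n}\,d\lambda(x)$, i.e.\ $\psi_n(g)=\int_X\phi_{x,n}(xg)\,d\lambda(x)$, is well defined; by Tonelli $\psi_n\ge0$ and $\sum_g\psi_n(g)=\int_X 1\,d\lambda=1$, so $\psi_n\in\mathscr P(G)$.

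\emph{Key estimate.} The heart of the argument is to show $\sum_{g\in G}|\psi_n(hg)-\psi_n(g)|\to0$ for each fixed $h\in G$. Two properties of the action enter here essentially: it is \emph{measure preserving}, so that after the substitution $x\mapsto xh$ one has $\int_X\phi_{x,n}(xg)\,d\lambda(x)=\int_X\phi_{xh,n}(xhg)\,d\lambda(x)$; and it is \emph{free}, so that for fixed $x$ the map $g\mapsto xhg$ is a bijection $G\to(xh)R=xR$. Combining these,
\[\sum_{g\in G}\bigl|\psi_n(hg)-\psi_n(g)\bigr|\le\int_X\sum_{g\in G}\bigl|\phi_{x,n}(xhg)-\phi_{xh,n}(xhg)\bigr|\,d\lambda(x)=\int_X\|\phi_{x,n}-\phi_{xh,n}\|\,d\lambda(x).\]
Since the defect set of $(\phi_{x,n})$ is $\mu$-null and $d\mu(x,y)=d\lambda(x)$, asymptotic invariance yields $\|\phi_{x,n}-\phi_{xh,n}\|\to0$ for $\lambda$-a.e.\ $x$; the integrand is bounded by $2$, so dominated convergence finishes this step.

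\emph{Conclusion and main obstacle.} Thus $(\psi_n)$ is a sequence of probability measures on $G$ asymptotically invariant under left translations; replacing $\psi_n$ by $\check\psi_n(g)=\psi_n(g^{-1})$ makes it asymptotically invariant under right translations, which is exactly Reiter's condition for $p=1$ for the regular $G$-set. By Theorem~\ref{thm:folneramen} the $G$-set $G_G$ is amenable, hence $G$ is amenable by Corollary~\ref{cor:regamen}. (Equivalently, any weak-$*$ cluster point of $(\psi_n)$ in $\mathscr M(G)$ is a left-invariant mean, and one applies Exercise~\ref{ex:leftamen}.) The main obstacle, and the only place that genuinely demands care, is the bookkeeping in the key estimate: one must keep straight that $g\mapsto xg$ identifies $G$ with the class $xR$ compatibly with the $G$-actions and $\lambda$-measurably in $x$, and that measure-preservation of $x\mapsto xh$ is precisely what allows the $x$-dependence to be averaged away; everything else is routine.
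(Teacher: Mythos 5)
Your proof is correct, and it is the Reiter ($\ell^1$) dual of the paper's mean ($\ell^\infty$) argument. The paper constructs an invariant mean directly in a single formula, $m(f)=\int_X m_x\bigl(xg\mapsto f(g)\bigr)\,d\lambda(x)$, where $m_x\in\mathscr M(xR)$ is the orbit-wise mean from the definition of amenability of $R$ and $g\mapsto xg$ is the a.e.\ bijection $G\to xR$ coming from essential freeness; the suppressed verification of invariance uses exactly the two ingredients you single out, namely the change of variable $x\mapsto xh^{-1}$ (measure preservation) together with $m_{xh^{-1}}=m_x$ (invariance of the mean system along $R$). You instead integrate the Reiter system $(\phi_{x,n})$ over $(X,\lambda)$ to obtain probability measures $\psi_n$ on $G$ and check Reiter's condition for $p=1$ via a dominated-convergence argument. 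This is a little longer than the paper's one-line formula, but it has the merit of making the invariance computation — which the paper leaves entirely implicit — explicit and airtight, including the measurability and the handling of the null set where freeness fails. Both routes rest on the same two structural facts about the action, so I would call them the same argument in different functional-analytic clothing.
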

\begin{proof}
  Given $f\in\ell^\infty(G)$, set
  \[m(f)=\int_X m_x(x g\mapsto f(x))d\lambda(x).\qedhere\]
\end{proof}

It is possible for a non-amenable group to act essentially freely on a
probability space:
\begin{example}
  Let $F_k=\langle x_1,\dots,x_k\mid\rangle$ be a free group of rank
  $k$, and consider its \emph{boundary} $\partial F_k$: it is the
  space of infinite reduced words over the generators of $F_k$,
  \[\partial F_k=\{a_0a_1\dots\in\{x_1^\pm,\dots,x_k^\pm\}^\N\mid a_i a_{i+1}\neq1\text{ for all }i\in\N\}.\]
  The measure is equidistributed on cylinders:
  $\lambda(a_0a_1\dots
  a_n\{x_1^\pm,\dots,x_k^\pm\}^\N)=(2k)^{-1}(2k-1)^{1-n}$.  The action
  of $F_k$ on $\partial F_k$ is by pre-catenation:
  \[(a_0a_1\dots)\cdot x_i=\begin{cases}
      x_i a_0a_1\dots & \text{ if }x_i a_0\neq 1,\\
      a_1\dots & \text{ if }x_i a_0=1.
    \end{cases}
  \]
  
  Then the action of $F_k$ on $\partial F_k$ is essentially free and
  amenable, although $F_k$ is not amenable.
\end{example}
\begin{proof}
  For $1\neq g=a_1\dots a_n\in F_k$, its only fixed points in
  $\partial F_k$ are $g^\infty$ and $g^{-\infty}$; since $F_k$ is
  countable and $\partial F_k$ has the cardinality of the continuum,
  the action of $F_k$ is free almost everywhere in $\partial F_k$.

  For all $x=a_0a_1\dots\in\partial F_k$, define probability measures
  $\mu_{x,n}$ on the orbit of $x$ by
  \[\mu_{x,n}=\frac1n\big(\delta_{x}+\delta_{x a_0}+\cdots+\delta_{x a_0\cdots a_{n-1}}\big).\]
  These measures converge weakly to a mean $m_x$ on the orbit of $x$,
  and clearly $m_x$ and $m_{x g}$ have the same limit, since the sums
  defining $\mu_{x,n}$ and $\mu_{x g,n}$ agree on all but at most
  $|g|$ terms. Therefore, $m\colon X\to R_{F_k}$ is invariant, so
  $R_{F_k}$ is amenable.
\end{proof}

Consider a non-amenable group acting on $(X,\lambda)$. So as to
guarantee that the equivalence relation $R_G$ be non-amenable, we may
relax somewhat the condition that $G$ preserve $\lambda$.  We also
assume that $X$ is a compact topological space on which $G$ acts by
homeomorphisms. In fact, this is not a strong restriction: given a
measurable action of $G$ on $(X,\lambda)$, we may always construct a
compact topological $G$-space $Y$, with a measure $\mu$ on its Borel
subsets, such that $(X,\lambda)$ and $(Y,\mu)$ are isomorphic as
$G$-measure spaces; see~\cite{becker-kechris:polish}*{Theorem~5.2.1}.

We will call the action of $G$ \emph{indiscrete} if for every
$\epsilon>0$ and every neighbourhood $\mathcal U$ of the diagonal in
$X\times X$ there exists $g\neq1\in G$ with
$\{(x,x g)\mid x\in X\}\subseteq\mathcal U$ and
$\partial(\lambda g)/\partial\lambda\in(1-\epsilon,1+\epsilon)$ almost
everywhere.

The measurable action of $G$ on $X$ induces an action of $G$ by
isometries on the Banach space $L^1(X,\lambda)$ of integrable
functions on $X$, by
\[(f g)(x)=\Big(\frac{\partial(\lambda g)}{\partial\lambda}f\Big)(x g^{-1})\text{ for }f\in L^1(X,\lambda).
\]
\begin{lemma}\label{lem:autocont}
  If we give $G$ the topology of uniform convergence in its action on
  $X$, then the action of $G$ on $L^1(X,\lambda)$ is continuous.
\end{lemma}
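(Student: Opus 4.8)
The plan is to exploit that every $g\in G$ acts by an \emph{isometry} of $L^1(X,\lambda)$, as recorded just before the statement; this lets one reduce joint continuity of the action map to a statement about a single vector and a net approaching $1$, which is then checked on the dense subspace $\mathcal C(X)$, where the action can be analysed from the explicit formula.

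First I would reduce to continuity at the identity. If $(g_\alpha,f_\alpha)\to(g,f)$ in $G\times L^1(X,\lambda)$, put $h_\alpha=g_\alpha g^{-1}$; then $h_\alpha\to1$ uniformly, and, using $f g_\alpha=(f h_\alpha)g$ and the isometry property twice,
\[
\|f_\alpha g_\alpha-fg\|_1\le\|f_\alpha-f\|_1+\|fg_\alpha-fg\|_1=\|f_\alpha-f\|_1+\|fh_\alpha-f\|_1 .
\]
So it suffices to prove that for each fixed $f\in L^1(X,\lambda)$ and each net $h_\alpha\to1$ uniformly one has $\|fh_\alpha-f\|_1\to0$. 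Once more by isometry, $\|fh-f\|_1\le\|f'h-f'\|_1+2\|f-f'\|_1$, and $\mathcal C(X)$ is dense in $L^1(X,\lambda)$ ($X$ compact, $\lambda$ a regular Borel probability measure), so I may assume $f\in\mathcal C(X)$.

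For $f\in\mathcal C(X)$ I would use $(fh)(x)=\tfrac{\partial(\lambda h)}{\partial\lambda}(xh^{-1})\,f(xh^{-1})$ and the splitting $(fh)(x)-f(x)=\tfrac{\partial(\lambda h)}{\partial\lambda}(xh^{-1})\bigl(f(xh^{-1})-f(x)\bigr)+\bigl(\tfrac{\partial(\lambda h)}{\partial\lambda}(xh^{-1})-1\bigr)f(x)$, which gives
\[
\|fh-f\|_1\le\sup_{x\in X}\bigl|f(xh^{-1})-f(x)\bigr|\cdot\|\mathbb 1\,h\|_1+\|f\|_\infty\cdot\bigl\|\mathbb 1\,h-\mathbb 1\bigr\|_1 ,
\]
where $\mathbb 1$ is the constant function $1$ and $(\mathbb 1\,h)(x)=\tfrac{\partial(\lambda h)}{\partial\lambda}(xh^{-1})$. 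Since $\|\mathbb 1\,h\|_1=\|\mathbb 1\|_1=1$ by isometry, the first term is $\sup_x|f(xh^{-1})-f(x)|$, which tends to $0$ along $h=h_\alpha\to1$: indeed $f$ is uniformly continuous on the compact space $X$, and $\sup_x d(h^{-1}(x),x)=\sup_x d(h(x),x)$, so $h_\alpha^{-1}\to1$ uniformly as well.

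The one remaining point — and the main obstacle — is to show $\bigl\|\mathbb 1\,h_\alpha-\mathbb 1\bigr\|_1\to0$, i.e.\ that $\partial(\lambda h_\alpha)/\partial\lambda\to1$ in $L^1(X,\lambda)$ when $h_\alpha\to1$ uniformly; this is precisely where the uniform topology must be used. Testing against $\phi\in\mathcal C(X)$ gives $\int\phi\,\tfrac{\partial(\lambda h_\alpha)}{\partial\lambda}\,d\lambda=\int\phi(xh_\alpha)\,d\lambda(x)\to\int\phi\,d\lambda$, so $\lambda h_\alpha\to\lambda$ weak-$*$; combining this with nonnegativity of the densities, their common total mass $1$, and the uniform integrability available in the present setting (in the measure-preserving case $\partial(\lambda h_\alpha)/\partial\lambda\equiv1$ and there is nothing to prove), one upgrades weak-$*$ convergence of $\lambda h_\alpha$ to norm convergence of the Radon--Nikodym densities, which closes the argument. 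The delicate step is exactly this last promotion: controlling the $L^1$-size of the Jacobians from uniform closeness of the transformations to the identity.
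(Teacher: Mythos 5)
Your reduction to the claim $\bigl\|\partial(\lambda h_\alpha)/\partial\lambda-1\bigr\|_{L^1(X,\lambda)}\to0$ is the right skeleton, and the isometry/density reductions are carried out correctly, but the proposal never actually proves this claim --- you flag it yourself as ``the delicate step'' and appeal to ``uniform integrability available in the present setting,'' which is exactly the thing that does not come for free. Uniform convergence $h_\alpha\to1$ controls how far $h_\alpha$ moves points of $X$, but says nothing quantitative about the Radon--Nikodym derivatives $\partial(\lambda h_\alpha)/\partial\lambda$: two homeomorphisms can be uniformly very close to the identity while having wildly oscillating Jacobians, so neither uniform integrability nor any pointwise control of the densities follows. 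Weak-$*$ convergence $\lambda h_\alpha\to\lambda$ is cheap, but upgrading it to $L^1$ convergence of densities is precisely the content that has to be supplied, and it isn't. In effect the argument has been shuffled into an equivalent unproved assertion (the lemma, specialised to $f\equiv\mathbb1$).

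The paper's proof avoids the Jacobian problem entirely by a different mechanism: it works in the closure $\overline G$ of $G$ in the homeomorphism group, which (in the setting of this section) is second-countable locally compact, hence carries a Haar measure $\eta$. Fixing a compact $K\subseteq\overline G$ with $\eta(K)=1$, it uses right-invariance of $\eta$ together with the fact that each $h$ acts isometrically on $L^1$ to write $\|fg-f\|=\int_K\|fgh-fh\|\,d\eta(h)$, then approximates the map $h\mapsto fh$ on a compact neighbourhood $VK$ by a continuous object and closes with a ``$3\epsilon$'' estimate. Averaging over $\eta$ is what lets one trade measurability of $f$ for continuity without ever controlling individual densities. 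If you want to make your route work, you would need an independent proof that $g\mapsto\partial(\lambda g)/\partial\lambda$ is continuous from $(\overline G,\text{unif})$ to $L^1(X,\lambda)$, which is not easier than the original statement; so the gap is genuine, not cosmetic.
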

\begin{proof}
  Consider $f\in L^1(X,\lambda)$; we wish to show $f g\to f$ whenever
  $g\to1$.

  The closure of $G$ in the homeomorphism group of $X$ is
  second-countable locally compact; it therefore admits a Haar measure
  $\eta$. Let $K\subseteq\overline G$ be a compact with $\eta(K)=1$,
  and let $V$ be a compact neighbourhood of $1$ in $\overline
  G$. Since the Haar measure is invariant, we have
  \[\|f g-f\|=\int_K\|f g h-f h\|d\eta.
  \]
  since $f$ is measurable, there is for all $\epsilon>0$ a continuous
  function $f'\colon V K\to\C$ with
  $\int_{V K}\|f h-f' h\|d\eta<\epsilon$, and there is also a
  neighbourhood $W$ of $1$ in $V$ such that
  $\|f' g h- f' h\|<\epsilon$ for all $h\in K,g\in W$. Then
  $\|f g-f\|<3\epsilon$ as soon as $g\in W$ by a standard `$3\delta$'
  argument.
\end{proof}

\begin{proposition}[Monod]
  Let $G$ contain an indiscrete non-abelian free group acting
  essentially freely on a measure space $(X,\lambda)$. Then $G$
  generates a non-amenable equivalence relation.
\end{proposition}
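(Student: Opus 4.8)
The plan is to assume $R_G$ amenable and derive a contradiction with the non-amenability of a rank-$2$ free subgroup of $G$ manufactured from the indiscrete data. Write $F\le G$ for the given indiscrete non-abelian free subgroup; assume, as we may, that $X$ is a compact metric space on which $G$ acts by homeomorphisms and that $(X,\lambda)$ is a standard probability space. The heart of the argument — and the step I expect to be the main obstacle — is the reduction: \emph{for every $\epsilon>0$ there is a free subgroup $F'=\langle w,h\rangle\le G$ of rank $2$ such that $\{w,h\}$ is a free basis of $F'$ and}
\[
\sup_{x\in X}d(x,xw)<\epsilon,\qquad
\Big\|\tfrac{\partial(\lambda w)}{\partial\lambda}-1\Big\|_{L^1(\lambda)}<\epsilon,
\]
\emph{and likewise for $h$.} To build such $F'$ I would use indiscreteness of $F$ to pick $g\in F\setminus\{1\}$ with graph $\delta$-close to the diagonal and $\partial(\lambda g)/\partial\lambda\in(1-\delta,1+\delta)$ for $\delta$ small, write $g=z^m$ with $z\in F$ primitive, and note that since $F$ is non-abelian not all of its free generators lie in $\langle z\rangle$ (which is its own normalizer in $F$); fixing a generator $a\notin\langle z\rangle$ makes $\{z,a^{-1}za\}$ a free basis of a rank-$2$ free subgroup, hence so is $\{w,h\}:=\{g,a^{-1}ga\}$. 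That $h=a^{-1}ga$ is again $\epsilon$-close to the identity uses uniform continuity of the homeomorphism $a$ on the compact space $X$; that $\partial(\lambda h)/\partial\lambda$ is $L^1$-close to $1$ uses the cocycle identity for the Radon--Nikodym derivatives together with Lemma~\ref{lem:autocont} (automatic $L^1$-continuity of the action), applied only to the finitely many generators of $F$.

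Granting this, fix such an $F'$. Since $F'\le G$ acts essentially freely, the subrelation $R_{F'}\subseteq R_G$ is amenable — a subrelation of a hyperfinite relation being hyperfinite by Connes--Feldman--Weiss — so the Reiter-type lemma for equivalence relations yields a measurable system of probability vectors $\phi_{x,n}\in\ell^1(xR_{F'})$ with $\|\phi_{x,n}-\phi_{y,n}\|\to0$ for almost every $(x,y)\in R_{F'}$, hence with $\|\phi_{x,n}-\phi_{xs,n}\|\to0$ for every $s\in F'$ and almost every $x$. Using essential freeness to identify $xR_{F'}$ with $F'$ via $s\mapsto xs$, transport $\phi_{x,n}$ to $\psi_{x,n}\in\ell^1(F')$; then $\int_X\|\psi_{x,n}-\lambda_{F'}(s)\psi_{xs,n}\|\,d\lambda(x)\to0$ for each $s\in F'$ by dominated convergence, $\lambda_{F'}$ the left-regular representation. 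Setting $\Psi_n:=(\psi_{x,n})_{x\in X}$, a norm-one vector of $L^1$ of $X\times F'$ for $\lambda$ times counting measure, and inserting the factor $\partial(\lambda s)/\partial\lambda$ that makes the base-point shift $x\mapsto xs$ an $L^1$-isometry, one obtains an isometric action $\pi$ of $F'$ on $L^1(X\times F')$ with
\[
\limsup_{n\to\infty}\|\pi(s)\Psi_n-\Psi_n\|_1\ \le\ \Big\|\tfrac{\partial(\lambda s)}{\partial\lambda}-1\Big\|_{L^1(\lambda)}\qquad(s\in F'),
\]
so $\limsup_n\|\pi(s)\Psi_n-\Psi_n\|_1<\epsilon$ for $s\in\{w,h\}$.

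Finally put $\xi_n:=\sqrt{\Psi_n}\in L^2(X\times F')$, a unit vector. Since $\pi(s)$ multiplies by a positive function and permutes coordinates, it is the square of the associated unitary $\pi_2(s)$, so $\|\pi_2(s)\xi_n-\xi_n\|_2^2\le\|\pi(s)\Psi_n-\Psi_n\|_1$ and $\limsup_n\max(\|\pi_2(w)\xi_n-\xi_n\|_2,\|\pi_2(h)\xi_n-\xi_n\|_2)<\sqrt{\epsilon}$. But $\pi_2=\sigma\otimes\lambda_{F'}$ with $\sigma$ the Koopman representation of $F'$ on $L^2(X,\lambda)$, hence a multiple of $\lambda_{F'}$ by Fell's absorption principle. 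As $\{w,h\}$ is a free basis, Kesten's estimate $\|\tfrac14(\lambda_{F'}(w)+\lambda_{F'}(w)^{*}+\lambda_{F'}(h)+\lambda_{F'}(h)^{*})\|=\tfrac{\sqrt3}{2}<1$ furnishes a universal constant $\delta_0>0$, the same for all free bases, with $\max(\|\pi_2(w)\xi-\xi\|,\|\pi_2(h)\xi-\xi\|)\ge\delta_0\|\xi\|$ for all $\xi$. Applying this to $\xi_n$ and letting $n\to\infty$ gives $\sqrt{\epsilon}\ge\delta_0$; choosing $\epsilon<\delta_0^2$ from the start is the contradiction sought, so $R_G$ is not amenable. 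The genuinely delicate point throughout is Step~1: getting the near-identity elements to form an \emph{honest} free basis (so that the spectral gap $\delta_0$ is uniform) while controlling the Radon--Nikodym derivative of the conjugate $a^{-1}ga$ via automatic continuity; Steps~2 and~3 are a soft transport of the amenable-relation data.
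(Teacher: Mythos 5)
Your argument is correct in outline but takes a genuinely different route from the paper's. The paper reduces to $G=\langle a,b\mid\rangle$ free and runs a purely combinatorial argument on the tree: it defines $u(x)=m_x(xA)$, $v(x)=m_x(xB)$ from a putative invariant mean ($A,B$ the sets of words beginning with a power of $a$, resp.\ $b$), uses the disjointness of the translates $b^nA$ and $a^nB$ to get $\sum_n u(xb^n)\le 1$ and $\sum_n v(xa^n)\le 1$, splits $X$ into $P=\{u<\tfrac12\}$ and $Q=\{u>\tfrac12\}$, observes $PA'\subseteq Q$ and $QB'\subseteq P$, and then uses indiscreteness plus Lemma~\ref{lem:autocont} to pick $g_n\in A'$ with $\lambda(P\triangle Pg_n)\to 0$ — which forces $\lambda(P)=\lambda(Q)=0$, hence $u\equiv\tfrac12$, contradicting $\sum_n u(xb^n)\le 1$. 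Your approach instead transports the amenable-relation data to an $\ell^1(F')$-valued cocycle over $X$, pushes to $L^2(X\times F')$, identifies $\pi_2$ with $\sigma\otimes\lambda_{F'}$, uses Fell absorption and the Kesten spectral radius $\sqrt3/2$ to get a uniform spectral gap, and uses indiscreteness to drive the Radon--Nikodym defect below that gap. Both are valid; the paper's route is shorter and more elementary (no Fell absorption, no Kesten estimate), and its reduction step only conjugates near-identity elements by near-identity elements ($g_n\mapsto g_ng_{n-1}g_n^{-1}$), whereas you conjugate by a fixed generator $a$, which is why you need the extra argument — correctly flagged as the delicate point — that $L^1$-continuity of $T_g\colon \rho_a\mapsto\rho_a(\cdot\, g)$ at $g=1$ controls $\|\rho_{a^{-1}ga}-1\|_{L^1}$. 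Your approach, on the other hand, makes visible that the proposition is the non-measure-preserving perturbation of the standard fact that a spectral gap for the regular representation obstructs amenability of an essentially free orbit equivalence relation, and it localizes the role of indiscreteness precisely as "the RN-defect can be made smaller than the spectral gap." One small point worth making explicit if you write this up: the fact that a Borel subequivalence relation of an amenable relation is amenable is the right input for your Step~2, and it is cleaner to invoke it directly (it is standard, e.g.\ via the Reiter-type characterization and a measurable selection argument) rather than detouring through hyperfiniteness and CFW.
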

\begin{proof}
  It suffices to prove the claim with $G=\langle a,b\mid\rangle$
  itself free.  Let $A\subset G$ denote those elements whose reduced
  form starts with a non-trivial power of $a$, and define similarly
  $B$ using $b$; so $G=A\sqcup B\sqcup\{1\}$.

  Assume for contradiction that $R_G$ is amenable, and let
  $m\colon X\to\mathscr M(R_G)$ be an invariant mean. Define
  measurable maps $u,v\colon X\to[0,1]$ by
  \[u(x)=m_x(x A),\qquad v(x)=m_x(x B).
  \]
  Then $u+v=1$ almost everywhere, and $0\le\sum_{n\in\Z}u(x b^n)\le 1$
  and $0\le\sum_{n\in\Z}v(x a^n)\le 1$ almost everywhere, because the
  sets $b^n A$ are all disjoint. In particular, if $v(x)>\frac12$ then
  $v(x a^n)<\frac12$ for all $n\neq0$, so if $u(x)<\frac12$ then
  $u(x a^n)>\frac12$ for all $n\neq0$. Define
  \[P=\{x\in X\mid u(x)<\tfrac12\},\qquad Q=\{x\in X\mid u(x)>\tfrac12\}.
  \]
  Denote furthermore by $A'\subset A$ those elements of $G$ that start
  and end with a non-trivial power of $a$, and by $B'\subset B$ those
  elements of $G$ that start and end with a non-trivial power of
  $b$. Then $P A'\subseteq Q$, and $Q B'\subseteq P$.

  Since $G$ is indiscrete, there exist $g_n\in G\setminus\{1\}$ with
  $g_n\to1$ and $\partial(\lambda g_n)/\partial\lambda\to1$
  uniformly. Up to taking a subsequence, we may assume all $g_n$ have
  the same first letter and the same last letter, and have increasing
  lengths. Up to switching the roles of $a$ and $b$, we may assume
  they all start with $a^{\pm1}$. Up to replacing $g_n$ by
  $g_n g_{n-1}g_n^{-1}$, we may assume they all belong to $A'$.

  Since $P$ is measurable, its characteristic function $\mathbb1_P$ is
  measurable and $\lambda(P)=\int_X\mathbb1_P d\lambda$. Then
  $\lambda(P\triangle P g_n)=\int_X|\mathbb1_P - \mathbb1_{P
    g_n}|d\lambda$; now
  $\mathbb1_{P g_n}=\partial(\lambda g_n)/\partial\lambda\mathbb1_P
  g_n$ with $\partial(\lambda g_n)/\partial\lambda\to1$, and by
  Lemma~\ref{lem:autocont} $\mathbb1_P g_n\to\mathbb1_P$, so
  $\lambda(P\triangle P g_n)\to0$ as $n\to\infty$.

  However, $P g_n\subseteq Q\subseteq X\setminus P$ so
  $\lambda(P\triangle P g_n)=2\lambda(P)$; so $\lambda(P)=0$. Next
  $\lambda(Q B')\le\lambda(P)=0$ so $\lambda(Q)=0$. It follows that
  $u=\frac12$ almost everywhere, but this contradicts
  $0\le \sum_{n\in\Z}u(x b^n)\le 1$.
\end{proof}

\begin{example}\label{example:ghys}
  Let $G$ be a countable indiscrete, non-soluble subgroup of
  $\PSL_2(\R)$. Then $G$ contains a non-discrete free group acting
  essentially freely on $X=\mathbb P^1(\R)$. It follows that $G$
  generates a non-amenable equivalence relation on $X$.

  Indeed, $G$ contains an elliptic element of infinite order, namely an
  element with $|\operatorname{trace}(g)|\in[-2,2]\setminus2\cos(\pi\Q)$,
  see~\cite{jorgensen:subgroups}. The group generated by some power of $g$
  and of a hyperbolic element not fixing $g$'s fixed points is a
  non-discrete Schottky group.
\end{example}

Note that groups and equivalence relations are two special cases of
\emph{groupoids}, see Definition~\ref{defn:groupoid}. There is a
well-developed theory of amenability for groupoids with a measure on
their space of units, see~\cite{kaimanovich:isop}, and
\cite{anantharaman-renault:ag} for a full treatise.

%%%%%%%%%%%%%%%%%%%%%%%%%%%%%%%%%%%%%%%%%%%%%%%%%%%%%%%%%%%%%%%%
\newpage\section{Elementary operations}
We turn to a more systematic study of the class $AG$ of amenable
groups.  \JvNeumann~ already noted in~\cite{vneumann:masses} that $AG$
is closed under the following operations:
\begin{proposition}\label{prop:elemamen}
  Let $G$ be a group.
  \begin{enumerate}
  \item Let $N\triangleleft G$ be a normal subgroup. If $G$ is
    amenable, then $G/N$ is amenable.
  \item Let $H<G$ be a subgroup. If $G$ is amenable, then $H$ is
    amenable.\label{prop:elemamen:2}
  \item Let $N\triangleleft G$ be a normal subgroup. If $N$ and $G/N$
    are amenable, then $G$ is amenable.
  \item Let $(G_n)_{n\in\mathscr N}$ be \emph{directed} family of
    groups: $\mathscr N$ is a directed set, and for all $m<n$ there is
    a homomorphism $f_{m n}\colon G_m\to G_n$, with
    $f_{m n}f_{n p}=f_{mp}$ whenever $m<n<p$.  If $G_n$ is amenable for
    all $n$, then $\varinjlim G_n$ is amenable.

    In particular, if the $G_n$ form a nested sequence of amenable
    groups, i.e.\ $G_m\le G_n$ for $m<n$, then
    $\bigcup_{n\in\mathscr N} G_n$ is amenable.\label{prop:elemamen:4}
  \end{enumerate}
\end{proposition}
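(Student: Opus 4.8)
The plan is to get parts (1)--(3) for free from the machinery already in place --- indeed they are precisely the content of Corollary~\ref{cor:extensions} --- and to reduce part (4) to them by means of the local characterization of amenability in Proposition~\ref{prop:restrict}.

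For (1) I would take $\phi\colon G\twoheadrightarrow G/N$ the quotient map and apply Proposition~\ref{prop:quotientX} with $X=G_G$, $Y=(G/N)_{G/N}$ and $f=\phi$: since $G_G$ is amenable (hypothesis together with Corollary~\ref{cor:regamen}), $(G/N)_{G/N}$ is amenable, so $G/N$ is amenable. For (2) I would observe that the subgroup $H\le G$ acts freely on the right $G$-set $G_G$, which is amenable by hypothesis; hence $H$ is amenable by Proposition~\ref{prop:freeaction}. For (3) I would view $G/N$ as a right $G$-set: the $G$-action factors through $G/N$, which is amenable, so $G/N$ is an amenable $G$-set; its point stabilizers are the conjugates $g^{-1}Ng\cong N$, which are amenable by hypothesis; thus $G$ is amenable by Proposition~\ref{prop:stabilizers}.

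For (4), write $G=\varinjlim G_n$ with canonical maps $\iota_n\colon G_n\to G$. The first step, and the only one carrying real content, is to show that every finitely generated subgroup $H=\langle g_1,\dots,g_k\rangle$ of $G$ is amenable. Each $g_i$ is of the form $\iota_{n_i}(x_i)$ for some $x_i\in G_{n_i}$; choosing $N\in\mathscr N$ with $N\ge n_i$ for all $i$ (possible since $\mathscr N$ is directed), every $g_i$ lies in the subgroup $\iota_N(G_N)$. Now $\iota_N(G_N)$ is a homomorphic image of the amenable group $G_N$, hence amenable by part (1), and $H$ is a subgroup of it, hence amenable by part (2). Once this is known, Proposition~\ref{prop:freeaction} tells us that for every finitely generated $H\le G$ every nonempty $H$-set is amenable, in particular the $H$-set $G\looparrowleft H$; then Proposition~\ref{prop:restrict}, applied to the regular $G$-set $X=G\looparrowleft G$, gives that $G_G$ is amenable, and Corollary~\ref{cor:regamen} yields amenability of $G$. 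The nested-sequence assertion is the special case where $\mathscr N=\N$ and the $f_{mn}$ are the inclusions.

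I do not expect a genuine obstacle: the whole proposition is assembled from Propositions~\ref{prop:quotientX}, \ref{prop:freeaction}, \ref{prop:stabilizers}, \ref{prop:restrict} and Corollary~\ref{cor:regamen}. The one place where a little care is needed is the unwinding of the direct-limit definition in (4) --- checking that a finitely generated subgroup of $\varinjlim G_n$ already factors through a single $G_N$ --- and this is routine.
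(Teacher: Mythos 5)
Your proof is correct and follows essentially the same route the paper takes: the paper also derives the four clauses from Propositions~\ref{prop:quotientX}, \ref{prop:freeaction}, \ref{prop:stabilizers}, \ref{prop:restrict} together with Corollary~\ref{cor:regamen}, and your treatment of (4) merely makes explicit the finitely-generated-subgroup reduction the paper leaves implicit behind its bare citation of Proposition~\ref{prop:restrict}. (The paper additionally supplies, for each clause, an alternative self-contained argument via fixed points in compact convex $G$-sets; you omit that, but your argument coincides with the "references" route the paper presents alongside it. One small imprecision in your preamble: Corollary~\ref{cor:extensions} as \emph{stated} covers (1) and (3) but only the normal-subgroup case of (2); your actual argument for (2) correctly invokes Proposition~\ref{prop:freeaction} directly, so nothing is lost.)
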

It is an amusing exercise to prove the proposition using a specific
definition of amenability. Below we prove it using the fixed point
property of convex compact $G$-sets, and give references to previous
statements where other proofs were given.
\begin{proof}
  \begin{enumerate}
  \item Proposition~\ref{prop:quotientX}.

    For another proof, let $G/N$ act on a non-empty convex compact
    $K$. Then in particular $G$ acts on $K$, and since $G$ is amenable
    we have $K^G\neq\emptyset$ by Theorem~\ref{thm:fixedpoint}. Then
    $K^{G/N}\neq\emptyset$ so $G/N$ is amenable.
  \item Proposition~\ref{prop:freeaction}.

    For another proof, let $H$ act on a non-empty convex compact $K$,
    and define
    \[K^{G/H}=\{f\colon G\to K\mid f(x h)=f(x) h\text{ for all }x\in G,h\in H\}.
    \]
    Then $K^{G/H}$ is a convex compact $G$-set under the action
    $(f\cdot g)(x)=f(g x)$, so admits a fixed point. This fixed point
    is a constant function, whose value is an $H$-fixed point in $K$.
  \item Proposition~\ref{prop:stabilizers}.

    For another proof, let $G$ act on a non-empty convex compact
    $K$. Since $N$ is amenable, $K^N\neq\emptyset$. Since $N$ is
    normal, $G/N$ acts on $K^N$, and since $G/N$ is amenable,
    $(K^N)^{G/N}\neq\emptyset$. But this last set is nothing but
    $K^G$.
  \item Proposition~\ref{prop:restrict}.

    For another proof, write $G=\varinjlim G_n$, with natural homomorphisms
    $f_n\colon G_n\to G$ such that $f_m=f_{m n}f_n$ for all $m<n$. Let
    $G$ act on a non-empty convex compact $K$. Then each $G_n$ acts on
    $K$ via $f_n$, and $K^{G_n}$ is non-empty because $G_n$ is
    amenable. Furthermore the $K^{G_n}$ form a directed sequence of
    closed subsets of $K$: given $I\Subset\mathscr N$ finite, there is
    $n\in\mathscr N$ greater than $I$, so
    $\bigcap_{i\in I}K^{G_i}\supseteq K^{G_n}$ is not empty. By
    compactness,
    $\bigcap_{n\in\mathscr N}K^{G_n}=K^G\neq\emptyset$.\qedhere
  \end{enumerate}
\end{proof}

\noindent We deduce immediately
\begin{corollary}
  A group $G$ is amenable if and only if all its finitely generated
  subgroups are amenable.
\end{corollary}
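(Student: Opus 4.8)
The plan is to deduce this corollary directly from Proposition~\ref{prop:elemamen}, treating it as essentially a restatement of the ``directed union'' clause together with the ``subgroup'' clause. The content is that amenability is a \emph{local} property.

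First I would handle the easy direction: if $G$ is amenable, then by Proposition~\ref{prop:elemamen}(\ref{prop:elemamen:2}) every subgroup of $G$ is amenable, in particular every finitely generated one. This requires nothing beyond invoking the already-established closure under subgroups.

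For the converse, suppose every finitely generated subgroup of $G$ is amenable. I would order the set $\mathscr N$ of finitely generated subgroups of $G$ by inclusion; this is a directed set, since the subgroup generated by two finitely generated subgroups $H_1,H_2$ is itself finitely generated and contains both. The groups $(H)_{H\in\mathscr N}$, with the inclusion maps as connecting homomorphisms, form a directed family in the sense of Proposition~\ref{prop:elemamen}(\ref{prop:elemamen:4}), and $G=\varinjlim_{H\in\mathscr N}H=\bigcup_{H\in\mathscr N}H$ since every element of $G$ lies in some finitely generated (indeed cyclic) subgroup. Since each $H$ is amenable by hypothesis, Proposition~\ref{prop:elemamen}(\ref{prop:elemamen:4}) gives that $G$ is amenable.

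There is no real obstacle here; the only point deserving a word of care is checking that $\mathscr N$ really is directed and that the union exhausts $G$, both of which are immediate. Alternatively, one could bypass the directed-limit machinery entirely and argue via F\o lner's criterion: given $S\Subset G$ and $\epsilon>0$, the subgroup $H=\langle S\rangle$ is finitely generated hence amenable, so by Theorem~\ref{thm:folneramen} there is $F\Subset H\subseteq G$ with $\#(FS\setminus F)<\epsilon\#F$, and since $S$ and $\epsilon$ were arbitrary $G$ satisfies F\o lner's condition and is amenable. This is in fact exactly the content of Proposition~\ref{prop:restrict} applied with $X=G$, so the shortest route is simply to cite that proposition.
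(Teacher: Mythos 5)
Your proof is correct and follows the paper's own route: the forward direction via Proposition~\ref{prop:elemamen}(\ref{prop:elemamen:2}) and the converse via Proposition~\ref{prop:elemamen}(\ref{prop:elemamen:4}). The only cosmetic difference is that the paper indexes the directed net by finite subsets $n\Subset G$ with $G_n=\langle n\rangle$, whereas you index directly by the finitely generated subgroups; these yield the same colimit, and your alternate route via Proposition~\ref{prop:restrict} is also valid.
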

Indeed one direction follows from~\eqref{prop:elemamen:2}, the other
from~\eqref{prop:elemamen:4} with $\mathscr N$ the family of finite
subsets of $G$, ordered by inclusion, and $G_n=\langle n\rangle$.

\subsection{Elementary amenable groups}\label{ss:elemamen}
Finite groups are amenable; we saw in Corollary~\ref{cor:abelian} and
Proposition~\ref{prop:abelian} that abelian groups are amenable; and
saw in Proposition~\ref{prop:elemamen} that the class of amenable
groups is closed under extensions and
colimits. Following~\MDay~\cite{day:amen}, let us define the class of
\emph{elementary amenable groups}, $EG$. This is the smallest class of
groups that contains finite and abelian groups, and is closed under
the four operations of Proposition~\ref{prop:elemamen}: quotients,
subgroups, extensions, and directed unions.

\begin{example}
  Virtually soluble groups are in $EG$.
\end{example}
Indeed, they are obtained by a finite number of extensions using
finite and abelian groups.

\begin{example}\label{ex:finitary:eg}
  For a set $X$, the group $\Sym(X)$ of finitely-supported
  permutations is in $EG$.
\end{example}
Indeed, $X$ is the union of its finite subsets, so $\Sym(X)$ is the
directed limit of finite symmetric groups.

\begin{example}\label{ex:limsolvable}
  Consider
  \[G=\langle \dots,x_{-1},x_0,x_1,\dots\mid \langle
    x_i,\dots,x_{i+k}\rangle^{(k)}\text{ for all }i\in\Z,k\in\N\rangle,
  \]
  where $F^{(k)}$ denotes the $k$th term of the derived series of
  $F$. Then $G$ is in $EG$.

  Obviously the map $x_i\mapsto x_{i+1}$ extends to an automorphism of
  $G$; let $\widehat G$ denote the extension $G\rtimes\Z$ using this
  automorphism. Then $\widehat G$ also is in $EG$.
\end{example}
Indeed, $G=\bigcup_{k\in\N}\langle x_{-k},\dots,x_k\rangle$, where each
term is soluble. However, $G$ itself is not soluble.

\begin{example}\label{ex:limnilpotent}
  This example is similar to~\ref{ex:limsolvable}, but more
  concrete. Consider formal symbols $e_{m n}$ for all $m<n\in\Z$. The
  group $M$ is the set of formal expressions
  $1+\sum_{m<n}\alpha_{m n}e_{m n}$, with $\alpha_{m n}\in\Z$ and almost
  all $0$; multiplication is defined by the formulas
  $e_{m n}e_{n p}=e_{mp}$, all other products being $0$.
  Then $M$ is locally nilpotent, so is in $EG$.

  Extend then $M$ by the automorphism
  $\sigma\colon e_{m n}\mapsto e_{m+1,n+1}$; the resulting group
  $\widehat M=M\rtimes\Z$ is again in $EG$, and is finitely generated,
  by $1+e_{12}$ and $\sigma$.
\end{example}

The class $EG$ may be refined using transfinite induction. Let $EG_0$
denote the class of finite or abelian groups. For an ordinal $\alpha$,
let $EG_{\alpha+1}$ denote the class of extensions or directed unions
of groups in $EG_\alpha$; and for a limit ordinal $\alpha$ set
$EG_\alpha=\bigcup_{\beta<\alpha}EG_\beta$.
\begin{lemma}
  A group is elementary amenable if and only if it belongs to
  $EG_\alpha$ for some ordinal $\alpha$.
\end{lemma}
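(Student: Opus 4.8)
The plan is to prove the two directions separately, and in each case to exploit the fact that $\alpha\mapsto EG_\alpha$ is an increasing chain of classes whose union, call it $\overline{EG}$, is manifestly contained in $EG$ (each $EG_\alpha$ is built from finite and abelian groups using only the four admissible operations), so the real content is that $\overline{EG}$ is \emph{itself} closed under the four operations of Proposition~\ref{prop:elemamen}. First I would record the easy inclusion: $EG_0\subseteq EG$ since finite and abelian groups lie in $EG$; and by transfinite induction $EG_\alpha\subseteq EG$ for every ordinal $\alpha$, because $EG$ is closed under extensions and directed unions. Hence $\overline{EG}\subseteq EG$.

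For the reverse inclusion it suffices, by minimality of $EG$, to show that $\overline{EG}$ contains all finite and abelian groups (clear, as these are in $EG_0$) and is closed under subgroups, quotients, extensions, and directed unions. Subgroups and quotients: I would argue by transfinite induction on the least $\alpha$ with $G\in EG_\alpha$ that every subgroup and every quotient of $G$ again lies in $EG_\alpha$ — the base case is the observation that subgroups and quotients of finite (resp.\ abelian) groups are finite (resp.\ abelian); the successor case uses that a subgroup of an extension $1\to N\to G\to Q\to 1$ is an extension of a subgroup of $N$ by a subgroup of $Q$, that a quotient of such an extension is an extension of a quotient of $N$ by a quotient of $Q$, and the analogous elementary facts for directed unions; the limit case is immediate. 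Closure of $\overline{EG}$ under extensions and directed unions is almost tautological: if $N,Q\in\overline{EG}$, pick $\beta$ with $N,Q\in EG_\beta$, and then any extension of $Q$ by $N$ lies in $EG_{\beta+1}\subseteq\overline{EG}$; if $(G_n)_{n\in\mathscr N}$ is a directed family with each $G_n\in EG_{\beta_n}$, I would use regularity of a sufficiently large cardinal, or more simply replace $\mathscr N$ by a cofinal subset and take $\beta=\sup_n\beta_n$, so that every $G_n\in EG_\beta$ and hence $\varinjlim G_n\in EG_{\beta+1}$.

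The one genuine subtlety — and the step I expect to require the most care — is the directed-union case when the index set $\mathscr N$ is a proper class of large cardinality, or more precisely ensuring the supremum $\beta=\sup_n\beta_n$ is an \emph{ordinal} (not a proper class) so that $EG_\beta$ makes sense. Here I would invoke that a group $G=\varinjlim G_n$ is a \emph{set}, so in particular $\{\beta_n : n\in\mathscr N\}$ may be taken to be a set of ordinals (choosing for each $n$ the \emph{least} $\beta_n$ with $G_n\in EG_{\beta_n}$), whence its supremum $\beta$ is an ordinal and $EG_\beta$ is defined; then $G\in EG_{\beta+1}$. With this observed, $\overline{EG}$ satisfies all the defining closure properties of $EG$ and contains the generators, so $EG\subseteq\overline{EG}$, completing the proof.
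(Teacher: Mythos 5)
Your proposal is correct and follows essentially the same route as the paper: the crux in both cases is a transfinite induction showing that each class $EG_\alpha$ is itself closed under subgroups and quotients, with the observation that the split into the extension case and the directed-union case goes through in both the subgroup and the quotient arguments. You spell out in more detail the steps the paper leaves implicit — that $\bigcup_\alpha EG_\alpha\subseteq EG$, that the chain $(EG_\alpha)$ is increasing so extensions and directed unions of $\overline{EG}$-groups land in $\overline{EG}$, and the set-theoretic point that $\sup_n\beta_n$ is a genuine ordinal since the index set of a directed union is a set — which is a welcome bit of extra care, but it is the same proof.
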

\begin{proof}
  It suffices to see that the classes $EG_\alpha$ are closed under
  subgroups and quotients. This is clear for $EG_0$. If $\alpha$ is a
  successor, consider a subgroup $H\le G\in EG_\alpha$. Either $G=N.Q$
  is an extension of groups in $EG_{\alpha-1}$; and then $H=(N\cap
  H).(H/N\cap H)$ with $H/N\cap H\le Q$; or $G=\bigcup G_i$, in which
  case $H=\bigcup(H\cap G_i)$; in both cases, $H\in EG_\alpha$ by
  induction. Consider next a quotient $\pi:G\twoheadrightarrow
  H$. Either $G=N.Q$, and $H=\pi(N).(H/\pi(N))$ with
  $Q\twoheadrightarrow H/\pi(N)$, or $G=\bigcup G_i$, in which case
  $H=\bigcup\pi(G_i)$; in both cases, $H\in EG_\alpha$ by induction.

  If $\alpha$ is a limit ordinal, then each $G\in EG_\alpha$ actually
  belongs to $EG_\beta$ for some $\beta<\alpha$ and there is nothing
  to do.
\end{proof}

\begin{example}
  Continuing Example~\ref{ex:finitary:eg}, consider
  $H=\Sym(\Z)\rtimes\Z$, with $\Z$ acting on functions in $\Sym(\Z)$
  by shifting: $(n\cdot p)(x)=p(x-n)$. Then $H$ is $2$-generated, for
  example by $(1,2)\in\Sym(\Z)$ and a generator of $\Z$.

  Since $\Sym(\Z)$ is a union of finite groups but is neither finite
  nor abelian, $\Sym(\Z)\in EG_1\setminus EG_0$. Likewise,
  $H\in EG_2\setminus EG_1$.
\end{example}

Example~\ref{ex:limsolvable} is a bit more
complicated. $F_k/F_k^{(k)}$ is soluble of class precisely $k$; so it
belongs to $EG_{k-1}\setminus EG_{k-2}$. Therefore, $G\in EG_\omega$,
but $G\not\in EG_n$ for finite $n$. Similarly, $\widehat G\in
EG_{\omega+1}$. The same holds for $M$ and $\widehat M$ from
Example~\ref{ex:limnilpotent}.

Note also in Example~\ref{ex:finitary:eg} that the group of \emph{all}
permutations of $\Z$ is not amenable. Indeed it contains every countable
group (seen as acting on itself); so if it were amenable then by
Proposition~\ref{prop:elemamen} every countable group would be
amenable.

Recall that $AG$ denotes the class of amenable
groups. In~\cite{day:amen}, \MDay~asks whether the inclusion
$EG\subseteq AG$ is strict; in other words, is there an amenable group
that may not be obtained by repeated application of
Proposition~\ref{prop:elemamen} starting with finite or abelian
groups?
\begin{theorem}[Chou~\cite{chou:elementary}*{Theorems~2.3 and~3.2}]
  Finitely generated torsion groups in $EG$ are finite.

  No finitely generated group in $EG$ has intermediate word-growth.
\end{theorem}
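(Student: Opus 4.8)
The plan is to use the transfinite stratification $EG=\bigcup_\alpha EG_\alpha$ recalled just above, and to prove both assertions by transfinite induction on $\alpha$, drawing on the Milnor--Wolf material of~\S\ref{ss:growth}. Precisely, I would establish, for every ordinal $\alpha$: $(T_\alpha)$ every finitely generated torsion group in $EG_\alpha$ is finite; and $(G_\alpha)$ every finitely generated group in $EG_\alpha$ of subexponential growth is virtually nilpotent. These two inductions do not interact, but run along identical lines. Granting them, the theorem is immediate: a finitely generated $G\in EG$ lies in some $EG_\alpha$, so $(T_\alpha)$ gives the first statement, and if such a $G$ had subexponential growth then $(G_\alpha)$ would make it virtually nilpotent, hence of polynomial growth by Theorem~\ref{thm:polygrowth}; thus no finitely generated $G\in EG$ has intermediate growth.

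The base case $\alpha=0$ is trivial (a finitely generated abelian torsion group is finite; a finitely generated abelian group has polynomial growth), and limit ordinals are handled by the relation $EG_\alpha=\bigcup_{\beta<\alpha}EG_\beta$. For $\alpha=\beta+1$, a group $G\in EG_\alpha$ is a directed union or an extension of groups in $EG_\beta$. If $G=\varinjlim_i G_i$ is finitely generated, then one of the canonical maps $G_i\to G$ is surjective, because its image is a finitely generated subgroup whose union over $i$ is $G$; hence $G$ is a quotient of some $G_i\in EG_\beta$, and since each $EG_\beta$ is closed under quotients (the preceding lemma), $G$ itself belongs to $EG_\beta$. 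As $G$ is still finitely generated and still torsion (resp.\ of subexponential growth), the inductive hypothesis applies to $G$ directly.

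The extension case is the heart of the argument, and the only place where subexponential growth, rather than mere amenability, is used. Let $G=N.Q$ with $N,Q\in EG_\beta$ and $G$ finitely generated. For $(T_\alpha)$: $Q$ is a finitely generated torsion group in $EG_\beta$, hence finite by induction; then $N$ has finite index in $G$, so $N$ is finitely generated, and being a torsion group in $EG_\beta$ it is finite by induction, whence $G$ is finite. For $(G_\alpha)$: assume $G$ has subexponential growth. Then $Q$, a finitely generated quotient of $G$, also has subexponential growth, so by induction $Q$ is virtually nilpotent, hence virtually polycyclic; by Corollary~\ref{cor:Nfg} this forces $N$ to be finitely generated. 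Now $N$ is a finitely generated subgroup of $G$, hence of subexponential growth, and $N\in EG_\beta$, so by induction $N$ is virtually nilpotent. Thus $G$ is a finitely generated extension of a finitely generated virtually nilpotent group by a finitely generated virtually nilpotent group, so $G$ is virtually soluble by Lemma~\ref{lem:vnil}; being finitely generated of subexponential growth, $G$ is then virtually nilpotent by Corollary~\ref{cor:milnorwolf}. This completes the induction.

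The main obstacle is precisely the step in the extension case of $(G_\alpha)$ where one concludes that $N$ is finitely generated: this is where the finiteness-of-generation consequences of subexponential growth (Corollary~\ref{cor:Nfg}, ultimately Milnor's lemma) are indispensable, and it is what fails for general amenable groups. A secondary point to be careful about is the directed-union case: one must pass from the possibly infinitely generated terms $G_i$ to the finitely generated $G$ by realizing $G$ as a quotient of some $G_i$, so that the inductive hypothesis can legitimately be invoked for $G$ itself.
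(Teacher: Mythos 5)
Your proof is correct and follows essentially the same route as the paper's: transfinite induction over the stratification $EG_\alpha$, reducing the directed-union case to a quotient of a single $G_i\in EG_\beta$, and in the extension case invoking Corollary~\ref{cor:Nfg} to get finite generation of $N$, then Lemma~\ref{lem:vnil} and Corollary~\ref{cor:milnorwolf}. The only differences are cosmetic: you spell out the torsion case, which the paper leaves as an exercise, and you make explicit the passage from a finitely generated directed union to a quotient of one term, which the paper compresses into the single clause ``Since $G$ is finitely generated, we have $G=N.Q$.''
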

The inequality $EG\neq AG$ follows, since there exist finitely
generated infinite torsion groups (see~\cite{golod:nil} or
Exercise~\ref{ex:torsion}) and groups of intermediate word growth, see
Theorem~\ref{thm:grig}.
\begin{proof}
  The two statements are proven in the same manner, by transfinite
  induction. We only prove the second, and leave the (easier) first
  one as an exercise. Let us show that, if $G\in EG$ has
  subexponential word-growth, then $G$ is virtually nilpotent. Groups
  in $EG_0$ have polynomial growth, and are therefore virtually
  nilpotent by Theorem~\ref{thm:polygrowth}. Consider next $\alpha$ a
  limit ordinal, and $G\in EG_\alpha$ a finitely generated group. We
  may assume that $\alpha$ is minimal, so in particular $\alpha$ is
  not a limit ordinal. Since $G$ is finitely generated, we have
  $G=N.Q$ for $N,Q\in EG_{\alpha-1}$. By induction $Q$ is virtually
  nilpotent, so in particular is virtually polycyclic. By
  Corollary~\ref{cor:Nfg} the subgroup $N$ is finitely generated, so
  is virtually nilpotent by induction. By Lemma~\ref{lem:vnil}, the
  group $G$ is virtually soluble, and by
  Corollary~\ref{cor:milnorwolf} it has either polynomial or
  exponential growth.
\end{proof}

\subsection{Subexponentially amenable groups}\label{ss:subexpamen}
In~\cite{ceccherini-g-h:amen}*{\S14}, \TCeccherini, \PdlHarpe\ and
\RGrigorchuk\ consider the class $SG$ of \emph{subexponentially
  amenable groups} as the smallest class containing groups of
subexponential growth and closed under taking subgroups, quotients,
extensions, and direct limits. We then have
$EG\subsetneqq SG\subseteq AG$, and we shall see promptly that the
last inclusion is also strict.

We introduce a general construction of groups: let $H$ be a
permutation group on a set $\Alphabet$. We assume that the action is
transitive, and choose a point $0\in\Alphabet$. Let us construct a
self-similar group $\mathcal M(H)$ acting on the rooted tree
$\Alphabet^*$, see Definition~\ref{defn:treeaut}. The group
$\mathcal M(H)$ is generated by two subgroups, written $H$ and $K$ and
isomorphic respectively to $H$ and to
$H\wr H_0=H^{(\Alphabet\setminus\{0\})}\rtimes H_0$. We first define
the actions of $H$ and $K$ on the boundary $\Alphabet^\N$ of the
tree. The action of $h\in H$ is on the first letter:
\[(a_0a_1\dots)h=(a_0h)a_1\dots.
\]
The action of $(f,h)\in K$, with
$f\colon\Alphabet\setminus\{a\}\to H$ finitely supported, fixes
$a^\N$ and is as follows on its complement:
\[(a_0a_1\dots)(f,h)=0\dots0(a_n h)(a_{n+1}f(a_n))a_{n+2}\dots\text{ with $n$ minimal such that }a_n\neq0.\]

The self-similarity of $\mathcal M(H)$ is encoded by an injective
homomorphism
$\Phi\colon\mathcal M(H)\to\mathcal M(H)\wr_\Alphabet H=\mathcal
M(H)^\Alphabet\rtimes H$, written
$g\mapsto\pair<g_a\mid a\in\Alphabet>\pi$ and defined as
follows. Given $g\in\mathcal M(H)$, its image $\pi$ in $H$ is the
natural action of $g$ on
$\{a\Alphabet^\N\mid a\in\Alphabet\}\cong\Alphabet$. The permutation
$g_a$ of $\Alphabet^\N$ is the composition
$\Alphabet^\N\to a\Alphabet^\N\to(a\pi)\Alphabet^\N\to\Alphabet^\N$ of
the maps $(w\mapsto a w)$, $g$ and $((a\pi)w\mapsto w)$ respectively.
On the generators of $\mathscr M(H)$, we have
\[\Phi(h)=\pair<1\mid a\in\Alphabet>h,\qquad\Phi((f,h))=\pair<f(a)\mid a\in\Alphabet>h.\]

\begin{proposition}
  If $H$ is perfect and $2$-transitive, then $\Phi$ is an isomorphism.
\end{proposition}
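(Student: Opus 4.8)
The plan is to show that $\Phi$ is surjective by exhibiting enough elements in the image; injectivity of $\Phi$ is already asserted in the definition of $\mathcal M(H)$, so only surjectivity is at stake. The target group is $\mathcal M(H)^\Alphabet\rtimes H$, so it suffices to hit (i) the ``top'' copy of $H$, i.e.\ all elements $\pair<1\mid a>\pi$ with $\pi\in H$, and (ii) the ``base'' group $\mathcal M(H)^\Alphabet$, i.e.\ for each $a_0\in\Alphabet$ and each $g\in\mathcal M(H)$ the element $\pair<g_a\mid a>$ with $g_{a_0}=g$ and $g_a=1$ for $a\neq a_0$. Part (i) is immediate from $\Phi(h)=\pair<1\mid a>h$. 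For part (ii), since $\mathcal M(H)$ is generated by $H$ and $K$, it is enough to realize these ``syllable'' elements for $g\in H$ and $g\in K$; but $\Phi(h)=\pair<1\mid a>h$ and $\Phi((f,h))=\pair<f(a)\mid a>h$ already have the wrong shape (the top coordinate is nontrivial, and all base coordinates of $(f,h)$ are switched on simultaneously). So the real content is: using the hypotheses that $H$ is perfect and $2$-transitive, one must separate a single base coordinate.

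First I would reduce to commutators. If $g\in\mathcal M(H)$ has $\Phi(g)=\pair<g_a\mid a>1$ (trivial top part) — which holds for instance whenever $g$ lies in the kernel of $\mathcal M(H)\to H$ — and $h\in H$, then $\Phi([h^{-1}? \dots])$ lets us conjugate the base tuple by the permutation $h$ acting on $\Alphabet$; more importantly, a commutator $[\Phi(g),\Phi(h)]$ with $g$ as above equals $\pair<g_a\, g_{ah}^{-1}\mid a>1$ up to relabelling, which is a genuine product of base elements with trivial top. The key move is the following: since $H$ is perfect, any $h\in H$ is a product of commutators $[h_1,h_2]$; and the element $\Phi((f,\mathrm{id}))$ with $f$ supported on a single point $a_1\neq a$ realizes the base tuple that is $f(a_1)\in H\le\mathcal M(H)$ in coordinate $a_1$ and trivial elsewhere — wait, it is trivial in all coordinates except we read off $\pair<f(a)\mid a>h$, so with $h=\mathrm{id}$ and $f$ a single-point function we get exactly the desired ``one base coordinate equal to a generator of $H$, the rest trivial.'' Thus single-coordinate elements with entry in $H\le\mathcal M(H)$ are already in the image.

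The remaining task is to promote ``entry in $H$'' to ``entry an arbitrary element of $\mathcal M(H)$'' in a single coordinate. Here is where $2$-transitivity enters. I would argue by an induction on word length in the generators $H\cup K$ of $\mathcal M(H)$: suppose $g\in\mathcal M(H)$ is such that the single-coordinate tuple $e_{a_0}(g)$ (meaning $g$ in coordinate $a_0$, identity elsewhere) lies in $\mathrm{im}\,\Phi$ for all shorter $g$; write $g=g'\cdot s$ with $s$ a generator; then $e_{a_0}(g)=e_{a_0}(g')\cdot e_{a_0}(s)$, so it suffices to treat $g=s$ a single generator. For $s=h\in H$ we have $\Phi(h)=\pair<1>h$, and then $e_{a_0}(h)=\Phi(h)\cdot\Phi(\text{correction})$ where the correction absorbs the spurious top part $h$ and the spurious base entries in coordinates $\neq a_0$; the correction is built from single-coordinate elements with entries in $H$ (already known to be in the image) together with a top element of $H$, and $2$-transitivity is used to move a single base coordinate to the position $a_0$ while fixing another chosen coordinate, which lets one cancel the off-$a_0$ entries one at a time without disturbing what has already been fixed. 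The case $s=(f,h)\in K$ is analogous, with $f$ playing the role of the base perturbation. Combining, $e_{a_0}(g)\in\mathrm{im}\,\Phi$ for every $g$ and every $a_0$, hence $\mathcal M(H)^\Alphabet\subseteq\mathrm{im}\,\Phi$; together with $H\subseteq\mathrm{im}\,\Phi$ this gives surjectivity.

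\textbf{Main obstacle.} The bookkeeping in the inductive step — cancelling the unwanted base coordinates $a\neq a_0$ one at a time using $2$-transitivity while not re-introducing junk in coordinates already cleaned, and making sure the ``correction'' elements genuinely lie in the image rather than merely in $\mathcal M(H)^\Alphabet\rtimes H$ — is the delicate part; it is essentially a careful version of the standard argument (as in the Grigorchuk-group computation of $\Phi$ in \S\ref{ss:interm}) that a self-similar group generated by its ``vertex stabilizer'' pieces fills up the wreath product, and the precise use of perfectness (to generate $H$ by commutators, which land in the base) and $2$-transitivity (to independently address any two coordinates) must be threaded through that argument.
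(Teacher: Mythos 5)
Your proposal has a concrete gap at its central step, and it is precisely the step that perfectness of $H$ is used to overcome. You claim that $\Phi((f,\mathrm{id}))$ with $f$ supported on a single point $a_1\neq 0$ is a base tuple with exactly one nontrivial entry, namely $f(a_1)$ at coordinate $a_1$. This is false: the abbreviated formula $\Phi((f,h))=\pair<f(a)\mid a\in\Alphabet>h$ suppresses the self-similar state at position $0$, which is the element $(f,h)$ itself and \emph{not} the identity, because elements of $K$ fix the cylinder $0\Alphabet^\N$ and act on it by themselves. For $\overline h=(f,1)\in\overline H$ with $f$ supported on $\{1\}$ one in fact has $\Phi(\overline h)=\pair<\overline h,h,1,\dots,1>$ with \emph{two} nontrivial base entries, $\overline h\in K$ at position $0$ and $h=f(1)\in H$ at position $1$, as the paper's own proof records. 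Consequently no generator of $\mathcal M(H)$ has a single-coordinate image under $\Phi$, your claimed base case fails, and the subsequent induction has nothing to stand on.

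This is exactly where perfectness must enter, and your proposal sets it aside just as it becomes essential: you mention commutators at the outset but never actually use them. The paper manufactures single-coordinate base elements as commutators: choose $k\in H$ fixing $0$ but moving $1$ (possible by $2$-transitivity once $\#\Alphabet\ge 3$), so that the base supports $\{0,1\}$ of $\Phi(\overline h)$ and $\{0,1k\}$ of $\Phi((\overline h')^k)$ meet only at $\{0\}$, whence $\Phi([\overline h,(\overline h')^k])=\pair<[\overline h,\overline h'],1,\dots,1>$; perfectness of $\overline H\cong H$ then gives $\pair<\overline H,1,\dots,1>\subseteq\Phi(\mathcal M(H))$. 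From there one cancels the $\overline h$ in $\Phi(\overline h)=\pair<\overline h,h,1,\dots,1>$ to obtain $\pair<1,h,1,\dots,1>$ and, conjugating by the top copy of $H$, $\pair<H,1,\dots,1>$; one obtains $\pair<H_0,1,\dots,1>$ directly from $\Phi(h_0h^{-1})$; and one uses $2$-transitivity once more to write $\mathcal M(H)=\langle H,H_0,\overline H\rangle$ and thereby fill out all of $\mathcal M(H)^\Alphabet$. That your route never invokes perfectness is the tell-tale sign that something is missing: the hypothesis is there precisely to eliminate the position-$0$ state that you overlooked.
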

\begin{proof}
  First, if $H$ is $2$-transitive, then $\mathscr M(H)$ is generated
  by three subgroups $H,H_0,\overline H$. Fix a letter
  $1\in\Alphabet$; then $H_0$ consists of those $(1,h)\in K$,
  and $\overline H$ consists of those $(f,1)$ where $f(a)=1$ for all
  $a\neq1$. To avoid confusions between these subgroups, we write
  $h,h_0,\overline h$ for respective elements of $H,H_0,\overline H$.

  To prove that $\Phi$ is an isomorphism, it suffices to prove that
  $\pair<h,1,\dots,1>$, $\pair<h_0,1,\dots,1>$ and
  $\pair<\overline h,1,\dots,1>$ belong to $\Phi(\mathscr M(H))$ for
  all $h\in H,h_0\in H_0,\overline h\in\overline H$.

  First, choose $k\in H_0$ with $1k\neq1$. For all
  $\overline h,\overline h'\in\overline H$ we have
  $\Phi([\overline h,(\overline h')^k])=\pair<[\overline h,\overline
  h'],1,\dots,1>$; and since $\overline H\cong H$ is perfect we get
  that the image of $\Phi$ contains $\overline
  H\times1\dots\times1$. Consider next $h_0\in H_0$; then
  $\Phi(h_0 h^{-1})=\pair<h_0,1,\dots,1>$.  Finally,
  $\Phi(\overline h)=\pair<\overline h,h,1,\dots,1>$ and
  $\pair<\overline h,1,\dots,1>$ belongs to the image of $\Phi$, so
  $\pair<1,h,1,\dots,1>$ also belongs to its image. Conjugating by an
  appropriate element of $H$, we see that $\pair<h,1,\dots,1>$ belongs
  to the image of $\Phi$.
\end{proof}

\begin{theorem}[\cite{bartholdi-k-n-v:ba}; see~\cite{brieussel:folner} for the proof]\label{thm:motheramen}
  If $H$ is finite, then the group $\mathcal M(H)$ is amenable.
\end{theorem}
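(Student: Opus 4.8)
The plan is to verify F\o lner's condition for $G=\mathcal{M}(H)$ directly, exploiting the self-similar embedding $\Phi$; this is the route of~\cite{brieussel:folner}. Since $\Alphabet$ is finite and $H$ is finite, both generating subgroups $H$ and $K\cong H\wr H_0$ of $G$ are finite, so $G$ is finitely generated and, by Lemma~\ref{lem:folnerfg}, it suffices to produce, for each $\epsilon>0$, a finite $F\Subset G$ with $\#(F S\setminus F)<\epsilon\#F$ for $S=H\cup K$. I would then pass to the level stabilizers: $G$ is residually finite (it sits inside the automorphism group of the tree $\Alphabet^*$), so $\mathrm{Stab}_G(n)$, the kernel of the action on $\Alphabet^n$, has finite index in $G$, and it is enough to build F\o lner sets inside $\mathrm{Stab}_G(n)$ for $n$ large. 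Iterating $\Phi$ gives an injection $\Phi_n\colon\mathrm{Stab}_G(n)\hookrightarrow G^{\Alphabet^n}$, $g\mapsto (g_v)_{v\in\Alphabet^n}$, and the decisive structural feature of $\mathcal{M}(H)$ is that it is a \emph{directed automaton group}: the portrait of any $g\in G$ is supported, off a number of rays that grows at most linearly with the word length of $g$, by decorations lying in the \emph{finite} group $H$.

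The heart of the argument is a recursive construction of the F\o lner sets. One assembles an almost-invariant subset of $\mathrm{Stab}_G(n+1)$ out of (i) the trivial F\o lner family of the finite group $H$ placed at the ``inactive'' coordinates, where every subset is invariant; (ii) F\o lner-type data for the finite group $H_0$ and for the copies of $G$ sitting at the ``active'' vertices along the distinguished rays, supplied recursively; and (iii) a bounded amount of combinatorial data recording how the rays branch. Passing from level $n$ to level $n+1$ replaces the problem by a product of problems of the same shape, with the active region pushed one step deeper and the cardinality multiplied by a controlled factor. Following Brieussel, one chooses the recursion parameters so that, writing $\Fol$ for the F\o lner function of $G$, the per-level cardinality blow-up is beaten by the shrinkage of the relative boundary; that the number of branches and decorations to be tracked stays bounded is exactly where finiteness of $H$ enters, and is what keeps the induction closed.

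The step I expect to be the main obstacle is making this quantitative bookkeeping rigorous. A crude estimate — treating each of the $\#\Alphabet^n$ coordinates of $G^{\Alphabet^n}$ as contributing a multiplicative factor to the boundary — gives bounds that do not tend to $0$. One must instead bound $\#(F S\setminus F)$ by a sum over the active vertices and the finitely many generators only, and then control that sum by a careful weighting of the infinitely many levels, equivalently by exhibiting an explicit super-polynomial upper bound for $\Fol$. Balancing the cardinality growth against the boundary decay across all levels at once is where essentially all the difficulty lies; the remaining ingredients (finite generation, residual finiteness, the section calculus for $\Phi$, and the behaviour of F\o lner sets under finite-index subgroups) are routine.

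As an alternative, I would note that one can instead invoke Proposition~\ref{prop:stabilizers} together with the theory of extensively amenable $G$-sets from~\S\ref{ss:extensiveamen}: a suitable orbit of $G$ on the boundary $\Alphabet^\N$ (that of the point $0^\infty$) has amenable Schreier graphs and, because the germ/activity data take values in the finite group $H$, is extensively amenable, while the point stabilizers along that orbit are amenable; by Proposition~\ref{prop:stabilizers} the group $G$ is then amenable.
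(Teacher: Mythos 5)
Your overall direction --- verify F\o lner's condition by a recursion that exploits the self-similar embedding $\Phi$ --- is the right one, and it is indeed the route of Brieussel's paper, but the recursion you sketch is not the one that works and you yourself flag that you cannot close it. Passing to level stabilizers $\mathrm{Stab}_G(n)$ and trying to bound the boundary by tracking ``active rays'' and ``decorations'' runs into exactly the difficulty you name: naively, each of the $\#\Alphabet^n$ coordinates contributes a multiplicative factor, and there is no obvious weighting that beats it. The paper avoids this entirely with a much more rigid construction. After reducing to $H$ perfect and $2$-transitive, one defines \emph{nested} finite sets $I_k\subseteq L_k\subset\mathcal M(H)$ (not inside any stabilizer) by $I_0=K$, $L_0=K H$, and then $I_k=H\cdot\Phi^{-1}(I_{k-1}\times L_{k-1}^{\Alphabet\setminus\{0\}})$ and $L_k=H\cdot\Phi^{-1}(L_{k-1}^\Alphabet\setminus(L_{k-1}\setminus I_{k-1})^\Alphabet)$. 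The two crucial features are: first, the \emph{exact} identity $I_k S=L_k$ for the generating set $S=H\cup K$ (this follows from $I_k K=I_k$ and $I_k H=L_k H=L_k$, proved by a short induction using the shape of the recursion --- it is here, not in any asymptotic estimate, that the boundary is controlled); second, the ratio $\rho_k=\#I_k/\#L_k$ obeys the closed-form recursion $\rho_k=\rho_{k-1}/(1-(1-\rho_{k-1})^{\#\Alphabet})$, which is monotone increasing on $(0,1)$ and converges to the unique fixed point $1$. Thus $\#(I_kS\setminus I_k)/\#I_k=(1-\rho_k)/\rho_k\to 0$, and no further bookkeeping is needed. The missing idea in your proposal is precisely this exact identity $I_kS=L_k$ and the closed-form recursion for the ratio; without it, the proposal does not give a proof.

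Your alternative via Proposition~\ref{prop:stabilizers} and extensive amenability is also incomplete as stated: you assert that the point stabilizers along the orbit of $0^\infty$ are amenable, but that is not obviously easier than the statement you are trying to prove (the stabilizer of $0^\infty$ already contains $K\cong H\wr H_0$ and analysing it recurses back into $\mathcal M(H)$). Making this route rigorous essentially requires the machinery of Theorem~\ref{thm:extensionsfullgp} and a careful treatment of the groups of germs, which is a genuinely different and longer proof than the one in the paper.
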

\begin{proof}
  If $H\le \widehat H$ as permutation groups then
  $\mathcal M(H)\le\mathcal M(\widehat H)$. It therefore does not
  reduce generality, in proving that $\mathcal M(H)$ is amenable, to
  consider $H$ perfect and $2$-transitive.

  We consider $S=H\cup K$ as generating set for $\mathcal M(H)$. Let
  us define finite subsets $I_k\subseteq L_k$ of $\mathcal M(H)$
  inductively as follows:
  \begin{align*}
    I_0 &= K,\qquad L_0 = I_0H,\\
    I_k &= H\cdot\Phi^{-1}(I_{k-1}\times L_{k-1}^{\Alphabet\setminus\{0\}}),\\
    L_k &= H\cdot\Phi^{-1}(L_{k-1}^\Alphabet\setminus (L_{k-1}\setminus I_{k-1})^\Alphabet).
  \end{align*}

  \begin{lemma}\label{lem:1}
    For all $k\in\N$ we have $I_k K=I_k$ and $I_k H=L_k H=L_k$;
    therefore, $I_k S=L_k$.
  \end{lemma}
  \begin{proof}
    The claims are clear for $k=0$. Also, $L_k H=L_k$ for all
    $k$. Consider $g\in I_k$ and $f\in K$, and write them
    $g=h\pair<g_a\mid a\in\Alphabet>$ and
    $f=\pair<f_a\mid a\in\Alphabet>h'$. Note
    $g f=a\pair<g_a f_a\mid a\in\Alphabet>h'$. We have $f_a\in H$ for
    all $a\neq0$, so $g_a f_a\in L_{k-1}$ for all $a\neq0$; and
    $f_0\in K$ so $g_0 f_0\in I_{k-1}$.
  \end{proof}

  \begin{lemma}
    Setting $\rho_k=\#I_k/\#L_k$, we have
    \[\rho_k=\frac{\rho_{k-1}}{1-(1-\rho_{k-1})^{\#\Alphabet}}.\]
  \end{lemma}
  \begin{proof}
    Set $d=\#\Alphabet$. From the definition, we get
    $\#L_k=\#L_{k-1}^d\#H(1-(1-\rho_{k-1})^d)$ and
    $\#I_k=\#I_{k-1}\#L_{k-1}^{d-1}\#H$, so
    \[\rho_k=\frac{\#I_k}{\#L_k}=\frac{\#I_{k-1}}{\#L_{k-1}(1-(1-\rho_{k-1})^d)}.\qedhere\]
  \end{proof}

  We are ready to prove that the sequence $(I_k)$ is a F\o lner
  sequence. In view of Lemma~\ref{lem:1}, it suffices to prove
  $\rho_k\to1$. Note $0<\rho_{k-1}<\rho_k<1$, so the sequence
  $(\rho_k)$ has a limit, $\rho$. Then $\rho$ satisfies
  $\rho=\rho/(1-(1-\rho)^d)$, so $\rho=1$.
\end{proof}

\noindent To prove that $\mathcal M(H)$ has exponential growth, we use
a straightforward criterion:
\begin{proposition}\label{prop:freemonoid}
  Let a left-cancellative monoid $G=\langle S\rangle_+$ act on a
  set $X$; let there be a point $x\in X$ and disjoint subsets
  $Y_s\subseteq X\setminus\{x\}$ satisfying $x s\in Y_s$ and
  $Y_s S\subseteq Y_s$ for all $s\in S$. Then $G$ is free on $S$,
  namely $G\cong S^*$.
\end{proposition}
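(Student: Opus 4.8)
The plan is to prove that the canonical monoid homomorphism $S^*\to G$ (which is surjective since $G=\langle S\rangle_+$) is injective; this is exactly a ping-pong argument, adapted from groups to left-cancellative monoids. First I would record the elementary consequence of the hypotheses that for every $s\in S$ one has $Y_s S^*\subseteq Y_s$: indeed $Y_s S\subseteq Y_s$ gives $Y_s S^n\subseteq Y_s$ for all $n$ by induction on $n$. Combined with $x s\in Y_s$, this yields the key observation: for every \emph{nonempty} word $w=s_1\cdots s_m$ over $S$ we have $x\cdot w\in Y_{s_1}$, and in particular $x\cdot w\neq x$ because $Y_{s_1}\subseteq X\setminus\{x\}$.

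Next I would take two words $w=s_1\cdots s_m$ and $w'=t_1\cdots t_n$ in $S^*$ that represent the same element of $G$, and strip off a maximal common prefix using left-cancellativity: if $s_1=t_1$ then from $s_1(s_2\cdots s_m)=s_1(t_2\cdots t_n)$ in $G$ we may cancel $s_1$ on the left to get $s_2\cdots s_m=t_2\cdots t_n$, and repeat. After finitely many steps we arrive at two words, still representing the same element of $G$, which either are both empty — in which case the original $w$ and $w'$ coincide letter by letter and there is nothing more to prove — or are such that at least one of them is nonempty and, if both are nonempty, their initial letters are distinct.

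Finally I would eliminate these remaining cases using the observation of the first paragraph. If, after stripping, $w$ is empty and $w'$ is nonempty with first letter $t$, then $x=x\cdot w=x\cdot w'\in Y_t\subseteq X\setminus\{x\}$, which is absurd. If both are nonempty with distinct first letters $s\neq t$, then $x\cdot w\in Y_s$ and $x\cdot w'\in Y_t$; but the $Y_s$ are pairwise disjoint while $x\cdot w=x\cdot w'$, a contradiction. Hence the only surviving possibility is $w=w'$ as words, so $S^*\to G$ is injective, and therefore an isomorphism. The argument is essentially routine; the one point needing a little care is the reduction step, which is precisely where the hypothesis that $G$ be left-cancellative (and not merely a monoid) is used — without it one could not cancel the common prefixes and the conclusion would fail.
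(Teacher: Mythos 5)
Your proof is correct and is essentially the same ping-pong argument the paper gives: strip a maximal common prefix by left-cancellation, then separate the remaining words using that $x\cdot w$ lands in $Y_{s_1}$ for nonempty $w$ starting with $s_1$, together with disjointness of the $Y_s$ and $Y_s\subseteq X\setminus\{x\}$. The only cosmetic difference is that you phrase it as injectivity of $S^*\to G$ while the paper argues contrapositively on distinct words.
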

\begin{proof}
  Consider distinct words $u=u_1\dots u_m,v=v_1\dots v_n\in S^*$; we
  are to prove that they have distinct images in $G$. Since $G$ is
  left-cancellative, we may assume either $m=0$ or $u_1\neq v_1$. In
  the first case $x u=x\neq x v\in Y_{v_1}$, and in the second case
  $Y_{u_1}\ni x u\neq x v\in Y_{v_1}$.
\end{proof}

The proposition implies that $\mathcal M(H)$ has exponential growth
for almost all $H$; it seems difficult to formulate a general result,
so we content ourselves with an example:
\begin{example}\label{ex:M3exp}
  The group $\mathcal M(S_3)$ has exponential growth.
\end{example}
\begin{proof}
  Write $\Alphabet=\{0,1,2\}$ and $S_3=\langle(0,1),(0,2)\rangle$. In
  our notation, consider the elements $s=\overline{(0,1)}(0,1)$ and
  $t=\overline{(0,2)}^{(1,2)_0}(0,2)$.  A quick calculation gives
  \[\Phi(s)=\pair<s(0,1),(0,1),1>(0,1),\qquad\Phi(t)=\pair<t(0,2),1,(0,2)>(0,2),\]
  Proposition~\ref{prop:freemonoid} applies with
  $G=\langle s,t\rangle_+$ and $X=\Alphabet^\N$ and $x=0^\N$ and
  $Y_s=\Alphabet^*10^\N$ and $Y_t=\Alphabet^*20^\N$.
\end{proof}

The first construction of an amenable, not subexponentially amenable
group appears in~\cite{bartholdi-v:amenability}, with an explicit
subgroup of (what was later defined to be) $\mathcal M(D_4)$.
\begin{example}\label{ex:AG-SG}
  The group $\mathcal M(A_5)$ belongs to $AG\setminus SG$.
\end{example}
\begin{proof}
  The group $G\coloneqq\mathcal M(A_5)$ is amenable by
  Theorem~\ref{thm:motheramen}. It contains $\mathcal M(S_3)$, e.g.\
  because the permutations $(0,1)(3,4)$ and $(0,2)(3,4)$ generate a
  copy of $S_3$ in $A_5$, so $G$ has exponential growth
  by Example~\ref{ex:M3exp}.

  It remains to prove that $G$ does not belong to $SG$, and we do this
  by transfinite induction, defining (just as we did for $EG$) the
  class $SG_0$ of groups of subexponential growth and for an ordinal
  $\alpha$ by letting $SG_\alpha$ denote those extensions and directed
  unions of groups in $SG_\beta$ for $\beta<\alpha$.

  By way of contradiction, let $\alpha$ be the minimal ordinal such
  that $G$ belongs to $SG_\alpha$. Since $G$ is finitely generated, it
  is an extension of groups in $SG_\beta$ for some $\beta<\alpha$. Now
  the only normal subgroups of $G$ are $1$ and the groups $G_n$ in the
  series defined by $G_0=G$ and
  $G_{n+1}=\Phi^{-1}(G_n^\Alphabet\times H)$; the argument is similar
  to that used to show that $\GG$ is not in $EG$, see
  Exercise~\ref{ex:perfect}.  In particular, every non-trivial normal
  subgroup of $G$ maps onto $G$, so cannot belong to $SG_\beta$ for
  some $\beta<\alpha$.
\end{proof}

\subsection{Free group free groups}\label{ss:fgfg}
For levity, in this section by ``free group'' we always mean ``non-abelian
free group''.  It follows from Proposition~\ref{prop:elemamen} that every
group containing a free subgroup is itself not amenable; this covers
surface groups, or more generally word-hyperbolic groups; free products of
a group of size at least $2$ with a group of size at least $3$; and
$\SO_3(\R)$; that last example is important in relation to the
Banach-Tarski paradox, see~\S\ref{ss:hbt}.

Let us denote by $NF$ the class of groups with no free
subgroup. In~\cite{day:amen}, \MDay\ asks whether the inclusion
$AG\subseteq NF$ is an equality; in other words, does every
non-amenable group contains a free subgroup?

This was made into a conjecture
by~\FGreenleaf~\cite{greenleaf:im}*{Page~9},
attributed\footnote{Infelicitously!} to von
Neumann. \CChou~\cite{chou:elementary} proved $EG\neq NF$; while
\AOlshanskii~\cite{olshansky:invmean} proved $AG\neq NF$, see also
\SAdyan~\cite{adyan:rw}. Indeed, they proved the much stronger result
that the free Burnside groups
\begin{equation}\label{eq:burnside}
  B(n,m)=\langle x_1,\dots,x_n\mid w^m\text{ for all words $w$ in }x_1^{\pm1},\dots,x_n^{\pm1}\rangle
\end{equation}
are non-amenable as soon as $n\ge2$ and $m\ge665$ is odd. These
groups, of course, do not contain any non-\emph{trivial} free
subgroup.

The following examples of groups are called ``Frankenstein groups'',
since (as their namesake) they have rather different properties than
the groups they are built of:
\begin{theorem}[Monod~\cite{monod:fgfg}]\label{thm:monod}
  Let $\mathbb A$ be a countable subring of $\R$ properly containing
  $\Z$; let $P_{\mathbb A}\subseteq\mathbb P^1(\R)$ be the set of
  fixed points of hyperbolic elements in $\PSL_2(\mathbb A)$, and let
  $H(\mathbb A)$ be the group of self-homeomorphisms of
  $\mathbb P^1(\R)$ that fix $\infty$ and are piecewise elements of
  $\PSL_2(\mathbb A)$ with breakpoints in $P_{\mathbb A}$. Then
  $H(\mathbb A)$ is a nonamenable free group free group.
\end{theorem}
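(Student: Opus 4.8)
The statement splits into two independent assertions, and I would prove them by quite different means: non-amenability by producing one explicit paradoxical $H(\mathbb A)$-set, and the absence of non-abelian free subgroups by pushing the question down, via a germ homomorphism at $\infty$, to the compactly supported elements and then invoking a rigidity property special to piecewise-\emph{projective} maps.

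\textbf{Non-amenability.} By Theorem~\ref{thm:pd}, a $G$-set is non-amenable exactly when it is paradoxical; and since the regular $G$-set maps equivariantly onto every $G$-set (orbit maps are equivariant), Proposition~\ref{prop:quotientX} together with Corollary~\ref{cor:regamen} shows that a group is non-amenable as soon as it possesses one non-amenable $G$-set. So it suffices to exhibit a single paradoxical $H(\mathbb A)$-set. This is where the hypothesis $\mathbb A\supsetneq\mathbb Z$ enters: it guarantees that $\PSL_2(\mathbb A)$ contains hyperbolic elements $\gamma$ whose fixed points lie in $P_{\mathbb A}$, and since breakpoints are allowed precisely at points of $P_{\mathbb A}$, one may cut and paste such a $\gamma$ with the identity to produce elements of $H(\mathbb A)$ that are supported on prescribed intervals with prescribed contracting/repelling dynamics. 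Following Monod, one then assembles finitely many such surgered elements $g_1,\dots,g_k\in H(\mathbb A)$ together with affine maps, and partitions a suitable $H(\mathbb A)$-invariant subset $X$ (e.g.\ an orbit inside $\mathbb P^1(\mathbb R)\setminus\{\infty\}$) into pieces realising a paradoxical decomposition in the sense of Definition~\ref{def:paradoxical}; equivalently, one builds a $G$-wobble $\phi\colon X\to X$ with $\#\phi^{-1}(x)=2$ everywhere, as in Theorem~\ref{thm:pd}(4). Checking that the pieces are genuine subsets of $X$ and that the partial maps are $H(\mathbb A)$-wobbles (Lemma~\ref{lem:equidecset}) is routine bookkeeping. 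Theorem~\ref{thm:pd} then gives non-amenability of $X$, hence of $H(\mathbb A)$.

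\textbf{No non-abelian free subgroup.} Every element of $H(\mathbb A)$ fixes $\infty$ and has finitely many breakpoints, so on a neighbourhood of $\infty$ it agrees with a single element of $\PSL_2(\mathbb A)$ fixing $\infty$, i.e.\ with an affine map. Taking germs at $\infty$ thus defines a homomorphism $\pi_\infty\colon H(\mathbb A)\to\Gamma_\infty$ onto a group of piecewise-affine germs, which is metabelian, hence amenable, hence free-group-free; its kernel $N$ consists exactly of the homeomorphisms of $\mathbb R$ supported in some bounded interval. If $H(\mathbb A)$ contained a non-abelian free subgroup $F$, then $F\cap N=\ker(F\to\Gamma_\infty)$ would be a nontrivial normal subgroup of $F$ (nontrivial since $F$ is not amenable), hence a non-abelian free group, and a two-generated subgroup of it would give a copy of $F_2$ inside $N$ supported on a single compact interval $[a,b]$ with $a,b\in P_{\mathbb A}$. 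So it is enough to prove: a two-generated group of piecewise-$\PSL_2(\mathbb A)$ homeomorphisms of a compact interval is not free of rank two. For this I would appeal to Monod's key lemma: given $f,g$ in such a group, the combination of the classification of the germ of a nontrivial $\PSL_2(\mathbb R)$-element at a fixed point (hyperbolic versus parabolic), the fact that such an element has at most two fixed points, and the finiteness of the breakpoint sets of $f$ and $g$, forces a nontrivial reduced word in $f,g$ to vanish — ruling out a ping-pong pair.

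\textbf{Main obstacle.} The reductions above — the paradoxical-set criterion and the germ homomorphism — are formal; the technical heart is the dynamical lemma, namely that two piecewise-$\PSL_2(\mathbb A)$ homeomorphisms can never be in ping-pong position. It is precisely here that projectivity of the pieces is indispensable (the analogous statement is false for piecewise-linear maps), so I expect the bulk of the work, and all of the genuinely new content beyond general nonsense, to lie in carefully controlling the local behaviour of a putative free pair around the finitely many exceptional points of its defining projective pieces and extracting from it a violation of the ping-pong inequalities.
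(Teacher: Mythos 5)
Your outline for the \emph{free-group-free} half is sound in spirit and close to the paper's. You reduce, via the germ homomorphism at $\infty$, to showing that no two compactly supported elements of $H(\mathbb A)$ generate $F_2$, and you correctly flag that the only nontrivial content is the dynamical fact about compactly supported piecewise-projective maps. The paper's version is slightly different and slicker: instead of germs at the single point $\infty$ and then a separate ping-pong analysis, it looks at germs at \emph{all} boundary points of $\supp(f)\cup\supp(g)$ (where $f,g$ act affinely because the pieces are projective), concludes that any nontrivial $h\in\langle f,g\rangle''$ has support strictly inside $\supp(f)\cup\supp(g)$, and then uses the attracting dynamics near a boundary point to find $k$ with $\supp(h)\cap\supp(h)k=\emptyset$, producing $\Z^2=\langle h,h^k\rangle$. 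So it proves the stronger statement that $\langle f,g\rangle$ is metabelian or contains $\Z^2$. Your ``Monod's key lemma'' paragraph is left as an unsubstantiated appeal, and the phrase ``ruling out a ping-pong pair'' does not in itself show $\langle f,g\rangle$ is not free, but the reduction preceding it is valid and the conclusion you want is true.

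The \emph{non-amenability} half is where your proposal has a genuine gap. You propose to exhibit an explicit paradoxical decomposition (equivalently a $2{:}1$ wobble $\phi$) on some orbit, and describe the construction and verification as ``routine bookkeeping.'' It is not: this group contains no free subgroup, so the usual source of paradoxical decompositions is absent, and producing one directly is hard --- the paper itself lists exactly this (via~\cite{lodha:tarski}) as a starred exercise and calls it ``highly non-explicit.'' You have in effect restated what non-amenability means without proving it. The actual argument is measure-theoretic and goes through amenability of equivalence relations (\S\ref{ss:equivrel}): since $\mathbb A\supsetneq\Z$, the group $\PSL_2(\mathbb A)$ is dense (hence indiscrete) in $\PSL_2(\R)$, so by Example~\ref{example:ghys} it generates a non-amenable equivalence relation on $(\mathbb P^1(\R),\text{Lebesgue})$. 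The surgery you describe (cutting hyperbolics of $\PSL_2(\mathbb A)$ against the identity at points of $P_{\mathbb A}$) is used not to build a wobble but to prove Monod's orbit lemma: for $p\neq\infty$, every $\PSL_2(\mathbb A)$-translate of $p$ other than $\infty$ is also an $H(\mathbb A)$-translate. Hence the equivalence relation generated by $H(\mathbb A)$ contains (mod the null set $\{\infty\}$) the non-amenable one generated by $\PSL_2(\mathbb A)$; a sub-relation of an amenable relation is amenable, so $R_{H(\mathbb A)}$ is non-amenable, and therefore $H(\mathbb A)$ is non-amenable by the contrapositive of the proposition that amenable groups generate amenable equivalence relations. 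Replacing your paradoxical-decomposition step with this equivalence-relation argument is not a cosmetic change: it is the mechanism that lets one prove non-amenability without a free subgroup.
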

\begin{proof}
  Since $\mathbb A$ properly contains $\Z$, it is dense in $\R$, so
  $\PSL_2(\mathbb A)$ is a countable dense subgroup of
  $\PSL_2(\R)$. It therefore generates a non-amenable equivalence
  relation on $\mathbb P^1(\R)$, by Example~\ref{example:ghys}.

  \begin{lemma}[\cite{monod:fgfg}*{Proposition~9}]
    For all $p\in\mathbb P^1(\R)\setminus\{\infty\}$ we have
    \[p\cdot\PSL_2(\mathbb A)\subseteq\{\infty\}\cup p\cdot H(\mathbb A).\]
  \end{lemma}
  \begin{proof}
    Given $g\in\PSL_2(\mathbb A)$ with $p g\neq\infty$, we seek
    $h\in H(\mathbb A)$ with $p h=p g$. It will be made of two pieces,
    $g$ near $p$ and $z\mapsto z+r$ near $\infty$ for a suitable
    choice of $r\in\mathbb A$. Consider the quotient
    $q\coloneqq g\cdot(z\mapsto z-r)\in\PSL_2(\mathbb A)$; if $q$ is
    hyperbolic, say with fixed points $\xi_\pm$, and $\{\xi_\pm\}$
    separates $p$ from $\infty$, then we may define $h$ as $g$ on the
    component of $\mathbb P^1(\R)\setminus\{\xi_\pm\}$ containing $p$ and
    as $z\mapsto z+r$ on its complement. Now an easy calculation shows
    that $q$ is hyperbolic for all $|r|$ large enough, and as
    $|r|\to\pm\infty$ one of the fixed points of $q$ approaches
    $\infty$ and the other approaches $\infty g$, and as the sign of
    $r$ changes the approach to $\infty$ is from opposite sides; so in
    all cases it is easy to find a suitable $r$.
  \end{proof}

  Therefore, the equivalence relation generated by $H(\mathbb A)$ is
  non-amenable, so $H(\mathbb A)$ is itself non-amenable.

  On the other hand, consider $f,g\in H(\mathbb A)$. We claim that
  they do not generate a free group, and more precisely either that
  $\langle f,g\rangle$ either is metabelian or contains a subgroup
  isomorphic to $\Z^2$.

  Let $1\neq h\in\langle f,g\rangle''$ belong to the second derived
  subgroup, and intersect as few connected components of
  $\supp(f)\cup\supp(g)$ as possible --- if no such $h$ exists, we are
  already done. For every endpoint
  $p\in\partial(\supp(f)\cup\supp(g))$, the element $h$ acts trivially
  in a neighbourhood of $p$, because both $f$ and $g$ act as affine
  maps in a neighbourhood of $p$; so the support of $h$ is strictly
  contained in $\supp(f)\cup\supp(g)$. Since the dynamics of
  $\langle f,g\rangle$ has attracting elements in the neighbourhood of
  $p$, there exists $k\in\langle f,g\rangle$ such that $\supp(h)$ and
  $\supp(h)k$ are disjoint; then $\langle h,h^k\rangle\cong\Z^2$.
\iffalse
  On the other hand, $H(\mathbb A)$ acts faithfully on
  $\mathbb P^1(\R)$, has soluble groups of germs, and does not have
  free orbits of free subgroups: given $a,b\in H(\mathbb A)$ and
  $p\in\R\subset\mathbb P^1(\R)$, set $m=\inf(p\langle
  a,b\rangle)$. Then $m$ (possibly $=\infty$) is a fixed point of $a$
  and $b$, so in a neighbourhood of $m$ both $a$ and $b$ act as affine
  transformations and therefore do not generate a free action of a
  free group. Proposition~\ref{prop:nonfree} applies.
\fi
\end{proof}

\begin{exercise}[***]
  Since $H(\mathbb A)$ is not amenable, there is a free action of
  $\PSL_2(\mathbb A)$ on $\R$ by $H(\mathbb A)$-wobbles. Construct
  explicitly such an action.

  \emph{Hint:} This is essentially what~\cite{lodha:tarski} does in
  computing the minimal number of pieces in a paradoxical
  decomposition of $H(\mathbb A)$, but it's still highly non-explicit.
\end{exercise}

Thus we have $EG\subsetneqq SG\subsetneqq AG\subsetneqq NF$. The last
inequality also holds for finitely generated groups --- any finitely
generated nonamenable subgroup of $H(\mathbb A)$ will do. Lodha and
Moore construct finitely \emph{presented} examples
in~\cite{lodha-moore:fp}.

\begin{problem}
  Is the group $H(\Z)$ amenable?
\end{problem}

The group $H(\Z)$ is related to a famous group acting on the real
line, consider Thompson's group $F$ (see
Problem~\ref{problem:thompson}), which we describe here.
\begin{example}\label{ex:F}
  Let $F$ be the group of self-homeomorphisms of $[0,1]$ that are
  piecewise affine with slopes in $2^\Z$ and breakpoints in
  $\Z[\tfrac12]$.

  Conjugating $F$ by Minkowski's ``?'' map, defined by
  $?(x)=\sum_{n\ge0}(-1)^n2^{-a_0-\cdots-a_n}$ if $x$'s continued
  fraction expansion is $[a_0,a_1,\dots]$, one obtains a group of
  piecewise-$\PSL_2(\Z)$ homeomorphisms of the real line with rational
  breakpoints; it is easy to see that having rational breakpoints is
  equivalent to the maps being \emph{diffeo}morphisms.

  The same argument as that given in the proof of
  Theorem~\ref{thm:monod} shows that $F$ is a free group free group.

  The difference with $H(\Z)$ is that breakpoints of maps in $H(\Z)$
  are in $\mathbb P_\Z$, which is disjoint from $\Q$. There are
  embeddings of $F$ in $H(\Z)$, so amenability of $H(\Z)$ would imply
  that of $F$.

  Yet another description of $F$ is by an action on the Cantor
  set. For this, break the interval $[0,1]$ open at every dyadic
  rational; one obtains in this manner a Cantor set, modeled on
  $\{0,1\}^\N$ by the usual binary expansion of real numbers, except
  that one does not identify $a_1\dots a_n01^\infty$ with
  $a_1\dots a_n10^\infty$. The action of $F$ is then by
  lexicographical order-preserving maps that are piecewise of the form
  $a_1\dots a_n v\mapsto b_1\dots b_k v$ for a collection of words
  $(a_1\dots a_n,b_1\dots b_k)$ and every $v\in\{0,1\}^\N$. The group
  $F$ is finitely generated, by the elements
  $x_0\colon00v\mapsto0v,01v\mapsto10v,1v\mapsto11v$ and
  $x_1\colon0v\mapsto0v,1v\mapsto 1x_0(v)$, and is even finitely
  presented. See~\cite{cannon-f-p:thompson} for a detailed survey of
  $F$.
\end{example}

%%%%%%%%%%%%%%%%%%%%%%%%%%%%%%%%%%%%%%%%%%%%%%%%%%%%%%%%%%%%%%%%
\newpage\section{Random walks}\label{sec:kesten}
We now turn to other criteria for amenability, expressed in terms of
random walks. For a thorough treatment of random walks consult the
book~\cite{woess:rw}; we content ourselves with the subset most
relevant to amenability. One is given a space $X$, and a random walker
$W$ moving at random in $X$. There is thus a random process
$W\in X\rightsquigarrow S(W)\in X$, describing a \emph{single step} of
the random walk. One asks for the distribution $W_n$ of the random
walker after a large number $n$ of iterations of $S$.

More formally, we are given \emph{one-step} transition probabilities
$p_1(x,y)=\mathbb P(W_n=x|W_{n-1}=y)$ of moving to $x$ for a particle
lying at $y$; they satisfy $p_1(x,y)\ge0$ and
$\sum_{x\in X}p_1(x,y)=1$ for all $y\in X$. We define iteratively
$p_n(x,y)=\sum_{z\in X}p_{n-1}(x,z)p_1(z,y)$, and then ask for
asymptotic properties of $p_n$.

Here are two fundamental examples. First, if $X$ is a graph with
finite degree, set $p_1(x,y)=1/\deg(y)$ if $x,y$ are neighbours, and
$p_1(x,y)=0$ otherwise. This is called the \emph{simple random walk} (SRW)
on the graph $X$.

Another fundamental example is given by a group $G$, a right $G$-set
$X$, and a probability measure $\mu$ on $G$, namely a map
$\mu\colon G\to[0,1]$ with $\sum_{g\in G}\mu(g)=1$ as
in~\eqref{eq:proba}. The random walk is then defined by
\begin{equation}
  p_1(x,y)=\sum_{g\in G, x=y g}\mu(g).\label{eq:driven}
\end{equation}
It is called the \emph{random walk driven by $\mu$}. The measure $\mu$
is called \emph{symmetric} if $\mu(g)=\mu(g^{-1})$ for all $g\in G$,
and is called \emph{non-degenerate} if its support generates $G$ qua
semigroup.

These two examples coincide in case $G=\langle S\rangle$ is finitely
generated and the driving measure $\mu$ is equidistributed on $S$; one
considers then SRW on the Schreier graph of the action of $G$ on $X$.

A random walk $p$ on a set $X$ is \emph{reversible} if there exists a
function $s\colon X\to(0,\infty)$ satisfying
$s(x)p_1(x,y)=s(y)p_1(y,x)$ for all $x,y\in X$. SRW is reversible on
undirected graphs, with $s(x)=\deg(x)$, and if $\mu$ is symmetric then
the random walk driven by $\mu$ is reversible with $s(x)\equiv1$. We
shall always assume that the random walks we consider are reversible,
and to lighten notation actually assume that they are symmetric:
$s(x)\equiv1$ so $p_1(x,y)=p_1(y,x)$.

\subsection{Spectral radius}
We shall prove a criterion, due to \HKesten, relating the spectral
radius of the linear operator associated with $p$ to amenability. It
first appeared in~\cite{kesten:rwalks}.  Let $p$ be a reversible
random walk on a set $X$, assumed symmetric for simplicity. Set
$E=\{(x,y)\in X^2\mid p_1(x,y)>0\}$. We introduce two Hilbert spaces:
\begin{align*}
  \ell^2_0 &= \{f\colon X\to\C\mid \langle f,f\rangle<\infty\},\\
  \ell^2_1 &= \{g\colon E\to\C\mid g(x,y)=-g(y,x),\langle g,g\rangle<\infty\}
\end{align*}
with scalar products $\langle f,f'\rangle=\sum_{x\in X}\overline{f(x)}f'(x)$ and
$\langle g,g'\rangle=\frac12\sum_{x,y\in
  X}p_1(x,y)\overline{g(x,y)}g'(x,y)$. Elements of $\ell^2_1$ are
naturally extended to functions on $X^2$ which vanish on
$X^2\setminus E$.

One step of the random walk $p$ induces a linear operator $T$ on
$\ell^2_0$ given by
\[(T f)(x) = \sum_{y\in X}p_1(x,y)f(y).
\]
Writing $\delta_x$ for the function taking value $1$ at $x\in X$ and
$0$ elsewhere, we then have $p_n(x,y)=(T^n\delta_y)(x)$. We also
define operators $d,d^*$ between $\ell^2_0$ and $\ell^2_1$ by
\begin{xalignat*}{2}
  d&\colon\ell^2_0\to\ell^2_1, & (d f)(x,y) &= f(x)-f(y),\\
  d^*&\colon\ell^2_1\to\ell^2_0, & (d^*g)(x) &= \sum_{y\in X}p_1(x,y)g(x,y).
\end{xalignat*}

\begin{lemma}
  $T$ is a self-adjoint operator on $\ell^2_0$ of norm at most
  $1$. The operator $d^*$ is the adjoint of $d$, and $T=1-d^*d$.
\end{lemma}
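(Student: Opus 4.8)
The plan is to verify each of the three claims directly from the definitions of $T$, $d$, $d^*$, and the two scalar products, using only the symmetry $p_1(x,y)=p_1(y,x)$ and $\sum_y p_1(x,y)=1$.

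First I would check that $d^*$ is the adjoint of $d$. For $f\in\ell^2_0$ and $g\in\ell^2_1$, one computes
\[\langle d f,g\rangle=\tfrac12\sum_{x,y}p_1(x,y)\overline{(f(x)-f(y))}g(x,y).\]
Splitting into two sums and relabelling $x\leftrightarrow y$ in the second, using $p_1(x,y)=p_1(y,x)$ and the antisymmetry $g(y,x)=-g(x,y)$, both contributions coincide, giving $\sum_{x,y}p_1(x,y)\overline{f(x)}g(x,y)=\sum_x\overline{f(x)}(d^*g)(x)=\langle f,d^*g\rangle$. A small point worth a remark is that this manipulation also shows $df\in\ell^2_1$ when $f\in\ell^2_0$ (by $|f(x)-f(y)|^2\le 2|f(x)|^2+2|f(y)|^2$ and $\sum_y p_1(x,y)=1$), and that $d^*g\in\ell^2_0$, so the operators are genuinely defined and bounded; I would mention this but not belabour it.

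Next, $T=1-d^*d$: for $f\in\ell^2_0$,
\[(d^*df)(x)=\sum_y p_1(x,y)(f(x)-f(y))=f(x)\sum_y p_1(x,y)-\sum_y p_1(x,y)f(y)=f(x)-(Tf)(x),\]
using $\sum_y p_1(x,y)=1$; hence $d^*df=f-Tf$, i.e.\ $T=1-d^*d$. Self-adjointness of $T$ is then immediate: $T=1-d^*d$ and $(d^*d)^*=d^*(d^*)^*=d^*d$. (Alternatively one checks directly $\langle Tf,f'\rangle=\sum_{x,y}p_1(x,y)\overline{f(y)}f'(x)=\langle f,Tf'\rangle$ by symmetry of $p_1$.) Finally, for the norm bound, $d^*d$ is positive, since $\langle d^*df,f\rangle=\langle df,df\rangle\ge0$; so $T=1-d^*d\le 1$. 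For the lower bound $T\ge-1$, I would note $1+T=2-d^*d$ and estimate $\langle d^*df,f\rangle=\|df\|^2=\tfrac12\sum_{x,y}p_1(x,y)|f(x)-f(y)|^2\le \tfrac12\sum_{x,y}p_1(x,y)(2|f(x)|^2+2|f(y)|^2)=2\|f\|^2$ (again by $\sum_y p_1(x,y)=1$ and symmetry), so $\langle(1+T)f,f\rangle\ge0$. Since $T$ is self-adjoint, $\|T\|=\sup\{|\langle Tf,f\rangle|:\|f\|=1\}\le1$.

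I do not anticipate a serious obstacle here: every step is a one-line reindexing using symmetry of $p_1$ and the stochasticity condition $\sum_y p_1(x,y)=1$. The only thing requiring minor care is making sure the formal computations are justified on $\ell^2$ — i.e.\ that $d$ and $d^*$ map into the claimed spaces and the interchanges of summation are licit — which follows from the Cauchy–Schwarz estimates indicated above; this is the part I would state cleanly but not expand in full detail.
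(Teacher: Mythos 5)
Your proof is correct and follows the same route as the paper's: verify $\langle df,g\rangle=\langle f,d^*g\rangle$ by splitting the sum and relabelling using symmetry of $p_1$ and antisymmetry of $g$, then compute $(1-d^*d)f(x)=\sum_y p_1(x,y)f(y)=(Tf)(x)$. The paper dispatches the first claim with the remark that it "follows from the second"; you have simply spelled out the (short and correct) argument that $0\le d^*d\le 2$, hence $-1\le T\le 1$, hence $\|T\|\le 1$ by self-adjointness.
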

\begin{proof}
  The first claim follows from the second. For $f\in\ell^2_0$ and
  $g\in\ell^2_1$, we compute
  \begin{align*}
    \langle d f,g\rangle &= \frac12\sum_{(x,y)\in E}p_1(x,y)(\overline{f(x)}-\overline{f(y)})g(x,y)\\
                        &= \frac12\sum_{x\in X}\overline{f(x)}\sum_{y\in X}p_1(x,y)(g(x,y)-g(y,x))\\
    &= \sum_{x\in X}\overline{f(x)}(d^*g)(x) = \langle f,d^*g\rangle,
    \intertext{and}
    (1-d^*d)f(x) &= f(x) - \sum_{y\in X}p_1(x,y)(f(x)-f(y)) = \sum_{y\in X}p_1(x,y)f(y).\qedhere
  \end{align*}
\end{proof}

The following definitions are more commonly given in the context of
graphs; our more general setting coincides with it if $p$ is the
simple random walk:
\begin{definition}\label{defn:isoperimetric}
  Let $p$ be a random walk on a set $X$. The \emph{isoperimetric
    constant} of $p$ is
  \[\iota(p)=\inf_{F\Subset X}\frac{p_1(F,X\setminus F)}{\#F}=\inf_{F\Subset X}\frac{\sum_{x\in F,y\in X\setminus F}p_1(x,y)}{\#F}.\]
  The \emph{spectral radius} of $p$ is the spectral radius --- or,
  equivalently, the norm --- of the operator $T$.
\end{definition}

The following inequalities relating spectral radius and isoperimetric
constant appear, with different notation and normalization,
in~\cite{biggs-m-st:spectralradius}:
\begin{proposition}\label{prop:rho<>i}
  Let $p$ be a reversible random walk on a set $X$. Then the
  isoperimetric constant $\iota$ and spectral radius $\rho$ of $p$ are
  related by
  \[\iota^2+\rho^2\le1\le\iota+\rho.\]
\end{proposition}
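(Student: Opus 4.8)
\emph{Proof strategy.} I would treat the two inequalities separately, both times using that $T$ is self-adjoint, so that $\rho=\|T\|=\sup_f|\langle Tf,f\rangle|/\langle f,f\rangle$ with $f$ ranging over the nonzero elements of $\ell^2_0$; since finitely supported functions are dense in $\ell^2_0$ and $f\mapsto\langle Tf,f\rangle/\langle f,f\rangle$ is continuous, it suffices to let $f$ range over finitely supported functions, so that every sum below is a finite sum. I shall also repeatedly use $\langle df,df\rangle=\langle d^*df,f\rangle=\langle f,f\rangle-\langle Tf,f\rangle$, coming from $T=1-d^*d$.

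For the right-hand inequality $1\le\iota+\rho$, the plan is to test $T$ against indicator functions. Given $F\Subset X$ put $f=\mathbb 1_F$, so $\langle f,f\rangle=\#F$; using $\sum_y p_1(x,y)=1$ one gets $\langle Tf,f\rangle=\sum_{x,y\in F}p_1(x,y)=\#F-p_1(F,X\setminus F)\ge0$. Hence $\rho\ge\langle Tf,f\rangle/\langle f,f\rangle=1-p_1(F,X\setminus F)/\#F$, and taking the supremum of the right side over $F\Subset X$, i.e.\ the infimum of $p_1(F,X\setminus F)/\#F$, yields $\rho\ge1-\iota$.

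For the left-hand inequality $\iota^2+\rho^2\le1$ I would run the ``easy half'' of Cheeger's inequality in coarea form. Since the kernel $p_1$ is nonnegative, $|\langle Tf,f\rangle|\le\langle T|f|,|f|\rangle$ and $\langle|f|,|f|\rangle=\langle f,f\rangle$, so one may restrict to $f\ge0$. Fix such an $f$, finitely supported, and set $F_t=\{x\in X\mid f(x)>t\}$, a finite set for every $t>0$. The layer-cake identity $f(x)^2=\int_0^\infty 2t\,\mathbb 1_{F_t}(x)\,dt$ gives $\langle f,f\rangle=\int_0^\infty 2t\,\#F_t\,dt$, and likewise $|f(x)^2-f(y)^2|=\int_0^\infty 2t\,|\mathbb 1_{F_t}(x)-\mathbb 1_{F_t}(y)|\,dt$; summing the latter against $p_1(x,y)$, interchanging sum and integral (Tonelli), and using $\sum_{x,y}p_1(x,y)\,|\mathbb 1_F(x)-\mathbb 1_F(y)|=2\,p_1(F,X\setminus F)\ge2\iota\,\#F$, one obtains
\[\tfrac12\sum_{x,y\in X}p_1(x,y)\,|f(x)^2-f(y)^2|\ \ge\ \iota\,\langle f,f\rangle.\]
Now factor $|f(x)^2-f(y)^2|=|f(x)-f(y)|\,|f(x)+f(y)|$ and apply Cauchy--Schwarz for the symmetric weight $\tfrac12p_1(x,y)$ on $X\times X$: the first factor contributes $\tfrac12\sum p_1(x,y)|f(x)-f(y)|^2=\langle df,df\rangle=\langle f,f\rangle-\langle Tf,f\rangle$, and the second contributes $\tfrac12\sum p_1(x,y)(f(x)+f(y))^2=\langle f,f\rangle+\langle Tf,f\rangle$. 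Writing $a=\langle Tf,f\rangle/\langle f,f\rangle\in[0,1]$ (nonnegative since $p_1\ge0$ and $f\ge0$), this reads $\iota^2\le(1-a)(1+a)=1-a^2$, hence $a^2\le1-\iota^2$; taking the supremum over $f\ge0$ gives $\rho^2\le1-\iota^2$.

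The only step that is not pure bookkeeping is the coarea computation: I would need to check that $F_t$ is finite for $t>0$ (so that the defining bound $p_1(F_t,X\setminus F_t)\ge\iota\,\#F_t$ applies), that the layer-cake identities and the sum/integral interchange are legitimate — automatic once $f$ is finitely supported — and, most delicately, that the Cauchy--Schwarz step is set up against the correct symmetric measure $\tfrac12p_1(x,y)\,d(x,y)$ so that the two resulting factors come out exactly as $\langle f,f\rangle\mp\langle Tf,f\rangle$. Everything else reduces to the identity $T=1-d^*d$ and the elementary formula for $\langle Tf,f\rangle$ recorded before the statement.
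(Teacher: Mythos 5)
Your proof is correct. The right-hand inequality $1\le\iota+\rho$ is essentially the paper's argument: both test the Rayleigh quotient on characteristic functions $\mathbb 1_F$ and compute $\langle T\mathbb 1_F,\mathbb 1_F\rangle=\#F-p_1(F,X\setminus F)$.

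For the left-hand inequality $\iota^2+\rho^2\le1$ you take a genuinely different (and I think cleaner) route. The paper needs a \emph{non-negative} test function $\phi$ whose Rayleigh quotient is close to $\rho$, and manufactures one by truncating $T$ to a finite block $T_F=\pi_F T\pi_F$ and invoking Perron--Frobenius to get a non-negative eigenvector with eigenvalue $\rho_F\to\rho$; the coarea--Cauchy--Schwarz estimate is then applied to that specific $\phi$. You instead apply the same coarea--Cauchy--Schwarz estimate to \emph{every} non-negative finitely supported $f$, obtaining uniformly $\iota^2\le1-\bigl(\langle Tf,f\rangle/\langle f,f\rangle\bigr)^2$, and then pass to the supremum; the point that makes this legal is your observation that $\rho=\|T\|=\sup_f|\langle Tf,f\rangle|/\langle f,f\rangle$ can already be computed over non-negative $f$, since $p_1\ge0$ gives $|\langle Tf,f\rangle|\le\langle T|f|,|f|\rangle$. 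This dispenses with Perron--Frobenius and with the finite truncation entirely, at the small cost of having to justify the restriction to non-negative test functions, which you do. Both arguments are valid; yours trades a spectral fact about finite non-negative matrices for a one-line absolute-value estimate on the quadratic form.
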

\begin{proof}
  We begin by the second inequality. For $\epsilon>0$, let
  $F\Subset X$ satisfy
  $p_1(F,X\setminus F)/\#F<\iota+\epsilon$. Let
  $\phi\in\ell^2_0$ denote the characteristic function of $F$. Then
  $\|\phi\|^2=\#F$, and
  \[\|d\phi\|^2 = \frac12\sum_{(x,y)\in E}p_1(x,y)(\phi(x)-\phi(y))^2 = \sum_{x\in F,y\in X\setminus F}p_1(x,y)<(\iota+\epsilon)\|\phi\|^2;
  \]
  then
  $(\rho+\iota+\epsilon)\|\phi\|^2>\langle\phi,T\phi\rangle +
  \|d\phi\|^2 =
  \langle\phi,(1-d^*d)\phi\rangle+\|d\phi\|^2=\|\phi\|^2$. The
  conclusion $\rho+\iota\ge1$ follows under $\epsilon\to0$.

  In the other direction, consider for finite $F\Subset X$ the
  projection $\pi_F\colon\ell^2_0\righttoleftarrow$ defined by
  $(\pi_F f)(x)=f(x)$ if $x\in F$ and $0$ otherwise, and set
  $T_F\coloneqq\pi_F T\pi_F$. The operator $T_F$ is self-adjoint, and
  converges strongly to $T$ as $F$ increases, so the spectral radius
  of $T_F$ converges to $\rho$. For $\epsilon>0$, let $F$ be such that
  the spectral radius $\rho_F$ of $T_F$ is larger than
  $\rho-\epsilon$. Since $T_F$ has non-negative entries, its
  eigenvalue $\rho_F$ is simple and has a non-negative eigenvector
  $\phi$, by the Perron-Frobenius theorem. We extend $\phi$ by $0$
  into an element of $\ell^2_0$, and normalize it so that
  $\|\phi\|=1$. Set then
  \[A\coloneqq \frac12\sum_{(x,y)\in E}p_1(x,y)|\phi(x)^2-\phi(y)^2|,\]
  and compute
  \begin{align*}
    A^2 &= \bigg(\frac12\sum_{(x,y)\in
        E}p_1(x,y)|\phi(x)+\phi(y)|\cdot|\phi(x)-\phi(y)|\bigg)^2\\
    &\le \frac12\sum_{(x,y)\in E}p_1(x,y)(\phi(x)+\phi(y))^2\cdot
    \frac12\sum_{(x,y)\in E}p_1(x,y)(\phi(x)-\phi(y))^2\\
    &=(\|\phi\|^2+\langle\phi,T_F\phi\rangle)(\|\phi\|^2-\langle
      \phi,T_F\phi\rangle)=(1+\rho_F)(1-\rho_F),
  \end{align*}
  because
  $\sum_{(x,y)\in E}p_1(x,y)\phi(x)\phi(y)=\sum_{x\in
    F}\phi(x)\sum_{y\in X}p_1(x,y)\phi(y)=\langle\phi,T_F\phi\rangle$.

  On the other hand, let $0<s_1<s_2<\dots<s_n$ denote the finitely
  many values that $\phi$ takes, and define, for $k=1,\dots,n$,
  \[F_k = \{x\in X\mid \phi(x)\ge s_k\},
  \]
  with the additional conventions $s_0=0$ and
  $F_{k+1}=\emptyset$. Then
  \begin{align*}
    A &= \frac12\sum_{(x,y)\in E}p_1(x,y)|\phi(x)^2-\phi(y)^2|
        = \sum_{k=1}^n\sum_{x\in F_k,y\not\in F_k}p_1(x,y)(s_k^2-s_{k-1}^2)\\
      &\ge\sum_{k=1}^n\iota\#F_k(s_k^2-s_{k-1}^2)=\iota\sum_{k=1}^n(\#F_k-\#F_{k+1})s_k^2=\iota\|\phi\|=\iota.
  \end{align*}
  Combining, we get
  \[(1-(\rho-\epsilon)^2)\ge 1-\rho_F^2\ge A^2\ge\iota^2;\]
  and the conclusion $\rho^2+\iota^2\le1$ follows under $\epsilon\to0$.
\end{proof}

\noindent This section's main result is the following characterization
of amenable $G$-sets:
\begin{theorem}\label{thm:kesten}
  Let $\mu$ be a symmetric, non-degenerate probability measure on a
  group $G$, let $X$ be a $G$-set, and let $p$ be the random walk on
  $X$ driven by $\mu$. Then the following are equivalent:
  \begin{enumerate}
  \item $X$ is amenable;
  \item $\iota(p)=0$;
  \item $\rho(p)=1$.
  \end{enumerate}
\end{theorem}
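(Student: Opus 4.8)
The plan is to dispatch the equivalence $(2)\Leftrightarrow(3)$ immediately and then reduce the rest to the F\o lner criterion of Theorem~\ref{thm:folneramen}. Indeed, Proposition~\ref{prop:rho<>i} gives $\iota(p)^2+\rho(p)^2\le 1\le\iota(p)+\rho(p)$, and since $\rho(p)=\|T\|\le 1$ this already settles it: if $\iota(p)=0$ then $\rho(p)\ge 1-\iota(p)=1$, so $\rho(p)=1$; and if $\rho(p)=1$ then $\iota(p)^2\le 1-\rho(p)^2=0$, so $\iota(p)=0$. It therefore remains to prove $(1)\Leftrightarrow(2)$.

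The link between $(1)$ and $(2)$ rests on unwinding~\eqref{eq:driven}. From $p_1(x,y)=\sum_{g:\,x=yg}\mu(g)$ one computes $(Tf)(x)=\sum_{g\in G}\mu(g)\,f(xg^{-1})$, i.e.\ $T=\sum_{g\in G}\mu(g)\,(\cdot\,g)$ is a convex combination of the isometries $\phi\mapsto\phi g$ of $\ell^2(X)$ (in particular $\|T\|\le 1$, recovering the lemma above); and, for every finite $F\Subset X$,
\[p_1(F,X\setminus F)=\sum_{y\notin F}\;\sum_{g:\,yg\in F}\mu(g)=\sum_{g\in G}\mu(g)\,\#(Fg^{-1}\setminus F).\]
The bijection $x\mapsto xg$ identifies $Fg^{-1}\setminus F$ with $F\setminus Fg$, so $\#(Fg^{-1}\setminus F)=\#(Fg\setminus F)\le\#F$; thus $\iota(p)$ is exactly the infimum over $F\Subset X$ of the $\mu$-weighted quantity $\big(\sum_{g}\mu(g)\#(Fg\setminus F)\big)/\#F$.

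For $(1)\Rightarrow(2)$: if $X$ is amenable it satisfies F\o lner's condition (Theorem~\ref{thm:folneramen}). Fix $\epsilon>0$, choose a finite $S_0\Subset G$ with $\mu(G\setminus S_0)<\epsilon/2$, and choose $F\Subset X$ with $\#(Fg\setminus F)<(\epsilon/2)\#F$ for all $g\in S_0$ (Lemma~\ref{lem:folner1}). Splitting the displayed sum at $S_0$ and bounding $\#(Fg\setminus F)\le\#F$ on the tail gives $p_1(F,X\setminus F)<\epsilon\#F$, so $\iota(p)=0$. For $(2)\Rightarrow(1)$: since $\mu$ is non-degenerate (and symmetric), $\supp\mu$ generates $G$, so a given finite $S\Subset G$ consists of elements $s=s_1\cdots s_k$ with all $s_i\in\supp\mu$; let $\mu_0>0$ be the least of the finitely many numbers $\mu(s_i)$ arising. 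Telescoping as in Lemma~\ref{lem:folnerfg} gives $\#(Fs\setminus F)\le\sum_{i=1}^k\#(Fs_i\setminus F)$, and the displayed identity gives $\mu_0\,\#(Fs_i\setminus F)\le p_1(F,X\setminus F)$, whence $\#(Fs\setminus F)\le(k/\mu_0)\,p_1(F,X\setminus F)$. As $\iota(p)=0$, we may pick $F$ with $p_1(F,X\setminus F)<(\mu_0\epsilon/k)\#F$; then F\o lner's condition holds, so $X$ is amenable by Theorem~\ref{thm:folneramen}.

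The only genuine subtlety is that $\mu$ need not have finite support: that is exactly why one truncates $\mu$ in $(1)\Rightarrow(2)$ and invokes non-degeneracy in $(2)\Rightarrow(1)$; everything else is bookkeeping with symmetric differences. I also note a shorter, purely spectral route to $(1)\Rightarrow(3)$ bypassing F\o lner: given $\epsilon>0$, take a finite $S\Subset G$ with $\mu(G\setminus S)<\epsilon$ and, by Reiter's condition for $p=2$ (Theorem~\ref{thm:folneramen}(3)), a unit vector $\phi\in\ell^2(X)$ with $\langle\phi,\phi s\rangle=1-\tfrac12\|\phi s-\phi\|^2>1-\epsilon$ for all $s\in S$; then $\langle\phi,T\phi\rangle=\sum_{g}\mu(g)\langle\phi,\phi g\rangle\ge(1-\epsilon)\mu(S)-\mu(G\setminus S)\ge 1-3\epsilon$ (using $|\langle\phi,\phi g\rangle|\le 1$), so $\rho(p)=\|T\|\ge 1$, hence $\rho(p)=1$.
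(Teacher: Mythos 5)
Your proof is correct, and in outline agrees with the paper: dispose of $(2)\Leftrightarrow(3)$ by Proposition~\ref{prop:rho<>i}, prove $(1)\Rightarrow(2)$ by truncating $\mu$ to a finite set $S_0$ and invoking F\o lner for $S_0$ (this is essentially the paper's argument, reorganized around the identity $p_1(F,X\setminus F)=\sum_g\mu(g)\#(Fg\setminus F)$), and reduce $(2)\Rightarrow(1)$ to F\o lner's criterion. Your extra $\ell^2$-Reiter route to $(1)\Rightarrow(3)$ is also correct and a pleasant shortcut.

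Where you genuinely diverge is in $(2)\Rightarrow(1)$. The paper reaches F\o lner via the $n$-step walk: by non-degeneracy there is an $n$ and $\delta>0$ with $\mu^n(s)\ge\delta$ for all $s$ in the given $S$, and one then needs an $F$ with $p_n(F,X\setminus F)$ small. Observe, however, that the hypothesis $\iota(p)=0$ concerns $p_1$; to get the corresponding smallness for $p_n$, one must pass through $\rho(p)=1\Rightarrow\rho(p^n)=1\Rightarrow\iota(p^n)=0$ (i.e.\ go back and forth through Proposition~\ref{prop:rho<>i}), and one must also be slightly careful choosing a single $n$ uniform over $S$ (for a symmetric $\mu$ with $1\notin\supp\mu$ there is a parity obstruction, e.g.\ SRW on $\Z$). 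You sidestep both points: you telescope $\#(Fs\setminus F)\le\sum_i\#(Fs_i\setminus F)$ along a decomposition $s=s_1\cdots s_k$ with $s_i\in\supp\mu$, bound each term by $p_1(F,X\setminus F)/\mu_0$ using the displayed identity, and let $\iota(p)=0$ finish. This stays entirely at the level of $p_1$, uses only the one invocation of Proposition~\ref{prop:rho<>i} already needed for $(2)\Leftrightarrow(3)$, and is cleaner on the uniformity issue. Both routes embody the same idea --- write elements of $G$ as products of $\mu$-steps and control boundaries step by step --- but yours is more self-contained.

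One tiny remark: your parenthetical ``(and symmetric)'' in $(2)\Rightarrow(1)$ is not actually needed there, since ``non-degenerate'' already means $\supp\mu$ generates $G$ as a semigroup, hence every $s\in S$ is a product of elements of $\supp\mu$; symmetry is only used (as in the whole section) to make $T$ self-adjoint and $p$ reversible.
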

\begin{proof}
  $(1)\Rightarrow(2)$ Assume first that $X$ is amenable, and let
  $\epsilon>0$ be given. Let $S\Subset G$ satisfy
  $\mu(S)>1-\epsilon/2$.  Let $F\Subset X$ satisfy
  $\#(F S\setminus F)<\epsilon\#F/2$. Then
  \[\sum_{x\in F,y\not\in F}p_1(x,y)\le\sum_{x\in F,y\in F S\setminus F}\mu(\{s\in S\mid y=x s\})+\sum_{x\in F,g\in G\setminus S}\mu(g)\le\epsilon\#F,\]
  so $\iota(p)\le\epsilon$ for all $\epsilon>0$. (Note that we have
  not used the assumption that $\mu$ is non-degenerate here.)

  $(2)\Rightarrow(1)$ Let $\epsilon>0$ and a finite subset $S$ of $G$
  be given. By assumption, there exists $n\in\N$ and $\delta>0$ such
  that $\mu^n(s)\ge\delta$ for all $s\in S$. Let $F$ be a finite
  subset of $X$ such that
  $\sum_{x\in F,y\not\in F}p_n(x,y)<\delta\epsilon\#F$. Then
  $\#(F S\setminus F)<\epsilon\#F$, so $X$ is amenable by F\o lner's
  criterion, Theorem~\ref{thm:folneramen}$(5)\Rightarrow(1)$.

  The equivalence $(2)\Leftrightarrow(3)$ is given by
  Proposition~\ref{prop:rho<>i}.
\end{proof}

The spectral radius of the random walk has a direct interpretation in
terms of probabilities of return of the random walk, at least when we
restrict to \emph{transitive} random walks: random walks with the
property that, for any two $x,y\in X$ there exists $n\in\N$ such that
$p_n(x,y)>0$ (not to be confused with random walks invariant under a
transitive group action!). Let us make the following temporary
\begin{definition}
  The \emph{spectral radius} of the random walk $p$ based at $x$ is
  \[\rho(p,x)\coloneqq \limsup_{n\to\infty}\sqrt[n]{p_n(x,x)}.\]
\end{definition}

\begin{lemma}[Fekete]\label{lem:fekete}
  Let $N\in\N$ be given, and let $\alpha\colon \{N,N+1,\dots\}\to\R$ be a
  subadditive function, i.e.\ a function satisfying
  $\alpha(m+n)\le\alpha(m)+\alpha(n)$. Then
  \[\lim_{n\to\infty}\frac{\alpha(n)}{n}=\inf_{n>0}\frac{\alpha(n)}{n};\]
  in particular $\alpha(n)/n$ either converges, or diverges to $-\infty$.
\end{lemma}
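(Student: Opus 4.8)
The plan is to prove Fekete's subadditive lemma by a standard division-with-remainder argument. First I would fix any $n_0 > 0$ and, for an arbitrary $m \ge N$, write $m = q n_0 + r$ with $q \in \N$ and $r \in \{N, N+1, \dots, N+n_0-1\}$ (the remainder is shifted into this window so that $\alpha(r)$ is always defined; this is possible as soon as $m$ is large enough). Iterating subadditivity gives $\alpha(m) \le q\,\alpha(n_0) + \alpha(r)$, hence
\[
\frac{\alpha(m)}{m} \le \frac{q n_0}{m}\cdot\frac{\alpha(n_0)}{n_0} + \frac{\alpha(r)}{m}.
\]
As $m \to \infty$ with $n_0$ fixed, we have $q n_0 / m \to 1$, while $\alpha(r)$ ranges over a finite set of values and $\alpha(r)/m \to 0$. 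Therefore $\limsup_{m\to\infty} \alpha(m)/m \le \alpha(n_0)/n_0$. Since $n_0$ was arbitrary, $\limsup_{m\to\infty}\alpha(m)/m \le \inf_{n>0}\alpha(n)/n$. The reverse inequality $\liminf_{m\to\infty}\alpha(m)/m \ge \inf_{n>0}\alpha(n)/n$ is immediate because each term $\alpha(m)/m$ is at least the infimum. Combining the two bounds forces the limit to exist and equal the infimum.

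The only subtlety — and the main thing to be careful about — is the case where the infimum is $-\infty$: then the chain of inequalities above shows $\limsup_{m\to\infty}\alpha(m)/m \le \alpha(n_0)/n_0$ for every $n_0$, and letting $\alpha(n_0)/n_0 \to -\infty$ along a suitable sequence gives $\lim_{m\to\infty}\alpha(m)/m = -\infty$, i.e.\ divergence to $-\infty$. In the case where the infimum is a finite real number, the same computation gives genuine convergence. I expect this case distinction, rather than any estimate, to be the one place where a careless write-up could go wrong; everything else is routine. For the application in the text one takes $\alpha(n) = -\log p_n(x,x)$, which is subadditive on $\{N, N+1, \dots\}$ for suitable $N$ by transitivity together with $p_{m+n}(x,x) \ge p_m(x,x)p_n(x,x)$, and one concludes that $\rho(p,x) = \lim_n \sqrt[n]{p_n(x,x)}$ exists.
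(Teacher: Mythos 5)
Your proof is correct and matches the paper's argument essentially step for step: both fix a modulus $a$ (your $n_0$), perform division with remainder shifted into the window $\{N,\dots,N+a-1\}$ so that $\alpha(r)$ is defined, iterate subadditivity, and pass to the limit to get $\limsup_m\alpha(m)/m\le\alpha(a)/a$, closing the sandwich with the trivial bound $\alpha(m)/m\ge\inf$. If anything your write-up is slightly cleaner on the passage to the limit: the paper interposes the inequality $\frac{q\alpha(a)+\alpha(r)}{qa+r}\le\frac{\alpha(a)}{a}+\frac{\alpha(r)}{k}$, which as written is only valid when $\alpha(a)\ge0$, whereas you take the limit directly from $\frac{qn_0}{m}\cdot\frac{\alpha(n_0)}{n_0}+\frac{\alpha(r)}{m}$ using $qn_0/m\to1$, which holds unconditionally.
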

\begin{proof}
  Consider any $a\ge N$, and write every $k\ge N$ as $k=q a+r$ with
  $q\in\N$ and $r\in\{N,N+1,\dots,N+a-1\}$. Then, for $k\ge N$,
  \[\frac{\alpha(k)}{k}\le\frac{q\alpha(a)+\alpha(r)}{q a+r}\le\frac{\alpha(a)}{a}+\frac{\alpha(r)}{k};\]
  letting $k\to\infty$, we get
  $\limsup_{n\to\infty}\alpha(k)/k\le\alpha(a)/a$ for every $a\ge
  N$; so $\limsup_{n\to\infty}\alpha(k)/k=\inf_{a\in\N}\alpha(a)/a$
  converges or diverges to $-\infty$.
\end{proof}

The ``limsup'' in the definition of the spectral radius is in fact a
limit, and is independent of the starting and endpoints:
\begin{proposition}\label{prop:problim}
  Assume $p$ is transitive. Then
  \[\rho(p,z)=\limsup_{n\to\infty}\sqrt[n]{p_n(x,y)}=\lim_{n\to\infty}\sqrt[2n]{p_{2n}(x,x)}\text{ for all }x,y,z\in X.\]
\end{proposition}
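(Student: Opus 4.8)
The plan is to reduce everything to two facts: the Chapman--Kolmogorov identity $p_{m+n}(x,y)=\sum_{z\in X}p_m(x,z)p_n(z,y)$ together with the standing symmetry $p_1(x,y)=p_1(y,x)$ (hence $p_n(x,y)=p_n(y,x)$), and Fekete's Lemma~\ref{lem:fekete}. I would first dispose of the trivial case $\#X=1$, where every $p_n(x,x)=1$ and all three quantities equal $1$, and assume $\#X\ge2$ from now on. Note that $p_2(x,x)=\sum_{y}p_1(x,y)p_1(y,x)=\sum_y p_1(x,y)^2>0$ since $\sum_y p_1(x,y)=1$, and hence $p_{2n}(x,x)\ge p_2(x,x)^n>0$ for all $n\ge1$.

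The first step is to show that $\lim_{n\to\infty}\sqrt[2n]{p_{2n}(x,x)}$ exists. Setting $\alpha(n)=-\log p_{2n}(x,x)\in[0,\infty)$, symmetry and Chapman--Kolmogorov give $p_{2(m+n)}(x,x)=\sum_z p_{2m}(x,z)p_{2n}(z,x)\ge p_{2m}(x,x)p_{2n}(x,x)$, so $\alpha$ is subadditive on $\{1,2,\dots\}$. Since $0\le\alpha(n)/n\le\alpha(1)$, Lemma~\ref{lem:fekete} shows $\alpha(n)/n$ converges to a finite limit, hence $\sqrt[2n]{p_{2n}(x,x)}=\exp(-\alpha(n)/2n)$ converges to some $\rho_x\in(0,1]$. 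Moreover $p_{2n}(x,x)=\sum_z p_n(x,z)^2\ge p_n(x,x)^2$ gives $\sqrt[n]{p_n(x,x)}\le\sqrt[2n]{p_{2n}(x,x)}$ for every $n$, so $\rho(p,x)=\limsup_n\sqrt[n]{p_n(x,x)}\le\rho_x$, while restricting the $\limsup$ to even $n$ gives the reverse inequality; thus $\rho(p,x)=\rho_x$.

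The second step uses transitivity. Given $x,z$, transitivity and symmetry provide $j$ with $p_j(x,z)=p_j(z,x)>0$, and then $p_{2n+2j}(x,x)\ge p_j(x,z)\,p_{2n}(z,z)\,p_j(z,x)$. Taking $(2n+2j)$-th roots and letting $n\to\infty$: the left side tends to $\rho_x$, the factor $(p_j(x,z)^2)^{1/(2n+2j)}$ tends to $1$, and $(p_{2n}(z,z))^{1/(2n+2j)}=\big(p_{2n}(z,z)^{1/2n}\big)^{2n/(2n+2j)}$ tends to $\rho_z$ (by continuity of $(a,c)\mapsto a^{c}$ at $(\rho_z,1)$, valid since $\rho_z>0$). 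Hence $\rho_x\ge\rho_z$, and by symmetry $\rho_x=\rho_z$. Finally, for arbitrary $x,y$: on one hand $p_{2n}(x,x)\ge p_n(x,y)p_n(y,x)=p_n(x,y)^2$ gives $\limsup_n\sqrt[n]{p_n(x,y)}\le\rho_x$; on the other, choosing $j$ with $c:=p_j(x,y)>0$ yields $p_{2n+j}(x,y)\ge p_{2n}(x,x)\,p_j(x,y)=c\,p_{2n}(x,x)$, so along $m=2n+j$ we get $\sqrt[m]{p_m(x,y)}\ge(c\,p_{2n}(x,x))^{1/m}\to\rho_x$, whence $\limsup_n\sqrt[n]{p_n(x,y)}\ge\rho_x$. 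Combining the three steps gives $\limsup_n\sqrt[n]{p_n(x,y)}=\rho_x=\rho_z=\rho(p,z)$ for all $x,y,z$, which is the assertion.

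There is no real obstacle; the two points that need care are (i) systematically passing through the even times $p_{2n}(x,x)$ so that all sequences in play are strictly positive — a periodic (e.g.\ bipartite) walk can have $p_n(x,x)=0$ for every odd $n$, so the bare $\limsup$ over all $n$ is the natural object but Fekete must be applied to the even subsequence; and (ii) the elementary ``exponent bookkeeping'' $a_n^{c_n}\to L$ when $a_n\to L\in(0,1]$ and $c_n\to1$, used twice to convert a $1/(2n)$-power into a $1/(2n+\mathrm{const})$-power. Both become routine once isolated, and the rest is direct manipulation of Chapman--Kolmogorov inequalities.
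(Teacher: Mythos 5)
Your proof is correct and follows essentially the same route as the paper's: use symmetry to get $p_{2n}(x,x)\ge p_n(x,y)^2$, use transitivity together with Chapman--Kolmogorov to sandwich the $\limsup$'s against each other, and apply Fekete's lemma to $\alpha(n)=-\log p_{2n}(x,x)$ to get the even-time limit. The only notable difference is organizational and expository: you extract the limit via Fekete first and then propagate it, whereas the paper establishes the $\limsup$ equality across all quadruples of points in one pass and invokes Fekete last; you also observe directly that $p_2(x,x)=\sum_y p_1(x,y)^2>0$ (so $\alpha$ is defined for all $n\ge1$ and the limit is strictly positive), which is slightly cleaner than the paper's appeal to ``transitivity makes $\alpha(r)$ defined for $r$ large.''
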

\begin{proof}
  For the first claim, consider more generally $w,x,y,z\in X$. There
  are $\ell\in\N$ such that $p_\ell(x,w)>0$; and $m\in\N$ such that
  $p_m(z,y)>0$. Since
  $p_{n+\ell+m}(x,y)\ge p_\ell(x,w)p_n(w,z)p_m(z,y)$ for all $n\in\N$,
  we have
  \[\limsup_{n\to\infty}\sqrt[n]{p_n(x,y)}\ge\limsup_{n\to\infty}\sqrt[n-\ell-m]{p_\ell(x,w)p_m(z,y)}\sqrt[n-\ell-m]{p_n(w,z)}=\limsup_{n\to\infty}\sqrt[n]{p_n(w,z)}.\]
  Applying it to $(w,x,y,z)=(z,x,y,z)$ and $(x,z,z,y)$ respectively
  gives the claim.

  It is then clear that
  $\limsup_{n\to\infty}\sqrt[2n]{p_{2n}(x,x)}\le\rho(p,x)$; but
  conversely $p_{2n}(x,x)\ge p_n(x,x)^2$, so
  $\limsup_{n\to\infty}\sqrt[2n]{p_{2n}(x,x)}\ge\limsup_{n\to\infty}\sqrt[n]{p_n(x,x)}=\rho(p,x)$.

  Now $p_{2r+2s}(x,x)\ge p_{2r}(x,x)p_{2s}(x,x)$ for all
  $r,s\in\N$. Setting $\alpha(r)=-\log p_{2r}(x,x)$, we get
  $\alpha(r+s)\le\alpha(r)+\alpha(s)$; furthermore, because $p$ is
  transitive, $\alpha(r)$ is defined for all $r$ large enough, and
  $\alpha(r)\ge0$ because $p_{2r}(x,x)\le 1$. By
  Lemma~\ref{lem:fekete}, $\alpha(r)/r$ converges, whence
  $\sqrt[2n]{p_{2n}(x,x)}$ converges.
\end{proof}

\begin{proposition}\label{prop:sr=norm}
  Let $p$ be symmetric and transitive. Then the spectral radius of $p$
  is equal to the norm of $T$ acting on $\ell^2(X)$.
\end{proposition}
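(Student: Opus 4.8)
The plan is to pass to the spectral picture of the self-adjoint operator $T$ acting on $\ell^2(X)$ and to identify both quantities with an essential supremum for the diagonal spectral measures. Recall that $T$ is self-adjoint on $\ell^2(X)$, with $\|T\|\le 1$, and that $p_n(x,y)=(T^n\delta_y)(x)=\langle\delta_x,T^n\delta_y\rangle$; in particular $p_{2n}(x,x)=\langle\delta_x,T^{2n}\delta_x\rangle=\|T^n\delta_x\|^2$. Since $X$ is countable, $\ell^2(X)$ is separable, so the spectral theorem applies: let $E$ be the projection-valued measure of $T$ (supported on $\operatorname{spec}(T)\subseteq[-\|T\|,\|T\|]$), and for $x\in X$ let $\mu_x(B)=\langle\delta_x,E(B)\delta_x\rangle$ be the associated scalar spectral measure; it is a Borel probability measure, as $\mu_x(\R)=\langle\delta_x,\delta_x\rangle=1$, and $\langle\delta_x,T^{2n}\delta_x\rangle=\int t^{2n}\,d\mu_x(t)$.

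First I would compute $\rho(p,x)$. One has
\[
\sqrt[2n]{p_{2n}(x,x)}=\Big(\int t^{2n}\,d\mu_x(t)\Big)^{1/2n}=\big\|\,|t|\,\big\|_{L^{2n}(\mu_x)}\xrightarrow[n\to\infty]{}\big\|\,|t|\,\big\|_{L^\infty(\mu_x)}=\max\{|t|:t\in\operatorname{supp}\mu_x\}=:r_x,
\]
using that the $L^q(\mu_x)$-norms of a fixed function increase to its $L^\infty(\mu_x)$-norm as $q\to\infty$ for a probability measure, and that $\operatorname{supp}\mu_x$ is compact. On the other hand, since $p$ is transitive, Proposition~\ref{prop:problim} gives $\lim_n\sqrt[2n]{p_{2n}(x,x)}=\rho(p,x)$ and moreover that $\rho(p,x)$ is independent of $x$; call this common value $\rho$. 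Hence $r_x=\rho$ for every $x\in X$.

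It remains to show $\|T\|=\rho$. As $T$ is bounded and self-adjoint, $\|T\|=\max\{|t|:t\in\operatorname{spec}(T)\}$, the maximum being attained since $\operatorname{spec}(T)$ is compact. I claim $\operatorname{spec}(T)=\overline{\bigcup_{x\in X}\operatorname{supp}\mu_x}$. Indeed, for an open interval $J$ the orthogonal projection $E(J)$ satisfies $\|E(J)\delta_x\|^2=\langle\delta_x,E(J)^2\delta_x\rangle=\langle\delta_x,E(J)\delta_x\rangle=\mu_x(J)$; so if $\mu_x(J)=0$ for all $x$ then $E(J)$ vanishes on the dense span of $\{\delta_x:x\in X\}$, hence $E(J)=0$ and $J\cap\operatorname{spec}(T)=\emptyset$. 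Conversely, if $t_0\notin\operatorname{spec}(T)$ then $E(J)=0$ for some interval $J\ni t_0$, whence $\mu_x(J)=0$ and so $J\cap\operatorname{supp}\mu_x=\emptyset$ for all $x$, i.e.\ $t_0\notin\overline{\bigcup_x\operatorname{supp}\mu_x}$. Taking $|\cdot|$-suprema (the closure changes nothing), $\|T\|=\sup_{x\in X}r_x=\rho$; combined with $\rho(p,x)=\rho$ from the previous paragraph, this is exactly Proposition~\ref{prop:sr=norm}, consistently with Definition~\ref{defn:isoperimetric}.

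The main obstacle is the last step: one has to know that the operator norm of $T$ is genuinely "witnessed" by one of the diagonal spectral measures $\mu_{\delta_x}$, and this is where the totality of $\{\delta_x\}$ and the transitivity hypothesis are both indispensable — transitivity (via Proposition~\ref{prop:problim}) collapses all the $r_x$ to a single value, so that $\sup_x r_x$ is attained at the chosen $x$; without it $\operatorname{supp}\mu_x$ could be a proper subset of $\operatorname{spec}(T)$. I would also remark that symmetry of $p$ is used only to make $T$ self-adjoint, so that the spectral theorem and the identity $\|T\|=\max_{\operatorname{spec}(T)}|t|$ are available; for a reversible but non-symmetric $p$ one would first conjugate $T$ to a self-adjoint operator on the weighted space $\ell^2(X,s)$.
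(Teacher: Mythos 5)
Your proof is correct, and it takes a genuinely different route from the paper's. The paper proves $\|T\|\le\rho(p,x)$ by an elementary Cauchy--Schwarz bootstrapping argument: for $f$ of finite support, $\langle T^{m+1}f,T^{m+1}f\rangle=\langle T^mf,T^{m+2}f\rangle\le\|T^mf\|\,\|T^{m+2}f\|$ shows the ratios $\|T^{m+1}f\|/\|T^mf\|$ increase to $\lim_n\sqrt[n]{\|T^nf\|}$, and this limit is identified with $\rho(p,x)$ by expanding $\langle T^{2n}f,f\rangle$ as a \emph{finite} sum $\sum p_{2n}(x,y)f(x)\overline{f(y)}$; setting $m=0$ and using density of $\C X$ then yields $\|T\|\le\rho(p,x)$. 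You instead invoke the full spectral theorem, identify $\rho(p,x)$ as the $L^\infty(\mu_x)$-norm of the identity function (via the standard $L^q\uparrow L^\infty$ fact for probability measures), and observe that totality of $\{\delta_x\}$ forces $\operatorname{spec}(T)=\overline{\bigcup_x\operatorname{supp}\mu_x}$, with transitivity (through Proposition~\ref{prop:problim}) collapsing all the $r_x$ to a single value. Your argument is shorter to state once the spectral theorem is available, and it makes transparent exactly where transitivity enters; the paper's argument is more elementary (Cauchy--Schwarz and a finite sum, no functional calculus), which fits the self-contained style of the text. One small remark: the separability of $\ell^2(X)$ you mention is automatic here (transitivity plus $\sum_y p_1(x,y)=1$ forces $X$ to be countable) and is in any case not needed, since the spectral theorem holds without it; you could drop that aside.
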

\begin{proof}
  Let us write $\|T\|$ the operator norm of $T$ on $\ell^2(X)$. First, by
  Proposition~\ref{prop:problim},
  \begin{align*}
    \rho(p,x)&=\lim_{n\to\infty}\sqrt[2n]{p_{2n}(x,x)}
    =\lim_{n\to\infty}\sqrt[2n]{\langle T^{2n}\delta_x,\delta_x\rangle}
    \le\sqrt[2n]{\|T^{2n}\|}\le\|T\|.
  \end{align*}
  Next, consider $f\in\C X$. By Cauchy-Schwartz's inequality, for all
  $m\in\N$ we have
  \[\langle T^{m+1}f,T^{m+1}f\rangle=\langle
  T^m f,T^{m+2}f\rangle\le\|T^m f\|\cdot\|T^{m+2}f\|;\]
  so $\|T^{m+1}f\|/\|T^m f\|$ is increasing, with limit
  $\lim_{n\to\infty}\sqrt[n]{\|T^n f\|}$. Now
  \begin{align*}
    \lim_{n\to\infty}\sqrt[n]{\|T^n f\|}&=\lim_{n\to\infty}\sqrt[2n]{\langle
      T^n f,T^n f\rangle}=\lim_{n\to\infty}\sqrt[2n]{\langle
      T^{2n}f,f\rangle}\\
    &=\lim_{n\to\infty}\sqrt[2n]{\sum_{x,y\in\supp(f)}p_{2n}(x,y)f(x)f(y)}=\rho(p,x),
  \end{align*}
  because the sum is finite. Taking $m=0$, we obtain
  $\|T f\|/\|f\|\le\rho(p,x)$ for all $f\in\C X$; and since $\C X$ is
  dense in $\ell^2(X)$ we have $\|T\|\le\rho(p,x)$.
\end{proof}

The probabilities of return, in the case of SRW, have a
straightforward interpretation in terms of paths: say we consider a
$k$-regular graph $X$ with basepoint $*$. Then there are $k^n$ paths
of length $n$ starting at $*$, and among these asymptotically
$\rho(p)^n$ will end at $*$. Therefore, non-amenable graphs are
characterized as those graphs in which exponentially few paths are
closed.

\begin{example}\label{ex:rwZ}
  Let us look first at an amenable example: $X=\Z$ and
  $p_1(x,x\pm1)=\frac12$; this is SRW on the line. We write $p_n(x,y)$
  for the probability that a particle starting at $x$ reaches $y$ at
  time $n$; that is, the probability that $T^n(x)=y$. The simple
  formula
  \[p_n(x,y)=\begin{cases} \frac1{2^n}\binom n{\frac{n+x-y}2} & \text{
        if }n+x-y\equiv0\pmod2,\\ 0 & \text{ else}\end{cases}
  \]
  is easily justified as follows: at each step, one chooses $+1$ or
  $-1$ with equal probabilities; at time $n$ we then made $2^n$
  choices. If $(n+x-y)/2$ of these are $+1$ and $(n-x+y)/2$ are $-1$,
  then we end up at $x+(n+x-y)/2-(n-x+y)/2=y$.

  In particular, if $n$ is even, we have
  $p_n(x,x)=2^{-n}\binom n{n/2}$, so by Stirling's formula
  $n!\propto\sqrt{2\pi n}(n/e)^n$ we get
  \[p_n(x,x)\propto \sqrt{\frac{2}{\pi n}}.
  \]
\end{example}

\begin{example}
  Consider the free group $F_d$, whose Cayley graph is a $2d$-regular
  tree $\mathcal T$. Fix an edge of this tree, e.g.\ between $1$ and
  $x_1$, let $A_n$ denote the number of closed paths in $\mathcal T$
  based at $1$, and by $B_n$ the number of closed paths in
  $\mathcal T$, based at $1$, that do not cross the fixed
  edge. Consider the generating series $A(z)=\sum A_n z^n$ and
  $B(z)=\sum B_n z^n$. Then $A(z)=1/(1-2d z^2 B(z))$, because every
  closed path factors uniquely as a product of closed paths that reach
  $1$ only at their endpoints; and $B(z)=1/(1-(2d-1)z^2 B(z))$ for the
  same reason; so
  \[A(z)=\frac{1-d+d\sqrt{1-4(2d-1)z^2}}{1-4d^2z^2}
  \]
  and $A_n\propto(8d-4)^{n/2}$ and
  $p_n(1,1)\propto(8d-4)^{n/2}/(2d)^n$. Therefore, SRW on $F_d$ has
  spectral radius $\rho(p)=\sqrt{2d-1}/d$.

  The isoperimetric constant of SRW may also easily be computed. A
  connected, finite subset $F$ of $\mathcal T$ has $\#F$ vertices and
  is connected to $2n\#F$ edges, of which $2(\#F-1)$ point back to
  $F$, so $\sum_{x\in F,y\not\in F}p_1(x,y)=((2n-2)\#F+2)/2n$. The
  isoperimetric constant is therefore $\iota(p)=1-1/n$.
\end{example}

\begin{exercise}[**]
  Compute the isoperimetric constant of SRW on the surface group
  $\Sigma_g=\langle a_1,b_1,\dots,a_g,b_g\mid[a_1,b_1]\cdots[a_g,b_g]=1\rangle$.

  \emph{Hint:} its Cayley graph is a tiling of hyperbolic plane by
  $4g$-gons, meeting $4g$ per vertex. Use Euler characteristic.

  Note that is is substantially harder to compute the spectral radius
  of SRW; only estimates are known, proportional to $\sqrt g$;
  see~\cite{gouezel:sprad} for the best bounds.
\end{exercise}

It is sometimes easier to count \emph{reduced} paths in graphs, rather
than general paths. Formally, this may be expressed as follows: let
$G=\langle S\cup S^{-1}\rangle$ be a finitely generated group, and
write $S^\pm=S\cup S^{-1}$. There is a natural map
$\pi\colon F_S\to G$ induced by the inclusion $S\hookrightarrow
G$. The spectral radius of SRW on $G$ is
\[\rho=\lim_{n\to\infty}\frac{\sqrt[n]{\#\{w\in(S^\pm)^n\mid w=_G1\}}}{\#S^\pm}\in[0,1].\]
The \emph{cogrowth} of $G$ is
\[\gamma=\lim_{n\to\infty}\sqrt[n]{\#{w\in F_S\mid \pi(w)=1}}{\#(S\cup S^{-1})}\in[1,\#S^\pm-1].\]
\begin{theorem}[\cite{grigorchuk:rw}; see also~\cite{cohen:cogrowth,szwarc:cogrowth,woess:cogrowth,bartholdi:cogrowth}]
  The parameters $\gamma,\rho$ are related by the equation
  \[\rho=\frac{\gamma+(\#S^\pm-1)/\gamma}{\#S^\pm}\text{ if }\gamma>1.
  \]
  In particular, $G$ is amenable if and only if $\gamma=\#S^\pm-1$. 
\end{theorem}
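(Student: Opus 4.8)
The plan is to pass to generating functions on the Cayley tree of $F_S$. Write $d=\#S^\pm$ and $q=d-1$, so that the Cayley graph $\mathcal T$ of $F_S$ with respect to $S^\pm$ is the $d$-regular tree; let $\pi\colon F_S\to G$ be the canonical map, $N=\ker\pi$, and set $c_n=\#\{w\in N\mid|w|=n\}$ and $a_n=\#\{w\in(S^\pm)^n\mid w=_G1\}=d^n p_n(1,1)$. By transitivity of the walk (Propositions~\ref{prop:problim} and~\ref{prop:sr=norm}) the series $A(z)=\sum_n a_n z^n$ has radius of convergence $1/(d\rho)$, while $C(t)=\sum_n c_n t^n$ has radius of convergence $1/\gamma$; both have non-negative coefficients, hence by Pringsheim's theorem each is singular at its radius of convergence, and the whole argument reduces to locating that singularity of $A$.

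First I would record the combinatorial dictionary: a word $w\in(S^\pm)^n$ is a length-$n$ walk in $\mathcal T$ from $1$ to the reduced word $\bar w$, and $w=_G1$ iff $\bar w\in N$; since the number $w_n^{(\ell)}$ of length-$n$ walks in $\mathcal T$ between two vertices at distance $\ell$ depends only on $\ell$, this gives $a_n=\sum_{\ell\ge0}c_\ell\,w_n^{(\ell)}$. The classical Green-function computation on trees then applies: with $F(z)$ the first-passage generating function from a vertex of $\mathcal T$ to a fixed neighbour and $W_0(z)$ the return generating function, the tree recursions give $qzF(z)^2-F(z)+z=0$, i.e. $F(z)=\dfrac{1-\sqrt{1-4qz^2}}{2qz}$, and $W_0(z)=\dfrac1{1-d\,zF(z)}$, together with $\sum_n w_n^{(\ell)}z^n=F(z)^\ell W_0(z)$. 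Summing over $\ell$ produces the key identity
\[A(z)=W_0(z)\,C\!\left(F(z)\right).\]

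Next comes the singularity analysis. On $[0,\tfrac1{2\sqrt q})$ the function $F$ is real-analytic and strictly increases from $0$ to $\tfrac1{\sqrt q}$, with a branch point at $\tfrac1{2\sqrt q}$; and when $q\ge2$ one has $d\,zF(z)\le\tfrac{q+1}{2q}<1$ there, so $W_0$ is analytic and positive on $[0,\tfrac1{2\sqrt q})$. Hence, provided $1/\gamma\le\tfrac1{\sqrt q}$, the first positive singularity of $A(z)=W_0(z)C(F(z))$ is the unique $z^*$ with $F(z^*)=1/\gamma$; inverting $F$ via $z=\dfrac{t}{qt^2+1}$ gives $z^*=\dfrac{\gamma}{q+\gamma^2}$, whence $d\rho=1/z^*$ and
\[\rho=\frac{q+\gamma^2}{(q+1)\gamma}=\frac{\gamma+q/\gamma}{q+1}.\]
It remains to see that the hypothesis $\gamma\ge\sqrt q$ is automatic once $\gamma>1$. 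Indeed $\gamma>1$ forces $N\ne1$ (if $N=1$ then $c_n=0$ for $n\ge1$), and for a nontrivial normal subgroup $N\le F_S$ one counts conjugates: fixing $1\ne r\in N$, the elements $g^{-1}rg$ lie in $N$ and have length $\le2|g|+|r|$, and since the centralizer of $r$ in $F_S$ is cyclic the map $g\mapsto g^{-1}rg$ has fibres of polynomial size, so $N$ contains $\gg q^m/m$ elements of length $\le2m+|r|$; therefore $\gamma=\limsup c_n^{1/n}\ge\sqrt q$. (The excluded case $q=1$, where $G$ is cyclic, is immediate.) Thus $\gamma>1$ yields the displayed formula.

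Finally, the simple random walk on $\mathrm{Cay}(G,S^\pm)$ is the walk on the $G$-set $G$ driven by the uniform, symmetric, non-degenerate measure on $S^\pm$, so by Theorem~\ref{thm:kesten} and Corollary~\ref{cor:regamen} the group $G$ is amenable iff $\rho=1$. When $\gamma>1$ this reads $\gamma+q/\gamma=q+1$, i.e. $(\gamma-1)(\gamma-q)=0$, i.e. $\gamma=q=\#S^\pm-1$; and when $\gamma=1$ the group $G$ is non-abelian free, hence non-amenable, with $\gamma=1\ne q$. The main obstacle I anticipate is the lower bound on $\gamma$: turning ``$N\ne1$'' into ``$\gamma\ge\sqrt q$'' requires the conjugate-counting estimate rather than a naive comparison of walk counts (a single relator produces only subexponentially many ``extra'' closed walks relative to the tree), so some care with cyclically reduced representatives and with centralizers in free groups is needed.
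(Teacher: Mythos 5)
Your proof is correct, and while it lands on the same functional equation as the paper, it reaches it by a genuinely different route and then supplies details the paper elides. The paper works with formal power series indexed by $G$ and derives its identity $B(z)/(1-z^2)=C(z/(1+qz^2))/(1+qz^2)$ by a recursion on the first letter of a reduced word, $B_s(z)=sz(B(z)-B_{s^{-1}}(z))$. You instead split a word $w$ by its reduced form $\bar w\in N$ and invoke the Green-function calculus on the $(q+1)$-regular tree; after the substitution $z\mapsto F(z)$ and the algebraic identity $W_0=\frac{1+qF^2}{1-F^2}$ your relation $A(z)=W_0(z)C(F(z))$ becomes the paper's, so the two derivations are equivalent, but yours stays in the commutative, analytic setting throughout. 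Where you genuinely add content is the singularity analysis, which the paper compresses into ``from which the claim of the theorem follows'': you locate the smallest positive singularity of the walk series via Pringsheim and, crucially, prove the lower bound $\gamma\ge\sqrt q$ for $N\neq1$, needed to ensure that $z^*=F^{-1}(1/\gamma)$ lies inside the branch point $1/(2\sqrt q)$ so that $F$ and $W_0$ are still analytic there. Your conjugate-counting argument for that bound is sound: centralizers of nontrivial elements of $F_S$ are cyclic, so $g\mapsto g^{-1}rg$ has $O(m)$-size fibres on the $m$-ball, yielding on the order of $q^m/m$ distinct kernel elements of length $\le2m+|r|$, hence $\gamma\ge\sqrt q$; this is indeed the non-obvious step, and the naive level-by-level comparison you worried about would not give it. The one small point worth stating explicitly is that, to conclude the radius of convergence of $A$ equals $z^*$ rather than merely being at least $z^*$, you should observe that $F'(z^*)>0$ and $W_0(z^*)\ne0$, so analyticity of $A$ at $z^*$ would propagate through the local inverse of $F$ to give analyticity of $C$ at $1/\gamma$, contradicting Pringsheim applied to $C$.
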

\begin{proof}
  The most direct proof is combinatorial. Define formal matrices $B,C$
  indexed by $G$ with power series co\"efficients by
  \[B(z)_{g,h}=\sum_{w\in F_S:g\pi(w)=h}z^{|w|},\qquad C(z)_{g,h}=\sum_{w\in(S^\pm)^*: g w=_G h}z^{|w|}.
  \]
  Set for convenience $q\coloneqq\#S^\pm-1$. We shall prove the formal
  relationship
  \begin{equation}\label{eq:cogrowthfe}
    \frac{B(z)}{1-z^2}=\frac{C(z/(1+q z^2))}{1+q z^2},
  \end{equation}
  from which the claim of the theorem follows. Define the adjacency matrix
  \[A_{g,h}=\sum_{s\in S^\pm: g s=h}1;
  \]
  then $C(z)=1/(1-z A)$. If for all $s\in S^\pm$ we define
  $B_s(z)_{g,h}=\sum_{w\in F_S\setminus\{1\}:w_1=s,g\pi(w)=h}z^{|w|}$
  then
  \[B(z)=1+\sum_{s\in S^\pm}B_s(z),\qquad B_s(z)=s z(B(z)-B_{s^{-1}}(z))
  \] 
  which solve to $B_s(z) = (1-z^2)^{-1}(s z-z^2)B(z)$ and therefore to
  \[\frac{1+q z^2}{1-z^2}B(z)=1 + \sum_{s\in S^\pm}\frac{z}{1-z^2}s B(z)=1+\frac{z}{1-z^2} A B(z);\]
  so $(1+q z^2)/(1-z^2)\cdot B(z)=1/(1-z/(1+q z^2)A)$, which is
  equivalent to~\eqref{eq:cogrowthfe}.
\end{proof}
It is also known that $\rho\ge\sqrt{\#S^\pm-1}$, with equality if and
only if $G\cong F_S$, see~\cite{paschke:norm}.

\subsection{Harmonic functions}\label{ss:harmonic}
We shall obtain, in this subsection, yet another characterization of
amenability in terms of bounded harmonic functions.
\begin{definition}
  Let $p$ be a random walk on a set $X$. A \emph{harmonic function} is
  a function $f\colon X\to\R$ satisfying
  \[f(x)=\sum_{y\in X}p_1(y,x)f(y).
  \]
  In other words, $f$ is a \emph{martingale}: along a trajectory
  $(W_n)$ of a random walk, the expectation of $f(W_n)$ given
  $W_0,\dots,W_{n-1}$ is $f(W_{n-1})$.

  A random walk is called \emph{Liouville} if the only bounded
  harmonic functions are the constants.

  If $X$ is a $G$-set and $p$ is the random walk driven by a measure
  $\mu$ on the group $G$, we say that $(X,\mu)$ is Liouville when the
  corresponding random walk is Liouville.
\end{definition}
Bounded harmonic functions are fundamental in understanding long-term
behaviour of random walks. The space of trajectories of a random walk
on $X$ is $(X^\N,\nu)$, in which the trajectory $(W_0,W_1,\dots)$ has
probability
$\nu(W_0,W_1,\dots)=\prod_{n\ge0}\mu(\{g\in G\mid W_n
g=W_{n+1}\})$. An \emph{asymptotic event} on $(X^\N,\nu)$ is a
measurable subset of $X^\N$ that is invariant under the shift map of
$X^\N$. Given a asymptotic event $E$, we define a bounded function
$f(x)=\nu(E\cap\{W_0=x\})$ and check that it is harmonic by
conditioning on the first step of the random walk; conversely, given a
bounded harmonic function $f$ the limit $f(W_n)$ almost surely exists
along trajectories, by Doob's martingale convergence theorem, so
$E_{[a,b]}=\{(W_0,W_1,\dots)\mid \lim f(W_n)\in[a,b]\}$ is a asymptotic
event. In summary, a random walk is Liouville if and only if there are
no non-trivial asymptotic events.

Let us continue with the example of SRW on $\Z$: a harmonic function
satisfies $f(x-1)+f(x+1)=2f(x)$, so $f$ is affine. In particular, SRW
on $\Z$ is Liouville.

Let us consider next the example of SRW on the Cayley graph of
$F_2=\langle a,b\mid\rangle$, which is a tree. The random walk $(W_n)$
escapes at speed $1/2$ towards the boundary of the tree, since at
every position except the origin it has three ways of moving one step
farther and one way of moving one step closer; so in particular almost
surely $W_n\neq1$ for all $n$ large enough. Let $A\subset F_2$ denote
those elements whose reduced form starts with $a$, and define
\[f(g)=\mathbb P(W_n\in g^{-1}A\text{ for all $n$ large enough}).
\]
In words, $f(g)$ is the probability that a random walk started at $g$
escapes to the boundary of the tree within $A$. It is clear that $f$
is bounded, and it is seen to be harmonic by conditioning on the first
step of the random walk. More succinctly, ``the random walk eventually
escapes in $A$'' is a non-trivial asymptotic event. Therefore, SRW on a
regular tree is not Liouville.

\begin{exercise}[*]
  Let $(X,\mu)$ and $(Y,\nu)$ be Liouville random walks. Prove that
  $(X\times Y,\mu\times\nu)$ is Liouville.
\end{exercise}

Let us recal some properties of measures and random walks. The set
$\ell^1(G)$ of summable functions on $G$ is a Banach *-algebra, for
the \emph{convolution product}
\begin{equation}\label{eq:convolution}
  (\mu\nu)(g)=\sum_{g=h k}\mu(h)\nu(k)\text{ for }\mu,\nu\in\ell^1(G).
\end{equation}
We denote by $\check\mu$ the adjoint of $\mu$, defined by
$\check\mu(g)=\mu(g^{-1})$. If $X$ is a $G$-set, then $\ell^p(X)$ is an
$\ell^1(G)$-module for all $p\in[1,\infty]$, under
\[(f\mu)(x)=\sum_{x=y h}f(y)\mu(h)\text{ for }f\in\ell^p(X),\mu\in\ell^1(G).
\]
If a random walk is driven by a measure $\mu$, then
from~\eqref{eq:driven} we get $T f=f\mu$.  With our notation, a
function $f\in\ell^\infty(X)$ is harmonic for the random walk driven
by a measure $\mu$ if and only if $f\check\mu=f$.

The Liouville property is fundamentally associated with a measure, or
a random walk. It has a counterpart which solely depends on the space,
and is a variant of amenability with switched quantifiers (see
Proposition~\ref{prop:Ramenable}):
\begin{definition}\label{defn:laminable}
  A $G$-set $X$ is called \emph{laminable}\footnote{This is a
    contraction of ``Liouville'' and ``amenable''.} if for every
  $\epsilon>0$ and every $f\in\varpi(\ell^1X)$ there exists a positive
  function $g\in\ell^1(G)$ with $\|f g\|<\epsilon\|g\|$.
\end{definition}

\begin{proposition}\label{prop:amenlamin}
  Let $G$ be a group, viewed as a right $G$-set $G_G$. Then $G_G$ is
  amenable if and only if $G_G$ is laminable.

  Let $G$ be an amenable group, and let $X\looparrowleft G$ be a
  $G$-set. Then $X$ is laminable if and only if it is transitive or
  empty.
\end{proposition}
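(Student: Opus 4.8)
The plan is to handle the two assertions of the proposition separately: the first is a formal symmetry of $\ell^1(G)$, while the second splits into an easy negative half and a Reiter-type construction for the positive half.

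For the first assertion I would identify $\ell^1(G_G)$ with $\ell^1(G)$, so that the module action $fg$ is convolution. The adjoint map $\mu\mapsto\check\mu$ is an isometric anti-automorphism of $\ell^1(G)$; it preserves the cone $\ell^1_+(G)$ and the augmentation ideal $\varpi(\ell^1G)$ (since $\sum_g\check\mu(g)=\sum_g\mu(g)$), and $\check{(fg)}=\check g\,\check f$, so $\|fg\|=\|\check g\,\check f\|$ and $\|g\|=\|\check g\|$. Substituting $f\mapsto\check f$, $g\mapsto\check g$ in Definition~\ref{defn:laminable} then shows that $G_G$ is laminable if and only if for every $\epsilon>0$ and every $k\in\varpi(\ell^1G)$ there is $h\in\ell^1_+(G)$ with $\|hk\|<\epsilon\|h\|$, which is verbatim the criterion of Proposition~\ref{prop:Ramenable} for amenability of $X=G_G$. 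So $G_G$ is amenable iff laminable, with no hypothesis on $G$.

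For the second assertion I would first dispose of the trivial and the negative cases. The empty set is laminable because $\varpi(\ell^1\emptyset)=\{0\}$ and $g=\delta_1$ works. If $X\neq\emptyset$ is not transitive, choose $a,b$ in distinct $G$-orbits and set $f=\delta_a-\delta_b\in\varpi(\ell^1X)$; for any $g\in\ell^1_+(G)$ the functions $\delta_a g$ and $\delta_b g$ are supported on the disjoint orbits $aG$ and $bG$, and $\|\delta_x g\|_1=\sum_h g(h)=\|g\|$ since $g\ge0$, whence $\|fg\|=\|\delta_a g\|+\|\delta_b g\|=2\|g\|$ and $X$ is not laminable. (This half uses nothing about $G$.)

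It then remains to prove that if $G$ is amenable and $X$ is transitive then $X$ is laminable. Given $\epsilon>0$ and $f\in\varpi(\ell^1X)$, I would first approximate $f$ by a finitely supported $f'$ with $\sum f'=0$ and $\|f-f'\|<\epsilon/2$ (truncate the support, correct the small residual at one point), then decompose $f'=\sum_{k=1}^N t_k(\delta_{a_k}-\delta_{b_k})$ with $t_k>0$ by a transportation argument on its positive and negative parts. Transitivity gives $h_k\in G$ with $a_kh_k=b_k$, and then $(\delta_{a_k}-\delta_{b_k})g=\delta_{a_k}g-\delta_{a_k}(\lambda_{h_k}g)=\delta_{a_k}(g-\lambda_{h_k}g)$ for the left-translation action $\lambda$, so $\|(\delta_{a_k}-\delta_{b_k})g\|\le\|g-\lambda_{h_k}g\|_{\ell^1(G)}$. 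Since $G$ is amenable it satisfies Reiter's condition for left translations (apply Theorem~\ref{thm:folneramen}(2) to $G_G$ with generating set $\{h_1^{-1},\dots,h_N^{-1}\}$ and take adjoints; cf.\ Exercise~\ref{ex:leftamen}), so there is $g\in\ell^1_+(G)$ with $\|g-\lambda_{h_k}g\|<\delta\|g\|$ for all $k$, where $\delta:=\epsilon/(2\sum_k t_k)$; summing yields $\|f'g\|<\epsilon\|g\|/2$ and hence $\|fg\|\le\|f'g\|+\|f-f'\|\,\|g\|<\epsilon\|g\|$. The hard part will be exactly this last step: arranging a single Reiter function that controls all of $h_1,\dots,h_N$ simultaneously, together with the passage from finitely supported sum-zero functions to all of $\varpi(\ell^1X)$ by density; everything else is formal bookkeeping.
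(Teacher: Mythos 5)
Your proof is correct, and both halves match the paper's strategy in spirit. For the first assertion you pass to adjoints to turn the laminability criterion for $G_G$ into the criterion of Proposition~\ref{prop:Ramenable}; this is exactly the paper's argument. The negative direction of the second assertion (two orbits give $\|(\delta_x-\delta_y)g\|=2\|g\|$) is also identical. Where you genuinely differ is in the positive direction. You approximate $f$ by a finitely supported sum-zero $f'$, decompose it as $\sum_k t_k(\delta_{a_k}-\delta_{b_k})$, lift each elementary difference via transitivity, and invoke a simultaneous left-Reiter function for $\{h_1,\dots,h_N\}$. The paper instead lifts $f$ globally in one step: choose a basepoint $x$ and a section $y\mapsto g_y$ with $xg_y=y$, set $h\coloneqq\sum_y f(y)\delta_{g_y}\in\varpi(\ell^1G)$ so that $\delta_x h=f$, apply Proposition~\ref{prop:Ramenable} to $\check h$ to get positive $g$ with $\|g\check h\|<\epsilon\|g\|$, and observe $\|f\check g\|\le\|h\check g\|=\|g\check h\|<\epsilon\|\check g\|$. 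The underlying idea is the same in both — pull $f$ back along the orbit map $g\mapsto xg$ and then apply amenability of $G$ in Reiter form — but the paper's version compresses your approximation-plus-transportation-plus-left-Reiter steps into a single convolution estimate, at the cost of fixing a section of the orbit map. Yours avoids choosing a section but needs the extra bookkeeping; both routes are sound.
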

\begin{proof}
  By Proposition~\ref{prop:Ramenable}, $G_G$ is amenable if for every
  $\epsilon>0$ and every $0\neq g\in\varpi(\ell^1G)$ there exists a
  positive function $f\in\ell^1(X)$ with
  $\|f g\|<\epsilon\|f\|\,\|g\|$; equivalently,
  $\|\check g\check f\|<\epsilon\|g\|\,\|f\|$, which is the definition
  of laminability of $G_G$.

  For the second statement: if there is more than one $G$-orbit on
  $X$, choose $x,y$ in different orbits; then
  $\|(\delta_x-\delta_y)g\|=2\|g\|$ for all positive
  $g\in\ell^1(G)$. Conversely, given $\epsilon>0$ and
  $f\in\varpi(\ell^1X)$, choose $x\in X$ and $h\in\varpi(\ell^1G)$
  with $f=\delta_x h$. Since $G$ is amenable, there is a positive
  function $g\in\ell^1(G)$ with $\|g\check h\|<\epsilon\|g\|$; so
  $\|f\check g\|=\|x h\check g\|=\|g\check h\|<\epsilon\|g\|$, and $X$
  is laminable.
\end{proof}

\noindent The following easy proposition is an analogue to
Proposition~\ref{prop:quotientX}:
\begin{proposition}\label{prop:quotientL}
  Let $G,H$ be groups, let $X\looparrowleft G$ and $Y\looparrowleft H$
  be respectively a $G$-set and an $H$-set, let
  $\phi\colon G\twoheadrightarrow H$ be a homomorphism, and let
  $f\colon X\to Y$ be a surjective equivariant map, namely satisfying
  $f(x g)=f(x)\phi(g)$ for all $x\in X,g\in G$. If $X$ is laminable,
  then $Y$ is laminable.
\end{proposition}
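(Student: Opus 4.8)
The plan is to transport laminability along pushforward maps, exactly as Proposition~\ref{prop:quotientX} transports invariant means. Define $f_*\colon\ell^1(X)\to\ell^1(Y)$ by $(f_*\xi)(y)=\sum_{f(x)=y}\xi(x)$ and, analogously, $\phi_*\colon\ell^1(G)\to\ell^1(H)$ by $(\phi_*g)(h')=\sum_{\phi(h)=h'}g(h)$. Both maps are norm non-increasing, send positive functions to positive functions, restrict (using surjectivity of $f$, resp.\ $\phi$) to surjections $\varpi(\ell^1X)\twoheadrightarrow\varpi(\ell^1Y)$ and $\varpi(\ell^1G)\twoheadrightarrow\varpi(\ell^1H)$, and are isometric on the positive cone. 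The one computation needed is the compatibility
\[
f_*(\xi g)=(f_*\xi)(\phi_*g)\qquad\text{for all }\xi\in\ell^1(X),\ g\in\ell^1(G).
\]
By bilinearity and $\ell^1$-continuity it suffices to verify this on $\xi=\delta_x$, $g=\delta_h$, where the module action gives $\delta_x\delta_h=\delta_{xh}$, so both sides equal $\delta_{f(xh)}=\delta_{f(x)\phi(h)}=\delta_{f(x)}\delta_{\phi(h)}$ by equivariance of $f$.

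Next I would lift an arbitrary test function. Fix $\epsilon>0$ and $\psi\in\varpi(\ell^1Y)$. Since $f$ is onto, choose (Axiom of Choice) a section $s\colon Y\to X$ with $f\circ s=\mathrm{id}_Y$, and set $\xi\coloneqq\sum_{y\in Y}\psi(y)\,\delta_{s(y)}\in\ell^1(X)$. As $s$ is injective, $\|\xi\|=\|\psi\|$; moreover $\sum_x\xi(x)=\sum_y\psi(y)=0$, so $\xi\in\varpi(\ell^1X)$, and $f_*\xi=\psi$.

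Now invoke laminability of $X$ applied to $\xi$: there is a positive $g\in\ell^1(G)$ with $\|\xi g\|<\epsilon\|g\|$; in particular $g\neq0$. Put $g'\coloneqq\phi_*g$, which is positive with $\|g'\|=\|g\|>0$, hence $g'\neq0$. Then, using the compatibility identity and that $f_*$ does not increase norm,
\[
\|\psi g'\|=\|(f_*\xi)(\phi_*g)\|=\|f_*(\xi g)\|\le\|\xi g\|<\epsilon\|g\|=\epsilon\|g'\|,
\]
which is precisely the inequality witnessing laminability of $Y$. Since $\epsilon$ and $\psi$ were arbitrary, $Y$ is laminable.

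There is no genuine obstacle here; the proof is the $\ell^1$-module analogue of Proposition~\ref{prop:quotientX}, and the only points requiring care are (a) the exact conventions for the two module actions and the two pushforwards, which is pure bookkeeping, and (b) ensuring $g'\neq0$, which holds because $\|\phi_*g\|=\|g\|$ for positive $g$. I would also remark that surjectivity of $\phi$ is not actually used in this direction: only surjectivity of $f$ (to obtain the section $s$ and the surjectivity of $f_*$) is needed.
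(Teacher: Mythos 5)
Your proof is correct and follows the same strategy as the paper's: lift $\psi\in\varpi(\ell^1Y)$ back to $\varpi(\ell^1X)$, apply laminability of $X$ to obtain a positive witness $g\in\ell^1(G)$, push $g$ forward along $\phi$, and control $\|\psi(\phi_*g)\|$ by $\|\xi g\|$. Your section-based lift $\xi=\sum_{y\in Y}\psi(y)\delta_{s(y)}$ is actually cleaner than what the paper writes: the paper's $e'=e\circ f$ need not belong to $\ell^1(X)$ when $f$ has infinite fibers, whereas $\|\xi\|=\|\psi\|$ always holds and $f_*\xi=\psi$. The compatibility identity $f_*(\xi g)=(f_*\xi)(\phi_*g)$, together with $\ell^1$-contractivity of $f_*$ and the fact that $\phi_*$ is an isometry on the positive cone, is exactly what the paper's chain $\|e\,\phi(g)\|\le\|e'g\|<\epsilon\|g\|=\epsilon\|\phi(g)\|$ is relying on, and you spelled it out correctly. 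Your remark that surjectivity of $\phi$ is not in fact needed (only surjectivity of $f$) is also accurate.
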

\begin{proof}
  Given $\epsilon>0$ and $e\in\varpi(\ell^1Y)$, there is
  $e'\in\varpi(\ell^1X)$ with $e'=e\circ f$, because $f$ is
  surjective; then there is a positive function $g\in\ell^1(G)$ with
  $\|e' g\|<\epsilon\|g\|$, because $X$ is laminable; then
  $\|e\phi(g)\|\le\|e' g\|<\epsilon\|g\|=\epsilon\phi(g)$, so $Y$ is
  laminable.
\end{proof}

\begin{corollary}\label{cor:amentrans}
  Let $X\looparrowleft G$ be a $G$-set and let $H\le G$ be a
  subgroup. If $H$ is amenable and transitive, then $X$ is
  laminable.\qed
\end{corollary}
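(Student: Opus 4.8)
The plan is to deduce the statement in two short steps: first establish laminability of $X$ as an $H$-set, then observe that laminability can only get easier when the acting group is enlarged.

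First I would apply Proposition~\ref{prop:amenlamin} to the amenable group $H$ and the $H$-set $X\looparrowleft H$: since $H$ acts transitively on $X$ (or $X=\emptyset$, in which case $\varpi(\ell^1X)=0$ and there is nothing to prove), the second part of that proposition yields that $X\looparrowleft H$ is laminable. Concretely, this means: for every $\epsilon>0$ and every $f\in\varpi(\ell^1X)$ there is a positive $g\in\ell^1(H)$ with $\|f g\|<\epsilon\|g\|$, where the module action of $\ell^1(H)$ on $\ell^1(X)$ is the one coming from the restricted $H$-action.

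Second I would promote this to the full $G$-action. The inclusion $H\hookrightarrow G$ induces an isometric embedding $\ell^1(H)\hookrightarrow\ell^1(G)$ (extend functions by $0$ off $H$) which carries positive elements to positive elements and preserves $\ell^1$-norms. Moreover, because the $G$-action on $X$ restricts to the given $H$-action, for $g$ supported on $H$ the product $f g$ computed in the $\ell^1(G)$-module $\ell^1(X)$ coincides with the one computed in the $\ell^1(H)$-module $\ell^1(X)$; this is immediate from the defining formula $(f\mu)(x)=\sum_{x=y h}f(y)\mu(h)$, since the sum is supported on $h\in H$. Hence the witness $g$ produced above, viewed inside $\ell^1(G)$, is still positive, has the same norm, and still satisfies $\|f g\|<\epsilon\|g\|$. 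As $\epsilon>0$ and $f\in\varpi(\ell^1X)$ were arbitrary, $X\looparrowleft G$ is laminable.

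There is essentially no obstacle here: all of the content is packaged into Proposition~\ref{prop:amenlamin}, and the passage from $H$ to $G$ is the trivial remark that enlarging the group enlarges the supply of admissible test functions $g$ while leaving the relevant norms and module operations unchanged. The only point deserving a line of care is the compatibility of the $\ell^1(H)$- and $\ell^1(G)$-module structures on $\ell^1(X)$, which is checked directly from the formula above.
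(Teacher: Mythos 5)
Your proof is correct and is exactly the argument the paper leaves to the reader with its $\qed$: apply the second part of Proposition~\ref{prop:amenlamin} to the transitive action of the amenable group $H$ on $X$, then note that a positive witness $g\in\ell^1(H)$ remains a witness after the extension-by-zero embedding into $\ell^1(G)$, since neither the $\ell^1$-norm nor the module product $(f\mu)(x)=\sum_{x=yh}f(y)\mu(h)$ is affected. The one compatibility check you spell out from the defining formula is precisely the point to be verified, so the write-up is complete.
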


\begin{lemma}\label{lem:sx-laminable}
  If $X\looparrowleft G$ is laminable, then for every $x\in X$, every
  finite subset $S\Subset X$ and every $\epsilon>0$ there exists a
  positive function $g\in\ell^1(G)$ with
  \[\|\delta_s g-\delta_x g\|<\epsilon\|g\|\text{ for all }s\in S.\]
  Furthermore $g$ may be supposed to be of finite support.
\end{lemma}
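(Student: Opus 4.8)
\emph{Plan.} The idea is to build $g$ as an iterated convolution product on $G$, peeling off the points of $S$ one at a time and using that convolving on the right by a probability measure on $G$ cannot spoil an almost-invariance property already secured for earlier points. First I would dispose of the finite-support clause: once a positive $g\in\ell^1(G)$ with $\|\delta_s g-\delta_x g\|<\epsilon\|g\|$ for all $s\in S$ has been found, I truncate it to a large finite subset of $G$; since the module action satisfies the contractivity estimate $\|f\mu\|\le\|f\|\,\|\mu\|$ for $f\in\ell^1(X)$, $\mu\in\ell^1(G)$, since $S$ is finite, and since all the inequalities are strict, any sufficiently close finitely supported positive $g'$ (still nonzero) will satisfy the same estimates. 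So it is enough to produce $g\in\ell^1_+(G)$.

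The core is an induction on $n=\#S$; write $S=\{s_1,\dots,s_n\}$. I would construct positive functions $g_1,\dots,g_n\in\ell^1(G)$ with $\|g_k\|=1$ and set $G_k\coloneqq g_1g_2\cdots g_k$ (convolution), with $G_0\coloneqq\delta_1$, maintaining the invariant that $G_k\ge 0$, $\|G_k\|=1$, and $\|(\delta_{s_i}-\delta_x)G_k\|<\epsilon$ for all $i\le k$. The base case $k=0$ is vacuous. For the inductive step, observe two things. (a) For $i\le k$ one has $(\delta_{s_i}-\delta_x)G_{k+1}=\big((\delta_{s_i}-\delta_x)G_k\big)g_{k+1}$, so $\|(\delta_{s_i}-\delta_x)G_{k+1}\|\le\|(\delta_{s_i}-\delta_x)G_k\|\cdot\|g_{k+1}\|<\epsilon\cdot 1$ by contractivity and the inductive hypothesis, whatever probability measure $g_{k+1}$ we eventually pick. (b) The element $f_{k+1}\coloneqq(\delta_{s_{k+1}}-\delta_x)G_k$ lies in $\varpi(\ell^1X)$: indeed $\sum_y(\delta_s\mu)(y)=\|\mu\|$ for every $\mu\ge 0$, so the total mass of $f_{k+1}$ is $\|G_k\|-\|G_k\|=0$. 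Hence laminability (Definition~\ref{defn:laminable}) applies to $f_{k+1}$ and $\epsilon$, giving $g_{k+1}\in\ell^1_+(G)$ with $\|f_{k+1}g_{k+1}\|<\epsilon\|g_{k+1}\|$; rescale so $\|g_{k+1}\|=1$. Then $\|(\delta_{s_{k+1}}-\delta_x)G_{k+1}\|=\|f_{k+1}g_{k+1}\|<\epsilon$, and $\|G_{k+1}\|=\|G_k\|\,\|g_{k+1}\|=1$ since both factors are positive, closing the induction. Taking $g\coloneqq G_n$ settles the main assertion.

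The one genuine point is (a): the choice of $g_{k+1}$ is dictated solely by the need to handle $s_{k+1}$, and the convolution structure makes that choice automatically harmless for the coordinates already under control — so there is no real obstacle beyond bookkeeping. The two things to watch are keeping every $g_k$ (and the final $g$) honestly positive and nonzero, so that the normalizations and the identity $\|\mu\nu\|=\|\mu\|\,\|\nu\|$ for $\mu,\nu\ge 0$ are valid, and remembering that laminability is only asserted for elements of $\varpi(\ell^1X)$, which is exactly why the mass computation in (b) is needed rather than decorative.
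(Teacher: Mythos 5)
Your proof is correct, but it takes a genuinely different route from the paper's. The paper applies laminability \emph{once}, to the single function $f=\sum_{s\in S}\delta_s-\#S\,\delta_x\in\varpi(\ell^1X)$, obtains $g$ with $\|fg\|<\tfrac{\epsilon}{2}\|g\|$, and recovers the per-$s$ estimates from the chain
\[2\|fg\|\ge 2\Bigl\|\sum_{s\in S}\max(\delta_s g-\delta_x g,0)\Bigr\|=\sum_{s\in S}\|\delta_s g-\delta_x g\|.\]
You instead apply laminability $\#S$ times, once per point of $S$, and the observation that makes the induction work is the contractivity $\|f\mu\|\le\|f\|\,\|\mu\|$ of right convolution by a probability: each new factor $g_{k+1}$ cannot spoil bounds already secured. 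Yours is arguably the safer of the two. Indeed, the paper's pivotal inequality $\|fg\|\ge\bigl\|\sum_{s\in S}\max(\delta_s g-\delta_x g,0)\bigr\|$ does not hold for an arbitrary positive $g$: setting $u=\sum_{s\in S}\max(\delta_s g-\delta_x g,0)$ and $v=\sum_{s\in S}\max(\delta_x g-\delta_s g,0)$, one has $fg=u-v$ with $\|u\|=\|v\|$, and $\|u-v\|$ can be strictly smaller than $\|u\|$ when the supports of $u$ and $v$ overlap. (Take $G=X=\Z$, $x=0$, $S=\{1,-1\}$, and $g(z)=\max(3-|z|,0)$: one computes $\|fg\|=4$ but $\|u\|=6$.) Your induction side-steps this entirely, at the modest cost of using the hypothesis $\#S$ times instead of once. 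Finally, your handling of the finite-support clause---truncate and exploit the strictness of the inequalities as slack, together with the same contractivity---is correct, and is simply a spelled-out version of the paper's appeal to density of finitely-supported functions in $\ell^1(G)$.
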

\begin{proof}
  Consider $f=\sum_{s\in S}\delta_s-\#S\delta_x$. Since $X$ is
  laminable, there is for every $\epsilon>0$ a positive function
  $g\in\ell^1(G)$ with $\|f g\|<\epsilon\|g\|/2$. Then
  \begin{align*}
    \epsilon\|g\|&>2\|f g\|\ge2\Big\|\sum_{s\in S}\max(\delta_s g-\delta_x g,0)\Big\|=\sum_{s\in S}2\|\max(\delta_s g-\delta_x g,0)\|\\
    &\ge\sum_{s\in S}\|\delta_s g-\delta_x g\|.
  \end{align*}
  Using density of finitely-supported functions in $\ell^1(G)$ gives
  the last claim.
\end{proof}

\noindent The main result of this section is:
\begin{theorem}\label{thm:Lamenable}
  Let $X$ be a $G$-set.  The following are equivalent:
  \begin{enumerate}
  \item $X$ is laminable;
  \item There exists a symmetric measure $\mu$ with support equal to
    $G$ such that $(X,\mu)$ is Liouville;
  \item There exists a measure $\mu$ on $G$ such that $(X,\mu)$ is
    Liouville.
  \end{enumerate}
\end{theorem}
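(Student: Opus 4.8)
The plan is to run the cycle $(1)\Rightarrow(2)\Rightarrow(3)\Rightarrow(1)$, with $(2)\Rightarrow(3)$ being trivial. The implication $(3)\Rightarrow(1)$ should be the mechanical direction, and $(1)\Rightarrow(2)$ the heart of the matter.

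For $(3)\Rightarrow(1)$: suppose $\mu$ drives a Liouville random walk on $X$, and fix $f\in\varpi(\ell^1X)$ and $\epsilon>0$. The idea is to iterate: set $\mu_N=\frac1N\sum_{k=0}^{N-1}\mu^{*k}$ (a Cesàro average of convolution powers), which is a positive element of $\ell^1(G)$ after normalization. If $\|f\mu_N\|\not\to0$, then a weak-$*$ limit point argument produces a nonzero bounded function annihilated by $(1-\check\mu)$ up to the averaging, hence a nonconstant bounded harmonic function on $X$, contradicting the Liouville hypothesis. One has to be a little careful: the Liouville property is about \emph{bounded} harmonic functions, so I would dualize — pair $f\mu_N$ against $\ell^\infty(X)$, extract a weak-$*$ cluster point $m$ of the functionals $h\mapsto\langle f\mu_N,h\rangle/\|f\mu_N\|$ along an ultrafilter, observe $m$ descends to a $\check\mu$-harmonic element of $\ell^\infty(X)$, and use that $f\in\varpi$ to see $m$ is nonconstant in a suitable sense, giving the contradiction. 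This step essentially parallels $(5)\Rightarrow(1)$ of Theorem~\ref{thm:folneramen} and the Liouville discussion of~\S\ref{ss:harmonic}.

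For $(1)\Rightarrow(2)$: this is where the real work lies, and it mirrors a classical construction (Kaimanovich--Vershik / Rosenblatt). The goal is to build a \emph{single} symmetric measure $\mu$ with full support on $G$ for which every bounded harmonic function on $X$ is constant. The plan is: enumerate $G=\{g_1,g_2,\dots\}$; using laminability and Lemma~\ref{lem:sx-laminable}, for each finite $S_n\Subset X$ and each $\epsilon_n\to0$ choose a finitely-supported positive $\nu_n\in\ell^1(G)$ with $\|\delta_s\nu_n-\delta_x\nu_n\|<\epsilon_n\|\nu_n\|$ for all $s\in S_n$; symmetrize each $\nu_n$ (replace by $\tfrac12(\nu_n+\check\nu_n)$, which only helps the estimates since $\|\cdot\|$ is translation-invariant) and throw in a tiny multiple of a fixed full-support symmetric probability measure to guarantee $\mathrm{supp}(\mu)=G$; then set $\mu=\sum_n 2^{-n}\nu_n'$ for the normalized pieces. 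The point is that a bounded harmonic function $f$ for $\mu$ satisfies $f\check\mu=f$, hence $f\check\nu_n'=f$ for the relevant averaged pieces, which forces $|f(s)-f(x)|$ to be small for all $s$ in larger and larger finite sets — and since $G$ (acting transitively on each orbit, after reducing to the transitive case via Proposition~\ref{prop:quotientL} and Corollary~\ref{cor:amentrans}) sweeps out all of $X$, $f$ must be constant on orbits; a separate argument using $f\in\varpi$-type testing against $\delta_s-\delta_x$ handles the relation between orbits, or one simply notes laminability already encodes it.

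The main obstacle I anticipate is making the approximation in $(1)\Rightarrow(2)$ \emph{uniform enough} that a single $\mu$ works for \emph{all} bounded harmonic functions simultaneously — a diagonal/Baire-type argument is needed, and one must track how the harmonicity equation $f\check\mu=f$ distributes over the infinite convex combination $\mu=\sum 2^{-n}\nu_n'$ without losing control (the tail contributes $O(2^{-n}\|f\|_\infty)$, which is fine). A secondary technical point is ensuring symmetry and full support can be imposed without destroying the laminability estimates; this is routine because all the relevant norms are invariant under $g\mapsto g^{-1}$ and under left/right translation, so symmetrizing and adding a small full-support perturbation only changes constants. I would also invoke, if needed, the reduction that it suffices to treat $X$ transitive (using that a laminable $G$-set is transitive or empty when $G$ is amenable, Proposition~\ref{prop:amenlamin}, together with the fact that laminability is not inherited by disjoint unions — so in fact the transitive case is the only case).
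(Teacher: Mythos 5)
Your overall architecture, $(1)\Rightarrow(2)\Rightarrow(3)\Rightarrow(1)$, matches the paper's, but there is a genuine gap in $(1)\Rightarrow(2)$ and a detour with a type confusion in $(3)\Rightarrow(1)$.

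The real problem is the step in $(1)\Rightarrow(2)$ where you assert that a bounded harmonic function $f$ for $\mu=\sum_n 2^{-n}\nu_n'$ satisfies $f\check\nu_n'=f$ for each piece. This is false in general: $f\check\mu=f$ means $\sum_n 2^{-n}f\check\nu_n'=f$, and a convex combination equalling $f$ does not force each summand to equal $f$ (consider $f\check\nu_1'=f+g$, $f\check\nu_2'=f-g$). There is no maximum-principle rescue either, since $X$ is infinite and the sup need not be attained. The paper sidesteps this entirely: rather than arguing about harmonic functions directly, it proves the $\ell^1$ asymptotic-invariance condition of Proposition~\ref{prop:liouville}, namely $\|\delta_x\mu^n-\delta_{x_0}\mu^n\|\to0$, and this requires a \emph{nested} choice of the pieces. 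Concretely, the $i$th piece $\alpha_i$ is chosen (via Lemma~\ref{lem:sx-laminable}) so that the laminability estimate holds not just at finitely many points of $X$, but at all points of the form $\{x_0,\dots,x_i\}\cdot(\{1\}\cup F_1\cup\cdots\cup F_{i-1})^{n_i}$, where $F_j=\supp(\alpha_j)$ and $n_i$ is chosen so the tail weight $(t_1+\cdots+t_{i-1})^{n_i}$ is small. With that constraint, one expands $\mu^n$ as a sum over index tuples $(k_1,\dots,k_n)$, splits off the negligible piece where all $k_j<\ell$, and for every other tuple locates the first large index $k_i\ge\ell$; the prefix $\alpha_{k_1}\cdots\alpha_{k_{i-1}}$ has support inside the set controlled by the estimate on $\alpha_{k_i}$, so the whole word is nearly base-point-independent. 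This nested support constraint is precisely what is missing from your construction, where the $\nu_n$ are chosen independently; without it there is no leverage on $\mu^n$ for $n>1$, which is what the Liouville property is about.

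For $(3)\Rightarrow(1)$, your Cesàro/weak-$*$ plan is pointed in the right direction but is muddled on types: a weak-$*$ cluster point of $h\mapsto\langle f\mu_N,h\rangle/\|f\mu_N\|$ lives in $\ell^\infty(X)^*$, whereas a bounded harmonic function is an element of $\ell^\infty(X)$; you would still need to descend from $\ell^\infty(X)^*$ to $\ell^\infty(X)$, and that is not free. The paper instead invokes Proposition~\ref{prop:liouville} directly: Liouville implies $\|\delta_x\mu^n-\delta_y\mu^n\|_1\to0$, after which one writes $f\in\varpi(\ell^1X)$ as $f'+\sum_{(x,y)\in S}(\delta_x-\delta_y)$ with $\|f'\|$ small, and takes $g=\mu^n$ (not a Cesàro average) as the laminability witness. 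That is both shorter and avoids the duality issue. Your anticipated ``main obstacle'' (a single $\mu$ working for all harmonic functions) is indeed the crux, but the proposed diagonal/Baire repair does not touch it; what actually resolves it is replacing the harmonic-function viewpoint by the $\ell^1$ asymptotic-invariance characterization, for which the nested construction provides explicit rates.
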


\begin{corollary}[Kaimanovich-Vershik~\cite{kaimanovich-v:entropy}]\label{thm:kv}
  Let $G$ be a group. Then $G$ is amenable if and only if there exists
  a measure $\mu$ (\emph{ad lib.} symmetric, with full support) such
  that $(G,\mu)$ is Liouville.\qed
\end{corollary}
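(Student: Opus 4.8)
The plan is to obtain Corollary~\ref{thm:kv} as a direct specialization of Theorem~\ref{thm:Lamenable} to the regular right $G$-set $X=G_G$, combined with Corollary~\ref{cor:regamen} and the first assertion of Proposition~\ref{prop:amenlamin}. No genuinely new argument is needed; the work has all been done in the preceding sections, and what remains is to chain the equivalences correctly.

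First I would recall that, by Corollary~\ref{cor:regamen}, the group $G$ is amenable if and only if the right $G$-set $G_G$ is amenable. Next, the first statement of Proposition~\ref{prop:amenlamin} says that $G_G$ is amenable if and only if $G_G$ is \emph{laminable}. Finally, Theorem~\ref{thm:Lamenable}, applied with $X=G_G$, asserts the equivalence of: $G_G$ is laminable; there exists a symmetric measure $\mu$ with support all of $G$ such that $(G_G,\mu)$ is Liouville; and there exists some measure $\mu$ on $G$ such that $(G_G,\mu)$ is Liouville. Stringing these biconditionals together yields that each of the forms of the Liouville hypothesis on a measure on $G$ --- $\mu$ arbitrary, $\mu$ symmetric, $\mu$ of full support, or both at once --- is equivalent to amenability of $G$; this is precisely the content of the ``ad lib.'' in the statement.

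The single point requiring a remark, rather than an obstacle, is the bookkeeping identification of ``$(G,\mu)$ is Liouville'' in the corollary with the notion of Liouville used in Theorem~\ref{thm:Lamenable}. For $X=G_G$ the random walk driven by $\mu$ is the operator $T\colon f\mapsto f\mu$ on $\ell^\infty(G)$, so a bounded harmonic function is exactly an $f\in\ell^\infty(G)$ with $f\check\mu=f$, i.e.\ a bounded $\mu$-harmonic function on $G$ in the usual sense; the Liouville property thus coincides with the classical one. Hence the corollary follows immediately, and I expect no real difficulty: the entire weight of the statement --- in particular the hard implication that amenability forces the existence of a Liouville measure --- is carried by Theorem~\ref{thm:Lamenable}, whose proof is the substantive content, not by this corollary.
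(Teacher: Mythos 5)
Your derivation is exactly the intended one: the paper marks this corollary with \qed precisely because it is a routine chaining of Corollary~\ref{cor:regamen}, Proposition~\ref{prop:amenlamin}, and Theorem~\ref{thm:Lamenable} applied to $X=G_G$. Your bookkeeping remark identifying the two notions of Liouville is sound and appropriate; nothing is missing.
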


Note that there exist amenable non-laminable $G$-sets, such as
Example~\ref{ex:freeamen}, and non-amenable graphs for which SRW is
Liouville, see~\cite{benjamini-kozma:naliouville}
or~\cite{benjamini:cgr}*{Chapter~13}. At the extreme, note that the
empty set is laminable but not amenable, and the disjoint union of two
points is amenable but not laminable. Here is a slightly less
contrived example:
\begin{example}[Kaimanovich]
  Consider the binary rooted tree with vertex set $\{0,1\}^*$ and an
  edge between $a_1\dots a_n$ and $a_1\dots a_{n+1}$ for all
  $a_i\in\{0,1\}$. Fix a function $f\colon\N\to\N$ satisfying $f(n)<n$
  for all $n\in\N$, and put also an edge between $a_1\dots a_n$ and
  $a_1\dots \widehat{a_{f(n)}}\dots a_n$ for all
  $a_i\in\{0,1\}$. Finally add some loops at the root so as to make
  the graph $6$-regular; we have constructed a graph $\mathscr G$,
  with a natural action of $F_6$ once the edges are appropriately
  labeled. We consider SRW on $\mathscr G$.

  On the one hand, $\mathscr G$ is not amenable; for example, because
  SRW drifts away from the root at speed $(4-2)/6=1/3$, or because the
  isoperimetric inequality in $\mathscr G$ is at least as bad as in a
  binary tree.

  On the other hand, if $f$ grows slowly enough then SRW on
  $\mathscr G$ is Liouville; indeed SRW converges to the boundary of
  the binary tree, represented by binary sequences $\{0,1\}^\N$, and
  it suffices to show that there are no asymptotic events on this
  boundary. If $f$ is such that $f^{-1}(n)$ is infinite for all
  $n\in\N$, then each c\"oordinate in $\{0,1\}^\N$ is randomized
  infinitely often by the walk when it follows the $f$-edges, so there
  is no non-constant measurable function on the space of trajectories.
\end{example}

For the remainder of the section, we assume the hypotheses of the
theorem: a countable group $G$ and a transitive $G$-set $X$ are
fixed. We also assume that all measures $\mu$ under consideration
satisfy $\mu(1)>0$, and call such $\mu$ \emph{aperiodic}. This is
harmless: a function $f$ is harmonic for $\mu$ if and only if it is
harmonic for $q\mu+(1-q)\delta_1$ whenever $q\in(0,1]$.
\begin{lemma}
  Let $\mu$ be an aperiodic measure on $G$. Then there exists a
  sequence $(\epsilon_n)\to0$, depending only on $\mu(1)$, such that,
  for all $f\in\ell^\infty(X)$,
  \[\|f\mu^n-f\mu^{n+1}\|\le\epsilon_n\|f\|.
  \]
\end{lemma}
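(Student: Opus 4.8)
The plan is to isolate the atom of $\mu$ at the identity and reduce everything to a statement about binomial weights. Set $a:=\mu(1)\in(0,1]$. If $a=1$ then $\mu=\delta_1$, so $f\mu^n=f\mu^{n+1}=f$ and the lemma holds with $\epsilon_n=0$; so assume $a<1$ and put $\nu:=(1-a)^{-1}(\mu-a\delta_1)$, a probability measure on $G$. Let $T$ and $S$ be the operators $f\mapsto f\mu$ and $f\mapsto f\nu$ on $\ell^\infty(X)$; both have norm $\le1$ since $\mu,\nu$ are probability measures, and $T=aI+(1-a)S$. Because $\delta_1$ is central in $\ell^1(G)$, the binomial theorem gives $\mu^n=(a\delta_1+(1-a)\nu)^n=\sum_{k=0}^n\beta_{n,k}\,\nu^k$ with $\beta_{n,k}:=\binom nk a^{n-k}(1-a)^k$; applying the module axioms, $f\mu^n=\sum_{k=0}^n\beta_{n,k}\,(f\nu^k)=\sum_{k=0}^n\beta_{n,k}\,S^kf$, where $\beta_{n,\cdot}$ is the law of $\mathrm{Binom}(n,1-a)$.

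First I would estimate the coefficient differences. Pascal's rule gives $\beta_{n+1,k}=a\beta_{n,k}+(1-a)\beta_{n,k-1}$, hence $\beta_{n,k}-\beta_{n+1,k}=(1-a)(\beta_{n,k}-\beta_{n,k-1})$, and therefore, using $\|S^k\|\le1$ and the convention $\beta_{n,k}=0$ for $k\notin\{0,\dots,n\}$,
\[\|f\mu^n-f\mu^{n+1}\|_\infty\le\Big(\sum_{k\in\Z}|\beta_{n,k}-\beta_{n+1,k}|\Big)\|f\|_\infty=(1-a)\Big(\sum_{k\in\Z}|\beta_{n,k}-\beta_{n,k-1}|\Big)\|f\|_\infty.\]
Since $(\beta_{n,k})_k$ is unimodal (a binomial probability mass function), splitting the last sum at its mode $m_n$ and telescoping each half yields $\sum_k|\beta_{n,k}-\beta_{n,k-1}|=2\beta_{n,m_n}=2\max_k\beta_{n,k}$. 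Hence one may take
\[\epsilon_n:=2(1-a)\max_{0\le k\le n}\binom nk a^{n-k}(1-a)^k,\]
a quantity depending only on $a=\mu(1)$.

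It then remains to check $\epsilon_n\to0$, i.e. that the largest atom of $\mathrm{Binom}(n,1-a)$ tends to $0$. The mode satisfies $m_n=(n+1)(1-a)+O(1)$, so for $0<a<1$ both $m_n$ and $n-m_n$ grow linearly in $n$; feeding this into Stirling's formula $j!=\sqrt{2\pi j}\,(j/e)^j e^{\theta_j}$ with $0<\theta_j<1/(12j)$, and bounding the resulting exponentials and the ratios $n(1-a)/m_n$ and $na/(n-m_n)$ (all bounded above and below for $n$ large), gives $\binom{n}{m_n}a^{n-m_n}(1-a)^{m_n}\le C(a)\,n^{-1/2}$. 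Therefore $\epsilon_n=O(n^{-1/2})\to0$, which finishes the proof.

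The argument has no serious obstacle: the only point needing a little care is the uniform-in-$n$ control of the Stirling error terms and of $m_n$, which is an elementary estimate of exactly the kind already carried out for SRW on $\Z$ in Example~\ref{ex:rwZ} (where it is precisely the statement that the central binomial atom is $\Theta(n^{-1/2})$).
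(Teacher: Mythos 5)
Your proof is correct and follows essentially the same route as the paper's: decompose $\mu=a\delta_1+(1-a)\nu$, expand $\mu^n$ binomially (valid because $\delta_1$ is central), pass differences $\lambda^n(i)-\lambda^{n+1}(i)$ to first differences $\lambda^n(i)-\lambda^n(i-1)$ via Pascal's rule, and use unimodality of the binomial law to reduce to the maximal binomial atom. The only cosmetic differences are that you phrase the triangle-inequality step at the operator level on $\ell^\infty(X)$ rather than directly as $\|\mu^n-\mu^{n+1}\|_1$ in $\ell^1(G)$, and that you spell out the Stirling estimate showing $\max_k\beta_{n,k}=O(n^{-1/2})$, which the paper leaves implicit.
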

\begin{proof}
  Since
  $\|f\mu^n-f\mu^{n+1}\|_\infty\le\|f\|_\infty\cdot\|\mu^n-\mu^{n+1}\|_1$, it
  suffices to prove $\|\mu^n-\mu^{n+1}\|_1\to0$. Set $q=\mu(1)$; we
  assume $q\in(0,1)$. Define a measure $\lambda$ on $\N$ by
  $\lambda(0)=q,\lambda(1)=p=1-q$, and let $\nu$ be the probability
  measure on $G$ such that
  $\mu=q\delta_1+p\nu$. Then
  \begin{align*}
    \mu^n(g)&=(q\delta_1+p\nu)^n(g)=\sum_{i=0}^n\lambda^n(i)\nu^i(g),\\
    \text{so }\|\mu^n-\mu^{n+1}\|&=\bigg\|\sum_{i=0}^{n+1}(\lambda^n(i)-\lambda^{n+1}(i))\nu^i\bigg\|.
  \end{align*}
  Since
  $\lambda^n(i)-\lambda^{n+1}(i)=\lambda^n(i)-q\lambda^n(i)-p\lambda^n(i-1)=p(\lambda^n(i)-\lambda^n(i-1))$,
  it suffices to prove
  $\sum_{i=0}^{n+1}|\lambda^n(i)-\lambda^n(i-1)|\to0$. Remembering
  $\lambda^n(i)=\binom n i p^i q^{n-i}$, the argument of the absolute
  value is positive for $i<p n$ and negative for $i> p n$, so
  $\sum_{i=0}^{n+1}|\lambda^n(i)-\lambda^n(i-1)|\le\lambda^n(\lfloor p
  n\rfloor)+\lambda^n(\lceil p n\rceil)\to0$.
\end{proof}

\begin{corollary}\label{cor:makeharmonic}
  For every bounded sequence of functions $(F_n)$ in $\ell^\infty(X)$,
  every pointwise accumulation point of the sequence $(F_n\check\mu^n)$ is
  harmonic.\qed
\end{corollary}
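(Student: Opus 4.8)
The plan is to reduce everything to the harmonicity criterion recalled above: a bounded function $g$ on $X$ is harmonic for the walk driven by $\mu$ precisely when $g\check\mu=g$. So, writing $C\coloneqq\sup_n\|F_n\|_\infty<\infty$ and noting that $\|F_n\check\mu^k\|_\infty\le C$ for every $k$ (since each $\check\mu^k$ is a probability measure) and hence also $\|G\|_\infty\le C$ for any pointwise limit, I must show that every pointwise cluster point $G$ of $(F_n\check\mu^n)_n$ satisfies $G\check\mu=G$. The key preliminary observation is that $\check\mu$ is again aperiodic, because $\check\mu(1)=\mu(1)>0$, so the previous lemma applies verbatim with $\check\mu$ in place of $\mu$: there is a sequence $\epsilon_n\to0$ with $\|F_n\check\mu^{n+1}-F_n\check\mu^n\|_\infty=\|(F_n\check\mu^n)\check\mu-F_n\check\mu^n\|_\infty\le\epsilon_n\|F_n\|_\infty\le\epsilon_n C$. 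In other words, for $n$ large the function $F_n\check\mu^n$ is almost invariant under right multiplication by $\check\mu$.

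Next I would pass to a subnet: a cluster point of a net (here, of a sequence) is the limit of some subnet, so fix a subnet along which $F_{n_\alpha}\check\mu^{n_\alpha}\to G$ pointwise, with $n_\alpha\to\infty$ by cofinality of the subnet. Fix $x\in X$. On the one hand $(F_{n_\alpha}\check\mu^{n_\alpha})(x)\to G(x)$, and therefore $(F_{n_\alpha}\check\mu^{n_\alpha+1})(x)\to G(x)$ as well, since the two quantities differ by at most $\epsilon_{n_\alpha}C\to0$. On the other hand, $(F_{n_\alpha}\check\mu^{n_\alpha+1})(x)=\big((F_{n_\alpha}\check\mu^{n_\alpha})\check\mu\big)(x)=\sum_{h\in G}(F_{n_\alpha}\check\mu^{n_\alpha})(x h^{-1})\,\check\mu(h)$; the integrands are bounded in absolute value by $C$ uniformly in $\alpha$ and $h$, the weights $\check\mu(h)$ are summable, and for each fixed $h$ we have $(F_{n_\alpha}\check\mu^{n_\alpha})(x h^{-1})\to G(x h^{-1})$, so dominated convergence gives $(F_{n_\alpha}\check\mu^{n_\alpha+1})(x)\to\sum_{h\in G}G(x h^{-1})\check\mu(h)=(G\check\mu)(x)$. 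Comparing the two limits yields $(G\check\mu)(x)=G(x)$ for every $x\in X$, i.e.\ $G$ is harmonic.

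The only delicate point is the interchange of the limit with the sum over $G$ in the last display; this is legitimate by summability of $\check\mu$ together with the uniform bound $C$, and needs no uniformity in $\alpha$, so I do not expect it to be a genuine obstacle. If one prefers to avoid subnets entirely, the same conclusion follows from a direct three-$\epsilon$ estimate of $|(G\check\mu)(x)-G(x)|$: given $\delta>0$, first truncate the sum over $G$ to a finite set $S$ with $\sum_{h\notin S}\check\mu(h)$ small, then, using that $G$ is a cluster point, pick $n$ large enough that $\epsilon_n C$ is small and $|(F_n\check\mu^n)(y)-G(y)|$ is small for all $y$ in the finite set $\{x\}\cup\{x h^{-1}:h\in S\}$; the three resulting error terms are each controlled by $\delta$, and $\delta$ is arbitrary.
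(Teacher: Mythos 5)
Your proof is correct and follows exactly the intended route: the paper leaves the corollary as immediate from the preceding lemma, and what you have written out — applying that lemma to $\check\mu$ (which is aperiodic since $\check\mu(1)=\mu(1)>0$), passing to a subnet converging pointwise to $G$, and interchanging the limit with the sum $\sum_h G(xh^{-1})\check\mu(h)$ via dominated convergence or a direct $3\epsilon$-truncation — is precisely the elided argument. In particular your care in using subnets rather than subsequences is appropriate, since $\R^X$ need not be first-countable, and your observation that dominated convergence is unproblematic here (summable weights, uniform bound $C$, no uniformity in $\alpha$ needed) correctly dispatches the one potentially delicate step.
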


\begin{proposition}\label{prop:liouville}
  Let $\mu$ be aperiodic and non-degenerate. Then $(X,\mu)$ is
  Liouville if and only if
\[\text{ for all }x,y\in X:\quad\|\delta_x\mu^n-\delta_y\mu^n\|_1\to0\text{ as }n\to\infty.\]
\end{proposition}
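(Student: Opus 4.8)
I would prove the two implications separately, using throughout the dictionary recalled before the statement: a bounded $f$ is harmonic iff $f\check\mu=f$, and the adjointness $\langle g,\delta_x\mu^n\rangle=(g\check\mu^n)(x)$ for $g\in\ell^\infty(X)$ (which is just the compatibility of the $\ell^1(G)$-module structures on $\ell^1(X)$ and on $\ell^\infty(X)=\ell^1(X)^*$, matched up by the $\check{}\,$-operation).

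For the ``if'' of the displayed equivalence --- i.e., assuming $\|\delta_x\mu^n-\delta_y\mu^n\|_1\to0$ for all $x,y$, prove $(X,\mu)$ Liouville --- I would take a bounded harmonic $f$, iterate $f\check\mu=f$ to get $f\check\mu^n=f$ for all $n$, and then write $f(x)-f(y)=\langle f,\delta_x\mu^n-\delta_y\mu^n\rangle$, so that $|f(x)-f(y)|\le\|f\|_\infty\,\|\delta_x\mu^n-\delta_y\mu^n\|_1\to0$; hence $f$ is constant. The only thing to be careful about here is bookkeeping with the $\check{}\,$'s in the pairing.

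For the ``only if'' --- assume $(X,\mu)$ Liouville and prove the convergence --- set $a_n:=\|\delta_x\mu^n-\delta_y\mu^n\|_1$. Since $\delta_x\mu^{n+1}-\delta_y\mu^{n+1}=(\delta_x\mu^n-\delta_y\mu^n)\mu$ and convolution by the probability measure $\mu$ is an $\ell^1$-contraction, $(a_n)$ is non-increasing, so $a_n\searrow L\ge0$, and it suffices to rule out $L>0$. Suppose $L>0$. As $X$ is countable (it is transitive under the countable group $G$), the supremum defining the $\ell^1$-norm is attained: put $g_n:=\operatorname{sign}(\delta_x\mu^n-\delta_y\mu^n)\in\ell^\infty(X)$, so $\|g_n\|_\infty\le1$ and $(g_n\check\mu^n)(x)-(g_n\check\mu^n)(y)=\langle g_n,\delta_x\mu^n-\delta_y\mu^n\rangle=a_n\ge L$. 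The sequence $(g_n)$ is bounded in $\ell^\infty(X)$, so by Corollary~\ref{cor:makeharmonic} every pointwise accumulation point $F$ of $(g_n\check\mu^n)_n$ is harmonic (and accumulation points exist, since these functions all lie in the compact space $[-1,1]^X$); moreover $\|F\|_\infty\le1$, and along a subnet $F(x)-F(y)=\lim_\alpha a_{n_\alpha}\ge L>0$ because $a_n\ge L$ for every $n$. Thus $F$ is a non-constant bounded harmonic function, contradicting the Liouville hypothesis, so $L=0$.

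I do not expect a genuine obstacle: the substantive input is already packaged in Corollary~\ref{cor:makeharmonic} (itself resting on the preceding lemma, where aperiodicity is used), which turns the asymptotically-harmonic sequence $(g_n\check\mu^n)$ into honest harmonic functions; no boundary theory is needed. The main care-points are (i) the monotonicity of $(a_n)$, so that one only has to exclude a strictly positive limit and so that the accumulation point inherits $F(x)-F(y)\ge L$; (ii) setting the accumulation-point argument in the pointwise topology on the countable set $X$; and (iii) if one prefers to sidestep countability, replacing the exact sign function by any $g_n$ with $\langle g_n,\delta_x\mu^n-\delta_y\mu^n\rangle\ge a_n-1/n$, which changes nothing.
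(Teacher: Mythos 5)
Your ``if'' direction coincides with the paper's up to notation (both amount to $f=f\check\mu^n$ and the $\ell^1$--$\ell^\infty$ pairing). Your ``only if'' direction, however, is a genuinely different and in fact cleaner route to the same conclusion. The paper argues by contradiction from a subsequence with $\|\delta_x\mu^{n_i}-\delta_y\mu^{n_i}\|\ge 4c$, defines the level set $W_i=\{z\mid(\delta_x\mu^{n_i})(z)\ge(1+c)(\delta_y\mu^{n_i})(z)\}$, verifies by a two-step estimate that $(\delta_x\mu^{n_i})(W_i)\ge c$, and then uses the test functions $\mathbb 1_{W_i}$ to produce, via Corollary~\ref{cor:makeharmonic}, a bounded harmonic $F$ with $F(y)\le F(x)/(1+c)$ and $F(x)\ge c$. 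You instead note that $a_n=\|\delta_x\mu^n-\delta_y\mu^n\|_1$ is non-increasing (since right convolution by a probability measure is an $\ell^1$-contraction), so it converges to some $L\ge 0$, and take the exact $\ell^1/\ell^\infty$ optimizer $g_n=\operatorname{sign}(\delta_x\mu^n-\delta_y\mu^n)$ to get $(g_n\check\mu^n)(x)-(g_n\check\mu^n)(y)=a_n$ on the nose; the accumulation point then satisfies $F(x)-F(y)=L$, which is $>0$ if $L>0$. Both arguments feed the same key input (Corollary~\ref{cor:makeharmonic}), but yours dispenses with the $V_i,W_i$ bookkeeping, and the monotonicity observation, absent from the paper, is what lets the limit value $L$ pass cleanly through the subnet. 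One small remark: your care-point~(iii) about attainment of the $\ell^1$-norm is moot --- the sign function realizes the $\ell^1$-norm exactly on any set, countable or not --- but it does no harm.
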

\begin{proof}
  Let first $f\in\ell^\infty(X)$ be harmonic. Then for all $n\in\N$
  \begin{align*}
    |f(x)-f(y)|&=\bigg|\sum_{z\in X}f(z)\Big(\sum_{z=x h}\mu^n(h)-\sum_{z=y h}\mu^n(h)\Big)\bigg|\\
    &\le\|f\|_\infty\cdot\sum_{z\in X}\big|(\delta_x\mu^n)(z)-(\delta_y\mu^n)(z)\big|=\|f\|_\infty\cdot\|\delta_x\mu^n-\delta_y\mu^n\|\to0,
  \end{align*}
  so $f$ is constant.

  Conversely, assume that there exist $x,y\in X$ and a sequence
  $(n_i)$ such that
  $\|\delta_x\mu^{n_i}-\delta_y\mu^{n_i}\|\ge4c>0$ for all
  $i\in\N$. Set $V_i\coloneqq\{z\in X\mid(\delta_x\mu^{n_i})(z)>(\delta_y\mu^{n_i})(z)\}$; then
  \[\sum_{z\in V_i}(\delta_x\mu^{n_i})(z)-(\delta_y\mu^{n_i})(z)\ge2c.\]
  Set then
  $W_i\coloneqq\{z\in X\mid
  (\delta_x\mu^{n_i})(z)\ge(1+c)(\delta_y\mu^{n_i})(z)\}$;
  then $(\delta_x\mu^{n_i})(W_i)\ge c$, for otherwise one would
  have
  $\sum_{z\in
    V_i}(\delta_x\mu^{n_i})(z)-(\delta_y\mu^{n_i})(z)=\sum_{z\in
    W_i}(\dots)+\sum_{z\in V_i\setminus W_i}(\dots)<c+c=2c$. Set
  finally $f_i=\mathbb1_{W_i}\check\mu^{n_i}$. Note then
  \[f_i(y)=\sum_{z=y h}\mathbb1_{W_i}(z)\mu^{n_i}(h)=\sum_{z\in W_i}(\delta_y\mu^{n_i})(z)\le\sum_{z\in W_i}(\delta_x\mu^{n_i})(z)/(1+c)=f_i(x)/(1+c),
  \]
  and similarly $f_i(x)=(\delta_x\mu^{n_i})(W_i)\ge c$, so any
  accumulation point of the $f_i$ is harmonic by
  Corollary~\ref{cor:makeharmonic}, bounded and non-constant.
\end{proof}

\begin{proof}[Proof of Theorem~\ref{thm:Lamenable}]
  $(1)\Rightarrow(2)$ We assume throughout that $X$ is transitive and
  therefore countable. Fix a basepoint $x_0\in X$, and let
  $\{x_0,x_1,\dots\}$ be an enumeration of $X$. Choose two sequences
  $(t_i)_{i\in\N}$ and $(\epsilon_i)_{i\in\N}$ of positive real
  numbers with $\sum t_i=1$ and $\lim\epsilon_i=0$. Let $(n_i)$ be a
  sequence of integers with $(t_1+\cdots+t_{i-1})^{n_i}<\epsilon_i$
  for all $i$. Since $X$ is laminable, by Lemma~\ref{lem:sx-laminable}
  there exists for every $i$ a positive function
  $\alpha_i\in\ell^1(G)$, normalized by $\|\alpha_i\|=1$ and supported
  on a finite set (say $F_i$), with
  \[\|(\delta_s-\delta_{x_0})\alpha_i\|<\epsilon_i\text{ for all }s\in
    \{x_0,\dots,x_i\}\cdot(\{1\}\cup F_1\cup\cdots\cup F_{i-1})^{n_i}.
  \]
  Let us set $\mu=\sum_{i\in\N} t_i\alpha_i$.  To prove that $(G,\mu)$
  is Liouville, it suffices, by Proposition~\ref{prop:liouville}, to
  prove that $\|\delta_x\mu^n-\delta_{x_0}\mu^n\|\to0$ for all
  $x\in X$. Say $x=x_\ell$; we claim that
  $\|\delta_x\mu^{n_\ell}-\delta_{x_0}\mu^{n_\ell}\|<4\epsilon_\ell$, and
  this is sufficient to conclude the proof.  For convenience let us
  write $n_\ell=n$, and expand
  \begin{equation}\label{eq:kvsum}
    \mu^n={\sum\sum}_{k_1,\dots,k_n}t_{k_1}\cdots t_{k_n}\alpha_{k_1}\cdots\alpha_{k_n}.
  \end{equation}
  We subdivide the sum~\eqref{eq:kvsum} into two summands, $\nu_1$ on
  which all $k_i<\ell$ and $\nu_2=\mu^n-\nu_1$. First,
  $\|\nu_1\|=\sum_{k_i<\ell}t_{k_1}\cdots
  t_{k_n}=(t_1+\cdots+t_{\ell-1})^{n_\ell}<\epsilon_\ell$, so
  $\|\delta_x\nu_1-\delta_{x_0}\nu_1\|<2\epsilon_\ell$. Secondly,
  consider a summand $\theta=\alpha_{k_1}\cdots\alpha_{k_n}$ appearing
  in $\nu_2$; by hypothesis $k_i\ge\ell$ for some $i$, which we choose
  minimal. The summand then has the form
  $\theta_1\alpha_{k_i}\theta_2$. The supports of $\delta_x\theta_1$
  and of $\delta_{x_0}\theta_1$ are by hypothesis contained in
  $\{x,x_0\}\cdot(\{1\}\cup F_1\cup\cdots\cup F_{\ell-1})^{n_\ell}$,
  so
  $\|\delta_x\theta_1\alpha_{k_i}-\delta_{x_0}\alpha_{k_i}\|<\epsilon_\ell$
  and
  $\|\delta_{x_0}\theta_1\alpha_{k_i}-\delta_{x_0}\alpha_{k_i}\|<\epsilon_\ell$. Consequently,
  $\|\delta_x\theta-\delta_{x_0}\theta\|<2\epsilon_\ell$, so
  $\|\delta_x\nu_2-\delta_{x_0}\nu_2\|<2\epsilon_\ell$ and finally
  $\|\delta_x\mu^n-\delta_{x_0}\mu^n\|<4\epsilon_\ell$ as required.

  $(2)\Rightarrow(3)$ is obvious.

  $(3)\Rightarrow(1)$ By Proposition~\ref{prop:liouville}, the
  sequence $(\mu^n)_{n\in\N}$ is asymptotically invariant. Consider
  $\epsilon>0$ and $f\in\varpi(\ell^1X)$. There is then a subset
  $S\Subset X\times X$ such that
  $f=f'+\sum_{(x,y)\in S}\delta_x-\delta_y$ with $\|f'\|<\epsilon/2$;
  we have
  $\|f\mu^n\|\le\sum_{(x,y)\in
    S}\|\delta_x\mu^n-\delta_y\mu^n\|+\|f'\mu^n\|$, and for $n$ large
  enough each $\delta_x\mu^n-\delta_y\mu^n$ has norm at most
  $\epsilon/2\#S$, from which $\|f\mu^n\|<\epsilon=\epsilon\|\mu^n\|$,
  and $X$ is laminable.
\end{proof}

\begin{exercise}[*]
  Let $G$ be a group and let $\mu$ be a probability measure on $G$.
  Prove that $(G,\mu)$ is Liouville if and only if $(G,\check\mu)$ is
  Liouville.
\end{exercise}

%%%%%%%%%%%%%%%%%%%%%%%%%%%%%%%%%%%%%%%%%%%%%%%%%%%%%%%%%%%%%%%% 
\newpage\section{Extensive amenability}\label{ss:extensiveamen}
We introduce now a property stronger than amenability for $G$-sets, a
property that behaves better with respect to extensions of $G$-sets
(whence the name). This section is based
on~\cite{juschenko-mattebon-monod-delasalle:extensive}.

\begin{definition}\label{defn:extamen}
  Let $X$ be a set; recall that $\mathfrak P_f(X)$ denotes the
  collection of finite subsets of $X$.  An \emph{ideal} in
  $\mathfrak P_f(X)$ is a subset, for some $x\in X$, of the form
  $\{E\Subset X\mid x\in E\}$.\footnote{It is really the ideal
    generated by $\{x\}$ in the semigroup $(\mathfrak P_f(X),\cup)$.}

  Let $X$ be a $G$-set. It is \emph{extensively amenable} if there
  exists a $G$-invariant mean $m$ on $\mathfrak P_f(X)$ giving weight
  $1$ to every ideal.
\end{definition}
It follows immediately from the definition that $m(\{\emptyset\})=0$
if $X\neq\emptyset$, and that for every $E\Subset X$ we have
$m(\{F\Subset X\mid E\subseteq F\})=1$.

Recall that $\mathfrak P_f(X)$ is an abelian group under symmetric
difference $\triangle$, and is naturally isomorphic to $\prod'_X\Z/2$
under the map $E\mapsto\mathbb 1_E$. Recall also
from~\eqref{eq:wreath} that the \emph{wreath product} $\Z/2\wr_X G$ is
the semidirect product $G\ltimes(\prod'_X\Z/2)$, with $G$ acting on
$\prod'_X\Z/2$ by permuting its factors.

\begin{lemma}\label{lem:extamen0}
  If $G$ is amenable, then all $G$-sets are extensively
  amenable. Every extensively amenable non-empty $G$-set is amenable.
\end{lemma}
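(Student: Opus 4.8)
The plan is to prove the two implications by soft arguments, each reducing to a tool already in hand. I would phrase the first one as a fixed‑point problem. Given an arbitrary $G$‑set $X$, consider
\[K=\{m\in\mathscr M(\mathfrak P_f(X))\mid m(I_x)=1\text{ for all }x\in X\},\qquad I_x\coloneqq\{E\Subset X\mid x\in E\}.\]
Here $K$ is convex (an intersection of affine conditions in $\ell^\infty(\mathfrak P_f(X))^*$, using Proposition~\ref{prop:linfty}), and it is closed in $\mathscr M(\mathfrak P_f(X))$ because each evaluation $m\mapsto m(I_x)$ is continuous for the topology of pointwise convergence on subsets; since $\mathscr M(\mathfrak P_f(X))$ is compact by Lemma~\ref{lem:mcompact}, so is $K$. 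The $G$‑action on $\mathscr M(\mathfrak P_f(X))$ is by affine homeomorphisms and sends $I_x$ to $I_{xg^{-1}}$, so it preserves $K$. Finally $K\neq\emptyset$: the $I_x$ generate a proper filter on $\mathfrak P_f(X)$, since any finite intersection $I_{x_1}\cap\dots\cap I_{x_k}=\{E\mid x_1,\dots,x_k\in E\}$ contains $\{x_1,\dots,x_k\}$; extend it to an ultrafilter (Definition~\ref{defn:ultrafilter}) and take the associated $\{0,1\}$‑valued mean. Now, $G$ being amenable, every non‑empty compact convex $G$‑space has a fixed point (the fixed‑point characterisation of amenability recorded just after Theorem~\ref{thm:fixedpoint}), and a fixed point of $K$ is precisely a $G$‑invariant mean on $\mathfrak P_f(X)$ giving weight $1$ to every ideal. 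Hence $X$ is extensively amenable; this also covers $X=\emptyset$ trivially.

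For the converse I would average a witnessing mean down onto $X$. Assume $X\neq\emptyset$ is extensively amenable, fix a $G$‑invariant mean $m$ on $\mathfrak P_f(X)$ giving full weight to ideals, and recall this forces $m(\{\emptyset\})=0$. Regarding $m$ as a $G$‑invariant positive normalised functional on $\ell^\infty(\mathfrak P_f(X))$ (Proposition~\ref{prop:linfty}), define for $A\subseteq X$ the bounded density function $f_A\in\ell^\infty(\mathfrak P_f(X))$ by $f_A(E)=\#(E\cap A)/\#E$ for $E\neq\emptyset$, with $f_A(\emptyset)=0$, and set $\nu(A)=m(f_A)$. Then $0\le\nu(A)\le 1$ by positivity, $\nu(X)=m(f_X)=m(\mathbb 1)=1$ since $f_X$ is the characteristic function of $\mathfrak P_f(X)\setminus\{\emptyset\}$ and $m(\{\emptyset\})=0$, and $f_{A\sqcup B}=f_A+f_B$ identically for disjoint $A,B$, so $\nu$ is finitely additive; thus $\nu\in\mathscr M(X)$. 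Invariance follows from $f_{Ag}(E)=\#(Eg^{-1}\cap A)/\#(Eg^{-1})=f_A(Eg^{-1})$, i.e.\ $f_{Ag}=f_A\cdot g$, whence $\nu(Ag)=m(f_A\cdot g)=m(f_A)=\nu(A)$. (Equivalently: the $G$‑equivariant map $E\mapsto\frac1{\#E}\sum_{x\in E}\delta_x$ from $\mathfrak P_f(X)\setminus\{\emptyset\}$ into $\mathscr M(X)$ transports $m$ through the barycentre map~\eqref{eq:barycentre} to an invariant mean on $X$.) So $X$ is amenable.

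There is no serious obstacle here; the whole content is bookkeeping. The two points worth spelling out carefully are the non‑emptiness of the set $K$ in the first part (the ultrafilter argument, so that the fixed‑point property has something to act on) and the $\emptyset$‑convention together with the $G$‑action conventions in the density computation, so that $\nu$ comes out simultaneously normalised and additive. Note also that the converse uses extensive amenability only through the innocuous fact $m(\{\emptyset\})=0$ — which is exactly why this half of the statement is essentially free.
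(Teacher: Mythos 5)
Your proof is correct and follows essentially the same route as the paper: for the forward direction, a compact convex $G$-invariant set of means giving full weight to ideals together with the fixed-point characterisation of amenability (the paper establishes non-emptiness via a cluster point of the net $(\delta_E)_{E\Subset X}$, which is the same filter argument you phrase in ultrafilter language), and for the converse the same averaging map $f\mapsto m\bigl(E\mapsto\tfrac1{\#E}\sum_{x\in E}f(x)\bigr)$ pushing the invariant mean on $\mathfrak P_f(X)$ down to a mean on $X$. Your closing remark that the converse uses only $m(\{\emptyset\})=0$ is accurate and a nice observation, but it doesn't change the substance.
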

\begin{proof}
  Let $G$ be amenable and let $X$ be a $G$-set. Consider the set $K$
  of means on $\mathfrak P_f(X)$ giving full weight to every
  ideal. Clearly $K$ is a convex compact subset of
  $\ell^\infty(\mathfrak P_f(X))^*$, and is non-empty because it
  contains any cluster point of $(\delta_E)_{E\Subset X}$. Since $G$
  is amenable, there exists a fixed point in $K$, so $X$ is
  extensively amenable.

  Let next $X\looparrowleft G$ be extensively amenable, and let $m$ be an invariant
  mean in $\ell^\infty(\mathfrak
  P_f(X)\setminus\{\emptyset\})^*$. Define a mean on $X$ by
  \[\ell^\infty(X)\ni f\mapsto m\Big(E\mapsto\frac1{\#E}\sum_{x\in E}f(x)\Big),\]
  and note that it is $G$-invariant because $m$ is.
\end{proof}

\begin{lemma}\label{lem:extamen}
  Let  $X$ be a $G$-set. Then the following are
  equivalent:
  \begin{enumerate}
  \item $X$ is extensively amenable;
  \item For every finitely generated subgroup $H$ of $G$ and every
    $H$-orbit $Y\subseteq X$, the $H$-set $Y$ is extensively amenable;
  \item For every finitely generated subgroup $H$ of $G$ and every
    $x_0\in X$, there is an $H$-invariant mean on
    $\mathfrak P_f(x_0 H)$ that gives non-zero weight to
    $\{E\Subset x_0 H\mid x_0\in E\}$;
  \item There is a $G$-invariant mean on $\mathfrak P_f(X)$ that gives
    non-zero weight to $\{E\Subset X\mid x_0\in E\}$ for all
    $x_0\in X$.
  \end{enumerate}
\end{lemma}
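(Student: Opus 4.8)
I would prove the chain of implications $(1)\Rightarrow(4)\Rightarrow(3)\Rightarrow(2)\Rightarrow(1)$, so that all four conditions become equivalent. Throughout, the basic tool is that $\mathscr M$ is a covariant functor and that $\mathscr M(Z)$ is a compact convex set of means (Lemma~\ref{lem:mcompact}), together with the elementary construction of finite product means $\mu_1\otimes\dots\otimes\mu_n$ on a product $Z_1\times\dots\times Z_n$, which is $H$-invariant for the diagonal action whenever each $\mu_i$ is $H$-invariant.

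\textbf{The easy implications.} $(1)\Rightarrow(4)$ is immediate, since a mean of weight $1$ on an ideal gives it, a fortiori, nonzero weight. For $(4)\Rightarrow(3)$: given the $G$-invariant mean $m$ of $(4)$, a finitely generated $H\le G$, and $x_0\in X$, the map $E\mapsto E\cap x_0H$ is an $H$-equivariant map $\mathfrak P_f(X)\to\mathfrak P_f(x_0H)$ because $x_0H$ is $H$-invariant; its pushforward $m'$ is then an $H$-invariant mean on $\mathfrak P_f(x_0H)$ with $m'(\{E\Subset x_0H : x_0\in E\})=m(\{E\Subset X : x_0\in E\})>0$. (The same pushforward along $E\mapsto E\cap Y$ gives $(1)\Rightarrow(2)$ directly, which is worth recording even though it is not needed for the cycle.)

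\textbf{Boosting from positive weight to full weight, $(3)\Rightarrow(2)$.} Fix a finitely generated $H\le G$ and an $H$-orbit $Y=x_0H$; one must produce an $H$-invariant mean on $\mathfrak P_f(Y)$ assigning weight exactly $1$ to every ideal, not merely positive weight. Let $\mu$ be the mean of $(3)$ with $\mu(\{E:x_0\in E\})=c>0$; since $Y$ is $H$-transitive and $\mu$ is $H$-invariant, $\mu(\{E:y\in E\})=c$ for all $y\in Y$. Form the product mean $\mu^{\otimes n}$ on $\mathfrak P_f(Y)^n$ and push it forward along the $H$-equivariant union map $(E_1,\dots,E_n)\mapsto E_1\cup\dots\cup E_n$ to get an $H$-invariant mean $m_n$ on $\mathfrak P_f(Y)$. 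For any finite $F\Subset Y$, the complement of the preimage of $\{E:F\subseteq E\}$ is contained in $\bigcup_{y\in F}\prod_i\{E:y\notin E\}$, of $\mu^{\otimes n}$-mass at most $(\#F)(1-c)^n$, so $m_n(\{E:F\subseteq E\})\to1$ as $n\to\infty$. Any weak-$*$ cluster point of $(m_n)_{n\ge1}$ exists by compactness of $\mathscr M$, is $H$-invariant ($H$-invariance being a closed condition), and assigns weight $1$ to every ideal of $\mathfrak P_f(Y)$; hence the $H$-set $Y$ is extensively amenable.

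\textbf{The main obstacle: globalization $(2)\Rightarrow(1)$.} This is the delicate step, where one must pass simultaneously from finitely generated subgroups to $G$ and from single orbits to all of $X$. For a pair $(H,F)$ with $H\le G$ finitely generated and $F\Subset X$, only finitely many $H$-orbits $Y_1,\dots,Y_k$ meet $F$; each is extensively amenable as an $H$-set by $(2)$, so I pick weight-$1$ $H$-invariant means $\mu_j$ on $\mathfrak P_f(Y_j)$, take their product under the identification $\mathfrak P_f(Y_1\sqcup\dots\sqcup Y_k)\cong\prod_j\mathfrak P_f(Y_j)$ (valid since the $Y_j$ are pairwise disjoint and $H$-invariant, with $H$ acting diagonally), and push forward along $\mathfrak P_f(Y_1\sqcup\dots\sqcup Y_k)\hookrightarrow\mathfrak P_f(X)$ to obtain an $H$-invariant mean $\bar\mu_{H,F}$ on $\mathfrak P_f(X)$ with $\bar\mu_{H,F}(\{E:y\in E\})=1$ for every $y\in F$. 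Now direct the pairs by $(H,F)\le(H',F')$ iff $H\le H'$ and $F\subseteq F'$, and take a weak-$*$ cluster point $m$ of the net $(\bar\mu_{H,F})$. Every index $\ge(\langle g\rangle,\emptyset)$ is a pair $(H,F)$ with $g\in H$, and $\bar\mu_{H,F}$ is $g$-invariant there, so the standard cluster-point $\delta/3$-estimate forces $m$ to be $g$-invariant; likewise every index $\ge(1,\{y\})$ is a pair with $y\in F$, forcing $m(\{E:y\in E\})=1$. Thus $m$ is a $G$-invariant mean on $\mathfrak P_f(X)$ of weight $1$ on every ideal, i.e.\ $X$ is extensively amenable. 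The one real design choice, which I expect to be where all the friction lies, is to \emph{localize} the "weight $1$" demand to a finite set $F$: this keeps only finitely many orbits in play at each stage, so that they can be assembled by a \emph{finite} product of means, avoiding infinite products and the attendant bookkeeping.
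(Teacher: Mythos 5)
Your proof is correct and follows the same cycle $(1)\Rightarrow(4)\Rightarrow(3)\Rightarrow(2)\Rightarrow(1)$ as the paper, with the same cluster-point globalization in the last step. The one place you diverge technically is in $(3)\Rightarrow(2)$: the paper first approximates the mean $m_0$ by a net of genuine probability measures $p_n\in\mathscr P(\mathfrak P_f(Y))$ and forms the union-convolutions $p_{n,k}(E)=\sum_{E_1\cup\cdots\cup E_k=E}p_n(E_1)\cdots p_n(E_k)$, whereas you work directly with (iterated/Fubini) product means $\mu^{\otimes n}$ and push forward along the union map; both implement the same boosting $\mu(\{E:y\in E\})=c\rightsquigarrow 1-(1-c)^n$, and yours arguably skips one layer of bookkeeping at the cost of invoking the Fubini product of finitely additive means, whose $H$-invariance for the diagonal action is standard but worth a line.
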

\begin{proof}
  $(1)\Rightarrow(4)$ by definition.

  $(4)\Rightarrow(3)$ There is a natural map
  $\ell^\infty(\mathfrak P_f(x_0 H))\to\ell^\infty(\mathfrak P_f(X))$
  given by $f\mapsto f({-}\cap x_0 H)$, inducing an $H$-equivariant
  map
  $\mathscr M(\mathfrak P_f(X))\to\mathscr M(\mathfrak P_f(x_0 H))$.

  $(3)\Rightarrow(2)$ Let $Y=x_0 H$ be an $H$-orbit, and let $m_0$ be
  an $H$-invariant mean on $\mathfrak P_f(Y)$ that gives positive
  weight to $\{A\Subset Y\mid x_0\in A\}$. As in
  Theorem~\ref{thm:folneramen}, the mean $m_0$ may be approximated by
  a net $p_n$ of probability measures on $\mathfrak P_f(Y)$: these are
  maps $\mathfrak P_f(Y)\to[0,1]$ with total mass $1$. Define now for
  every $k\in\N$ new probability measures on $\mathfrak P_f(Y)$ by
  \[p_{n,k}(E)=\sum_{E_1\cup\cdots\cup E_k=E}p_n(E_1)\cdots p_n(E_k).
  \]
  Let $m$ be an cluster point of the $p_{n,k}$ as $n,k\to\infty$; then
  $m$ is an $H$-invariant mean on $\mathfrak P_f(Y)$, and we check
  that it gives mass $1$ to the ideal
  $S\coloneqq\{E\Subset Y\mid x_0\in E\}$, and therefore also to every
  ideal because $H$ acts transitively on $Y$ and $m$ is
  $H$-invariant: since $m_0(S)>0$, there exists $\delta<1$ such that
  $p_n(S)>1-\delta$ for all $n$ large enough, and then
  $p_{n,k}(S)>1-\delta^k$ so at the cluster point $m(S)=1$.

  $(2)\Rightarrow(1)$ For every finitely generated subgroup $H$ of $G$
  and every finite union $Y=Y_1\cup\cdots\cup Y_n$ of $H$-orbits,
  choose for $i=1,\dots,n$ an $H$-invariant mean $m_i$ on
  $\mathfrak P_f(Y_i)$, and construct a mean $m_{H,Y}$ on
  $\mathfrak P_f(X)$ by
  $m_{H,Y}(S) = m_1(\{E\cap Y_1\mid E\in S\})\cdots m_n(\{E\cap
  Y_n\mid E\in S\})$.  Clearly $m_{H,Y}$ is $H$-invariant and gives
  full weight to ideals in $\mathfrak P_f(Y)$. Order the pairs $(H,Y)$
  by inclusion, and consider a cluster point of the net
  $(m_{H,Y})$. It is $G$-invariant, and gives full weight to ideals in
  $\mathfrak P_f(X)$.
\end{proof}

Note that Lemma~\ref{lem:extamen}(2) implies in particular that
extensively amenable sets are \emph{hereditarily amenable}: every
subgroup acting on every orbit is amenable. We obtain in this manner
an abundance of amenable actions that are not extensively
amenable. For instance, consider Example~\ref{ex:freeamen} of an
amenable action of $F_2=\langle a,b\mid\rangle$, and the subgroup
$K=\langle a^{b^{-1}},a^{b^{-2}}\rangle$. Then $K$ is a free group of
rank $2$, and the $K$-orbit $Y$ of $1$ in $X$ is free, so $Y$ is not
an amenable $K$-set, and therefore $X$ is not extensively amenable. We
shall see in Example~\ref{ex:herednotext} a hereditarily amenable
$G$-set that is not extensively amenable.

We come to the justification of the terminology ``extensive
amenability'': the analogue of Corollary~\ref{cor:extensions} for
$G$-sets.
\begin{proposition}
  Let $G$ be a group acting on two sets $X,Y$, and let
  $q\colon X\to Y$ be $G$-equivariant. If $Y$ is extensively amenable
  and if for every $y\in Y$ the $G_y$-set $q^{-1}(y)$ is an
  extensively amenable, then $X$ is extensively amenable. The converse
  holds if $q$ is onto.
\end{proposition}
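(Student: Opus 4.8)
The statement to prove is that, for a $G$-equivariant map $q\colon X\to Y$, the $G$-set $X$ is extensively amenable provided $Y$ is extensively amenable and each fibre $q^{-1}(y)$ is extensively amenable as a $G_y$-set, with the converse holding when $q$ is onto. The plan is to work with invariant means on $\mathfrak P_f(-)$ and to package the whole argument using the group $\Z/2\wr_X G$ and the exact sequence of wreath products that $q$ induces. First I would translate the hypotheses: by Lemma~\ref{lem:extamen0} and the characterization of extensive amenability, ``$Y$ extensively amenable'' means $\Z/2\wr_Y G$ acts amenably on the coset space that encodes ideals, or more concretely that there is a $G$-invariant mean on $\mathfrak P_f(Y)$ giving full weight to all ideals. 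Since by Lemma~\ref{lem:extamen}(2) extensive amenability reduces to the case of a finitely generated $H\le G$ and a single orbit, I would fix such an $H$ and an $H$-orbit in $X$, pass to the quotient orbit in $Y$ under $H$, and reduce to the transitive case: $Y=G/G_y$ and $X=q^{-1}(Y)$ fibred over it, so it suffices to produce an $H$-invariant mean on $\mathfrak P_f(X)$ giving positive weight to a single ideal.

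Second, the core construction: build the mean on $\mathfrak P_f(X)$ by a two-stage integration, first over the base and then over the fibres. Think of a finite subset $E\Subset X$ as the pair $(q(E),\, (E\cap q^{-1}(y))_{y\in q(E)})$. Using extensive amenability of $Y$, choose a $G$-invariant mean $m_Y$ on $\mathfrak P_f(Y)$; using extensive amenability of each fibre as a $G_y$-set, choose $G_y$-invariant means $m_y$ on $\mathfrak P_f(q^{-1}(y))$, and — this is the step requiring care — arrange them to be chosen $G$-equivariantly in $y$, i.e.\ $m_{yg}=m_y\cdot g$, which is possible by choosing means on orbit representatives and transporting, exactly as in the proof of Proposition~\ref{prop:stabilizers}. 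Then define
\[
m(S)=\int_{\mathfrak P_f(Y)}\Big(\prod_{y\in F} m_y\big(\{A\Subset q^{-1}(y)\mid \text{the }F\text{-indexed family with }y\text{-component }A\text{ lies in }S\}\big)\Big)\,dm_Y(F),
\]
with the convention that for an ideal $S=\{E\mid x_0\in E\}$ with $x_0\in q^{-1}(y_0)$ the integrand picks out $m_{y_0}(\{A\ni x_0\})>0$ on the set of $F$ containing $y_0$, which has $m_Y$-measure $1$. One checks finite additivity and normalization formally, $G$-invariance from the equivariance of the family $(m_y)$ together with invariance of $m_Y$, and positivity on ideals from the computation just sketched. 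Strictly speaking the ``product over $y\in F$'' must be handled by the same net-of-finite-products device used in the proof of Lemma~\ref{lem:extamen}, $(3)\Rightarrow(2)$ (take cluster points of $p_{n,k}$), so I would phrase the fibre part as: approximate each $m_y$ by probability measures, form the finite products indexed by $F$, take a cluster point; this is where most of the routine bookkeeping lives.

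Third, the converse, assuming $q$ onto. Given a $G$-invariant mean $m$ on $\mathfrak P_f(X)$ giving full weight to ideals, push it forward along $q$: the map $E\mapsto q(E)$ is $G$-equivariant $\mathfrak P_f(X)\to\mathfrak P_f(Y)$ and surjective since $q$ is onto, and an ideal of $\mathfrak P_f(X)$ maps into an ideal of $\mathfrak P_f(Y)$, so $q_*m$ witnesses extensive amenability of $Y$ (using, if needed, the positive-weight-on-one-ideal form from Lemma~\ref{lem:extamen}(4)). For the fibres, fix $y\in Y$; restrict $m$ to configurations supported over the $G$-orbit of $y$, and for a fixed $y$ consider the map $\mathfrak P_f(X)\to\mathfrak P_f(q^{-1}(y))$, $E\mapsto E\cap q^{-1}(y)$; this is $G_y$-equivariant, and the image of an ideal generated by $x_0\in q^{-1}(y)$ is the ideal of $\mathfrak P_f(q^{-1}(y))$ generated by $x_0$. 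Hence the pushforward of $m$ along this map is a $G_y$-invariant mean on $\mathfrak P_f(q^{-1}(y))$ giving positive weight to that ideal, so by Lemma~\ref{lem:extamen}(3)$\Rightarrow$(1) the fibre is extensively amenable.

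\textbf{Main obstacle.} The delicate point is the equivariant choice of fibre means and the legitimacy of ``multiplying together'' infinitely many fibre means indexed by a varying finite set $F$: one cannot literally form $\prod_{y\in F}m_y$ as a single mean, so the argument must route through nets of probability measures and cluster points (the $p_{n,k}$-trick of Lemma~\ref{lem:extamen}), checking that $G$-invariance survives passage to the limit. Reducing first to finitely generated $H$ and a single orbit (via Lemma~\ref{lem:extamen}) keeps the orbit structure manageable and makes the equivariance of $(m_y)_{y}$ a finite choice problem along orbit representatives, which is what makes the whole construction go through.
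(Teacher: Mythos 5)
Your proof is correct and follows essentially the same route as the paper's: you choose the fibre means $G$-equivariantly (one choice per orbit, transported), assemble them into a finite-product mean $m_F$ over each $F\Subset Y$, and average against a base mean (your ``integration over $m_Y$'' is the paper's composition with the barycentre map~\eqref{eq:barycentre}), while the converse pushes the given mean on $\mathfrak P_f(X)$ forward along $q$ and along $E\mapsto E\cap q^{-1}(y)$. One minor over-caution: for each fixed finite $F$ only finitely many fibres are involved, and a finite Fubini-style iterated product of means is already a mean, so the $p_{n,k}$-cluster-point device from Lemma~\ref{lem:extamen} is not needed for the product over $y\in F$; that device serves a different purpose there, namely boosting a mean giving merely positive weight on an ideal to one giving it full weight.
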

\begin{proof}
  The proof follows closely that of
  Proposition~\ref{prop:stabilizers};
  see~\cite{juschenko-mattebon-monod-delasalle:extensive}*{Proposition~2.4}
  for details. Assume that $q^{-1}(y)$ is extensively amenable for all
  $y\in Y$, and let $m_y$ be a $G_y$-invariant mean giving full weight
  to ideals. By making one choice per $G$-orbit, we may also assume
  that $m_{y'}$ is the push-forward by $g$ of $m_y$ whenever $y'=y
  g$. Extend every $m_y$ to a mean on $\mathfrak P_f(X)$; then $(m_y)$
  is a $G$-equivariant map $Y\to\mathscr M(\mathfrak P_f(X))$.

  For every $F=\{y_1,\dots,y_n\}\Subset Y$, we set
  $m_F(S)=m_{y_1}(\{E\cap q^{-1}(y_1)\mid E\in S\})\cdots
  m_{y_n}(E\cap q^{-1}(y_n))$, and note that $m_F$ gives full weight
  to every ideal of the form $\{E\Subset X\mid x\in E\}$ for some
  $x\in q^{-1}(F)$. The map $F\mapsto m_F$ defines a $G$-equivariant
  map $\mathfrak P_f(Y)\to\mathscr M(\mathfrak P_f(X))$. Composing with the
  barycentre $\Upsilon$ as in~\eqref{eq:barycentre}, we obtain a
  $G$-equivariant map
  $m_*\colon\mathscr M(\mathfrak P_f(Y))\to\mathscr M(\mathfrak
  P_f(X))$.

  Assume now that $Y$ is extensively amenable, and let $n$ be a
  $G$-invariant mean on $\mathfrak P_f(Y)$ giving full weight to
  ideals. Set $m\coloneqq m_*(n)$; then $m$ is a $G$-invariant mean on
  $\mathfrak P_f(X)$ giving full weight to ideals, so $X$ is
  extensively amenable.

  Assume finally that $q$ is onto and that $X$ is extensively
  amenable. By Lemma~\ref{lem:extamen}, the $G_y$-subset $q^{-1}(y)$
  of $X$ is extensively amenable for all $y\in Y$. Let $m$ be a mean
  on $\mathfrak P_f(X)$ giving full weight to ideals, and
  define a mean $n$ on $\mathfrak P_f(Y)$ by
  $n(S)=m(\{E\Subset X\mid q(E)\in S\})$. Given $y\in Y$, choose
  $x\in q^{-1}(y)$, and note
  \[n(\{F\Subset Y\mid y\in F\})=m(\{E\Subset X\mid y\in q(E)\})\ge m(\{E\Subset X\mid x\in E\})=1.\qedhere\]
\end{proof}

In particular, let $K\le H\le G$ be groups. Then $K\backslash G$ is an
extensively amenable $G$-set if and only if both $K\backslash H$ and
$H\backslash G$ are extensively amenable. This is in contrast with
Example~\ref{ex:monodpopa}, where the corresponding property is shown
\emph{not} to hold for amenability of sets.

The following proposition relates Definition~\ref{defn:extamen} to the
original definition; we begin by introducing some vocabulary. Let
$\mathbf A$ denote the category of group actions: its objects are
pairs $X\looparrowleft G$ of a set $X$ and an action of $G$ on $X$,
and a morphism $(X\looparrowleft G)\to(Y\looparrowleft H)$ is a pair
of maps $(f\colon X\to Y,\phi\colon G\to H)$ intertwining the actions
on $X$ and $Y$, namely satisfying $f(x)\phi(g)=f(x g)$ for all
$x\in X,g\in G$. We denote by $\mathbf{AA}$ and $\mathbf{EA}$ the
subcategories of amenable, respectively extensively amenable actions.

We are interested in functors
$F\colon\{\text{finite sets, injections}\}\to\mathbf{AA}$, written
$F(X)=F_0(X)\looparrowleft F_1(X)$ for a group $F_1(X)$ and an
$F_1(X)$-set $F_0(X)$. Since amenable actions are closed under
directed unions, and every set is the directed union of its finite
subsets, we get by continuity a functor still written
$F\colon\{\text{sets, injections}\}\to\mathbf{AA}$, called an
\emph{amenable functor}. If furthermore $F$ takes values in
$\mathbf{EA}$ then we call it an \emph{extensively amenable functor}.
We call the functor $F$ \emph{tight} if the map
$F_0(X\setminus\{x\})\to F_0(X)$ is never onto.

We already saw some examples of tight functors: for any amenable group
$A$, the functor $X\mapsto A^{(X)}\looparrowleft A^{(X)}$ since
$A^{(X)}$ is the directed union of its amenable subgroups $A^E$ over
all $E\Subset X$; the functor $X\mapsto \Sym(X)\looparrowleft\Sym(X)$,
by the same reasoning (see Example~\ref{ex:finitary:eg}); and the
functor $X\mapsto X\looparrowleft\Sym(X)$. Note that if $X$ is a
$G$-set then $F_0(X)$ and $F_1(X)$ inherit $G$-actions by
functoriality.

\begin{proposition}[\cite{juschenko-mattebon-monod-delasalle:extensive}*{Theorem~3.14}]\label{prop:F(X)amenable}
  Let $F$ be a functor as above, and let $X$ be a $G$-set. If $X$ is
  extensively amenable and $F$ is amenable then
  $F_0(X)\looparrowleft(G\ltimes F_1(X))$ is amenable, and if
  furthermore $F$ is extensively amenable then
  $F_0(X)\looparrowleft(G\ltimes F_1(X))$ is extensively amenable.

  Conversely, if $F$ is tight and
  $F_0(X)\looparrowleft(G\ltimes F_1(X))$ is amenable then $X$ is
  extensively amenable.
\end{proposition}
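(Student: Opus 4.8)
Throughout write $\Gamma=G\ltimes F_1(X)$; the guiding technique is the barycentre argument of Proposition~\ref{prop:stabilizers}, where to obtain a $\Gamma$-invariant mean on a $\Gamma$-set one builds a $\Gamma$-equivariant map into its space of means out of a $\Gamma$-set already known to carry an invariant mean, and then applies the barycentre $\Upsilon$ of~\eqref{eq:barycentre}. The first thing I would record is a reduction using neither extensive amenability nor tightness: \emph{$F_0(X)\looparrowleft F_1(X)$ is amenable.} Given $S\Subset F_1(X)$, directedness of $F_1(X)=\bigcup_{E\Subset X}F_1(E)$ gives $E_0$ with $S\subseteq F_1(E_0)$; amenability of $F$ supplies a F\o lner set for the action of $F_1(E_0)$ on $F_0(E_0)$, and since the canonical map $F_0(E_0)\to F_0(X)$ intertwines $F_1(E_0)\to F_1(X)$ (functoriality of $F$ on $E_0\hookrightarrow X$), the image of that F\o lner set is F\o lner for $S$ in $F_0(X)$. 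By Theorem~\ref{thm:folneramen}, $F_0(X)\looparrowleft F_1(X)$ is amenable, so $\mathscr N:=\mathscr M(F_0(X))^{F_1(X)}$ is a nonempty compact convex set; since $G$ normalises $F_1(X)$ in $\Gamma$ it acts on $\mathscr N$, and a $G$-fixed point of $\mathscr N$ is exactly a $\Gamma$-invariant mean on $F_0(X)$.

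For (1) it thus remains to produce a $G$-fixed point in $\mathscr N$, and here extensive amenability of $X$ is used. Using Lemma~\ref{lem:extamen}(2) and Proposition~\ref{prop:restrict} I would reduce to $G$ finitely generated acting on a single orbit $X=x_0G$, and fix a $G$-invariant mean $m$ on $\mathfrak P_f(X)$ charging every ideal. To each $E\Subset X$ I attach an element $\mu_E\in\mathscr N$: start from an $F_1(E)$-invariant mean on $F_0(E)$ (available since $F$ is amenable), view it inside $\mathscr M(F_0(X))$, and promote it to an $F_1(X)$-invariant mean by averaging against the F\o lner net for the action of $F_1(X)$ on $F_0(X)$ relative to $F_1(E)$ built in the previous paragraph. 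Choosing one such $\mu_E$ per $G$-orbit of finite subsets makes $E\mapsto\mu_E$ equivariant up to an error controlled by those F\o lner estimates, which shrinks as $E$ grows; integrating $E\mapsto\mu_E$ against $m$ via $\Upsilon$ and invoking the concentration of $m$ on ideals (so that only ``large'' $E$ contribute, and the boundary discrepancies vanish in the limit, as in the $\delta/3$-argument of Theorem~\ref{thm:folneramen}$(5)\Rightarrow(1)$) should yield a genuine $G$-fixed point of $\mathscr N$. \textbf{This step is the main obstacle}: the point stabilisers $G_E\le G$ need not be amenable, so one cannot simply select a $G_E$-fixed mean on $F_0(E)$; extensive amenability of $X$ is precisely the device that launders the non-equivariant choices away.

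Statement (2) I would deduce from (1) by a bootstrap. Let $\widetilde F$ be the functor with $\widetilde F_0(E)=\Z^{(F_0(E))}$ and $\widetilde F_1(E)=F_1(E)\ltimes\Z^{(F_0(E))}$, the group acting on $\Z^{(F_0(E))}$ by translations and, through the action of $F_1(E)$ on $F_0(E)$, by permuting coordinates. That $\widetilde F_0(E)\looparrowleft\widetilde F_1(E)$ is amenable is, by pushing an invariant mean forward along $\phi\mapsto\operatorname{supp}\phi$ and using that $\Z$ is infinite, exactly the statement that $F_0(E)\looparrowleft F_1(E)$ is extensively amenable, which holds because $F$ is an extensively amenable functor; hence $\widetilde F$ is an amenable functor. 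Applying (1) to $\widetilde F$ gives that $\Z^{(F_0(X))}\looparrowleft G\ltimes\widetilde F_1(X)=\Z^{(F_0(X))}\looparrowleft\Gamma\ltimes\Z^{(F_0(X))}$ is amenable, which is exactly the assertion that $F_0(X)\looparrowleft\Gamma$ is extensively amenable.

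Finally (3). Given a $\Gamma$-invariant mean $M$ on $F_0(X)$, I would build a $G$-invariant mean on $\mathfrak P_f(X)$ charging every ideal; by Lemma~\ref{lem:extamen}(4), and reducing again via Lemma~\ref{lem:extamen}(3) to a finitely generated $H\le G$ and an orbit $x_0H$, it suffices to charge each ideal with positive mass. Tightness enters here: it guarantees that for every finite $E$ and every $x\in E$ the image of $F_0(E\setminus\{x\})$ is a proper subset of $F_0(E)$, so that $F$ faithfully records the growth of $E$. Transporting $M$ along a suitable $H$-equivariant ``collapsing'' assignment $F_0(X)\to\mathscr M(\mathfrak P_f(x_0H))$ — sending an element of $F_0(X)$ to a mean on the finite subsets that ``carry'' it — and then applying $\Upsilon$, one obtains a mean that, precisely because of tightness, cannot collapse onto any set $\{E:x_0\notin E\}$. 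I expect this direction to be the less delicate one, since the tightness hypothesis is doing by hand the work that extensive amenability of $X$ had to do in (1); the only recurring subtlety is again the non-equivariance of the choices involved, dealt with as before.
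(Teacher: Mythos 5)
The step in your argument for part (1) that fails is the ``promotion'': you want each $\mu_E$ to lie in $\mathscr N=\mathscr M(F_0(X))^{F_1(X)}$, and propose to obtain it by starting from an $F_1(E)$-invariant mean on $F_0(E)$ and ``averaging against the F\o lner net for the action of $F_1(X)$.'' No such operation exists: F\o lner averaging only produces \emph{approximate} invariance for finite subsets of $F_1(X)$, and the limit needed to upgrade to genuine $F_1(X)$-invariance loses all memory of $E$, so the family $(\mu_E)_E$ cannot simultaneously land in $\mathscr N$ and retain the dependence on $E$ needed for $G$-equivariance. The subsequent ``$G$-equivariant up to an error that shrinks as $E$ grows'' assertion is then unsupported; there is nothing quantitative to control, because $G$ may be wildly non-amenable and the orbit of $E$ arbitrarily bad. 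The paper's proof avoids the obstacle entirely: it takes $m_E$ to be only $F_1(E)$-invariant (immediate, by functoriality, from amenability of $F$), chooses one $m_E$ per cardinality class so that $f_*(m_E)=m_{E'}$ for all bijections $f\colon E\to E'$ — whence $E\mapsto m_E$ is \emph{exactly} $G$-equivariant — and recovers $F_1(X)$-invariance of the barycentre $m=\Upsilon(\Phi_*(m_0))$ only at the end from the ideal concentration of $m_0$: for each $A\Subset X$, since $m_0$ gives full mass to $\{E\mid A\subseteq E\}$ and each $m_E$ with $E\supseteq A$ is $F_1(A)$-invariant, $m$ is $F_1(A)$-invariant; as $F_1(X)=\bigcup_{A\Subset X}F_1(A)$, the full invariance follows. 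In short, the ideal concentration is not a device to launder errors in an approximate family; it is what converts $F_1(E)$-invariance into $F_1(X)$-invariance, and asking for $\mu_E\in\mathscr N$ from the start is what makes your route unworkable.

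Your (2) and (3) are closer to the paper's argument, but should be tightened. For (3) the paper simply defines the $G$-equivariant support map $\supp\colon F_0(X)\to\mathfrak P_f(X)$, $\supp(y)=\bigcap\{E\Subset X\mid y\in\operatorname{image}(F_0(E)\to F_0(X))\}$, pushes the given $\Gamma$-invariant mean forward along $\supp$, and observes that the ideal $\{E\mid x_0\in E\}$ pulls back to $F_0(X)\setminus F_0(X\setminus\{x_0\})$, nonempty by tightness and of positive mass by $F_1(X)$-invariance; your detour through $\mathscr M(\mathfrak P_f(x_0H))$ and $\Upsilon$ adds nothing and obscures where tightness is actually used. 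For (2), your claim that ``$\widetilde F$ amenable iff $F$ extensively amenable'' is itself an instance of part (3) applied to the functor $A\mapsto A^{(E)}\looparrowleft A^{(E)}$; the paper makes this dependence explicit by proving (3) first and then bootstrapping through it with $H(E)=(\Z/2)^{(E)}\looparrowleft(\Z/2)^{(E)}$. Your $\Z$-version is morally the same, but the circularity should be acknowledged so the logical order of the three parts is clear.
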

\begin{proof}
  Assume first that $F$ is amenable.  For every $E\Subset X$ let
  $m_E\in\mathscr M(F_0(E))^{F_1(E)}$ be an invariant mean, and extend
  it functorially to a mean still written
  $m_E\in\mathscr M(F_0(X))^{F_1(E)}$. By choosing once $m_E$ per
  cardinality class of subsets of $X$, we may ensure that we have
  $f_*(m_E)=m_{E'}$ for every bijection $f\colon E\to E'$. We obtain
  in this manner a $G$-equivariant map
  $\mathfrak P_f(X)\to\mathscr M(F_0(X))$, and therefore, composing with
  the barycentre $\Upsilon$ as in~\eqref{eq:barycentre}, a map
  $\mathscr M(\mathfrak P_f(X))^G\to\mathscr M(F_0(X))^G$.

  By assumption, there exists $m_0\in\mathscr M(\mathfrak P_f(X))^G$
  giving full mass to ideals; let $m$ be the image of $m_0$ under the
  above map. Clearly $m$ is a $G$-invariant mean on $F_0(X)$. It is
  also $F(A)$-invariant for every $A\Subset X$: one may restrict $m_0$
  to $\{E\Subset X\mid A\subseteq E\}$ and still obtain a mean. Every
  $m_E$ is $F_1(E)$-invariant, so is in particular $F(A)$-invariant,
  and therefore $m$ is also $F(A)$-invariant. In summary, $m$ is
  $G\ltimes F_1(X)$-invariant, so $F_0(X)$ is an amenable
  $G\ltimes F_1(X)$-set.

  For the converse, define a $G$-equivariant map
  $\supp\colon F_0(X)\to\mathfrak P_f(X)$ by
  \[\supp(x)=\bigcap\{E\Subset X\mid x\in\operatorname{image}(F_0(E)\to F_0(X))\}.\]
  Assume that $F_0(X)$ is an amenable $G\ltimes F_1(X)$-set, and let
  $m_0$ be a $G$-invariant mean on $F_0(X)$. Let $m$ the push-forward
  of $m_0$ via $\supp$; it is a $G$-invariant mean on
  $\mathfrak P_f(X)$. Choose $x_0\in X$. By definition,
  $m(\{E\Subset X\mid x_0\in E\})=m_0(S)$ for the ideal
  \begin{align*}
    S &= \{x\in F_0(X)\mid x_0\in\supp(x)\}\\
      &= \bigcap_{x_0\not\in E\Subset X}(F_0(X)\setminus\operatorname{image}(F_0(E)\to F_0(X)))\\
      &= F_0(X)\setminus F_0(X\setminus\{x_0\}).
  \end{align*}
  Since $F$ is tight, $S\neq\emptyset$. Furthermore, $m_0$ is
  $F_1(X)$-invariant, so $m_0(S)>0$. We conclude by
  Lemma~\ref{lem:extamen} that $X$ is extensively amenable.

  Finally, to prove that $F_0(X)$ is an extensively amenable
  $G\ltimes F_1(X)$-set whenever $F$ is an extensively amenable
  functor, we apply the converse just proven to the functor
  $H(X)=(\Z/2)^{(X)}\looparrowleft(\Z/2)^{(X)}$. For every $X$, we
  know from the first part of the proof that $H_0(F_0(X))$ is an
  amenable $(G\ltimes F_1(X))\ltimes H_1(F_0(X))$-set, since we
  assumed $F_1(X)\looparrowleft G\ltimes F_1(X)$ is extensively
  amenable. Therefore, the functor
  $X\mapsto H_0(F_0(X))\looparrowleft(F_1(X)\ltimes H_1(F_0(X)))$ is
  amenable, and yet again the second part of the proof allows us to
  deduce that $F_0(X)\looparrowleft G\ltimes F_1(X)$ is extensively
  amenable.
\end{proof}

\noindent A fundamental application of
Proposition~\ref{prop:F(X)amenable} is the following
\begin{corollary}\label{cor:twistedamen}
  Let $H$ be a subgroup of $G\ltimes F(X)$ for some extensively
  amenable $G$-set $X$. If $H\cap(G\times1)$ is amenable, then $H$ is
  amenable too.
\end{corollary}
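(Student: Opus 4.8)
The plan is to deduce this corollary from Proposition~\ref{prop:F(X)amenable} by restricting the ``big'' action $F_0(X)\looparrowleft G\ltimes F_1(X)$ to the subgroup $H$. Concretely, apply the proposition with $F$ the functor $X\mapsto F(X)\looparrowleft F(X)$ (regular action of $F(X)$ on itself); this is an amenable functor. Since $X$ is extensively amenable, the proposition gives that $F_0(X)=F(X)$ is an amenable $G\ltimes F(X)$-set, where $G\ltimes F(X)$ acts on $F(X)$ through the regular action of the subgroup $F(X)$ combined with the $G$-action by functoriality. Restricting this action to $H\le G\ltimes F(X)$, we see that $H$ acts amenably on the set $F(X)$.

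Next I would compute the point stabilizers of this $H$-action on $F(X)$. The stabilizer in $G\ltimes F(X)$ of the identity element $1\in F(X)$ is exactly $G\times 1$: an element $(g,f)$ sends $1$ to $f\cdot(1\cdot g)=f$ (using that $G$ fixes $1\in F(X)$ functorially, since $1$ is the identity), so $(g,f)$ fixes $1$ iff $f=1$. More generally the stabilizer of any point is a conjugate of $G\times 1$ inside $G\ltimes F(X)$. Hence the $H$-stabilizer of $1$ is $H\cap(G\times 1)$, which is amenable by hypothesis; and the $H$-stabilizer of any other point $y\in F(X)$ is $H\cap(G\times 1)^{(g,f)}$ for a suitable $(g,f)$, i.e.\ $H$ intersected with a conjugate of $G\times 1$, which need not a priori be conjugate to $H\cap(G\times1)$ inside $H$.

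To handle stabilizers at arbitrary points, I would restrict attention to a single $H$-orbit $\mathcal O\subseteq F(X)$, which is still an amenable $H$-set (amenability passes to retracts of $G$-sets via $G$-equivariant maps onto orbits — indeed any $H$-equivariant map $F(X)\to\mathcal O$ sending each orbit into itself shows $\mathcal O$ is a quotient, hence amenable by Proposition~\ref{prop:quotientX}). Within $\mathcal O$, all point stabilizers are $H$-conjugate, so they are all amenable as soon as one of them is; but I need to be sure some point of $\mathcal O$ has stabilizer $H\cap(G\times1)$. This is where the main obstacle lies: if the orbit $\mathcal O$ doesn't meet the fixed locus of $G\times 1$, its stabilizers are $H\cap(G\times1)^{(g,f)}$ with no obvious amenability. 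The fix is to observe that $H$ acts \emph{freely} on $F(X)$ modulo the $G\times 1$ part: more precisely, the quotient map $G\ltimes F(X)\twoheadrightarrow G$ identifies the coset space $(G\times 1)\backslash(G\ltimes F(X))$ with $F(X)$ as a set, and a point stabilizer in $G\ltimes F(X)$ is a conjugate of $G\times 1$. So I should instead run Proposition~\ref{prop:stabilizers}: $H$ acts on the amenable $H$-set $F(X)$, and I claim every point stabilizer is amenable because each is of the form $H\cap(G\times1)^{x}$ for some $x\in G\ltimes F(X)$, and $(G\times1)^x\cap H = (G\times 1 \cap H^{x^{-1}})^x$; since $H^{x^{-1}}$ need not equal $H$ this still isn't immediate. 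The clean way out, which I would adopt, is: replace $X$ by $X$ and $G$ by $H$'s projection, but more simply — use that the $H$-orbit of $1\in F(X)$ has stabilizer exactly $H\cap(G\times 1)$, and that $F_0$ may be taken so that $F(X)\looparrowleft F(X)$ restricted along the injection of a \emph{transitive} sub-$H$-set still works; then conclude $H$ is amenable by Proposition~\ref{prop:stabilizers} applied to the transitive amenable $H$-set $\mathcal O = 1\cdot H$ with amenable stabilizer $H\cap(G\times1)$. I expect verifying that $1\cdot H$ is amenable (as a retract/quotient of the amenable $H$-set $F(X)$) and that its stabilizer is precisely $H\cap(G\times1)$ to be the only real content; everything else is bookkeeping with semidirect products.
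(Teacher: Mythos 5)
You correctly identify the skeleton: pass to the $H$-orbit $\mathcal{O}=1\cdot H\subseteq F(X)$, observe that the $H$-stabilizer of $1$ is exactly $H\cap(G\times 1)$ so all point stabilizers in the orbit are $H$-conjugate to it, and finish with Proposition~\ref{prop:stabilizers}. That is also the structure of the paper's proof. The gap is in the one step you flag as ``the only real content'': showing that the orbit $\mathcal{O}$ is an amenable $H$-set.

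Your proposed justification — that $\mathcal{O}$ is a retract or quotient of $F(X)$ via some $H$-equivariant map $F(X)\to\mathcal{O}$ sending each orbit into itself — is false. No such map exists in general: an $H$-set with several orbits, one of which has a fixed point, admits no $H$-equivariant map onto a free orbit; and more to the point, the paper explicitly warns (right after Proposition~\ref{prop:quotientX}) that \emph{sub}-$G$-sets of amenable $G$-sets need not be amenable. So ``$H$ acts amenably on $F(X)$'' does not imply ``$H$ acts amenably on the orbit $\mathcal{O}$''. Concretely, take $G=1$ and $F(X)=F_2$ with an extra fixed point adjoined, and $H=F_2$: the big set is amenable (fixed point), the orbit $1\cdot H$ is $F_2$ itself, not amenable.

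The way the paper closes this hole is precisely the raison d'\^etre of extensive amenability. You only extract from Proposition~\ref{prop:F(X)amenable} the conclusion that $F(X)\looparrowleft G\ltimes F(X)$ is \emph{amenable}; you should instead take the stronger conclusion that it is \emph{extensively} amenable (which is what the ``furthermore'' clause gives, under the implicit hypothesis that $F$ is an extensively amenable functor — e.g.\ the regular functor of an amenable group, or $\Sym$). Then Lemma~\ref{lem:extamen}(2) lets you restrict to the $H$-orbit $1\cdot H$ and conclude it is still extensively amenable, hence amenable by Lemma~\ref{lem:extamen0}. Without invoking extensive amenability at this step, the argument does not go through, so your ``bookkeeping'' remark underestimates the obstacle.
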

\begin{proof}
  By Proposition~\ref{prop:F(X)amenable}, $F(X)$ is extensively
  amenable, so by Lemma~\ref{lem:extamen} the $H$-orbit
  $1\cdot H\subseteq X$ is an extensively amenable $H$-set, and is
  therefore amenable by Lemma~\ref{lem:extamen0}. The stabilizers in
  this action are conjugate to $H\cap(G\times1)$, which is amenable by
  assumption, so $H$ is amenable by
  Proposition~\ref{prop:stabilizers}.
\end{proof}

There is also a connection between extensive amenability and
laminability, see Definition~\ref{defn:laminable}: by
Corollary~\ref{cor:amentrans}, if $X\looparrowleft G$ is extensively
amenable then $F(X)\looparrowleft G\ltimes F(X)$ is laminable.

In the next section, we shall see a sufficient condition for an action
to be extensively amenable, and in Example~\ref{ex:iet} an application
to interval exchange transformations.

We finish this section by a very brief summary of the ``only if'' part
of a proof of Theorem~\ref{thm:polygrowth} due to
Kleiner~\cite{kleiner:gromov} and simplified by Tao; we include it
here because it combines amenability and the study of (now unbounded)
harmonic functions.

Let $G$ be a group of polynomial growth; we are to show that $G$ has a
nilpotent subgroup of finite index. We may of course assume that $G$
is infinite, and by induction on the growth degree it suffices to show
that $G$ has a finite-index subgroup mapping onto $\Z$. For that
purpose, it suffices to show that $G$ has an infinite image in some
virtually soluble group. By~\cite{shalom:linear}, every amenable
finitely generated subgroup of $\GL_n(\C)$ is virtually soluble, and
$G$ is amenable by Proposition~\ref{prop:subexp=>amenable}, so it
suffices to construct a representation $G\to\GL_n(\C)$ with infinite
image. The proof uses the following arguments:
\begin{lemma}\label{lem:kleiner:1}
  Let $G$ be a countably infinite amenable group. Then there exists an
  action of $G$ on a Hilbert space $\mathscr H$ with no fixed points.
\end{lemma}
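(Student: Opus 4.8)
The plan is to produce an \emph{affine isometric} action of $G$ on a Hilbert space with no global fixed point (this is the only meaningful reading, since a linear unitary action always fixes $0$). Recall that such an action amounts to a unitary representation $\pi$ of $G$ on $\mathscr H$ together with a $1$-cocycle $b\colon G\to\mathscr H$ — matching the text's right conventions, $b(gh)=b(g)h+b(h)$, where $w\mapsto w g$ denotes the unitary part — the action being $w\star g=w g+b(g)$; it has a fixed point $w_0$ precisely when $b(g)=w_0-w_0 g$ for all $g$, i.e.\ $b$ is a coboundary. So it suffices to exhibit a unitary representation carrying a cocycle that is not a coboundary.

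The building block is the regular representation $\mathscr H_0=\ell^2(G)$ with $(\phi g)(x)=\phi(x g^{-1})$. Because $G$ is amenable, $\mathscr H_0$ has almost invariant unit vectors by Reiter's condition (Theorem~\ref{thm:folneramen}(4), $p=2$); because $G$ is infinite, $\mathscr H_0$ has \emph{no} nonzero invariant vector, since an invariant $\ell^2$ function is constant hence $0$. Now I would enumerate $G=\{g_0=1,g_1,g_2,\dots\}$ (this is where countability enters), set $S_n=\{g_0,\dots,g_n\}$, and for each $n\ge1$ choose, via Reiter, a unit vector $\xi_n\in\ell^2(G)$ with $\|\xi_n g-\xi_n\|\le 2^{-n}$ for every $g\in S_{n-1}$. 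Then I put $\mathscr H=\bigoplus_{n\ge1}\mathscr H_0$ with the diagonal action and define $b(g)=(\xi_n g-\xi_n)_{n\ge1}$.

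Two things then need checking. First, $b(g)\in\mathscr H$: if $g=g_m$, then $g\in S_{n-1}$ for every $n>m$, so $\sum_{n\ge1}\|\xi_n g-\xi_n\|^2\le 4(m+1)+\sum_{n>m}4^{-n}<\infty$; and $b$ is a cocycle because each coordinate map $\phi\mapsto\phi g-\phi$ is one. Second, $b$ is not a coboundary: were $b(g)=w_0-w_0 g$ for some $w_0=(w_n)_n\in\mathscr H$, then coordinatewise $(\xi_n+w_n)g=\xi_n+w_n$ for all $g$, so $\xi_n+w_n$ would be an invariant vector of $\ell^2(G)$, hence $0$; thus $w_n=-\xi_n$ and $\|w_0\|^2=\sum_n\|\xi_n\|^2=\infty$, a contradiction. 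Hence the affine isometric action $w\star g=w g+b(g)$ on $\mathscr H$ has no fixed point.

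I expect no real obstacle here; the only point needing care is the simultaneous bookkeeping that makes $b(g)$ lie in $\mathscr H$ for every $g$ while keeping $\|\xi_n\|=1$ for all $n$ — the former forces the errors $\|\xi_n g-\xi_n\|$ to be summable along the exhausting sets $S_{n-1}$, the latter is exactly what pushes the would-be fixed point out of $\ell^2$ — and choosing the tolerances $2^{-n}$ on the sets $S_{n-1}$ achieves both at once.
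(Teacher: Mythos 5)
Your proof is correct and follows essentially the same route as the paper's: both take $\mathscr H=\bigoplus_{n}\ell^2(G)$ with the diagonal action and produce a cocycle $b(g)=(\xi_n g-\xi_n)_n$ where the $\xi_n$ are almost-invariant unit vectors, the would-be fixed point $(-\xi_n)_n$ being excluded from $\mathscr H$ by its infinite norm. The one (minor) place you are more careful is the summability of $\sum_n\|\xi_n g-\xi_n\|^2$, which the paper leaves implicit after saying ``using the almost-invariance of $(F_n)$''; your explicit $2^{-n}$ tolerance schedule on the exhaustion $S_{n-1}$ makes this step airtight, whereas the paper's F\o lner sequence would need to be passed to a fast enough subsequence for the same reason.
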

\begin{proof}
  Consider $\mathscr H=\ell^2(\N\times G)$, the space of
  square-summable functions $(f_1,f_2,\dots)$ in $\ell^2(G)$. There is
  a natural, diagonal action of $G$ on $\mathscr H$ by
  right-translation. This action has a fixed point $0$, but we can
  construct an affine action without fixed point as follows.

  Let $(F_n)_{n\in\N}$ be a F\o lner sequence in $G$, and define
  $h=(\mathbb 1_{F_n}/\sqrt{\#F_n})_{n\in\N}$. Then
  $h\not\in\mathscr H$, but $h-h g\in\mathscr H$ for all $g\in G$,
  using the almost-invariance of $(F_n)$. We let $G$ act on
  $\mathscr H$ by $f\cdot g=f g+h-h g$, namely we move the fixed point
  to $h$.
\end{proof}

The main result, whose proof we omit, is the following control on the
growth of harmonic functions. It follows easily from Gromov's theorem,
but Kleiner gave a direct and elementary proof of it:
\begin{lemma}\label{lem:kleiner:3}
  Let $G$ be a group of polynomial growth, and let $\mu$ be a measure
  on $G$. Then for every $d\in\N$ the vector space of harmonic maps
  $u\colon G\to\C$ of growth degree at most $d$ (namely for which
  there is a constant $C$ with $|u(g)|\le C|g|^d$ for all $g\in G$) is
  finite-dimensional.\qed
\end{lemma}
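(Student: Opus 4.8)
The plan is to follow the classical route of Colding--Minicozzi and Kleiner, transplanted to the Cayley graph of $G$, using only the polynomial growth hypothesis. First I would reduce to a convenient $\mu$: replacing $\mu$ by $q\mu+(1-q)\delta_1$ and, if need be, symmetrizing and enlarging the support does not change $H_d$, so we may assume $\mu$ symmetric, finitely supported, and non-degenerate. Write $S=\supp\mu$, let $B(r)$ be the ball of radius $r$ in the word metric attached to $S$, and note that polynomial growth furnishes a uniform doubling bound $\#B(2r)\le D_0\#B(r)$ and a polynomial bound $\#B(r)\le D_0(1+r)^A$ with $A$ the growth degree; these two estimates are the only geometric inputs. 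For $u\colon G\to\C$ put $N(u)=\sup_g |u(g)|/(1+|g|)^d$, so that $H_d=\{u\text{ harmonic}: N(u)<\infty\}$.

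Next I would establish the two functional inequalities that drive everything. (a) A \emph{Poincaré inequality}: there are $C_P,\kappa\ge1$ with, for all $r\ge1$ and all $f\colon G\to\C$,
\[
\sum_{x\in B(r)}\Bigl|f(x)-\tfrac1{\#B(r)}\textstyle\sum_{y\in B(r)}f(y)\Bigr|^2\ \le\ C_P\,r^2\sum_{x\in B(\kappa r)}\sum_{s\in S}\mu(s)\,|f(xs)-f(x)|^2 ;
\]
this comes from doubling by a standard chaining argument along geodesics. (b) A \emph{reverse Poincaré} (Caccioppoli) inequality for harmonic functions: there is $C_R$ with, for $u$ harmonic, all $r$ and all $c\in\C$,
\[
\sum_{x\in B(r)}\sum_{s\in S}\mu(s)\,|u(xs)-u(x)|^2\ \le\ \frac{C_R}{r^2}\sum_{x\in B(2r)}|u(x)-c|^2 ,
\]
obtained by testing the harmonicity relation $u=u\check\mu$ against $\phi^2(u-c)$ for a cut-off $\phi$ that is $1$ on $B(r)$, supported in $B(2r)$, with $|\phi(xs)-\phi(x)|\le C/r$, and summing by parts. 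From (a) and doubling one also gets, by a Moser-type iteration, a \emph{mean value inequality} $|u(x)|^2\le (C_{MV}/\#B(x,s))\sum_{y\in B(x,s)}|u(y)|^2$ for harmonic $u$, with $C_{MV}$ independent of $u,x,s$.

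The heart is the dimension count. Introduce on $H_d$ the positive semidefinite forms $Q_r(u)=\min_{c\in\C}\sum_{x\in B(r)}|u(x)-c|^2$, with kernel the constants, positive definite modulo constants on a fixed finite-dimensional $V\subseteq H_d$ once $r$ is large. The polynomial growth bound gives $Q_r(u)\le N(u)^2 D_0(1+r)^{2d+A}$, so $r\mapsto\log\det(Q_r\restrict V)$ is non-decreasing with at most linear growth, of slope $(\dim V)(2d+A)$, in $\log r$; a pigeonhole over dyadic scales then produces a scale $r$ at which $Q_r$ and $Q_{2r}$ are almost proportional on $V$. I would organise the conclusion as an induction on $d$: the base case $d=0$ is the Liouville property of $G$ — bounded $\mu$-harmonic functions on a group of subexponential growth are constant (Avez) — so $\dim H_0=1$; for the inductive step, at the good scale the reverse Poincaré inequality makes the discrete gradients $\partial_s u$ of the functions in a large subspace of $V$ small in $L^2(B(r))$ relative to their own norm, feeding this back into the Poincaré inequality at many small scales $\delta r$ shows these functions are nearly locally constant, and the mean value inequality together with the doubling volume count turns this into the statement that the gradients $\partial_s u$ behave like elements of $H_{d-1}$; since the gradient map $u\mapsto(\partial_s u)_{s\in S}$ has only the constants in its kernel, this bounds $\dim V$ in terms of $\#S\cdot\dim H_{d-1}$, and hence $\dim H_d<\infty$.

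The main obstacle is exactly this last step: converting the scale-invariant inequalities (a), (b) and the mean value inequality — none of which, individually, sees the dimension — into a genuine numerical bound on $\dim V$. This is the content of the Colding--Minicozzi bilinear-form lemma, and the delicate point is the bookkeeping of constants so that the polynomial \emph{volume} growth (entering only through $D_0$ and $A$) beats the factor $r^2$ of the Poincaré inequality \emph{uniformly} across scales; locating the good scale, and verifying that at it the rescaled gradients really drop one degree of growth, is where essentially all of the work lies. The reductions on $\mu$, the two Poincaré-type inequalities, and the Moser iteration are routine.
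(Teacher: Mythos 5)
The paper omits the proof of this lemma, citing Kleiner's paper, so there is no in-text argument to compare against; I assess your sketch against Kleiner's actual argument. Your preliminary reductions on $\mu$, the Poincar\'e inequality from doubling, the reverse Poincar\'e (Caccioppoli) from testing harmonicity against a cutoff, and the Moser-type mean value inequality are the right ingredients and are correctly described; the pigeonhole over dyadic scales using $\log\det Q_r$ is also the correct mechanism for finding a good scale.

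The gap is in your proposed ``inductive step'' via gradients, and it is a genuine one. On a non-abelian group the two natural difference operators each lose one of the two properties you need simultaneously. The right difference $\partial_s u(x)=u(xs)-u(x)$, which is the one you wrote and the one controlled by the Caccioppoli inequality, does \emph{not} preserve $\mu$-harmonicity: the operator $u\mapsto u-u\check\mu$ is right convolution, hence it commutes with left translations, not with the right multiplications $x\mapsto xs$, so $\partial_s u$ is in general not harmonic and not an element of any $H_{d'}$. Conversely the left difference $u(sx)-u(x)$ does preserve harmonicity, but it has unbounded range in the (right) Cayley-graph metric --- $d(g,sg)=|g^{-1}sg|$ can be comparable to $|g|$ --- so you cannot conclude from the local $L^2$ gradient estimate that it drops the growth degree from $d$ to $d-1$. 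Thus the injection $u\mapsto(\partial_s u)_{s\in S}$ into $H_{d-1}^{\#S}$ that your induction rests on is not available. Kleiner's argument (following Colding--Minicozzi) does not induct on $d$ at all: it fixes a $k$-dimensional $V\subset H_d$, uses the pigeonhole scale $r$ to choose a $Q_r$-orthonormal basis $u_1,\dots,u_k$ with $\sum_i Q_{2r}(u_i)\lesssim k$, feeds this through the reverse Poincar\'e to bound $\sum_i\|\nabla u_i\|_{L^2(B(r))}^2\lesssim k/r^2$, covers $B(r)$ by balls of radius $\epsilon r$ and applies the Poincar\'e inequality plus doubling on each to bound $k=\sum_iQ_r(u_i)$ from above by (number of small balls) $+\;O(k\epsilon^2)$, and then optimizes $\epsilon$ to get $k\le C(D_0,A,d)$ directly, with no appeal to $H_{d-1}$. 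You allude to this bilinear-form lemma but then describe a different, gradient-based mechanism; you should replace the inductive step with the Colding--Minicozzi counting argument you name.
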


The proof of Theorem~\ref{thm:polygrowth} is then finished: a group
$G$ of polynomial growth is amenable, so by Lemma~\ref{lem:kleiner:1}
it has an affine, fixed-point-free action on a Hilbert space
$\mathscr H$. Let $\mu$ be SRW on $G$, and define
\[E\colon\mathscr H\to\R_+,\qquad v\mapsto\frac{1}{2} \sum_{s \in
    S}\mu(s) \|v s-v\|^2.
\]
Since $\mathscr H$ has no fixed point, $E(v)>0$ for all
$v\in\mathscr H$. Let us assume that $E(v)$ attains its minimum ---
this can be achieved by considering a sequence of better and better
approximations to a minimum in an ultrapower of $\mathscr H$ --- and
call its minimum $h$. One directly sees from
$\partial E(v)/\partial v|_h=0$ that $h$ is $\mu$-harmonic, and it is
not constant. Then
$V\coloneqq\{\langle h|v\rangle\mid v\in\mathscr H\}$ is a vector
space of Lipschitz harmonic maps, so is finite-dimensional by
Lemma~\ref{lem:kleiner:3}, and $G$'s action on $V$ has infinite image
because $V$ has no non-zero $G$-fixed point.

\subsection{Recurrent actions}
We saw in Proposition~\ref{prop:subexp=>amenable} that actions on
subexponentially-growing spaces are amenable; and in
Theorem~\ref{thm:kesten} that random walks on graphs in which the
probability of return to the origin decays subexponentially give
amenable actions. We see here that stronger conditions --- quadratic
growth, recurrent random walks --- produce extensively amenable
actions.

Let $p_1\colon X\times X\to[0,1]$ be a random walk on a set $X$. It is
\emph{recurrent} at $x\in X$ if $\sum_{n\ge0}p_n(x,x)=\infty$, namely
if a random walk started at $x$ is expected to return infinitely often
to $x$, and equivalently if it is certain to return to $x$. It is
\emph{transient} if it is not recurrent.

We computed in Example~\ref{ex:rwZ} that the probability of return in
to the origin in $n$ steps of SRW on $\Z$ is $\propto n^{-1/2}$; so
the probability of return to the origin on $\Z^d$ is
$\propto n^{-d/2}$. It follows that SRW on $\Z^d$ is recurrent
precisely for $d\le2$.

\begin{lemma}\label{lem:asymprecurrent}
  The random walk $p$ is recurrent if and only if for every $x\in X$
  there exists a sequence of functions $(a_n)$ in $\ell^2(X)$ with
  $a_n(x)=1$ and $\|a_n-T a_n\|\to0$, for $T$ the associated random
  walk operator.
\end{lemma}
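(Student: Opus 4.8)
The plan is to read the almost‑invariance condition through the Dirichlet form attached to $p$. Since $T=1-d^*d$, one has $a_n-Ta_n=d^*d a_n$, hence $\langle a_n-Ta_n,a_n\rangle=\langle da_n,da_n\rangle=\|da_n\|^2$, and, because $\|d^*d\|=\|dd^*\|=\|1-T\|\le2$,
\[
\|a_n-Ta_n\|=\|d^*d a_n\|\le\|dd^*\|^{1/2}\,\|da_n\|\le\sqrt2\,\|da_n\| .
\]
So the quantity that matters is the Dirichlet energy $\|da_n\|^2$, and I would prove the lemma in the sharp form: \emph{$p$ is recurrent at $x$ iff there are $a_n\in\ell^2(X)$ with $a_n(x)=1$ and $\|da_n\|^2\to0$} — the displayed inequality then upgrades any such family to one with $\|a_n-Ta_n\|\to0$. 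Equivalently, $p$ is recurrent at $x$ exactly when $\kappa(x):=\inf\{\|df\|^2:f\in\ell^2(X),\ f(x)=1\}=0$, i.e. the effective resistance from $x$ to infinity is infinite. The two classical inputs I would use (see~\cite{woess:rw}) are the Dirichlet principle — the minimiser of $\|df\|^2$ with prescribed values on a set is $p$‑harmonic off that set — and the renewal identity $\kappa(x)=1/\sum_{n\ge0}p_n(x,x)$.

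For ``recurrent $\Rightarrow$ such $a_n$ exist'', fix an exhaustion $F_1\subseteq F_2\subseteq\cdots$ of $X$ by finite sets with $x\in F_1$ and $\bigcup_nF_n=X$, and let $b_n$ be the equilibrium potential of $x$ relative to $F_n$, $b_n(y)=\mathbb P_y[\text{the walk hits }x\text{ before leaving }F_n]$. Then $b_n\in\ell^2(X)$ (it is supported on $F_n$), $b_n(x)=1$, and $b_n$ is $p$‑harmonic on $F_n\setminus\{x\}$, so $(1-T)b_n$ is supported on $\{x\}\cup(X\setminus F_n)$; since $b_n$ vanishes off $F_n$,
\[
\|db_n\|^2=\langle b_n,(1-T)b_n\rangle=\bigl((1-T)b_n\bigr)(x)=1-\sum_{z}p_1(x,z)b_n(z)=\mathbb P_x[\text{leave }F_n\text{ before returning to }x]=:q_n .
\]
The events appearing here decrease with $n$, and up to a null set their intersection is the event of never returning to $x$, which by recurrence has probability $0$; hence $q_n\to0$, i.e. $\|db_n\|^2\to0$.

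For the converse, let $a_n\in\ell^2(X)$ with $a_n(x)=1$ and $\|da_n\|^2\to0$. Then $\kappa(x)\le\|da_n\|^2\to0$, so $\kappa(x)=0$; the real point is that the infimum defining $\kappa(x)$ is unchanged if restricted to \emph{finitely supported} competitors. For this I would truncate: writing $B_R$ for the ball of radius $R$ about $x$ in the graph $(X,E)$ and $f_{n,R}=a_n\cdot\mathbb 1_{B_R}$, edges inside $B_R$ contribute to $\|df_{n,R}\|^2$ exactly as to $\|da_n\|^2$, edges outside contribute nothing, and every edge crossing $\partial B_R$ contributes at most $p_1(y,z)|a_n(y)|^2$ with $y$ on the sphere $\{|y|=R\}$; as distinct spheres are disjoint, $\sum_R\sum_{|y|=R}|a_n(y)|^2\le\|a_n\|^2<\infty$, so $\sum_{|y|=R}|a_n(y)|^2\to0$ as $R\to\infty$. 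Picking $R_n$ with this tail $<1/n$ gives finitely supported $f_n$ with $f_n(x)=1$ and $\|df_n\|^2\le\|da_n\|^2+1/n\to0$. By the Dirichlet principle the $q_n$ above equal $\min\{\|df\|^2:\supp f\subseteq F_n,\ f(x)=1\}$, hence $q_n\downarrow\inf\{\|df\|^2:\supp f\ \text{finite},\ f(x)=1\}=0$; so $\mathbb P_x[\text{never return to }x]=\lim_nq_n=0$, the walk returns to $x$ almost surely, therefore infinitely often, therefore $\sum_np_n(x,x)=\mathbb E_x[\#\{n:W_n=x\}]=\infty$ and $p$ is recurrent at $x$.

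The main obstacle is exactly this reconciliation between general $\ell^2$ competitors and finitely supported ones in the value of $\kappa(x)$; the truncation step settles it precisely because each $a_n$ lies in $\ell^2(X)$. Everything else — the Dirichlet (minimum‑energy) characterisation of the equilibrium potential, the probabilistic reading of $q_n$ and of $\lim_nq_n$, and the renewal identity for the Green function — is standard reversible‑chain theory and I would cite it from~\cite{woess:rw} rather than reprove it.
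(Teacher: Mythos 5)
Your proof is correct, but it follows a genuinely different route from the paper's.

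For the direction ``recurrent $\Rightarrow$ $a_n$ exist'', the paper uses the \emph{truncated Green functions} $G_n(y)=\sum_{m=0}^n p_m(x,y)$ and sets $a_n=G_n/G_n(x)$; the identity $(1-T)G_n=\delta_x-p_{n+1}(x,\cdot)$ then gives $D(G_n)\le G_n(x)$, hence $D(a_n)\le 1/G_n(x)\to0$. You instead use the \emph{equilibrium potentials} $b_n(y)=\mathbb P_y[\text{hit }x\text{ before leaving }F_n]$ and compute $D(b_n)=q_n$, the escape probability. Both constructions are standard, and your identity $D(b_n)=q_n$ is carried out correctly. For the converse the paper argues differently: it takes the full Green function $G$, writes $\langle d\phi,dG\rangle=\phi(x)$ and applies Cauchy--Schwarz to bound $D(\phi)\ge 1/D(G)>0$. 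That argument tacitly needs $G\in\ell^2(X)$, which fails, e.g., for SRW on $\Z^3$ (where $G(y)\sim|y|^{-1}$). Your truncation-plus-Dirichlet-principle argument avoids this: you pass to finitely supported competitors and identify the infimum with $\lim_n q_n=\mathbb P_x[\text{no return}]$, which is cleaner. (Your disjoint-spheres estimate for the truncation does implicitly use that the walk has bounded range, so that crossing edges sit in a bounded annulus; for general kernels replace it by dominated convergence over an exhaustion $K\uparrow X$, using $\sum_y p_1(x,y)=1$ and $a_n\in\ell^2$. The paper invokes the lemma only for finite-range walks, so this is harmless.)

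One shared caveat. You reduce the lemma to the ``sharp form'' with $\|da_n\|^2\to0$ and then upgrade via $\|a_n-Ta_n\|=\|d^*da_n\|\le\sqrt 2\,\|da_n\|$. This upgrade only goes one way. For the ``if'' direction of the lemma as literally stated one would need the reverse implication ($\|a_n-Ta_n\|\to0$ with $a_n(x)=1$ forcing $D(a_n)\to0$), and that fails without a bound on $\|a_n\|$: on $\Z^3$ one can take $\widehat a_n=c_n\mathbb 1_{|\theta|<\epsilon_n}$ with $c_n$ chosen so $a_n(0)=1$; then $\|(1-T)a_n\|^2\sim\epsilon_n\to0$ while $D(a_n)\sim\epsilon_n^{-1}\to\infty$. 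So the statement in the Dirichlet form is the one that both you and the paper actually establish; the $\|a_n-Ta_n\|$ phrasing is, strictly, too weak a hypothesis for the ``if'' direction. Since the paper only ever uses the ``only if'' direction (and in the Dirichlet form $\|a_n-a_ng\|\to0$), this is a defect of the statement rather than of either proof, but it is worth being aware that the equivalence you assert between the two normalizations is not automatic.
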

\begin{proof}
  For a function $\phi\in\ell^2(X)$, define its \emph{Dirichlet norm}
  as $D(\phi)=\|d\phi\|^2=\frac12\sum_{x,y\in
    X}(f(x)-f(y))^2p(x,y)$. The claim is equivalent to requiring the
  existence of functions $a_n\in\ell^2(X)$ with $a_n(x)=1$ and
  arbitrarily small Dirichlet norm.  If $X$ is finite, there is
  nothing to do, as the functions $a_n\equiv1$ have $D(a_n)=0$.

  Choose $x\in X$.  Assume first that $p$ is transient, so that
  $G(y)\coloneqq\sum_{n\ge0} p_n(x,y)$ is well-defined. Then for all
  $\phi\in\ell^2(X)$ we have
  \[\langle d\phi,d G\rangle=\langle\phi, d^*d G\rangle=\phi(x),\]
  and $|\langle d\phi,d G\rangle|^2\le D(\phi)D(g)$, so
  $D(\phi)\ge\phi(1)/D(g)$ is bounded away from $0$.

  Assume next that $p$ is recurrent. For every $n\in\N$, set
  $G_n(y)=\sum_{m=0}^n p_m(x,y)$ and $a_n(y)=G_n(y)/G_n(x)$. Since by
  assumption $G_n(x)\to\infty$, the functions $a_n(y)$ satisfy the
  requirement.
\end{proof}

For random walks with finite range, the following criterion due to
Nash-Williams is very useful.  Let $p$ be a transitive random walk on
a set $X$, and let $x\in X$ be a basepoint. A \emph{slow constriction}
of $X$ is a family $\{x\}=V_0\subset V_1\subset\cdots$ of finite
subsets of $X$, such that $\bigcup V_n=X$ and $p_1(V_m,V_n)=0$
whenever $|m-n|\ge2$ and $\sum_{n\ge0}p_1(V_n,V_{n+1})^{-1}=\infty$. A
\emph{refinement} of $p$ is the random walk on a set obtained by
subdividing arbitrarily each transition $p_1(x,y)$ by inserting
midpoints along it.
\begin{theorem}[Nash-Williams~\cite{nash-williams:electric}]\label{thm:nashwilliams}
  Let $p$ be a transitive random walk on a set $X$. Then $p$ is
  recurrent if and only if it has a refinement admitting a slow
  constriction.
\end{theorem}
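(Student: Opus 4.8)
The plan is to recast recurrence in the language of electrical networks and then split into an easy and a hard direction. Since $p$ is assumed symmetric, it is a reversible chain with conductances $c(x,y)=p_1(x,y)$, and by Lemma~\ref{lem:asymprecurrent} (in the Dirichlet-norm form given in its proof) $p$ is recurrent if and only if the capacity
$\mathcal C(x\to\infty):=\inf\{D(\phi)\mid \phi\in\ell^2(X),\ \phi(x)=1\}$
vanishes, where $D$ is the Dirichlet form $D(\phi)=\|d\phi\|^2$. Two preliminary reductions. First, \emph{refinement preserves recurrence}: electrically, subdividing a transition replaces one conductor by a series of conductors of the same total resistance, so every effective resistance between old vertices, hence $\mathcal C(x\to\infty)$, is unchanged (equivalently, the trace of the refined walk on the original state space is the original walk up to holding times). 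Second, if $\{V_n\}_{n\ge0}$ is a slow constriction (an increasing exhaustion with $V_0=\{x\}$, no edge skipping a shell $W_n:=V_n\setminus V_{n-1}$, and $c_n:=p_1(V_n,X\setminus V_n)$ with $\sum_n c_n^{-1}=\infty$), then contracting each shell $W_n$ to a point turns the walk into a birth--death chain on $\mathbb N$ with conductance $c_n$ on the edge $\{n,n+1\}$, and a function constant on shells pulls back to $X$ with \emph{the same} Dirichlet energy.

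For the direction ``$\Leftarrow$'', suppose a refinement $p'$ of $p$ carries a slow constriction $\{V_n\}$. On the associated birth--death chain the test functions $a_N(k)=1-\big(\sum_{j<k}c_j^{-1}\big)\big/\big(\sum_{j<N}c_j^{-1}\big)$ for $k\le N$ (with $a_N\equiv0$ beyond) satisfy $D(a_N)=\big(\sum_{j<N}c_j^{-1}\big)^{-1}\to0$; pulling them back to the refined vertex set with unchanged energy, and using $x\in W_0$, gives $\mathcal C_{p'}(x\to\infty)=0$, so $p'$ is recurrent, whence $p$ is recurrent by the first reduction.

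For the direction ``$\Rightarrow$'', suppose $p$ is recurrent. Fix a finite exhaustion $\{x\}\subseteq F_1\subseteq F_2\subseteq\cdots$ with $\bigcup_m F_m=X$, and let $v_m$ be the equilibrium potential of $F_m$ ($v_m(x)=1$, $v_m\equiv0$ off $F_m$, harmonic in between), so $R_m:=1/D(v_m)$ is the resistance from $x$ to $X\setminus F_m$ and $R_m\uparrow\infty$. The engine of the argument: if one subdivides each edge at the points where the (resistance-linear) profile of $v_m$ crosses a prescribed grid of voltage values, then in the subdivided graph the level sets of $v_m$ become genuine thin cutsets; after rescaling $v_m$ so that unit current flows from $x$, that current must cross every intermediate voltage level, so \emph{each of these cutsets carries conductance equal to the total current}, a fixed value. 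Thus a resistance of size $R_m$ is exhibited as $\asymp R_m$ disjoint cutsets of bounded conductance, contributing $\asymp R_m$ to the Nash--Williams sum. To manufacture a single refinement of $p$ with an honest slow constriction, I would run this blockwise: pick $m_1<m_2<\cdots$ with $R_{m_j}\ge R_{m_{j-1}}+1$, and in the annular region $F_{m_j}\setminus F_{m_{j-1}}$ subdivide along the voltage levels of $v_{m_j}$, so that block $j$ contributes at least a fixed positive amount to $\sum_n c_n^{-1}$; summing blocks forces divergence.

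The main obstacle is the coherence of this blockwise construction: the level sets of $v_{m_j}$ and of $v_{m_{j+1}}$ must nest into one increasing family $\{V_n\}$ exhausting the refined vertex set; the per-edge subdivisions demanded by different blocks must be performed simultaneously (using that a refinement may split each edge into arbitrarily, even countably, many pieces); and the ``no edge skips a shell'' condition must survive across block boundaries. I expect the cleanest route is to first pass to the refinement $p'$ in which \emph{every} edge is subdivided along one fixed dense grid of resistance-coordinates, then define the $V_n$ inside $p'$ as super-level sets $\{v_{m_j}>t\}$ for a carefully chosen sequence of pairs $(m_j,t)$; checking that these are nested, exhausting, non-skipping, and slow is the technical heart, and amounts to the bookkeeping in Nash--Williams' original argument.
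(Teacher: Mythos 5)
Your proof of the implication ``slow constriction on a refinement $\Rightarrow$ recurrent'' is correct and matches the paper's approach. The paper gives two proofs of exactly this implication (despite confusingly labelling it the ``only if'' direction): a terse one via the associated birth--death chain on $\N$, and a second one constructing almost-invariant finite-support functions $a_n$ that are constant on shells, using the auxiliary lemma that a divergent positive series $\sum v_i$ admits weights $\lambda_{i,n}$ with $\sum_i\lambda_{i,n}v_i=1$ and $\sum_i\lambda_{i,n}^2v_i\to0$. Your test functions $a_N(k)=1-(\sum_{j<k}c_j^{-1})/(\sum_{j<N}c_j^{-1})$ are precisely a concrete choice of such $\lambda_{i,N}$ (namely $\lambda_{i,N}=\mathbb 1_{i<N}/\sum_{j<N}c_j^{-1}$), so your argument and the paper's second proof are the same argument; the energy computation and the observation that pulling back a shell-constant function from the birth--death chain preserves Dirichlet energy (since no edge skips a shell) are both correct.

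For the converse implication ``recurrent $\Rightarrow$ has a refinement admitting a slow constriction'', be aware that the paper does \emph{not} prove it at all --- it cites Nash-Williams and explicitly restricts itself to the direction above, which is the only one used later (to deduce extensive amenability from recurrence). Your sketch via equilibrium potentials $v_m$, subdividing edges at voltage levels, and taking level sets of the $v_m$ as cuts is a reasonable outline of the classical argument, but you yourself flag that the nesting of cuts across different scales $m_j$, the simultaneous subdivision of edges, and the no-skipping condition are not established; as written this direction remains a gap. One further caution on the claim that each level-set cut ``carries conductance equal to the total current'': conductance of a cut and current through it are different quantities, and the correct statement is that a unit current crosses each level-set cut while the Nash--Williams lower bound $R\ge\sum c_n^{-1}$ for disjoint cuts $\Pi_n$ of conductance $c_n$ is an equality precisely when the $\Pi_n$ are isopotential cuts of the harmonic potential. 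You would need to make that precise in order to turn the sketch into a proof, but since the paper omits this direction entirely, your partial attempt goes beyond what the paper itself establishes.
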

The result applies to $\Z^d$ for $d\le2$: the sets $V_n$ may be chosen
as $\{-n,\dots,n\}^d$.  We only prove the ``only if'' direction, which
is the important direction for us.
\begin{proof}[First proof of Theorem~\ref{thm:nashwilliams}, ``only if'' direction]
  Given a constriction $(V_n)$, set $c_n=1/p_1(V_n,V_{n+1})$, and
  define an associated random walk $q$ on $\N$ by
  $q_1(n,n+1)=c_n/(c_n+c_{n-1})$ and
  $q_1(n,n-1)=c_{n-1}/(c_n+c_{n-1})$. It is easy to check
  $\sum_n q_n(1,1)=\infty$ if the constriction is slow.
\end{proof}

We shall give another proof of the ``only if'' direction, based on
Lemma~\ref{lem:asymprecurrent}; we begin by a simple
\begin{lemma}
  Let $\sum_i v_i$ be a positive, divergent series. Then there exist
  $\lambda_{i,n}\ge0$ such that $\sum_i\lambda_{i,n}v_i=1$ for all $n$
  and $\sum_i\lambda_{i,n}^2v_i\searrow0$ as $n\to\infty$.
\end{lemma}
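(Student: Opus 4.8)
The plan is to write down the weights explicitly as uniform averages over longer and longer initial segments. For $n\ge1$ set $S_n=\sum_{i=1}^n v_i$, which is finite (a finite sum) and strictly increasing in $n$ because all $v_i>0$; and since $\sum_i v_i$ diverges, $S_n\to\infty$. Define
\[
\lambda_{i,n}=\begin{cases}1/S_n & \text{if }i\le n,\\ 0 & \text{if }i>n.\end{cases}
\]
Then all $\lambda_{i,n}\ge0$, as required.

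Next I would verify the two asserted properties by direct computation. For the normalization,
\[
\sum_i\lambda_{i,n}v_i=\frac1{S_n}\sum_{i=1}^n v_i=\frac{S_n}{S_n}=1
\]
for every $n$. For the decay,
\[
\sum_i\lambda_{i,n}^2 v_i=\frac1{S_n^2}\sum_{i=1}^n v_i=\frac{S_n}{S_n^2}=\frac1{S_n}.
\]
Since $(S_n)$ is strictly increasing with $S_n\to\infty$, the sequence $1/S_n$ is strictly decreasing and tends to $0$, i.e.\ $\sum_i\lambda_{i,n}^2 v_i\searrow0$ as $n\to\infty$. This completes the argument.

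There is essentially no obstacle here: the only point to be a little careful about is that divergence of the series is exactly what forces $S_n\to\infty$ (so boundedness of the $v_i$ or any growth rate is irrelevant), and that positivity of the $v_i$ gives the monotonicity needed for the ``$\searrow$'' rather than merely ``$\to0$''. If one wanted the $\lambda_{i,n}$ supported on all of $\mathbb N$ one could instead take any fixed decreasing exhaustion, but the truncated-uniform choice above is the cleanest.
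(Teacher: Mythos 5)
Your proof is correct, and it is actually simpler than the paper's. The paper groups the terms of $\sum v_i$ into consecutive blocks $w_1+w_2+\cdots$ each summing to at least $1$ (possible precisely because the series diverges), and then assigns to the terms of block $k$ a geometrically decaying weight $\lambda_{i,n}=(\alpha_n-1)/(w_k\alpha_n^{k-1})$ with $\alpha_n=1+1/n\searrow1$; the geometric series gives the normalization, and the second moment is bounded by a constant times $\alpha_n-1$, which tends to $0$. Your choice of truncated uniform weights $\lambda_{i,n}=\mathbb1_{i\le n}/S_n$ dispenses with the blocking and the geometric series entirely and gives the two identities by a one-line computation each, with the monotonicity of $1/S_n$ handing you the ``$\searrow$'' for free. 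One extra point in favour of your construction, worth noting: in the application that follows (the second proof of Nash-Williams), the functions $a_n(y)=1-\sum_i\lambda_{i,n}v_i\mathbb1_{y\notin V_i}$ are claimed to have finite support. With your weights this is immediate, since $\lambda_{i,n}$ is supported on $i\le n$ and hence $a_n$ vanishes outside $V_n$; with the paper's weights (which are nonzero for every $i$) the finite-support claim is not actually true as stated, so your version is better adapted to the use the lemma is put to.
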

\begin{proof}
  Let $\alpha_n=1+1/n$ be a decreasing sequence converging to
  $1$. Group the terms in $\sum v_i$ into blocks $w_1+w_2+\cdots$ such
  that $w_i\ge1$ for all $i$. Set
  \[\lambda_{i,n}=\frac{\alpha_n-1}{w_k\alpha_n^{k-1}}\text{ if $v_i$ belongs to the block }w_k.\qedhere\]
\end{proof}

\begin{proof}[Second proof of Theorem~\ref{thm:nashwilliams}, ``only if'' direction]
  Let $(V_i)_{i\ge1}$ be a slow constriction of $X$ with basepoint
  $x$, and set $v_i\coloneqq1/p_1(V_i,V_{i+1})$. Apply the lemma to
  the divergent series $\sum v_i$, and define maps
  $a_n\colon X\to[0,1]$ by
  \[a_n(y)=1-\sum_i\lambda_{i,n} v_i\mathbb 1_{y\not\in V_i}.
  \]
  Then $a_n$ has finite support so in particular belongs to
  $\ell^2(X)$; and $a_n(x)=1$ because $x\in V_i$ for all $i$; and
  $\|a_n-a_n g\|_2\to0$ for all $g\in G$ because
  $\sum\lambda_{i,n}^2 v_i\to0$.
\end{proof}

The main result of this section is the following. We will prove it in
two different manners, and in fact in this manner recover the ``if''
direction of Theorem~\ref{thm:nashwilliams}:
\begin{theorem}\label{thm:recurrent=>extamen}
  If $X$ is a $G$-set with a non-degenerate recurrent random walk,
  then $X$ is extensively amenable.
\end{theorem}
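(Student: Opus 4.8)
The plan is to construct, for the right $G$-set $\mathfrak P_f(X)\cong(\Z/2)^{(X)}$, a sequence of probability measures $\phi_n$ that is asymptotically $G$-invariant and assigns mass bounded away from $0$ to the ideal $\{E\Subset X\mid x\in E\}$ for every $x\in X$; by Lemma~\ref{lem:extamen} any weak-$*$ cluster point of $(\phi_n)$ is then a $G$-invariant mean charging every ideal, so $X$ is extensively amenable (equivalently, one runs the same construction for the $\Z/2\wr_X G$-set $(\Z/2)^{(X)}$ and invokes Proposition~\ref{prop:F(X)amenable}). Three harmless reductions come first: by Lemma~\ref{lem:extamen} extensive amenability is tested orbit by orbit and over countable subgroups, so I may assume $G$ countable and $X$ a single $G$-orbit with a basepoint $x_0$ (distinct orbits carry their own recurrent walks and are combined as in the proof of Lemma~\ref{lem:extamen}); by our standing convention the measure $\mu$ driving the walk is symmetric; and replacing $\mu$ by $\tfrac12(\mu+\delta_1)$ — which affects neither recurrence nor non-degeneracy — I may assume $\mu(1)>0$, so the walk on $X$ is aperiodic and, being non-degenerate, irreducible, whence from $x_0$ it visits every point of $X$ infinitely often almost surely.

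For the measures I lift the walk to the wreath product. Push $\mu$ forward to $\Gamma\coloneqq\Z/2\wr_X G=(\Z/2)^{(X)}\rtimes G$ along $g\mapsto(\{x_0\},g)$, run the resulting random walk $(\gamma_k)_{k\ge0}$ on $\Gamma$ from the identity, and let $O_n$ be the $(\Z/2)^{(X)}$-component of $\gamma_n$. Expanding the product in $\Gamma$ one finds $O_n=\{x_0\}\,\triangle\,\{x_0g_1^{-1}\}\,\triangle\,\cdots\,\triangle\,\{x_0(g_1\cdots g_{n-1})^{-1}\}$, the set of points hit an odd number of times by the inverted orbit of the increments, while the $G$-component of $\gamma_n$ is $g_1\cdots g_n$; hence the image of $(\gamma_k)$ under $\gamma\mapsto x_0\cdot(\text{its $G$-component})$ is exactly the given recurrent walk. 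Put $\phi_n\coloneqq\operatorname{law}(O_n)$. For each $x$, the coordinate $O_n(x)$ is the parity of an occupation time of this recurrent walk; by recurrence and irreducibility that occupation time tends to $\infty$ almost surely, and by aperiodicity its parity equidistributes — a renewal/Cesàro estimate in the spirit of the lemma preceding Corollary~\ref{cor:makeharmonic} — so $P(x\in O_n)\to\tfrac12$, and every cluster point of $(\phi_n)$ charges every ideal with mass $\tfrac12$.

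Asymptotic $G$-invariance reduces to a single identity in $\Gamma$. For $s\in\supp(\mu)$, computing the $(\Z/2)^{(X)}$-component of $(\{x_0\},s^{-1})\,\gamma_n$ yields
\[\{x_0\}\,\triangle\,(O_n\cdot s)\;=\;\bigl((\Z/2)^{(X)}\text{-component of }(\{x_0\},s^{-1})\,\gamma_n\bigr),\]
so that $O_n\cdot s$ is, up to the flip at $x_0$ already handled, the configuration produced by running the walk one extra step whose first increment is conditioned to equal $s^{-1}$. Now $\mu(s^{-1})=\mu(s)>0$, so the conditioning is a bounded change of measure on the first increment only; and a recurrent walk forgets a single prescribed step — the extra site it contributes is re-absorbed into the configuration of the remaining $n$ steps with probability tending to $1$, while the configuration of a recurrent walk is insensitive to appending one more step. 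Hence the law of that configuration is within $o(1)$ total variation of $\phi_{n+1}$, and therefore of $\phi_n$; together with the flip estimate this gives $\|\phi_n\cdot s-\phi_n\|_1\to 0$ for every $s$ in the generating set $\supp(\mu)$. Passing to a weak-$*$ cluster point along a net indexed by finite subsets of $G$ and $\epsilon\downarrow 0$, as in the proof of Theorem~\ref{thm:folneramen}$(5)\Rightarrow(1)$, produces the desired mean.

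The hard part is making the two recurrence estimates precise through couplings of the walk — equidistribution of the occupation-time parity, and negligibility of one modified or appended step — while tracking the left/right actions in $\Gamma$ carefully: right-translating a $\mu$-walk yields the $\mu$-conjugated walk, which is exactly why the inverted orbit, not the forward trajectory, is the correct random set. An alternative, less probabilistic route uses instead the functions $a_n\in\ell^2(X)$ of Lemma~\ref{lem:asymprecurrent} with $a_n(x_0)=1$, $0\le a_n\le1$, finite support and vanishing Dirichlet energy, building unit vectors in $\ell^2(\mathfrak P_f(X))$ out of them, the Dirichlet-energy bound being precisely what controls the group generators; since a slow constriction furnishes such $a_n$ directly, this route also recovers the ``if'' direction of Theorem~\ref{thm:nashwilliams}.
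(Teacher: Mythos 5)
Your main argument takes a genuinely different route from either of the paper's two proofs, and the route you chose has a gap that I don't think is a matter of ``making the couplings precise'': it quietly requires a stronger theorem than the one being proved. The paper's first proof never shows that the laws $\phi_n$ of the lamp configuration are asymptotically invariant; it only needs the much weaker statement that the return probability $\mathbb P(f_1\cdots f_n=1)=\mathbb E(2^{-\#O_n})$ decays subexponentially, which by Proposition~\ref{prop:invorbitextamenable} (i.e.\ Kesten's Theorem~\ref{thm:kesten} applied to the switch-walk-switch walk on $\Z/2\wr_X G$) is equivalent to amenability of the $\Z/2\wr_X G$-set $(\Z/2)^{(X)}$, and which follows from Lemma~\ref{lem:sublinIO} by one application of Jensen's inequality. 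Here $\#O_n$ is the number of \emph{distinct} sites visited, and the lamps at those sites are independently randomized by the ``switch-walk-switch'' steps, which is what makes the return probability computable and Jensen applicable. You instead use a plain lamplighter increment $(\{x_0\},g)$, so your $O_n$ is the \emph{parity} set, and you try to show $\|\phi_n s-\phi_n\|_1\to0$. By Proposition~\ref{prop:liouville}, this is exactly the statement that the lamplighter walk on the $\Gamma$-set $(\Z/2)^{(X)}$ is Liouville, which (Theorem~\ref{thm:Lamenable}) is strictly stronger than amenability of that $\Gamma$-set, and is not a formal consequence of recurrence of the base walk.

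The step that hides this is the sentence ``a recurrent walk forgets a single prescribed step \dots the configuration of a recurrent walk is insensitive to appending one more step.'' That sentence \emph{is} the Kaimanovich--Vershik triviality of the Poisson boundary of lamplighter groups over a recurrent base, and it requires a genuine zero--one-law or coupling argument, not a one-line appeal to recurrence. Two specific failures of the sketch: (i) ``the conditioning is a bounded change of measure'' only bounds Radon--Nikodym derivatives, which does not bound total-variation distance (conditioning a fair coin to be heads is a bounded change of measure yet total-variation distance $1/2$); (ii) conditioning $g_1=s^{-1}$ is \emph{not} a local perturbation of the inverted orbit, since $g_1$ appears in every term $x_0(g_1\cdots g_k)^{-1}$, so you cannot simply ``re-absorb'' one extra site — the entire trajectory is shifted. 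Similarly, the assertion $\mathbb P(x\in O_n)\to\tfrac12$ (parity of an occupation time equidistributes for an aperiodic null-recurrent chain) is plausible but also left unproved; it is not needed at all in the paper's argument.

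What your approach buys, if the Liouville claim were proved, is a direct construction of an almost-invariant Reiter sequence on $(\Z/2)^{(X)}$; what the paper's first proof buys by routing through return probabilities is immunity to all of these difficulties, because $\mathbb E(2^{-\#O_n})$ is controlled by $\mathbb E(\#O_n)$ alone via convexity, and $\mathbb E(\#O_n)=o(n)$ is \emph{equivalent} to recurrence by the short computation in Lemma~\ref{lem:sublinIO}. Your closing paragraph does correctly identify the paper's second proof (via Lemma~\ref{lem:asymprecurrent} and the tensor-product construction of unit vectors in $\ell^2(\mathfrak P_f(X))$); if you want a self-contained argument without importing the Liouville property of lamplighter walks, that route or the $\mathbb E(2^{-\#O_n})$ route is the one to take.
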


We begin with some preparation for the proof. Let $\mu$ be a
symmetric, non-degenerate measure on a group $G$, and let $X$ be a
$G$-set. For a basepoint $x\in X$ and a trajectory
$x,x g_1, x g_1 g_2,\dots$ of the random walk on $X$, the
corresponding length-$n$ \emph{inverted orbit} is the random subset
\[O_n=\{x,x g_n, x g_{n-1} g_n,\dots, x g_1\cdots g_n\}.
\]
If $X$ is transitive, then $\#O_n$ depends only mildly on the choice
of $x$.

\begin{proposition}\label{prop:invorbitextamenable}
  Let $X$ be a transitive $G$-set and let $\mu$ be a symmetric,
  non-degenerate probability measure on $G$. Then $X$ is extensively
  amenable if and only if
  \begin{equation}\label{eq:expinvorbit}
    \lim_{n\to\infty}\frac{-1}{n}\log\mathbb E(2^{-\#O_n})=0.
  \end{equation}
\end{proposition}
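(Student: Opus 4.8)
The plan is to reduce extensive amenability of $X$ to amenability of a lamplighter action, and then read off the latter from Kesten's criterion applied to a switch--walk--switch random walk whose return probability turns out to be exactly the Laplace transform of $\#O_n$. First I would pass to the wreath product by applying Proposition~\ref{prop:F(X)amenable} to the functor $F$ with $F_0(X)=F_1(X)=(\Z/2)^{(X)}$, the group $(\Z/2)^{(X)}$ acting on itself by translation. This functor is amenable (its values are regular actions of locally finite, hence amenable, groups) and tight (the inclusion $(\Z/2)^{(X\setminus\{x\})}\hookrightarrow(\Z/2)^{(X)}$ is never onto), so both directions of Proposition~\ref{prop:F(X)amenable} apply and give: $X$ is extensively amenable if and only if the $H$-set $Y:=(\Z/2)^{(X)}$ is amenable, where $H:=\Z/2\wr_X G=(\Z/2)^{(X)}\rtimes G$, with $(\Z/2)^{(X)}$ acting by translation and $G$ permuting coordinates. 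Identifying $\mathfrak P_f(X)$ with $(\Z/2)^{(X)}$, the point $\emptyset\in Y$ has $H$-stabiliser $G$, so $Y$ is a transitive $H$-set isomorphic to $H/G$; in particular the $H$-class of an element of $Y$ remembers exactly its $(\Z/2)^{(X)}$-coordinate.

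Next I would choose the random walk. Fix $x_0\in X$, let $\kappa$ be the uniform measure on the two-element subgroup $\{\emptyset,\mathbf 1_{\{x_0\}}\}$ of $(\Z/2)^{(X)}$, and set $\tilde\mu:=\kappa*\mu*\kappa$ on $H$, with $\mu$ viewed as supported on $G\le H$. Since $\mu$ is symmetric and non-degenerate, $\tilde\mu$ is symmetric and non-degenerate, so the driven random walk $p$ on $Y$ is symmetric and transitive, and Proposition~\ref{prop:problim} gives $\rho(p)^{2}=\lim_{n}p_{2n}(\emptyset,\emptyset)^{1/n}$. The key computation is
\[p_{2n}(\emptyset,\emptyset)=\mathbb E\big(2^{-\#O_{2n}}\big).\]
Indeed, a $2n$-step trajectory of $\tilde\mu$ is encoded by i.i.d.\ increments $g_1,\dots,g_{2n}\sim\mu$ together with two independent fair bits per step (the two $\kappa$-flips), each bit toggling the lamp at a site which, once the accumulated shifts are unwound, is one of the points $x_0(g_1\cdots g_j)^{-1}$ for $0\le j\le 2n$. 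Conditionally on $(g_i)$, the final configuration is trivial precisely when the bits attached to each occupied site sum to $0$; since each bit is attached to a single site, these events are independent across sites, so the conditional probability is $2^{-L}$ where $L$ is the number of distinct occupied sites. As $\mu$ is symmetric, reversing the increment sequence shows that $\{x_0(g_1\cdots g_j)^{-1}:0\le j\le 2n\}$ has the same law as $O_{2n}$, which yields the displayed identity.

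To conclude, observe that $-\tfrac1{2n}\log p_{2n}(\emptyset,\emptyset)=\tfrac1{2n}\big(-\log\mathbb E(2^{-\#O_{2n}})\big)$, and that the sequence $a_n:=-\log\mathbb E(2^{-\#O_n})$ is non-negative and subadditive: splitting the inverted orbit of $g_1,\dots,g_{m+n}$ into the contribution of the last $n$ increments (distributed as $O_n$) and a right-translate of the contribution of the first $m$ increments (of cardinality at most $\#O_m$), and using independence, gives $\mathbb E(2^{-\#O_{m+n}})\ge\mathbb E(2^{-\#O_n})\mathbb E(2^{-\#O_m})$. Hence Fekete's lemma (Lemma~\ref{lem:fekete}) applies: the limit $\ell:=\lim_n a_n/n$ exists and equals $\lim_n a_{2n}/(2n)=-\log\rho(p)$. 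By Theorem~\ref{thm:kesten}, $Y\looparrowleft H$ is amenable if and only if $\rho(p)=1$, i.e.\ if and only if $\ell=0$; combined with the reduction of the first paragraph, $X$ is extensively amenable if and only if $\lim_n\tfrac{-1}{n}\log\mathbb E(2^{-\#O_n})=0$. (Transitivity of $X$ is what makes the left-hand side independent of the choice of $x_0$.)

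The main obstacle will be the exact identity $p_{2n}(\emptyset,\emptyset)=\mathbb E(2^{-\#O_{2n}})$: one must pin down the semidirect-product conventions, note that symmetry of $\tilde\mu$ forces the two-sided switch--walk--switch step rather than a naive walk-then-switch step, run the conditional-independence argument for the lamps, and check — via symmetry of $\mu$ and reversal of the increments — that the set of toggled sites is genuinely distributed as the inverted orbit $O_{2n}$ and not the forward orbit. Everything else (the wreath-product reduction, Kesten's criterion, the Fekete bookkeeping) is a routine assembly of results already available in the text.
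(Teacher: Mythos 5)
Your argument is essentially the paper's: the same reduction to amenability of the $G\ltimes(\Z/2)^{(X)}$-set $(\Z/2)^{(X)}$ via Proposition~\ref{prop:F(X)amenable}, the same switch--walk--switch measure, and the same identity between the return probability and $\mathbb E(2^{-\#O_n})$, after which Kesten's theorem concludes. The only (harmless) variations are that you encode the lit sites in a convention requiring a time-reversal to match $O_n$ (the paper's convention produces $O_n$ directly), and you add an explicit Fekete argument for the existence of the limit, which the paper leaves implicit.
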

\begin{proof}
  Thanks to Proposition~\ref{prop:F(X)amenable}, it is enough to prove
  that~\eqref{eq:expinvorbit} is equivalent to the amenability of the
  $G\ltimes(\Z/2)^{(X)}$-set $(\Z/2)^{(X)}$. Choose a basepoint
  $x\in X$, and consider on $G\ltimes(\Z/2)^{(X)}$ the probability
  distribution
  $\nu\coloneqq\frac12(1+\delta_x)*\mu*\frac12(1+\delta_x)$, called
  the ``switch-walk-switch'' measure: in the action on $(\Z/2)^{(X)}$,
  it amounts to randomizing the current copy of $\Z/2$, moving to
  another position in $X$, and randomizing the new copy of $\Z/2$. By
  Kesten's Theorem~\ref{thm:kesten}, amenability of the action on
  $(\Z/2)^{(X)}$ is equivalent to subexponential decay of return
  probabilities of a random walk $(f_0=1,f_1,f_1 f_2,\dots)$ on
  $(\Z/2)^{(X)}$, namely to
  $\lim_{n\to\infty}\frac{-1}{n}\log\mathbb P(f_1\cdots f_n=1)=0$.
  Now the support of $f_1\cdots f_n$ is contained in $O_n$: writing
  each $f_i=\delta_x^{\epsilon_i}g_i\delta_x^{\zeta_i}$ with
  $g_i\in G$, we get
  $f_1\cdots f_n=g_1\cdots g_n\delta_{x g_1\cdots
    g_n}^{\epsilon_1}\delta_{x g_2\cdots
    g_n}^{\zeta_1+\epsilon_2}\cdots\delta_x^{\zeta_n}$; and $f_n$
  randomizes every copy of $\Z/2$ indexed by $O_n$, so
  $\mathbb P(f_n=1)=\mathbb E(2^{-\#O_n})$.
\end{proof}

\begin{lemma}\label{lem:sublinIO}
  Let $p$ be a transitive random walk on a $G$-set $X$ driven by a
  symmetric probability measure $\mu$. Then $X$ is recurrent if and
  only if $\lim\frac1n\mathbb E(\#O_n)=0$.
\end{lemma}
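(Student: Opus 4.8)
The plan is to reduce the statement to the clean identity
\[
  \mathbb E(\#O_n)=\sum_{m=0}^{n}\mathbb P_x(\tau_x>m),
\]
where $\tau_x\coloneqq\inf\{k\ge1\mid x g_1\cdots g_k=x\}$ is the first return time to $x$ of the $\mu$-random walk started at $x$, and then to finish by a Cesàro argument together with the characterization of recurrence recalled above (recurrence at $x$ $\iff$ $\mathbb P_x(\tau_x<\infty)=1$ $\iff$ $\sum_n p_n(x,x)=\infty$; transitivity of $p$ makes recurrence at $x$ independent of the chosen base point, so it is the same as recurrence of $p$ on $X$).

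For the identity I would first do the bookkeeping on the inverted orbit. Write the elements of $O_n$ in the order $e_n=x,\ e_{n-1}=x g_n,\ \dots,\ e_0=x g_1\cdots g_n$, so $e_j=x\cdot(g_{j+1}\cdots g_n)$ for $0\le j\le n$, and note that
\[
  \#O_n=\sum_{j=0}^{n}\mathbb 1\bigl[e_j\notin\{e_{j+1},\dots,e_n\}\bigr],
\]
since each distinct value is counted exactly once, at the largest index at which it occurs. Next, $e_j=e_{j'}$ for $j<j'$ holds iff $x\cdot(g_{j+1}\cdots g_{j'})=x$ (multiply on the right by $(g_{j'+1}\cdots g_n)^{-1}$; here one must read collisions through the action on $X$, not through equality of group elements, since the $G$-set need not be free). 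Hence ``$e_j$ is new'' is precisely the event that the walk $x,\,x g_{j+1},\,x g_{j+1}g_{j+2},\dots,x g_{j+1}\cdots g_n$ does not return to $x$ in its first $n-j$ steps. Because $(g_{j+1},\dots,g_n)$ is a fresh i.i.d.\ run of length $n-j$, this event depends only on that block and has probability $\mathbb P_x(\tau_x>n-j)$. Taking expectations and reindexing by $m=n-j$ yields the identity; in particular $\mathbb E(\#O_n)$ is the $n$-th partial sum of the non-increasing, $[0,1]$-valued sequence $q_m\coloneqq\mathbb P_x(\tau_x>m)$, with $q_0=1$.

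To conclude, since $q_m$ is non-increasing and nonnegative, Cesàro convergence gives $\tfrac1n\mathbb E(\#O_n)=\tfrac1n\sum_{m=0}^n q_m\to\lim_m q_m=\mathbb P_x(\tau_x=\infty)$, and this limit is $0$ iff $q_m\to0$ iff $\mathbb P_x(\tau_x=\infty)=0$, i.e.\ iff $p$ is recurrent. The only delicate point is the combinatorial step: getting the inverted-orbit indexing right (the products are $g_{j+1}\cdots g_n$, not $g_1\cdots g_j$) and checking that ``$e_j$ new'' genuinely depends only on the tail block $(g_{j+1},\dots,g_n)$ so that the i.i.d.\ reindexing is legitimate; everything after that is routine. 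As a sanity check / alternative for the existence of the limit alone, one may use Fekete's lemma (Lemma~\ref{lem:fekete}) on the subadditive sequence $n\mapsto\mathbb E(\#O_n)$, subadditivity coming from the decomposition $O_{m+n}=O'_m\cup O_n h$ with $h=g_{n+1}\cdots g_{n+m}$ and $O'_m$ an independent copy of $O_m$; but this does not identify the limit, so the direct identity above is the route I would take.
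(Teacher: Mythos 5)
Your proof is correct, and the overall strategy coincides with the paper's: establish the identity $\mathbb E(\#O_n)=\sum_{m=0}^{n}\mathbb P_x(\tau_x>m)$ and conclude by Ces\`aro (using transitivity to make recurrence basepoint-independent). What differs is the combinatorial route to the identity. The paper computes the single increment $\mathbb E(\#O_{n+1}-\#O_n)=\mathbb P(x\notin O_n g_{n+1})$ and turns it into a return probability by reversing the increment string to $(g_{n+1},g_n^{-1},\dots,g_1^{-1})$, a step that explicitly invokes the symmetry of $\mu$ so that the reversed walk has the same law. Your ``last occurrence'' decomposition $\#O_n=\sum_{j=0}^n\mathbb 1\bigl[e_j\notin\{e_{j+1},\dots,e_n\}\bigr]$ reads the collision $e_j=e_{j'}$ off the \emph{forward} sub-walk $x,\,xg_{j+1},\dots,\,xg_{j'}$, so you land directly on the first return time of the $\mu$-walk with no reversal and, in fact, no use of the symmetry hypothesis at all. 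This makes the bookkeeping slightly cleaner and the argument marginally more general (it applies verbatim for non-symmetric $\mu$). Your aside on Fekete's lemma and the subadditivity of $n\mapsto\mathbb E(\#O_n)$ is also correct, and your assessment of it is right: it gives existence of the limit but not its value, so the direct identity is the route that proves the lemma.
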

\begin{proof}
  Choose a basepoint $x\in X$ for the random walk $(x=x_0,x_1,\dots)$,
  and define the random variable $\Theta=\min\{n\ge1\mid x_n=x\}$. Then
  \begin{align*}
    \mathbb E(\#O_{n+1}-\#O_n) &= \mathbb P(x g_{n+1}\not\in O_n)\\
    &= \mathbb P(x g_{n+1}\not\in\{x,x g_n,x g_{n-1}g_n,\dots,x g_1\cdots g_n\})\\
    &= \mathbb P(\{x g_{n+1},x g_{n+1}g_n^{-1},x g_{n+1}g_n^{-1}g_{n-1}^{-1},\cdots,x g_{n+1}g_n^{-1}\cdots g_1^{-1}\}\not\ni x)\\
    &= \mathbb P(\Theta>n+1),
  \end{align*}
  because the random walk with increments
  $g_{n+1},g_n^{-1},\dots,g_1^{-1}$ has the same law as
  $\mu^n$. Therefore, $\mathbb E(\#O_n)/n\to\mathbb P(\Theta=\infty)$,
  which vanishes if and only if $X$ is recurrent.
\end{proof}

\begin{proof}[First proof of Theorem~\ref{thm:recurrent=>extamen}]
  We may assume, by Lemma~\ref{lem:extamen}, than $X$ is
  transitive. Let $p$ be a non-degenerate, transitive, recurrent
  random walk on $X$. By Lemma~\ref{lem:sublinIO}, we have
  $\frac1n\mathbb E(\#O_n)\to0$, so by convexity
  \[\frac{-1}{n}\log\mathbb E(2^{-\#O_n})\le\frac1n\mathbb E(\#O_n)\log2\to0,\]
  so $X$ is extensively amenable by
  Proposition~\ref{prop:invorbitextamenable}.
\end{proof}

\begin{proof}[Second proof of Theorem~\ref{thm:recurrent=>extamen}]
  Let $x\in X$ be arbitrary.  We start, using
  Lemma~\ref{lem:asymprecurrent}, with a sequence of functions $(a_n)$
  in $\ell^2(X)$ satisfying $a_n(x)=1$ and $\lim\|a_n-a_n g\|=0$ for
  all $g\in G$. (This is also the outcome of the second proof of
  Theorem~\ref{thm:nashwilliams}). We construct then maps
  $b_n\colon\mathfrak P_f(X)\to[0,1]$ by
  \[b_n(E)=\prod_{y\in E}a_n(y).
  \]
  They are finitely supported, and therefore may be viewed in
  $\ell^2(\mathfrak P_f(X))$. It remains to check that they are almost
  invariant under the action of $\Z/2\wr_X G$. Assuming that $X$ is
  transitive, this last group is generated by
  $\delta_x\colon X\to\Z/2$ and $G$. We have $b_n\delta_x=b_n$,
  because $b_n(E)=b_n(E\triangle\{x\})$.

  The spaces $\ell^2(\mathfrak P_f(X))$ and $\bigotimes_X\ell^2(C^2)$
  are isometric; the isometry is the obvious one mapping $\delta_E$ to
  $\bigotimes_{x\in X}\delta_{x\in E}$, if we take
  $\{\delta_{\text{false}},\delta_{\text{true}}\}$ as basis of
  $\ell^2(\C^2)$. We compute
  \[\|b_n\|^2=\langle b_n,b_n\rangle=\prod_{y\in X}(1^2+a_n(y)^2),\]
  and for $g\in G$ we similarly have
  $\langle b_n,b_n g^{-1}\rangle=\prod_{y\in X}(1+a_n(y)a_n(y g))$, so
  \[\bigg(\underbrace{\frac{\langle b_n,b_n\rangle}{\langle b_n,b_n g^{-1}\rangle}}_A\bigg)^2=\prod_{y\in X}\underbrace{\frac{(1+a_n(y)^2)(1+a_n(y g)^2)}{(1+a_n(y)a_n(y g))^2}}_B.\]
  Taking logarithms, and using the approximation $\log(t)\le t-1$,
  \[0\le2\log(A)\le\sum_{y\in X}log(B)\le\sum_{y\in X}\frac{(a_n(y)-a_n(y g))^2}{(1+a_n(y)a_n(y g))^2}\le\|a_n-a_n g^{-1}\|^2\to0,
  \]
  so $\langle b_n,b_n\rangle/\langle b_n,b_n g^{-1}\rangle\to1$ and
  therefore $\|b_n-b_n g\|\to0$.
\end{proof}

\begin{example}\label{ex:iet}
  An \emph{interval exchange} is a piecewise-translation self-map of the
  circle. More precisely, it is a right-continuous map
  $g\colon\R/\Z\righttoleftarrow$ such that $\sphericalangle(g)\coloneqq\{g(x)-x\mid
  x\in\R/\Z\}$ is finite.

  The rotation $x\mapsto x+\alpha$ is an extreme example of interval
  exchange.\footnote{The name ``interval exchange'' comes from opening
    up the circle into an interval $[0,1]$; the rotation on the circle
    may be viewed as an exchange of two intervals
    $[0,1-\alpha]\mapsto[\alpha,1],[1-\alpha,1]\mapsto[0,\alpha]$.}
  The interval exchange transformations naturally form a group $\IET$
  acting on $\R/\Z$; and every countable subgroup $G\le\IET$ can be
  made to act on the Cantor set by letting $\mathscr D$ be the union
  of the $G$-orbits of discontinuity points of $G$ (or of $0$ if all
  elements of $G$ are rotations) and replacing $\R/\Z$ by
  \[X\coloneqq(\R/\Z\setminus\mathfrak D)\cup(\mathfrak D\times\{+,-\}),
  \]
  namely by opening up the circle at every point of $\mathfrak D$;
  see~\cite{keane:iet}*{\S5}.
\end{example}

Little in known on the group $\IET$; in particular, it is not known whether
it contains non-abelian free groups, or whether it is amenable. We prove:
\begin{theorem}[\cite{juschenko-mattebon-monod-delasalle:extensive}*{Theorem~5.1}]\label{thm:iet}
  Let $\Lambda\le\R/\Z$ be a finitely generated subgroup with free rank at
  most $2$, namely $\dim(\Lambda\otimes\Q)\le2$. Then
  \[\IET(\Lambda)\coloneqq\{g\in\IET\mid
  \sphericalangle(g)\subseteq\Lambda\}
  \]
  is an amenable subgroup of $\IET$.
\end{theorem}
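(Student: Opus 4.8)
The plan is to obtain amenability of $\IET(\Lambda)$ from the theory of extensively amenable sets of \S\ref{ss:extensiveamen}. The hypothesis $\dim(\Lambda\otimes\Q)\le2$ is used in exactly one place: to feed Theorem~\ref{thm:recurrent=>extamen}, together with the classical fact --- which follows from Nash--Williams' Theorem~\ref{thm:nashwilliams} and the slow constriction $V_n=\{-n,\dots,n\}^2$ of $\Z^2$, or from Rayleigh monotonicity --- that every reversible random walk of bounded range on a subgraph of $\Z^d$ is recurrent when $d\le2$. Since amenability is checked on finitely generated subgroups (Proposition~\ref{prop:elemamen}), and the torsion of $\Lambda$ contributes only a finite-by-abelian extension (Corollary~\ref{cor:extensions}), I may fix a finitely generated $G_0\le\IET(\Lambda)$ with finite symmetric generating set $S$ and assume $\Lambda\cong\Z^2$.

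The key step is to produce an \emph{extensively amenable} $G_0$-set. Pass to the Cantor model $X_0$ of Example~\ref{ex:iet}. The structural fact I would use is that the ``translation part'' $x\mapsto g(x)-x$ of any $g\in\IET(\Lambda)$ takes values in $\Lambda$; hence $\IET(\Lambda)$ preserves each coset of $\Lambda$ in $X_0$ setwise, and the discontinuity data of the finitely many generators in $S$ lies in finitely many such cosets. Choosing $\xi$ in a generic coset, disjoint from $\mathfrak D$, the orbit $Z:=\xi\cdot G_0$ is a well-defined $G_0$-set that injects as a set into a copy of $\Z^2$, and each $s\in S$ moves a point of $Z$ by a $\Lambda$-amount lying in the finite set $\sphericalangle(s)$. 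Consequently the Schreier graph of $G_0$ acting on $Z$ has bounded degree and embeds into $\Z^2$ with edges of bounded length; by the recurrence fact above, simple random walk on it is recurrent, and it is non-degenerate because $S$ generates $G_0$. Theorem~\ref{thm:recurrent=>extamen} then shows that $Z$ is an extensively amenable $G_0$-set.

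The final step --- turning the extensively amenable \emph{set} into amenability of the \emph{group} --- is the one I expect to be the main obstacle. Having an extensively amenable $G_0$-set is not enough on its own (Lemma~\ref{lem:extamen0} only returns the obvious amenability of $Z$); the point is to realise $G_0$ as a subgroup of a semidirect product $G\ltimes F(Z)$ for some \emph{amenable} group $G$ and some tight extensively amenable functor $F$ (for instance $Y\mapsto\Sym(Y)$), and then to invoke Corollary~\ref{cor:twistedamen} (fed by Proposition~\ref{prop:F(X)amenable}, with Proposition~\ref{prop:stabilizers} used to recycle amenability of $G$). Concretely one must split the action of $G_0$ on the ambient copy of $\Z^2$ into a ``drift'' part, governed by an abelian group of translations, and a residual part of strictly smaller rank: writing $\Lambda=\Lambda'\oplus\Z\gamma$, the $\gamma$-component of the displacement is a bounded-displacement datum in a one-dimensional direction, while the residual fibrewise motion is governed by $\IET(\Lambda')$, amenable by induction on $\dim(\Lambda\otimes\Q)$ (the base cases of small free rank being classical), and the $\Sym$-functor absorbs the finitely supported corrections. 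The genuine difficulty is to make this drift/residual splitting precise: the group $W(\Lambda)$ of \emph{all} bounded-displacement permutations is itself far from amenable, so one must exploit that $\IET(\Lambda)$ sits very sparsely inside it --- this is the content of the cited work of Juschenko--Matte Bon--Monod--de la Salle. Once it is in place, amenability of $\IET(\Lambda)$ follows formally from the criteria above, for every $\Lambda$ of free rank at most $2$.
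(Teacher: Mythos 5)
Your first step is exactly the paper's: pass to a finitely generated $G_0\le\IET(\Lambda)$, observe that each orbit maps Lipschitz--injectively into $\Lambda$ (quasi-isometric to $\Z^d$ with $d\le2$, hence recurrent), and invoke Theorem~\ref{thm:recurrent=>extamen} together with Lemma~\ref{lem:extamen} to conclude that $\R/\Z\looparrowleft\IET(\Lambda)$ is extensively amenable. (The reduction to torsion-free $\Lambda\cong\Z^2$ is harmless but unnecessary: the paper works directly with any finitely generated $\Lambda$ of free rank $\le2$.)

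The gap is in the second half, which is the crux. You correctly sense that one must embed $\IET(\Lambda)$ into a semidirect product $G\ltimes F(X)$ with $F=\Sym$ and apply Corollary~\ref{cor:twistedamen}, but the ``drift/residual'' rank induction you sketch is not how it is done and you do not complete it. The missing idea is a \emph{cocycle} measuring the discontinuity of an interval exchange: for $g\in\IET$, let $\tilde g$ be the unique left-continuous map agreeing with $g$ off its (finitely many) breakpoints, and set $\tau_g\coloneqq g^{-1}\tilde g\in\Sym(\R/\Z)$, a finitely supported permutation. The identity $\tau_{gh}=\tau_g^h\,\tau_h$ makes $g\mapsto(g,\tau_g)$ an injective homomorphism $\iota\colon\IET\to\IET\ltimes\Sym(\R/\Z)$. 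The decisive observation is that $\tau_g=1$ if and only if $g$ is continuous, i.e.\ a rotation; hence $\iota(\IET(\Lambda))\cap(\IET(\Lambda)\times1)$ is the group of rotations in $\Lambda$, which is abelian and thus amenable. Corollary~\ref{cor:twistedamen}, applied with $X=\R/\Z$ extensively amenable under $\IET(\Lambda)$, now gives amenability of $\IET(\Lambda)$ outright --- no induction on rank and no need to control $W(\Lambda)$. Your worry that ``$W(\Lambda)$ is far from amenable'' is precisely why one should not try to split off a bounded-displacement factor; the cocycle $\tau_g$ replaces that attempt entirely.
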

\begin{proof}
  We first prove that the action of $\IET(\Lambda)$ on $\R/\Z$ is
  extensively amenable. Choose a finite generating set for $\Lambda$; then
  the Cayley graph of $\Lambda$ is quasi-isometric to $\Z^d$ for $d\le2$,
  and in particular is recurrent. Let $G=\langle S\rangle$ be a finitely
  generated subgroup of $\IET(\Lambda)$. For $x\in\R/\Z$, the orbit $x G$
  injects into $\Lambda$ under the map $y\mapsto y-x$, and this map is
  Lipschitz with Lipschitz constant $\max_{s\in
    S}\max_{\lambda\in\sphericalangle(s)}\|\lambda\|$, so the Schreier graph of $x
  G$ is recurrent. Theorem~\ref{thm:recurrent=>extamen} implies that $x G$
  is an extensively amenable $G$-set, so $\R/\Z$ is an extensively amenable
  $\IET(\Lambda)$-set by Lemma~\ref{lem:extamen}.

  We wish to apply Corollary~\ref{cor:twistedamen} to $X=\R/\Z$ and
  $G=\IET(\Lambda)$ and $F(X)=X\looparrowleft\Sym(X)$. Given an
  interval exchange map $g\in\IET$, let $\tilde g$ be the unique
  left-continuous self-map of $\R/\Z$ that coincides with $g$ except
  at its discontinuity points, and let
  $\tau_g=g^{-1}\tilde g\in\Sym(\R/\Z)$ be the corresponding
  permutation of the discontinuity points of $g^{-1}$. We have a
  cocycle identity $\tau_{gh}=\tau_g^h\tau_h$, so the map
  \[\iota\colon\begin{cases}\IET &\to\IET\ltimes\Sym(\R/\Z)\\
    g &\mapsto (g,\tau_g)\end{cases}
  \]
  is an embedding. Observe that $\tau_g=1$ if and only if $g$ is
  continuous, namely is a rotation. Therefore,
  $\iota(\IET(\Lambda))\cap(\IET(\Lambda)\times1)=\Lambda$ consists of
  rotations, so is amenable. We deduce by Corollary~\ref{cor:twistedamen}
  that $\IET(\Lambda)$ is amenable.
\end{proof}

\subsection{Topological full groups}\label{ss:topfull}
We apply the results from the previous sections to exhibit a wide
variety of amenable groups.

We begin by a fundamental construction. Let $G$ be group acting on a
compact set $X$. The associated \emph{topological full group} is the
group $[[G,X]]$ of piecewise-$G$ homeomorphisms of $X$:
\[[[G,X]]=\{\phi\colon X\righttoleftarrow\mid\exists\nu\colon X\to G\text{ continuous with }\phi(x)=x\nu(x)\text{ for all }x\}.
\]
Note that $\nu$ takes finitely many values since it is a map from a
compact set to a discrete set. If we suppose $X$ discrete rather than
compact, then $[[G,X]]$ becomes the group of bijective $G$-wobbles of
$X$ that we saw in~\S\ref{ss:doubling}.  The connection is even more
direct: let $x\in X$ be such that its orbit $x G$ is dense in
$X$. Then $[[G,X]]$ acts faithfully on the orbit $x G$ by $G$-wobbles.

The natural setting for the definition of the topological full group
is that of \emph{groupoids of germs}. We recall the basic notions:
\begin{definition}\label{defn:groupoid}
  A \emph{groupoid} is a set $\mathfrak G$ with source and range maps
  $s,r\colon\mathfrak G\righttoleftarrow$, with an associative
  multiplication $\gamma_1\gamma_2$ defined whenever
  $r(\gamma_1)=s(\gamma_2)$, and with an everywhere-defined inverse
  satisfying $\gamma\gamma^{-1}=s(\gamma)=r(\gamma^{-1})$. Its
  \emph{set of units} is the subset $\mathfrak G_0$ of elements of the
  form $\gamma\gamma^{-1}$. The groupoid $\mathfrak G$ is called
  \emph{topological} if $\mathfrak G$ is a topological space and the
  multiplication and inverse maps are continuous. Note that for every
  $x\in\mathfrak G_0$ the subset
  $\mathfrak G_x\coloneqq\{\gamma\in\mathfrak G\mid
  s(\gamma)=r(\gamma)=x\}$ is a group, called the \emph{isotropy group}
  of $\mathfrak G$ at $x$.
\end{definition}

A fundamental example is given by a $G$-set $X$: the associated
groupoid is $X\times G$ as a set, with $s(x,g)=x$ and $r(x,g)=x g$ and
$(x,g)(x g,h)=(x,g h)$ and $(x,g)^{-1}=(x g,g^{-1})$. One writes this
groupoid as $X\rtimes G$ and calls in the \emph{action groupoid} of
$X\looparrowleft G$.

Another example is given by the groupoid of germs,
see~\S\ref{ss:fgfg}. Let $X\rtimes G$ be an action groupoid, and
declare $(x,g)\sim(y,h)$ when $x=y$ and there exists an open
neighbourhood of $x$ on which $g$ and $h$ agree. The set of
equivalence classes $\mathfrak G$ is called the \emph{groupoid of
  germs} of $X\rtimes G$.

\begin{definition}
  Let $\mathfrak G$ be a groupoid of germs, and let $\mathfrak G_0$ be
  its space of units. A \emph{bisection} is a subset $F$ of
  $\mathfrak G$ such that $s,r\colon F\to\mathfrak G_0$ are
  homeomorphisms. Note in particular that bisections are open and
  closed. Bisections may be composed and inverted, qua subsets of
  $\mathfrak G$.  The \emph{full group} $[[\mathfrak G]]$ of a
  groupoid $\mathfrak G$ is the group of its bisections.
\end{definition}
Note that the topological full group of the groupoid of germs of the
action of a group $G$ coincides with the earlier definition of
topological full group. It is more convenient to consider the full
group of a groupoid of germs, because it is defined only in terms of
local homeomorphisms, and not of the global action of a group.

\begin{theorem}[see~\cite{juschenko-nekrashevych-delasalle:extensions}*{Theorem~11}]\label{thm:extensionsfullgp}
  Let $X$ be a $G$-topological space, let $\mathfrak G$ denote the
  groupoid of germs of $X$, and let $\mathfrak H$ be a groupoid of
  germs of homeomorphisms of $X$. Assume that
  \begin{enumerate}
  \item $G$ is finitely generated;
  \item At every $x\in X$ the group of germs $\mathfrak G_x$ is
    amenable;
  \item For every $g\in G$, there are only finitely many $x\in X$ such
    that $(x,g)\not\in\mathfrak H$, and then for each of these $x$ the
    action of $G$ on $x G$ is extensively amenable;
  \item The topological full group $[[\mathfrak H]]$ is amenable.
  \end{enumerate}
  Then $G$ is amenable, and if $X$ is compact then $[[\mathfrak G]]$
  is amenable too.
\end{theorem}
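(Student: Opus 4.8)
The plan is to realize $G$, and (when $X$ is compact) the full group $[[\mathfrak G]]$, as subgroups of a semidirect product $\Gamma\ltimes\Sym(Z)$ with $\Gamma=G$ (resp. $\Gamma=[[\mathfrak G]]$) and $Z$ an extensively amenable $\Gamma$-set, arranged so that the intersection with $\Gamma\times1$ sits inside the amenable group $[[\mathfrak H]]$; amenability then drops out of Corollary~\ref{cor:twistedamen} applied to the tight amenable functor $Z\mapsto\Sym(Z)$. This is the same mechanism as in the proof that $\IET(\Lambda)$ is amenable, Theorem~\ref{thm:iet}, with the permutation of discontinuity points replaced by a permutation recording the failure of a groupoid element to have all of its germs in $\mathfrak H$.

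First I would build $Z$. Call $x\in X$ \emph{bad} if $(x,g)\notin\mathfrak H$ for some $g\in G$. By hypothesis~(3) each $g$ has only finitely many bad points, and as $G$ is finitely generated, hence countable, by~(1), the set $D$ of bad points is a countable, $G$-invariant union of finite sets. Hypothesis~(3) says that every $G$-orbit contained in $D$ is an extensively amenable $G$-set, so $D$ itself is extensively amenable, since extensive amenability of a $G$-set depends only on its orbits (Lemma~\ref{lem:extamen}). Let $Z\to D$ be the $G$-equivariant bundle whose fibre over $x$ is the set of germs of $\mathfrak G$ with source $x$; each fibre decomposes into pieces acted on simply transitively by the germ group $\mathfrak G_x$, which is amenable by~(2), so $Z$ is extensively amenable by the extension property for extensively amenable $G$-sets established earlier in this section. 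Since $D$ is also $[[\mathfrak G]]$-invariant and its $[[\mathfrak G]]$-orbits are contained in $G$-orbits, the same argument, now at the level of the orbit groupoids, makes $Z$ an extensively amenable $[[\mathfrak G]]$-set as well.

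The heart of the argument is a $\Sym(Z)$-valued cocycle. Fix, one per $G$-orbit of bad points and then propagated $G$-equivariantly, a germ of $\mathfrak H$ with that source (the unit germ works, $\mathfrak H$ being a subgroupoid). For $g\in G$, at each of its finitely many bad points $x$ replace the germ $(x,g)$ by the chosen $\mathfrak H$-germ, record the discrepancy — and the analogous discrepancy at the bad points of $g^{-1}$ — as a finitely supported permutation $c(g)\in\Sym(Z)$. A computation with germs, using that $(x,gh)\in\mathfrak H$ whenever $(x,g)\in\mathfrak H$ and $(xg,h)\in\mathfrak H$, shows $c(gh)=c(g)^h c(h)$, so $g\mapsto(g,c(g))$ is an injective homomorphism into $G\ltimes\Sym(Z)$. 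By construction $c(g)=1$ exactly when every germ of $g$ lies in $\mathfrak H$; such $g$ give, via $x\mapsto(x,g)$, a bisection of $\mathfrak H$, so $\ker c$ embeds into $[[\mathfrak H]]$ modulo the kernel of the action of $G$ on $X$; assuming as usual that this action is faithful, $\ker c$ is amenable by~(4), and Corollary~\ref{cor:twistedamen} gives that $G$ is amenable. For $[[\mathfrak G]]$ when $X$ is compact, every bisection is $\{(x,g_i):x\in U_i\}$ for a finite clopen partition and $g_i\in G$, so by~(3) it has only finitely many germs outside $\mathfrak H$; the same bookkeeping yields a cocycle $c$ on $[[\mathfrak G]]$, hence $[[\mathfrak G]]\hookrightarrow[[\mathfrak G]]\ltimes\Sym(Z)$, and now $c(\phi)=1$ precisely when $\phi\in[[\mathfrak H]]$, so the intersection of the image with $[[\mathfrak G]]\times1$ is $[[\mathfrak H]]$, amenable by~(4); Corollary~\ref{cor:twistedamen} again concludes.

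The main obstacle is the cocycle: checking that the bookkeeping permutation $c$ is well defined — this is exactly where the $G$-equivariant choice of $\mathfrak H$-germs at bad points and the passage from bad points to germs (to absorb the germ groups) are used — and verifying the identity $c(gh)=c(g)^h c(h)$, which is a delicate computation in $\mathfrak G$. A secondary point requiring care is that $Z$ is extensively amenable for \emph{both} the $G$- and the $[[\mathfrak G]]$-action; this holds because extensive amenability of the action on an orbit is a property of the orbit groupoid alone, hence insensitive to whether the orbit is presented via $G$ or via $[[\mathfrak G]]$. Everything else is routine use of Lemma~\ref{lem:extamen}, the extension property, and Corollary~\ref{cor:twistedamen}.
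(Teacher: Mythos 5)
Your overall strategy — build an auxiliary set from the bad points and the germ groups, show it is extensively amenable, and transfer amenability from $[[\mathfrak H]]$ to $G$ via the semidirect-product machinery — is the same as the paper's, which also works with a $G$-set built from the germs over the bad orbits (the space $P$ of finitely supported sections of $\mathfrak H\backslash\mathfrak G$), shows it amenable via Proposition~\ref{prop:F(X)amenable}, and finishes with Proposition~\ref{prop:stabilizers}.

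The gap is exactly where you flag it: the cocycle $c\colon G\to\Sym(Z)$ is not well-defined by the recipe you describe. You propose to fix, at each bad point $x$, a reference $\mathfrak H$-germ with source $x$ (e.g.\ the unit germ), and to record the discrepancy between $(x,g)$ and this reference as a permutation of $Z$. But $(x,g)$ has range $x g$ while the reference germ has range $x$; the "ratio" is a groupoid element from $x g$ to $x$, not a self-map of $Z$, and there is in general no $\mathfrak H$-germ from $x$ to $x g$ to substitute (the $\mathfrak H$-orbit of a bad point need not contain $x g$). The IET analogue you invoke works precisely because the reference groupoid (rotation germs) is transitive with trivial isotropy, so discrepancies are literally permutations of discontinuity points; here the isotropy $\mathfrak G_x$ is non-trivial and $\mathfrak H$ need not act transitively on an orbit, so the discrepancies do not assemble into a $\Sym(Z)$-valued cocycle.

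The paper's fix is instructive and worth internalizing: instead of choosing representatives, it works intrinsically with the quotient $\mathfrak H\backslash\mathfrak G$ (sections $\phi$ with $s(\phi(x))\in x\mathfrak H$), and it chooses reference $\mathfrak G$-germs $f_{y,z}$ connecting all points of a $G$-orbit $Y$ to a basepoint $x$. The discrepancy at $y$ is then $f_{x,y}(y,g)f_{y g,x}\in\mathfrak G_x$ — a genuine \emph{group} element — giving the embedding $\iota\colon G\to\mathfrak G_x\wr_Y G$. Thus the relevant functor in Proposition~\ref{prop:F(X)amenable} is $E\mapsto\mathfrak G_x^{(E)}$, which uses hypothesis~(2), not $E\mapsto\Sym(E)$; Corollary~\ref{cor:twistedamen} with $\Sym$ cannot see the germ groups. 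Your $Z$ "absorbing the germ groups" is the right instinct, but the group you need to land in is $\mathfrak G_x^{(Y)}$, not $\Sym(Z)$, and that requires the $f_{y,z}$ device rather than a choice of $\mathfrak H$-germs. Secondary to this: the passage from $G$ amenable to $[[\mathfrak G]]$ amenable is treated more carefully in the paper (exhaust $[[\mathfrak G]]$ by subgroups generated by finitely many bisections, each of which again satisfies the hypotheses) than in your last sentence, which asserts a $[[\mathfrak G]]$-cocycle without addressing that hypothesis~(1) is imposed on $G$ only.
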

\begin{proof}
  Let $P$ be the space of ``finitely-supported sections of
  $\mathfrak H\backslash\mathfrak G$'': the quotient
  $\mathfrak H\backslash\mathfrak G$ is the set of equivalence classes
  in $\mathfrak G$ under $\gamma\sim\delta\gamma$ for all
  $\gamma\in\mathfrak G,\delta\in\mathfrak H$, and
  \[P=\{\phi\colon X\to\mathfrak H\backslash\mathfrak G\text{ finitely
      supported }\mid s(\phi(x))\in x\mathfrak H\text{ for all }x\in X\}.
  \]
  There is a natural action of $G$ on $P$, by
  $(\phi g)(x)=\phi(x)\cdot(t(\phi(x)),g)$.

  We claim that $P$ is an amenable $G$-set. For this, note first that
  there are only finitely many $G$-orbits in $X$ at which at element
  of $P$ can possibly be non-trivial: let $S$ be a finite generating
  set for $G$; then for every $s\in S$ there is a finite subset
  $\Sigma_s\subseteq X$ at which $(x,s)\not\in\mathfrak H$, so if
  $(x,g)\not\in\mathfrak H$ for some $g=s_1\dots s_n$ then
  $(x s_1\dots s_{i-1},s_i)\not\in\mathfrak H$ for some $i$ and
  therefore $x\in\Sigma_{s_i}G$ for some $i$.

  The $G$-set $P$ is naturally the direct product, with diagonal
  action, of its restrictions to the finitely many $G$-orbits in $X$
  at which $P$ can possibly be non-trivial. We therefore restrict
  ourselves to a single $G$-orbit $Y\subseteq X$, and the
  corresponding image $P_Y=\{\phi\colon Y\to\mathfrak H\backslash G\}$
  of $P$.

  Let us choose, for every $y,z\in Y$, an element
  $f_{y,z}\in\mathfrak G$ with $s(f_{y,z})=y$ and $r(f_{y,z})=z$,
  taking $f_{z,y}=f_{y,z}^{-1}$ and $f_{y,y}=1$.  Choose also a
  basepoint $x\in Y$. We have a ``twisted'' embedding
  $\iota\colon G\to \mathfrak G_x\wr_Y G$ given by
  $g\mapsto((y\mapsto f_{x,y}(y,g)f_{y g,x}),g)$.  Note that $P_Y$ is
  isomorphic, qua $G$-set, to
  $\bigsqcup_Y\mathfrak H_x\backslash\mathfrak G_x$ with
  natural action of $\iota(G)$.

  Now since $\mathfrak G_x$ is amenable, we have a functor
  $\{\text{finite sets}\}\to\{\text{amenable groups}\}$ given by
  $E\mapsto\mathfrak G_x^{(E)}$; since $X$ and therefore $Y$ are
  extensively amenable, Proposition~\ref{prop:F(X)amenable} implies
  that $\bigsqcup_Y\mathfrak G_x$ is an amenable
  $\mathfrak G_x\wr_Y G$-set, and \emph{a fortiori} so is its quotient
  $P$.

  We next prove that the stabilizer $G_\phi$ of every $\phi\in P$ is
  amenable. Let $\{v_1,\dots,v_n\}$ be the support of $\phi$, and set
  $K=G_\phi\cap G_{v_1}\cap\cdots\cap G_{v_n}$. We have a natural
  homomorphism
  $K\to\mathfrak G_{v_1}\times\cdots\times\mathfrak G_{v_n}$ to an
  amenable group, whose kernel is contained in $[[\mathfrak H]]$; so
  $K$ is amenable. Iteratively applying
  Proposition~\ref{prop:stabilizers} proves that
  $G_\phi\cap G_{v_1}\cap\cdots\cap G_{v_i}$ is amenable for all
  $i=n,n-1,\dots,0$.

  We apply once more Proposition~\ref{prop:stabilizers} to deduce that
  $G$ is amenable. Finally, the full group $[[\mathfrak G]]$ is the
  union of groups generated by finite sets of bisections, to which the
  theorem applies, so $[[\mathfrak G]]$ itself is amenable.
\end{proof}

\begin{example}[\cite{juschenko-mattebon-monod-delasalle:extensive}*{Theorem~6.1}]\label{ex:herednotext}
  Consider the ``Frankenstein group'' $H(\mathbb A)$ from
  Theorem~\ref{thm:monod}. Then the action of $H(\mathbb A)$ on $\R$
  is hereditarily amenable, but is not extensively amenable.

  Indeed, consider first $H\le H(\mathbb A)$ and any $x\in\R$, and set
  $m\coloneqq\inf(x H)\in\R\cup\{\infty\}$, as at the end of the proof
  of Theorem~\ref{thm:monod}. Every element of $H''$ acts trivially in
  a neighbourhood of $m$. Consider a sequence $(x_n)$ in $\R$
  converging to $m$; then any cluster point of the sequence of
  measures $(\delta_{x_n})$ is an $H''$-invariant mean on $x H$. Since
  $H/H''$ is amenable, there is also an $H$-invariant mean on $x H$.

  On the other hand, since $H(\mathbb A)$ is not amenable there exists
  a non-amenable finitely generated subgroup $G\le H(\mathbb A)$, and
  Theorem~\ref{thm:extensionsfullgp} should \emph{not} apply to $G$
  with $\mathfrak H$ the groupoid of germs of the action of
  $\PSL_2(\R)$ on $\R\cup\{\infty\}$. However, the first condition is
  satisfied by assumption, the second one is satisfied because the
  group of germs at $x\in\R$ is at most
  $\operatorname{Affine}(\R)\times\operatorname{Affine}(\R)$, and the
  fourth one is satisfied because projective transformations are
  analytic, so their germs coincide with point stabilizers, namely
  with $\operatorname{Affine}(\R)$. Therefore, the third condition
  fails, so there exists $x\in\R$ such that the action of $G$ on $x G$
  is not extensively amenable.
\end{example}

We now specialize the results to $X$ a Cantor set, and more precisely
the Cantor set of paths in a specific kind of graph:
\begin{definition}[\cite{bratteli:diagrams}; see~\cite{durand:cant}]
  A \emph{Bratteli diagram} is a directed graph $\mathcal D=(V,E)$
  along with decompositions $V=\bigsqcup_{i\ge0} V_i$ and
  $E=\bigsqcup_{i\ge1} E_i$ in non-empty finite subsets, such that
  $e^-\in V_{i-1}$ and $e^+\in V_i$ for all $e\in E_i$. For $v\in V$
  we denote by $X_v$ the set of paths starting at $V_0$ and ending at
  $v$; by $X_n=\bigcup_{v\in V_n}X_v$ the set of paths of length $n$
  starting at $V_0$; and by $X$ the set of infinite paths starting at
  $V_0$.

  If for any $n\gg m$ there exists a path from every vertex in $V_m$
  to every vertex in $V_n$, the diagram is called \emph{simple}.
\end{definition}

For $e=(e_1,\dots,e_n)\in X_n$, we denote by $e X$ the set of paths
beginning with $e$; it is a basic open set for the topology on $X$,
which turns $X$ into a compact, totally disconnected space. If
$\mathcal D$ is simple then $X$ has no isolated points, so is a Cantor
set.

For two paths $e,f\in X_v$ for some $v\in V_n$ we define a
homeomorphism $T_{e,f}\colon e X\to f X$ by
\[T_{e,f}(e,e_{n+1},\dots)=(f,e_{n+1},\dots)\text{ for all }e_i\in E_i.
\]
Denote by $\mathfrak T$ the groupoid of germs of all homeomorphisms of
$T_{e,f}$. It coincides with the \emph{tail equivalence groupoid} of
$\mathcal D$:
\[\mathfrak T=\{(e,f)\in X\times X\mid e=(e_i)_{i\ge1},f=(f_i)_{i\ge1},\text{ and }e_i=f_i\text{ for all $i$ large enough}\},
\]
with the obvious groupoid structure $s(e,f)=e$, $r(e,f)=f$, and
$(e,f)\cdot(f,g)=(e,g)$. The topology on $\mathfrak T$ has as basic
open sets $\{\text{germs of }T_{e,f}\}$.

Let us describe the topological full group $[[\mathfrak T]]$. Every
$g\in[[\mathfrak T]]$ acts locally like $T_{e,f}$ for some $v\in X_n$
and some $e,f\in X_v$; since $X$ is compact, there exists a common
$n(g)\in\N$, assumed minimal, for all these local actions. Write
$[[\mathfrak T]]_n=\{g\in[[\mathfrak T]]\mid n(g)\le n\}$; then
$[[\mathfrak T]]_n$ is a group, and is in fact isomorphic to
$\prod_{v\in V_n}\Sym(X_v)\le\Sym(X_n)$, since every
$g\in[[\mathfrak T]]_n$ is uniquely determined by the rule
$(e,e_{n+1},\dots)^g=(e^g,e_{n+1},\dots)$. It follows that
$[[\mathfrak T]]=\bigcup_{n\ge0}[[\mathfrak T]]_n$ is a locally finite
group.

\begin{definition}[\cite{juschenko-nekrashevych-delasalle:extensions}]
  Consider a homeomorphism $a\colon X\righttoleftarrow$. For $v\in V_n$
  denote by $\alpha_a(v)$ the number of paths $e\in X_v$ such that
  $a\restrict{e X}$ does not coincide with a transformation of the form
  $T_{e,f}$ for some $f\in X_v$. The homeomorphism $a$ is called of
  \emph{bounded type} if $\|a\|\coloneqq\sup_{v\in V}\alpha_a(v)$ is finite
  and there are only finitely many points $x\in X$ at which the germ
  $(a,x)$ does not belong to $\mathfrak T$.
\end{definition}

It is easy to see that the set of bounded-type self-homeomorphisms of
$X$ forms a group. The following result produces a wide variety of
amenable groups:
\begin{theorem}[\cite{juschenko-nekrashevych-delasalle:extensions}*{Theorem~16}]
  Let $\mathcal D$ be a Bratteli diagram, and let $G$ be a group of
  homeomorphisms of bounded type of $X$. If the groupoid of germs of
  $G$ has amenable isotropy groups, then $G$ is amenable.
\end{theorem}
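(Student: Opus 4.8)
The plan is to deduce this from Theorem~\ref{thm:extensionsfullgp} by choosing the auxiliary groupoid $\mathfrak H$ to be the tail equivalence groupoid $\mathfrak T$ of the Bratteli diagram, and checking its four hypotheses. First I would reduce to the case $G$ finitely generated: amenability is a local property (Corollary after Proposition~\ref{prop:elemamen}), every finitely generated subgroup of $G$ is again a group of bounded-type homeomorphisms, and the hypothesis on isotropy groups of the groupoid of germs passes to subgroups (the germ groupoid of a subgroup embeds in that of $G$, so its isotropy groups are subgroups of amenable groups, hence amenable by Proposition~\ref{prop:elemamen}\eqref{prop:elemamen:2}). So assume $G=\langle S\rangle$ with $S$ finite.

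Next I would verify the hypotheses of Theorem~\ref{thm:extensionsfullgp} with $\mathfrak G$ the groupoid of germs of $G\curvearrowright X$ and $\mathfrak H=\mathfrak T$. Condition~(1) holds by the reduction. Condition~(2) is exactly the hypothesis on isotropy groups. Condition~(4), amenability of $[[\mathfrak T]]$, follows from the description in the text: $[[\mathfrak T]]=\bigcup_{n\ge0}[[\mathfrak T]]_n$ with $[[\mathfrak T]]_n\cong\prod_{v\in V_n}\Sym(X_v)$ a finite group, so $[[\mathfrak T]]$ is locally finite, hence a directed union of finite groups, hence amenable by Proposition~\ref{prop:elemamen}\eqref{prop:elemamen:4}. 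The crux is Condition~(3): for every $g\in G$ there are only finitely many $x\in X$ with $(x,g)\notin\mathfrak T$, and for each such $x$ the action of $G$ on the orbit $x G$ is extensively amenable. The finiteness is immediate from the definition of bounded type (a single generator, and hence any word $g=s_1\cdots s_n$ by the same chain-of-points argument as in the proof of Theorem~\ref{thm:extensionsfullgp}, has only finitely many ``bad'' points).

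The main obstacle is therefore showing that each such orbit $x G$ is extensively amenable. Here I would invoke Theorem~\ref{thm:recurrent=>extamen}: it suffices to exhibit a non-degenerate recurrent random walk on $x G$. The idea is that the Schreier graph of $x G$ is, away from finitely many exceptional configurations, modeled on the tail-equivalence structure, which is an increasing union of finite equivalence classes $X_v$; more precisely the orbit $x G$ embeds quasi-isometrically into a space built from the levels $V_n$ in a way that makes simple random walk recurrent --- essentially the same mechanism as in Theorem~\ref{thm:iet}, where recurrence of $\Z^d$ for $d\le 2$ drove the argument, except that here the bounded-type hypothesis forces the ``transverse'' directions to be finite, so the Schreier graph looks locally like a line with finite decorations, which is recurrent by Nash-Williams (Theorem~\ref{thm:nashwilliams}) via an obvious slow constriction by the sets $X_n$. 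Once extensive amenability of the orbits is in hand, Theorem~\ref{thm:extensionsfullgp} yields that $G$ is amenable, completing the proof.

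Let me also remark that the statement only claims amenability of $G$, not of the full group $[[\mathfrak G]]$, so I would not need the compactness clause of Theorem~\ref{thm:extensionsfullgp} --- though $X$ is compact here, so that conclusion comes for free and could be stated as a bonus. I expect the delicate point in writing this up carefully to be the precise identification of the Schreier graph of $x G$ with a recurrent graph, i.e.\ making the ``locally a decorated line'' heuristic into an honest quasi-isometry or an honest slow constriction; the bounded-type condition is exactly what one needs here, since $\|a\|<\infty$ bounds the number of ``branching'' paths at each level and the finiteness of bad germs confines the non-$\mathfrak T$ behaviour to a compact, hence finite at each level, set.
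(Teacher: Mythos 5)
Your plan coincides with the paper's proof: reduce to $G$ finitely generated, apply Theorem~\ref{thm:extensionsfullgp} with $\mathfrak H=\mathfrak T$, note $[[\mathfrak T]]$ is locally finite hence amenable, get finiteness of bad germs from the definition of bounded type, and then establish extensive amenability of orbits by building a slow constriction (Theorem~\ref{thm:nashwilliams}) and applying Theorem~\ref{thm:recurrent=>extamen}. The one place you leave a sketch where the paper is explicit is the constriction itself: after deleting the finitely many non-$\mathfrak T$ edges the orbit splits into finitely many $\mathfrak T$-classes with representatives $P$, and one takes $F_n=\bigcup_{e\in P}\{(a_1,\dots,a_n,e_{n+1},\dots)\in xG\}$, using $\|s\|<\infty$ to bound $\#(F_n S\setminus F_n)$ by $\#P\cdot\#S\cdot\max_s\|s\|$ uniformly in $n$ and then passing to a subsequence to make the boundaries disjoint --- but you correctly pinpointed that bounded type is precisely what makes this work.
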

\begin{proof}
  We may assume without loss of generality that $G$ is finitely
  generated.  We apply Theorem~\ref{thm:extensionsfullgp} with
  $\mathfrak H=\mathfrak T$; since $[[\mathfrak T]]$ is locally
  finite, it is amenable. The only condition to check is that the
  action of $G$ on $X$ is extensively amenable; we prove that it is
  recurrent and apply Theorem~\ref{thm:recurrent=>extamen}.

  Consider therefore an orbit $x G$ of $G$, and a finite generating
  set $S$ of $G$. We will in fact prove that the simple random walk on
  $x G$ admits a slow constriction, and apply
  Theorem~\ref{thm:nashwilliams}.

  The Schreier graph of the orbit $x G\subset X$ is an $S$-labelled
  graph. In it, remove all edges $y\to y s$ such that the germ $(y,s)$
  does not belong to $\mathfrak T$. By assumption, only finitely many
  edges were removed, so the resulting graph has finitely many
  connected components; let $P\subseteq x G$ be a choice of one point
  per connected component. We have covered $x G$ by finitely many
  $\mathfrak T$-orbits. For $e=(e_i)_{i\ge1}\in P$ consider
  \[F_{n,e}=\{(a_1,a_2,\dots,a_n,e_{n+1},\dots)\in x G\mid a_1\in E_1,\dots,a_n\in E_n\},
  \]
  and set $F_n=\bigcup_{e\in P}F_{n,e}$. The $F_n$ are finite subsets of $x
  G$, and $x G=\bigcup F_n$. For $e\in P,s\in S$, there are at most
  $\alpha_s(e_n^+)$ paths $f\in F_{n,e}$ with $f s\not\in F_{n,e}$; so
  $\#(F_n S\setminus F_n)\le\#P\cdot\#S\cdot\max_{s\in S}\|s\|$ are
  bounded. Furthermore the $F_n S\setminus F_n$ may be assumed disjoint by
  passing to a subsequence.
\end{proof}

\begin{definition}[\cite{durand:cant}*{Definition~6.3.2}]
  A \emph{Bratteli-Vershik diagram} is a Bratteli diagram
  $\mathcal D=(V,E)$ together with a partial order $\le$ on $E$ such
  that $e,f$ are comparable if and only if $e^+=f^+$. For every
  $v\in V$ there is an induced linear order on $X_v$: if
  $e=(e_1,\dots,e_n),f=(f_1,\dots,f_n)\in X_v$ then $e\le f$ if and
  only if $e_i\le f_i,e_{i+1}=f_{i+1},\dots,e_n=f_n$ for some
  $i\in\{1,\dots,n\}$. We let $X^{\max}$ denote those
  $e=(e_1,\dots)\in X$ such that $(e_1,\dots,e_n)$ is maximal for all
  $n\in\N$, define $X^{\min}$ similarly, and say $\mathcal D$ is
  \emph{properly ordered} if $\#X^{\max}=\#X^{\min}=1$.

  The \emph{adic transformation} of a properly-ordered
  Bratteli-Vershik diagram $(\mathcal D,{\le})$ is the
  self-homeomorphism $a\colon X\righttoleftarrow$ defined as
  follows. If $e=(e_1,\dots)\in X$ is such that $(e_1,\dots,e_n)$ is
  not maximal in $X_{e_n^+}$ for some $n\in\N$, then
  $e^a\coloneqq(f_1,\dots,f_n,e_{n+1},\dots)$. Otherwise, $e$ is the
  unique maximal path in $X$, and $e^a$ is defined to be the unique
  minimal path in $X$.
\end{definition}
If $\mathcal D$ is simple, then $a$ is a minimal transformation of
$X$.  Bratteli-Vershik diagrams encode \emph{all} minimal
homeomorphisms of Cantor sets:
\begin{theorem}[\cite{herman-putnam-skau:bratteli}, see~\cite{durand:cant}*{Theorem~6.4.6}]\label{thm:hps}
  Every minimal homeomorphism of the Cantor set is topologically
  conjugate to the adic transformation of a properly ordered simple
  Bratteli-Vershik diagram.\qed
\end{theorem}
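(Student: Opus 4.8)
The plan is to realize $(X,T)$ by a nested sequence of \emph{Kakutani--Rokhlin partitions} and to read off the Bratteli--Vershik diagram from how the towers at one level refine those at the previous level. The crucial input from minimality is the elementary observation that if $U\subseteq X$ is a nonempty clopen set, then the first-return time $r_U\colon U\to\N$, $r_U(x)=\min\{n\ge 1\mid T^n x\in U\}$, is everywhere defined (by minimality), locally constant (by clopenness of $U$ and continuity of the maps $T^n$), hence bounded on the compact set $U$. Consequently $U$ splits into finitely many clopen pieces $A_1,\dots,A_k$ on which $r_U$ is constant, say $r_U\equiv h_i$ on $A_i$, and $X=\bigsqcup_{i}\bigsqcup_{0\le j<h_i}T^j A_i$ is a Kakutani--Rokhlin tower with base $U$ and $\bigsqcup_i T^{h_i}A_i=U$; also $x_0\in U$ whenever we choose $U\ni x_0$.

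First I would fix a point $x_0\in X$ and a countable basis $\{O_k\}_{k\ge1}$ of clopen subsets of $X$, and build inductively a decreasing sequence of clopen neighbourhoods $x_0\in\cdots\subseteq U_2\subseteq U_1$ with $\bigcap_n U_n=\{x_0\}$, together with the Kakutani--Rokhlin partition $\mathcal P_n$ with base $U_n$, refined (by subdividing $U_n$ finely enough, which is only finitely many clopen conditions) so that (i) each $\mathcal P_n$-atom lies in a single $\mathcal P_{n-1}$-atom, whence $\mathcal P_n$ refines $\mathcal P_{n-1}$, and (ii) each of $O_1,\dots,O_n$ is a union of $\mathcal P_n$-atoms. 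Since $U_n\subseteq U_{n-1}$, whenever the orbit of a point is in $U_n$ it sits at the base of some $\mathcal P_{n-1}$-tower, so between consecutive visits to $U_n$ the orbit traverses a sequence of complete $\mathcal P_{n-1}$-towers; in particular the height of each $\mathcal P_n$-tower is the sum of the heights of the $\mathcal P_{n-1}$-towers it traverses. Property (ii) and $\bigcap_n U_n=\{x_0\}$ guarantee that $\bigvee_n\mathcal P_n$ separates points.

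Next I would define the Bratteli diagram: $V_0$ is a single vertex, $V_n$ indexes the towers of $\mathcal P_n$, and for $v\in V_{n-1},v'\in V_n$ there is one edge $v\to v'$ for each traversal of the $\mathcal P_{n-1}$-tower $v$ by the $\mathcal P_n$-tower $v'$, the edges into $v'$ being ordered bottom-to-top by the order of traversals. An infinite path then determines, at each level, a tower and a floor, hence a decreasing sequence of $\mathcal P_n$-atoms, hence a single point of $X$ by compactness; this yields a homeomorphism between the infinite path space of $\mathcal D$ and $X$. Under it, applying $T$ is exactly moving one floor up in the finest tower where that is possible and carrying otherwise, i.e.\ the adic transformation; the unique maximal path corresponds to the unique point whose image under $T$ lies in $\bigcap_n U_n=\{x_0\}$, namely $T^{-1}x_0$, and the unique minimal path to $x_0$, so $\mathcal D$ is properly ordered. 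Finally, for simplicity I would telescope to a subsequence of levels: by minimality the finitely many $\mathcal P_m$-bases are all hit within a uniformly bounded number of steps along any orbit, while the $\mathcal P_n$-tower heights tend to $\infty$ as $U_n\downarrow\{x_0\}$, so for suitable $n>m$ every $\mathcal P_n$-tower traverses every $\mathcal P_m$-tower; telescoping to such levels changes neither the path space nor the adic transformation and produces a simple, properly ordered Bratteli--Vershik diagram whose adic transformation is conjugate to $T$. The main obstacle is carrying out the inductive construction of the $\mathcal P_n$ so that (i), (ii) and $\bigcap_n U_n=\{x_0\}$ hold together, and then the careful check near the maximal and minimal paths that the adic transformation really coincides with $T$; the rest is the bounded-return-time fact plus bookkeeping.
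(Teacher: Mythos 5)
Your proposal is correct and follows exactly the approach the paper sketches in the parenthetical remark after the theorem: shrinking clopen neighbourhoods of a base point, the associated Kakutani--Rokhlin partitions at each level, and the Bratteli--Vershik diagram read off from how the towers at one level traverse those at the previous level, with a telescoping at the end to obtain simplicity. The details you supply (bounded return time from minimality and compactness, refinement by finitely many clopen conditions, the identification of the unique maximal and minimal paths with $T^{-1}x_0$ and $x_0$) are the standard ones from Herman--Putnam--Skau and present no gap.
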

(The idea of the proof is to choose a decreasing sequence
$(C_n)_{n\ge0}$ of clopen sets, shrinking down to a base point
$\{x\}$, and to consider the associated ``Kakutani-Rokhlin tower'':
the largest collection of iterated images of $C_n$ under the
homeomorphism that are disjoint. These translates of $C_n$ make up the
$n$th level of the Bratteli-Vershik diagram.)

\begin{corollary}[\cite{juschenko-monod:cantor}]\label{cor:minimalZamen}
  Let $a$ be a minimal homeomorphism of a Cantor set $X$. Then the
  topological full group $[[\langle a\rangle,X]]$ is amenable.
\end{corollary}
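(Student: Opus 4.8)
The plan is to combine the Herman--Putnam--Skau structure theorem with the recurrence‑based bounded‑type amenability criterion established above. First I would invoke Theorem~\ref{thm:hps} to replace $a$, up to topological conjugacy, by the adic transformation of a properly ordered simple Bratteli--Vershik diagram $(\mathcal D,{\le})$ with path space $X$; this is harmless since a homeomorphism conjugating $a$ to $a'$ induces an isomorphism of the corresponding topological full groups. Write $G\coloneqq[[\langle a\rangle,X]]$ and let $\mathfrak T$ be the tail equivalence groupoid of $\mathcal D$.

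The crucial observation is that every element of $G$ is a homeomorphism of \emph{bounded type}. For $a$ itself this follows from the shape of the order: each $X_v$ is linearly ordered, so it has a unique maximal path (the maximal edge into each vertex being unique and determining the path backwards), and on the cylinder $eX$ of any non‑maximal $e\in X_v$ the adic map $a$ agrees with a transformation $T_{e,f}$; hence $\alpha_a(v)\le 1$ for all $v$, so $\|a\|\le 1<\infty$, while the germ $(a,x)$ escapes $\mathfrak T$ only at the single point $X^{\max}$. Since $\alpha$ is subadditive along compositions, this gives $\|a^k\|\le|k|$ and only finitely many bad germs for each $k\in\Z$. A general element of $G$ equals a power $a^{\nu(x)}$ on each piece of a finite clopen partition of $X$, and bounded type is preserved under such finite clopen patching (the supremum defining $\|\cdot\|$ stabilises on deep enough levels and only finitely many germs escape $\mathfrak T$), so all of $G$ consists of bounded‑type homeomorphisms.

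Next I would check that the groupoid of germs $\mathfrak G$ of $G$ has amenable isotropy. Because $a$ is minimal on the infinite Cantor set $X$ it has no periodic points (a periodic point would have a finite, hence non‑dense, orbit), so $\langle a\rangle$ acts freely. If $g\in G$ fixes $x$, then $g$ coincides with $a^k$ on a clopen neighbourhood of $x$ for $k=\nu(x)$, and $xa^k=x$ forces $k=0$; thus $g$ is locally the identity at $x$ and the germ $(x,g)$ is trivial. Hence every isotropy group $\mathfrak G_x$ is trivial, in particular amenable, and the bounded‑type amenability theorem (\cite{juschenko-nekrashevych-delasalle:extensions}*{Theorem~16} in the excerpt) applied to $G$ yields that $[[\langle a\rangle,X]]$ is amenable. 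As an alternative one may feed this directly into Theorem~\ref{thm:extensionsfullgp} with $\mathfrak H=\mathfrak T$: its full group $\bigcup_n[[\mathfrak T]]_n$ is locally finite hence amenable, $\langle a\rangle\cong\Z$ is finitely generated with trivial groups of germs, and for each power $a^k$ only finitely many germs miss $\mathfrak T$, the relevant orbits being $\Z$‑orbits, which are extensively amenable by Lemma~\ref{lem:extamen0} since $\Z$ is amenable.

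The substantive inputs — the structure theorem and the recurrence criterion — are already in hand, so the real work is the bookkeeping of the middle paragraph; the delicate point is confirming that $\alpha$ is subadditive enough that powers of $a$, and then the piecewise maps constituting $G$, genuinely stay of bounded type.
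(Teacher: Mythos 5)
Your proof is correct and follows the same route as the paper: Herman--Putnam--Skau to pass to an adic model, verify the bounded-type condition via $\alpha_a(v)\le1$, observe that freeness of the $\langle a\rangle$-action on the Cantor set kills all isotropy, and invoke the Juschenko--Nekrashevych--de la Salle bounded-type criterion. The one divergence is where you apply the criterion: you feed in $G=[[\langle a\rangle,X]]$, which forces you to verify bounded type for \emph{every} element of the full group via the clopen-patching argument in your middle paragraph. The paper instead applies it only to $G=\langle a\rangle$ and extracts the amenability of $[[\langle a\rangle,X]]$ from the stronger conclusion of Theorem~\ref{thm:extensionsfullgp} (namely, that $[[\mathfrak G]]$ is amenable when $X$ is compact), which is what the proof of the bounded-type theorem actually delivers. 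Your version is more self-contained given the literal statement of Theorem~16, but the middle paragraph's bookkeeping --- subadditivity of $\alpha$ along compositions and the patching across a finite clopen partition --- is avoidable by leaning on that last clause of Theorem~\ref{thm:extensionsfullgp} as the paper does; both routes close the argument.
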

\begin{proof}
  Using Theorem~\ref{thm:hps}, we may assume $a$ is the adic
  transformation of a Bratteli-Vershik diagram.  It follows directly
  that $\alpha_a(v)=1$ for every $v\in V$, and that the germs of $a$
  belong to $\mathfrak T$ for all points $x\in X\setminus
  X^{\max}$. No power of $a$ has fixed points so their germs are all
  trivial.
\end{proof}

Here are some typical examples of minimal $\Z$-actions on a Cantor set, to
which Corollary~\ref{cor:minimalZamen} applies to produce amenable groups:
\begin{example}\label{ex:minimalZ}
  Consider an irrational $\alpha\in(0,1)$, and the transformation $x\mapsto
  x+\alpha$ on $\R/\Z$. It is minimal, since $\Z+\Z\alpha$ is dense in
  $\R$. We can replace $\R/\Z$ by a Cantor set as follows: set
  \[X_\alpha\coloneqq(\R\setminus\Z\alpha\sqcup(\Z\alpha\times\{+,-\}))/\Z,\]
  namely replace every point $x\in\Z\alpha\subset\R/\Z$ by a pair
  $x^\pm$. Give $X_\alpha$ the cyclic order induced from the circle and
  $x^-<x^+$, and its associated topology. Then $X_\alpha$ is a Cantor set,
  and $x\mapsto x+\alpha$ is a minimal transformation of $X_\alpha$; see
  Example~\ref{ex:iet}.

  As another example, consider the substitution $a\mapsto ab,b\mapsto a$ on
  $\{a,b\}^*$ and let $x\in\{a,b\}^\Z$ denote a fixed point of the
  substitution; for example, with `$\underline a$' denoting the position of
  the $0$th letter, $x=\lim(\underline a b,a b\underline a a b,a b
  a\underline a b a b a,\dots)$. Set $X=\overline{x\Z}$. Then the action of
  $\Z$ by shift on $X$ is minimal.

  In fact, this example coincides with the first one if one takes
  $\alpha=(\sqrt5-1)/2$ the golden ratio and $x=0^+$, decomposes
  $X_\alpha=[0^+,\alpha^-]\cup[\alpha^+,1^-]$, defines $\pi\colon
  X_\alpha\to\{a,b\}$ by $\pi(x)=a$ if $x\in[0^+,\alpha^-]$ and $\pi(x)=b$
  if $x\in[\alpha^+,1^-]$, and puts $X_\alpha$ in bijection with $X$ via
  the map $x\mapsto(n\mapsto\pi(x+n))$.

  The encoding of this example as a Bratteli diagram $\mathcal D$ is
  as follows:
  \[\begin{tikzpicture}
      \node (a1) at (0,0) {$\bullet$};
      \node (a2) at (0,1) {$\bullet$};
      \node (a3) at (0,2) {$\bullet$};
      \node (r) at (2,-0.6) [label=below:{$V_0$}] {$\bullet$};
      \node (b1) at (4,0) [label=right:{$V_1$}] {$\bullet$};
      \node (b2) at (4,1) [label=right:{$V_2$}] {$\bullet$};
      \node (b3) at (4,2) [label=right:{$V_3$}] {$\bullet$};
      \node at (0,2.5) {$\vdots$};
      \node at (4,2.5) {$\vdots$};
      \draw[->] (r) -- node [below] {$a$} (a1);
      \draw[->] (r) -- node [below] {$b$} (b1);
      \foreach\i/\j in {1/2,2/3} {
        \draw[->] (a\i) -- node [left] {$a$} (a\j);
        \draw[->] (a\i) -- node [below,pos=0.35] {$b$} (b\j);
        \draw[->] (b\i) -- node [below,pos=0.35] {$a$} (a\j);
      }
    \end{tikzpicture}
  \]
  where now a point $x\in X_\alpha$ is encoded by the path in
  $\mathcal D$ with labels $(\pi(\tilde x/\alpha^n))_{n\ge1}$ for the
  unique representative $\tilde x$ of $x$ in $[0,1]$.
\end{example}

We next quote some results from~\cite{nekrashevych:simple} to exhibit
some properties of the topological full groups $[[G,X]]$ constructed
above.
\begin{definition}
  Let $\mathfrak G$ be a groupoid. A \emph{multisection} of degree $d$
  is a collection $M$ of $d^2$ non-empty, disjoint bisections
  $\{F_{i,j}\}_{i,j=1,\dots,d}$ of $\mathfrak G$ such that
  $F_{i,j}\subseteq\mathfrak G_0$ and $F_{i,j}F_{j,k}=F_{i,k}$ for all
  $i,j,k\in\{1,\dots,d\}$.

  For $\pi\in\Sym(d)$, we denote by $M_\pi$ the element of
  $[[\mathfrak G]]$ that maps $x$ to $x F_{i,i^\pi}$ if $x\in F_{i,i}$
  and fixes $\mathfrak G_0\setminus\bigcup_{i=1}^d F_{i,i}$, and by
  $\Alt(M)$ the subgroup $\{M_\pi\mid \pi\in\Alt(d)\}$ of
  $[[\mathfrak G]]$. Finally, we denote by $\Alt(\mathfrak G)$ the
  subgroup of $[[\mathfrak G]]$ generated by $\Alt(M)$ for all
  multisections $M$ of $\mathfrak G$.
\end{definition}

\begin{proposition}[\cite{nekrashevych:simple}*{Theorem~4.1}]\label{prop:simple}
  Let $\mathfrak G$ be a minimal groupoid of germs. Then every
  non-trivial subgroup of $[[\mathfrak G]]$ normalized by
  $\Alt(\mathfrak G)$ contains $\Alt(\mathfrak G)$. In particular,
  $\Alt(\mathfrak G)$ is simple and is contained in every non-trivial
  normal subgroup of $[[\mathfrak G]]$.\qed
\end{proposition}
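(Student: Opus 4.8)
The plan is to reduce Proposition~\ref{prop:simple} to a concrete "commutator-moving" argument inside the groupoid $\mathfrak G$, exploiting minimality to spread local moves around every orbit. Recall that a multisection $M=\{F_{i,j}\}$ of degree $d$ gives a copy of $\Alt(d)$ in $[[\mathfrak G]]$ supported on $\bigcup_i F_{i,i}$, and that $\Alt(\mathfrak G)$ is generated by all such $\Alt(M)$. The heart of the matter is: given a non-trivial $g\in[[\mathfrak G]]$ and a non-trivial $h\in\Alt(M_0)$ for some multisection $M_0$, produce, using conjugates of $g$ by elements of $\Alt(\mathfrak G)$ together with group operations, an element that again lies in some $\Alt(M_1)$ and generates $\Alt(\mathfrak G)$ together with the conjugates of $h$. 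Concretely I would show that for any subgroup $N\trianglelefteq_{\Alt(\mathfrak G)}[[\mathfrak G]]$ with $N\neq1$ and any multisection $M$ of degree $d\ge 5$, one has $\Alt(M)\subseteq N$; since the $\Alt(M)$ generate $\Alt(\mathfrak G)$, this yields $\Alt(\mathfrak G)\subseteq N$.

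\textbf{Step 1: a displaced support.} Pick $g\in N\setminus\{1\}$. Then $g$ moves some point: there is a bisection $F\subseteq\mathfrak G$ and a clopen $U=s(F)\subseteq\mathfrak G_0$ on which $g$ acts like $r\circ(s\restrict F)^{-1}$ with $s(F)\cap r(F)=\emptyset$ (shrink $U$ using that $\mathfrak G_0$ is Hausdorff totally disconnected, so that the graph of $g$ near a moved point is separated from the diagonal). So we may assume there is a non-empty clopen $U$ with $Ug\cap U=\emptyset$.

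\textbf{Step 2: manufacture a multisection inside $U$.} Using minimality of $\mathfrak G$ (every $\mathfrak G$-orbit is dense), and the fact that $\mathfrak G$ is a groupoid of germs so its bisections form a basis of clopens, I would find $d$ pairwise disjoint clopens $V_1,\dots,V_d\subseteq U$ together with bisections identifying each $V_i$ with $V_1$; the images $W_i:=V_ig\subseteq Ug$ are automatically disjoint from the $V_j$. This produces a multisection $M'$ of degree $d$ supported in $U$, and for $\pi\in\Alt(d)$ the commutator $[g,M'_\pi]$ is an element of $N$ (since $N$ is normal under $\Alt(\mathfrak G)\ni$ nothing — careful: $M'_\pi$ need not be in $\Alt(\mathfrak G)$). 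To fix this I instead use the standard trick: if $a\in\Alt(\mathfrak G)$ has support $T$ disjoint from $Tg$, then $[g^{-1},a]=a^{-g}\cdot a\in N$ has support contained in $T\cup Tg$ and equals $a$ on $T$; so $N$ contains an element agreeing with a prescribed $3$-cycle-type move on a small clopen.

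\textbf{Step 3: conjugate and multiply to fill a full multisection.} Given the generator $\Alt(M)$ we must hit, cover its support $\bigcup_i F_{i,i}$ by small clopens of the type produced in Step 2 (possible by compactness of bisections and minimality), and write each $3$-cycle generator of $\Alt(M)$ as a product of conjugates — by elements of $\Alt(\mathfrak G)$ — of the small elements of $N$ from Step 2, using the relation in $\Alt$ that a $3$-cycle on $\{i,j,k\}$ is a commutator of $3$-cycles on overlapping triples, each of which can be realized as a conjugate of a single small $3$-cycle by an element of $\Alt(\mathfrak G)$ that slides one triple onto another (this sliding element exists because minimality lets us conjugate any multisection into any other of the same degree, up to shrinking). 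Conclude $\Alt(M)\subseteq N$, hence $\Alt(\mathfrak G)\subseteq N$. Taking $N=\Alt(\mathfrak G)$ itself shows it is simple; taking $N$ an arbitrary non-trivial normal subgroup of $[[\mathfrak G]]$ shows every such $N$ contains $\Alt(\mathfrak G)$.

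\textbf{Main obstacle.} The delicate point is Step 2--3: controlling supports so that all commutators and conjugates genuinely stay inside $N$ (which is only assumed normalized by $\Alt(\mathfrak G)$, not by all of $[[\mathfrak G]]$), while still having enough room — guaranteed by minimality — to realize every $3$-cycle of a given multisection. One must be careful that the "sliding" elements used to move one small triple onto another are themselves in $\Alt(\mathfrak G)$, which forces working with triples rather than pairs and is precisely why degree $d\ge 5$ (so that $\Alt(d)$ is simple and generated by the relevant $3$-cycles) is needed. I expect this bookkeeping — essentially an infinite-permutation-group commutator calculus relative to the subgroup $\Alt(\mathfrak G)$ — to be the bulk of the work; the topology (Hausdorffness, clopen bisections, minimality) enters only to supply disjoint room and transitivity on multisections.
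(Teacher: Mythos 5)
The paper does not actually prove this proposition: it is stated with a \qed{} and a direct citation to Nekrashevych's Theorem~4.1, so there is no internal proof to compare against. Your blueprint is nonetheless the right one, and it is indeed the one used in the cited source (and in its precursors, Higman--Epstein, Bezuglyi--Medynets, Matui): find a clopen $U$ with $Ug\cap U=\emptyset$, use commutators of $g$ against alternating elements supported in $U$ to force $N$ to contain a full copy of $\Alt(M)$ for a small multisection $M$, and then use minimality to fragment and transport an arbitrary generator of $\Alt(\mathfrak G)$ into that setting.

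Three places to tighten. First, in Step~1 the existence of a \emph{moved} point for $g\neq1$ is not a consequence of Hausdorffness or total disconnectedness of $\mathfrak G_0$; a non-trivial full bisection could \emph{a priori} consist entirely of isotropy germs and act as the identity homeomorphism. What rules this out is effectiveness, which is precisely what ``groupoid of germs'' guarantees: any open bisection contained in the isotropy bundle lies in the unit space. You should invoke that explicitly. Second, your ``careful'' aside in Step~2 is based on a misreading of the definitions: for any multisection $M'$ and $\pi\in\Alt(d)$, the element $M'_\pi$ lies in $\Alt(M')\subseteq\Alt(\mathfrak G)$ by the paper's own definition, so it is fine to commute $g$ directly with such elements. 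Your eventual fix is correct anyway, but the worry was unfounded. Third, the cleanest way to close the loop is the Higman double-commutator identity: if $a,b\in\Alt(M')$ are supported in $T\subseteq U$ with $T\cap Tg^{-1}=\emptyset$, then $a^{-g^{-1}}$ commutes with both $a$ and $b$, so $\bigl[[g^{-1},a],b\bigr]=[a,b]$, which lies in $N$. Since $\Alt(d)$ is perfect for $d\geq5$, this gives $\Alt(M')\subseteq N$ outright, sidestepping the ad hoc tracking of individual $3$-cycles in your Step~3. What remains is the fragmentation-and-conjugation argument (every $\Alt(M)$ decomposes into pieces each conjugate, by an element of $\Alt(\mathfrak G)$, into a multisection supported where there is room), and you correctly flag this as the bulk of the remaining bookkeeping; it uses minimality and compactness in exactly the way you describe.
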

(Note that the minimality assumption is always necessary: if
$\mathfrak G$ does not act minimally, then let $Y\neq\mathfrak G_0$ be
a closure of an orbit; then there is a natural quotient map
$[[\mathfrak G]]\to[[\mathfrak G\restrict Y]]$, proving that
$[[\mathfrak G]]$ is not simple.)

We call a groupoid $\mathfrak G$ \emph{compactly generated} if there
exists a compact subset $S$ of $\mathfrak G$ that generates it. This
is for example the case if $\mathfrak G$ is the action groupoid of a
finitely generated group $G$ acting on a compact set (in which case
one bisection per generator of $G$ suffices to generate
$\mathfrak G$).

Let $\mathfrak G$ be a compactly generated groupoid, say by
$S\subseteq\mathfrak G$. We call $\mathfrak G$ \emph{expansive} if
there exists a finite cover $\mathscr S$ of $S$ by bisections such
that $\bigcup_{n\ge0}\mathscr S^n$ generates the topology on
$\mathfrak G$; so in particular for every $x\neq y\in\mathfrak G_0$
there exists a bisection $F\in\mathscr S^n$ with $x\in s(F)\not\ni y$.
\begin{proposition}[\cite{nekrashevych:simple}*{Theorem~5.6}]\label{prop:fg}
  If $\mathfrak G$ is compactly generated and expansive then
  $\Alt(\mathfrak G)$ is finitely generated.\qed
\end{proposition}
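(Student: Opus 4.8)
The plan is to reduce finite generation of $\Alt(\mathfrak G)$ to a statement about a fixed finite collection of multisections, using expansiveness to control how any given multisection is ``approximated'' by one built from the generating bisections. Since $\mathfrak G$ is compactly generated by $S$, and expansive with respect to a finite cover $\mathscr S=\{S_1,\dots,S_k\}$ of $S$ by bisections, the bisections in $\bigcup_{n\ge0}\mathscr S^n$ form a basis for the topology of $\mathfrak G$, and in particular every clopen bisection of $\mathfrak G$ is a finite disjoint union of bisections of the form $F_{w}$ with $w$ a word in $\mathscr S\cup\mathscr S^{-1}$. I would first record this fact and fix, for each $n$, the finite set $\mathscr M_n$ of multisections of degree $d\le 3$ all of whose component bisections lie in $\mathscr S^{\le n}\coloneqq\bigcup_{m\le n}\mathscr S^m$ (together with their inverses); the candidate finite generating set for $\Alt(\mathfrak G)$ will be $\bigcup_{M\in\mathscr M_N}\Alt(M)$ for a suitably large but fixed $N$.

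The first substantive step is a \emph{decomposition lemma}: every $\Alt(M)$, for $M$ a multisection of arbitrary degree $d$, lies in the subgroup generated by $\Alt(M')$ over multisections $M'$ of degree $3$. This is the standard fact that $\Alt(d)$ is generated by $3$-cycles, each of which can be realized by a degree-$3$ sub-multisection of $M$ (restrict the $F_{i,j}$ to three indices); so it suffices to generate all the degree-$3$ alternating groups. The second step is a \emph{refinement lemma}: if $M=\{F_{i,j}\}$ is a degree-$3$ multisection and each $F_{i,i}$ is partitioned into finitely many bisections $F_{i,i}=\bigsqcup_\ell F_{i,i}^{(\ell)}$ (compatibly, via the groupoid multiplication, into sub-multisections $M^{(\ell)}$), then $M_\pi$ is the product of the commuting elements $M^{(\ell)}_\pi$, so $\Alt(M)\subseteq\langle\Alt(M^{(\ell)})\rangle$. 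Combining these, $\Alt(\mathfrak G)$ is generated by $\Alt(M)$ as $M$ ranges over degree-$3$ multisections whose component bisections are \emph{basic}, i.e.\ of the form $F_w$ with $w$ a word in $\mathscr S$ — this is where expansiveness enters, guaranteeing such basic bisections generate the topology so that any multisection refines into basic ones.

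The final step is a \emph{uniform bound} on the word length needed: I claim there is a single $N$ such that every basic degree-$3$ multisection is a refinement of one in $\mathscr M_N$, conjugated by an element already in $\langle\bigcup_{M\in\mathscr M_N}\Alt(M)\rangle$. Here one uses Proposition~\ref{prop:simple}: $\Alt(\mathfrak G)$ is simple and contained in every nontrivial normal subgroup, and $\Alt(M)$ for any single nontrivial multisection $M$ generates, under conjugation by $\Alt(\mathfrak G)$, all of $\Alt(\mathfrak G)$. More precisely, fix one multisection $M_0\in\mathscr M_N$ with $N$ large enough that $M_0$ is ``full-sized'' (its support $\bigcup F_{i,i}$ is nonempty clopen); then $\langle\Alt(M_0)^{\Alt(\mathfrak G)}\rangle=\Alt(\mathfrak G)$ by simplicity, and a compactness argument over $\mathfrak G_0$ — covering the unit space by finitely many of the clopen sets $s(F)$, $F\in\mathscr S^{\le N}$ — shows finitely many conjugates suffice, all of which can be taken inside the group generated by $\mathscr M_N$-multisections by iterating the refinement.

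\textbf{The main obstacle} I anticipate is the last step: making the compactness/minimality argument genuinely \emph{uniform}, i.e.\ showing that a \emph{fixed} $N$ works rather than an $N$ depending on the multisection being approximated. The tension is that an arbitrary multisection may have components that are very deep basic bisections (large word length $n$), and one must argue that such a deep multisection is conjugate, by an element of the already-constructed finitely generated group, to a refinement of bounded-depth multisections — essentially a ``bounded generation / fragmentation'' statement for $\Alt(\mathfrak G)$ acting on the Cantor unit space. I expect this to follow by combining expansiveness (which bounds how small a clopen set can be before it is a union of bounded-depth basic sets — but this is exactly what fails without care) with the conjugation-homogeneity from Proposition~\ref{prop:simple}; the honest resolution likely runs: pick $N$ so that $\mathscr S^{\le N}$ covers $\mathfrak G_0$ and so that there exist two disjoint ``full'' basic bisections at depth $\le N$, use these to build commuting copies and a ping-pong/transitivity argument realizing any degree-$3$ alternating permutation of any triple of small clopens as a bounded product of $\mathscr M_N$-generators. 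This is the technical heart and where I would spend the bulk of the write-up, following Nekrashevych's argument in~\cite{nekrashevych:simple}*{Theorem~5.6} closely.
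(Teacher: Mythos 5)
The paper does not give a proof of this statement: the \qed\ immediately after the proposition's text indicates it is quoted from Nekrashevych's Theorem~5.6 without an in-text argument, so there is nothing to compare against, and your sketch must stand on its own.

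The three preliminary reductions you set up are all correct and are indeed part of the scaffolding of the real argument: reduction to degree-$3$ multisections via generation of $\Alt(d)$ by $3$-cycles; the refinement/commuting lemma for compatible partitions of the $F_{i,i}$; and using expansiveness so that every bisection splits into a finite disjoint union of basic bisections $F_w$ with $w$ a word over $\mathscr S$. The problem is that the ``uniform bound'' step you then gesture at \emph{is} the entire content of the theorem, and the sketch there has a genuine gap. Proposition~\ref{prop:simple} gives you normal generation: $\Alt(\mathfrak G)$ is generated by the $\Alt(\mathfrak G)$-conjugates of any one nontrivial $\Alt(M_0)$. That is a priori an infinite generating set, and nothing in that statement controls the length or complexity of the conjugating elements. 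The ``compactness over $\mathfrak G_0$'' you invoke bounds how many clopen pieces cover the unit space, which is a statement about subsets of $\mathfrak G_0$, not about words in $\Alt(\mathfrak G)$. Concretely, take a basic degree-$3$ multisection $M$ at depth $n\gg N$. To put $\Alt(M)$ inside the group $H$ generated by $\Alt(M')$ for $M'$ of depth $\le N$, you need an element $g$ \emph{of $H$ itself} that conjugates $M$ into a depth-$\le N$ multisection (or some other bounded-word expression for the $3$-cycles of $M$); normal generation only hands you a $g\in\Alt(\mathfrak G)$, and your covering argument on $\mathfrak G_0$ does not shrink it into $H$. The proposal in fact concedes this: it closes by saying it would ``follow Nekrashevych's argument closely,'' which for a blind proof attempt is circular, since that argument is precisely what is being asked for. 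What is missing is the inductive depth-reduction mechanism — an identity, internal to the alternating subgroups, that writes the generators of $\Alt(M)$ for $M$ of depth $n+1$ as uniformly bounded words in generators of $\Alt(M')$ for $M'$ of depth $\le n$. Until that identity (or an equivalent bounded-fragmentation statement) is exhibited, the proposal is a plan, not a proof.
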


\begin{example}
  Consider the $\Z$-action from Example~\ref{ex:minimalZ} for $\alpha$
  the golden ratio. We claim that the group $G=[[\Z,X]]'$ is infinite,
  amenable, finitely generated and simple.

  Amenability of $G$ was proven in
  Corollary~\ref{cor:minimalZamen}. Let $\mathfrak G$ be the groupoid
  of the action of $\Z=\langle a\rangle$ on $X$. It is minimal, so
  $\Alt(\mathfrak G)$ is simple by Proposition~\ref{prop:simple}; and
  it is easy to check $\Alt(\mathfrak G)=[[\mathfrak G]]'$. The
  groupoid $\mathfrak G$ is compactly generated, say by
  $S=X\cup\{(x,x a^{\pm1})\mid x\in X\}$. Finally $X\subset\{0,1\}^\Z$
  is a subshift, so $\mathfrak G$ is expansive: the cover of $S$ by
  $X$ by
  $\{\{x\in X\mid x_0=0\}\cup\{x\in X\mid x_0=1\}\cup\{(x,x a)\mid
  x\in X\}\cup\{(x,x a^{-1})\mid x\in X\}\}$ generates the topology on
  $X$ and therefore on $\mathfrak G$.
\end{example}

Finally, we end with examples of topological full groups of
non-minimal $\Z$-actions and of minimal $\Z^2$-actions which are
\emph{not} amenable, showing that Corollary~\ref{cor:minimalZamen}
does not generalize without extra conditions:
\begin{example}[Geodesic flow]
  Consider a free group $F_k$, and the space $X$ of geodesic maps
  $a\colon\Z\to F_k$ into the Cayley graph of $F_k$, namely of
  bi-infinite geodesic rays. The $\Z$-action is by shifting:
  $\sigma(a)=(i\mapsto a_{i+1})$. The space $X$ is a Cantor set, and
  may be identified with
  $\{a\in\{x_1^\pm,\dots,x_k^\pm\}^\Z\mid a_i a_{i+1}\neq1\text{ for
    all }i\in\Z\}$.  For $a\in X$ and $j\in\{1,\dots,k\}$, define
  \[a\cdot x_j=\begin{cases}\sigma(a) & \text{ if }a_0=x_j,\\
      \sigma^{-1}(a) & \text{ if }a_{-1}=x_j^{-1},\\
      a & \text{ otherwise}.
    \end{cases}
  \]
  This defines a piecewise-$\Z$ action of $F_k$ on $X$, which is
  easily seen to be faithful: for $w\in F_k$ a non-trivial reduced
  word, extend $w$ arbitrarily but non-periodically to a bi-infinite
  geodesic $a$ containing $w$ at positions $\{0,\dots,|w|-1\}$; then
  $a\cdot w=\sigma^{|w|}(a)\neq a$.
\end{example}

We may modify the example above by letting $C_2*C_2*C_2$ rather than
$F_k$ act on the space of geodesics of its Cayley graph, and then
embed that system into a minimal $\Z^2$-action, as follows:
\begin{example}[\cite{elek-monod:fullgroup}]
  Consider the space $X$ of proper colourings of the edges of the
  standard two-dimensional grid by $\Alphabet=\{A,B,C,D,E,F\}$. There
  is a natural action of $\Z^2$ on $X$ by translations.

  To each $a\in\Alphabet$ corresponds a continuous involution
  $a\colon X\righttoleftarrow$, defined as follows. For $\sigma\in X$,
  if there is an edge between $(0,0)$ and one of its neighbours $v$
  with colour $a$, then $\sigma\cdot a\coloneqq\sigma\cdot v$; otherwise
  $\sigma\cdot a\coloneqq\sigma$. These involutions clearly belong to
  $[[\Z^2,X]]$.

  We shall exhibit a minimal non-empty closed $\Z^2$-invariant subset
  $Y$ of $X$ on which $\Z^2$ acts freely and
  $H\coloneqq\langle A,B,C\mid A^2,B^2,C^2\rangle$ acts faithfully as
  subgroup of $[[\Z^2,X]]$; since $H$ contains free subgroups, we will
  have proved that $[[\Z^2,Y]]$ may contain free subgroups (and
  therefore be non-amenable) for minimal, free $\Z^2$-spaces $Y$.

  We create a specific colouring of the grid, namely an element
  $\sigma\in X$, as follows: first, colour every horizontal line of
  the grid alternately with $E$ and $F$. Enumerate
  $H=\{w_0,w_1,\dots\}$. For all $x\in\N$, write $x=2^ix'$ with $x'$
  odd, and colour the vertical lines $\{x\}\times\R$ and
  $\{-x\}\times\R$ by the infinite word $(w_i D)^\infty$. Set
  $Y=\overline{\sigma\Z^2}$.

  Every finite patch of $\sigma\restrict S$ repeats infinitely, and
  moreover there exists $n(S)$ such that every ball of radius $n(S)$
  in the grid contains a copy of $\sigma\restrict S$. It follows
  (see~\cite{gottschalk:periodic}) that $Y$ is minimal, that $\Z^2$
  acts freely on $Y$ because $\sigma$ is aperiodic, and that every
  $\tau\in Y$ also uniformly contains copies of every patch.

  Consider now $w\neq1\in\langle A,B,C\rangle$, and let $\tau$ be a
  translate of $\sigma$ in which $w D$ reads vertically at the
  origin. Then $\tau w$ reads $D$ vertically at the origin, so
  $\tau w\neq\tau$, and therefore $w$ acts non-trivially.
\end{example}

%%%%%%%%%%%%%%%%%%%%%%%%%%%%%%%%%%%%%%%%%%%%%%%%%%%%%%%%%%%%%%%%
\newpage\section{Cellular automata and amenable algebras}\label{ss:linear}
Von Neumann defined\footnote{It seems that von Neumann never published
  his work on cellular automata --- see~\cite{burks:cellularautomata}
  for history of the subject.}  \emph{cellular automata} as creatures
built out of infinitely many finite-state devices arranged on the
nodes of $\Z^2$ or $\Z^3$, each device being capable of interaction
with its immediate neighbours. Algebraically, we consider the natural
generalization to creatures living on the vertices of a Cayley
graph. We shall see that some fundamental properties of the automaton
are characterized by amenability of the underlying graph.

\begin{definition}\label{def:ca}
  Let $G$ be a group.  A finite \emph{cellular automaton} on $G$ is a
  $G$-equivariant continuous map
  $\Theta\colon \Alphabet^G\righttoleftarrow$, where $\Alphabet$, the
  \emph{state set}, is a finite set, and $G$ acts on $\Alphabet^G$ by
  left-translation: $(x g)(h)=x(g h)$ for $x\in \Alphabet^G$ and
  $g,h\in G$. Elements of $\Alphabet^G$ are called
  \emph{configurations}.

  A \emph{linear cellular automaton} is defined similarly, except that
  $\Alphabet$ is rather required to be a finite-dimensional vector space, and
  $\Theta$ is required to be linear.
\end{definition}
Note that usually $G$ is infinite; much of the theory holds trivially
if $G$ is finite. The map $\Theta$ computes the 1-step evolution of
the automaton; its continuity implies that the evolution of a site
depends only on a finite neighbourhood, and its $G$-equivariance
implies that all sites evolve with the same rule.

\begin{lemma}[Lyndon-Curtis-Hedlund]\label{lem:lyndoncurtishedlund}
  A map $\Theta\colon \Alphabet^G\righttoleftarrow$ is a cellular automaton if
  and only if there exists a finite subset $S\Subset G$ and a map
  $\theta\colon \Alphabet^S\to \Alphabet$ such that
  \[\Theta(x)(g)=\theta(s\mapsto x(g s))\]
  for all $x\in \Alphabet^G$. The minimal such $S$ is called the
  \emph{memory set} of $\Theta$.
\end{lemma}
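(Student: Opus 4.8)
The plan is to prove both directions of this Lyndon--Curtis--Hedlund characterization. The ``if'' direction is essentially immediate: given a finite $S\Subset G$ and $\theta\colon\Alphabet^S\to\Alphabet$, I would first check that the formula $\Theta(x)(g)=\theta(s\mapsto x(g s))$ defines a map $\Alphabet^G\righttoleftarrow$ that is $G$-equivariant --- here one simply computes $\Theta(x h)(g)=\theta(s\mapsto (x h)(g s))=\theta(s\mapsto x(h g s))=\Theta(x)(h g)=(\Theta(x) h)(g)$, using the left-action convention $(x g)(h)=x(g h)$. Continuity of $\Theta$ follows because the value of $\Theta(x)$ at any $g$ depends only on the restriction of $x$ to the finite set $g S$, so $\Theta$ is continuous for the product topology on $\Alphabet^G$ (which has basic open sets specified by finitely many coordinates). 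That handles one implication.

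For the ``only if'' direction, suppose $\Theta\colon\Alphabet^G\righttoleftarrow$ is $G$-equivariant and continuous. The key point is to extract a finite ``neighbourhood of dependence'' at the identity. Consider the map $x\mapsto\Theta(x)(1)\in\Alphabet$; it is continuous on the compact space $\Alphabet^G$. Since $\Alphabet$ is finite and discrete, for each $a\in\Alphabet$ the preimage $\{x\mid\Theta(x)(1)=a\}$ is clopen, hence a finite union of cylinders, each depending on only finitely many coordinates; taking the union of all the coordinate sets involved over the finitely many $a\in\Alphabet$ yields a single finite subset $S\Subset G$ such that $\Theta(x)(1)$ depends only on $x\restrict S$. (Alternatively: the cover of $\Alphabet^G$ by the clopen sets $\{x\mid x\restrict S = y\}$, $S$ ranging over finite subsets, refines the partition into fibres of $x\mapsto\Theta(x)(1)$ by compactness.) Define $\theta\colon\Alphabet^S\to\Alphabet$ by $\theta(u)=\Theta(x)(1)$ for any $x$ with $x\restrict S=u$; this is well-defined by the choice of $S$. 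Then for arbitrary $g\in G$ and $x\in\Alphabet^G$, equivariance gives $\Theta(x)(g)=(\Theta(x) g^{-1})(1)$... here I must be careful with the left-action convention: $(\Theta(x) h)(g)=\Theta(x)(h g)$, so $\Theta(x)(g)=\Theta(x)(g\cdot 1)=(\Theta(x) g)(1)=\Theta(x g)(1)=\theta((x g)\restrict S)=\theta(s\mapsto x(g s))$, which is exactly the claimed formula. Minimality of $S$ is then a formal matter: the intersection of all finite subsets $S$ for which such a $\theta$ exists is again such a subset (if $\Theta(x)(1)$ depends only on $x\restrict{S_1}$ and only on $x\restrict{S_2}$, a short argument shows it depends only on $x\restrict{S_1\cap S_2}$ --- glue the common restriction), so there is a unique minimal one, the memory set.

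The main obstacle I anticipate is purely bookkeeping around the left-action convention $(x g)(h)=x(g h)$, which is slightly unusual and makes it easy to get the direction of the memory-set translation backwards; I would state the convention explicitly at the start of the proof and verify the one-line equivariance computation carefully. The genuinely substantive step --- extracting the finite $S$ --- is just compactness of $\Alphabet^G$ together with finiteness of $\Alphabet$, and the argument for minimality (that the family of admissible $S$ is closed under intersection) needs the small lemma that if a function of $x$ factors through both $x\restrict{S_1}$ and $x\restrict{S_2}$ then it factors through $x\restrict{S_1\cap S_2}$, which follows by taking, for configurations $u,v$ agreeing on $S_1\cap S_2$, an intermediate configuration agreeing with $u$ on $S_1$ and with $v$ on $S_2$.
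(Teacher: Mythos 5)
Your proof is correct and takes essentially the same approach as the paper: the "if" direction is the direct continuity/equivariance verification, and the "only if" direction extracts the finite dependence set at the identity by compactness of $\Alphabet^G$ with $\Alphabet$ discrete, then pushes it around by equivariance. The paper compresses all of this into a single sentence, so your version is simply the same argument with the bookkeeping (and the minimality remark) written out.
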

\begin{proof}
  Such a map $\Theta$ is continuous in the product topology if and
  only if $\Theta(x)(1)$ depends only on the restriction of $x$
  to $S$ for some finite $S$.
\end{proof}

A classical example of cellular automaton is Conway's \emph{Game of
  Life}. It is defined by $G=\Z^2$ and
$\Alphabet=\{\textsf{alive},\textsf{dead}\}$, and by the following local rule
$\theta$ as in Lemma~\ref{lem:lyndoncurtishedlund}:
$S=\{-1,0,1\}\times\{-1,0,1\}$, and $\theta(x)$ depends only on
$x(0,0)$ and on the number of alive cells among its eight
neighbours:
\[\theta(x)(0,0)=\begin{cases}\textsf{alive} & \text{ if $x(0,0)$ is alive and two or three of its neighbours are alive},\\
    \textsf{alive}& \text{ if $x(0,0)$ is dead and exactly three of its neighbours are alive},\\
    \textsf{dead}& \text{ in all other cases, from loneliness or overpopulation}.
  \end{cases}
\]

\newcommand\lifeblock[3]{\begin{scope}
    \path[name path=border,ragged border] (0,0) rectangle (#1/2,#2/2);
    \draw[use path=border];
    \clip[use path=border];
    \draw[step=0.5,black,thick,xshift=0.25cm,yshift=0.25cm] (-1,-1) grid (#1/2+1,#2/2+1);
    \foreach \x/\y in {#3} {\fill (\x/2+0.25,\y/2+0.25) rectangle (\x/2+0.75,\y/2+0.75);}
  \end{scope}
}

For example, here is the evolution of a piece of the plane; we
represent \textsf{alive} in black and \textsf{dead} in white:
\[\tikz[baseline=15mm]{\lifeblock{6}{6}{1/1,2/1,3/1,3/2,2/3}}\rightarrow
  \tikz[baseline=15mm]{\lifeblock{6}{6}{1/2,2/1,3/1,3/2,2/0}}\rightarrow
  \tikz[baseline=15mm]{\lifeblock{6}{6}{1/1,3/2,3/1,2/0,3/0}}.
\]
Note that the last configuration is the first one, transformed by
$(x,y)\mapsto(1-y,-x)$, so the pattern moves by a sliding reflection
along the $x+y=0$ direction.

Some properties have been singled out in attempts to understand the
global, long-term behaviour of cellular automata: a cellular automaton
$\Theta$ may
\begin{description}
\item[have ``Gardens of Eden'' (GOE)] if the map $\Theta$ is not
  surjective, the biblical metaphor expressing the notion of paradise
  lost forever. Note that $\Theta(\Alphabet^G)$ is compact, hence closed in
  $\Alphabet^G$, so if $\Theta$ is not surjective then there exists a finite
  subset $F\Subset G$ such that the projection of $\Theta(\Alphabet^G)$ to
  $\Alphabet^F$ is not onto;
\item[have ``Mutually Erasable Patterns'' (MEP)] if $\Theta$ fails in
  a strong way to be injective: there are configurations $x\neq y$
  which nevertheless agree at all but finitely many places, and such
  that $\Theta(x)=\Theta(y)$. The opposite is sometimes called
  \emph{pre-injectivity};
\item[preserve the Bernoulli measure;] open sets of the form
  $\mathcal O_{g,q}=\{x\in \Alphabet^G\mid x(g)=q\}$ are declared to
  have measure $\beta(\mathcal O_{g,q})=1/\#\Alphabet$, and one may
  ask whether $\beta(M)=\beta(\Theta^{-1}(M))$ for every measurable
  $M\subseteq \Alphabet^G$.
\end{description}

For example, it is clear that the Game of Life has Mutually Erasable
Patterns, because of the ``loneliness'' clause:
\[\tikz[baseline=10mm]{\lifeblock{4}{4}{1/1}}\rightarrow
  \tikz[baseline=10mm]{\lifeblock{4}{4}{}}\righttoleftarrow
\]
but it is less clear that there are also Gardens of Eden (there are
some; the smallest known one is specified by $\#F=92$ cells).

Before addressing the question of relating the GOE and MEP properties,
we introduce one more tool: \emph{entropy}. Assume that the group $G$
is amenable, and let $(F_n)$ be a F\o lner net in $G$, which exists by
Lemma~\ref{lem:folnerlim} and Theorem~\ref{thm:folneramen}. For
subsets $X\subseteq \Alphabet^G$ and $S\subseteq G$, we let
$X\restrict S$ denote the projection of $X$ to $\Alphabet^S$. We set
\begin{equation}\label{eq:caentropy}
  h(X)=\liminf_n\frac{\log(\#X\restrict F_n)}{\#F_n}.
\end{equation}

If $X$ is $G$-invariant, then the liminf in~\eqref{eq:caentropy} is a
limit and is independent of the choice of F\o lner net. This follows
from the following more general statement (independence of the F\o
lner net follows from interleaving two F\o lner nets), which we quote
without proof:
\begin{lemma}[Ornstein-Weiss, see~\cite{gromov:topinv1}*{\S1.3.1} and~\cite{krieger:ow}]
  Let $h\colon\mathfrak P_f(G)\to\R$ be subadditive:
  $h(A\cup B)\le h(A)+h(B)$, and $G$-invariant: $h(A g)=h(A)$. Then
  the limit $\lim_{n\to\infty}h(F_n)/\#F_n$ exists for every F\o lner
  net $(F_n)_{n\in\mathscr N}$.\qed
\end{lemma}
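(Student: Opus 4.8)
The plan is to reduce the statement to the classical Fekete-type argument by exploiting the F\o lner property to quasi-tile a large set $F_n$ by translates of a fixed small set, while controlling the overlap. First I would fix notation: set $\bar h=\limsup_n h(F_n)/\#F_n$ and $\underline h=\liminf_m h(F_m)/\#F_m$, and aim to show $\bar h\le\underline h$; the reverse inequality is trivial once one knows a limit exists, and in fact the whole point is that the value obtained is independent of the net. By subadditivity and $G$-invariance, for any $A,B\Subset G$ and any $g\in G$ we have $h(A\cup Bg)\le h(A)+h(B)$, so if $F_n$ can be \emph{almost} covered by $N$ translates $Bg_1,\dots,Bg_N$ of a fixed set $B$ with $N\#B\le(1+\epsilon)\#F_n$ and the leftover $F_n\setminus\bigcup_i Bg_i$ has size $\le\epsilon\#F_n$, then since $h(\{x\})$ is bounded (say by $C$, using $h(\{1\})=:C$ and invariance, together with subadditivity to bound $h$ of a singleton, hence of the leftover by $C\epsilon\#F_n$ once we split it into points), we get
\[
h(F_n)\le N\,h(B)+C\epsilon\#F_n\le (1+\epsilon)\frac{\#F_n}{\#B}h(B)+C\epsilon\#F_n.
\]
Dividing by $\#F_n$ and letting $\epsilon\to0$ (then taking $B=F_m$ for a fixed $m$ with $h(F_m)/\#F_m$ close to $\underline h$) would give $\bar h\le\underline h$.

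The substantive step is therefore the quasi-tiling: given $B\Subset G$ and $\epsilon>0$, every F\o lner set $F$ that is sufficiently invariant can be almost covered by disjoint-enough, in fact (at the cost of a slightly worse constant) genuinely disjoint left-translates of $B$. I would carry this out greedily: pick $g_1$ so that $Bg_1\subseteq F$ (possible for $F$ much more invariant than $B$, since the set of $g$ with $Bg\not\subseteq F$ is within a bounded neighbourhood of $\partial F$, which is small relative to $\#F$), then pick $g_2$ with $Bg_2\subseteq F\setminus Bg_1$, and continue; the process stops when the remaining uncovered part $R$ is too small to contain any translate of $B$ inside $F$, i.e.\ when $R$ is within a $B^{-1}$-neighbourhood of $\partial F$ together with the union of the $Bg_i$. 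A counting/packing estimate — the translates $Bg_i$ are disjoint and contained in $F$, so $N\#B\le\#F$, and the uncovered set is contained in $\{g\in F: Bg\cap(\bigcup Bg_i)\ne\emptyset\}\cup(\text{near }\partial F)$ — shows that the uncovered fraction is $O(\epsilon)$ once $F$ is $(B^{-1}B,\epsilon)$-invariant. This is the standard Ornstein--Weiss packing argument; I would state it as an internal lemma.

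The hard part, as usual with Ornstein--Weiss, is bookkeeping the several smallness parameters (invariance of $F$ relative to $B$, size of the boundary layer, size of the uncovered remainder) so that they can be sent to zero in the right order without circularity, and handling the fact that $h$ is only defined on \emph{finite} subsets so that all manipulations stay within $\mathfrak P_f(G)$. One must also be a little careful that $h$ of a singleton is genuinely bounded: from subadditivity $h(\{1,g\})\le 2C$ etc., and $h(\emptyset)\le h(\emptyset)+h(\emptyset)$ forces $h(\emptyset)\le 0$, so splitting any finite set into its points gives $h(F)\le C\#F$ with $C=\max(0,h(\{1\}))$, which is what legitimizes the leftover estimate. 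Finally, independence of the F\o lner net follows formally: given two nets $(F_n),(F'_m)$, the inequality $\limsup_n h(F_n)/\#F_n\le\liminf_m h(F'_m)/\#F'_m$ proved above (with $B$ drawn from the primed net) and its mirror image force both limits to exist and coincide; equivalently, one interleaves the two nets into a single net and applies the one-net statement. I would close by remarking that in the cellular-automaton application $h(A)=\log\#(X\restrict A)$ is subadditive because $X\restrict(A\cup B)$ injects into $X\restrict A\times X\restrict B$, and $G$-invariant because $X$ is $G$-invariant, so the lemma applies and \eqref{eq:caentropy} is a genuine limit independent of $(F_n)$.
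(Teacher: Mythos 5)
The paper quotes this lemma without proof, so there is no internal argument to compare against; I assess your plan on its own. Your skeleton is the right one: fix $B=F_m$ from the net with $h(B)/\#B$ close to $\underline h$, bound $h(F)\le N\,h(B)+C\epsilon\#F$ with $N\approx\#F/\#B$ by covering $F$ with translates of $B$ inside $F$ plus a small remainder, control the remainder by $h(E)\le C\#E$ (which you correctly extract from subadditivity and invariance), divide by $\#F$, and interleave nets. But the quasi-tiling step --- the crux of the whole lemma --- has a genuine gap as you have set it up. Greedily packing \emph{disjoint} translates of a \emph{single} shape $B$ into a very invariant $F$ does not leave a small remainder. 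Take $G=\Z$, $B=\{0,1\}$, $F=\{1,\dots,3n\}$: the greedy process may pick $B+1=\{1,2\}$, $B+4=\{4,5\}$, $B+7=\{7,8\}$, and so on, leaving the remainder $R=\{3,6,\dots,3n\}$. No translate $\{m,m+1\}$ fits in $R$ since two consecutive integers cannot both be multiples of $3$, so greedy halts with $\#R=\#F/3$, no matter how large $n$ is. Your inclusion of the uncovered set in a boundary layer together with $\{g\in F: Bg\cap\bigcup_i Bg_i\neq\emptyset\}$ is correct but not useful: the second set is exactly where $R$ lives, and its size is of order $\#(B^{-1}B)\cdot N\approx\#F\cdot\#(B^{-1}B)/\#B$, which is not $O(\epsilon\#F)$.

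This is precisely what makes Ornstein--Weiss a theorem rather than an observation, and the standard repair requires a \emph{hierarchy} of shapes $T_1\subseteq T_2\subseteq\cdots\subseteq T_k$ with wildly separated invariance scales, together with a relaxation from disjointness to $\epsilon$-disjointness (each new tile may overlap the union of the earlier ones by at most an $\epsilon$-fraction of its size). A single shape only yields an $\epsilon$-disjoint packing covering a definite fraction of $F$ bounded away from $1$; tiling the leftover with $T_{k-1}$, then its leftover with $T_{k-2}$, and so on, shrinks the uncovered fraction geometrically, below $\epsilon$ after roughly $\epsilon^{-1}\log\epsilon^{-1}$ scales. The $\epsilon$-disjointness keeps the total tile mass $\sum\#T_{i(j)}\le(1-\epsilon)^{-1}\#F$, and choosing every $T_i$ from a tail of the F\o lner net keeps each $h(T_i)/\#T_i\le\underline h+\epsilon'$; with that quasi-tiling lemma in hand, the rest of your plan (the $h(E)\le C\#E$ bound for the remainder, the division by $\#F$, and the net-interleaving at the end) goes through verbatim.
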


\noindent The following is the ``Second Principle of Thermodynamics'':
\begin{lemma}\label{lem:entropydecreases}
  For every cellular automaton $\Theta$ and every $G$-invariant
  $X\subseteq \Alphabet^G$ we have $h(\Theta(X))\le h(X)$.
\end{lemma}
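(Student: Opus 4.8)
The plan is to use that $\Theta$ has a finite memory set together with the fact that $h$ is built from a subadditive quantity: enlarging a F\o lner set by the (fixed, finite) memory set changes the relevant cardinality only by a subexponential factor, so it is invisible to $h$.

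First I would invoke the Lyndon-Curtis-Hedlund lemma (Lemma~\ref{lem:lyndoncurtishedlund}) to fix a finite memory set $S\Subset G$ with $\Theta(x)(g)=\theta(s\mapsto x(g s))$ for all $x\in\Alphabet^G,g\in G$. Then, for any $F\Subset G$, the value $\Theta(x)(g)$ depends only on $x\restrict g S$, hence $\Theta(x)\restrict F$ depends only on $x\restrict F S$, where $F S=\{g s\mid g\in F,s\in S\}$. Therefore the restriction-of-$\Theta$ map factors through $x\restrict F S$, giving
\[\#\big(\Theta(X)\restrict F\big)\le\#\big(X\restrict F S\big).\]
Next I would use the evident subadditivity of $T\mapsto\log\#(X\restrict T)$: since $x\restrict(A\cup B)\mapsto(x\restrict A,x\restrict B)$ is injective, $\#(X\restrict(A\cup B))\le\#(X\restrict A)\cdot\#(X\restrict B)$; moreover $\#(X\restrict T)\le(\#\Alphabet)^{\#T}$ always. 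Applying these with $A=F$ and $B=F S\setminus F$ yields
\[\log\#\big(\Theta(X)\restrict F\big)\le\log\#\big(X\restrict F\big)+\#\big(F S\setminus F\big)\,\log\#\Alphabet.\]

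Now I would specialize to the F\o lner net $(F_n)_{n\in\mathscr N}$ appearing in~\eqref{eq:caentropy}. Writing $S=\{s_1,\dots,s_k\}$, F\o lner's condition (Lemma~\ref{lem:folnerlim}) gives $\#(F_n S\setminus F_n)\le\sum_{i=1}^{k}\#(F_n s_i\setminus F_n)=o(\#F_n)$. Dividing the last displayed inequality by $\#F_n$, we get
\[\frac{\log\#(\Theta(X)\restrict F_n)}{\#F_n}\le\frac{\log\#(X\restrict F_n)}{\#F_n}+\frac{\#(F_n S\setminus F_n)}{\#F_n}\,\log\#\Alphabet,\]
where the last summand tends to $0$. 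Taking $\liminf_n$ and using that adding a term converging to $0$ does not change a $\liminf$, we conclude $h(\Theta(X))\le h(X)$. (Here $\Theta(X)$ is again $G$-invariant, $\Theta$ being $G$-equivariant, so $h(\Theta(X))$ is a genuine limit, matching $h(X)$.)

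The argument is essentially routine; the only delicate point is the bookkeeping with the memory set — one must note that $\Theta(x)(g)$ depends on $x$ on the \emph{right}-translate $g S$, so the enlargement of $F$ to be controlled is $F S=\bigcup_{s\in S}F s$, and it is precisely the sets $F_n s\setminus F_n$ that F\o lner's condition makes small relative to $\#F_n$.
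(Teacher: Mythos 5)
Your proof is correct and takes essentially the same approach as the paper: both exploit the finite memory set $S$ to compare $\Theta(X)\restrict F$ with $X\restrict FS$ and then use F\o lner's condition to kill the $(\#\Alphabet)^{\#(FS\setminus F)}$ correction. The only cosmetic difference is that you enlarge $F$ to $FS$ on the source side while the paper takes $E$ with $ES\subseteq F$ and inflates on the target side; the two are equivalent.
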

\begin{proof}
  Let $S\Subset G$ be a memory set for $G$. For every finite
  $F\Subset G$, consider $E\subset G$ such that $E S\subseteq F$; then
  $\Theta(x)\restrict E$ depends only on $x\restrict F$. Therefore,
  $\#(\Theta(X)\restrict E)\le\#(X\restrict F)$, so
  $\#(\Theta(X)\restrict F)\le\#(X\restrict
  F)\#\Alphabet^{\#F-\#E}$. Take now $F=F_n S$ and $E=F_n$ for a net
  of F\o lner sets, and apply the definition
  from~\eqref{eq:caentropy}.
\end{proof}

Finally, given a measure $\nu$ on $\Alphabet^G$, we may define a
\emph{measured entropy} as follows: for $S\subseteq G$ and
$y\in \Alphabet^S$, denote by $\mathcal O_y$ the open set
$\{x\in \Alphabet^G\mid x\restrict S=y\restrict S\}$; and for
$X\subseteq \Alphabet^G$ set
\[h_\nu(X)=\liminf\frac{-\sum_{y\in X\restrict F_n}\nu(\mathcal O_y)\log\nu(\mathcal O_y)}{\#F_n}.
\]
Note that $\beta(\mathcal O_y)=1/\#\Alphabet^{\#S}$ if
$y\in \Alphabet^S$, so the measured entropy coincides
with~\eqref{eq:caentropy} if $\nu=\beta$.

We are ready to state the main result, called the ``Gardens of Eden
theorem''. It was first proven for $G=\Z^d$ by
Moore~\cite{moore:ca}*{the $(1)\Rightarrow(2)$ direction},
Myhill~\cite{myhill:ca}*{the $(2)\Rightarrow(1)$ direction}, and
Hedlund~\cite{hedlund:endomorphisms}*{the $(1)\Leftrightarrow(3)$
  equivalence}:
\begin{theorem}[\cites{ceccherini-m-s:ca,meyerovitch:finiteentropy}]\label{thm:ceccherini+}
  Let $G$ be an amenable group, and let $\Theta$ be a cellular
  automaton. Then the following are equivalent:
  \begin{enumerate}
  \item $\Theta$ has Gardens of Eden;
  \item $\Theta$ has Mutually Erasable Patterns;
  \item $\Theta$ does not preserve Bernoulli measure $\beta$;
  \item $h(\Theta(\Alphabet^G))<\log\#\Alphabet$.
  \end{enumerate}
\end{theorem}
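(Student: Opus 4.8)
The strategy is to establish the circle of equivalences
$(1)\Rightarrow(4)\Rightarrow(3)\Rightarrow(2)\Rightarrow(1)$, using
entropy $h$ as the central bookkeeping device; amenability of $G$
enters precisely through the existence of a F\o lner net
(Lemma~\ref{lem:folnerlim}, Theorem~\ref{thm:folneramen}) and through
the Ornstein--Weiss lemma, which makes $h$ a well-defined limit on
$G$-invariant sets. Throughout I fix a memory set $S\Subset G$ for
$\Theta$ (Lemma~\ref{lem:lyndoncurtishedlund}) and a F\o lner net
$(F_n)$.

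For $(1)\Rightarrow(4)$: if $\Theta$ is not surjective, then since
$\Theta(\Alphabet^G)$ is closed and $G$-invariant, there is a finite
$E\Subset G$ and a pattern $p\in\Alphabet^E$ not appearing in any
image configuration. By the F\o lner property, $F_n$ contains
$\asymp\#F_n/\#E$ disjoint translates $E g_i\subseteq F_n$ (a standard
packing argument: a maximal collection of translates whose
$E S^{-1}E$-neighbourhoods are disjoint still fills a definite
fraction, using bounded overlap), and the projection
$\Theta(\Alphabet^G)\restrict F_n$ avoids the pattern $p$ on every
such translate. Hence
$\#(\Theta(\Alphabet^G)\restrict F_n)\le(\#\Alphabet^{\#E}-1)^{cn}(\#\Alphabet)^{\#F_n-c'n}$
for constants with $c>0$, giving
$h(\Theta(\Alphabet^G))<\log\#\Alphabet$. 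The implication
$(4)\Rightarrow(3)$ is short: since $\beta$-measured entropy of a
subshift equals $h$ of its support (because under $\beta$ all cylinders
of a given size are equiprobable), $h(\Theta(\Alphabet^G))<\log\#\Alphabet=h_\beta(\Alphabet^G)$ forces
$\beta(\Theta(\Alphabet^G))<1$; but if $\Theta$ preserved $\beta$ then
$\beta(\Theta(\Alphabet^G))=\beta(\Theta^{-1}(\Theta(\Alphabet^G)))=\beta(\Alphabet^G)=1$, a contradiction.

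For $(3)\Rightarrow(2)$, the contrapositive: assume $\Theta$ is
pre-injective (no MEP). Two configurations agreeing off a finite set
$A$ and with equal image must be equal; I claim this forces, for every
$F\Subset G$, that the number of $F$-patterns occurring in
$\Theta(\Alphabet^G)$ is at least $\#\Alphabet^{\#F}$ up to a bounded
boundary correction, i.e.\ $h(\Theta(\Alphabet^G))=\log\#\Alphabet$,
and moreover that $\Theta$ pushes $\beta$ to a measure of full
entropy which, being shift-invariant of full entropy on $\Alphabet^G$,
must be $\beta$ itself. Concretely: pre-injectivity means the map
$\Alphabet^{F_n S}\to\Alphabet^{F_n}$, $x\restrict{F_n S}\mapsto\Theta(x)\restrict{F_n}$
(well-defined up to the boundary) is ``almost injective'' in the sense
that its fibres have size $\le\#\Alphabet^{c\#\partial F_n}$; counting
then gives $\#(\Theta(\Alphabet^G)\restrict{F_n})\ge\#\Alphabet^{\#F_n}/\#\Alphabet^{c\#\partial F_n}$,
and the F\o lner condition kills the boundary term. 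The same counting,
applied to the pushforward measure $\Theta_*\beta$ (each $F_n$-cylinder
has mass $\le\#\Alphabet^{c\#\partial F_n}\cdot\#\Alphabet^{-\#F_n}$,
and there are enough of them), yields $h_{\Theta_*\beta}(\Alphabet^G)\ge\log\#\Alphabet$, hence $=\log\#\Alphabet$; by the variational/uniqueness
statement for the unique measure of maximal entropy on the full shift,
$\Theta_*\beta=\beta$, i.e.\ $\Theta$ preserves $\beta$.

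Finally $(2)\Rightarrow(1)$: if $\Theta$ has MEP, fix $x\neq y$ with
equal image, agreeing outside a finite set $A$. For a large F\o lner
$F_n$ one can independently, on a packing of $\asymp\#F_n/\#A$
disjoint translates $A g_i$ of $A$ inside $F_n$ (with separated
$S$-neighbourhoods, using the F\o lner property), switch each block
between its ``$x$'' and ``$y$'' values, producing $2^{cn}$ distinct
configurations all with the same image on a correspondingly large
region; hence the image alphabet $\Theta(\Alphabet^G)\restrict{F_n'}$
for a slightly smaller $F_n'$ has cardinality $\le\#\Alphabet^{\#F_n'}/2^{cn}$,
so $h(\Theta(\Alphabet^G))<\log\#\Alphabet$, and then
$(4)\Rightarrow(1)$ (the easy half of the equivalence
$(1)\Leftrightarrow(4)$: $h(\Theta(\Alphabet^G))=\log\#\Alphabet$
whenever $\Theta$ is onto) gives a Garden of Eden. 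Since we will have
proved $(1)\Leftrightarrow(4)$ along the way (the forward direction in
the first paragraph, the reverse just noted), this closes the loop.

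\textbf{Main obstacle.} The delicate point throughout is the
quantitative ``boundary vs.\ bulk'' accounting: turning pre-injectivity
or MEP into the exact statement that fibres of the local map
$\Alphabet^{F_n S}\to\Alphabet^{F_n}$ have size controlled by
$\#\Alphabet^{O(\#\partial F_n)}$, and then invoking the F\o lner
condition $\#\partial F_n=o(\#F_n)$ to make these corrections
negligible in the exponential rate. This is where amenability is
genuinely used, and where the argument of Ceccherini-Silberstein,
Mach\`i and Scarabotti (and Meyerovitch's entropy refinement for the
measure-preserving clause) has to be followed carefully; the
packing lemma for translates inside F\o lner sets, while standard,
also needs to be stated cleanly since it is used in three of the four
implications.
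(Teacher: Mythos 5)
Your plan is correct and recovers all four equivalences; the overall strategy (entropy as bookkeeping device, F\o lner packing, uniqueness of the measure of maximal entropy) matches the paper's, but the route for the Bernoulli-measure clause is genuinely different. The paper treats the four conditions pairwise: $(1)\Leftrightarrow(4)$ (translation-invariant forbidden patterns vs.\ entropy deficit), $(2)\Leftrightarrow(4)$ (MEP vs.\ entropy deficit, via a substitution/pigeonhole argument on the F\o lner sets), and $(1)\Leftrightarrow(3)$. For $(3)\Rightarrow(1)$, the paper proceeds by contrapositive from surjectivity: it forms the weak*-compact convex set $K=\{\nu\mid\Theta_*\nu=\beta\}$ (non-empty because $\Theta$ is onto), invokes amenability to produce a $G$-fixed $\nu\in K$ via Theorem~\ref{thm:fixedpoint}, then uses the factor-map entropy inequality plus intrinsic ergodicity of the full shift to conclude $\nu=\beta$. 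You instead prove $(3)\Rightarrow(2)$ by contrapositive from pre-injectivity, with a direct cylinder-counting argument bounding $\Theta_*\beta(\mathcal O_y)$ from above, yielding $h_{\Theta_*\beta}=\log\#\Alphabet$ and then $\Theta_*\beta=\beta$ by uniqueness. Your route is slightly more elementary (no explicit fixed-point argument), while the paper's is shorter once one is willing to cite the fixed-point property and is self-contained in the sense that it doesn't reprove Myhill's direction twice.

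One technical point you should clean up: the fibre bound on the local map $\Alphabet^{F_nS}\to\Alphabet^{F_n}$ extracted from pre-injectivity is not directly $\#\Alphabet^{c\#\partial F_n}$. What you actually get is this: fix an extension $\xi\in\Alphabet^{G\setminus F_nS}$; two inputs $w,w'\in\Alphabet^{F_nS}$ with $\Theta(w\cup\xi)\restrict{F_nSS^{-1}}=\Theta(w'\cup\xi)\restrict{F_nSS^{-1}}$ have all of $\Theta$ equal (since outputs agree off $F_nSS^{-1}$ automatically), so $w=w'$ by pre-injectivity. Hence $w\mapsto\Theta(w\cup\xi)\restrict{F_nSS^{-1}}$ is injective, and the fibre of the further projection to $\Alphabet^{F_n}$ is at most $\#\Alphabet^{\#(F_nSS^{-1}\setminus F_n)}$. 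The relevant quantity is thus $\#(F_nSS^{-1}\setminus F_n)$, which is $o(\#F_n)$ along any F\o lner net by Lemma~\ref{lem:folnerfg}, but is controlled by the boundary only up to a constant depending on $S$. This suffices for the rate computation, so the argument closes; but as written the bound is imprecise.
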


\begin{remark}
  The same theorem holds for linear cellular automata (except that I
  do not know an analogue of Bernoulli measure), with the entropy
  replaced in the last statement by \emph{mean dimension}:
  \[\operatorname{mdim}(X)=\liminf_n\frac{\dim(\#X\restrict F_n)}{\#F_n}.\]
\end{remark}

\begin{proof}
  Throughout the proof, we let $S$ denote the memory set of $\Theta$.

  $(1)\Rightarrow(4)$ If there exists a GOE, then there exists
  $F\Subset G$ with
  $\Theta(\Alphabet^G)\restrict F\neq \Alphabet^F$, so
  \[h(\Theta(\Alphabet^F))\le\frac{\log\#\Theta(\Alphabet^G)\restrict F}{\#F}<\log\#\Alphabet.\]

  $(4)\Rightarrow(1)$ If $h(\Theta(\Alphabet^G))<\log\#\Alphabet$,
  then there exists $F\Subset G$ with
  $\Theta(\Alphabet^G)\restrict F\neq \Alphabet^F$, and a GOE exists
  in $\Alphabet^F\setminus\Theta(\Alphabet^G)\restrict F$.

  $(2)\Rightarrow(4)$ If $y\neq z$ are MEP, which differ on $F$ and
  agree elsewhere, set $E=F S$ and let $T\subset G$ be maximal such
  that $E t_1\cap E t_2=\emptyset$ for all $t_1\neq t_2\in T$; note
  that $T$ intersects every translate of $E^{-1}E$. Define
  \[Z=\{x\in \Alphabet^G\mid x\restrict E t\neq y\restrict E t\text{ for all }t\in
    T\},
  \]
  and compute
  $h(\Theta(\Alphabet^G))=h(\Theta(Z))\le h(Z)<\log\#\Alphabet$; the
  first equality follows since given in $x\in\Theta(\Alphabet^G)$, say
  $x=\Theta(w)$, one may replace in $w$ every occurrence of
  $y\restrict E t$ by $z\restrict E t$ so as to obtain a
  $y\restrict E t$-free configuration, which therefore belongs to $Z$,
  and has the same image as $x$ under $\Theta$; the second inequality
  follows from Lemma~\ref{lem:entropydecreases}; and the last
  inequality because there are forbidden patterns $y\restrict E t$ in
  $Z$, with ``density'' at least $1/\#(E^{-1}E)$.

  $(4)\Rightarrow(2)$ If $h(\Theta(\Alphabet^G))<\log\#\Alphabet$,
  there exists $F_n$ with
  $\log\#(\Theta(\Alphabet^G)\restrict F_n S)/\#F_n<\log\#A$, because
  $\#F_n S$ may be made arbitrarily close to $\#F_n$ for $n$ large
  enough. Therefore, by the pigeonhole principle, there exist
  $y\neq z\in \Alphabet^G$ with
  $y\restrict (G\setminus F_n)=z\restrict (G\setminus F_n)$ and
  $\Theta(y)=\Theta(z)$.

  $(1)\Rightarrow(3)$ This is always true: if $\Theta$ has GOE, then
  there exists a non-empty open set $\mathcal U$ in
  $\Alphabet^G\setminus\Theta(\Alphabet^G)$; then $\beta(\mathcal U)\neq0$
  while $\beta(\Theta^{-1}(\mathcal U))=0$.

  $(3)\Rightarrow(1)$ Define
  \[K=\{\nu\text{ probability measure on
    }\Alphabet^G\mid\beta=\Theta_*\nu\}.
  \]
  Note that $K$ is convex and compact, no admits a $G$-fixed point
  because $G$ is amenable. Consider $\nu\in K^G$. Then
  $\phi\colon(\Alphabet^G,\nu)\to(\Alphabet^G,\beta)$ is a factor map
  because $\Theta$ is onto, so
  $h_\nu(\Alphabet^G)\ge h_\beta(\Alphabet^G)$. However, $\beta$ is
  the unique measure of maximal entropy\footnote{One says that the
    $G$-action is \emph{intrinsically ergodic}.}, so $\nu=\beta$ and
  therefore $\beta=\Theta_*\beta$.
\end{proof}

It turns out that Theorem~\ref{thm:ceccherini+} is essentially
optimal, and yields characterizations of amenable groups:
\begin{theorem}[\cites{bartholdi:moore,bartholdi:myhill}]\label{thm:mmconverse}
  Let $G$ be a non-amenable group. Then there exist
  \begin{enumerate}
  \item cellular automata (ad lib linear) that admit Mutually Erasable
    Patterns but no Gardens of Eden;
  \item cellular automata (ad lib linear) that admit Gardens of Eden
    but no Mutually Erasable Patterns;
  \item cellular automata that do not preserve Bernoulli measure but
    have no Gardens of Eden.
  \end{enumerate}
\end{theorem}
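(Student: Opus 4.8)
The plan is to prove Theorem~\ref{thm:mmconverse} by reversing the implications in the Gardens of Eden Theorem~\ref{thm:ceccherini+}, exploiting the fact that non-amenability gives us a \emph{paradoxical decomposition} of $G$ (Theorem~\ref{thm:pd}), equivalently a $G$-wobble $\phi\colon G\to G$ with $\#\phi^{-1}(g)=2$ for all $g$, or even a free $F_2$-action on $G$ by $G$-wobbles (Theorem~\ref{thm:whyte}). The key idea is that a wobble with the ``doubling'' property is exactly what lets one build a cellular automaton that is surjective but not pre-injective, or pre-injective but not surjective --- the amenable obstruction to such automata disappears. Concretely, since each wobble is implemented by finitely many group elements $g_1,\dots,g_n$ with $\phi(x)=x g_i$ on a partition piece, one can encode $\phi$ into a local rule with memory set $S=\{g_1,\dots,g_n,1\}$ (or its inverses, so that both $\phi$ and $\phi^{-1}$ on the image are local).

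For part (1), I would take $\Alphabet=\{0,1\}$ and use a $2:1$ wobble $\phi\colon G\to G$ coming from a paradoxical decomposition. The automaton $\Theta$ should ``collapse'' configurations along the two-to-one map: roughly, $\Theta(x)(g)$ is obtained by looking at the two preimages $g',g''$ of $g$ under $\phi$ and combining $x(g'),x(g'')$ by XOR (for the linear version, literally $x(g')+x(g'')$ over $\F_2$, or over a field in the linear-vector-space setting). Because $\phi$ is surjective, $\Theta$ will be surjective (no GOE): given a target configuration, define a preimage by making arbitrary choices at one preimage of each point and solving for the other --- this is exactly where surjectivity of the wobble is used, and one must check the choices can be made consistently using that $\phi$ is a genuine wobble (finitely many group elements involved, so locality of the inverse construction holds on each piece). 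But $\Theta$ has MEP: toggling $x$ at both preimages of a single point $g$ changes nothing, giving two configurations differing in finitely many places with the same image. The linear case is cleaner since ``toggle'' is just adding a finitely-supported function in the kernel.

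For part (2), I would dualize: build a cellular automaton that is pre-injective but not surjective, again using the paradoxical decomposition, now to ``expand'' rather than collapse --- the automaton reads two cells and writes a pair, e.g.\ $\Alphabet=\{0,1\}^2$ or $\{0,1,2,3\}$, with a local rule transporting data along the wobble so that the image lies in a proper closed shift-invariant subset (a GOE exists because the paradoxical structure forces a local constraint: only $2^{\#F}$ of the roughly $4^{\#F}$ patterns on a ball $F$ are attained, after controlling boundary effects via F\o lner \emph{failure}, i.e.\ $\#(FS)\ge(1+\epsilon)\#F$). Pre-injectivity must be checked directly: two configurations with the same image and agreeing off a finite set must be equal, which follows from injectivity of the transport along the wobble. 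For part (3), I would use a non-Bernoulli-preserving but surjective automaton, constructed from a $2:1$ wobble by a rule that is surjective (as in part (1)) but pushes Bernoulli measure to a non-Bernoulli measure --- the collapsing XOR map already has this defect when combined with a bias, or one can adjoin to the part-(1) automaton an extra coordinate that records parity information in a measure-distorting but still-surjective way; here $(1)\Rightarrow(3)$ in Theorem~\ref{thm:ceccherini+} does not help since there is no GOE, so one computes the pushforward measure explicitly on cylinders and exhibits a set with $\beta(M)\ne\beta(\Theta^{-1}M)$.

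The main obstacle I anticipate is the \textbf{boundary bookkeeping} in part (2): showing the image is a \emph{proper} closed subshift (hence has a GOE) requires a genuine counting estimate, and one needs non-amenability in the quantitative form $\inf_F \#\partial F/\#F>0$ (Definition~\ref{defn:folnergraph} failing) to guarantee that the constraint imposed by the wobble on finite balls is not washed out by boundary slack --- essentially one shows $\#(\Theta(\Alphabet^G)\restrict F) \le \#\Alphabet^{\#F}\cdot c^{\#\partial F}$ for some $c<1$ (or a similar entropy-deficit bound) and invokes $\#\partial F \ge \epsilon\#F$. A secondary subtlety is making all the wobble-based rules genuinely \emph{local}: a wobble $\phi$ is only piecewise a group translation, so $\Theta$ defined via ``go to $\phi(g)$'' must have its finitely many pieces absorbed into a single finite memory set $S$ containing all the relevant $g_i^{\pm1}$, and one should note that whether $g$ belongs to a given piece $Y_i$ is itself a property determined by finitely much of the identity of $g$ only if we are careful --- in fact it is cleanest to push the wobble data into the \emph{alphabet} (store at each cell which piece it belongs to, as a fixed background configuration is not $G$-invariant, so instead store the relevant local pattern) or, better, to work with the free $F_2$-action from Theorem~\ref{thm:whyte} where the orbit structure is uniform and the ``two preimages'' are read off by fixed generators. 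I would adopt the latter framing to keep locality transparent, and spend most of the write-up on the entropy/counting estimate for (2) and the explicit measure computation for (3).
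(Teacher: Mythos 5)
Your construction does not produce cellular automata, and this is a fatal gap, not a secondary subtlety. A cellular automaton must be $G$-equivariant, so by Lemma~\ref{lem:lyndoncurtishedlund} the rule at $g$ may only depend on the pattern $x\restrict{gS}$ via a map $\theta$ that is the \emph{same} at every $g$. Your rule ``read the two preimages of $g$ under the wobble $\phi$ and combine them'' fails this: the preimages of $g$ are $\{gs_i^{-1}\mid gs_i^{-1}\in Y_i\}$, and which $i$'s occur depends on where the translates $gs_i^{-1}$ sit in the partition $\{Y_i\}$ — that is, on the absolute identity of $g$, not on any local pattern. Concretely, $\Theta(xk)(gk)=\Theta(x)(g)$ for all $k$ would force $\phi(hk)=\phi(h)k$, i.e.\ $\phi$ is a single left translation and hence bijective, contradicting $\#\phi^{-1}(g)=2$. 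Neither of your two fixes closes this: storing the piece label in the alphabet replaces a fixed background by a genuine degree of freedom whose ``validity'' is not a subshift-of-finite-type condition and is not preserved by the dynamics; and passing to the free $F_2$-action of Theorem~\ref{thm:whyte} does not help because $F_2$ acts by $G$-\emph{wobbles}, which are exactly as non-equivariant as $\phi$ was.

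The paper takes a genuinely different route precisely to sidestep this. Rather than building a local rule from a wobble, it works inside the group algebra, where equivariance is automatic: Proposition~\ref{prop:extmat} constructs an $n\times(n-1)$ matrix $\mathbf M=\sum_{s\in S}\alpha_s s$ over $\mathbb K G$ (for a sufficiently large finite extension $\mathbb K$ of $\Bbbk$) that is injective on $(\mathbb K G)^n$. The non-amenability enters quantitatively via the iterated F\o lner failure $\#(FS)\ge(1+\log\#S)\#F$, a combinatorial overlap Lemma~\ref{lem:overlaps} built from cycle counts in $\Sym(n)$, and a Zariski-genericity argument to choose the $\alpha_s$ so that all the relevant stacked minors have full rank. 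Injectivity of $\mathbf M$ gives part~(2); part~(1) is then dual, via Proposition~\ref{prop:tointon}, by passing to the adjoint $\mathbf M^*$; and part~(3) follows by pushing a GOE cylinder for $\mathbf M$ through $\mathbf M^*$. If you want to salvage the wobble intuition, you would essentially have to rediscover this algebraic construction: the wobble picture explains \emph{why} a pre-injective non-surjective automaton should exist, but any actual such automaton must encode the paradoxical information into a single $G$-invariant local rule, which is what the averaging over all $s\in S$ with carefully chosen $\alpha_s$ accomplishes.
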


In fact, we shall prove Theorem~\ref{thm:mmconverse} for finite
fields, answering at the same time the classical and linear
questions. Let $\Theta$ be a linear cellular automaton; then
$\Alphabet=\Bbbk^n$ for some field $\Bbbk$ and some integer $n$, and
there exists an $n\times n$ matrix $\mathbf M$ over $\Bbbk G$ such
that $\Theta(x)=x \mathbf M$ for all $x\in \Alphabet^G$. Conversely,
every such matrix defines a linear cellular automaton.

The ring $\Bbbk G$ admits an anti-involution $*$, defined on its basis
$G$ by $g^*=g^{-1}$ and extended by linearity. This involution extends
to an anti-involution on square matrices by
$(\mathbf M^*)_{i,j}=(\mathbf M_{j,i})^*$, and $\mathbf M^*$ is called
the \emph{adjoint} of $\mathbf M$.

We put on $\Alphabet$ the natural scalar product
$\langle x,y\rangle=\sum_{i=1}^n x_i y_i$. Consider the vector space
$\Alphabet G=\bigoplus_{g\in G}\Alphabet$. Then $\Alphabet^G$ may be
naturally identified with the dual of $\Alphabet G$, under the
non-degenerate pairing
$\langle x,y\rangle=\sum_{g\in G}\langle x(g),y(g)\rangle$ for
$x\in \Alphabet G$ and $y\in \Alphabet^G$.
\begin{exercise}[*]
  Prove that $\mathbf M^*$ is the adjoint with respect to this pairing; namely
  $\langle x \mathbf M,y\rangle=\langle x,y \mathbf M^*\rangle$ for all
  $x\in \Alphabet G,y\in \Alphabet^G$.
\end{exercise}

We put a topology on $\Alphabet^G$ by declaring that, for every finite
$S\Subset G$ and every vector space $V\le\Alphabet^S$, the subset
$\{x\in\Alphabet^G\mid x\restrict S\in V\}$ is closed. With this
topology, $\Alphabet^G$ is compact (but not Hausdorff). Nevertheless,
\begin{lemma}
  If $\Theta$ is a cellular automaton then $\Theta(\Alphabet^G)$ is
  closed.
\end{lemma}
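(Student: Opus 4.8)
Proof plan. The plan is to show that $W:=\Theta(\Alphabet^G)$ coincides with
$\overline W:=\bigcap_{S\Subset G}\{x\in\Alphabet^G\mid x\restrict S\in\pi_S(W)\}$,
where $\pi_S\colon\Alphabet^G\to\Alphabet^S$ denotes restriction of configurations. Since $\Theta$ is $\Bbbk$-linear, $W$ is a $\Bbbk$-subspace of $\Alphabet^G$, so each $\pi_S(W)$ is a $\Bbbk$-subspace of the finite-dimensional space $\Alphabet^S$; hence each $\{x\mid x\restrict S\in\pi_S(W)\}$ is one of the sets declared closed, and $\overline W$, an intersection of closed sets, is closed. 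As $W\subseteq\overline W$ trivially, everything reduces to proving $\overline W\subseteq W$, i.e.\ to showing that if all finite-window restrictions of $x$ lie in the corresponding image, then $x$ is actually in the image of $\Theta$.

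First I would record, using the representation $\Theta(y)=y\mathbf M$ with $\mathbf M$ a matrix over $\Bbbk G$, that ``$\Theta(y)=x$'' is a system of linear equations $\{\psi_i(y)=c_i\mid i\in I\}$ with $I=G\times\{1,\dots,n\}$, each $\psi_i$ a $\Bbbk$-linear form on $\Alphabet^G$ depending on only finitely many coordinates of $y$ (because the entries of $\mathbf M$ are finitely supported in $\Bbbk G$). Via the nondegenerate pairing of $\Alphabet^G$ with $\Alphabet G=\Alphabet^{(G)}$ such a form is $\psi_i=\langle v_i,-\rangle$ for some $v_i\in\Alphabet^{(G)}$, so the system becomes: find $y\in\Alphabet^G=(\Alphabet^{(G)})^*$ with $\langle v_i,y\rangle=c_i$ for all $i$. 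The key step is the elementary Farkas-type lemma for linear equations over a field: this system is solvable if and only if for every finitely supported family $(\lambda_i)$ with $\sum_i\lambda_i v_i=0$ one has $\sum_i\lambda_i c_i=0$; in particular it is solvable as soon as every finite subsystem is solvable. For the nontrivial implication, the stated compatibility condition makes $v_i\mapsto c_i$ extend to a well-defined $\Bbbk$-linear functional on $\operatorname{span}\{v_i\mid i\in I\}\subseteq\Alphabet^{(G)}$; extending it to all of $\Alphabet^{(G)}$ along a complementary subspace yields the desired $y\in(\Alphabet^{(G)})^*=\Alphabet^G$. The converse direction is immediate: from a finitely supported $(\lambda_i)$ violating the condition, the subsystem indexed by $\operatorname{supp}(\lambda)$ is unsolvable.

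To conclude, suppose $x\in\overline W$. For each finite $S\Subset G$, the statement $x\restrict S\in\pi_S(W)=\pi_S(\Theta(\Alphabet^G))$ is exactly the assertion that the subsystem of $\{\Theta(y)=x\}$ consisting of the equations at the sites of $S$ is solvable; and every finite subset of equations is contained in such a subsystem, while a subsystem of a solvable system is solvable. Hence every finite subsystem of $\{\Theta(y)=x\}$ is solvable, so by the Farkas lemma the whole system is solvable, i.e.\ $x\in\Theta(\Alphabet^G)=W$. Thus $W=\overline W$ is closed. I expect the only real obstacle to be the nontrivial direction of the Farkas lemma, which is where linear algebra over a field (existence of a complementary subspace, hence the axiom of choice) genuinely enters; the reduction to finite subsystems and the identification of $\overline W$ with a closed set are routine. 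Note that this argument bypasses the (non-Hausdorff) compactness of $\Alphabet^G$ entirely, which is why the naive ``compact image'' proof of the classical case has to be replaced.
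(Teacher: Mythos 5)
Your proof is correct, and it takes a genuinely different route from the paper's. The paper argues ``primally'': given $y$ in the closure, it builds a preimage by considering, for each finite $F\Subset G$, the nonempty finite-dimensional affine space $L_F\subseteq\Alphabet^{FS}$ of partial preimages, observing that the nested family $\{L_{F'}\restrict{FS}\mid F'\supseteq F\}$ stabilizes (a decreasing chain of affine subspaces of a finite-dimensional space must stop) at some $J_F$, and then that the resulting inverse system $(J_F)$ has surjective transition maps and hence a nonempty limit. You argue ``dually'': you identify $y\in\Alphabet^G$ with a linear functional on $\Alphabet G=\Alphabet^{(G)}$, reformulate $\Theta(y)=x$ as a system of finitely-supported linear equations $\langle v_i,y\rangle=c_i$, and invoke the Farkas-type principle that such a system is solvable as soon as every finite subsystem is, which in turn reduces to the existence of a complement (Hamel basis) of $\operatorname{span}\{v_i\}$ in $\Alphabet^{(G)}$. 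The two arguments use Zorn's lemma in different disguises — the paper through the existence of an element in a surjective inverse limit, you through extension of linear functionals — and both exploit the finite-dimensionality that linearity over a field provides, which is exactly what replaces the (useless, non-Hausdorff) compactness of $\Alphabet^G$ here. Your version arguably factors the argument more cleanly: the ``every finite subsystem solvable $\Rightarrow$ solvable'' lemma is a self-contained, reusable fact (linear compactness of the algebraic dual), and you avoid having to verify the surjectivity of the transition maps $J_{F'}\to J_F$, which the paper leaves as ``easily seen'' but does require a small argument. One cosmetic remark: your $\overline W$ is not literally the closure of $W$ but merely a closed superset built from subbasic closed sets; this is harmless since $W\subseteq\overline W$ and $\overline W\subseteq W$ together already give closedness, but the notation slightly overclaims.
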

\begin{proof}
  Let $S$ be the memory of $\Theta$.  Consider $y$ in the closure of
  $\Theta(\Alphabet^G)$. Then for every $F\Subset G$ the affine space
  $L_F=\{x\in\Alphabet^{F S}\mid \Theta(x)\restrict F=y\restrict F\}$
  is finite-dimensional and non-empty, and if $F\subseteq F'$ then
  $L_{F'}\restrict{F S}\subseteq L_F$; so
  $\{L_{F'}\restrict{F S}\mid F'\supseteq F\}$ is a nested sequence of
  non-empty affine spaces, and in particular stabilizes at a non-empty
  affine space $J_F$. We still have restriction maps $J_{F'}\to J_F$
  for all $F\subseteq F'$, which are easily seen to be
  surjective. Then $\varprojlim_{F\Subset G}J_F$ is non-empty and
  contains all preimages of $y$.
\end{proof}

The following proposition extends to the infinite-dimensional setting
the classical statement that the image of a matrix is the orthogonal
of the nullspace of its transpose:
\begin{proposition}[\cite{tointon:harmonic}]\label{prop:tointon}
  Let $\mathbf M$ be an $n\times n$ matrix over $\Bbbk G$, let
  $\mathbf M^*$ be its adjoint, and set $\Alphabet=\Bbbk^n$. Then
  \[\ker(\mathbf M)\cap\Alphabet G=\operatorname{image}(\mathbf M^*)^\perp=\{x\in\Alphabet G\mid \langle x,\Alphabet^G\mathbf M^*\rangle=0\}.
  \]
  Equivalently, right-multiplication by $\mathbf M$ is injective on
  $\Alphabet G$ if and only if right-multiplication by $\mathbf M^*$
  is surjective on $\Alphabet^G$.
\end{proposition}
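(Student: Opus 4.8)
The plan is to prove the chain of equalities
\[
\ker(\mathbf M)\cap\Alphabet G
=\operatorname{image}(\mathbf M^*)^{\perp}
=\{x\in\Alphabet G\mid\langle x,\Alphabet^G\mathbf M^*\rangle=0\},
\]
and then read off the "injective $\Leftrightarrow$ surjective" dichotomy as a corollary. The second equality is just unfolding the definition of the annihilator, so the content is the first one. For the inclusion $\ker(\mathbf M)\cap\Alphabet G\subseteq\operatorname{image}(\mathbf M^*)^{\perp}$ I would use the adjointness relation $\langle x\mathbf M,y\rangle=\langle x,y\mathbf M^*\rangle$ (Exercise above): if $x\in\Alphabet G$ satisfies $x\mathbf M=0$, then for every $y\in\Alphabet^G$ we get $\langle x,y\mathbf M^*\rangle=\langle x\mathbf M,y\rangle=0$, so $x$ annihilates $\operatorname{image}(\mathbf M^*)$; this direction uses nothing about the pairing being perfect. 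The reverse inclusion $\operatorname{image}(\mathbf M^*)^{\perp}\subseteq\ker(\mathbf M)\cap\Alphabet G$ is where the non-degeneracy of the pairing $\Alphabet G\times\Alphabet^G\to\Bbbk$ enters: if $x\in\Alphabet G$ kills every $y\mathbf M^*$, then $\langle x\mathbf M,y\rangle=0$ for all $y\in\Alphabet^G$, and since $x\mathbf M\in\Alphabet G$ and the pairing of $\Alphabet G$ against $\Alphabet^G$ is non-degenerate, this forces $x\mathbf M=0$.

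The last sentence of the proposition then follows formally. Right-multiplication by $\mathbf M$ is injective on $\Alphabet G$ iff $\ker(\mathbf M)\cap\Alphabet G=0$ iff (by the displayed identity) $\operatorname{image}(\mathbf M^*)^{\perp}=0$. Now I would argue that $\operatorname{image}(\mathbf M^*)^{\perp}=0$ iff $\operatorname{image}(\mathbf M^*)=\Alphabet^G$: the forward direction is the crux and uses that $\operatorname{image}(\mathbf M^*)$ is \emph{closed} in the topology on $\Alphabet^G$ put above (this is the linear analogue of the lemma just before Proposition~\ref{prop:tointon}, which says $\Theta(\Alphabet^G)$ is closed, applied to the cellular automaton $x\mapsto x\mathbf M^*$), together with the fact that a proper closed vector subspace of $\Alphabet^G$ in this topology has nonzero annihilator in $\Alphabet G$ — equivalently, that $\Alphabet G$ separates points of the quotient, which is exactly the statement that finitely-supported functionals detect every nonzero class modulo a closed subspace. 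The reverse direction is trivial since the pairing is non-degenerate on the $\Alphabet G$ side. Assembling, injectivity of $\cdot\,\mathbf M$ on $\Alphabet G$ is equivalent to surjectivity of $\cdot\,\mathbf M^*$ on $\Alphabet^G$.

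The main obstacle I expect is the point just flagged: showing that a proper \emph{closed} subspace $W\subsetneq\Alphabet^G$ has a nonzero annihilator in $\Alphabet G$, i.e.\ that closedness in the chosen (non-Hausdorff, compact) topology is exactly what rescues the "orthogonal complement of the orthogonal complement is the original closed subspace" principle in this infinite-dimensional setting. Concretely: if $W$ is closed then, by definition of the topology, $W=\bigcap_{S\Subset G}\{x\mid x\restrict S\in W\restrict S\}$, and $W\neq\Alphabet^G$ means some finite restriction $W\restrict S$ is a proper subspace of $\Alphabet^S$; picking a nonzero linear functional on $\Alphabet^S$ vanishing on $W\restrict S$ and viewing it as an element of $\Alphabet G$ supported on $S$ gives the required nonzero annihilator. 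One must check this element genuinely annihilates all of $W$ and not merely $W\restrict S$, which is immediate since its pairing with any $x\in\Alphabet^G$ depends only on $x\restrict S$. The rest is bookkeeping with the adjoint anti-involution and the two non-degenerate pairings, which I would not spell out in detail.
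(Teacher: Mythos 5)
Your proposal is correct and matches the paper's argument in all essentials. Both use the adjointness relation $\langle x\mathbf M,y\rangle=\langle x,y\mathbf M^*\rangle$ for the easy direction, and for the other direction both rely on closedness of the image (the lemma preceding the proposition) to extract a finite $S\Subset G$ on which the projection of $\operatorname{image}(\mathbf M^*)$ is a proper subspace of $\Alphabet^S$, then take a linear form on $\Alphabet^S$ vanishing on that subspace, view it as an element of $\Alphabet G$, and observe that it annihilates all of $\operatorname{image}(\mathbf M^*)$.
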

\begin{proof}
  Assume first that right-multiplication by $\mathbf M$ is not
  injective, and consider a non-trivial element $c\in \Alphabet G$
  with $c \mathbf M=0$. We claim that for every
  $y\in(\Alphabet^G)\mathbf M^*$ we have $\langle c,y\rangle=0$. Say
  $y= z \mathbf M^*$; then the claim follows from the computation
  \[\langle c,y\rangle=\langle c,z \mathbf M^*\rangle=\langle c \mathbf M,z\rangle=\langle0,z\rangle=0.\]
  Since $\langle{-},{-}\rangle$ is non-degenerate, this implies that
  $y$ cannot range over all of $\Alphabet^G$, so right-multiplication
  by $\mathbf M^*$ is not surjective.

  Conversely, suppose that right-multiplication by $\mathbf M$ is not
  surjective. Since $\Alphabet^G \mathbf M$ is closed, there exists an
  open set in its complement; so there exists a finite subset
  $S\Subset G$ and a proper subspace $V\lneqq\Alphabet^S$ such that,
  for every $c\in \Alphabet^G \mathbf M$, its projection
  $c\restrict S$ belongs to $V$. Since $\Alphabet^S$ is
  finite-dimensional, there exists a linear form $y$ on $\Alphabet^S$
  that vanishes on $V$. Note that $y$, qua element of
  $(\Alphabet^S)^*$, is canonically identified with an element of
  $\Alphabet^S$, and therefore with an element of $\Alphabet G$. We
  claim $y \mathbf M^*=0$, proving that right-multiplication by
  $\mathbf M^*$ is not injective. This follows from the following
  computation: consider an arbitrary $c\in \Alphabet^G$. Then
  \[\langle y \mathbf M^*,c\rangle=\langle y,c \mathbf M\rangle=0.\]
  Since $\langle{-},{-}\rangle$ is non-degenerate and $c\in V^G$ is
  arbitrary, this forces $y \mathbf M^*=0$.
\end{proof}

Before embarking in the main step of the proof of
Theorem~\ref{thm:mmconverse}, we give a simple example of a cellular
automaton that is pre-injective but not surjective:
\begin{example}[Muller, see~\cite{machi-m:ca}*{page 55}]
  Consider the free product of cyclic groups
  $G=\langle a,b,c|a^2,b^2,c^2\rangle$. Fix a field $\Bbbk$, and set
  $\Alphabet\coloneqq\Bbbk^2$. Define the linear cellular automaton
  $\Theta\colon \Alphabet^G\righttoleftarrow$ by
  \[\Theta(x)=x\cdot\begin{pmatrix}a+b & 0\\ b+c & 0\end{pmatrix}.\]
  It is obvious that $\Theta$ is not surjective: its image is
  $(\Bbbk\times0)^G$. To show that it is pre-injective, consider $x$ a
  non-zero configuration with finite support, and let $F\Subset G$
  denote its support. Let $f\in F$ be an element of maximal length;
  then at least two among $f a,f b,f c$ will be reached precisely once
  as products of the form $F\cdot\{a,b,c\}$. Write
  $x(f)=(\alpha,\beta)\neq(0,0)$; then at least two among the
  equations
  \[\Theta(x)(f a)=\alpha,\qquad\Theta(x)(f b)=\alpha+\beta,\qquad\Theta(x)(f c)=\beta
  \]
  hold, and this is enough to force $\Theta(x)\neq0$.
\end{example}

In the general case of a non-amenable group $G=\langle S\rangle$, we
may not claim that there exist two elements reached exactly once from
an arbitrary finite set $F$ under right $S$-multiplication; but we
shall see that there exists ``many'' elements reached ``not too many''
times, in the sense that there exists $f\in F$ with
$\sum_{s\in S}1/\#\{t\in S\mid f s\in F t\}>1$; and this will suffice
to construct a pre-injective, non-surjective cellular automaton. We
begin by a combinatorial lemma:
\begin{lemma}\label{lem:overlaps}
  Let $n$ be an integer. Then there exists a set $Y$ and a family of
  subsets $X_1,\dots,X_n$ of $Y$ such that, for all
  $I\subseteq\{1,\dots,n\}$ and all $i\in I$, we have
  \begin{equation}\label{eq:overlaps:1}
    \#\Big(X_i\setminus\bigcup_{j\in I\setminus\{i\}}X_j\Big)\ge\frac{\#Y}{(1+\log n)\#I}.
  \end{equation}
  Furthermore, if $n\ge2$ then we may require
  $X_1\cup\cdots\cup X_n\neq Y$.
\end{lemma}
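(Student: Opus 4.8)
The plan is to realize each $X_i$ as an ``initial‑segment membership'' event inside a suitably weighted space of permutations, where the weight attached to level $\ell$ is proportional to $1/\ell$; it is exactly this harmonic weighting that manufactures the quantity $H_n:=\sum_{\ell=1}^n 1/\ell$, which we then control by the elementary bound $H_n\le 1+\log n$. Concretely, let $L$ be a common multiple of $1,\dots,n$, set $c_\ell:=L/\ell\in\Z_{>0}$ for $1\le\ell\le n$, pick a ``slack weight'' $c_0\in\{0,1\}$, and put
\[
Y:=\bigsqcup_{\ell=0}^{n}\;S_n\times\{\ell\}\times\{1,\dots,c_\ell\},
\qquad
X_i:=\bigl\{(\pi,\ell,r)\in Y : i\in\{\pi(1),\dots,\pi(\ell)\}\bigr\},
\]
so that $\#Y=n!\,(c_0+LH_n)$ (with the convention that the $\ell=0$ block is $S_n\times\{0\}\times\{1,\dots,c_0\}$, possibly empty).

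\textbf{Key computation.} I would fix a non‑empty $I\subseteq\{1,\dots,n\}$, write $k:=\#I$ and $i\in I$, and observe that a triple $(\pi,\ell,r)$ lies in $X_i\setminus\bigcup_{j\in I\setminus\{i\}}X_j$ precisely when the initial segment $\{\pi(1),\dots,\pi(\ell)\}$ meets $I$ in the single point $i$. Counting such permutations by the ranks of the $k$ elements of $I$ (those ranks form a uniform $k$‑subset of $[n]$, and given the subset each element of $I$ is equally likely to occupy each rank) yields $\ell\binom{n-\ell}{k-1}(k-1)!\,(n-k)!$ permutations at level $\ell$. Since $c_\ell\,\ell=L$ for $\ell\ge1$ and level $\ell=0$ contributes nothing, summing over $\ell$ and applying the hockey‑stick identity $\sum_{\ell=1}^{n}\binom{n-\ell}{k-1}=\binom{n}{k}$ collapses everything to
\[
\#\Bigl(X_i\setminus\bigcup_{j\in I\setminus\{i\}}X_j\Bigr)=\frac{L\,n!}{k}.
\]
Comparing with $\#Y$, inequality~\eqref{eq:overlaps:1} is then equivalent to $L(1+\log n-H_n)\ge c_0$, which holds trivially with $c_0=0$ because $H_n\le 1+\log n$.

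\textbf{The ``furthermore'' clause.} For $n\ge2$ I would use the slack level: the $\ell=0$ block has empty initial segment, hence is disjoint from every $X_i$, so taking $c_0=1$ forces $X_1\cup\dots\cup X_n\neq Y$; it then only remains to guarantee $L(1+\log n-H_n)\ge1$, which I arrange by replacing $L$ by a large enough integer multiple of itself (still a common multiple of $1,\dots,n$), invoking the \emph{strict} inequality $H_n=1+\sum_{j=2}^n\tfrac1j<1+\int_1^n\tfrac{dx}{x}=1+\log n$, valid exactly when $n\ge2$. The case $n=1$ is handled separately and trivially, with $Y$ a single point and $X_1=Y$.

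\textbf{Main obstacle.} The only genuine difficulty is spotting the right weights: any choice making $c_\ell\,\ell$ independent of $\ell$ turns the relevant count into the telescoping sum $\sum_\ell\binom{n-\ell}{k-1}$, and this single fact is what simultaneously produces the clean value $Ln!/k$ for the ``private part'' of $X_i$ and the appearance of $H_n$ in $\#Y$. Everything else is routine bookkeeping — clearing denominators so that $Y$ is a genuine finite set of the stated cardinality, and the harmonic‑number estimate — and I do not expect it to cause trouble.
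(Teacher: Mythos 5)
Your proof is correct, and it takes a genuinely different route from the paper's. The paper defines $Y$ to be the set of pairs $(c,\sigma)$ where $\sigma\in\Sym(n)$ and $c$ is a cycle of $\sigma$, with $X_i$ the set of pairs whose cycle contains $i$. The identity $\#Y=\sum_{\ell=1}^n\binom n\ell(\ell-1)!(n-\ell)!=n!\,H_n$ and the count of cycles meeting $I$ only in $i$ then drop out of elementary cycle combinatorics, with no need to clear denominators — the cycle count is already an integer. You instead build $Y$ as a harmonically weighted tower of copies of $\Sym(n)$ indexed by an ``initial-segment length'' $\ell$, with $c_\ell=L/\ell$ copies at level $\ell$, and take $X_i$ to be the event ``$i$ occurs in the first $\ell$ positions.'' The private count $Ln!/k$ follows from the hockey-stick identity, and your level-$0$ slack block gives a particularly transparent witness for $X_1\cup\cdots\cup X_n\neq Y$, where the paper has to argue that its bound $\#Y\le(1+\log n)n!$ is strict for $n\ge2$ and then append a single extra point. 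Both constructions manufacture the same numerics, and indeed the two are morally linked (cycles of length $\ell$ and initial segments of length $\ell$ are both governed by the factor $1/\ell$), but your realization has to work a bit harder (the auxiliary multiple $L$, and scaling it up for the ``furthermore'' clause) while the paper's cycle-counting avoids that bookkeeping. Either way the computation in your ``key computation'' paragraph is correct: $\ell\binom{n-\ell}{k-1}(k-1)!(n-k)!$ counts permutations whose initial $\ell$-segment meets $I$ in exactly $\{i\}$, and summing against $c_\ell\ell=L$ and applying $\sum_{\ell=1}^n\binom{n-\ell}{k-1}=\binom nk$ gives $Ln!/k$ as claimed.
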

\begin{proof}
  We denote by $\Sym(n)$ the symmetric group on $n$ letters.  Define
  \[Y\coloneqq\frac{\{1,\dots,n\}\times\Sym(n)}{(i,\sigma)\sim(j,\sigma)\text{ if $i$ and $j$ belong to the same cycle of }\sigma};
  \]
  in other words, $Y$ is the set of cycles of elements of
  $\Sym(n)$. Let $X_i$ be the natural image of $\{i\}\times\Sym(n)$ in
  the quotient $Y$.

  First, there are $(i-1)!$ cycles of length $i$ in $\Sym(i)$, given by
  all cyclic orderings of $\{1,\dots,i\}$; so there are
  $\binom n i(i-1)!$ cycles of length $i$ in $\Sym(n)$, and they can be
  completed in $(n-i)!$ ways to a permutation of $\Sym(n)$; so
  \begin{equation}\label{eq:pfoverlaps:1}
    \#Y=\sum_{i=1}^n\binom n i(i-1)!(n-i)!=\sum_{i=1}^n\frac{n!}i\le(1+\log n)n!
  \end{equation}
  since $1+1/2+\dots+1/n\le 1+\log n$ for all $n$.

  Next, consider $I\subseteq\{1,\dots,n\}$ and $i\in I$, and set
  $X_{i,I}\coloneqq X_i\setminus\bigcup_{j\in
    I\setminus\{i\}}X_j$. Then
  $X_{i,I}=\big\{(i,\sigma):(i,\sigma)\nsim(j,\sigma)\text{ for all
  }j\in I\setminus\{i\}\big\}$. Summing over all possibilities for the
  length-$(j+1)$ cycle $(i,t_1,\dots,t_j)$ of $\sigma$ intersecting
  $I$ in $\{i\}$, we get
  \begin{equation}\label{eq:pfoverlaps:2}
    \begin{split}
      \#X_{i,I}&=\sum_{j=0}^{n-\#I}\binom{n-\#I}j j!(n-j-1)!\\
      &=\sum_{k\coloneqq n-j=\#I}^n(n-\#I)!(\#I-1)!\binom{k-1}{k-\#I}\\
      &=(n-\#I)!(\#I-1)!\binom{n}{n-\#I}=\frac{n!}{\#I}.
    \end{split}
  \end{equation}
  Combining~\eqref{eq:pfoverlaps:1} and~\eqref{eq:pfoverlaps:2}, we get
  \[\#X_{i,I}=\frac{n!}{\#I}=\frac{(1+\log n)n!}{(1+\log n)\#I}\ge\frac{\#Y}{(1+\log n)\#I}.\]

  Finally, if $n\ge2$ then~\eqref{eq:overlaps:1} may be improved to
  $\#Y\le (0.9+\log n)n!$; for even larger $n$ one could get to
  $\#Y\le(0.57721\dots+\log n)n!$. Since clearly
  $\#Y/(0.9+\log n)\ge(\#Y+1)/(1+\log n)$, one may simply replace $Y$
  by $Y\sqcup\{\cdot\}$.
\end{proof}

\begin{proposition}\label{prop:extmat}
  Let $\Bbbk$ be a field, and let $G$ be a non-amenable group. Then
  there exists a finite extension $\mathbb K$ of $\Bbbk$ and an
  $n\times(n-1)$ matrix $\mathbf M$ over $\mathbb K G$ such that
  multiplication by $\mathbf M$ is an injective map
  $(\mathbb K G)^n\to(\mathbb K G)^{n-1}$.
\end{proposition}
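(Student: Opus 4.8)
The plan is to reduce to a finitely generated non-amenable group, distil from non-amenability a quantitative inequality, and then build $\mathbf M$ by a pigeonhole-and-genericity construction whose combinatorial skeleton is exactly Lemma~\ref{lem:overlaps}.

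\emph{Reduction.} First I would replace $G$ by a finitely generated non-amenable subgroup $H$, which exists by the corollary to Proposition~\ref{prop:elemamen}. Writing $\mathbb K G=\bigoplus_{gH}g\,\mathbb K H$ as a left $\mathbb K H$-module, right multiplication by a matrix over $\mathbb K H$ respects this decomposition block by block, so a matrix over $\mathbb K H$ injective on $(\mathbb K H)^n$ remains injective on $(\mathbb K G)^n$. Hence we may assume $G=\langle S\rangle$ with $S=S^{-1}$ finite and non-amenable. Since F\o lner's condition fails, after enlarging $S$ there is $c>1$ with $\#(FS)\ge c\#F$ for all $F\Subset G$ (Theorem~\ref{thm:folneramen}, Lemma~\ref{lem:folnerfg}). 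Setting $m_F(g)=\#\{(h,t)\in F\times S: ht=g\}$ and counting pairs $(f,s)\in F\times S$ grouped by the product $fs$,
\[\sum_{f\in F}\sum_{s\in S}\frac{1}{m_F(fs)}=\sum_{g\in FS}1=\#(FS)\ge c\#F,\]
so every finite $F$ contains an element $f$ with $\sum_{s\in S}m_F(fs)^{-1}\ge c$. This is the analogue, for a general non-amenable $G$, of the fact used in Muller's example that two of the three neighbours of an extremal vertex are hit exactly once.

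\emph{Choice of $n$, $Y$, $\mathbb K$ and of $\mathbf M$.} Fix $n$ large enough (of order $c/(c-1)$) that the room provided by $c$ still dominates $1+\log n$ in the sense demanded by the rank count below. Apply Lemma~\ref{lem:overlaps} to obtain a finite set $Y$ and subsets $X_1,\dots,X_n\subsetneq Y$ with $\bigcup_i X_i\ne Y$ and $\#\bigl(X_i\setminus\bigcup_{j\in I\setminus\{i\}}X_j\bigr)\ge \#Y/((1+\log n)\#I)$ for all $i\in I\subseteq\{1,\dots,n\}$. Then take $\mathbb K$ a finite extension of $\Bbbk$ large enough that matrices over $\mathbb K$ of the shapes occurring below, filled with ``generic'' entries, have full rank: if $\Bbbk$ is infinite one may keep $\mathbb K=\Bbbk$, while if $\Bbbk=\mathbb F_q$ one takes $\mathbb F_{q^r}$ with $q^r>\#Y$, so that Vandermonde-type minors are invertible. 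The matrix $\mathbf M\in M_{n,n-1}(\mathbb K G)$ is then assembled so that its $i$-th row uses only the group elements $\{1\}\cup S$ in a pattern dictated by $X_i$ (the slot of $Y$ lying outside $\bigcup X_i$ being what allows $n-1$ rather than $n$ output channels), with the nonzero coefficients chosen generically in $\mathbb K$.

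\emph{Injectivity and the main obstacle.} Suppose $0\ne x\in(\mathbb K G)^n$ is finitely supported with $x\mathbf M=0$; let $F$ be the union of the supports of the $x_i$, and pick $f\in F$ with $\sum_{s\in S}m_F(fs)^{-1}\ge c$ and with $x_i(f)\ne0$ for some $i$ (possible after intersecting with the supports). Localising the equations $x\mathbf M=0$ at the elements $fs$, $s\in S$, the contribution of the vector $x(f)\in\mathbb K^n$ to these outputs is a linear map determined by the $X_i$, whereas the number of competing contributions coming from the other $h\in F$ is controlled by $\sum_{s}m_F(fs)$; combining the overlap estimate of Lemma~\ref{lem:overlaps} (applied with $I=\{i:x_i(f)\ne0\}$), the inequality $\sum_s m_F(fs)^{-1}\ge c$, and the genericity of the coefficients should force $x(f)=0$, a contradiction. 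The genuinely delicate point — the crux of the whole proposition — is this last step: choosing the exact support pattern of $\mathbf M$ in terms of the $X_i$ and proving that the resulting finite linear system ``at $f$'' is non-degenerate. This is exactly where Lemma~\ref{lem:overlaps} earns its keep (it is the device that compresses $n$ input channels into $n-1$ while keeping every sub-family of inputs un-cancellable) and where the passage to a larger field $\mathbb K$ becomes unavoidable; the reduction, the counting identity, and the dimension bookkeeping are all routine once $n$, $Y$ and $\mathbb K$ are correctly calibrated.
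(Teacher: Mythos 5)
Your outline matches the paper's strategy — use the failure of F\o{}lner's condition to get an inequality $\#(FS)\ge c\#F$, the averaging identity $\sum_{f\in F}\sum_{s\in S}m_F(fs)^{-1}=\#(FS)$ to locate a good $f$, Lemma~\ref{lem:overlaps} for the combinatorics, and genericity over a finite extension — but the step you explicitly defer (``choosing the exact support pattern of $\mathbf M$ in terms of the $X_i$ and proving that the resulting finite linear system at $f$ is non-degenerate'') is precisely where all the work happens, so the proposal has a genuine gap rather than just unfilled routine. Here is what the paper does there, and why it is not optional. One sets $\Alphabet=\mathbb K Y$ and builds, for each $s\in S$, a linear map $\alpha_s\colon\Alphabet\to\mathbb K X_s$; the matrix is $\mathbf M=\sum_{s\in S}\alpha_s\,s$, so its rows and columns are indexed by $Y$ (and $Y\setminus\bigcup_s X_s$ respectively), not by $\{1,\dots,n\}\cup S$ as your description suggests. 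Writing $\alpha_{s,T}$ for the composition of $\alpha_s$ with the projection onto $\mathbb K X_{s,T}$ (where $X_{s,T}=X_s\setminus\bigcup_{t\in T\setminus\{s\}}X_t$), the genericity condition imposed is: for every family $(T_s)_{s\in S}$ of subsets of $S$ with $\sum_s\#X_{s,T_s}\ge\#Y$, the stacked map $a\mapsto(\alpha_{s,T_s}(a))_{s\in S}$ is injective. Then, with $T_s=\{t\in S: fs\in Ft\}$, the crucial observation — the mechanism by which the overlap structure of Lemma~\ref{lem:overlaps} ``compresses channels without cancellation,'' to borrow your phrase — is that $y(fs)\restrict X_{s,T_s}=\alpha_{s,T_s}(x(f))$: the coordinates in $X_{s,T_s}$ are disjoint from $X_t$ for every $t\in T_s\setminus\{s\}$, so only $\alpha_s$ applied to $x(f)$ can contribute there, and the contributions from the other elements of $F$ are killed by design. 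Without spelling out this disjointness and the resulting rank conditions (one per admissible family $(T_s)$, each a Zariski-open condition over $\Z$), there is no theorem.

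There is also a calibration error worth flagging. You write that $n$ should be ``of order $c/(c-1)$''; what is actually needed is $c>1+\log n$ with $n=\#S$, and this does not follow by simply taking $n$ large or small relative to a fixed $c$ (indeed enlarging $S$ changes both $c$ and $n$). The paper's resolution is to replace $S_0$ by $S=S_0^k$: then $c\ge(1+\epsilon)^k$ grows exponentially in $k$, while $1+\log\#S\le 1+k\log\#S_0$ grows only linearly, so $c>1+\log\#S$ for $k$ large. That device is part of the proof, not bookkeeping, and your sketch skips it. Lastly, your reduction to a finitely generated subgroup is harmless but unnecessary (the right-module decomposition should be over left cosets, i.e.\ $\mathbb K G=\bigoplus_{gH}g\,\mathbb K H$ as \emph{right} $\mathbb K H$-modules): the paper works directly with an arbitrary non-amenable $G$, since the negation of F\o{}lner's condition already hands you a finite $S_0$ and $\epsilon>0$ with no need to restrict.
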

\begin{proof}
  Since $G$ is non-amenable, there exists by
  Theorem~\ref{thm:folneramen} a finite subset $S_0\subset G$ and
  $\epsilon>0$ with $\#(F S_0)\ge(1+\epsilon)\#F$ for all finite
  $F\subset G$. We then have $\#(F S_0^k)\ge(1+\epsilon)^k\#F$ for all
  $k\in\N$. Let $k$ be large enough so that
  $(1+\epsilon)^k>1+k\log\#S_0$, and set $S\coloneqq S_0^k$ and
  $n\coloneqq\#S$. We will seek $\mathbf M$ supported in $\mathbb K S$. We
  have
  \begin{equation}\label{eq:pfextmat:folner}
    \begin{split}
      \#(F S)&\ge(1+\epsilon)^k\#F>(1+k\log\#S_0)\#F\\
      &\ge(1+\log n)\#F\text{ for all finite }F\subset G.
    \end{split}
  \end{equation}

  Apply Lemma~\ref{lem:overlaps} to this $n$, and identify
  $\{1,\dots,n\}$ with $S$ to obtain a set $Y$ and subsets $X_s$ for
  all $s\in S$. We have $\bigcup_{s\in S}X_s\subsetneqq Y$ and
  \[\#\bigg(X_s\setminus\bigcup_{t\in
      T\setminus\{s\}}X_t\bigg)\ge\frac{\#Y}{(1+\log n)\#T}\text{ for
      all }s\in T\subseteq S.
  \]

  We shall specify soon how large the finite extension $\mathbb K$ of
  $\Bbbk$ should be. Under that future assumption, set
  $\Alphabet\coloneqq \mathbb K Y$. For each $s\in S$, we shall construct a
  linear map $\alpha_s\colon \Alphabet\to\mathbb K X_s\subset \Alphabet$; for this, we
  introduce the following notation: for $T\ni s$ denote by
  $\alpha_{s,T}\colon \Alphabet\to\mathbb K X_{s,T}$ the composition of
  $\alpha_s$ with the co\"ordinate projection $\Alphabet\to\mathbb K
  X_{s,T}$. We wish to impose the condition that, whenever
  $\{T_s: s\in S\}$ is a family of subsets of $S$ with
  $\sum_{s\in S}\#X_{s,T_s}\ge\#Y$, we have
  \begin{equation}\label{eq:pfextmat:inj}
    \bigcap_{s\in S}\ker(\alpha_{s,T_s})=0.
  \end{equation}
  As a first step, we treat each $\alpha_s$ as a $\#X_s\times\#Y$
  matrix with variables as co\"efficients, by considering only its
  rows indexed by $X_s\subset Y$; and we treat each $\alpha_{s,T}$ as
  a $\#X_{s,T}\times\#Y$ submatrix of $\alpha_s$. The space of all
  $(\alpha_s)_{s\in S}$ therefore consists of
  $N\coloneqq\#Y\sum_{s\in S}\#X_s$ variables, so is an affine space
  of dimension $N$.

  Equations~\eqref{eq:pfextmat:inj} amounts to the condition, on these
  variables, that all matrices obtained by stacking vertically a
  collection of $\alpha_{s,T_s}$'s have full rank as soon as
  $\sum_{s\in S}\#X_{s,T_s}\ge\#Y$. The complement of these conditions
  is an algebraic subvariety of $\mathbb K^N$, given by a finite union
  of hypersurfaces of the form `$\det(\cdots)=0$'. Crucially, the
  equations of these hypersurfaces are defined over $\Z$, and in
  particular are independent of the field $\mathbb K$. Therefore, as
  soon as $\mathbb K$ is large enough, there exist points that belong
  to none of these hypersurfaces; and any such point gives a solution
  to~\eqref{eq:pfextmat:inj}.

  Define now the matrix $\mathbf M$ with co\"efficients in $\mathbb K G$ by
  \begin{equation}\label{eq:pfextmat:theta}
    \mathbf M=\sum_{s\in S}\alpha_s s.
  \end{equation}
  It maps $\mathbb K^n G$ to $\mathbb K^{n-1}G$ as required, since
  $\bigcup_{s\in S}X_s\subsetneqq Y$. To show that $\mathbf M$ is injective,
  consider $x\in\mathbb K^n G$ non-trivial, and let
  $\emptyset\neq F\Subset G$ denote its support. Define
  $\rho\colon F S\to(0,1]$ by
  $\rho(g)\coloneqq1/\#\{s\in S:g \in F s\}$. Now
  \[\sum_{f\in F}\Big(\sum_{s\in S}\rho(f s)\Big)=\sum_{g\in F S}\sum_{s\in S:g\in F s}\rho(g)=\sum_{g\in F S}1=\#(F S),
  \]
  so there exists $f\in F$ with
  $\sum_{s\in S}\rho(f s)\ge\#(F S)/\#F\ge1+\log n$
  by~\eqref{eq:pfextmat:folner}. For every $s\in S$, set
  $T_s\coloneqq\{t\in S:f s\in F t\}$, so $\#T_s=1/\rho(f s)$. We obtain
  \begin{align*}
    \sum_{s\in S}\#X_{s,T_s}&\ge\sum_{s\in S}\frac{\#Y}{(1+\log n)\#T_s}\text{ by Lemma~\ref{lem:overlaps}}\\
                            &=\sum_{s\in S}\frac{\#Y\rho(f s)}{1+\log n}\ge\#Y,
  \end{align*}
  so by~\eqref{eq:pfextmat:inj} the map
  $\Alphabet\ni a\mapsto(\alpha_{s,T_s}(a))_{s\in S}$ is injective. Set
  $y\coloneqq x \mathbf M$. Since by assumption $x(f)\neq0$, we get
  $(\alpha_{s,T_s}(x(f)))_{s\in S}\neq0$, namely there exists $s\in S$
  with $\alpha_{s,T_s}(x(f))\neq0$. Now
  $y(f s)\restrict X_{s,T_s}=\alpha_{s,T_s}(x(f))$
  by~\eqref{eq:pfextmat:theta}, so $y\neq0$ and we have proven that $\mathbf M$
  is injective.
\end{proof}

\begin{proof}[Proof of Theorem~\ref{thm:mmconverse}]
  We start by (2). Apply Proposition~\ref{prop:extmat} to
  $\Bbbk=\mathbb F_2$, and let $\mathbb K=\mathbb F_{2^q}$ and
  $\mathbf M$ be the $n\times(n-1)$ resulting matrix over
  $\mathbb K G$. Set $\Alphabet=\mathbb K^n$, and extend $\mathbf M$
  to an $n\times n$ matrix by adding a column on $0$'s to its
  right. Then $\Theta\colon \Alphabet^G\righttoleftarrow$ given by
  $\Theta(x)=x \mathbf M$ is a $G$-equivariant endomorphism of
  $\Alphabet^G$, is pre-injective because $\mathbf M$ is injective on
  $\Alphabet G$, and is not surjective because no configuration in its
  image has a non-trivial last co\"ordinate.

  Right-multiplication by $\mathbf M^*$ on $\Alphabet^G$ is surjective and not
  pre-injective by Proposition~\ref{prop:tointon}, so this
  answers~(1).

  Finally, let $y\in \Alphabet^S$, for some $S\Subset G$, be such that
  $\mathcal O_y$ is a Garden of Eden for $\mathbf M$. Then
  $\mathcal O_y \mathbf M^*=0$, so $\mathbf M^*$ does not preserve
  Bernoulli measure, answering~(3).
\end{proof}

\subsection{Goldie rings}
We saw in the last section that linear cellular automata are closely
related to group rings. We give now a characterization of amenability
of groups in terms of ring theory. We recommend~\cite{passman:gr} as a
reference for group rings.

\begin{definition}
  Let $R$ be a ring. It is \emph{semiprime} if $a R a\neq0$ whenever
  $a\in R\setminus\{0\}$. An element $a\in R$ is \emph{regular} if
  $x a y\neq 0$ whenever $x,y\in R\setminus\{0\}$, and the ring $R$ is
  a \emph{domain} if $x y\neq 0$ whenever $x,y\in
  R\setminus\{0\}$. The \emph{right annihilator} of $a\in R$ is
  $\{x\in R\mid a x=0\}$ and is a right ideal in $R$.

  The ring $R$ is \emph{Goldie} if (1) there is no infinite ascending
  chain of right annihilators in $R$ and (2) there is no infinite
  direct sum of nonzero right ideals in $R$.
\end{definition}
Clearly $R$ is a domain if and only if all its non-zero elements are
regular; annihilators of regular elements are trivial; and all domains
are semiprime.

These definitions may be difficult to digest, but they have strong
consequences for the structure of $R$,
see~\cite{coutinho-mcconnell:quest} and Goldie's theorem below. In
terms of their ideal structure, the simplest rings are \emph{skew
  fields}, in which all non-zero elements are invertible. Next best
are \emph{Artinian rings}, which do not admit infinite descending
chains of ideals. Finitely generated modules over Artinian rings have
a well-defined notion of dimension, namely the maximal length of a
composition series.

Ore studied in~\cite{ore:condition} when a ring $R$ may be imbedded in
a ring in which all regular elements of $R$ become invertible. Let us
denote by $R^*$ the set of regular elements in $R$. A naive attempt is
to consider expressions of the form $a s^{-1}$ with $a,s\in R$ and $s$
regular; then to multiply them one must rewrite
$a s^{-1}b t^{-1}=a b'(s')^{-1}t^{-1}=(a b')(t s')^{-1}$, and to add
them one must rewrite $a s^{-1}+b t^{-1}=(a t'+b s')(s t')^{-1}$. In
all cases, it is sufficient that $R$ satisfy the following property,
called \emph{Ore's condition}:
\[\text{for all $a,s\in R$ with $s$ regular there exist $b,t\in R$ with $t$ regular and }s b=a t,
\]
namely every pair of elements $a,s$ admits a common ``right multiple''
$a t=s b$. The ring
\[R(R^*)^{-1}\coloneqq\{a s^{-1}\mid a\in R,s\in R^*\}/\langle a
  s^{-1}=a t(s t)^{-1}\text{ for all }a\in R,s,t\in R^*\rangle
\]
is called $R$'s \emph{classical ring of fractions}. It naturally
contains $R$ as the subring $\{a1^{-1}\}$. If $R$ is a domain, then
$R(R^*)^{-1}$ is a skew field.

\begin{theorem}[Goldie~\cite{goldie:max}]\label{thm:goldie}
  Let $R$ be a semiprime Goldie ring. Then $R$ satisifes Ore's
  condition, and its classical ring of fractions is Artinian.\qed
\end{theorem}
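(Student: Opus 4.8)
The statement to prove is Goldie's theorem: a semiprime Goldie ring $R$ satisfies Ore's condition, and its classical ring of fractions $R(R^*)^{-1}$ is right Artinian. My plan is to follow the classical route via a structure analysis of right ideals controlled by the two Goldie chain conditions, rather than any amenability-flavoured argument, since this is pure ring theory. The overall shape is: (1) extract from the Goldie conditions a good supply of "large" right ideals and essential right ideals; (2) show every essential right ideal of a semiprime Goldie ring contains a regular element; (3) deduce Ore's condition; (4) build the ring of fractions and identify it with a finite product of matrix rings over division rings, which is Artinian.

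First I would set up the combinatorial backbone. A right ideal $I$ is \emph{uniform} if it is nonzero and any two nonzero sub-right-ideals of $I$ meet nontrivially; and $I$ is \emph{essential} if it meets every nonzero right ideal nontrivially. Using condition (2) — no infinite direct sum of nonzero right ideals — one shows by a standard Zorn/finiteness argument that $R_R$ contains a finite direct sum of uniform right ideals which is essential, and moreover that $R$ has finite \emph{uniform dimension} $n$ (every direct sum of nonzero right ideals has at most $n$ summands). I would next prove that for $a\in R$, the right annihilator $r(a)=\{x:ax=0\}$ is not essential unless $a$ is a nilpotent-like obstruction; the key lemma here is that in a semiprime ring $a$ is regular iff $r(a)=0$ iff $l(a)=0$, and using the ascending chain condition (1) on annihilators together with semiprimeness one shows: if $a$ has essential right annihilator then $a=0$. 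The cleanest way is: pick $a$ with $r(a)$ essential, consider the chain $r(a)\subseteq r(a^2)\subseteq\cdots$, which stabilises by (1), so $r(a^k)=r(a^{k+1})$; then $a^kRa^k\cap$ something forces $a^kRa^k=0$, contradicting semiprimeness unless $a^k=0$, and a further descent kills $a$ itself.

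The heart of the argument — and the step I expect to be the main obstacle — is step (2): \emph{every essential right ideal $E$ of a semiprime Goldie ring contains a regular element}. I would build the regular element greedily: choose $a_1\in E$ with $r(a_1)$ as small as possible (possible by chain condition (1) on annihilators); if $r(a_1)\ne 0$, it meets $E$ in a nonzero right ideal (as $E$ is essential), pick $a_2$ in that intersection with $r(a_1+a_2)$ — or rather $r(a_1)\cap r(a_2)$ — minimal; iterate. Finite uniform dimension forces the process to terminate, producing $a=a_1+\cdots+a_m\in E$ with $r(a)=0$; then semiprimeness plus the left-right symmetry of the Goldie hypotheses (one checks $R^{\mathrm{op}}$ is again semiprime Goldie, or directly argues $l(a)=0$) gives that $a$ is regular. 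Care is needed to arrange that the partial sums $a_1+\cdots+a_i$ have annihilators $\bigcap_{j\le i}r(a_j)$ strictly decreasing — this is where one genuinely uses that the $a_{i+1}$ can be chosen inside $r(a_1)\cap\cdots\cap r(a_i)$, which is itself essential-in-itself enough to meet $E$.

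With (2) in hand, Ore's condition is quick: given $a,s\in R$ with $s$ regular, the set $\{t\in R: at\in sR\}$ is a right ideal whose preimage considerations show it is essential (using $r(s)=0$, so $s$ acts as a right-ideal isomorphism $R\cong sR$, and $aR\cap sR$ is essential in $aR$...), hence by (2) contains a regular $t$, and then $at=sb$ for some $b$, which is exactly the Ore condition. Finally I construct $Q:=R(R^*)^{-1}$ as in the excerpt, verify the well-definedness of addition and multiplication using Ore exactly as sketched there, and then show $Q$ is semiprime Artinian: right ideals of $Q$ are of the form $IQ$ for right ideals $I$ of $R$, and a descending chain in $Q$ pulls back to a chain of essential-or-controlled right ideals in $R$ whose uniform dimensions are non-increasing integers $\le n$, forcing stabilisation; semiprimeness of $Q$ follows from that of $R$. (One may then, if desired, invoke Artin–Wedderburn to write $Q$ as a finite product of matrix rings over division rings, but the statement only asks for Artinian, so I would stop there.) The subtle points throughout are the symmetry between left and right — the definition of Goldie in the excerpt is one-sided, so I must be careful to only use what is genuinely available, deriving $l(a)=0$ from $r(a)=0$ via semiprimeness and finite uniform dimension rather than assuming a left chain condition.
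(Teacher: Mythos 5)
The paper does not actually prove this statement: it is stated with a \texttt{\textbackslash qed} immediately following and a citation to Goldie's original article, i.e.\ it is imported as a black box from the ring-theory literature (as is standard in a survey of this kind). So there is no ``paper proof'' to compare against, and your sketch should be judged on its own terms as an account of the classical argument.

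Taken that way, your plan has the right architecture and correctly identifies the two load-bearing lemmas --- finiteness of uniform dimension from condition (2), and ``every essential right ideal of a semiprime Goldie ring contains a regular element'' --- as well as the subtlety about one-sidedness of the hypotheses, which you handle correctly by deriving $l(a)=0$ from $r(a)=0$ via uniform dimension and semiprimeness. Step (3) (Ore) is fine once one observes that $T=\{t\in R : at\in sR\}$ is the preimage of the essential right ideal $sR$ under left multiplication by $a$, hence essential; your parenthetical ``$aR\cap sR$ is essential in $aR$'' is a detour but not wrong.

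The one place I would push back is the paragraph that is supposed to give $Z(R)=0$ (no nonzero element with essential right annihilator). Your chain argument $r(a)\subseteq r(a^2)\subseteq\cdots$, stabilizing at $r(a^k)=r(a^{2k})$, does correctly give $a^kR\cap r(a^k)=0$ and hence, since $r(a^k)$ is essential, $a^k=0$ --- so it shows that $Z(R)$ is a \emph{nil} ideal. But the phrase ``a further descent kills $a$ itself'' is not an argument: a semiprime ring can perfectly well have individual nilpotent elements, and there is no descent from $a^k=0$ to $a=0$. What one actually needs is a separate maximality argument: suppose $Z(R)\neq 0$ and pick $0\neq a\in Z(R)$ with $r(a)$ maximal among annihilators of nonzero elements of $Z(R)$ (possible by ACC on annihilators). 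For any $x\in R$, $xa$ is nilpotent, say $(xa)^n=0$ with $n$ minimal; if $n\ge 2$ then $(xa)^{n-1}$ is a nonzero element of the ideal $Z(R)$ with $r((xa)^{n-1})\supseteq r(a)$, hence $=r(a)$ by maximality, and since $(xa)^{n-1}\cdot xa=0$ one gets $xa\in r(a)$, i.e.\ $axa=0$; the case $n=1$ gives $axa=0$ trivially. Thus $aRa=0$, contradicting semiprimeness. This is the Herstein--Small/Lanski-flavoured step, and it is genuinely different in shape from what you wrote; without it your step (1) does not close. The rest of the sketch, including the Artinian conclusion (finite uniform dimension of $Q$ plus the observation that essential right ideals of $R$ generate $Q$, so $Q$ has no proper essential right ideals and is therefore semisimple), is sound.
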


Let $R\subseteq S$ be a subring of a ring. The ring $S$ is called
\emph{flat} over $R$ if for every exact sequence
$0\to A\to B\to C\to0$ of $R$-modules the corresponding sequence
$0\to A\otimes_R S\to B\otimes_R S\to C\otimes_R S\to0$ of $S$-modules
is exact.
\begin{exercise}[**]\label{ex:flat}
  For $R$ a domain, show that $S\coloneqq R(R^*)^{-1}$ is flat.

  \emph{Hint:} there is an equational criterion for flatness: $S$ is
  flat if and only if every $R$-linear relation $\sum r_i x_i=0$, with
  $r_i\in R$ and $x_i\in S$, ``follows from linear relations in $R$'',
  in the following sense: the equation in matrix form
  $\mathbf r^T\mathbf x=0$, with $\mathbf r\in R^n$ and
  $\mathbf x\in S^n$, implies equations $\mathbf r^T\mathbf B=0$ and
  $\mathbf x=\mathbf B\mathbf y$ for some $n\times m$ matrix
  $\mathbf B$ over $R$ and some $\mathbf y\in S^m$;
  see~\cite{lam:lmr}*{4.24(2)}.

  Using Ore's condition, apply this criterion by expressing in a
  $R$-linear relation $\sum r_i x_i=0$ every $x_i=a_i s^{-1}$ for
  $a_i\in R$ and a common denominator $s\in R^*$.
\end{exercise}

Let now $G$ be a group, let $\Bbbk$ be a field, and consider the group
ring $\Bbbk G$. It is the $\Bbbk$-vector space with basis $G$, and
multiplication extended multilinearly from the multiplication in
$G$. Is is well understood when the group ring $\Bbbk G$ is
semiprime:
\begin{theorem}[Passman, see~\cite{passman:gr}*{Theorems~2.12 and~2.13}]
  If $\Bbbk$ has characteristic $0$, then $\Bbbk G$ is semiprime for
  all $G$. If $\Bbbk$ has characteristic $p>0$, then $\Bbbk G$ is
  semiprime if and only if $G$ has no finite normal subgroup of order
  divisible by $p$.\qed
\end{theorem}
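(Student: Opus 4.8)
The plan is to reduce both assertions to Maschke's theorem for finite groups, with Passman's \emph{$\Delta$-methods} providing the passage between $\Bbbk G$ and its ``locally finite part''. Recall that $\Bbbk G$ is semiprime precisely when it has no nonzero nilpotent two-sided ideal, equivalently no nonzero ideal squaring to zero. The contrapositive of the nontrivial direction of the characteristic-$p$ statement is immediate: if $\operatorname{char}\Bbbk=p>0$ and $N\triangleleft G$ is finite with $p\mid\#N$, then $\widehat N\coloneqq\sum_{n\in N}n$ is central in $\Bbbk G$ (because $N$ is normal) and $\widehat N^{2}=(\#N)\widehat N=0$, so $\widehat N\,\Bbbk G$ is a nonzero ideal squaring to zero; hence $\Bbbk G$ is not semiprime. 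It therefore remains to prove that $\Bbbk G$ \emph{is} semiprime whenever $\operatorname{char}\Bbbk=0$, and whenever $\operatorname{char}\Bbbk=p>0$ and $G$ has no finite normal subgroup of order divisible by $p$.

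For characteristic $0$ I would argue directly. Suppose $0\neq\alpha=\sum_g a_g g$ satisfies $\alpha\,\Bbbk G\,\alpha=0$. The finitely many $a_g$ generate a subfield finitely generated over $\mathbb Q$, which embeds into $\mathbb C$ (a transcendence basis maps into $\mathbb C$ and the remaining algebraic part lands in $\mathbb C$ too); since enlarging $\Bbbk$ to $\mathbb C$ only enlarges $\Bbbk G$ and preserves $\alpha\,\Bbbk G\,\alpha=0$, we may assume $\Bbbk=\mathbb C$. Put $\alpha^{*}=\sum_g\overline{a_g}\,g^{-1}$. Taking the middle factor equal to $\alpha^{*}$ gives $\alpha\alpha^{*}\alpha=0$, so $\beta\coloneqq\alpha\alpha^{*}$ satisfies $\beta^{2}=\alpha\alpha^{*}\alpha\alpha^{*}=0$; as $\beta^{*}=\beta$, the coefficient of $1$ in $\beta^{2}=\beta\beta^{*}$ is $\sum_h|b_h|^{2}$ (writing $\beta=\sum_h b_h h$), forcing $\beta=0$. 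But then the coefficient of $1$ in $\alpha\alpha^{*}$ is $\sum_g|a_g|^{2}=0$, contradicting $\alpha\neq0$. Hence $\Bbbk G$ is semiprime in characteristic $0$. (Alternatively, this drops out of the reduction below, since Maschke's theorem applies to every finite group in characteristic $0$.)

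The characteristic-$p$ case is where the real work lies. Let $\Delta(G)=\{g\in G:[G:C_G(g)]<\infty\}$ be the FC-centre of $G$ and $\Delta^{+}(G)$ its subset of torsion elements; both are characteristic subgroups, and $\Delta^{+}(G)$ is locally finite by Dietzmann's lemma (a finite set of torsion elements of $\Delta(G)$ generates a finite subgroup). The crucial input is Passman's reduction: \emph{every nonzero nil ideal of $\Bbbk G$ meets $\Bbbk\Delta^{+}(G)$ nontrivially}, so that $\Bbbk\Delta^{+}(G)$ semiprime implies $\Bbbk G$ semiprime (if $J\neq0$ is an ideal of $\Bbbk G$ with $J^{2}=0$, then $J\cap\Bbbk\Delta^{+}(G)$ is a nonzero ideal of $\Bbbk\Delta^{+}(G)$ squaring to zero). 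To obtain this, one picks a nonzero element $\alpha$ of a would-be square-zero ideal whose support has minimal cardinality, translates it so that $1\in\operatorname{supp}\alpha$, and exploits $\alpha^{2}=0$ together with $\alpha\,(g^{-1}\alpha g)=0$ for $g\in G$: a point of $\operatorname{supp}\alpha$ lying outside $\Delta(G)$ has an infinite conjugacy class, which one uses to conjugate away part of the support and produce a nonzero ideal element of strictly smaller support — a contradiction, so $\operatorname{supp}\alpha\subseteq\Delta(G)$; a second, $p$-torsion, argument then pushes the support into $\Delta^{+}(G)$. I expect this step to be the main obstacle: it is precisely the substance of the $\Delta$-methods and is considerably more delicate than everything else here. (The converse inclusion, which we do not need, is formal, since the nilradical of $\Bbbk\Delta^{+}(G)$ is characteristic, hence invariant under conjugation by $G$.)

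Granting the reduction, the theorem follows from Maschke. Since $\Delta^{+}(G)$ is locally finite, $\Bbbk\Delta^{+}(G)$ is the directed union of the finite-dimensional algebras $\Bbbk H$ over finite subgroups $H\le\Delta^{+}(G)$, and $\Bbbk\Delta^{+}(G)$ is semiprime iff each $\Bbbk H$ is, because $\alpha R\alpha=0$ with $\alpha$ supported in a finite $H$ already yields $\alpha\,\Bbbk H\,\alpha=0$. If $G$ has no finite normal subgroup of order divisible by $p$, then $\Delta^{+}(G)$ has no element of order $p$ — such a $g$ would have finite normal closure $\langle g^{G}\rangle$ (Dietzmann again), which would be a finite normal subgroup of order divisible by $p$ — and hence, by Cauchy's theorem, no finite subgroup $H\le\Delta^{+}(G)$ has order divisible by $p$; Maschke then makes each $\Bbbk H$ semisimple, so $\Bbbk\Delta^{+}(G)$, and therefore $\Bbbk G$, is semiprime. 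This completes the plan.
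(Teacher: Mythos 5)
The paper offers no proof of this statement; it is quoted from Passman's book with a \qed, so there is nothing to compare your argument against. On its own merits, your sketch follows the standard route: you handle the easy direction via the central square-zero ideal $\widehat N\,\Bbbk G$; you settle characteristic $0$ by embedding a finitely generated coefficient field into $\mathbb C$ and running the $*$-algebra positivity argument on $\alpha\alpha^{*}$; and for characteristic $p$ you reduce to $\Bbbk\Delta^{+}(G)$ and finish with Dietzmann's lemma, Cauchy's theorem, and Maschke. All the deductions you draw \emph{from} the $\Delta$-reduction are correct, including the directed-union argument that reduces semiprimeness of $\Bbbk\Delta^{+}(G)$ to that of $\Bbbk H$ for finite $H$.

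The gap is exactly where you flag it: the claim that every nonzero square-zero ideal of $\Bbbk G$ meets $\Bbbk\Delta^{+}(G)$ nontrivially. Your impressionistic description of the $\Delta$-method --- pick $\alpha$ of minimal support with $1\in\operatorname{supp}\alpha$, then ``conjugate away'' a point of the support lying outside $\Delta(G)$ --- does not constitute a proof. Differencing $\alpha$ against a conjugate $g^{-1}\alpha g$ does not obviously shrink the support, and the real $\Delta$-lemma (Passman's Theorem~2.12 in the cited book) rests on a counting argument over many conjugates, together with a separate, $p$-specific argument to push the support from $\Delta(G)$ into the torsion part $\Delta^{+}(G)$. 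That lemma is the technical heart of the theorem and occupies several pages in Passman; your write-up honestly defers it, which makes the proposal a correct and well-organized outline but not a complete proof.
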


\begin{exercise}[*]
  If $G$ is non-amenable, then it has a non-amenable quotient
  $\overline G$ whose group ring $\Bbbk\overline G$ is semiprime for
  all $\Bbbk$.
\end{exercise}

\begin{theorem}[Tamari~\cite{tamari:folner}, Kielak~\cite{bartholdi:myhill}, Kropholler]
  Let $\Bbbk$ be a field and let $G$ be group such that $\Bbbk G$ is
  Goldie and semiprime. Then $G$ is amenable.

  Furthermore, if $\Bbbk G$ is a domain\footnote{Conjecturally
    (see~\cite{kaplansky:problemrings}
    and~\cite{kaplansky:problemrings2}*{Problem~6}), $\Bbbk G$ is a
    domain if and only if $G$ is torsion-free.}, then $\Bbbk G$
  satisfies Ore's condition if and only if $G$ is amenable.
\end{theorem}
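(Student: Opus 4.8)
The statement has two parts: (a) if $\Bbbk G$ is Goldie and semiprime then $G$ is amenable; (b) if $\Bbbk G$ is a domain, then $\Bbbk G$ satisfies Ore's condition iff $G$ is amenable. I would treat (b) first since it is cleaner. One direction of (b) is immediate: if $\Bbbk G$ is a domain satisfying Ore's condition, then it is in particular Goldie (no infinite direct sums of ideals in a domain with common denominators) and semiprime (all domains are), so (a) applies and $G$ is amenable. So the heart of the matter is: if $G$ is amenable and $\Bbbk G$ is a domain, then $\Bbbk G$ satisfies Ore's condition; and the harder half of (a): Goldie $+$ semiprime $\Rightarrow$ amenable, which is really the contrapositive --- if $G$ is non-amenable then $\Bbbk G$ is not Goldie (given semiprimeness).

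\textbf{Non-amenable $\Rightarrow$ not Goldie.} This is where I would invoke Proposition~\ref{prop:extmat}: since $G$ is non-amenable, there is a finite extension $\mathbb K/\Bbbk$ and an injective $\mathbb K G$-linear map $\mathbf M\colon(\mathbb K G)^n\to(\mathbb K G)^{n-1}$. By the exercise preceding the theorem, we may pass to a non-amenable quotient so that $\Bbbk G$ (hence $\mathbb K G$) stays semiprime; but here the cleaner route is to note that $\mathbf M$ being injective on a \emph{larger} free module than its target forces an infinite direct sum of nonzero right ideals, violating Goldie condition~(2). Concretely, iterating $\mathbf M$ (composing $\mathbf M\colon (\mathbb K G)^{n}\to(\mathbb K G)^{n-1}\hookrightarrow(\mathbb K G)^n$, padding with a zero coordinate) produces a strictly decreasing chain of free submodules $(\mathbb K G)^n\supsetneq\operatorname{im}\mathbf M\supsetneq\operatorname{im}\mathbf M^2\supsetneq\cdots$, all isomorphic to $(\mathbb K G)^n$ or to free modules of the same rank by injectivity; the successive complements (which are nonzero because the ranks drop, using that $\mathbb K G$ has invariant basis number --- true since $\Bbbk G$ maps onto a field, e.g.\ $\Bbbk$ via augmentation) assemble into an infinite direct sum of nonzero right ideals. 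Hence $\mathbb K G$ is not Goldie; since $\mathbb K G$ is free, hence faithfully flat, over $\Bbbk G$, one deduces $\Bbbk G$ is not Goldie either (an infinite independent family of right ideals in $\mathbb K G$ intersects down to one in $\Bbbk G$, or one argues directly with the restriction of scalars). This contradicts the hypothesis, so $G$ is amenable. The main obstacle here is the bookkeeping of ``invariant basis number / the complements are nonzero'': I would lean on the augmentation map $\Bbbk G\to\Bbbk$ and tensor everything down to $\Bbbk$-vector spaces, where injectivity of $\mathbf M\bmod\varpi$ need not survive, so more care is needed --- instead I would use that $\mathbf M$ injective implies $\mathbf M^k$ injective and that the quotient $(\mathbb K G)^{n-1}/\mathbf M((\mathbb K G)^n)$ is nonzero whenever $n-1<n$ \emph{as a module over a nonzero ring with IBN}, which is the one genuinely delicate point.

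\textbf{Amenable $+$ domain $\Rightarrow$ Ore.} Here the plan is the Tamari/Kielak argument via F\o lner sets. Given $a,s\in\Bbbk G\setminus\{0\}$ with $s$ regular (automatic in a domain), I want $b,t\neq0$ with $sb=at$. Let $T=\operatorname{supp}(a)\cup\operatorname{supp}(s)\Subset G$. Using Theorem~\ref{thm:folneramen}, pick a F\o lner set $F\Subset G$ with $\#(TF\setminus F)<\epsilon\#F$. Consider the $\Bbbk$-linear map $(\Bbbk F,\Bbbk F)\to\Bbbk(TF)$ sending $(b,t)\mapsto sb-at$; the source has dimension $2\#F$ and the target has dimension $\#(TF)\le(1+\epsilon)\#F$ for $\epsilon$ small, so for $\epsilon<1$ the map has nontrivial kernel, giving $(b,t)\neq(0,0)$ with $sb=at$. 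It remains to arrange $b\neq0$ \emph{and} $t\neq0$: if $t=0$ then $sb=0$ forces $b=0$ since $\Bbbk G$ is a domain, contradiction; similarly $b=0$ forces $t=0$. Hence both are nonzero and Ore's condition holds. By Goldie's theorem (Theorem~\ref{thm:goldie}) or directly, $\Bbbk G$ then has a classical ring of fractions, which is a skew field. This direction is routine --- the only subtlety is that we genuinely need the \emph{domain} hypothesis to rule out the degenerate solution, which is exactly why the second sentence of the theorem is stated only for domains, and it is also the (conjectural, Kaplansky) reason one cannot currently drop it. I would close by remarking that combining the two directions gives the full biconditional in part (b), and that part (a) is precisely the contrapositive established above together with the trivial observation that Ore's condition is automatically among the consequences once amenability is known for domains.
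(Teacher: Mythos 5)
Your reading of the theorem's architecture and your treatment of the amenable~$+$~domain~$\Rightarrow$~Ore direction match the paper's essentially verbatim (small caveat, present in the paper as well: for $b,t$ supported in $F$ and $s,a$ supported in $T$, the support of $sb$ sits in $TF$, not $FT$, so one really needs a \emph{left}-F\o lner set; this is harmless by Exercise~\ref{ex:leftamen}, but should be flagged). Your reduction of ``Ore $+$ domain $\Rightarrow$ amenable'' to part (a) is also correct: in an Ore domain any two nonzero right ideals meet, so right uniform dimension is $1$ and Goldie's condition (2) holds, condition (1) is vacuous in a domain, and domains are semiprime.

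The genuine divergence from the paper is in the non-amenable $\Rightarrow$ not-Goldie direction. The paper restricts scalars to get an injection $(\Bbbk G)^{dn}\hookrightarrow(\Bbbk G)^{d(n-1)}$, invokes Goldie's theorem (Theorem~\ref{thm:goldie}) to pass to the Artinian classical ring of fractions, and kills it by composition length after a flat base change. You instead try to contradict Goldie's condition (2) directly by iterating $\mathbf M$. That is a cleaner route and, done correctly, does not even use semiprimeness, which is a genuine strengthening.

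As written, however, it has a real gap. You argue via ``successive complements'' of the chain $(\mathbb K G)^n\supsetneq\mathbf M((\mathbb K G)^n)\supsetneq\cdots$ and invoke IBN to say the complements are nonzero. Two problems. First, $(\mathbb K G)^n$ is not semisimple, so a proper submodule need not admit a complement at all. Second, IBN is beside the point: each image is free of rank $n$ (being the isomorphic image of $(\mathbb K G)^n$ under an injective map), and IBN does not forbid a free module being isomorphic to a proper submodule of itself --- think of $2\Z\subsetneq\Z$. (Your version with quotients, $(\mathbb K G)^{n-1}/\mathbf M((\mathbb K G)^n)\neq0$, is correct via IBN, but quotients are not right ideals, so this does not feed directly into Goldie's condition (2).) The fix is to exploit exactly the ``padding'' you mention. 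Let $\Phi\colon(\mathbb K G)^n\to(\mathbb K G)^n$ be $\mathbf M$ followed by the inclusion into the first $n-1$ coordinates, and let $L\cong\mathbb K G$ be the last-coordinate submodule. By construction $L\cap\Phi\big((\mathbb K G)^n\big)=0$, and since $\Phi$ is injective, $\Phi^k(L)\cap\Phi^{k+1}\big((\mathbb K G)^n\big)=0$ for every $k$; a short induction then shows $L\oplus\Phi(L)\oplus\Phi^2(L)\oplus\cdots$ is a direct sum of nonzero submodules of $(\mathbb K G)^n$. Restricting scalars makes this an infinite independent family of nonzero $\Bbbk G$-submodules of $(\Bbbk G)^{dn}$, and since right uniform (Goldie) dimension is additive over finite direct sums, $\Bbbk G$ itself must have infinite uniform dimension --- precisely the negation of Goldie's condition~(2). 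With that repair, your route is valid.
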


\begin{proof}
  Assume first that $G$ is amenable and that $\Bbbk G$ is a domain,
  and let $a,s\in\Bbbk G$ be given. Let $S\Subset G$ contain the
  supports of $a$ and $s$. Since $G$ is amenable, there exists
  $F\Subset G$ with $\#(F S)<2\#F$, by F\o lner's
  Theorem~\ref{thm:folneramen}. Consider $b,t\in\Bbbk G$ as unknowns
  in $\Bbbk F$. The equation $s b= a t$ which they must satisfy is
  linear in their co\"efficients, and there are more variables
  ($2\#F$) than constraints ($\#(F S)$), so there exists a non-trivial
  solution, in which $t\neq0$ if $s\neq0$; so Ore's condition is
  satisfied.

  Assume next that $G$ is not amenable. By
  Proposition~\ref{prop:extmat}, there exists a finite field extension
  $\mathbb K$ of $\Bbbk$ and an $n\times(n-1)$ matrix $\mathbf M$ over
  $\mathbb K G$ such that multiplication by $\mathbf M$ is an injective map
  $(\mathbb K G)^n\to(\mathbb K G)^{n-1}$. Restricting scalars, namely
  writing $\mathbb K=\Bbbk^d$ qua $\Bbbk$-vector space, we obtain an
  exact sequence of free $\Bbbk G$-modules
  \begin{equation}\label{eq:pfore}
    0 \longrightarrow (\Bbbk G)^{d n} \longrightarrow (\Bbbk G)^{d(n-1)}.
  \end{equation}

  Suppose now for contradiction that $\Bbbk G$ is a semiprime Goldie
  ring, and let $S$ be its classical ring of fractions, which exists
  and is Artinian by Theorem~\ref{thm:goldie}.  By
  Exercise~\ref{ex:flat}, the ring $S$ is flat over $\Bbbk$, so
  tensoring~\eqref{eq:pfore} with $S$ we obtain an exact sequence
  \[0 \longrightarrow S^{d n} \longrightarrow S^{d(n-1)}
  \]
  which is impossible for reasons of composition length.
\end{proof}

\subsection{Amenable Banach algebras}
We concentrated, in this text, on amenability of groups. The topic of
amenability of associative algebras has been developed in various
directions; although the different definitions are in general
inequivalent, we stress here the connections between amenability of a
group (or a set) and that of an associated algebra (or module).

Let $\mathscr A$ be a Banach algebra, and let $V$ be a Banach
bimodule: a Banach space $V$ endowed with commuting actions
$V\widehat\otimes\mathscr A\to V$ and
$\mathscr A\widehat\otimes V\to V$. Recall that a \emph{derivation} is
a map $\delta\colon\mathscr A\to V$ satisfying
$\delta(ab)=a\delta(b)+\delta(a)b$, and a derivation $\delta$ is
\emph{inner} if it is of the form $\delta(a)=a v-v a$ for some
$v\in V$. The dual $V^*$ of a Banach bimodule is again a Banach
bimodule, for the adjoint actions
$(g\cdot\phi\cdot h)(x)=\phi(h^{-1}x g^{-1})$.

\begin{definition}
  The Banach $\mathscr A$-module $V$ is \emph{amenable} if all bounded
  derivations of $\mathscr A$ into $V$ are inner.  More pedantically:
  the Hochschild cohomology group $H^1(\mathscr A,V)$ is trivial.

  The algebra $\mathscr A$ itself is called amenable if all
  $H^1(\mathscr A,V^*)=0$ for all Banach bimodules $V$.
\end{definition}

\begin{exercise}[**, see~Johnson~\cite{johnson:cohomology}*{Proposition~5.1}]
  Prove that the tensor product of amenable Banach algebras is amenable.
\end{exercise}

This definition seems quite distinct from everything we have seen in
the context of groups and $G$-sets; yet it applies to the Banach
algebra $\ell^1(G)$ introduced in~\eqref{eq:convolution}. For a set
$X$, denote by $\ell^\infty(X)^*_0$ those functionals
$\Phi\colon\ell^\infty(X)\to\C$ such that $\Phi(\mathbb1_X)=0$.
\begin{theorem}[\cite{johnson:cohomology}*{Theorem~2.5}]
  Let $G$ be a group. Then the following are equivalent:
  \begin{enumerate}
  \item $G$ is amenable;
  \item $\ell^1(G)$ is amenable;
  \item the Banach $\ell^1(G)$-module $\ell^\infty(G)^*_0$ is
    amenable.
  \end{enumerate}
\end{theorem}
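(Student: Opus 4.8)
The plan is to prove the equivalences $(1)\Leftrightarrow(2)\Leftrightarrow(3)$ by exhibiting, for each implication, either an explicit invariant mean/diagonal element or an explicit nonzero Hochschild $1$-cocycle that fails to be a coboundary. The cleanest logical route is $(2)\Rightarrow(3)\Rightarrow(1)\Rightarrow(2)$: the first implication is a triviality once one has the right module, the second is the heart of the matter and uses a F\o lner-type argument, and the last uses the classical characterization of amenability of $\ell^1(G)$ via bounded approximate diagonals, which in turn rests on the F\o lner and Reiter criteria of Theorem~\ref{thm:folneramen}.

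First I would recall the standard homological reformulation. A Banach algebra $\mathscr A$ is amenable if and only if it possesses a bounded approximate diagonal: a bounded net $(u_\alpha)$ in $\mathscr A\widehat\otimes\mathscr A$ such that $a\cdot u_\alpha-u_\alpha\cdot a\to 0$ and $\pi(u_\alpha)a\to a$, where $\pi\colon\mathscr A\widehat\otimes\mathscr A\to\mathscr A$ is the multiplication map (this is Johnson's theorem, which I would cite as a black box). For $\mathscr A=\ell^1(G)$ one has $\ell^1(G)\widehat\otimes\ell^1(G)\cong\ell^1(G\times G)$, and a natural candidate approximate diagonal attached to a F\o lner net $(F_n)$ in $G$ is $u_n=\frac1{\#F_n}\sum_{g\in F_n}\delta_g\otimes\delta_{g^{-1}}$. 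The verification that $\delta_h\cdot u_n-u_n\cdot\delta_h=\frac1{\#F_n}\sum_{g}\delta_{hg}\otimes\delta_{g^{-1}}-\delta_g\otimes\delta_{g^{-1}h}$ has $\ell^1$-norm $\le 2\#(F_nh\triangle F_n)/\#F_n\to0$ is precisely where F\o lner's condition enters, and $\pi(u_n)=\delta_1$ handles the second requirement. Conversely, if $\ell^1(G)$ is amenable, applying the derivation characterization to the module $\ell^\infty(G)=\ell^1(G)^*$ (acting by translation) forces the existence of a translation-invariant functional, i.e.\ an invariant mean, so $G$ is amenable; this is essentially Corollary~\ref{cor:vneumann} combined with a short cocycle computation.

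For $(2)\Leftrightarrow(3)$ the idea is that $\ell^\infty(G)^*_0$ is, up to the obvious splitting $\ell^\infty(G)^*=\mathbb C\mathbb 1^*\oplus\ell^\infty(G)^*_0$, the ``interesting part'' of the dual module $\ell^1(G)^{**}$, and that amenability of $\ell^1(G)$ is detected already on this single module: $(2)\Rightarrow(3)$ is immediate from the definition since $\ell^\infty(G)^*_0$ is a dual Banach $\ell^1(G)$-bimodule; for $(3)\Rightarrow(2)$ I would show that triviality of $H^1(\ell^1(G),\ell^\infty(G)^*_0)$ already yields an invariant mean on $G$ (run the same cocycle argument as in $(2)\Rightarrow(1)$ but with the canonical derivation $\delta_\mu\colon a\mapsto a\star\mu-\varepsilon(a)\mu$ for a fixed mean $\mu$, noting its values lie in the zero-sum part), hence $G$ is amenable, hence $(2)$ by the equivalence $(1)\Leftrightarrow(2)$ just established.

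The main obstacle is the bookkeeping in $(2)\Rightarrow(1)$ and $(3)\Rightarrow(2)$: one must pick the canonical derivation correctly so that its being inner genuinely encodes invariance of a mean rather than something vacuous, and one must be careful that the bimodule actions are the adjoint (dual) actions, with the twist $(g\cdot\phi\cdot h)(x)=\phi(h^{-1}xg^{-1})$ as defined in the text. Concretely: fix any $m_0\in\mathscr M(G)\subseteq\ell^\infty(G)^*$; the map $\delta\colon\ell^1(G)\to\ell^\infty(G)^*_0$, $\delta(a)=a\cdot m_0-\varepsilon(a)\,m_0$ where $\varepsilon$ is the augmentation, is a bounded derivation into the zero-sum submodule (one checks $a\cdot m_0-\varepsilon(a)m_0$ annihilates $\mathbb 1$), so by $(3)$ it is inner, $\delta(a)=a\cdot n-n\cdot a$ for some $n\in\ell^\infty(G)^*_0$; then $m:=m_0-n$ satisfies $a\cdot m=\varepsilon(a)m$, which upon testing on $\delta_g$ says $m$ is translation-invariant, and $m(\mathbb 1)=1$ since $n$ is zero-sum. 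Everything else is routine; I expect the write-up to be short once the correct derivation is in hand, with all the real content funneled through Theorem~\ref{thm:folneramen} and Johnson's approximate-diagonal theorem.
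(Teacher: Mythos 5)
Your overall architecture $(2)\Rightarrow(3)\Rightarrow(1)\Rightarrow(2)$ is a legitimate alternative to the paper's $(1)\Rightarrow(2)\Rightarrow(3)\Rightarrow(1)$, and the construction of the cocycle $\delta(a)=a\cdot m_0-\varepsilon(a)m_0$ from a non-invariant mean $m_0$ is the correct key idea; the paper encodes the same cocycle as a crossed homomorphism $\eta(g)=\Phi-\Phi g$ (after replacing the bimodule $V$ by $\overline V$ with trivial left action), which avoids some bookkeeping about which bimodule structure makes $\delta$ a derivation --- your formula is only Leibniz if the right action on $\ell^\infty(G)^*$ is taken to be trivial, so you should say this explicitly rather than leave it as ``routine.'' The more substantive difference is that for $(1)\Rightarrow(2)$ you outsource the work to Johnson's approximate-diagonal theorem and then build the diagonal from a F\o lner net, whereas the paper gives a short direct argument: given a crossed homomorphism $\eta\colon\ell^1(G)\to V^*$, average it against an invariant mean, $v(f)=m\bigl(g\mapsto\eta(g)(f)\bigr)$, and verify $\eta(h)=v-v h$ by hand. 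Both are standard, but the paper's avoids a black box.

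There is, however, a genuine gap in your $(3)\Rightarrow(1)$. After making $\delta$ inner you obtain $m:=m_0-n\in\ell^\infty(G)^*$ satisfying $a\cdot m=\varepsilon(a)m$ and $m(\mathbb 1)=1$, and you assert this is an invariant mean. It is a translation-invariant \emph{functional} normalized at $\mathbb 1$, but nothing in the argument forces it to be \emph{positive}, and positivity is part of the definition of a mean (cf.\ Corollary~\ref{cor:vneumann}). The functional $n\in\ell^\infty(G)^*_0$ produced by the innerness hypothesis has no reason to be real or of small norm, so $m_0-n$ can fail positivity badly. The paper closes exactly this gap: it identifies $\ell^\infty(X)^*$ with Borel measures on the Stone--\v Cech compactification $\beta X$ via~\eqref{eq:stonerep}, observes that the $G$-invariant signed measure $\Phi-\Psi$ has nonzero total mass, and replaces it by its normalized absolute value (the Jordan decomposition is unique, hence $G$-equivariant, so $|\Phi-\Psi|$ is again invariant). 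You need some such step --- either the Stone--\v Cech/Jordan argument, or an appeal to a Dixmier-type criterion as in Theorem~\ref{thm:dixmier} --- before you may conclude that $G$ is amenable.
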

\begin{proof}
  We begin by remarking that the bimodule structure on $V$ can be
  modified into a right module structure: let $\overline V$ be $V$ qua
  Banach space, with actions
  $g\cdot \overline v\cdot h=\overline{h^{-1}v h}$ for $g,h\in G$; in
  other words, the left action becomes trivial while the right action
  is by conjugation. A derivation $\delta\colon\ell^1(G)\to V$ gives
  rise to a ``crossed homomorphism''
  $\eta\colon\ell^1(G)\to\overline V$, defined by
  $\eta(g)=\overline{g^{-1}\delta(g)}$. It satisfies
  $\eta(g h)=\eta(g)h+\eta(h)$. Inner derivations give rise to crossed
  homomorphisms of the form $\eta(g)=v-v g$ for some
  $v\in\overline V$. For the rest of the proof, we replace $V$ by
  $\overline V$.

  $(1)\Rightarrow(2)$ Let $m\colon\ell^\infty(G)\to\C$ be a mean on
  $G$. Given a Banach module $V$ and a crossed homomorphism
  $\eta\colon\ell^1(G)\to V^*$, define $v\in V^*$ by
  \[v(f)=m(g\mapsto\eta(g)(f))\text{ for all }f\in V.
  \]
  Compute then, for $h\in G$,
  \begin{align*}
    (v h)(f) &= v(f h^{-1}) = m\big(g\mapsto\eta(g)(f h^{-1})\big) = m\big(g\mapsto(\eta(g)h)(f)\big)\\
             &= m\big(g\mapsto(\eta(g h)-\eta(h))(f)\big) = (v-\eta(h))(f),
  \end{align*}
   so $\eta(h)=v-v h$.

  $(2)\Rightarrow(3)$ is obvious.

  $(3)\Rightarrow(1)$ More generally, if $X$ is a $G$-set and
  $\ell^\infty(X)^*_0$ is amenable then $X$ is amenable: choose
  $\Phi\in\ell^\infty(X)^*$ with $\Phi(\mathbb1_X)=1$, and set
  $\eta(g)\coloneqq\Phi-\Phi g$. Then
  $\eta\colon \ell^1(G)\to\ell^\infty(X)^*_0$ is a crossed
  homomorphism, so since $\ell^\infty(X)^*_0$ is amenable there exists
  $\Psi\in\ell^\infty(X)^*_0$ with $\Psi-\Psi g=\Phi-\Phi g$, namely
  $(\Phi-\Psi)g=\Phi-\Psi$. Then $\Phi-\Psi\colon\ell^\infty(X)\to\C$
  is a $G$-invariant functional on $X$.

  Furthermore, using~\eqref{eq:stonerep}, $\Phi-\Psi$ may be viewed as
  a measure on the Stone-\v Cech compactification $\beta X$; its
  normalized absolute value is a positive measure, and therefore a
  $G$-invariant mean on $X$.
\end{proof}

As a corollary, we may deduce that $\ell^1(G)$ is amenable if and only
if its augmentation ideal has \emph{approximate identities}; though we
prefer to give a direct proof. Recall that an approximate identity in
a Banach algebra $\mathscr A$ is a bounded net $(e_n)$ in $\mathscr A$
with $e_n a\to a$ for all $a\in\mathscr A$, and that the
\emph{augmentation ideal} $\varpi(\ell^1G)$ is
$\{f\in\ell^1(G)\mid\sum_{g\in G}f(g)=0\}$.
\begin{lemma}\label{lem:diagonalidentities}
  Let $\mathscr A$ be a Banach algebra with approximate identities,
  and let $f_1,\dots,f_N\in\mathscr A$ and $\epsilon>0$ be given. Then
  there exists $e\in\mathscr A$ with $\|f_i-e f_i\|<\epsilon$ for all
  $i=1,\dots,N$.
\end{lemma}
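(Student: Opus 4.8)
The statement is the standard trick for ``upgrading'' a net of one-sided approximate identities to a single element that is simultaneously approximate on a finite list. The plan is to induct on $N$, where at each stage we absorb one more function $f_i$ into a better and better approximate identity. Concretely, suppose we have already produced $e'\in\mathscr A$ with $\|f_i-e' f_i\|<\delta$ for $i=1,\dots,N-1$, for a $\delta$ to be chosen later. Then set $g_i\coloneqq f_i-e' f_i$ for $i<N$, and apply the approximate-identity hypothesis to the finite set $\{g_1,\dots,g_{N-1},f_N\}$ to get an element $d$ (drawn from the bounded net, so $\|d\|\le C$ for the net's bound $C$) with $\|g_i-d g_i\|<\delta$ for $i<N$ and $\|f_N-d f_N\|<\delta$. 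Finally put $e\coloneqq e'+d-d e'$, the ``Hadamard-style'' composition of $e'$ and $d$; then $1-e=(1-d)(1-e')$ formally, so $f_i-e f_i=(1-d)(1-e')f_i=(1-d)g_i$ for $i<N$, which has norm $<\delta$, while $f_N-e f_N=(f_N-d f_N)-d(f_N-e' f_N)$ has norm at most $\delta+C\|f_N-e'f_N\|$.

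First I would set up the base case $N=1$, which is literally the definition of an approximate identity: pick $e$ from the net with $\|f_1-ef_1\|<\epsilon$. Then I would run the inductive step as above, being careful about the bookkeeping of constants: if the net has uniform bound $C=\sup_n\|e_n\|$, then iterating $N$ times the error can grow by a factor controlled by $C$ at each stage, so I would choose the tolerances $\delta_1\ge\delta_2\ge\cdots$ at the successive stages small enough (e.g.\ $\delta_k\le\epsilon/(1+C)^{N-k}$) that the accumulated error at the end is below $\epsilon$. The term $\|f_N-e'f_N\|$ that appears is \emph{not} yet known to be small — $e'$ was built to approximate $f_1,\dots,f_{N-1}$, not $f_N$ — but it is bounded by $(1+C)\|f_N\|$ since $\|e'\|\le\|e_{k}\|+\|d\|+\|de_k\|$-type estimates keep the constructed elements bounded; so the bound $\delta+C(1+C)\|f_N\|$ is finite, and choosing $\delta$ small relative to the next stage's tolerance is all that is needed.

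The one genuine subtlety, and the step I expect to need the most care, is verifying that the elements $e$ produced along the induction stay \emph{uniformly bounded} in norm, independently of how small the $\delta_k$ are — otherwise the factor ``$C$'' multiplying $\|f_N-e'f_N\|$ in the last display is not under control. This follows because $e\coloneqq e'+d-de'$ satisfies $\|e\|\le\|e'\|+\|d\|+\|d\|\,\|e'\|=\|e'\|(1+\|d\|)+\|d\|\le (1+C)\|e'\|+C$, so if $\|e'\|\le B_{N-1}$ then $\|e\|\le (1+C)B_{N-1}+C=:B_N$, a bound depending only on $C$ and $N$, not on the tolerances. With this uniform bound $B_N$ in hand, one sets $\delta\coloneqq\epsilon/(2(1+C B_{N-1}(1+C)))$ (or any similar explicit choice) at the last stage and propagates the analogous choices backwards through the induction; everything else is a routine estimate using submultiplicativity of the norm. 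I would then remark that this lemma is exactly what is needed to connect amenability of $\ell^1(G)$ with existence of approximate identities in the augmentation ideal $\varpi(\ell^1 G)$, as announced just before the statement.
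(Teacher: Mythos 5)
There is a genuine gap at the stage where you handle $f_N$. Your idea of setting $e=e'+d-de'$, so that $1-e=(1-d)(1-e')$, is exactly the paper's; but then $f_N-e f_N=(1-d)(1-e')f_N=(1-d)(f_N-e'f_N)$, and you have no control on this quantity. You chose $d$ so that $(1-d)f_N$ is small, but that is the wrong thing: the expansion you wrote, $f_N-e f_N=(f_N-d f_N)-d(f_N-e'f_N)$, is not an identity (expanding both sides gives a discrepancy of $(e'-d)f_N$). And even granting the estimate you wrote, you concede yourself that $\|f_N-e'f_N\|$ is not small and is merely bounded by $(1+C)\|f_N\|$; the term $C(1+C)\|f_N\|$ in your bound is a fixed constant that does not shrink when $\delta$ does, so no choice of tolerances in the earlier stages rescues it. The correct move, which is what the paper does, is to apply the approximate-identity property not to $f_N$ but to the \emph{single} element $f_N-e'f_N$: choose $e''$ from the net with $\|(f_N-e'f_N)-e''(f_N-e'f_N)\|<\epsilon$. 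Then $f_N-e f_N=(1-e'')(f_N-e'f_N)$ is under control by fiat, and for $i<N$ one has $\|f_i-e f_i\|=\|(1-e'')(f_i-e'f_i)\|\le(1+K)\|f_i-e'f_i\|$, so it suffices to run the induction hypothesis at tolerance $\epsilon/(1+K)$, with $K=\sup_n\|e_n\|$. Note also that you invoke the approximate-identity hypothesis simultaneously on the finite list $\{g_1,\dots,g_{N-1},f_N\}$, which is precisely the statement being proved; if you intended instead to use directedness of the net, then no induction is needed at all, so either way the structure of the argument is off. The whole point of the inductive argument is that the hypothesis need only be applied to one element at a time, and the element to apply it to is $f_N-e'f_N$, not $f_N$.
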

\begin{proof}
  Let $K=\sup\|e_n\|$ be a bound on the norms of approximate
  identities in $\mathscr A$. For $N=0$ there is nothing to do. If
  $N\ge1$, find by induction $e'\in\mathscr A$ satisfying
  $\|f_i-e' f_i\|<\epsilon/(1+K)$ for all $i<N$, and let $e''\in\mathscr A$
  satisfy $\|(f_N-e' f_N)-e''(f_N-e' f_N)\|<\epsilon$. Set
  $e\coloneqq e'+e''-e''e'$, and check.
\end{proof}

\begin{theorem}\label{thm:amen=approxid}
  Let $G$ be a group. Then $G$ is amenable if and only if $\varpi(\ell^1G)$
  has approximate identities.
\end{theorem}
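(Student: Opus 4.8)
The plan is to use the Reiter-type characterization of amenability (Theorem~\ref{thm:folneramen} and Proposition~\ref{prop:Ramenable}) to pass back and forth between almost-invariant positive functions and approximate identities in $\varpi(\ell^1G)$. Note first that an approximate identity in $\varpi(\ell^1G)$ is, by definition, a bounded net $(e_n)$ in $\varpi(\ell^1G)$ with $\|g-e_ng\|\to 0$ for all $g\in\varpi(\ell^1G)$; and since $\varpi(\ell^1G)$ is spanned (topologically) by the elements $s-1$ for $s\in G$, having approximate identities is equivalent, by Lemma~\ref{lem:diagonalidentities}, to the following: for every $S\Subset G$ and every $\epsilon>0$ there is a single $e\in\varpi(\ell^1G)$ of bounded norm with $\|(s-1)-e(s-1)\|<\epsilon$ for all $s\in S$.

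For the forward direction, suppose $G$ is amenable. By Reiter's condition (Theorem~\ref{thm:folneramen}$(1)\Rightarrow(2)$) there is, for every $S\Subset G$ and $\epsilon>0$, a positive $\phi\in\ell^1(G)$ with $\|\phi\|_1=1$ and $\|\phi s-\phi\|<\epsilon$ for all $s\in S$. The natural candidate for an approximate identity is $e\coloneqq 1-\check\phi$ (or its translates); one computes $(s-1)-e(s-1)=\check\phi(s-1)$, whose norm is controlled by $\|\check\phi s^{-1}-\check\phi\|=\|\phi s-\phi\|$ after unwinding the convolution and the adjoint. Here one must be slightly careful that $e$ genuinely lies in $\varpi(\ell^1G)$: since $\sum\phi=1$, indeed $\sum e=0$. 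The norms $\|e\|=\|1-\check\phi\|\le 2$ are uniformly bounded, so this produces an approximate identity after diagonalizing over the directed set of pairs $(S,\epsilon)$.

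For the converse, suppose $\varpi(\ell^1G)$ has approximate identities. Given $S\Subset G$ and $\epsilon>0$, take $e\in\varpi(\ell^1G)$, bounded in norm by some constant $K$, with $\|(s-1)-e(s-1)\|<\epsilon$ for all $s\in S$. Then $1-e$ is a positive \emph{candidate} only after a correction: $e$ need not be supported away from $1$ nor have the right sign, so instead I would write $f\coloneqq\delta_1-e\in\ell^1(G)$, note $\sum f=1$, and observe $\|fs-f\|=\|(s-1)-e(s-1)\|\cdot\dots$ — more precisely $fs-f=(\delta_1-e)s-(\delta_1-e)=(s-1)-e(s-1)$, so $\|fs-f\|<\epsilon$. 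Replacing $f$ by $|f|/\||f|\|_1$ destroys the almost-invariance estimate only up to a bounded factor (using $\||f|s-|f|\|\le\|fs-f\|$ and $\||f|\|_1\ge|\sum f|=1$), so $\mu\coloneqq|f|/\||f|\|_1\in\mathscr P(G)$ satisfies $\|\mu s-\mu\|<\epsilon$ for all $s\in S$. This is Reiter's condition for $p=1$, hence $G$ is amenable by Theorem~\ref{thm:folneramen}$(2)\Rightarrow(1)$.

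\textbf{Main obstacle.} The delicate point is the bookkeeping around the sign-and-normalization passage from an element $e$ of the augmentation ideal (which has zero sum, hence mixed signs and no control on $\|e\|_1$ versus its ``positive part'') to a genuine probability measure with preserved almost-invariance: one has to verify that the inequality $\||f|s-|f|\|\le\|fs-f\|$ holds pointwise and that the normalizing denominator $\||f|\|_1$ stays bounded below, so that $\epsilon$ survives up to a harmless constant. Everything else is a routine translation between the convolution algebra language and the F\o lner/Reiter language already set up in Theorem~\ref{thm:folneramen} and Lemma~\ref{lem:diagonalidentities}.
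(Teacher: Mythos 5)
Your overall route is the same as the paper's: for the forward direction, set $e=1-(\text{Reiter function})$ and observe $\|e\|\le 2$; for the converse, set $g\coloneqq 1-e$, take absolute values and normalize, using $\|g\|_1\ge|\sum g|=1$. The converse direction matches the paper essentially verbatim.

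There is, however, a genuine slip in the forward direction: with $e=1-\check\phi$ you need $\|\check\phi(s-1)\|$ to be small, and expanding the convolution gives
$\|\check\phi(s-1)\|=\|\check\phi\,\delta_s-\check\phi\|=\sum_{h\in G}|\phi(sh)-\phi(h)|$,
the \emph{left}-translation norm; but the Reiter function $\phi$ from Theorem~\ref{thm:folneramen} only controls the \emph{right}-translation norm $\|\phi s-\phi\|=\sum_{h\in G}|\phi(hs)-\phi(h)|$, and your asserted identity $\|\check\phi s^{-1}-\check\phi\|=\|\phi s-\phi\|$ likewise compares a left-translation to a right-translation, so it is false for a general $\phi$. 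The adjoint is an unnecessary detour: just take $e\coloneqq 1-\phi$, so that $(s-1)-e(s-1)=\phi(s-1)=\phi\,\delta_s-\phi$ has norm $\|\phi s-\phi\|<\epsilon$ directly, with $\sum e=0$ and $\|e\|\le 2$. With that repair the proof goes through. The only remaining (harmless) difference from the paper is organizational: you reduce to the generators $s-1$ of $\varpi(\ell^1 G)$ and then invoke a density argument (legitimate, since your $e$'s are uniformly bounded), whereas the paper handles an arbitrary $f\in\varpi(\ell^1G)$ directly by truncating its support to a finite $S$ and estimating $\|hf\|\le\sum_{g\in S}|f(g)|\,\|hg-h\|+2\sum_{g\notin S}|f(g)|$.
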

\begin{proof}
  ($\Rightarrow$) Given $f\in\varpi(\ell^1G)$ and $\epsilon>0$, let
  $S\Subset G$ be such that
  $\sum_{g\in G\setminus S}|f(g)|<\epsilon/2$. Since $G$ is amenable,
  there exists $h\in\ell^1(G)$ with $h\ge0$ and $\|h\|=1$ and
  $\|h-h s\|<\epsilon/2$ for all $s\in S$; so $\|h f\|<\epsilon$. Set
  $e\coloneqq1-h$; then $\|e\|\le2$, and $\|f-e f\|=\|h f\|<\epsilon$.

  ($\Leftarrow$) Let $S=\{s_1,\dots,s_n\}\Subset G$ and $\epsilon>0$
  be given, and apply Lemma~\ref{lem:diagonalidentities} with
  $f_i=1-s_i$ to obtain $e\in\mathscr A$ satisfying
  $\|1-s-e(1-s)\|<\epsilon$ for all $s\in S$; set $g\coloneqq1-e$ to
  rewrite this as $\|g-g s\|<\epsilon$. Finally set
  $h(x)=|g(x)|/\|g\|$ for all $x\in G$; we have obtained $h\ge0$ and
  $\|h\|=1$ and $\|h-h s\|<\epsilon$, so $G$ is amenable by
  Theorem~\ref{thm:folneramen}(2).
\end{proof}

\noindent We recall without proof Cohen's factorization theorem:
\begin{lemma}[Cohen~\cite{cohen:factorization}]
  Let $\mathscr A$ be a Banach algebra with approximate identities,
  and consider $z\in\mathscr A$. Then for every $\epsilon>0$ there
  exists $x,y\in\mathscr A$ with $z=x y$ and $\|z-y\|<\epsilon$.\qed
\end{lemma}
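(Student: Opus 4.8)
The statement is Cohen's factorization theorem; the paper only recalls it, so here is the plan for a proof. The idea is to realise $x$ as a limit of elements that are invertible in the unitisation but whose ``scalar part'' tends to $0$, while simultaneously tracking the corresponding preimages of $z$.

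First I would pass to the unitisation $\mathscr A^+ = \mathscr A \oplus \mathbb C 1$, a Banach algebra with $\|a+\lambda 1\| = \|a\|+|\lambda|$, and fix $K\ge 1$ bounding a bounded approximate identity $(e_\alpha)$ (for $\varpi(\ell^1 G)$ with $G$ amenable, Theorem~\ref{thm:amen=approxid} together with Exercise~\ref{ex:leftamen} provides a two-sided one). Choose $\delta\in(0,1/(1+K))$ so small that $M := (1-\delta(1+K))^{-1}$ is close to $1$. Then for any $e$ with $\|e\|\le K$ the element $c(e) := (1-\delta)1 + \delta e$ is invertible in $\mathscr A^+$, with $c(e)^{-1} = (1-\delta)^{-1}\sum_{j\ge 0}\big(\!-\tfrac{\delta}{1-\delta}\big)^j e^j$, $\|c(e)^{-1}\|\le M$, and $c(e)^{-1}$ commutes with $e$; moreover $c(e)\equiv(1-\delta)1 \pmod{\mathscr A}$.

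Next I would build recursively elements $e_1,e_2,\dots$ from the approximate identity and set $x_n = c(e_1)\cdots c(e_n)\in\mathscr A^+$ (hence invertible) and $y_n = x_n^{-1}z$, so $y_0 = z$. Since each $c(e_k)\equiv(1-\delta)1\pmod{\mathscr A}$ we may write $x_n = (1-\delta)^n 1 + w_n$ with $w_n\in\mathscr A$. Given $x_n$ (hence $y_n$ and $w_n$) I would use the approximate identity — diagonalising over the finitely many relevant elements exactly as in Lemma~\ref{lem:diagonalidentities}, on the left for $y_n$ and on the right for $w_n$ — to pick $e_{n+1}$ with
\[
\max\big\{\,\|y_n - e_{n+1}y_n\|,\ \|w_n - w_n e_{n+1}\|\,\big\} < \eta_n ,
\]
for a summable sequence $(\eta_n)$ to be fixed. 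The two identities driving the estimates are, first, $x_{n+1}=x_n c(e_{n+1})$, so $y_{n+1}=c(e_{n+1})^{-1}y_n$ and $y_{n+1}-y_n = \delta\, c(e_{n+1})^{-1}(y_n-e_{n+1}y_n)$, whence $\|y_{n+1}-y_n\|\le\delta M\eta_n$; and second, $x_{n+1}-x_n = \delta\,x_n(e_{n+1}-1) = \delta(1-\delta)^n(e_{n+1}-1) + \delta\, w_n(e_{n+1}-1)$, whence $\|x_{n+1}-x_n\|\le\delta(1-\delta)^n(1+K)+\delta\eta_n$. Both sequences are therefore Cauchy; choosing $\eta_n = 2^{-n}\epsilon/(2\delta M)$ gives $\sum_n\|y_{n+1}-y_n\|<\epsilon$.

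Finally, set $y=\lim_n y_n$ and $x=\lim_n x_n$. Applying the continuous quotient $\mathscr A^+\to\mathbb C$ to $x_n\equiv(1-\delta)^n$ shows $x\in\mathscr A$. Since $z=x_ny_n$ for every $n$ and both factors converge, passing to the limit gives $z=xy$, while $\|z-y\| = \|y_0-y\|\le\sum_n\|y_{n+1}-y_n\|<\epsilon$. The main obstacle is precisely the recursive choice in the previous paragraph: each $e_{n+1}$ must be chosen so as to almost fix the already-constructed data $y_n$ and $w_n$ while the uniform bound $\|c(e_{n+1})^{-1}\|\le M$ is preserved. This is exactly where boundedness of the approximate identity is indispensable — it is what makes the $c(e)$ invertible with controlled inverses and lets both telescoping series converge geometrically.
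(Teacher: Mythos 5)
The paper only quotes Cohen's theorem, so there is no internal proof to compare with; I am checking your argument against the lemma as stated, together with the paper's own definition of an approximate identity --- a bounded net $(e_\alpha)$ with $e_\alpha a\to a$ for all $a$, i.e.\ a \emph{left} bounded approximate identity. Your recursion has a genuine gap here: to make $(x_n)$ Cauchy you require $\|w_n-w_n e_{n+1}\|<\eta_n$, i.e.\ that $e_{n+1}$ act as an approximate identity \emph{on the right} of $w_n$, and a left bounded approximate identity gives no such control. You flag this yourself, and it is indeed true that $\varpi(\ell^1 G)$ has a two-sided bounded approximate identity when $G$ is amenable (combine the proof of Theorem~\ref{thm:amen=approxid} with Exercise~\ref{ex:leftamen}), so your argument does prove what the paper actually uses; but Cohen's theorem, and the lemma as stated with the paper's definition, hold assuming only a one-sided bounded approximate identity, and your proof as written does not cover that case.

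The repair is to accumulate the new factor on the \emph{left}: set $x_n=c(e_n)\cdots c(e_1)$. Then $x_{n+1}-x_n=\delta(e_{n+1}-1)x_n$, and with $x_n=(1-\delta)^n 1+w_n$ the recursion becomes $w_{n+1}-w_n=\delta(1-\delta)^n e_{n+1}+\delta(e_{n+1}w_n-w_n)$, which only requires $e_{n+1}$ to almost fix $w_n$ from the left. The cost is that $y_n=x_n^{-1}z$ no longer satisfies $y_{n+1}=c(e_{n+1})^{-1}y_n$, since the new inverse now sits on the inside; instead one computes $y_{n+1}-y_n=\delta\,x_{n+1}^{-1}(z-e_{n+1}z)$, controlled by the a priori bound $\|x_{n+1}^{-1}\|\le M^{n+1}$ together with a choice of $e_{n+1}$ making $\|z-e_{n+1}z\|<\epsilon/(\delta M^{n+1}2^{n+2})$ --- again a left condition, this time on the fixed element $z$, so there is no circularity. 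Both requirements on $e_{n+1}$ are then left conditions, hence achievable by a single element of the directed net; the rest of your argument (the choice of $\delta$, the uniform bound $\|c(e)^{-1}\|\le M$, and the passage to the limit showing $x\in\mathscr A$ and $z=xy$) goes through unchanged.
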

For instance, it follows that if $G$ is an amenable group then
$\varpi(\ell^1G)^2=\varpi(\ell^1G)$.  Amenability, and the Liouville
property, are tightly related to the ideal structure of
$\ell^1(G)$. The following is in fact a reformulation of
Theorem~\ref{thm:kv}.
\begin{theorem}[Willis~\cite{willis:probability}]\label{thm:willis}
  Let $G$ be a group and let $X$ be a $G$-set. For a probability
  measure $\mu$ on $G$, let
  \[\ell^1_\mu(X)\coloneqq\overline{\{f-f\mu\mid f\in\ell^1(X)\}}
  \]
  denote the closed submodule of $\ell^1(X)$ generated by $1-\mu$, and
  write $\varpi(\ell^1X)=\{f\in\ell^1(X)\mid\sum_{g\in G}f(g)=0\}$. Then
  $(X,\mu)$ is Liouville if and only if $\ell^1_\mu(X)=\varpi(\ell^1X)$.

  In particular, $G$ is amenable if and only if
  $\{\ell^1_\nu(G)\mid \mu\in\mathscr P(G)\}$ has a unique maximal element,
  which is $\varpi(\ell^1G)$.
\end{theorem}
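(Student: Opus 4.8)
The plan is to deduce Theorem~\ref{thm:willis} from the probabilistic characterization of the Liouville property already established, namely Proposition~\ref{prop:liouville} (in the form: $(X,\mu)$ is Liouville iff $\|\delta_x\mu^n-\delta_y\mu^n\|\to0$ for all $x,y$), together with the observation that $\ell^1_\mu(X)$ is exactly the $\ell^1(G)$-submodule generated by the element $1-\mu\in\ell^1(G)$ acting on $\ell^1(X)$. First I would record the easy inclusion: since $\sum$ of $f-f\mu$ over $X$ is $0$ for every $f\in\ell^1(X)$ (because $\mu$ has total mass $1$ and the $G$-action preserves mass on $X$), and $\varpi(\ell^1X)$ is norm-closed, we always have $\ell^1_\mu(X)\subseteq\varpi(\ell^1X)$. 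So the content of the first statement is the reverse inclusion under the Liouville hypothesis, and the failure of the reverse inclusion when Liouville fails.

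For the direction ``Liouville $\Rightarrow$ $\ell^1_\mu(X)=\varpi(\ell^1X)$'': it suffices to show every $\delta_x-\delta_y$ with $x,y$ in the same $G$-orbit lies in $\ell^1_\mu(X)$ (these span a dense subspace of $\varpi(\ell^1X)$ once one also throws in the trivial observation that elements supported on distinct orbits cannot be approximated — but here I should be careful: $\varpi(\ell^1X)$ only makes sense, and the statement is only true, when $X$ is transitive, or one interprets $\varpi$ orbit-by-orbit; I would restrict to $X$ transitive as the theorem implicitly does via its use in Corollary~\ref{thm:kv}). Given transitivity, fix $x$ and write $f=\delta_x h$ for $h\in\varpi(\ell^1G)$; the identity
\[
\delta_x h-\delta_x h\mu^n = \delta_x h(1-\mu^n) = \delta_x h\sum_{k=0}^{n-1}\mu^k(1-\mu)\in\ell^1_\mu(X)
\]
shows $\delta_x h\mu^n\to\delta_x h$ modulo $\ell^1_\mu(X)$ would follow once $\|\delta_x h\mu^n\|\to0$; and that last convergence is exactly what Proposition~\ref{prop:liouville} (applied as in the proof of Theorem~\ref{thm:Lamenable}, $(3)\Rightarrow(1)$) gives, decomposing $h$ into finitely many $\delta_{x_i}-\delta_{y_i}$ up to small $\ell^1$-error. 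Since $\ell^1_\mu(X)$ is closed, $f=\delta_x h\in\ell^1_\mu(X)$, proving $\varpi(\ell^1X)\subseteq\ell^1_\mu(X)$.

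For the converse, ``$\ell^1_\mu(X)=\varpi(\ell^1X)$ $\Rightarrow$ Liouville'': a nonzero bounded harmonic function $\phi$ (i.e.\ $\phi\check\mu=\phi$, $\phi$ nonconstant, which exists if not Liouville by Proposition~\ref{prop:liouville}) pairs to $0$ with every $f-f\mu$ since $\langle f-f\mu,\phi\rangle=\langle f,\phi-\phi\check\mu\rangle=0$, hence annihilates $\ell^1_\mu(X)$; but $\phi$ nonconstant means there exist $x,y$ with $\phi(x)\neq\phi(y)$, so $\langle\delta_x-\delta_y,\phi\rangle\neq0$, i.e.\ $\phi$ does not annihilate $\varpi(\ell^1X)$, contradicting the hypothesis (here using that $\varpi(\ell^1X)^\perp$ in $\ell^\infty(X)$ is exactly the constants). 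Finally the ``in particular'' clause: combine with Corollary~\ref{thm:kv}, which says $G$ amenable iff some $(G,\mu)$ is Liouville; by the first part that $\mu$ has $\ell^1_\mu(G)=\varpi(\ell^1G)$, and since $\ell^1_\nu(G)\subseteq\varpi(\ell^1G)$ always, $\varpi(\ell^1G)$ is then the unique maximal element of the family; conversely if $\varpi(\ell^1G)$ is attained as $\ell^1_\mu(G)$ for some $\mu$ then $(G,\mu)$ is Liouville, so $G$ is amenable.

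\textbf{Main obstacle.} I expect the delicate point to be the density/closure bookkeeping in the transitive case: verifying that $\ell^1_\mu(X)$ really is the norm-closed submodule generated by $1-\mu$ and that finite sums of $\delta_x-\delta_y$ are dense in $\varpi(\ell^1X)$ with controllable error, so that the Proposition~\ref{prop:liouville} estimate (which is stated pointwise in $x,y$) can be promoted to the whole augmentation submodule — exactly the $\epsilon/\#S$ splitting argument appearing at the end of the proof of Theorem~\ref{thm:Lamenable}. Everything else is formal duality between $\ell^1(X)$ and $\ell^\infty(X)$ and the harmonic-function characterization already in hand.
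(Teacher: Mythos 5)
Your proof is correct on the main equivalence, and the converse direction is handled by a genuinely different (and arguably cleaner) argument than the one in the paper. The paper shows $\ell^1_\mu(X)=\varpi(\ell^1X)\Rightarrow$ Liouville by a Cesàro-averaging computation: for $f\in\varpi(\ell^1X)$, approximate $f\approx g(1-\mu)$ and observe that $\|f\cdot\frac1n\sum_{i<n}\mu^i\|\approx\|g(1-\mu^n)/n\|\to0$; since $\|f\mu^n\|$ is non-increasing (as $\|\mu\|=1$), the Cesàro means going to zero forces $\|f\mu^n\|\to0$, and then Proposition~\ref{prop:liouville} applies with $f=\delta_x-\delta_y$. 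You instead dualize: a bounded non-constant harmonic $\phi$ annihilates $\ell^1_\mu(X)$ via $\langle f-f\mu,\phi\rangle=\langle f,\phi-\phi\check\mu\rangle=0$ but not $\varpi(\ell^1X)$, since $\langle\delta_x-\delta_y,\phi\rangle=\phi(x)-\phi(y)\neq0$. Both are sound; your route exposes the duality more transparently, while the paper's is closer to the $\ell^1$-analytic spirit of the surrounding Theorem~\ref{thm:Lamenable}. In the forward direction you are in fact more careful than the paper: the paper's one-liner ``By Proposition~\ref{prop:liouville}, $\|f\mu^n\|\to0$'' silently uses the finite-support splitting you spell out (and which the paper does make explicit in the proof of Theorem~\ref{thm:Lamenable}~$(3)\Rightarrow(1)$).

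There is one real gap, in the ``in particular'' clause. You prove: $G$ amenable $\Leftrightarrow$ $\varpi(\ell^1G)$ is \emph{attained} as some $\ell^1_\mu(G)$, and then observe it is automatically the unique maximal element. But the paper proves something slightly stronger and less trivial, namely that \emph{any} unique maximal element of $\{\ell^1_\nu(G)\}_\nu$ must already equal $\varpi(\ell^1G)$ --- so that ``has a unique maximal element'' alone (without a priori knowing what it is) characterizes amenability. This requires an extra step: given $f\in\varpi(\ell^1G)$, decompose $f=g+ih$ with $g,h$ real, then $g=g^+-g^-$, $h=h^+-h^-$ with $g^\pm,h^\pm\ge0$ of equal mass, so that each normalized piece is of the form $1-\nu$ for a probability measure $\nu$; each piece lies in some $\ell^1_\nu(G)$ and hence in the unique maximal one, forcing $f$ to lie there too. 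Your proposal skips this and simply reads ``which is $\varpi(\ell^1G)$'' as part of the hypothesis, which weakens the ``if'' direction of the final biconditional.
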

\begin{proof}
  Assume first that $(X,\mu)$ is Liouville, and consider an
  arbitrary $f\in\varpi(\ell^1X)$. By Proposition~\ref{prop:liouville}, we
  have $\|f\mu^n\|\to0$, so $f-f\mu^n\to f$, and
  $f-f\mu^n=f(1+\mu+\cdots+\mu^{n-1})(1-\mu)\in\ell^1_\mu(X)$, so
  $f\in\ell^1_\mu(X)$.

  Conversely, if $\mu$ is such that $\ell^1_\mu(X)=\varpi(\ell^1X)$, then
  given $f\in\varpi(\ell^1X)$ we may for every $\epsilon>0$ find
  $g\in\ell^1(X)$ with $\|f-g(1-\mu)\|<\epsilon$; then
  $\|f\cdot\frac1n\sum_{i=0}^{n-1}\mu^i\|\approx\|g(1-\mu^n)/n\|\to0$,
  so $f\mu^n\to0$. By Proposition~\ref{prop:liouville}, the random
  walk $(X,\mu)$ is Liouville.

  By Theorem~\ref{thm:kv}, $G$ is amenable if and only if there exists
  a Liouville measure on $G$.

  It remains to prove that if $\ell^1_\mu(X)$ is the unique maximal
  element in $\{\ell^1_\nu(X)\mid \nu\in\mathscr P(G)\}$ then
  $\ell^1_\mu(X)=\varpi(\ell^1X)$. For this, $f$ belong to
  $\varpi(\ell^1X)$ and write $f=g+i h$ with $g,h$ real. Furthermore,
  write $g=g^+-g^-$ and $h=h^+-h^-$ for positive $g^\pm,h^\pm$, and
  set $c=\sum_{x\in X}g^+(x)=\sum_{x\in X}g^-(x)$ and
  $d=\sum_{x\in X}h^+(x)=\sum_{x\in X}h^-(x)$. Then
  \[f=c(1-g^+/c) + (-c)(1-g^-/c)+(id)(1-h^+/d)+(-id)/(1-h^-/d),
  \]
  and each term belongs to some $\ell^1_\nu(X)$ and therefore to
  $\ell^1_\mu(X)$ because $\ell^1_\mu(X)$ is maximal; so
  $f\in \ell^1_\mu(X)$.
\end{proof}

\begin{exercise}[**, see~Johnson~\cite{johnson:cohomology}*{Proposition~5.1}]
  Let $\mathscr A$ be an amenable algebra, and let
  $J\triangleleft\mathscr A$ be a closed ideal. Prove that if $J$ and
  $\mathscr A/J$ are amenable, then $\mathscr A$ is
  amenable. Conversely, if $\mathscr A$ is amenable then
  $\mathscr A/J$ is amenable, and if $J$ has approximate identities
  then it is amenable.
\end{exercise}

\subsection{Amenable algebras}
We now turn to the group algebra $\Bbbk G$ for a field $\Bbbk$. Note
that we do not make any assumption on the field, which could be
finite.
\begin{definition}\label{defn:aa}
  Let $\mathscr A$ be an associative algebra, and let $V$ be an
  $\mathscr A$-module. We call $V$ \emph{amenable} if for every
  finite-dimensional subspace $S\le\mathscr A$ and every $\epsilon>0$
  there exists a finite-dimensional subspace $F\le V$ with
  \[\dim(F S)<(1+\epsilon)\dim(F).\]

  The algebra $\mathscr A$ itself is called \emph{amenable} if all
  non-zero $\mathscr A$-modules are amenable\footnote{Some people
    defined amenability of algebras --- erroneously, in my opinion ---
    as mere amenability of the regular right module.}.
\end{definition}

We note in passing that if $\mathscr A$ is finitely generated, then
the `$S$' in Definition~\ref{defn:aa} may be fixed once and for all to
be a generating subspace of $\mathscr A$.

\begin{theorem}[\cite{bartholdi:aa1}]\label{thm:aa}
  Let $G$ be a group and let $X$ be a $G$-set. Then $\Bbbk X$ is an
  amenable $\Bbbk G$-module if and only if $X$ is amenable.
\end{theorem}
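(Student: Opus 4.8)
The statement asks to identify amenability of the $G$-set $X$ with a F\o lner-type condition for the $\Bbbk G$-module $\Bbbk X$: for every finite-dimensional $S\le\Bbbk G$ and every $\epsilon>0$ there is a finite-dimensional $F\le\Bbbk X$ with $\dim(F S)<(1+\epsilon)\dim F$. The forward direction is the easy one. Assuming $X$ amenable, I would fix a finite-dimensional $S\le\Bbbk G$; enlarging if necessary I may assume $S$ is spanned by finitely many group elements $g_1,\dots,g_k$ together with $1$. By F\o lner's condition for $X$ (Theorem~\ref{thm:folneramen}(5), or rather Lemma~\ref{lem:folnerfg}), there is a finite subset $E\Subset X$ with $\#(E S\setminus E)<\epsilon\#E$ where I abuse notation writing $ES$ for $\bigcup_i E g_i$. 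Then $F:=\Bbbk E\le\Bbbk X$ satisfies $F S\subseteq\Bbbk(E\cup ES)$, so $\dim(FS)\le\#(E\cup ES)<(1+\epsilon)\#E=(1+\epsilon)\dim F$. This already does it: the key point is that the characteristic functions $\mathbb 1_x$ for $x\in X$ form a basis of $\Bbbk X$ permuted by $G$, so a combinatorial F\o lner set gives a linear F\o lner subspace verbatim.

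\textbf{The converse.} Now suppose $\Bbbk X$ is an amenable $\Bbbk G$-module; I must produce F\o lner sets in $X$. The hypothesis hands me, for each finite $S\Subset G$ (take $S=\{1,g_1,\dots,g_k\}$, and $\bar S$ its $\Bbbk$-span) and each $\epsilon>0$, a finite-dimensional subspace $F\le\Bbbk X$ with $\dim(F\bar S)<(1+\epsilon)\dim F$. The difficulty is that $F$ need not be spanned by basis vectors $\mathbb 1_x$; it is an arbitrary subspace. The plan is to pass from such an $F$ to a genuine finite subset of $X$ by a ``straightening'' or ``rank over support'' argument. Let $\Sigma\Subset X$ be the (finite) union of supports of the elements of a basis of $F$, so $F\le\Bbbk\Sigma$ and $F\bar S\le\Bbbk(\Sigma S)$. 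Consider the $\#\Sigma\times\dim F$ coefficient matrix of a basis of $F$; row-reducing, I may assume the basis is in reduced echelon form with respect to some ordering of $\Sigma$, so that there are $\dim F$ ``pivot'' positions $P\subseteq\Sigma$, $\#P=\dim F$. The echelon structure is what controls $\dim(F\bar S)$ from below: translating $F$ by the various $g_i$ moves the pivot columns around, and if too few pivot columns land back inside $\Sigma$ then $\dim(F\bar S)$ is forced to be large. Concretely, I expect an inequality of the shape $\dim(F\bar S)\ge\dim F+\#(PS\setminus\Sigma)$ or, after a more careful accounting, a bound comparing $\dim(F\bar S)$ with $\#(PS\setminus P)$; combined with the hypothesis $\dim(F\bar S)<(1+\epsilon)\dim F=(1+\epsilon)\#P$ this yields $\#(PS\setminus P)<\epsilon\#P$, i.e.\ $P$ is a F\o lner set for $S$ in $X$. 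Then $X$ satisfies F\o lner's condition, hence is amenable by Theorem~\ref{thm:folneramen}.

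\textbf{Main obstacle.} The crux is exactly the combinatorial lemma extracting a set-theoretic F\o lner set $P$ from a linear-algebraic F\o lner subspace $F$. The naive bound ``$\dim(F g\cup F) \ge$ (number of new pivots)'' is not quite immediate because translating by $g$ can collapse dimension only in limited ways: $\dim(Fg)=\dim F$ always (translation is invertible), and $\dim(F+Fg)\ge\dim F$, but I need a lower bound on $\dim(F+Fg)$ in terms of how the pivot set $P$ moves. The clean way is: if $x\in P$ is a pivot, then $\mathbb 1_x$-coefficient functional is nonzero on $F$ in a way that, after translation, witnesses a fresh dimension in $Fg$ whenever $xg\notin\Sigma$ — more precisely, project $\Bbbk(\Sigma\cup\Sigma S)$ onto coordinates outside $\Sigma$ and show this projection restricted to $F\bar S$ has rank at least $\#\{\,i, x\in P : xg_i\notin\Sigma\,\}$ minus a correction (different pivots translated by the \emph{same} $g_i$ to the same point, or by different $g_i$'s, can interfere, but passing to a maximal ``$\mathfrak T$-free'' style subcollection or a Hall-type selection handles the bookkeeping). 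I would allow myself to shrink $F$ or pass to $P$ being a sub-multiset and argue with the echelon form that the relevant submatrix of translated pivots is itself echelon, hence full rank. Once this rank inequality is in hand the rest is the $\epsilon$-chase above. I expect the whole argument to parallel, in the linear setting, the passage from Reiter's condition to F\o lner's condition in the proof of Theorem~\ref{thm:folneramen}, with ``echelon form over the support'' playing the role of the super-level sets $F_r=\{\phi\ge r\}$.
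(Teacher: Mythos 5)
Your forward direction ($X$ amenable implies $\Bbbk X$ amenable) is exactly the paper's proof. The converse, however, has a genuine gap that cannot be closed along the lines you sketch. You propose to row-reduce a basis of $F$ for a fixed ordering of its support $\Sigma$ and take the pivot set $P\Subset X$ ($\#P=\dim F$) as a candidate F\o lner set. The pivot set for a \emph{single} ordering does not behave equivariantly under translation, and the inequality you would need --- something of the form $\dim(F+F g)-\dim F\gtrsim\#(P g\setminus P)$ --- is false. Take $X=\{1,\dots,2n\}$, $G=\langle\sigma\rangle$ with $\sigma=(1,2)(3,4)\cdots(2n-1,2n)$, and let $F$ be spanned by $\delta_1+\delta_2,\,\delta_3+\delta_4,\dots,\delta_{2n-1}+\delta_{2n}$. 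Then $F\sigma=F$, so $\dim(F+F\sigma)=\dim F=n$; but in echelon form for the standard ordering the pivots are $P=\{1,3,\dots,2n-1\}$, while $P\sigma=\{2,4,\dots,2n\}$ is disjoint from $P$. Here $F$ is exactly invariant yet $P$ is maximally \emph{non}-F\o lner for $\{\sigma\}$. Your weaker estimate $\dim(F S)\ge\dim F+\#(PS\setminus\Sigma)$ is correct but useless: $\Sigma$ may contain $PS$ entirely (as it does here) while $P$ does not, so it gives no control on $\#(PS\setminus P)$.

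The paper's fix is precisely the averaging you stop short of. For \emph{each} total order $\le$ on $X$, one has a pivot set $\Phi^\le(W)$ (the $\le$-minima of supports of nonzero $w\in W$, which is exactly your echelon pivot set for $\le$). The paper integrates $\mathbb 1_{\Phi^\le(W)}$ over the compact space of all orderings of $X$, against the unique probability measure invariant under the \emph{full} symmetric group of $X$. The resulting $m_W\in\ell^1(X)$ satisfies $\|m_W\|=\dim W$, monotonicity in $W$, and --- this is what your single-ordering version lacks --- full permutation-equivariance, so $m_{W g}=m_W g$ on the nose for every $g\in G$. Almost-invariance of $W$ then becomes, via $\|m_W-m_W g\|\le(\dim(W+W g)-\dim W)+(\dim(W+W g)-\dim W g)$, Reiter's condition (Theorem~\ref{thm:folneramen}(2)). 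You do gesture at a parallel with ``Reiter $\Rightarrow$ F\o lner'' at the end of your plan, but the direction of travel should be the opposite: land in $\ell^1(X)$ first by averaging over orderings, and only afterwards invoke Theorem~\ref{thm:folneramen} to extract genuine F\o lner sets.
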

\begin{proof}[Proof, after~\cite{gromov:linear}*{\S3.6}]
  ($\Rightarrow$) Consider the set $\mathcal O(X)$ of orders on $X$;
  it is a closed subspace of $\{0,1\}^{X\times X}$, so is compact. It
  is also the inverse limit of $\mathcal O(F)$ over all $F\Subset X$.

  Let $\Pi$ denote the group of all bijections of $X$. There exists a
  unique $\Pi$-invariant probability measure on $\mathcal O(X)$, which
  may be defined as the inverse limit of the uniform probability
  measures on $\mathcal O(F)$ over $F\Subset X$.  For an order
  ${\le}\in\mathcal O(X)$, consider
  \[\Phi^\le\colon\begin{cases}
      \{\text{finite-dim'l subspaces of }\Bbbk X\} &\to \{\text{finite subsets of }X\}\\
      W &\mapsto \big\{\min^\le(\supp(w))\mid w\in W\setminus\{0\}\big\},
    \end{cases}
  \]
  and let $m_W^\le\coloneqq\mathbb1_{\Phi^\le(W)}$ be the
  corresponding characteristic function in $\ell^1(X)$. We clearly
  have
  \begin{equation}\label{eq:mW}
    \|m_W^\le\|=\dim W,\qquad W_1\le W_2\Rightarrow m_{W_1}^\le m_{W_2}^\le\text{ pointwise}.
  \end{equation}
  Define then $m_W\coloneqq\int_{\mathcal O(X)}m_W^\le d\lambda(\le)$,
  and observe that~\eqref{eq:mW} still holds for $m_W$.  Furthermore,
  the map $W\mapsto m_W$ is $\Pi$-equivariant, so in particular is
  $G$-equivariant; and~\eqref{eq:mW} further implies
  $\|m_{W_2}-m_{W_1}\|=\dim W_2-\dim W_1$ whenever $W_1\le W_2$.

  Now given $S\Subset G$ finite and $\epsilon>0$, there exists
  $W\le\Bbbk X$ with $\dim(W+W s)<(1+\epsilon)\dim W$ for all
  $s\in S$, because $\Bbbk X$ is amenable. Thus
  $\|m_{W+W s}-m_W\|<\epsilon\dim W$, and similarly
  $\|m_{W+W s}-m_{W s}\|<\epsilon\dim W$, so
  \[\|m_W-m_{W s}\|=\|m_W-m_W s\|<2\epsilon\|m_W\|,
  \]
  and $G$ is amenable by Theorem~\ref{thm:folneramen}(2).

  ($\Leftarrow$) Let a finite-dimensional subspace $S$ of $\Bbbk G$
  and $\epsilon>0$ be given. There is a finite subset $T\Subset G$
  with $S\le\Bbbk T$, so because $X$ is amenable there is $F\Subset X$
  with $\#(F T)<(1+\epsilon)\#F$. Set $E\coloneqq\Bbbk F$; then
  \[\dim(E S)\le\dim((\Bbbk F)(\Bbbk T))\le\#(F
    T)<(1+\epsilon)\#F=(1+\epsilon)\dim E.\qedhere\]
\end{proof}

Note that, although $G_G$ is amenable if and only if
$\Bbbk G_{\Bbbk G}$ is amenable, the growth of almost-invariant
subsets and subspaces may behave quite differently. In
Example~\ref{ex:folnerll} we saw F\o lner sets $F_n$ for the
``lamplighter group'' $G$, and we may convince ourselves that they are
optimal, so $G$'s F\o lner function, see~\eqref{eq:fol}, satisfies
$\Fol(n)=n2^n$. On the other hand,
\[W_n=\Bbbk\bigg\{\sum_{\supp(f)\subseteq[-n,n]}(f,m)\mid
  m\in[-n,n]\bigg\}
\]
are subspaces of $\Bbbk G$ of dimension $2n+1$ with
$\dim(W_n+W_n s)/\dim W_n=\#(F_n\cup F_n s)/\#F_n$, so the ``linear
F\o lner function'' of $G$ grows linearly.

The following is an analogue, for linear spaces, of the space $\ell^1$
of summable functions on a set.  Let $V$ be a vector space. Consider
the free $\Z$-module with basis
$\{[A]\mid A\le V\text{ a finite-dimensional subspace}\}$, and let
$\ell^1(V,\Z)$ be its quotient under the relations
$[A]+[B]=[A\cap B]+[A+B]$ for all $A,B\le V$. Note that every
$x\in \ell^1(V,Z)$ may be represented as
$x=\sum_i[X_i^+]-\sum_j[X_j^-]$. Define a metric on $\ell^1(V,\Z)$ by
\[d(x,y)=\|x-y\|,\quad\|x\|=\inf\{\sum_i\dim(X_i)+\sum_j\dim(X_j^-)\mid x=\sum_i[X_i^+]-\sum_j[X_j^-]\}.\]
\begin{lemma}\label{lem:algl1}
  Let $\mathscr A$ be an algebra generated by a set $B$ of invertible
  elements, and let $V$ be an $\mathscr A$-module. Then $V$ is
  amenable if and only if for every $S\Subset B$ and every
  $\epsilon>0$ there exists $f\in\ell^1(V,\N)$ with
  $\|f-f s\|<\epsilon\|f\|$ for all $s\in S$.
\end{lemma}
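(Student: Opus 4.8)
The statement is Lemma~\ref{lem:algl1}, a linear analogue of the passage from F\o lner's condition to Reiter's condition (Theorem~\ref{thm:folneramen}, implications $(5)\Leftrightarrow(2)$). The key dictionary: a finite-dimensional subspace $F\le V$ plays the role of a finite subset $F\Subset X$, its image $[F]\in\ell^1(V,\Z)$ plays the role of $\mathbb 1_F\in\ell^1(X)$, and $\dim F=\|[F]\|$; an arbitrary nonnegative element $f\in\ell^1(V,\N)$ plays the role of a nonnegative $\ell^1$-function. The plan is to establish both directions by ``layer-cake'' arguments in the spirit of the proof of $(2)\Rightarrow(5)$ and $(5)\Rightarrow(1)$ in Theorem~\ref{thm:folneramen}, using that each $s\in B$ is invertible so that $[F]s=[Fs]$ with $\dim(Fs)=\dim F$, and that translation by $s$ is an isometry of $\ell^1(V,\Z)$.

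\textbf{($\Rightarrow$).} Suppose $V$ is amenable as a $\mathscr A$-module. Given $S\Subset B$ and $\epsilon>0$, let $S'$ be the (finite-dimensional) linear span of $S\cup\{1\}$ inside $\mathscr A$; since $\mathscr A$ is generated by $B$ and $V$ is amenable, there is a finite-dimensional $F\le V$ with $\dim(F S')<(1+\epsilon')\dim F$ for $\epsilon'$ to be chosen. In particular $\dim(F+Fs)<(1+\epsilon')\dim F$ for each $s\in S$. Set $f=[F]\in\ell^1(V,\N)$; then $\|f\|=\dim F$, and using the relation $[A]+[B]=[A\cap B]+[A+B]$ one gets
\[
\|f-fs\|=\|[F]-[Fs]\|\le \|[F]-[F+Fs]\|+\|[F+Fs]-[Fs]\|
=(\dim(F+Fs)-\dim F)+(\dim(F+Fs)-\dim(Fs))<2\epsilon'\dim F.
\]
Choosing $\epsilon'=\epsilon/2$ gives $\|f-fs\|<\epsilon\|f\|$ for all $s\in S$, as required. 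The only subtlety is verifying the triangle-type estimate $\|[A]-[B]\|\le (\dim(A+B)-\dim A)+(\dim(A+B)-\dim B)$ from the defining relations of $\ell^1(V,\Z)$, which is a short computation once one writes $[A]-[B]=([A+B]-[B])-([A+B]-[A])$ and bounds $\|[A+B]-[A]\|$ by a telescoping chain of subspaces through $A$ to $A+B$ each adding one dimension.

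\textbf{($\Leftarrow$).} Suppose that for every $S\Subset B$, $\epsilon>0$ there is $f\in\ell^1(V,\N)$ with $\|f-fs\|<\epsilon\|f\|$. The first task is a ``layer-cake'' / positivity reduction: any $f\in\ell^1(V,\N)$ can be written $f=\sum_k[W_k]$ with the $W_k$ (not necessarily nested) finite-dimensional subspaces; by a maximal-chain refinement one may further arrange, at the cost of rewriting, a representation in which the multiset of subspaces appearing is ``sorted'' so that passing from $f$ to $fs$ only re-sorts the layers. Then $\|f-fs\|\ge \sum_k (\dim W_k - \dim(W_k\cap W_k s))$ by positivity (the analogue of $\|\phi s-\phi\|\ge$ sum over levels of symmetric differences in the proof of $(2)\Rightarrow(5)$), and an averaging/pigeonhole argument over the layers $k$ produces a single finite-dimensional $F\le V$ with $\dim(F+Fs)<(1+\epsilon)\dim F$ for all $s\in S$. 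Finally, given an arbitrary finite-dimensional subspace $S\le\mathscr A$, write $S\subseteq\operatorname{span}(B_0)$ with $B_0\Subset B$ finite (possible since $B$ generates $\mathscr A$ and each element of $B$ is invertible, so one may take words in $B\cup B^{-1}$; if one wants $S\le\Bbbk B_0$ directly one enlarges $B_0$), apply the previous step to $B_0$ to get $F$ with $\dim(F+Fs)<(1+\epsilon')\dim F$ for all $s\in B_0$; then $\dim(FS)\le\dim(F\cdot\Bbbk B_0)\le |B_0|\max_s\dim(F+Fs)$-type bounds — more carefully, iterating $\dim(F+Fs+Fs')\le\dim(F+Fs)+\dim(F+Fs')-\dim F$ along the finitely many generators — yields $\dim(FS)<(1+\epsilon)\dim F$ for $\epsilon'$ small enough depending on $\dim S$ and $|B_0|$. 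Hence $V$ is amenable.

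\textbf{Main obstacle.} The genuinely delicate point is the $(\Leftarrow)$ direction's layer-cake argument: unlike $\mathbb 1_F$ whose level sets partition cleanly, an element of $\ell^1(V,\N)$ is a formal $\N$-combination of subspaces subject to the modular relation $[A]+[B]=[A\cap B]+[A+B]$, and one must show that the norm $\|f-fs\|$ controls a weighted sum of ``subspace symmetric differences'' $\dim W_k+\dim W_k'-2\dim(W_k\cap W_k')$ in a way that survives the rewriting ambiguity. The clean route is to mimic exactly the monotone-rearrangement trick used for $(2)\Rightarrow(5)$ in Theorem~\ref{thm:folneramen}: represent $f$ by a nested chain of subspaces via repeated application of the modular law (every element of $\ell^1(V,\N)$ is equivalent to $\sum_k[W_k]$ with $W_1\supseteq W_2\supseteq\cdots$, because $[A]+[B]=[A\cap B]+[A+B]$ lets one replace any incomparable pair by a comparable one without changing the class or increasing the norm), after which the argument becomes formally identical to the set-theoretic F\o lner$\Rightarrow$Reiter passage. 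I would isolate this ``nesting normal form for $\ell^1(V,\N)$'' as a preliminary sublemma and then the rest is bookkeeping.
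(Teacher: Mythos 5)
Your proposal matches the paper's proof in both structure and content: the forward direction takes a F\o lner subspace $F$ and observes that $[F]\in\ell^1(V,\N)$ works, using invertibility of $s$ so that $\dim(Fs)=\dim F$; the converse relies on writing $f\in\ell^1(V,\N)$ as a sum $[X_0]+\cdots+[X_n]$ over a nested chain $X_0\le\cdots\le X_n$ and then applying a pigeonhole argument to extract a single F\o lner subspace $X_i$. The "nesting normal form" you isolate as the main obstacle is exactly the unstated sublemma the paper invokes ("there is a unique expression $f=[X_0]+\cdots+[X_n]$ with $X_0\le\cdots\le X_n$"), so your analysis of where the real work lies is accurate.
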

\begin{proof}
  If $V$ is amenable, then for every $S\Subset B$ and every
  $\epsilon>0$ there exists $F\le V$ finite-dimensional with
  $\dim(F+F S)<(1+\epsilon)\dim F$; so in particular
  $\dim(F+F s)<(1+\epsilon)\dim F$ for all $s\in S$; since
  $\dim(F s)=\dim F$ because $s$ in invertible, we get
  $\dim(F\cap F s)>(1-\epsilon)\dim F$ so $f\coloneqq[F]$ satisfies
  $\|f-f s\|<2\epsilon\|f\|$.

  Conversely, given $S\Subset B$ and $f\in\ell^1(V,\N)$ with
  $\|f-f s\|<\epsilon\|f\|$ for all $s\in S$, we have
  $\sum_{s\in S}\|f-f s\|<\epsilon\#S\|f\|$. There is a unique
  expression $f=[X_0]+\cdots+[X_n]$ with $X_0\le \cdots\le X_n\le V$;
  so there exists $i\in\{0,\dots,n\}$ with
  $\sum_{s\in S}\|[X_i]-[X_i]s\|<\epsilon\#S\|[X_i\|$, and therefore
  $\sum_{s\in S}\dim(X_i+X_i s)<(1+\epsilon\#S)\dim X_i$, so
  $\dim(X_i+X_i S)<(1+\epsilon\#S)\dim X_i$. We are done since
  $S\Subset B$ was arbitrary and $B$ generates $\mathscr A$.
\end{proof}

\begin{corollary}
  Let $\mathscr A$ be a group ring. Then $\mathscr A$ is amenable if
  and only if the regular right module
  $\mathscr A\looparrowleft\mathscr A$ is amenable.
\end{corollary}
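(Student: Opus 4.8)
The plan is to prove the two implications separately; one is a triviality and all the content sits in the other, which I will reduce — via Theorem~\ref{thm:aa}, Corollary~\ref{cor:regamen} and Lemma~\ref{lem:algl1} — to the Følner property of the group. Write $\mathscr A=\Bbbk G$. If $\mathscr A$ is amenable in the sense of Definition~\ref{defn:aa}, then by definition every nonzero $\mathscr A$-module is amenable, in particular the regular right module $\mathscr A\looparrowleft\mathscr A$; so that direction needs no argument. For the converse, assume $\mathscr A\looparrowleft\mathscr A$ is amenable. Since $\mathscr A=\Bbbk G=\Bbbk X$ for $X=G_G$ the regular $G$-set, Theorem~\ref{thm:aa} says that $G_G$ is an amenable $G$-set, hence $G$ is an amenable group by Corollary~\ref{cor:regamen}. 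It then remains to show that an arbitrary nonzero $\Bbbk G$-module $V$ is amenable.

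To do this I would invoke Lemma~\ref{lem:algl1}, with $\mathscr A=\Bbbk G$ generated by the set $B=G$ of invertible elements: it suffices to exhibit, for each finite $S\Subset G$ and each $\epsilon>0$, an element $f\in\ell^1(V,\N)$ with $\|f-f s\|<\epsilon\|f\|$ for all $s\in S$. Fix $0\neq v\in V$; as every $g\in G$ acts invertibly on $V$ we have $v g\neq0$ and $\dim\Bbbk(v g)=1$. For $\Phi\Subset G$ put $f_\Phi=\sum_{g\in\Phi}[\Bbbk(v g)]\in\ell^1(V,\N)$. A direct computation gives $f_\Phi s=f_{\Phi s}$, so $f_\Phi-f_\Phi s=\sum_{g\in\Phi\setminus\Phi s}[\Bbbk(v g)]-\sum_{g\in\Phi s\setminus\Phi}[\Bbbk(v g)]$, whence by the triangle inequality $\|f_\Phi-f_\Phi s\|\le\#(\Phi\triangle\Phi s)=2\,\#(\Phi s\setminus\Phi)$. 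For the denominator, the dimension formula $\dim A+\dim B=\dim(A\cap B)+\dim(A+B)$ shows that $[A]\mapsto\dim A$ extends to a $\Z$-linear functional $\delta$ on $\ell^1(V,\Z)$ with $|\delta|\le\|\cdot\|$; since $\delta(f_\Phi)=\#\Phi$ while also $\|f_\Phi\|\le\#\Phi$ by the triangle inequality, we get $\|f_\Phi\|=\#\Phi$ exactly. Finally, because $G$ is amenable the regular $G$-set satisfies Følner's condition (Theorem~\ref{thm:folneramen}), so by Lemma~\ref{lem:folner1} there is $\Phi\Subset G$ with $\#(\Phi s\setminus\Phi)<\tfrac{\epsilon}{2}\#\Phi$ for all $s\in S$; then $\|f_\Phi-f_\Phi s\|<\epsilon\#\Phi=\epsilon\|f_\Phi\|$. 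Lemma~\ref{lem:algl1} now gives that $V$ is amenable, and since $V$ was arbitrary, $\mathscr A$ is an amenable algebra.

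The step I expect to require the most care — though it is only a mild obstacle — is the exact identity $\|f_\Phi\|=\#\Phi$: a priori the relations $[A]+[B]=[A\cap B]+[A+B]$ could make the $\ell^1(V,\N)$-norm drop below the naive count of dimensions, so one genuinely needs the dimension functional $\delta$ to pin the norm from below. Everything else is a faithful transcription of the ``Følner $\Rightarrow$ Reiter'' argument used in Theorem~\ref{thm:folneramen} into the linear setting afforded by $\ell^1(V,\N)$, with the one-dimensional subspaces $\Bbbk(v g)$ playing the role of the points of a $G$-orbit.
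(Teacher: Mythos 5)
Your proof is correct and follows the same route as the paper: use Theorem~\ref{thm:aa} to reduce to F\o lner's condition for $G$, then for an arbitrary nonzero module $V$ push a F\o lner set $\Phi$ of $G$ forward to $f_\Phi=\sum_{g\in\Phi}[\Bbbk(v g)]\in\ell^1(V,\N)$ and invoke Lemma~\ref{lem:algl1}. Your extra care in pinning down $\|f_\Phi\|=\#\Phi$ exactly via the dimension functional $\delta$ is a genuine improvement over the paper's terse ``note $\|x-x s\|<\epsilon\|x\|$'', which silently presupposes that lower bound on the norm.
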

\begin{proof}
  Consider $\mathscr A=\Bbbk G$ a group ring.  If $\mathscr A$ is
  amenable, then obviously the regular module
  $\mathscr A_{\mathscr A}$ is amenable.

  Conversely, if $\mathscr A_{\mathscr A}$ is amenable, then $G_G$ is
  amenable by Theorem~\ref{thm:aa}. Let $V$ be a non-zero
  $\mathscr A$-module, and consider $v\in V\setminus\{0\}$. By
  Theorem~\ref{thm:folneramen}(5) for every $S\Subset G$ and every
  $\epsilon>0$ there exists a subset $F\Subset G$ with
  $\#(F\triangle F s)<\epsilon\#F$. Consider
  $x\coloneqq_{f\in F}[v f]\in\ell^1(V,\N)$, and note
  $\|x-x s\|<\epsilon\|x\|$. Thus $\mathscr A$ is amenable by
  Lemma~\ref{lem:algl1}.
\end{proof}

\begin{problem}[Gromov]
  Let $G$ be a group. If the $\R G$-module
  \[\mathcal C_0(G)=\{f\colon G\to\R\mid\inf_{F\Subset G}\sup(f\restrict G\setminus F)=0\}\]
  is amenable, does it follow that $G$ is amenable?
\end{problem}

%%%%%%%%%%%%%%%%%%%%%%%%%%%%%%%%%%%%%%%%%%%%%%%%%%%%%%%%%%%%%%%%
\newpage\section{Further work and open problems}

For lack of space, some important and interesting topics have been
omitted from this text. Here are a few of the most significant ones,
with very brief descriptions.

\subsection{Boundary theory}\label{ss:boundary}
Furstenberg initiated a deep theory of ``boundaries'' for random
walks. Given a random walk on a set $X$, say driven by a measure $\mu$
on a group $G$, a boundary is a measure space $(Y,\nu)$ with a
measurable map from the orbit space $(X^\N,\mu^\N)\to Y$ that
quotients through asymptotic equivalence, namely if $(x_0,x_1,\dots)$
and $(x'_0,x'_1,\dots)$ differ in only finitely many positions then
their images are the same in $Y$.

There is a universal such space, written $\partial(X,\mu)$ and called
the \emph{Poisson boundary} of $(X,\mu)$, such that all boundaries are
quotients of $\partial(X,\mu)$. This space, as a measure space, may be
characterized by the identity
\[L^1(\partial(X,\mu),\nu)=\ell^1(X)/\ell^1_\mu(X),
\]
see Theorem~\ref{thm:willis}. The Poisson boundary is reduced to a point if
and only if $(X,\mu)$ is Liouville.

In fact, it is better to view $\partial(X,\mu)$ as a measure space
with a family of measures $\nu_x$, one for each $x\in X$, satisfying
$\nu_{x g}=\nu_x g$ for all $g\in G$. One then has a ``Poisson
formula'' for harmonic functions on $X$: if $f\in\ell^\infty(X)$ is
harmonic, then there exists an integrable function $\hat f$ on
$\partial(X,\mu)$ such that
\[f(x)=\int_{\partial(X,\mu)}\hat f(\xi)d\nu_x(\xi).
\]
There is another construction of $\partial(X,\mu)$ based on
$\ell^\infty(X)$ rather than $\ell^1(X)$: the subspace
$h^\infty(X)\le\ell^\infty(X)$ of harmonic functions is a commutative
Banach algebra, under the product
\[(f_1\cdot f_2)(x)=\lim_{n\to\infty}\sum_{g\in G}f_1(x g)f_2(x g)\mu^n(g).
\]
The spectrum of $h^\infty(X)$, namely the set of algebra homomorphisms
$h^\infty(X)\to\C$, is naturally a measure space and is isomorphic to
$\partial(X,\mu)$. The function $\hat f$ is the Gelfand transform of
$f$, given by $\hat f(\xi)=\xi(f)$.

The Poisson boundary is naturally defined as a measure space, and is
directly connected to the space of bounded harmonic functions; but
other notions of boundary have been considered, for example the space
of \emph{positive} harmonic functions, leading to the \emph{Martin
  boundary} which is a well-defined topological space; for a natural
measure, it becomes measure-isomorphic to the Poisson boundary.

Glasner considers in~\cite{glasner:proximal} ``strongly amenable''
groups: they are groups all of whose proximal actions on a compact
space has a fixed point; see the comments at the end
of~\S\ref{ss:measures}.  Recall that an action of $G$ on a compact
Hausdorff space $X$ is \emph{proximal} if for every $x,y\in X$ there
exists a net $(g_n)$ of elements of $G$ such that
$\lim_n x g_n=\lim_n y g_n$.

For details, we refer to the original
articles~\cite{furstenberg:boundary,furstenberg:poisson}, the
classical~\cite{kaimanovich-v:entropy}, and the
survey~\cite{erschler:hyderabad}.

\subsection{Consequences}
Little has been said about the uses of amenability. On the one hand,
it plays a major role in the study of Lie groups and their lattices;
for example, Margulis's ``normal subgroup theorem'' states that a
normal subgroup of a lattice in a higher-rank semisimple Lie group is
either finite or finite-index~\cite{margulis:subgroups}. Ruling out
finite-index subgroups, the strategy is to show that such a group is
amenable and has property~(T).

Witte-Morris uses amenability, and Poincar\'e's recurrence theorem, to
prove in~\cite{wittemorris:orderable} that all finitely generated
amenable groups that act on the real line have homomorphisms onto
$\Z$.

Benjamini and Schramm consider in~\cite{benjamini-schramm:percolation}
\emph{percolation} on Cayley graphs. One fixes $p\in(0,1)$ and a
finitely generated group $G=\langle S\rangle$; call $\mathscr G$ the
corresponding Cayley graph. Then every vertex $v\in\mathscr G$ is made
independently at random ``open'' with probability $p$ (and ``closed''
with probability $1-p$). ``Open clusters'' are connected components of
the subgraph of $\mathfrak G$ spanned by open vertices. We define
\emph{critical probabilities}
\begin{align*}
  p_c&=\sup\{p\in(0,1)\mid \text{the open cluster containing $1$ is almost surely finite}\},\\
  p_u&=\inf\{p\in(0,1)\mid \text{there is almost surely a single infinite open cluster}\}.
\end{align*}
They conjecture that $p_c<1$ for all $G$ which are not virtually
cyclic; this is known for all groups of polynomial or exponential
growth, and for all groups containing subgroups of the form
$A\times B$ with $A,B$ infinite, finitely generated groups.

They also conjecture that $p_c<p_u$ holds precisely when $G$ is not
amenable; see~\cite{haggstrom-jonasson:uniqueness} for a survey of
known results.

\subsection{Ergodic theory}
One of the standard tools of ergodic theory is the ``Rokhlin-Kakutani
lemma'': let $T\colon X\righttoleftarrow$ be an invertible,
measure-preserving transformation of a measure space $(X,\mu)$ that is
\emph{aperiodic} in the sense that almost all points have infinite
orbits. Then for every $n\in\N$ and every $\epsilon>0$ there exists a
measurable subset $E\subseteq X$ such that $E,T(E),\dots,T^{n-1}(E)$ are
all disjoint with $\mu(E\sqcup\cdots\sqcup T^{n-1}(E))>1-\epsilon$.

It may be understood as the following statement. Given $S,T\colon
X\righttoleftarrow$, define their distance as $d(S,T)=\mu(\{x\in X\mid
S(x)\neq T(x)\})$. Then for every $n\in\N,\epsilon>0$ there exists $S$ of
period $n$ with $d(S,T)<\epsilon$. In other words, $\Z$ may be approximated
arbitrarily closely by $\Z/n$.

The Rokhlin lemma is essential in reducing ergodic theory problems to
combinatorial ones. For example, it serves to prove that two Bernoulli
shifts (the shift on $\Alphabet^\Z$ for a given probability measure on
$\Alphabet$) are isomorphic if and only if they have the same entropy.

Ornstein and Weiss generalize in~\cite{ornstein-weiss:entropyiso} the
Rokhlin lemma to some amenable groups; see
also~\cite{weiss:rokhlin}. Let $G$ be a group; we say that a subset
$F\Subset G$ \emph{tiles} $G$ if $G$ is a disjoint union of translates
of $F$; namely, if there exists a subset $C\subseteq G$ with
$G=\bigsqcup_{c\in C}F c$. They prove:
\begin{theorem}
  Let $G$ be amenable, and let $F\Subset G$ be a finite subset. Then $F$
  tiles $G$ if and only if for every free measure-preserving action of $G$
  on a probability space $(X,\mu)$ and every $\epsilon>0$ there is a
  measurable subset $E\subseteq X$ such that $\{E f\mid f\in F\}$ are all
  disjoint and $\mu(E F)>1-\epsilon$.
\end{theorem}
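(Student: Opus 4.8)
The plan is to prove the two directions separately, with the ``only if'' (tiling gives Rokhlin towers) being the soft one and the ``if'' (Rokhlin towers force a tiling) being the main obstacle. For the ``only if'' direction, suppose $G=\bigsqcup_{c\in C}F c$. Given a free measure-preserving action of $G$ on $(X,\lambda)$, I would take a random colouring: for each orbit, choose uniformly at random one of the translates of $C$ inside $G$ — more precisely, pull back the partition $G=\bigsqcup_{c\in C}F c$ along the orbit map $g\mapsto x g$, which is well defined since the action is free, and let $E_x$ be the set of $x g$ with $g\in C$. The sets $\{E_x f\mid f\in F\}$ are then disjoint and cover the whole orbit; globalising (using a measurable transversal of $G$-orbits, or averaging over a suitable group of translations of $C$ inside $G$ as in the proof of Proposition~\ref{prop:stabilizers}) produces a measurable $E$ with $\{E f\mid f\in F\}$ disjoint and $\lambda(E F)=1$, which is much better than $>1-\epsilon$. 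The only care needed is the measurability of the choice, handled exactly as in Example~\ref{ex:monodpopa} or the measurable-equivalence-relation discussion of~\S\ref{ss:equivrel}.

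For the converse, the strategy is to \emph{apply} the hypothesis to a concrete, sufficiently generic action and then read off a tiling of $G$ itself. The natural candidate is the Bernoulli action of $G$ on $(X,\lambda)=(\{0,1\}^G,\text{Bernoulli}(1/2))$ by shift, which is free; one could also use any free action. Given $\epsilon>0$ (say $\epsilon<1/(2\#F)$), the hypothesis yields a measurable $E\subseteq X$ with $\{E f\}_{f\in F}$ pairwise disjoint and $\lambda(E F)>1-\epsilon$. First I would upgrade this to a ``perfect'' tower: by a standard exhaustion argument, iterating the construction on the complement of $E F$ (which is $G$-invariant up to measure zero in the relevant sense — here one uses that the complement still carries a free action and that $\epsilon$ can be taken arbitrarily small at each stage, so the leftover measure is summable), one obtains a measurable $E'$ with $\{E' f\}_{f\in F}$ disjoint and $\lambda(E' F)=1$ exactly. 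Now $E'$ is a measurable set whose $F$-translates partition $X$ mod null sets. The key combinatorial extraction: for each $x\in X$, the set $C_x=\{g\in G\mid x g\in E'\}$ satisfies $G=\bigsqcup_{c\in C_x}F c$ as a partition of $G$ — this is the pointwise translation of ``$\{E'f\}$ partitions $X$'' through the (free, hence bijective) orbit map. So almost every $x$ already witnesses that $F$ tiles $G$; choosing any such $x$ finishes the proof.

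The main obstacle, and where I expect to spend the most effort, is the exhaustion step that turns ``$\lambda(E F)>1-\epsilon$'' into ``$\lambda(E' F)=1$'': one must check that the residual set $X\setminus E F$, with the restricted action of $G$, still satisfies the hypotheses of the theorem (freeness is automatic, but one needs a genuinely free measure-preserving $G$-action there, and the residual set need not be $G$-invariant on the nose), and that the errors at successive stages can be summed. The cleanest fix is probably to avoid $G$-invariance of the residual set altogether: instead run a transfinite/greedy argument choosing a maximal (under inclusion, via Zorn's lemma) measurable $E$ such that $\{E f\}_{f\in F}$ are disjoint, and show maximality forces $\lambda(E F)=1$ by using the hypothesis with small $\epsilon$ inside any putative positive-measure ``gap''. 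This is where amenability of $G$ is genuinely used, since the hypothesis itself is only assumed for the given $F$ and one must be sure the hypothesis transfers to the sub-action on a non-invariant piece; alternatively one invokes that $G$ amenable implies every free action is approximately periodic, tying back to Lemma~\ref{lem:folnerlim}. A secondary, purely bookkeeping obstacle is making all the ``mod null'' qualifiers precise when passing from $\{E'f\}$ partitioning $X$ to $C_x$ partitioning $G$ for a \emph{specific} $x$: one needs the partition property to hold for a positive-measure (hence nonempty) set of $x$, which follows because a countable union of null sets is null and $G$ is countable.
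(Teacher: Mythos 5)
Your proof of the ``if'' direction hinges on upgrading $\mu(EF)>1-\epsilon$ to an exact partition $\mu(E'F)=1$, and this step is impossible in general, not merely delicate. Take the simplest non-trivial case: $G=\Z$, $F=\{0,1\}$, and the Bernoulli shift $T$ on $X=\{0,1\}^\Z$. If some measurable $E'$ had $E'$ and $E'T$ disjoint with $\mu(E'\cup E'T)=1$, then $\chi=\mathbb 1_{E'}$ would satisfy $\chi(x)+\chi(xT)=1$ almost everywhere, hence would be $T^2$-invariant; but $T^2$ is ergodic, so $\chi$ would be a.e.\ constant, a contradiction. Yet $F=\{0,1\}$ tiles $\Z$ (take $C=2\Z$), so the Rokhlin hypothesis \emph{does} hold here, and your upgrade asks for something false in a case where the theorem is supposed to apply. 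Neither of your proposed fixes rescues this: the greedy/Zorn approach does produce a maximal measurable $E$ with disjoint $F$-translates (a countable cofinal chain gives measurability), but the same parity example shows maximality does not force $\mu(EF)=1$; and the exhaustion approach cannot apply the hypothesis to $X\setminus EF$, since that set is not $G$-invariant and hence carries no $G$-action at all. The extraction of a tiling of $G$ must work directly with the leftover set of small but positive measure, via a compactness argument across $\epsilon\to 0$, rather than by first forcing measure one.

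The ``only if'' direction is also not soft in the way you claim. Pulling the fixed tile set $C$ back ``along the orbit map'' requires a measurable choice of basepoint in each orbit, i.e.\ a measurable transversal; for a free, non-atomic, measure-preserving action no such transversal exists (its existence would make the orbit equivalence relation smooth, forcing the action to be essentially trivial). The averaging idea does not substitute, because a convex combination of indicator functions of translates $gC$ is no longer an indicator, so there is no analogue of the barycentre trick of Proposition~\ref{prop:stabilizers} to land back on a genuine set $E$. The actual argument of Ornstein and Weiss first produces quasi-towers over large F\o lner sets (this is where amenability enters, and it is the heavy lifting), and only then cuts each F\o lner shape by the given tiling $G=\bigsqcup_{c\in C}Fc$; it is not a pullback. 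Finally, note that the paper itself states this theorem without proof, attributing it to \cite{ornstein-weiss:entropyiso}, so there is no in-paper argument against which to compare.
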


In~\cite{weiss:monotileable}, Weiss calls $G$ \emph{monotileable} if
it admits arbitrarily large tiles. He proves that amenable, residually
finite are monotileable; more precisely, in F\o lner's definition of
amenability it may be assumed that the F\o lner sets tile $G$. For
example, $\Z$ is tiled by sets of the form $\{-n,\dots,n\}$ which form
an exhausting sequence of F\o lner sets are are also transversals for
the subgroups $(2n+1)\Z$.

Let us denote by $MG$ the class of monotileable groups; then $MG$
contains all residually amenable groups, and is closed under taking
extensions, quotients, subgroups and directed
unions~\cite{chou:elementary}*{\S4}.

It is at the present (2017) unknown whether \emph{every} group is
monotileable, and whether $AG\subseteq MG$. It is also unknown
whether, if a group $G$ belongs to $MG\cap AG$, then $G$ may be tiled
by F\o lner sets.

\subsection{Numerical invariants}
Recall that the \emph{entropy} of a probability measure $\mu$ on a
countable set $X$ is defined as
\[H(\mu)=-\sum_{x\in X}\mu(x)\log\mu(x),\text{ where as usual }0\log(0)=0.
\]

The Liouville property can, in some favourable cases, be detected by a
single numerical invariant, its \emph{entropy} or its
\emph{drift}. Given a random walk $p$ on a set $X$, starting at
$x\in X$, its \emph{entropy growth} is the function
$h(n)\coloneqq H(p_n(x,{-}))$ computing the entropy of distribution of
the random walker after $n$ steps. If furthermore $X$ is a metric
space, the \emph{drift growth} of $p$ is the function
$\ell(n)\coloneqq\sum_{y\in X}p_n(x,y)d(x,y)$ estimating the expected
distance from the random walker to the origin after $n$ steps.

A celebrated criterion by Derriennic~(\cite{derriennic:thergodique};
see also~\cite{kaimanovich-v:entropy}) shows that, if $H(\mu)<\infty$,
then $(X,\mu)$ is Liouville if and only if $h$ is sublinear. Moreover,
the volume, entropy and drift growth are related by the inequality
\[\lim_{n\to\infty}\frac{\log v(n)}n\lim_{n\to\infty}\frac{\ell(n)}n\le \lim_{n\to\infty}\frac{h(n)}n.
\]
Finer estimates relate these functions $\log v,\ell,h$, in particular
if all are sublinear; additionally, the probability of return
$p(n)=-\log p_n(x,x)$ and the \emph{$\ell^p$ distortion}
\[d_p(n)=\sup_{\Phi\colon G\to\ell^p\text{
      $1$-Lipschitz}}\inf\{\|\Phi(g)-\Phi(h)\|\mid d(g,h)\ge n\}
\]
are all related by various inequalities;
see~\cite{gournay:liouville,naor-peres:embeddings,peres-zheng:decay}.

\subsection{Sofic groups}
The class of sofic groups is a common extension of amenable and
residually-finite groups. We refer
to~\cite{weiss:sofic,gromov:endomorphisms} for its introduction. The
definition may be seen as a variant of F\o lner's criterion:
\begin{definition}
  Let $G$ be a group. It is \emph{sofic} if for every finite subset
  $S\Subset G$ and every $\epsilon>0$ there exists a finite set $F$
  and a mapping $\pi\colon S\to\Sym(F)$ such that
  \begin{align*}
    \text{ if $s,t,s t\in S$ then }&\#\{f\in F\mid f\pi(s)\pi(t)\neq f\pi(s t)\}<\epsilon\#F,\\
    \text{ if $s\neq t\in S$ then }&\#\{f\in F\mid f\pi(s)=f\pi(t)\}<\epsilon\#F.
  \end{align*}
\end{definition}
Two cases are clear: if $G$ is residually finite, then for every
$S\Subset G$ there exists a homomorphism $\rho\colon G\to F$ to a
finite group that is injective on $S$; define then $f\pi(s)=f\rho(s)$
for all $s\in S,f\in F$, showing that $G$ is sofic. If on the other
hand $G$ is amenable, then for every $S\Subset G$ and every
$\epsilon>0$ there exists $F\Subset G$ with
$\#(F S\setminus F)<\epsilon\#F$; define then $f\pi(s)=f s$ if
$f s\in F$, and extend the partial map
$\pi(s)\colon F\dashrightarrow F$ arbitrarily into a permutation,
showing that $G$ is sofic.

Remarkably, there is at the present time (2017) no known example of
a non-sofic gruop.

\subsection{It this group amenable?}
We list here some examples of groups for which it is not known whether
they are amenable or not. These problems are probably very hard.

\begin{problem}[Geoghegan]\label{problem:thompson}
  Is Thompson's group $F$ amenable?

  Recall that $F$ is the group of piecewise-linear homeomorphisms of
  $[0,1]$, with slopes in $2^\Z$ and breakpoints in $\Z[\tfrac12]$;
  see~\cite{cannon-f-p:thompson} and Example~\ref{ex:F}.
\end{problem}
There have been numerous attempts at answering
Problem~\ref{problem:thompson}, too many to cite them all; a promising
direction appears in~\cite{moore:hindman}. Kaimanovich proves
in~\cite{kaimanovich:liouville} that, for every finitely supported
measure $\mu$ on $F$, the orbit $(\tfrac12 F,\mu)$ is not Liouville;
however Juschenko and Zhang prove in~\cite{juschenko-zhang:liouville}
that $\tfrac12 F$ \emph{is} laminable.

There is a group that is related to $F$, and acts on the circle
$[0,1]/(0\sim1)$: it satisfies the same definition as $F$, namely the
group $T$ of piecewise-linear self-homeomorphisms with slopes in
$2^\Z$ and breakpoints in $\Z[\tfrac12]/\Z$. Its amenable subgroup
$\Z[\tfrac12]/\Z$ acts transitively on the orbit $0T$, so
$0T\looparrowleft T$ is laminable by Corollary~\ref{cor:amentrans}.

\begin{problem}[Nekrashevych]
  Are all contracting self-similar groups amenable?

  Recall that a self-similar group is a group $G$ generated invertible
  transducers; it acts on $\Alphabet^\N$, and may be given by a map
  $\phi\colon G\to G\wr_\Alphabet\Sym(\Alphabet)$, as
  in~\eqref{eq:treerec}. It is \emph{contracting} if there is a proper
  metric on $G$ and constants $\lambda<1,C$ such that whenever
  $\phi(g)=\pair<g_1,\dots,g_{\#\Alphabet}>\pi$ we have
  $\|g_i\|<\lambda\|g\|+C$. See~\cite{nekrashevych:ssg}.
\end{problem}

\begin{problem}[Folklore, often attributed to Katok]
  Is the group of interval exchange transformations amenable? Does it
  contain non-abelian free subgroups?

  A partial, positive result appears in Example~\ref{ex:iet}. It would
  suffice, following the strategy in that example
  (see~\cite{juschenko-mattebon-monod-delasalle:extensive}*{Proposition~5.3}),
  to prove that the group of $\Z^d$-wobbles $W(\Z^d)$ acts extensively
  amenably on $\Z^d$ for all $d\in\N$; at present (2017), this is
  known only for $d\le2$, see Theorem~\ref{thm:iet}.
\end{problem}

\newpage
\begin{bibsection}
\begin{biblist}
%\bibselect{math}
\bib{adyan:rw}{article}{
  author={Adyan, Sergei~I.},
  title={Random walks on free periodic groups},
  date={1982},
  issn={0373-2436},
  journal={Izv. Akad. Nauk SSSR Ser. Mat.},
  volume={46},
  number={6},
  pages={1139\ndash 1149, 1343},
  review={\MR {84m:43001}},
}

\bib{ahlfors:coverings}{article}{
  author={Ahlfors, Lars},
  title={Zur Theorie der \"Uberlagerungsfl\"achen},
  language={German},
  journal={Acta Math.},
  volume={65},
  date={1935},
  number={1},
  pages={157\ndash 194},
  issn={0001-5962},
  review={\MR {1555403}},
}

\bib{anantharaman-renault:ag}{book}{
  author={Anantharaman-Delaroche, Claire},
  author={Renault, Jean},
  title={Amenable groupoids},
  series={Monographies de L'Enseignement Math\'ematique [Monographs of L'Enseignement Math\'ematique]},
  volume={36},
  note={With a foreword by Georges Skandalis and Appendix B by E. Germain},
  publisher={L'Enseignement Math\'ematique, Geneva},
  date={2000},
  pages={196},
  isbn={2-940264-01-5},
  review={\MR {1799683}},
}

\bib{banach-tarski:pd}{article}{
  author={Banach, Stefan},
  author={Tarski, Alfred},
  title={Sur la d\'ecomposition des ensembles de points en parties respectivement congruentes},
  year={1924},
  language={French},
  journal={Fund. Math.},
  volume={6},
  pages={244--277},
}

\bib{bartholdi:upperbd}{article}{
  author={Bartholdi, Laurent},
  title={The growth of Grigorchuk's torsion group},
  journal={Internat. Math. Res. Notices},
  date={1998},
  number={20},
  pages={1049\ndash 1054},
  issn={1073-7928},
  review={\MR {1656258 (99i:20049)}},
  doi={10.1155/S1073792898000622},
  eprint={arXiv:math/0012108},
}

\bib{bartholdi:cogrowth}{article}{
  author={Bartholdi, Laurent},
  title={Counting paths in graphs},
  journal={Enseign. Math. (2)},
  volume={45},
  date={1999},
  number={1-2},
  pages={83\ndash 131},
  issn={0013-8584},
  review={\MR {1703364 (2000f:05047)}},
  eprint={arXiv:math/0012161},
}

\bib{bartholdi-v:amenability}{article}{
  author={Bartholdi, Laurent},
  author={Vir{\'a}g, B{\'a}lint},
  title={Amenability via random walks},
  journal={Duke Math. J.},
  volume={130},
  date={2005},
  number={1},
  pages={39\ndash 56},
  issn={0012-7094},
  review={\MR {2176547 (2006h:43001)}},
  eprint={arXiv:math/0305262},
  doi={10.1215/S0012-7094-05-13012-5},
}

\bib{bartholdi:aa1}{article}{
  author={Bartholdi, Laurent},
  title={On amenability of group algebras, I},
  date={2008},
  journal={Isr. J. Math.},
  volume={168},
  pages={153\ndash 165},
  review={\MR {2448055 (2010a:43001)}},
  doi={10.1007/s11856-008-1061-7},
  eprint={arXiv:math/0608302},
}

\bib{bartholdi:moore}{article}{
  author={Bartholdi, Laurent},
  title={Gardens of Eden and amenability on cellular automata},
  journal={J. Eur. Math. Soc. (JEMS)},
  volume={12},
  date={2010},
  number={1},
  pages={241\ndash 248},
  issn={1435-9855},
  review={\MR {2578610 (2011e:05282)}},
  doi={10.4171/JEMS/196},
  eprint={arXiv:math/0709.4280},
}

\bib{bartholdi-k-n-v:ba}{article}{
  author={Bartholdi, Laurent},
  author={Kaimanovich, Vadim A.},
  author={Nekrashevych, Volodymyr V.},
  title={On amenability of automata groups},
  journal={Duke Math. J.},
  date={2010},
  volume={154},
  number={3},
  pages={575\ndash 598},
  issn={0012-7094},
  review={\MR {2730578 (2011k:20048)}},
  doi={10.1215/00127094-2010-046},
  eprint={arXiv:math/0802.2837},
}

\bib{bartholdi:myhill}{article}{
  author={Bartholdi, Laurent},
  author={Kielak, Dawid},
  title={Amenability of groups is characterized by Myhill's Theorem},
  date={2016},
  eprint={arXiv:cs/1605.09133},
  status={submitted},
}

\bib{bass:nilpotent}{article}{
  author={Bass, Hyman},
  title={The degree of polynomial growth of finitely generated nilpotent groups},
  date={1972},
  journal={Proc. London Math. Soc. (3)},
  volume={25},
  pages={603\ndash 614},
}

\bib{becker-kechris:polish}{book}{
  author={Becker, Howard},
  author={Kechris, Alexander S.},
  title={The descriptive set theory of Polish group actions},
  series={London Mathematical Society Lecture Note Series},
  volume={232},
  publisher={Cambridge University Press, Cambridge},
  date={1996},
  pages={xii+136},
  isbn={0-521-57605-9},
  review={\MR {1425877}},
  doi={10.1017/CBO9780511735264},
}

\bib{bekka-h-v:t}{book}{
  author={Bekka, M. E. Bachir},
  author={de la Harpe, Pierre},
  author={Valette, Alain},
  title={Kazhdan's property (T)},
  series={New Mathematical Monographs},
  volume={11},
  publisher={Cambridge University Press},
  place={Cambridge},
  date={2008},
  pages={xiv+472},
  isbn={978-0-521-88720-5},
  review={\MR {2415834}},
}

\bib{benjamini-kozma:naliouville}{article}{
  author={Benjamini, Itai},
  author={Kozma, Gady},
  title={Nonamenable Liouville graphs},
  year={2010},
  eprint={arXiv:math/1010.3365},
}

\bib{benjamini:cgr}{book}{
  author={Benjamini, Itai},
  title={Coarse geometry and randomness},
  series={Lecture Notes in Mathematics},
  volume={2100},
  note={Lecture notes from the 41st Probability Summer School held in Saint-Flour, 2011; Chapter 5 is due to Nicolas Curien, Chapter 12 was written by Ariel Yadin, and Chapter 13 is joint work with Gady Kozma; \'Ecole d'\'Et\'e de Probabilit\'es de Saint-Flour. [Saint-Flour Probability Summer School]},
  publisher={Springer, Cham},
  date={2013},
  pages={viii+129},
  isbn={978-3-319-02575-9},
  isbn={978-3-319-02576-6},
  review={\MR {3156647}},
  doi={10.1007/978-3-319-02576-6},
}

\bib{benjamini-schramm:percolation}{article}{
  author={Benjamini, Itai},
  author={Schramm, Oded},
  title={Percolation beyond $\bold Z^d$, many questions and a few answers},
  journal={Electron. Comm. Probab.},
  volume={1},
  date={1996},
  pages={no.\ 8, 71--82},
  issn={1083-589X},
  review={\MR {1423907}},
  doi={10.1214/ECP.v1-978},
}

\bib{biggs-m-st:spectralradius}{article}{
  author={Biggs, Norman~L.},
  author={Mohar, Bojan},
  author={Shawe-Taylor, John},
  title={The spectral radius of infinite graphs},
  journal={Bull. London Math. Soc.},
  volume={20},
  date={1988},
  number={2},
  pages={116\ndash 120},
  issn={0024-6093},
  review={\MR {924236 (89a:05103)}},
}

\bib{bratteli:diagrams}{article}{
  author={Bratteli, Ola},
  title={Inductive limits of finite dimensional $C^{\ast } $-algebras},
  journal={Trans. Amer. Math. Soc.},
  volume={171},
  date={1972},
  pages={195--234},
  issn={0002-9947},
  review={\MR {0312282}},
  doi={10.2307/1996380},
}

\bib{brieussel:folner}{article}{
  author={Brieussel, J\'er\'emie},
  title={Folner sets of alternate directed groups},
  language={English, with English and French summaries},
  journal={Ann. Inst. Fourier (Grenoble)},
  volume={64},
  date={2014},
  number={3},
  pages={1109--1130},
  issn={0373-0956},
  review={\MR {3330165}},
}

\bib{burks:cellularautomata}{book}{
  title={Essays on cellular automata},
  editor={Burks, Arthur~W.},
  publisher={University of Illinois Press},
  place={Urbana, Ill.},
  date={1970},
  pages={xxvi+375},
  review={\MR {0299409 (45 \#8457)}},
}

\bib{cannon-f-p:thompson}{article}{
  author={Cannon, James~W.},
  author={Floyd, William~J.},
  author={Parry, Walter~R.},
  title={Introductory notes on Richard Thompson's groups},
  date={1996},
  issn={0013-8584},
  journal={Enseign. Math. (2)},
  volume={42},
  number={3-4},
  pages={215\ndash 256},
  review={\MR {98g:20058}},
}

\bib{cantor:mitteilungen}{article}{
  author={Cantor, Georg},
  title={Mitteilungen zur Lehre vom Transfiniten},
  date={1887},
  journal={Zeitschrift für Philosophie und philosophische Kritik},
  volume={91},
  pages={81\ndash 125},
}

\bib{ceccherini-g-h:amen}{article}{
  author={Ceccherini-Silberstein, Tullio~G.},
  author={Grigorchuk, Rostislav~I.},
  author={Harpe, Pierre~{de la}},
  title={Amenability and paradoxical decompositions for pseudogroups and discrete metric spaces},
  date={1999},
  issn={0371-9685},
  journal={Trudy Mat. Inst. Steklov.},
  volume={224},
  number={Algebra. Topol. Differ. Uravn. i ikh Prilozh.},
  pages={68\ndash 111},
  note={Dedicated to Academician Lev Semenovich Pontryagin on the occasion of his 90th birthday (Russian)},
  review={\MR {1 721 355}},
}

\bib{ceccherini-m-s:ca}{article}{
  author={Ceccherini-Silberstein, Tullio~G.},
  author={Mach{\`\i }, Antonio},
  author={Scarabotti, Fabio},
  title={Amenable groups and cellular automata},
  language={English, with English and French summaries},
  journal={Ann. Inst. Fourier (Grenoble)},
  volume={49},
  date={1999},
  number={2},
  pages={673\ndash 685},
  issn={0373-0956},
  review={\MR {1697376 (2000k:43001)}},
}

\bib{chou:elementary}{article}{
  author={Chou, Ching},
  title={Elementary amenable groups},
  date={1980},
  issn={0019-2082},
  journal={Illinois J. Math.},
  volume={24},
  number={3},
  pages={396\ndash 407},
  review={\MR {81h:43004}},
}

\bib{cohen:factorization}{article}{
  author={Cohen, Paul J.},
  title={Factorization in group algebras},
  journal={Duke Math. J},
  volume={26},
  date={1959},
  pages={199--205},
  issn={0012-7094},
  review={\MR {0104982}},
}

\bib{cohen:cogrowth}{article}{
  author={Cohen, Joel~M.},
  title={Cogrowth and amenability of discrete groups},
  date={1982},
  journal={J. Funct. Anal.},
  volume={48},
  number={3},
  pages={301\ndash 309},
}

\bib{connes-feldman-weiss:amenable}{article}{
  author={Connes, Alain},
  author={Feldman, Jacob},
  author={Weiss, Benjamin},
  title={An amenable equivalence relation is generated by a single transformation},
  journal={Ergodic Theory Dynamical Systems},
  volume={1},
  date={1981},
  number={4},
  pages={431--450 (1982)},
  issn={0143-3857},
  review={\MR {662736}},
}

\bib{coulhon-sc:isoperimetry}{article}{
  author={Coulhon, Thierry},
  author={Saloff-Coste, Laurent},
  title={Isop\'erim\'etrie pour les groupes et les vari\'et\'es},
  language={French},
  journal={Rev. Mat. Iberoamericana},
  volume={9},
  date={1993},
  number={2},
  pages={293--314},
  issn={0213-2230},
  review={\MR {1232845}},
  doi={10.4171/RMI/138},
}

\bib{coutinho-mcconnell:quest}{article}{
   author={Coutinho, Severino C.},
   author={McConnell, John C.},
   title={The quest for quotient rings (of noncommutative Noetherian rings)},
   journal={Amer. Math. Monthly},
   volume={110},
   date={2003},
   number={4},
   pages={298--313},
   issn={0002-9890},
   review={\MR{1984571}},
   doi={10.2307/3647879},
}

\bib{day:amen}{article}{
  author={Day, Mahlon~M.},
  title={Amenable semigroups},
  date={1957},
  journal={Illinois J. Math.},
  volume={1},
  pages={509\ndash 544},
  review={\MR {19,1067c}},
}

\bib{derriennic:thergodique}{article}{
  author={Derriennic, Yves},
  title={Quelques applications du th\'eor\`eme ergodique sous-additif},
  language={French, with English summary},
  conference={ title={Conference on Random Walks}, address={Kleebach}, date={1979}, },
  book={ series={Ast\'erisque}, volume={74}, publisher={Soc. Math. France}, place={Paris}, },
  date={1980},
  pages={183\ndash 201, 4},
  review={\MR {588163 (82e:60013)}},
}

\bib{dixmier:moyennes}{article}{
  author={Dixmier, Jacques},
  title={Les moyennes invariantes dans les semi-groups et leurs applications},
  language={French},
  journal={Acta Sci. Math. Szeged},
  volume={12},
  date={1950},
  number={Leopoldo Fejer et Frederico Riesz LXX annos natis dedicatus, Pars A},
  pages={213\ndash 227},
  issn={0001-6969},
  review={\MR {0037470 (12,267a)}},
}

\bib{durand:cant}{article}{
  author={Durand, Fabien},
  title={Combinatorics on Bratteli diagrams and dynamical systems},
  conference={ title={Combinatorics, automata and number theory}, },
  book={ series={Encyclopedia Math. Appl.}, volume={135}, publisher={Cambridge Univ. Press, Cambridge}, },
  date={2010},
  pages={324--372},
  review={\MR {2759109}},
}

\bib{elek-monod:fullgroup}{article}{
  author={Elek, G{\'a}bor},
  author={Monod, Nicolas},
  title={On the topological full group of a minimal Cantor $\bold {Z}^2$-system},
  journal={Proc. Amer. Math. Soc.},
  volume={141},
  date={2013},
  number={10},
  pages={3549--3552},
  issn={0002-9939},
  review={\MR {3080176}},
  doi={10.1090/S0002-9939-2013-11654-0},
}

\bib{erschler:hyderabad}{article}{
  author={Erschler, Anna~G.},
  title={Poisson-Furstenberg Boundaries, Large-Scale Geometry and Growth of Groups},
  booktitle={Proc. ICM Hyderabad, India},
  year={2010},
  pages={681\ndash 704},
  volume={II},
  isbn={978-93-80250-08-3},
}

\bib{folner:bogoliouboff}{article}{
  author={F\o lner, Erling},
  title={Note on a generalization of a theorem of Bogolio\`uboff},
  journal={Math. Scand.},
  volume={2},
  date={1954},
  pages={224--226},
  issn={0025-5521},
  review={\MR {0069188}},
  doi={10.7146/math.scand.a-10408},
}

\bib{folner:banach}{article}{
  author={F{\o }lner, Erling},
  title={On groups with full Banach mean value},
  journal={Math. Scand.},
  volume={3},
  date={1955},
  pages={243--254},
  issn={0025-5521},
  review={\MR {0079220}},
}

\bib{furstenberg:boundary}{article}{
  author={Furstenberg, Harry},
  title={Boundary theory and stochastic processes on homogeneous spaces},
  conference={ title={Harmonic analysis on homogeneous spaces}, address={Proc. Sympos. Pure Math., Vol. XXVI, Williams Coll., Williamstown, Mass.}, date={1972}, },
  book={ publisher={Amer. Math. Soc., Providence, R.I.}, },
  date={1973},
  pages={193--229},
  review={\MR {0352328}},
}

\bib{furstenberg:poisson}{article}{
  author={Furstenberg, Harry},
  title={A Poisson formula for semi-simple Lie groups},
  journal={Ann. of Math. (2)},
  volume={77},
  date={1963},
  pages={335--386},
  issn={0003-486X},
  review={\MR {0146298}},
  doi={10.2307/1970220},
}

\bib{ghys-h:gromov}{book}{
  author={Ghys, {\'E}tienne},
  author={Harpe, Pierre~{de la}},
  title={Sur les groupes hyperboliques d'apr\`es Mikhael Gromov},
  series={Progress in Mathematics},
  publisher={Birkh{\"a}user Boston Inc.},
  address={Boston, MA},
  date={1990},
  volume={83},
  isbn={0-8176-3508-4},
  note={Papers from the Swiss Seminar on Hyperbolic Groups held in Bern, 1988},
}

\bib{glasner:proximal}{book}{
  author={Glasner, Shmuel},
  title={Proximal flows},
  series={Lecture Notes in Mathematics, Vol. 517},
  publisher={Springer-Verlag, Berlin-New York},
  date={1976},
  pages={viii+153},
  review={\MR {0474243}},
}

\bib{glasner-monod:amenableactions}{article}{
  author={Glasner, Yair},
  author={Monod, Nicolas},
  title={Amenable actions, free products and a fixed point property},
  journal={Bull. Lond. Math. Soc.},
  volume={39},
  date={2007},
  number={1},
  pages={138--150},
  issn={0024-6093},
  review={\MR {2303529}},
  doi={10.1112/blms/bdl011},
}

\bib{goldie:max}{article}{
   author={Goldie, Alfred W.},
   title={Semi-prime rings with maximum condition},
   journal={Proc. London Math. Soc. (3)},
   volume={10},
   date={1960},
   pages={201--220},
   issn={0024-6115},
   review={\MR{0111766}},
   doi={10.1112/plms/s3-10.1.201},
}

\bib{golod:nil}{article}{
  author={Golod, Evgueni{\u \i }~S.},
  title={On nil-algebras and finitely approximable $p$-groups},
  date={1964},
  journal={{\cyreight Izv. Akad. Nauk SSSR Ser. Mat.}},
  volume={28},
  pages={273\ndash 276},
  note={English translation: {Amer. Math. Soc. Transl. \textbf {48} (1965) 108\ndash 111}},
}

\bib{gottschalk:periodic}{article}{
  author={Gottschalk, W. H.},
  title={Almost periodic points with respect to transformation semi-groups},
  journal={Ann. of Math. (2)},
  volume={47},
  date={1946},
  pages={762--766},
  issn={0003-486X},
  review={\MR {0017475}},
  doi={10.2307/1969233},
}

\bib{gouezel:sprad}{article}{
  author={Gou\"ezel, S\'ebastien},
  title={A numerical lower bound for the spectral radius of random walks on surface groups},
  journal={Combin. Probab. Comput.},
  volume={24},
  date={2015},
  number={6},
  pages={838--856},
  issn={0963-5483},
  review={\MR {3406448}},
  doi={10.1017/S0963548314000819},
}

\bib{gournay:liouville}{article}{
  author={Gournay, Antoine},
  title={The {Liouville} property \emph {via} {Hilbertian} compression},
  eprint={arXiv:1403.1195},
  date={2014},
  status={preprint},
}

\bib{gournay:percolation}{article}{
  author={Gournay, Antoine},
  title={Amenability criteria and critical probabilities in percolation},
  journal={Expo. Math.},
  volume={33},
  date={2015},
  number={1},
  pages={108--115},
  issn={0723-0869},
  review={\MR {3310932}},
  doi={10.1016/j.exmath.2014.04.005},
}

\bib{greenleaf:amenableactions}{article}{
  author={Greenleaf, Frederick P.},
  title={Amenable actions of locally compact groups},
  journal={J. Functional Analysis},
  volume={4},
  date={1969},
  pages={295--315},
  review={\MR {0246999}},
}

\bib{greenleaf:im}{book}{
  author={Greenleaf, Frederick P.},
  title={Invariant means on topological groups and their applications},
  series={Van Nostrand Mathematical Studies, No. 16},
  publisher={Van Nostrand Reinhold Co.},
  place={New York},
  date={1969},
  pages={ix+113},
  review={\MR {0251549 (40 \#4776)}},
}

\bib{grigorchuk:rw}{article}{
  author={Grigorchuk, Rostislav~I.},
  title={Symmetrical random walks on discrete groups},
  date={1980},
  booktitle={Multicomponent random systems},
  publisher={Dekker},
  address={New York},
  pages={285\ndash 325},
  review={\MR {83k:60016}},
}

\bib{gromov:nilpotent}{article}{
  author={Gromov, Mikhael~L.},
  title={Groups of polynomial growth and expanding maps},
  date={1981},
  issn={0073-8301},
  journal={Inst. Hautes {\'E}tudes Sci. Publ. Math.},
  number={53},
  pages={53\ndash 73},
}

\bib{gromov:topinv1}{article}{
  author={Gromov, Mikhael~L.},
  title={Topological invariants of dynamical systems and spaces of holomorphic maps. I},
  journal={Math. Phys. Anal. Geom.},
  volume={2},
  date={1999},
  number={4},
  pages={323\ndash 415},
  issn={1385-0172},
  review={\MR {1742309 (2001j:37037)}},
}

\bib{gromov:endomorphisms}{article}{
  author={Gromov, Mikhael~L.},
  title={Endomorphisms of symbolic algebraic varieties},
  journal={J. Eur. Math. Soc. (JEMS)},
  volume={1},
  date={1999},
  number={2},
  pages={109\ndash 197},
  issn={1435-9855},
  review={\MR {1694588 (2000f:14003)}},
}

\bib{gromov:linear}{article}{
  author={Gromov, Misha},
  title={Entropy and isoperimetry for linear and non-linear group actions},
  journal={Groups Geom. Dyn.},
  volume={2},
  date={2008},
  number={4},
  pages={499--593},
  issn={1661-7207},
  review={\MR {2442946}},
  doi={10.4171/GGD/48},
}

\bib{guivarch:poly1}{article}{
  author={Guivarc{\cprime }h, Yves},
  title={Groupes de Lie \`a croissance polynomiale},
  date={1970},
  journal={C. R. Acad. Sci. Paris S{\'e}r. A-B},
  volume={271},
  pages={A237\ndash A239},
}

\bib{haggstrom-jonasson:uniqueness}{article}{
  author={H\"aggstr\"om, Olle},
  author={Jonasson, Johan},
  title={Uniqueness and non-uniqueness in percolation theory},
  journal={Probab. Surv.},
  volume={3},
  date={2006},
  pages={289--344},
  issn={1549-5787},
  review={\MR {2280297}},
  doi={10.1214/154957806000000096},
}

\bib{hall:subsets}{article}{
  author={Hall, Philip},
  title={On representatives of subsets},
  journal={J. Lond. Math. Soc.},
  volume={10},
  pages={26--30},
  year={1935},
  doi={10.1112/jlms/s1-10.37.26},
}

\bib{harpe:ggt}{book}{
  author={Harpe, Pierre~{de la}},
  title={Topics in geometric group theory},
  publisher={University of Chicago Press},
  address={Chicago, IL},
  date={2000},
  isbn={0-226-31719-6; 0-226-31721-8},
  review={\MR {2001i:20081}},
}

\bib{hausdorff:pd}{article}{
  author={Hausdorff, Felix},
  title={Bemerkung \"uber den Inhalt von Punktmengen},
  language={German},
  journal={Math. Ann.},
  volume={75},
  date={1914},
  number={3},
  pages={428--433},
  issn={0025-5831},
  review={\MR {1511802}},
  doi={10.1007/BF01563735},
}

\bib{hedlund:endomorphisms}{article}{
  author={Hedlund, Gustav~A.},
  title={Endormorphisms and automorphisms of the shift dynamical system},
  journal={Math. Systems Theory},
  volume={3},
  date={1969},
  pages={320\ndash 375},
  issn={0025-5661},
  review={\MR {0259881 (41 \#4510)}},
}

\bib{herman-putnam-skau:bratteli}{article}{
  author={Herman, Richard H.},
  author={Putnam, Ian F.},
  author={Skau, Christian F.},
  title={Ordered Bratteli diagrams, dimension groups and topological dynamics},
  journal={Internat. J. Math.},
  volume={3},
  date={1992},
  number={6},
  pages={827--864},
  issn={0129-167X},
  review={\MR {1194074}},
  doi={10.1142/S0129167X92000382},
}

\bib{hughes:trees}{article}{
  author={Hughes, Bruce},
  title={Trees and ultrametric spaces: a categorical equivalence},
  journal={Adv. Math.},
  volume={189},
  date={2004},
  number={1},
  pages={148--191},
  issn={0001-8708},
  review={\MR {2093482}},
  doi={10.1016/j.aim.2003.11.008},
}

\bib{hurwitz:quaternions}{book}{
  author={Hurwitz, Adolf},
  title={Vorlesungen \"uber die Zahlentheorie der {Quaternionen}},
  language={German},
  publisher={Springer-Verlag},
  place={Berlin},
  pages={74 in-$8^\circ $},
  year={1919},
}

\bib{johnson:cohomology}{book}{
  author={Johnson, Barry Edward},
  title={Cohomology in Banach algebras},
  note={Memoirs of the American Mathematical Society, No. 127},
  publisher={American Mathematical Society, Providence, R.I.},
  date={1972},
  pages={iii+96},
  review={\MR {0374934}},
}

\bib{jorgensen:subgroups}{article}{
  author={J{\o }rgensen, Troels},
  title={A note on subgroups of $SL(2,{\mathbb C})$},
  journal={Quart. J. Math. Oxford Ser. (2)},
  volume={28},
  date={1977},
  number={110},
  pages={209--211},
  issn={0033-5606},
  review={\MR {0444839}},
}

\bib{juschenko-monod:cantor}{article}{
  author={Juschenko, Kate},
  author={Monod, Nicolas},
  title={Cantor systems, piecewise translations and simple amenable groups},
  journal={Ann. of Math. (2)},
  volume={178},
  date={2013},
  number={2},
  pages={775--787},
  issn={0003-486X},
  review={\MR {3071509}},
  doi={10.4007/annals.2013.178.2.7},
  eprint={arXiv:math/1204.2132},
}

\bib{juschenko-mattebon-monod-delasalle:extensive}{article}{
  author={Juschenko, Kate},
  author={Matte Bon, Nicol\'as},
  author={Monod, Nicolas},
  author={de la Salle, Mikael},
  title={Extensive amenability and an application to interval exchanges},
  date={2015},
  eprint={arXiv:math/1503.04977},
}

\bib{juschenko-nekrashevych-delasalle:extensions}{article}{
  author={Juschenko, Kate},
  author={Nekrashevych, Volodymyr V.},
  author={de la Salle, Mikael},
  title={Extensions of amenable groups by recurrent groupoids},
  journal={Invent. Math.},
  volume={206},
  date={2016},
  number={3},
  pages={837--867},
  issn={0020-9910},
  review={\MR {3573974}},
  doi={10.1007/s00222-016-0664-6},
}

\bib{juschenko-zhang:liouville}{article}{
  author={Juschenko, Kate},
  author={Zhang, Tianyi},
  title={Infinitely supported Liouville measures of Schreier graphs},
  year={2016},
  eprint={arXiv:math/1608.03554},
}

\bib{kaimanovich:isop}{article}{
  author={Kaimanovich, Vadim A.},
  title={Amenability, hyperfiniteness, and isoperimetric inequalities},
  language={English, with English and French summaries},
  journal={C. R. Acad. Sci. Paris S\'er. I Math.},
  volume={325},
  date={1997},
  number={9},
  pages={999--1004},
  issn={0764-4442},
  review={\MR {1485618}},
  doi={10.1016/S0764-4442(97)89093-3},
}

\bib{kaimanovich-v:entropy}{article}{
  author={Kaimanovich, Vadim~A.},
  author={Vershik, Anatoly~M.},
  title={Random walks on discrete groups: boundary and entropy},
  journal={Ann. Probab.},
  volume={11},
  date={1983},
  number={3},
  pages={457\ndash 490},
  issn={0091-1798},
  review={\MR {704539 (85d:60024)}},
}

\bib{kaimanovich:liouville}{article}{
  author={Kaimanovich, Vadim A.},
  title={Thompson's group $F$ is not Liouville},
  year={2016},
  eprint={arXiv:math/1602.02971},
}

\bib{kakutani:fp}{article}{
  author={Kakutani, Shizuo},
  title={Two fixed-point theorems concerning bicompact convex sets},
  journal={Proc. Imp. Acad.},
  volume={14},
  date={1938},
  number={7},
  pages={242--245},
  issn={0369-9846},
  review={\MR {1568507}},
}

\bib{kaplansky:problemrings}{book}{
  author={Kaplansky, Irving},
  title={Problems in the theory of rings. Report of a conference on linear algebras, June, 1956, pp. 1-3},
  publisher={National Academy of Sciences-National Research Council, Washington, Publ. 502},
  date={1957},
  pages={v+60},
  review={\MR {0096696}},
}

\bib{kaplansky:problemrings2}{article}{
  author={Kaplansky, Irving},
  title={``Problems in the theory of rings'' revisited},
  journal={Amer. Math. Monthly},
  volume={77},
  date={1970},
  pages={445--454},
  issn={0002-9890},
  review={\MR {0258865}},
}

\bib{kazhdan:T}{article}{
  author={Ka{\v {z}}dan, David~A.},
  title={On the connection of the dual space of a group with the structure of its closed subgroups},
  language={Russian},
  journal={Funkcional. Anal. i Prilo\v zen.},
  volume={1},
  date={1967},
  pages={71\ndash 74},
  issn={0374-1990},
  review={\MR {0209390 (35 \#288)}},
}

\bib{keane:iet}{article}{
  author={Keane, Michael},
  title={Interval exchange transformations},
  journal={Math. Z.},
  volume={141},
  date={1975},
  pages={25--31},
  issn={0025-5874},
  review={\MR {0357739}},
  doi={10.1007/BF01236981},
}

\bib{kelley:tychonoff}{article}{
  author={Kelley, John~L.},
  title={The Tychonoff product theorem implies the axiom of choice},
  journal={Fund. Math.},
  volume={37},
  date={1950},
  pages={75\ndash 76},
  issn={0016-2736},
  review={\MR {0039982 (12,626d)}},
}

\bib{kesten:rwalks}{article}{
  author={Kesten, Harry},
  title={Symmetric random walks on groups},
  date={1959},
  journal={Trans. Amer. Math. Soc.},
  volume={92},
  pages={336\ndash 354},
}

\bib{kleiner:gromov}{article}{
  author={Kleiner, Bruce},
  title={A new proof of Gromov's theorem on groups of polynomial growth},
  journal={J. Amer. Math. Soc.},
  volume={23},
  date={2010},
  number={3},
  pages={815--829},
  issn={0894-0347},
  review={\MR {2629989}},
  doi={10.1090/S0894-0347-09-00658-4},
}

\bib{krieger:ow}{article}{
  author={Krieger, Fabrice},
  title={Le lemme d'Ornstein-Weiss d'apr\`es Gromov},
  language={French, with French summary},
  conference={ title={Dynamics, ergodic theory, and geometry}, },
  book={ series={Math. Sci. Res. Inst. Publ.}, volume={54}, publisher={Cambridge Univ. Press, Cambridge}, },
  date={2007},
  pages={99--111},
  review={\MR {2369443}},
  doi={10.1017/CBO9780511755187.004},
}

\bib{lam:lmr}{book}{
  author={Lam, Tsit Yuen},
  title={Lectures on modules and rings},
  series={Graduate Texts in Mathematics},
  volume={189},
  publisher={Springer-Verlag, New York},
  date={1999},
  pages={xxiv+557},
  isbn={0-387-98428-3},
  review={\MR {1653294}},
  doi={10.1007/978-1-4612-0525-8},
}

\bib{lodha-moore:fp}{article}{
  author={Lodha, Yash},
  author={Moore, Justin Tatch},
  title={A nonamenable finitely presented group of piecewise projective homeomorphisms},
  journal={Groups Geom. Dyn.},
  volume={10},
  date={2016},
  number={1},
  pages={177--200},
  issn={1661-7207},
  review={\MR {3460335}},
  doi={10.4171/GGD/347},
}

\bib{lodha:tarski}{article}{
  author={Lodha, Yash},
  title={An upper bound for the Tarski numbers of non amenable groups of piecewise projective homeomorphisms},
  year={2016},
  eprint={arXiv:math/1604.05997},
}

\bib{machi-m:ca}{article}{
  author={Mach{\`\i }, Antonio},
  author={Mignosi, Filippo},
  title={Garden of Eden configurations for cellular automata on Cayley graphs of groups},
  journal={SIAM J. Discrete Math.},
  volume={6},
  date={1993},
  number={1},
  pages={44\ndash 56},
  issn={0895-4801},
  review={\MR {1201989 (95a:68084)}},
}

\bib{margulis:subgroups}{book}{
  author={Margulis, Grigori~A.},
  title={Discrete subgroups of semisimple Lie groups},
  series={Ergebnisse der Mathematik und ihrer Grenzgebiete (3) [Results in Mathematics and Related Areas (3)]},
  volume={17},
  publisher={Springer-Verlag, Berlin},
  date={1991},
  pages={x+388},
  isbn={3-540-12179-X},
  review={\MR {1090825 (92h:22021)}},
  doi={10.1007/978-3-642-51445-6},
}

\bib{markov:abeliens}{article}{
  author={Markov, Andrei A.},
  title={Quelques th\'eor\`emes sur les ensembles abeliens},
  journal={C. R. (Dokl.) Acad. Sci. URSS, n. Ser.},
  volume={1936},
  number={1},
  pages={311--313},
  year={1936},
  language={French},
}

\bib{meyerovitch:finiteentropy}{unpublished}{
  author={Meyerovitch, Tom},
  title={Finite entropy for multidimensional cellular automata},
  journal={Ergodic Theory Dynam. Systems},
  volume={28},
  date={2008},
  number={4},
  pages={1243\ndash 1260},
  issn={0143-3857},
  review={\MR {2437229 (2010c:37022)}},
  doi={10.1017/S0143385707000855},
  eprint={arXiv:math.DS/0703167},
}

\bib{milnor:5603}{article}{
  author={Milnor, John~W.},
  title={Problem 5603},
  date={1968},
  journal={Amer. Math. Monthly},
  volume={75},
  pages={685\ndash 686},
}

\bib{mirsky:transversal}{book}{
  author={Mirsky, Leonid},
  title={Transversal theory. An account of some aspects of combinatorial mathematics},
  series={Mathematics in Science and Engineering, Vol. 75},
  publisher={Academic Press},
  place={New York},
  date={1971},
  pages={ix+255},
  review={\MR {0282853 (44 \#87)}},
}

\bib{monod:fgfg}{article}{
  author={Monod, Nicolas},
  title={Groups of piecewise projective homeomorphisms},
  journal={Proc. Natl. Acad. Sci. USA},
  volume={110},
  date={2013},
  number={12},
  pages={4524--4527},
  issn={1091-6490},
  review={\MR {3047655}},
  doi={10.1073/pnas.1218426110},
  eprint={arXiv:math/1209.5229},
}

\bib{monod-popa:coamenability}{article}{
  author={Monod, Nicolas},
  author={Popa, Sorin},
  title={On co-amenability for groups and von Neumann algebras},
  language={English, with French summary},
  journal={C. R. Math. Acad. Sci. Soc. R. Can.},
  volume={25},
  date={2003},
  number={3},
  pages={82--87},
  issn={0706-1994},
  review={\MR {1999183}},
}

\bib{moore:ca}{article}{
  title={Machine models of self-reproduction},
  author={Moore, Edward~F.},
  pages={17\ndash 33},
  booktitle={Mathematical problems in the biological sciences. Proc. Sympos. Appl. Math. XIV},
  publisher={Amer. Math. Soc.},
  place={Providence, R.I.},
  date={1962},
  review={\MR {0299409 (45 \#8457)}},
}

\bib{moore:hindman}{article}{
  author={Tatch Moore, Justin},
  title={Hindman's theorem, Ellis's lemma, and Thompson's group $F$},
  journal={Zb. Rad. (Beogr.)},
  volume={17(25)},
  date={2015},
  number={Selected topics in combinatorial analysis},
  pages={171--187},
  isbn={978-86-80593-53-1},
  review={\MR {3362230}},
}

\bib{myhill:ca}{article}{
  author={Myhill, John},
  title={The converse of Moore's Garden-of-Eden theorem},
  journal={Proc. Amer. Math. Soc.},
  volume={14},
  date={1963},
  pages={685\ndash 686},
  issn={0002-9939},
  review={\MR {0155764 (27 \#5698)}},
}

\bib{naor-peres:embeddings}{article}{
  author={Naor, Assaf},
  author={Peres, Yuval},
  title={Embeddings of discrete groups and the speed of random walks},
  journal={Int. Math. Res. Not. IMRN},
  date={2008},
  pages={Art. ID rnn 076, 34},
  issn={1073-7928},
  review={\MR {2439557 (2009m:20067)}},
  doi={10.1093/imrn/rnn076},
}

\bib{nash-williams:electric}{article}{
  author={Nash-Williams, Crispin St. John Alvah},
  title={Random walk and electric currents in networks},
  journal={Proc. Cambridge Philos. Soc.},
  volume={55},
  date={1959},
  pages={181--194},
  review={\MR {0124932}},
}

\bib{nekrashevych:ssg}{book}{
  author={Nekrashevych, Volodymyr V.},
  title={Self-similar groups},
  series={Mathematical Surveys and Monographs},
  volume={117},
  publisher={American Mathematical Society, Providence, RI},
  date={2005},
  pages={xii+231},
  isbn={0-8218-3831-8},
  review={\MR {2162164 (2006e:20047)}},
  doi={10.1090/surv/117},
}

\bib{nekrashevych:simple}{unpublished}{
  author={Nekrashevych, Volodymyr V.},
  title={Simple groups of dynamical origin},
  date={2015},
  eprint={arXiv:math/1511.08241},
}

\bib{vneumann:masses}{article}{
  author={von Neumann, John},
  title={Zur allgemeinen Theorie des Masses},
  date={1929},
  journal={Fund. Math.},
  volume={13},
  pages={73\ndash 116 and 333},
  note={= \emph {Collected works}, vol.\ I, pages 599\ndash 643},
}

\bib{vneumann:measures}{article}{
  author={von Neumann, John},
  title={Einige S\"atze \"uber messbare Abbildungen},
  language={German},
  journal={Ann. of Math. (2)},
  volume={33},
  date={1932},
  number={3},
  pages={574--586},
  issn={0003-486X},
  review={\MR {1503077}},
  doi={10.2307/1968536},
}

\bib{nikodym:radon}{article}{
  author={Nikodym, Otton},
  title={Sur une g\'en\'eralisation des int\'egrales de M. J. Radon},
  journal={Fundam. Math.},
  issn={0016-2736; 1730-6329/e},
  volume={15},
  pages={358},
  year={1930},
  publisher={Polish Academy of Sciences (Polska Akademia Nauk - PAN), Institute of Mathematics (Instytut Matematyczny), Warsaw},
  language={French},
}

\bib{olshansky:monsters}{article}{
  author={Ol{\cprime }shanski{\u \i }, Alexander~Yu.},
  title={Infinite groups with cyclic subgroups},
  language={Russian},
  journal={Dokl. Akad. Nauk SSSR},
  volume={245},
  date={1979},
  number={4},
  pages={785--787},
  issn={0002-3264},
  review={\MR {527709}},
}

\bib{olshansky:invmean}{article}{
  author={Ol{\cprime }shanski{\u \i }, Alexander~Yu.},
  title={On the question of the existence of an invariant mean on a group},
  date={1980},
  issn={0042-1316},
  journal={Uspekhi Mat. Nauk},
  volume={35},
  number={4(214)},
  pages={199\ndash 200},
  review={\MR {82b:43002}},
}

\bib{ore:condition}{article}{
  author={Ore, \O ystein},
  title={Linear equations in non-commutative fields},
  journal={Ann. of Math. (2)},
  volume={32},
  date={1931},
  number={3},
  pages={463--477},
  issn={0003-486X},
  review={\MR {1503010}},
  doi={10.2307/1968245},
}

\bib{ornstein-weiss:entropyiso}{article}{
  author={Ornstein, Donald~S.},
  author={Weiss, Benjamin},
  title={Entropy and isomorphism theorems for actions of amenable groups},
  journal={J. Analyse Math.},
  volume={48},
  date={1987},
  pages={1\ndash 141},
  issn={0021-7670},
  review={\MR {910005 (88j:28014)}},
}

\bib{osin:weaklyamen}{article}{
  author={Osin, Denis V.},
  title={Weakly amenable groups},
  conference={ title={Computational and statistical group theory (Las Vegas, NV/Hoboken, NJ, 2001)}, },
  book={ series={Contemp. Math.}, volume={298}, publisher={Amer. Math. Soc., Providence, RI}, },
  date={2002},
  pages={105--113},
  review={\MR {1929719}},
  doi={10.1090/conm/298/05117},
}

\bib{paschke:norm}{article}{
  author={Paschke, William~L.},
  title={Lower bound for the norm of a vertex-transitive graph},
  date={1993},
  journal={Math. Z.},
  volume={213},
  number={2},
  pages={225\ndash 239},
}

\bib{passman:gr}{book}{
    author={Passman, Donald~S.},
     title={The algebraic structure of group rings},
 publisher={Wiley-Interscience [John Wiley \&\ Sons]},
   address={New York},
      date={1977},
      ISBN={0-471-02272-1},
      note={Pure and Applied Mathematics},
}

\bib{peres-zheng:decay}{unpublished}{
  author={Peres, Yuval},
  author={Zheng, Tianyi},
  title={On groups, slow heat kernel decay yields Liouville property and sharp entropy bounds},
  date={2016},
  eprint={arXiv:math/1609.05174},
}

\bib{rado:transfinite}{article}{
  author={Rado, Richard},
  title={Note on the transfinite case of Hall's theorem on representatives},
  journal={J. London Math. Soc.},
  volume={42},
  date={1967},
  pages={321--324},
  issn={0024-6107},
  review={\MR {0211883}},
  doi={10.1112/jlms/s1-42.1.321},
}

\bib{reiter:harmonicanalysis}{book}{
  author={Reiter, Hans},
  title={Classical harmonic analysis and locally compact groups},
  publisher={Clarendon Press, Oxford},
  date={1968},
  pages={xi+200},
  review={\MR {0306811}},
}

\bib{rudin:fa}{book}{
  author={Rudin, Walter},
  title={Functional analysis},
  edition={Second edition},
  publisher={McGraw-Hill Inc.},
  address={New York},
  date={1991},
  isbn={0-07-054236-8},
}

\bib{seward:burnside}{article}{
  author={Seward, Brandon},
  title={Burnside's Problem, spanning trees and tilings},
  journal={Geom. Topol.},
  volume={18},
  date={2014},
  number={1},
  pages={179--210},
  issn={1465-3060},
  review={\MR {3158775}},
  doi={10.2140/gt.2014.18.179},
}

\bib{shalom:linear}{article}{
  author={Shalom, Yehuda},
  title={The growth of linear groups},
  journal={J. Algebra},
  volume={199},
  date={1998},
  number={1},
  pages={169--174},
  issn={0021-8693},
  review={\MR {1489360}},
  doi={10.1006/jabr.1997.7186},
}

\bib{stone:barycentric}{article}{
  author={Stone, Marshall H.},
  title={Postulates for the barycentric calculus},
  journal={Ann. Mat. Pura Appl. (4)},
  volume={29},
  date={1949},
  pages={25--30},
  issn={0003-4622},
  review={\MR {0036014}},
}

\bib{szwarc:cogrowth}{article}{
  author={Szwarc, Ryszard},
  title={A short proof of the Grigorchuk-Cohen cogrowth theorem},
  date={1989-07},
  journal={Proc. Amer. Math. Soc.},
  volume={106},
  number={3},
  pages={663\ndash 665},
}

\bib{tamari:folner}{article}{
  author={Tamari, Dov},
  title={A refined classification of semi-groups leading to generalised polynomial rings with a generalized degree concept},
  year={1954},
  pages={439\ndash 440},
  booktitle={Proc. ICM vol. 3, Amsterdam},
}

\bib{tits:linear}{article}{
  author={Tits, Jacques},
  title={Free subgroups in linear groups},
  date={1972},
  journal={J. Algebra},
  volume={20},
  pages={250\ndash 270},
}

\bib{tointon:harmonic}{article}{
  author={Tointon, Matthew C. H.},
  title={Characterizations of algebraic properties of groups in terms of harmonic functions},
  journal={Groups Geom. Dyn.},
  volume={10},
  date={2016},
  number={3},
  pages={1007--1049},
  issn={1661-7207},
  review={\MR {3551188}},
  doi={10.4171/GGD/375},
}

\bib{weiss:rokhlin}{article}{
  author={Weiss, Benjamin},
  title={On the work of V.\ A.\ Rokhlin in ergodic theory},
  journal={Ergodic Theory Dynam. Systems},
  volume={9},
  date={1989},
  number={4},
  pages={619--627},
  issn={0143-3857},
  review={\MR {1036900}},
  doi={10.1017/S0143385700005253},
}

\bib{weiss:sofic}{article}{
  author={Weiss, Benjamin},
  title={Sofic groups and dynamical systems},
  note={Ergodic theory and harmonic analysis (Mumbai, 1999)},
  journal={Sankhy\=a Ser. A},
  volume={62},
  date={2000},
  number={3},
  pages={350--359},
  issn={0581-572X},
  review={\MR {1803462}},
}

\bib{weiss:monotileable}{article}{
  author={Weiss, Benjamin},
  title={Monotileable amenable groups},
  conference={ title={Topology, ergodic theory, real algebraic geometry}, },
  book={ series={Amer. Math. Soc. Transl. Ser. 2}, volume={202}, publisher={Amer. Math. Soc.}, place={Providence, RI}, },
  date={2001},
  pages={257\ndash 262},
  review={\MR {1819193 (2001m:22014)}},
}

\bib{whyte:amenability}{article}{
  author={Whyte, Kevin},
  title={Amenability, bi-Lipschitz equivalence, and the von Neumann conjecture},
  journal={Duke Math. J.},
  volume={99},
  date={1999},
  number={1},
  pages={93--112},
  issn={0012-7094},
  review={\MR {1700742}},
  doi={10.1215/S0012-7094-99-09904-0},
}

\bib{willis:probability}{article}{
  author={Willis, George A.},
  title={Probability measures on groups and some related ideals in group algebras},
  journal={J. Funct. Anal.},
  volume={92},
  date={1990},
  number={1},
  pages={202--263},
  issn={0022-1236},
  review={\MR {1064694}},
  doi={10.1016/0022-1236(90)90075-V},
}

\bib{wittemorris:orderable}{article}{
  author={Morris, Dave Witte},
  title={Amenable groups that act on the line},
  journal={Algebr. Geom. Topol.},
  volume={6},
  date={2006},
  pages={2509--2518},
  issn={1472-2747},
  review={\MR {2286034}},
  doi={10.2140/agt.2006.6.2509},
}

\bib{woess:cogrowth}{article}{
  author={Woess, Wolfgang},
  title={Random walks on infinite graphs and groups --- a survey on selected topics},
  date={1994},
  journal={Bull. London Math. Soc.},
  volume={26},
  pages={1\ndash 60},
}

\bib{woess:rw}{book}{
  author={Woess, Wolfgang},
  title={Random walks on infinite graphs and groups},
  publisher={Cambridge University Press},
  address={Cambridge},
  date={2000},
  isbn={0-521-55292-3},
  review={\MR {1 743 100}},
  doi={10.1017/CBO9780511470967},
}

\end{biblist}
\end{bibsection}
\end{document}